\setlist[enumerate]{label=(\roman*)} 
\newcommand\dela[1]{}
\newcommand\dele[1]{}
\newcommand\delas[1]{}
\newcommand\deln[1]{}
\newcommand{\nun}{\nu}
\newcommand{\Mbar}{\overline{M}}
\newcommand{\Nbar}{\overline{N}}
\newcommand{\Pbar}{\overline{\mathbb{P}}}
\newcommand{\Fbar}{\overline{\mathscr{F}}}
\newcommand{\Ebar}{\overline{\mathbb{E}}}
\newcommand{\loc}{\mathrm{loc}}
\newcommand{\Aoneab}{\mathscr{A}_1(\alpha,\beta)}
\newcommand{\Atwoa}{\mathscr{A}_{2}(\alpha)}
\newcommand{\Omegabar}{\overline{\Omega}}
\newcommand{\power}{2}
\newcommand{\sinOT}{s\in [0,T]}
\newcommand{\tinOT}{t\in [0,T]}
\newcommand{\te}{\overline{\mathbb{E}}}
\newcommand{\unk}{{u}_{{n}_{k}}}
\newcommand{\cadlag}{{c\`{a}dl\`{a}g}\,}
\newcommand{\eps}{\varepsilon }
\newcommand\todown{\searrow}
\newcommand{\s}{s}
\newcommand{\acal}{\mathscr{A}}
\newcommand{\ccal}{\mathscr{C}}
\newcommand{\fcal}{\mathscr{F}}
\newcommand{\lcal}{\mathcal{L}}
\newcommand{\ocal}{\mathcal{O}}
\newcommand{\scal}{\mathcal{S}}
\newcommand{\tcal}{\mathcal{T}}
\newcommand{\vcal}{\mathscr{V}}
\newcommand{\xcal}{\mathscr{X}}
\newcommand{\ycal}{\mathscr{Y}}
\newcommand{\zcal}{\mathscr{Z}}
\newcommand{\emm}{k}
\newcommand{\fmath}{\mathbb{F}}
\newcommand{\ind}[1]{{1\! \!  1 }_{#1}  }
\newcommand{\ip}[3]{{\bigl( #1 , #2 \bigr) }_{#3}}
\newcommand{\Ip}[3]{\Bigl( #1 , #2 {\Bigr)}_{#3}}
\newcommand{\dual}[3]{\fourIdx{}{}{}{#3}{\big\langle #1, #2 \big\rangle}}
\newcommand{\duali}[3]{\fourIdx{}{U^\prime}{#3}{U}{\langle #1, #2 \rangle}}
\newcommand{\normb}[3]{\fourIdx{}{}{#3}{#2}{\vert #1 \vert}}
\newcommand\soutok[1]{}
\newcommand\xcancelok[1]{}
\newcommand\footnoteok[1]{}
\newcommand{\ilsk}[3]{{(#1 ,#2 )}_{#3}}
\newcommand{\duality}[2]{\fourIdx{}{U^\prime}{}{U}{\bigl\langle #1, #2 \bigr\rangle}} 
\newcommand{\dualf}[4]{\fourIdx{}{#4}{}{#3}{\big\langle #1, #2 \big\rangle}}
\newcommand{\lb}[1]{\fourIdx{}{U^\prime}{}{}{\bigl\langle #1 }} 
\newcommand{\rbb}{\fourIdx{}{}{}{U}{ \bigr\rangle}}
\newcommand{\dualtrzy}[3]{\fourIdx{}{#1^\prime}{}{#1}{\bigl\langle #2, #3 \bigr\rangle}}
\newcommand{\dualityc}[2]{\fourIdx{}{U^\prime}{\mathrm{c}}{U}{\langle #1, #2 \rangle}}
\newcommand{\Mbarphi}[1]{\fourIdx{}{}{#1}{\phi}{\overline{M}}}
\newcommand{\tomega}{\overline{\omega}}
\newcommand{\tP}{\overline{\mathbb{P}}}
\newcommand{\tF}{\overline{\mathbb{F}}}
\newcommand{\tun}{{\overline{u}}_{n}}
\newcommand{\tunk}{{\overline{u}}_{{n}_{k}}}
\newcommand{\tu}{\overline{u}}
\newcommand{\ubar}{\overline{u}}
\newcommand{\tfcal}{\overline{\fcal}}
\newcommand{\rM}{\mathrm{M}}
\newcommand{\rH}{H}
\newcommand{\rV}{V}
\newcommand{\rB}{B}
\newcommand{\rU}{U}
\newcommand{\rY}{Y}
\newcommand{\bX}{\mathbf{X}}
\newcommand{\bH}{\mathbb{H}}
\newcommand{\bL}{\mathbb{L}}
\newcommand{\HR}{H_R}
\newcommand{\VR}{V_R}
\newcommand{\id}{\mathrm{id}}
\newcommand{\dom}{\mathrm{Dom}}
\newcommand\toup{\nearrow}
\newcommand\embed{\hookrightarrow}
\newcommand{\ddual}[4]{\fourIdx{}{#1}{}{#4}{\big\langle #2, #3 \big\rangle}}
\numberwithin{equation}{section} 
\theoremstyle{plain}
\newtheorem{theorem}{Theorem}[section]
\newtheorem{lemma}[theorem]{Lemma}
\newtheorem{assum-A}{Assumption A.$\!$}
\newtheorem{assum-P}{Assumption P.}
\newtheorem{assum-F}{Assumption F.}
\newtheorem{corollary}[theorem]{Corollary}
\newtheorem{proposition}[theorem]{Proposition}
\newtheorem*{Theorem*}{Theorem}
\newtheorem*{Lemma*}{Lemma}
\newtheorem*{Corollary*}{Corollary}
\newtheorem*{Proposition*}{Proposition}
\theoremstyle{definition}
\newtheorem{definition}[theorem]{Definition}
\newtheorem{remark}[theorem]{Remark}
\newtheorem{condition}[theorem]{Condition}
\newtheorem{example}[theorem]{Example}
\newtheorem{exercise}[theorem]{Exercise}
\newtheorem*{Definition*}{Definition}
\newtheorem*{Notation*}{Notation}
\newtheorem*{Fact*}{Fact}
\newtheorem*{Example*}{Example}
\newtheorem*{Observation*}{Observation}
\newtheorem*{Remark*}{Remark}
\newtheorem*{Exercise*}{Exercise}
\newtheorem*{Question*}{Question}
\newtheorem*{Problem*}{Problem}
\newtheorem*{Assumption*}{Assumption}
\newtheorem*{Idea*}{Idea}
\DeclarePairedDelimiter\abs{\lvert}{\rvert}%
\DeclarePairedDelimiter\norm{\lVert}{\rVert}%
\let\oldabs\abs
\def\abs{\@ifstar{\oldabs}{\oldabs*}}
\let\oldnorm\norm
\def\norm{\@ifstar{\oldnorm}{\oldnorm*}}
\newcommand{\R}{\mathbb{R}}
\newcommand{\F}{\mathbb{F}}
\newcommand{\N}{\mathbb{N}}
\newcommand{\ud}{\mathrm{d}}
\newcommand{\dx}{\, \mathrm{d}x}
\newcommand{\dr}{\, \mathrm{d}r}
\newcommand{\ds}{\, \mathrm{d}s}
\newcommand{\dt}{\, \mathrm{d}t}
\newcommand{\E}{\mathbb{E}}
\newcommand{\1}{\mathds{1}}
\DeclareMathOperator{\supp}{\mathrm{supp}}
\DeclareMathOperator{\divergence}{\mathrm{div}}
\DeclareMathOperator{\diver}{\mathrm{div}\,}
\DeclareMathOperator{\Span}{\mathrm{Span}}
\DeclareMathOperator{\Id}{\mathrm{Id}}
\DeclareMathOperator{\Leb}{\mathrm{Leb}}
\LetLtxMacro\Oldfootnote\footnote
\def\namedlabel#1#2{\begingroup
    #2%
    \def\@currentlabel{#2}%
    \phantomsection\label{#1}\endgroup
}
\newcommand{\D}{\mathrm{D}}
\begin{document}

\title{Weak solutions of Navier-Stokes Equation with purely discontinuous L\'evy Noise}
\author[1]{Zdzis\l{}aw Brze\'zniak}
\author[2]{Tomasz Kosmala}
\author[3]{El{\. z}bieta Motyl}
\author[4]{Paul Razafimandimby}
\affil[1]{Department of Mathematics, University of York
Heslington, York, YO10 5DD, UK;
\href{mailto:zdzislaw.brzezniak@york.ac.uk}{zdzislaw.brzezniak@york.ac.uk}}
\affil[2]{Department of Mathematics, Imperial College London, 180 Queen's Gate, London SW7 2AZ, UK; \href{mailto:t.kosmala@imperial.ac.uk}{t.kosmala@imperial.ac.uk}}
\affil[3]{Faculty of Mathematics and Computer Science, University of \L\'{o}d\'{z},  Poland;
\href{mailto:elzbieta.motyl@wmii.uni.lodz.pl}{elzbieta.motyl@wmii.uni.lodz.pl}}
\affil[4]{School of Mathematical Sciences, Dublin City University, Collins Avenue Dublin 9, Ireland; \href{mailto:paul.razafimandimby@dcu.ie}{paul.razafimandimby@dcu.ie}}
\maketitle

\begin{abstract}
In this paper we prove the existence of weak martingale solutions to the stochastic Navier-Stokes Equations driven by pure jump L\'evy processes. Our proof consists of two  parts. In the first one, mostly classical,  we recall a priori estimates, from the paper by the third named author,  for solutions to suitable constructed Galerkin approximations  and  we use the Jakubowski-Skorokhod Theorem to find a sequence of processes on a new probability space convergent point-wise to a limit process. In the second one, we show that the limit process is a weak martingale solution to the 
SNSEs by using an approach of Kallianpur and Xiong. The core of this method consists of a proof that  a certain natural process on the new probability space is a purely discontinuous martingale and then to  use a suitable representation theorem. In this way we propose a method of proving solutions to stochastic PDEs which is different from the method used recently by the first and fourth named author in their joint paper with E.\ Hausenblas, see \cite{Brz+Haus+Raza_2018_reaction_diffusion}. 
\end{abstract}

\section{Introduction}
\label{sec_introduction}

In this paper we consider the following Navier-Stokes Equations (NSEs), in the whole space $\R^d$, with $d=2$ or $d=3$, driven by a  Poisson Random Measure
\begin{equation}
\label{eqn-reduced_Navier_Stokes}
\begin{cases}
\partial_t u -  \Delta u + \langle u, \nabla \rangle u + \nabla p = f(t) \dt + \int_{\R^d} F(t,u,y) \,\tilde{\eta}(\ud t,\ud y) \\
\divergence u = 0,\\
u(0)=u_0,
\end{cases}
\end{equation}
where 
\begin{align*}
    &u\colon\mathbb{R} \times \mathbb{R}^d \to \mathbb{R}^d , 
    \\
    &p\colon  \mathbb{R} \times  \mathbb{R}^d \to \mathbb{R}
    \end{align*}
    are respectively the velocity and pressure of the fluid. The symbol $\tilde{\eta}$ denotes a compensated time homogeneous Poisson random measure 
 on  $(\R_+ \times \rY, \mathscr{B}(\R_+) \otimes \ycal )$ over a filtered probability space  $(\Omega , \fcal , \fmath , \mathbb{P})$  intensity measure  of the form $\Leb \otimes \nu$, where  $\Leb $ is the Lebesgue measure on  the $\sigma$-field $ \mathscr{B}(\R_+)$ and $\nu$ is a  $\sigma$-finite measure on the $\sigma$-field $\ycal $, see  Definition \ref{def_PRM_filtration}.
 We omit here the dependence of the sample element $\omega \in \Omega$. 

Before we  describe the field of Stochastic Navier Stokes Equations (SNSEs) driven by jump processes, let us spend some time on describing the literature concerned with the existence theory of the Stochastic Navier Stokes Equations driven by Gaussian  processes. To the best of our knowledge, the first paper on this topic was written by Bensoussan and Temam \cite{Bensoussan+Temam_1973} who considered the SNSEs driven by the additive Wiener process. This line of research was continued by Vishik and Fursikov, see e.g. their monograph \cite{Vishik+Fursikov_1988}. The first paper which consider the SNSEs driven by the so called transport noise was the paper \cite{BCF_1993}, by the first named author, Flandoli and Capi{\'n}ski. This direction was later taken up by Flandoli and Gatarek \cite{Fland+Gat_1995}, who by the way, also treated other types of gaussian noises.  All these papers and others, which we have not cited, have dealt with NSEs in bounded domains. The first paper which dealt with the exietnce of solutions to the SNSEs (as well as Euler Equations) in  unbounded domains was the paper \cite{Brz+Peszat_2001} which treated multiplicative but not of transport type  gaussian noise. After that Rozovski and Mikulevicius  published the paper \cite{Mikulevicius+Rozovskii_2005} which studied the SNSEs in the whole space.
The method used by these authors was different and used weak topolopogies instead of weighted spaces used in the former paper. The latter paper was subsequently generalized by the first and third named authors in \cite{Brzezniak+Motyl_2013}, who proved the existence of weak martingale solutions to SNSEs (in both dimensions $d=2$ and $d=3 $ for general unbounded domains. One important improvement was the use of the Jakukowski's generalization \cite{Jakubowski_1998} to a class of non-metric spaces of the Skorokhod representation theorem \cite{Skorohod_1956}. We have not mentioned papers related to qualitative behaviours of SNSEs as for instance the existence and properties of attractors, the existence, uniqueness and properties invariant measures and of stationary solutions.  We want to finish this part of our Introduction about very recent developments. In particular about the question of regularization by noise, see 
e.g. \cite{Flandoli+Luo_2021} and references therein, the non-uniqueness of very weak solutions in the sense of convex integration, $L^p$-theory of SNSEs, including  transport noise, see e.g. \cite{Brz+Peszat_2000}, \cite{Agresti+Veraar_2024} and references therein, as well the  geometric approach to transport noise, see e.g. \cite{Crisan+Holm+Leahy+Nilssen_2022} and references therein.

There exists  also a  vast amount of results related to the stochastic Navier-Stokes equations driven by the  L\'evy processes called also jump noise. The existence of solutions, the long-time behaviour of these solutions, in particular the existence and uniqueness of invariant measure associated to the solutions, has been the subjects of extensive mathematical analysis, which have generated several important results, see \cite{ZB+EH+JH_2013,Dong+Li+Zhai_2012,Dong+Xie_2009,Dong+Xu+Zhang_2011,Dong+Zhai_2011,Motyl_2013, Motyl_2014,Sritharan_2012,Zhang-Tusheg_2009}  among others. 

To the best of our knowledge, the first papers about the Stochastic Navier-Stokes Equations driven by L\'evy type noise were articles by Dong Zhao and his collaborators. In particular, Dong Zhao and his collaborators proved  the existence of weak solutions of 2D and 3D Navier-Stokes with L\'evy noise in   \cite{Dong+Xie_2009}, \cite{Dong+Li+Zhai_2012}, established the Large Deviation Principles of L\'evy noise driven 2D Navier-Stokes equations  in    \cite{Zhang-Tusheg_2009}, and show the existence of invariant measure of the 2D Navier-Stokes equatiosn driven by $\alpha$-stable L\'evy noise in \cite{Dong+Xu+Zhang_2011}. Then, there were two papers \cite{Brz+Haus+Zhu_2013} and \cite{Sritharan_2012} by, respectively,  Brze{\'z}niak et al and Striharan, which used a similar approach, called Barbu method adapted to the jump case. These two papers and the ones by Dong Zhao and his collaborators do not use arguments based on  tightness or compactness theorems.

These papers were followed by two fundamental papers \cite{Motyl_2013} and \cite{Motyl_2014}, who extended her own works \cite{Brzezniak+Motyl_2011} and \cite{Brzezniak+Motyl_2013} from the Wiener processes to the full L\'evy  case.  Recently, 
Brze{\'z}niak,  Peng and Zhai proved in \cite{Brz+X+Zhai_2023} the existence of weak and strong solutions to the 2D stochastic Navier Stokes Equations driven by pure jump L\'evy processes for the initial data from the solenoidal versions of either the Lebesgue space $L^2$ or the Sobolev space $H^1$. They used  an 
ingenuous  proof based on the classical Banach Fixed Point Theorem. 
Most importantly, they also established  and fully proved the Large Deviations Principle (what was the main motivation of that paper).   This paper was generalized in \cite{Peng+Yang+Zhai_2022} by Peng,  Yang and Zhai by removing some "atypical" assumptions.  The recent literature on the topic of SPDEs driven by L\'evy processes of jump type  includes also (as examples) the following aricles: 
\cite{Chen+Wang+Wang_2019} by Chen, Wang and Wang, \cite{Nguyen+Tawri+Temam_2021} by Nguyen, Tawri and Temam, \cite{Kumar+Mohan_2024} by Kumar and Mohan, and  \cite{Ngana+Deugoeu+Medjo_2023} Ndongmo  Ngana, Deugoue and Tachim Medjo.

Despite this vast amount of papers, there are still some questions, which are well-understood for the Wiener process driven Navier-Stokes equations, remain unanswered for the Navier-Stokes driven by pure jump noise, e.g.\ uniqueness of invariant measure for 2D Navier-Stokes driven by a noise of the form $\sum_{k=1}^\infty \alpha_k L_k $, where $(L_k)_{k\in \mathbb{N}}$ is sequence of i.i.d.\ L\'evy processes and only finitely many of the $\alpha_k\in \mathbb{R}$ are non-zero, and the existence of weak martingale solutions via a martingale representation theorem. The main reason for this is that many of the mathematical tools which are useful for the analysis of Wiener processes driven SPDEs might not be applicable to SPDEs with L\'evy noise. Also, there are some important results which are true for SPDEs driven by specific classes of L\'evy noise but cannot be applied to SPDEs driven by a different type of L\'evy noise, for instance the earlier cited paper \cite{Dong+Xu+Zhang_2011}, who studied the existence of invariant measures of stochastic 2D Navier-Stokes equations driven by $\alpha$-stable processes.

Our main (and modest) goal in this paper is to establish the existence of a global weak solution of  \eqref{eqn-reduced_Navier_Stokes}. 
In the construction of a weak solution in the probabilistic sense, an important r\^{o}le is play an appropriate representation theorems. While considering SPDEs equations driven by Gaussian processes, the well-known theorem on the representation of the square integrable continuous martingales as the It\^{o} stochastic integrals with respect to Wiener processes is used, see e.g. \cite{Fland+Gat_1995}, \cite{Brzezniak+Motyl_2013}. 
Following this classical idea, in our construction of a  weak solution to problem \eqref{eqn-reduced_Navier_Stokes}, we use the representation theorem for square integrable purely discontinuous martingales as the stochastic integrals with respect to the Poisson random measures, see 
\cite{Kallianpur_Xiong}, \cite[Section II.\,7]{Ikeda+Watanabe_1989} and reference therein. This approach enables us, in particular, to indicate explicitly appropriate Poisson random measure being a part of the weak solution to problem \eqref{eqn-reduced_Navier_Stokes}, see Definition \ref{def-solution martingale}. This is the main novelty of the present paper.

We will use the Skorokhod Theorem in the Jakubowski version \cite{Jakubowski_1998},
but the proof that the limiting process is a solution to equation \eqref{eqn-reduced_Navier_Stokes}  is completely different from the one used in the previously mentioned paper \cite{Brz+Haus+Raza_2018_reaction_diffusion}. 
Our method is based on representation of pure jump martingales as stochastic integrals with respect to Poisson Random measure. A byproduct of our work is that we have prepared ground for treating other SPDEs driven by L\'evy processes. 
The reason  we wanted to find a proof which does not rely on  the method used in the paper \cite{Brz+Haus+Raza_2018_reaction_diffusion} by the first and fourth named authors is that we are no longer convinced that part (iii)  of Theorem C.1 therein is correct, as it was pointed out to us by M. Ondrej\'{a}t \cite{Ondrejat-2000}.
We are grateful  to  M. Ondrej\'{a}t for his personal information and to A. Debussche  for related discussion.

Let us describe the content of the present paper in more detail. We prove, under fairly general assumption on the noise coefficients and the measure $\nu$ that the system \eqref{eqn-reduced_Navier_Stokes} has a global weak solution. The term \textit{weak solution} is understood in the sense of both the Stochastic Analysis  and the Partial Differential Equations. That is we look for both the velocity $u$, a filtered probability space $(\Omega, \mathcal{F}, \mathbb{F}, \mathbb{P})$, where $\mathbb{F}=(\mathcal{F}_t)_{t\ge 0}$, satisfying the usual condition and a Poisson Random Measure $\eta$ with the prescribed L\'evy measure $\nu$. The velocity $u$ is required to satisfies the variational formulation of \eqref{eqn-reduced_Navier_Stokes}. 
A precise formulation of the definition of a solution, we use the term "martingale solution", is given in Definition \ref{def-solution martingale} and the precise formulation of the main result is given in Theorem \ref{thm-main-existence}.
The presentation from sections \ref{sec-statement} and \ref{sec-existence-I} follows mostly the  papers \cite{Motyl_2013} and \cite{Motyl_2014} by the third named author. However, because of some small but important changes and for the sake of the completeness, we include as much details as possible.
Moreover, we  prove our main result by using a theorem on representation of pure jump martingales by stochastic integrals driven by compensated Poisson Random Measure. While this kind of representation theorem is well-known and understood for martingales with continuous paths, it is known to hold only for purely discontinuous processes with values in a co-nuclear space, see \cite[Th.\ 3.4.7]{Kallianpur_Xiong}. Our second main  contribution, which is interesting in itself, is the proof of a  Representation Theorem for purely discontinuous martingales for Hilbert-space valued processes. Although we closely follow the presentation  in \cite{Kallianpur_Xiong}, the proof is non-trivial, see Appendix \ref{App:Appendix-RepThm}.

The layout of this paper can be summarized as follows. In Section \ref{sec-preliminaries} we recall and present some preliminary notions and results that are needed in the subsequent sections. Section 3 is devoted to the statement of the standing assumptions and the main result of the paper.
We start the proof of our main result in Section \ref{sec-existence-I} where we introduce the Galerkin approximation of the problem \eqref{eqn-reduced_Navier_Stokes}. Still in Section \ref{sec-existence-I},  we  derive several important uniform estimates for the solutions of the Galerkin approximation, which  will be used to show that the family of the approximate solutions' laws are tight on a space $\mathscr{Z}_T$ introduced in \eqref{eqn-Z_cadlag_NS}. This tightness result along with the Skorokhod-Jakubowski Theorem enables us to construct new probability space on which almost sure convergence of a sequence of random variables (with the same laws as  the  solutions to the Galerkin approximation problems)   to the candidate solution $\tu$ holds.
 Section 5 is devoted to one of the crucial and technical steps of the proof. There we prove that the
 processes 
$$ \begin{aligned}
{\Mbar}  (t)
   &= \tu (t) \,  -    \tu (0)
   - \int_{0}^{t}  f(s) \ds
   \\ &+ \int_{0}^{t}  \acal \tu (s) \ds
  + \int_{0}^{t} {\rB}  \bigl( \tu (s)  \bigr) \ds,  \quad
t \in [0,T].
\end{aligned}
$$ 
 and 
$$
\begin{aligned}
  \Nbar_\phi(t) &=\duali{\tu (t)}{\phi}{2} - \duali{u_0}{\phi}{2} \\
  &+2  \int_0^t \duality{\acal \tu (s)+\rB(\tu (s))-f(s)}{\phi}\;\duality{\tu (s)}{\phi} \ds
  \\
  &-\int_0^t \int_{Y} \ip{F(s,\tu (s),y)}{\phi}{\rH}^2 \, \nun(\ud y) \ds, \;\; t\in [0,T], \, \phi \in\rU 
  \end{aligned}
$$
are martingales with respect to standard augmentation of the filtration generated by $\tu$.

In Section \ref{sec-existence-III}  we use these results along with the Martingale Representation Theorem for purely discontinuous martingales \ref{thm-martingale_rep}, due to Kallianpur and Xiong, see \cite[Theorem\ 3.4.7]{Kallianpur_Xiong}, in order to construct the desired martingale solutions. The analyses in Sections \ref{sec-existence-II} and \ref{sec-existence-III} are nontrivial and require long, tedious, and technical arguments. They are also the main difference between the current paper and previous work on global weak solutions of stochastic Navier-Stokes with L'evy noise. In the appendices, we state and prove several results, some of which are well-known and some less known, that are necessary for our analysis. In particular, we establish in Appendix \ref{App:Appendix-RepThm} a Martingale Representation Theorem for purely discontinuous martingales. This theorem is interesting in itself and will be useful in the proof of existence of martingale solutions of other problems.

\subsection*{Notation.}
Throughout this paper we set $\R_+=[0,\infty)$ and $\R_+^\ast=(0,\infty)$.  We denote by $\overline{\R}_+$ and by  $\R^\ast$ the sets  $[0,\infty]$ and $\mathbb{R} \setminus \{0\}$, respectively.  We also denote by $\mathbb{N}$ and $\overline{\mathbb{N}}$ the sets $\{0,1,2,3,\ldots\}$ and  $\mathbb{N} \cup \{+\infty\}$, respectively.
For a nonempty subset $C \subset [0,\infty]$ we set
\begin{equation*}
\label{eqn-C^2_+}
C^2_{+}\coloneqq\bigl\{ (s,t)\in C^2: s\leq t\bigr\}.
\end{equation*}

\ 
Let $X$ be a topological Hausdorff  space, 	 $T \in (0,\infty)$
 and $f\colon [0,T]\to X$  a  c\`adl\`ag function, that is $f$ is  right continuous and has left limit at each point $t\in [0,T]$. We put
\begin{align*}
f_{-}(t)=f(t-)&\coloneqq \begin{cases}
\lim_{s\toup t} f(s),  & \mbox{ if } t \in (0,T], \\
f(0) & \mbox{ if } t=0,
\end{cases}
\end{align*}
Moreover, if $X$ is a vector space, then we put
\begin{align}\label{eqn-f-jumps}
\Delta f(t) =f(t)-f(t-), \qquad t \in [0,T],
\end{align}
By $\mathbb{D}([0,T];X)$ we will denote the space of all c\`adl\`ag functions $f\colon [0,T]\to X$. In the special case of $X=\R$ we will use a simple notation
\[\mathbb{D}([0,T]) \coloneqq \mathbb{D}([0,T];\R).
\]
We also introduce the following useful notation. For $t \in [0,T]$ we put
\begin{equation}\label{eqn-D(0,T)-continuous points}
\mathbb{D}_{t}^{c}([0,T];X)\coloneqq\begin{cases}
\mathbb{D}([0,T]; X), &\mbox{ if } t \in \{0,T\},\\
\bigl\{ x \in \mathbb{D}([0,T];X):\; \Delta x(t)=0 \}  &\mbox{ if } t \in (0,T).
\end{cases}
\end{equation}

By \cite[Cor.\ I.1.25]{Jacod_Shiryaev}, if $\xi$ is a c\`adl\`ag adapted process with values in a separable Banach space $X$, then both processes
$\xi_{-}$ and $\Delta \xi$ are  optional.

By an \emph{evanescent} set we mean a random set $A\subset [0,T]\times \Omega$, whose projection  on $\Omega$, i.e.\ the set  $\bigl\{ \omega \in \Omega: \exists t \in [0,T]: (t,\omega) \in A \bigr\}$, is a $\mathbb{P}$-null set, see \cite[I.1.10]{Jacod_Shiryaev}.

To avoid any ambiguity, let us  emphasise that given a property $R$,
whenever  we write
\begin{equation}
    \label{eqn-R property-1}
    \mathbb{P}\mbox{-a.s. } R  \mbox{ holds for every  }t\in [0,T],
\end{equation}
or
\begin{equation}
    \label{eqn-R property-2}
\mathbb{P}\mbox{-a.s., for every  } t\in [0,T], \;  \mbox{ property  $R$ holds } 
\end{equation}
we mean that the set 
\begin{equation}
    \label{eqn-Omega_R}
\Omega_R\coloneqq\bigl\{ \omega \in \Omega: \mbox{  for every } t \in [0,T], \mbox{  property } R \mbox{  holds for } (t,\omega) \bigr\}
\end{equation}
is  $\mathbb{P}$-full,   i.e.  the random set 
\begin{equation}
    \label{eqn-A-evanescent}
A\coloneqq\bigl\{ (t,\omega) \in [0,T] \times \Omega :  \mbox{  property } R \mbox{  does not  hold for } (t,\omega) \bigr\}
\end{equation}
is an evanescent set.

On the other hand, when we write 
\begin{equation}
    \label{eqn-R property-3}
    \mbox{ for every  $t\in [0,T],$  $R$ holds  $\mathbb{P}$-a.s.} 
    \end{equation}
we mean that  for every  $t\in [0,T]$, the set 
\[
\bigl\{ \omega \in \Omega: \mbox{  property } R \mbox{  holds for } (t,\omega) \bigr\} \mbox{ is  a $\mathbb{P}$-full set.}\]

For two normed vector  spaces $X$ and $Y$, $\mathscr{L}(X,Y)$ denotes the space of bounded linear operators from $X$ to $Y$ and the operator norm is denoted by $\norm{\cdot}_{\mathscr{L}(X,Y)}$. When $Y=X$ we will simply write 
$\mathscr{L}(X)$ and  $\norm{\cdot}_{\mathscr{L}(X)}$.
By $X^{\prime }$ we will denote dual space of $X$, i.e.  $X^{\prime }=\mathscr{L}(X,\mathbb{R})$, 
and by $\dualtrzy{X}{\cdot}{\cdot}$ we will denote the duality between ${X}^{\prime }$ and $X$.

\section{Preliminaries}\label{sec-preliminaries}
In this section we introduce few definitions related to Poisson Random Measures and introduce technical terms and definitions related to the theory of Navier-Stokes equations.

\subsection{Poisson Random Measures}\label{seubsec-PRM}
As mentioned in the introduction of this paper we are interested in the analysis of the Navier-Stokes equation with L\'evy noise. Similarly to the papers by Brze\'zniak and Motyl \cite{Brzezniak+Motyl_2013} and by Motyl  \cite{Motyl_2013, Motyl_2014} we consider a mutliplicative noise described by means of  Poisson Random Measures. In this section we recall two concepts, namely that  of a Poisson Random Measure (PRM)  by closely following \cite{Ikeda+Watanabe_1989} and its extension to a PRM with respect to a given filtration.
To this aim, let $M_{\overline{\mathbb{N}}}(E)$ denote the set of all $\overline{\mathbb{N}}$-valued measures on a measurable space $(E,\mathscr{E})$. The space $M_{\overline{\mathbb{N}}}(E)$ has the natural $\sigma$-algebra $\mathscr{M}_{\overline{\mathbb{N}}}(E)$ generated by the following family os maps
\begin{equation*}\mbox{
$M_{\overline{\mathbb{N}}}(E) \ni \mu \mapsto \mu(A) \in \overline{\mathbb{N}}$
}, \;\; A \in \mathscr{E}.
\end{equation*}

We start by recalling the following definition of a Poisson random measure. Our expositure is based on   \cite[Def.\ 8.1]{Ikeda+Watanabe_1989}.

\begin{definition}
\label{def_PRM}
A measurable function  
\begin{equation}
    \eta\colon \Omega \to M_{\overline{\mathbb{N}}}(E)
    \end{equation}
    is a Poisson random measure if and only if 
\begin{enumerate}
\item for any $A\in \mathscr{E}$, the function
\begin{equation*}
\eta(A)\colon \Omega \ni \omega \mapsto \eta(\omega)(A) \in \overline{\mathbb{N}}
\end{equation*}
 is a Poisson random variable with parameter $\E \left[ \eta(A) \right]$ if $\E \left[ \eta(A) \right] < \infty$, and $\eta(A)=\infty $ $\mathbb{P}$-a.s.\ otherwise.
\item[(ii)] For every finite family $A_1,A_2,\ldots A_n \in \mathscr{E}$ of pairwise  disjoint sets, the random variables $\eta(A_1),\ldots, \eta(A_n)$ are independent.
\end{enumerate}
By the \emph{intensity measure} of a PRM $\eta$  we understand   the measure
\begin{equation}\label{eqn-intensity measure}
\mu\colon \mathscr{E} \ni A \mapsto   \E \left[ \eta(A) \right] \in \overline{\mathbb{N}}_+.
\end{equation}
\end{definition}

We will always assume that the filtered probability space
 $(\Omega , \fcal , \mathbb{P},\mathbb{F})$ satisfies the so-called \textit{usual hypothesis}, i.e.
 \begin{trivlist}
\item[(i)] the filtration $\mathbb{F}$ is right-continuous, that is 
 \[\mathscr{F}_t=\bigcap\limits_{u>t} \mathscr{F}_u,\qquad \text{for all } t\in[0,\infty)\,,\]
\item[(ii)]
and  every $\mathbb{P}$-null, i.e.\ $\mathbb{P}$-negligible,  element  $A \in \mathscr{F}$,  belongs to $\mathscr{F}_0$.
 \end{trivlist}

Next, similarly to the concepts of a Wiener or a L\'evy process with respect to a filtration, we introduce a notion  of a time-dependent Poisson random measure, see e.g.
\cite[Def.\ 2.3]{Brzezniak_Hausenblas_Razafimandiby}.

\begin{definition}
\label{def_PRM_filtration}
Let $(\rY,\ycal)$ be a measurable space and let 
\begin{equation}
    E = \R_+ \times Y\mbox{ and }\mathscr{E} = \mathscr{B}(\R_+) \otimes \ycal.
    \end{equation}
    Let $(\Omega , \fcal , \mathbb{P})$ be a probability space and $\mathbb{F}=(\mathscr{F}_t)_{t\geq 0}$ be a filtration on $(\Omega,\mathscr{F}, \mathbb{P})$ satisfying the usual conditions.
A measurable function
\begin{equation}\label{eqn-PRM-02}
\eta\colon \Omega \to M_{\overline{\mathbb{N}}}(E)
\end{equation}
is called a Poisson random measure over
$(\Omega , \fcal , \fmath , \mathbb{P})$  if and only  it satisfies the conditions  (i) and (ii) in Definition \ref{def_PRM} as well as  the following two  additional  conditions:
\begin{enumerate}
\item[(iii)] For all $A\in \mathscr{Y}$ and $t \geq 0$ the
function
\begin{equation*}
\eta((0,t] \times A)\colon \Omega \ni \omega \mapsto \eta(\omega)((0,t] \times A) \in \overline{\mathbb{N}}
\end{equation*}
 is $\mathscr{F}_t$-measurable.
\item[(iv)] For all $A\in \mathscr{Y}$ and $(s,t)\in [0,\infty)^2_+$ the random variable  $\eta((s,t] \times A)$ is independent of the $\sigma$-field $\mathscr{F}_s$.
\end{enumerate}

A Poisson random measure $\eta$  as in \eqref{eqn-PRM-02} will be called a time homogeneous Poisson random measure on  $(\R_+ \times \rY, \mathscr{B}(\R_+) \otimes \ycal )$ over $(\Omega , \fcal , \fmath , \mathbb{P})$ if and only if  its intensity measure $\mu$ is of the form $\Leb \otimes \nu$, where  $\Leb $ is the Lebesgue measure on 
the $\sigma$-field $ \mathscr{B}(\R_+)$ and $\nu$ is a  $\sigma$-finite measure on the $\sigma$-field $\ycal $.
\end{definition}

For more background information about the integration with respect to point processes and random measures, see the Ikeda-Watanabe monograph \cite[p.\ 61-63]{Ikeda+Watanabe_1989},
where random measures $\eta_p$ are associated to point processes $p$. We summarise that background in Appendix \ref{sec-Point processes}. Here we present integration with respect to Poisson random measures and define random measures of class (QL).

Given a Poisson point processes $p$ with its domain denoted by $\dom(p)$ and the associated Poisson random measure $\eta_p$, 
we define the following three classes of random functions, see \cite{Ikeda+Watanabe_1989}, 
\begin{equation}
    \label{eqn-F_p}
\begin{aligned}
\mathbf{F}_p&\coloneqq \Bigl\{ f\colon [0,\infty)\times Y \times \Omega \to \mathbb{R}: f \mbox{ is } \mathbb{F}-\mbox{predictable and for  every } t>0, \notag
\\
&\hspace{4truecm}
\int_0^{t} \int_Y \vert f(s,y,\omega)\vert \, \eta_p(\ud s, \ud y)< \infty \mbox{ a.s.}
\Bigr\},
\end{aligned}
\end{equation}
where we put 
\begin{equation}\label{eqn-IW-3.6-3.7}
\int_0^{t} \int_Y |f(s,y,\omega)|  \, \eta_p(\ud s, \ud y) \coloneqq \sum_{ s \in (0,t] \cap \dom(p)} \vert f(s,p(s),\omega) \vert,\;\; t \geq 0.
\end{equation}
For $k=1,2$, with $\mu$ being the intensity measure of $\eta_p$, we define
\begin{equation}
    \label{eqn-F_p^i}
\begin{aligned}
\mathbf{F}_p^k&\coloneqq \Bigl\{ f\colon [0,\infty)\times Y \times \Omega \to \mathbb{R}: f \mbox{ is } \mathbb{F}-\mbox{predictable}, \notag \\
&\hspace{4truecm}
\mathbb{E}\int_0^{t} \int_Y \vert f(s,y,\omega)\vert^k \, \mu(\ud s, \ud y) < \infty, \;\;  t>0
\Bigr\}.
\end{aligned}
\end{equation}

For $f \in \mathbf{F}_p$ the integral on the LHS of \eqref{eqn-H.5} is well-defined: 
\begin{equation}\label{eqn-H.5}
\int_0^{t} \int_Y f(s,y,\omega) \, \eta_p(\ud s, \ud y)
= \sum_{ s \in (0,t] \cap \dom(p)} f(s,p(s),\omega),\;\; t \geq 0.
\end{equation}

One can show that 
\begin{equation}\label{eqn-F_p^1 subset F_p-1}
 \mathbb{E} \int_0^{t} \int_Y |f(s,y,\omega)|  \, \eta_p(\ud s, \ud y)     =
 \mathbb{E} \int_0^{t} \int_Y |f(s,y,\omega)|  \, \mu(\ud s, \ud y) .
\end{equation}

In order to define the integral with respect to the compensated Poisson random measure defined by  
\begin{equation}\label{eqn-CPRM-02}
\tilde{\eta}_p:=\eta_p-\mu,
\end{equation}
we first define it for a random function $f \in \mathbf{F}_p^1 \cap \mathbf{F}_p^2$ by
\begin{equation}\label{eqn-H.6}
\begin{aligned}
\int_0^{t} \int_Y f(s,y,\omega) \, \tilde{\eta}_p(\ud s, \ud y)
&:= \int_0^{t} \int_Y f(s,y,\omega) \, \eta_p(\ud s, \ud y) \\
&- \int_0^{t} \int_Y f(s,y,\omega) \, \mu(\ud s, \ud y),\;\; t \geq 0.
\end{aligned}
\end{equation}
Let us point out that in view of \eqref{eqn-F_p^1 subset F_p-1}, $\mathbf{F}_p^1 \subset \mathbf{F}_p$ and so both  integrals on the RHS of \eqref{eqn-H.6} above exist $\mathbb{P}$- a.s.
The integral defined by \eqref{eqn-H.6} can be naturally extended to the whole class  $\mathbf{F}_p^2$, see \cite{Ikeda+Watanabe_1989}, section II.3 and the resulting integral $\int_0^{t} \int_Y f(s,y,\omega) \, \tilde{\eta}_p(\ud s, \ud y)$ is  a square-integrable martingale. In particular, 
\begin{equation}\label{eqn-Ito isometry}
    \mathbb{E} \bigg\vert \int_0^{t} \int_Y f(s,y,\omega) \, \tilde{\eta}_p(\ud s, \ud y) \bigg\vert^2=
    \mathbb{E} \int_0^{t}   \int_Y \vert f(s,y,\omega) \vert^2\, \mu(\ud s, \ud y) , \;\; t\geq 0.
\end{equation}

We will also use  a more general object, namely random measures of class (QL), see Definition \ref{def-QL calss random measure} for more details.

\begin{definition}\label{def-class-QL}
Assume that  $(\rY,\ycal)$ be a measurable space.  A random measure \[\eta\colon \Omega \to M_{\overline{\mathbb{N}}}(\R_+ \times Y)\] is of (QL) class if and only if
\begin{enumerate}
\item[(i)] it is $\sigma$-finite i.e. there exists a sequence  $(U_n)_{n=1}^\infty$ of elements of the family  $\Gamma_\eta$ defined by 
 \begin{equation}
\Gamma_\eta := \{ A \in \mathscr{E} : \E \,\abs{\eta([0,t] \times A)} < \infty \mbox{ for all } t \geq 0 \},
\end{equation}
such that   $U_n \nearrow E$,
\item[(ii)]
there exists a $\sigma$-finite random measure 
\[\widehat{\eta}: \Omega \times \bigl( \mathscr{B}(\R_+) \otimes \ycal\bigr) \to \R,
\]
such that $\tilde{\eta} = \eta - \widehat{\eta}$ is a martingale random measure,
\item[(iii)] for any $A \in \Gamma_\eta$  the process 
the process 
\[ \R_+ \times \Omega  \ni (t,\omega) \mapsto \widehat{\eta}(\omega;[0,t]\times A)
\]
is continuous.
\end{enumerate}
\end{definition}

\subsection{Useful Sobolev spaces on  \texorpdfstring{$\mathbb{R}^d$}{Rd}} 
\label{subsec-preliminaries}

In this subsection we introduce several definitions and concepts that are related to the mathematical analysis of the Navier-Stokes equations.
Our results, with some little extra work, can be shown to be valid in every   domain with sufficiently smooth boundary. However, we have decided to avoid unnecessary complications and present all results
in the special case of the Euclidean spaces   $\mathbb{R}^d$.

By the symbol ${H}^{1}(\R^d ,\mathbb{R}^d )=\bH^{1}(\R^d)$
we will denote   the standard Sobolev space of all functions $u \in \mathbb{L}^2(\R^d)\coloneqq{L}^{2}(\R^d ,\mathbb{R}^d)$  whose weak derivatives $D_iu=\frac{\partial u}{\partial {x}_{i}}$ exist and belong to $ \mathbb{L}^2(\R^d)$, $i=1,\ldots,d$.
For $u,v \in  \bH^{1}(\R^d)$, we put
\begin{equation*} \label{eqn-semi inner product in H^1}
\ip{\nabla u}{\nabla v}{\bL^2}
\coloneqq\sum_{i=1}^{d}\int_{\R^d}\frac{\partial u}{\partial {x}_{i}} \cdot \frac{\partial v}{\partial {x}_{i}} \dx
=\sum_{i,j=1}^{d}\int_{\R^d}\frac{\partial u_j}{\partial {x}_{i}} \frac{\partial v_j}{\partial {x}_{i}} \dx,
\end{equation*}
where the dot $\cdot$ in the middle integral denotes the scalar product in $\R^d$.
The space $\bH^{1}(\R^d)$ endowed with  the following scalar product
\begin{equation*} \label{eqn-inner product in H^1}
   \ip{u}{v}{\bH^1} \coloneqq \ip{u}{v}{\bL^2} +  \ip{\nabla u}{\nabla v}{\bL^2}, \qquad u,v \in
   \bH^{1}(\R^d),
\end{equation*}
is a Hilbert space. 

The fractional order Sobolev spaces  $\mathbb{H}^\theta(\R^d)\coloneqq{H}^{\theta}(\R^d , \mathbb{R}^d )$, for any $\theta\in \mathbb{R}$, are defined by means of the Fourier transform, see \cite{Rudin_functional,Lions+Magenes_1972,McLean_2000}.
Note that for $\theta=1$ the two definitions yield the same Hilbert space.

Next, we denote by  ${\ccal }^{\infty }_{c} (\R^d, \mathbb{R}^d )$  the space of all ${\ccal }^{\infty }$-class  $\mathbb{R}^d $-valued functions  with compact support. We will also use the following spaces
\begin{align}\label{eqn-Vcal}
 \vcal &\coloneqq \{ u \in {\ccal }^{\infty }_{c} (\R^d, \mathbb{R}^d ) : \, \, \diver u= 0 \}  ,  \\
  \label{eqn-H}
  \rH &\coloneqq \mbox{the closure of $\vcal $ in $\mathbb{L}^2(\R^d)$} ,  \\
  \label{eqn-V}
  \rV &\coloneqq \mbox{the closure of $\vcal $ in ${\bH}^{1,2}(\R^d)$} .
\end{align}
Before we continue let us formulate the following fundamental property of the space $\vcal$, a property that  is the basis for the Helmholtz decomposition, i.e. 
\begin{equation}\label{eqn-Helmoholtz}
    \int_{\R^d} u(x)\cdot \nabla p(x) \dx =0 \mbox{ for all } u \in  \vcal, \; p\in {\ccal }^{\infty }_{c} (\R^d, \mathbb{R}).
\end{equation}

We endow  the space $\rH$ with the scalar product, and hence the norm, inherited from the space $\mathbb{L}^2(\R^d)$. We
denote them  respectively by $\ip{\cdot }{\cdot }{\rH}$ and $\normb{\cdot}{\rH}{}$, i.e.
\begin{align*}
\ip{u}{v}{\rH}\coloneqq \ip{u}{v}{{\mathbb{L}}^{2}} , \qquad
\normb{u}{\rH}{}  \coloneqq \normb{u}{{\mathbb{L}}^{2}}{} , \qquad u, v \in \rH.
\end{align*}
One can show that the above spaces can be characterised in a different way, see \cite[Theorem 1.1/1.4 and 1.1/1.6]{Temam_2001}:
\begin{align*}
  \rH &= \{ u \in \mathbb{L}^2: \divergence u=0\},
\\
  \rV &= \{ u \in \mathbb{H}^1: \divergence u=0\}.
\end{align*}

We endow the space $V$ with  the scalar product
inherited from ${\bH}^{1}(\R^d)$, i.e.
\begin{align*}
  \ip{u}{v}{\rV} \coloneqq \ip{u}{v}{\rH} + \ip{\nabla u}{\nabla v}{\bL^2} ,
\end{align*}
We will denote the  norm  induced by the scalar product $\ip{\cdot }{\cdot }{\rV}$  by $\normb{\cdot}{\rV}{}$, i.e.
\begin{equation}\label{eqn-norm_V}
  \normb{u}{\rV}{2} \coloneqq  \normb{u}{\rH}{2} + \normb{\nabla u}{\bL^2}{2}.
\end{equation}

We also  define the following  scale of Hilbert spaces
\begin{equation}\label{eqn-V_s}
  {\rV}_{\theta} \coloneqq \mbox{the closure of $\vcal $ in $\mathbb{H}^\theta(\mathbb{R}^d)$},  \;\;\theta\in [0,\infty).
\end{equation}
In particular, $\rV_0=\rH$. 
Using the method of proof of   \cite[Theorem\ 1.1]{Temam_2001} one can show that
\begin{align*}
  \rV_\theta &=  \mathbb{H}^\theta (\mathbb{R}^d) \cap \rH, \;\; \;\;\theta\in [0,\infty).
\end{align*}

From now on we choose and fix a real number $s$ such that
\begin{align}\label{eqn-number s}
s > \frac{d}{2} +1.
  \end{align}
   It  follows from the definitions \eqref{eqn-V} and \eqref{eqn-V_s} of the space $\rV$ and   the space $\rV_s$, respectively,  that
$\rV_s \subset \rV \subset \rH $ and 
\begin{align}\label{eqn-V_s embed V}
\mbox{ the embedding } \rV_s &\embed \rV \mbox{ is continuous},
  \\
  \label{eqn-V_s embed H}
\mbox{ the embedding } \rV_s &\embed \rH \mbox{ is continuous}.
  \end{align}

  Now, let  ${\ccal }_{b}(\R^d, \mathbb{R}^d )$ be the space of continuous and bounded $\mathbb{R}^d $-valued functions.  Then, since $s>\frac{d}{2} +1$ by the Sobolev Embedding Theorem we have
\begin{align}\label{eqn-H^s-1 to L^infty}
    {\bH}^{s-1,2}(\R^d) \embed  {\ccal }_{b}(\R^d, \mathbb{R}^d )
   \embed {\bL}^{\infty } (\R^d ).
   \end{align}
Next, it is well known, see for instance   \cite[Lemma C.1 ]{Brzezniak+Motyl_2013},  that
there exists a Hilbert space\label{page-U space} $\rU$, with the inner product denoted by $\ip{\cdot}{\cdot}{\rU}$ such that
\begin{equation} \label{eqn-U-V_s densely}
\rU \subset  {\rV}_{s} \mbox{   densely}
\end{equation}
and 
\begin{equation} \label{eqn-U_comp_V_s}
 \mbox{the natural embedding }\iota _{s}\colon \rU \embed  {\rV}_{s} \mbox{ is compact}.
\end{equation}

We will denote by
 \begin{equation} \label{eqn-i-U to H}
\iota \colon \rU \embed \rV     \mbox{ and } i\colon \rU \embed \rH
 \end{equation}
two other  natural embeddings. Hence, in view of \eqref{eqn-V_s embed V} we deduce that
\begin{equation} \label{eqn-U embed V}
\rU \embed \rV \embed  \rH \mbox{ continuously }
\end{equation}
with the the first embedding  being   compact.
Moreover, without loss of generality, we can suppose that  the embedding 
\begin{equation} \label{eqn-U to H is contraction}
\rU \embed   \rH \mbox{ is a contraction}.
\end{equation}
By \cite[Section\ 2.1]{Lions+Magenes_1972}, there exists an unbounded positive operator $L$ in $\rH$, with domain $D(L) \subset \rH$,  such that
\begin{equation}\label{eqn-DL^12}
D(L^{1/2})= \rU \mbox{ isometrically}.
\end{equation}
In particular, the following identities hold
\begin{align} \label{eqn-op_L_ip}
    \ip{Lu}{w}{\rH} &= \ip{u}{w}{\rU}, \qquad u \in D(L), \quad w \in \rU,
\\
\label{norm_in_U_and_H_by_L^1/2}
|u|_{\rU} &= |L^{1/2}u|_{\rH}, \qquad u \in \rU.
\end{align}

Let us observe that  for each element $u \in H $ there exists a unique functional ${u}^\ast \in \rU^{\prime }$ such that \begin{equation}\label{eqn-U'-H}
    \duality{{u}^{\ast }}{v}\coloneqq \ip{u}{v}{\rH} , \qquad v \in\rU.
\end{equation}
As usual, we identify the dual ${\rH}^\prime$ with ${\rH}$ and
in what follows we will identify $u\in H$ with $u^\ast\in \rU^\prime$ and therefore we  have ${\rH} \embed\rU^{\prime }$ as well as the following equality
\begin{equation}\label{eqn-U'-H_2}
    \duality{u}{v}= \ip{u}{v}{\rH} , \qquad u \in H, \;\;v \in\rU.
\end{equation}
Moreover, the dual space ${\rV}^\prime$ can be identified with a subspace of ${\rU}^\prime$ and hence we have the following generalized Gelfand triple
\begin{equation}\label{eqn-embeddings}
\rU \embed {\rV}_{s} \embed V \embed H \cong H^{\prime}
\embed  V^{\prime }  \embed {\rV}_{s}^{\prime} \embed U^{\prime}
\end{equation}
as well as  the following identity linking the duality paring $\ddual{\rV^\prime }{\cdot}{\cdot}{\rV}{}$ between spaces $\rV^\prime$ and $\rV$ with the
duality paring $\duality{\cdot}{\cdot}$ between spaces $\rU^\prime$ and $\rU$,
\begin{equation}\label{eqn-V'-U'}
    \ddual{\rV^\prime }{u}{v}{\rV}{}=\duality{u}{v}, \qquad u \in \rV, \;\;v \in\rU.
\end{equation}

In view of \eqref{eqn-V_s embed H}, we infer that 
the natural embedding $\rU \embed  \rH$ is compact and thus, by duality we deduce we that 
\begin{equation} \label{eqn-h-to-U'-compact}
 \mbox{the  embedding } \rH \embed  \rU^\prime \mbox{ is compact}.
\end{equation}

\begin{remark}\label{rem-h-to-U'-compact}
    It is the last property of the space $\rU$, as well as \eqref{eqn-comp_VR_HR},  that we need  in our paper, see the proof of Lemma \ref{lem-comp_Galerkin}.
\end{remark}

Finally, let us also observe that since  the embedding $i:\rU \embed  \rH$ is a contraction,  so is the map $i^\prime: {\rH} \embed\rU^{\prime }$.

We conclude this subsection with the introduction of two new spaces which will be needed in defining  solutions
to the stochastic NSEs.
\begin{definition}
    Let $B_R$ be the open ball in $\R^d$ centered at $0$ with radius $R>0$. We define the following spaces
\begin{equation}  \label{eqn-HR_VR}
\HR \coloneqq \{ u \in \rH: \supp u \subset B_R  \}
 \qquad \VR \coloneqq V \cap   \HR,
\end{equation}
with the inner products and norms inherited respectively from the spaces $\rH$ and $\rV$.
\end{definition}

Since obviously $\HR \subset \rH$ and $\VR \subset \rV$ are closed subspaces and 
\begin{align*}
  \HR &
  \subset \bigl\{ u \in \mathbb{L}^2(B_R): \divergence u =0  \bigr\}, \;\;\;
  \VR 
  \subset\bigl\{ u \in \mathbb{H}^1(B_R): \divergence u =0  \bigr\}.
\end{align*}
Hence, since the sets $B_R$ are bounded, for every $R>0$, by the Rellich-Kondratchov Theorem we infer that
\begin{equation} \label{eqn-comp_VR_HR}
\mbox{ the embedding $\VR \embed  \HR$ is compact. \rm }
\end{equation}

The symbols  $\HR^{\prime }$ and $\VR^{\prime }$ will stand for the corresponding dual spaces.

\subsection{The weak Stokes operator}
\label{subsec-Stokes}

In this subsection, we will introduce the (weak) Stokes operator $\acal$  and  the bilinear operator $B$.

Since by the  definition  \eqref{eqn-norm_V} of the norm in $\rV$, 
\begin{equation}
\label{estimate_by_norm_in_V}
  |\ip{\nabla u}{\nabla v}{\bL^2}| \le \normb{\nabla u}{\bL^2}{} \normb{\nabla v}{\bL^2}{}
   \leq
 \normb{\nabla u}{\bL^2}{} \normb{v}{\rV}{}, \quad u,v \in \rV,
\end{equation}
 by the Riesz Lemma    there exists  a unique linear bounded   operator  \[\acal: \rV \to \rV^\prime\] such that
\begin{equation}  \label{eqn-Acal_ilsk_Dir}
   \ddual{\rV^\prime }{\acal u}{v}{\rV}{} = \ip{\nabla u}{\nabla v}{\bL^2} , \qquad u,v \in \rV.
\end{equation}
One can also prove  that the operator $\acal$, called here for obvious reasons the weak Stokes operator,  is accretive.

\subsection{The non-linear map}
\label{subsec-non-linear map}

Next, let us consider the tri-linear form $b \colon V \times V \times V \to \R$ defined by
\begin{equation}\label{eqn-form_b}
b(u,w,v)
\coloneqq \int_{\R^d} \bigl( u \cdot \nabla w \bigr) \cdot v \dx
= \sum_{i,j=1}^d \int_{\R^d} u_i \frac{\partial w_j}{\partial x_i} v_j \dx.
\end{equation}
Because $\rV \subset \mathbb{L}^6$ by the Sobolev Embedding Theorem, see \cite{Adams},  the Lebesgue integral on the right-hand side of \eqref{eqn-form_b} exists and thus, by   the H\"{o}lder  and the Gagliardo-Ladyzhenskaya-Nirenberg inequalities, see  \cite[Lemma \ III.3.3 and III.3.5]{Temam_2001} we deduce  that there exists  a  generic positive constant $c$  such that the  following estimates hold
\begin{equation}
     \label{eqn-b_estimate_V}
\begin{aligned}
    |b(u,w,v )|
 \leq  \normb{u}{\bL^4}{} \normb{\nabla v}{\bL^2}{}\normb{v}{\bL^4}{}
&\leq 2 \normb{u}{\bL^2}{\frac14} \normb{\nabla u}{\bL^2}{\frac34}  \normb{\nabla w}{\bL^2}{}  \normb{v}{\bL^2}{\frac14} \normb{\nabla v}{\bL^2}{\frac34}
\\ &\leq  c \normb{u}{\rV}{} \normb{w}{\rV}{} \normb{v}{\rV}{} , \qquad u,w,v \in \rV  .
\end{aligned}
\end{equation}

In particular, we infer that  the  form $b$ is continuous on $\rV$.

Let us also recall the following fundamental properties of the form $b$, see \cite[Lemma \ II.1.3]{Temam_2001}, 
\begin{equation*}  \label{eqn-antisymmetry_b}
b(u,w, v ) =  - b(u,v ,w), \qquad u,w,v \in \rV
\end{equation*}
and
\begin{equation*}  \label{eqn-wirowosc_b}
b(u,v,v) =0   \qquad u,v \in \rV.
\end{equation*}
We also observe that  by inequality \eqref{eqn-b_estimate_V}  there exists a unique bounded, i.e. continuous,   bilinear map
\[
\rB\colon\rV\times \rV \to \rV^\prime\]
such that
\[    \ddual{\rV^\prime }{B(u,w)}{v}{\rV}{}\coloneqq b(u,w, v ), \mbox{ for all } u,w, v\in \rV.
\]
 With a slight abuse of notation, we will also denote  by $\rB$   the corresponding quadratic map
\begin{equation*}  \label{eqn-B quadratic}
\rB\colon \rV \ni u \mapsto \rB(u,u)  \in  \rV^{\prime } \in \mathbb{R}.
\end{equation*}

By \eqref{eqn-H^s-1 to L^infty},  there exists a  constant $c >0 $ such that for all $u,w \in V$, $v\in V_s$, 
\begin{align}\label{eqn-b-inequality}
 |b(u,w,v)|  =  |b(u,v,w)|
   &\leq   \normb{u}{H}{} \normb{w}{H}{} \normb{\nabla v}{{L}^{\infty }}{}
 \le  {c}_{} \normb{u}{H}{} \normb{w}{H}{} \normb{v}{{\rV}_{s}}{},
 \end{align}
and,  for all $u,v \in V$, $w\in V_s$, 
\begin{equation}\label{eqn-b-inequality-2}
    |b(u,w,v)|
   \leq    {c}_{} \normb{u}{H}{} \normb{w}{{\rV}_{s}}{} \normb{v}{H}{}.
\end{equation}
   
 Hence, the trilinear form  $b$ can be uniquely extended to a tri-linear forms, denoted by the same letter,
\begin{align*}
   b \colon \rH \times \rH \times {\rV}_{s} \to \mathbb{R} &\mbox{ and }
   b \colon \rH   \times {\rV}_{s} \times \rH \to \mathbb{R}
\end{align*}
satisfying \eqref{eqn-b-inequality}  and \eqref{eqn-b-inequality-2} and  the operator $\rB$ can be uniquely extended
to  bounded bilinear operators
\begin{align}\label{eqn-B-2}
    \rB \colon \rH \times \rH \to {\rV}_{s}^{\prime },
   &\mbox{ and }
\rB \colon \rH \times {\rV}_{s} \to  \rH ,
    \end{align}
i.e. satisfying  the following estimate
\begin{align} \label{eqn-estimate_B_ext}
 |\rB(u,w) {|}_{{\rV}_{s}^{\prime }} &\le c {|u|}_{\rH}  {|w|}_{\rH} ,\qquad u,w \in \rH,
 \\
 |\rB(u,w) {|}_{\rH} &\le c {|u|}_{\rH}  {|w|}_{{\rV}_{s}} ,\qquad u \in \rH, w \in {\rV}_{s}.
\end{align}

We conclude this subsection in recalling properties of the form $b$ with respect to the spaces
$\HR$ introduced at the end of the previous subsection.

Analogously to \eqref{eqn-b-inequality} we  can show that there exists $c>0$ such that

\begin{equation}\label{eqn-b_R}
|b(u,w,v)| \le  {c}_{} \normb{u}{\HR}{} \normb{w}{\HR}{} \normb{v}{{\rV}_{s}}{}, \mbox{ for } u,w \in {\rH} \mbox{ and } v \in {\rV}_{s}: \supp(v) \subset B_R.
\end{equation}
Note that the above inequality follows from  inequality \eqref{eqn-b-inequality}. It is essential in the identification of the limit.

\subsection{Approximation of  maps}
\label{subsec-approcximation of maps}
Let us recall, see  Section \ref{subsec-preliminaries},  that $L$ is a positive self-adjoint unbounded operator  in $\rH$ such that   ${L}^{-1}$ is compact and
 $ D(L^{1/2})=\rU$ isometrically, so that  in particular 
 \begin{align}\label{eqn-U emebed V}
 \rU \subset  \rV \mbox{ continuously and densely, }
 \end{align}
 and the  identity \eqref{eqn-op_L_ip} holds.
  Therefore, there exists
an  orthonormal basis  $\{ {e}_{i} {\} }_{i =1}^{\infty  }$  of ${\rH}$ consisting  of eigenvectors of the operator $L$. The corresponding eigenvalues will be denoted by $\bigl(\lambda_i\bigr)_{i=1}^\infty$.
 Notice that ${e}_{i} \in\rU $, $i \in \mathbb{N} $, because $D(L) \subset D(L^{\frac12}) =\rU$.
For any  $n \in \mathbb{N} $  let
\[\rU_n\coloneqq\Span \{ {e}_{1},..., {e}_{n} \}
\]
be the subspace of $\rU$ spanned by the vectors ${e}_{1},...,{e}_{n}$.
We endow the space $\rU_n$ with the norm from $\rU$. Let ${\rH}_{n}$  be the space $U_{n}$ endowed with the  norm inherited from ${\rH}$. Obviously,  ${\rH}_{n}$ is a subspace of $\rU$, and hence of $\rV$.

Let ${P}_{n} $ be an   operator  defined by
\begin{equation} \label{eqn-P_n in U'}
  {P}_{n} : \rU^{\prime } \ni  {u}^\ast \mapsto  \sum_{i=1}^{n}  \ddual{\rU^{\prime }}{{u}^{\ast }}{{e}_{i}}{\rU} {e}_{i} \in \rU.
\end{equation}
Let us observe that the range of ${P}_{n}$ is  equal to $\rU_n$ and,
in view of \eqref{eqn-U'-H_2}, that
\begin{equation} \label{eqn-P_n in H}
  {P}_{n} u = \sum_{i=1}^{n} \ip{ u}{ {e}_{i}}{\rH} {e}_{i}, \qquad u \in \rH .
\end{equation}
 Therefore   the  restriction of the operator ${P}_{n} $ to the space ${\rH}$,
   is equal to the  ${\rH}$-orthogonal projection from ${\rH}$
onto ${\rH}_n$.  For the sake of simplicity, this restriction of ${P}_{n} $  will also be denoted by ${P}_{n} $. In particular, all the operators $(P_n)_{n=1}^\infty$ have unit norm, i.e. 
\begin{equation}\label{eqn-P_n contraction on H}
\Vert {P}_n\Vert_{\mathscr{L}(\rH)} = 1, \qquad n \in \mathbb{N}.
\end{equation}
The  restriction  of the operator ${P}_{n} $ to the space ${\rU}$ will be denoted by
\[\tilde{P}_n\coloneqq P_n\big\vert_\rU\colon  \, U\to U.  \]

Let us present a list of additional properties of the operators $P_n$.

\begin{lemma}\label{lem_P_n} Assume that
$n \in \mathbb{N}$. Then
\begin{trivlist}
\item[(i)] The map $\tilde{P}_n$ is an   ${\rU}$-orthogonal projection in  ${\rU}$ (with range equal to ${\rU}_n$); in particular $\Vert\tilde{P}_n\Vert_{\mathscr{L}(U)}=1$.
\item[(ii)] For all $ u,v  \in \rU$ and $ {u}^\ast \in\rU^{\prime }$
\begin{align}  
  \duality{P_n u^\ast }{ v} &= \ip{{P}_{n} {u}^\ast}{v}{\rH} =\duality{u^\ast }{\tilde{P}_n v} ,
\notag
\end{align}
\item[(iii)]  $\Vert {P}_n\Vert_{\mathscr{L}(\rU^\prime)}=1$.
\end{trivlist}
Moreover,
\begin{trivlist}
\item[(iv)] For all
 $ u  \in \rU$ and $ {u}^\ast \in\rU^{\prime }$,
 \begin{align}
 &\lim_{n \to \infty }  \left\vert{P}_{n} u^\ast -u^\ast \right\vert_{\rU^\prime}=0,
\notag  \\
  \label{eqn-P_n_convergence}
 &\lim_{n \to \infty }  \left\vert\tilde{P}_{n} u -u\right\vert_\rU =0.
\end{align}
\end{trivlist}
\end{lemma}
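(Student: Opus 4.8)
The plan is to exploit that the orthonormal basis $\{e_i\}$ of $\rH$ is simultaneously \emph{orthogonal} in $\rU$, which converts every claim into bookkeeping with Fourier coefficients. First I would record the key spectral identity. Since the norm of $\rU$ satisfies $|u|_\rU=|L^{1/2}u|_\rH$ and, by polarisation, $\ip{u}{v}{\rU}=\ip{L^{1/2}u}{L^{1/2}v}{\rH}$ for $u,v\in\rU$, using $L^{1/2}e_i=\sqrt{\lambda_i}\,e_i$ together with the self-adjointness of $L^{1/2}$ yields
\[
\ip{u}{e_i}{\rU}=\lambda_i\,\ip{u}{e_i}{\rH},\qquad u\in\rU,
\]
and in particular $\ip{e_i}{e_j}{\rU}=\lambda_i\delta_{ij}$. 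Hence $\{\lambda_i^{-1/2}e_i\}_{i\ge 1}$ is an orthonormal system in $\rU$; its completeness follows from the identity $\bigl|u-\sum_{i\le n}\ip{u}{e_i}{\rH}e_i\bigr|_\rU^2=\sum_{i>n}\lambda_i\,\ip{u}{e_i}{\rH}^2$, whose right-hand side is the tail of the convergent series $\sum_i\lambda_i\,\ip{u}{e_i}{\rH}^2=|L^{1/2}u|_\rH^2=|u|_\rU^2$.

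Then (i) is immediate: substituting $\ip{u}{e_i}{\rH}=\lambda_i^{-1}\ip{u}{e_i}{\rU}$ into the formula \eqref{eqn-P_n in H} for $P_n$ on $\rH$ gives, for $u\in\rU$,
\[
\tilde P_n u=\sum_{i=1}^{n}\frac{\ip{u}{e_i}{\rU}}{|e_i|_\rU^2}\,e_i,
\]
which is exactly the $\rU$-orthogonal projection onto $\rU_n$, an operator of norm one. For (ii), I would note that $P_n u^\ast\in\rU_n\subset\rH$, so the identification \eqref{eqn-U'-H_2} gives $\duality{P_n u^\ast}{v}=\ip{P_n u^\ast}{v}{\rH}$; expanding $P_n u^\ast$ via \eqref{eqn-P_n in U'} and $\tilde P_n v$ via \eqref{eqn-P_n in H} and comparing the two resulting finite sums, which agree term by term because $\ip{e_i}{v}{\rH}=\ip{v}{e_i}{\rH}$, yields the remaining equality $\ip{P_n u^\ast}{v}{\rH}=\duality{u^\ast}{\tilde P_n v}$.

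Part (iii) follows by duality: from (ii), $|P_n u^\ast|_{\rU'}=\sup_{|v|_\rU\le 1}\bigl|\duality{u^\ast}{\tilde P_n v}\bigr|\le|u^\ast|_{\rU'}$ since $|\tilde P_n v|_\rU\le|v|_\rU$ by (i); for the reverse estimate it is enough that $P_n$ is a nonzero idempotent on $\rU'$ (a direct check from \eqref{eqn-P_n in U'} and \eqref{eqn-U'-H_2} gives $P_n^2=P_n$ and $P_n|_{\rU_n}=\id$), whence $\|P_n\|_{\mathscr{L}(\rU')}\ge 1$. Finally, in (iv) the convergence $\tilde P_n u\to u$ in $\rU$ is precisely the Parseval tail estimate from the first paragraph. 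For the convergence in $\rU'$ I would combine (iii) with the idempotency: a norm-one idempotent on the Hilbert space $\rU'$ is an orthogonal projection, here onto $\rU_n$; since $\bigcup_n\rU_n$ contains all the $e_i$ and is therefore dense in $\rH$, which in turn is dense in $\rU'$ (dualise the dense embedding $\rU\embed\rH$, cf.\ \eqref{eqn-U-V_s densely} and \eqref{eqn-h-to-U'-compact}), the increasing ranges $\rU_n$ exhaust $\rU'$, so the associated orthogonal projections converge strongly to the identity.

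The only genuinely delicate points, where I would spend the most care, are the two completeness/density statements: that $\{\lambda_i^{-1/2}e_i\}$ is a basis of $\rU$ (needed for the $\rU$-limit) and that $\bigcup_n\rU_n$ is dense in $\rU'$ (needed for the $\rU'$-limit). Both reduce to the spectral representation of $L^{1/2}$ together with the density of $\rU$ in $\rH$; once these are in place, everything else is elementary manipulation of the identity $\ip{u}{e_i}{\rU}=\lambda_i\,\ip{u}{e_i}{\rH}$ and the idempotency of $P_n$.
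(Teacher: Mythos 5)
Your proof is correct, and for parts (i)--(iii) it follows essentially the same path as the paper: the identity $\ip{u}{e_i}{\rU}=\lambda_i\,\ip{u}{e_i}{\rH}$ (the paper obtains it directly from \eqref{eqn-op_L_ip}, you via the self-adjointness of $L^{1/2}$; both are fine), the recognition of $\tilde P_n$ as the $\rU$-orthogonal projection onto $\rU_n$, the duality identity in (ii) from \eqref{eqn-U'-H}, and the norm bound on $\rU^\prime$ obtained by dualising $\tilde P_n$. The genuine divergence is in (iv). For the $\rU$-limit the paper simply cites \cite[Lemmas 2.3 and 2.4]{Brzezniak+Motyl_2013}, whereas you supply the direct Parseval tail estimate $|u-\tilde P_n u|_\rU^2=\sum_{i>n}\lambda_i\,\ip{u}{e_i}{\rH}^2\to 0$, which makes the argument self-contained. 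For the $\rU^\prime$-limit the paper runs a three-epsilon argument: approximate $u^\ast\in\rU^\prime$ by $u_k\in\rU$ using density, then combine $\Vert P_n\Vert_{\mathscr{L}(\rU^\prime)}=1$ with the already-established convergence $\tilde P_n u_k\to u_k$ in $\rU$ and the continuous embedding $\rU\embed\rU^\prime$. You instead note that each $P_n$ is a norm-one idempotent on the Hilbert space $\rU^\prime$, hence an orthogonal projection with range $\rU_n$, and that these ranges increase to a dense subspace of $\rU^\prime$ (since $\bigcup_n\rU_n$ is dense in $\rH$ and $\rH$ is dense in $\rU^\prime$), so the projections converge strongly to the identity. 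Both routes are valid; yours trades the paper's reduction to the $\rU$-limit for two classical facts about contractive idempotents and monotone families of orthogonal projections. A small remark: your route can be shortened, since once you know that $P_n$ fixes $\rU_m$ for $n\ge m$ and that $\Vert P_n\Vert_{\mathscr{L}(\rU^\prime)}\le 1$, the density of $\bigcup_m\rU_m$ in $\rU^\prime$ already yields the strong convergence without invoking the fact that a contractive idempotent is an orthogonal projection.
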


\begin{proof}
Part (i).  Put $\tilde{e}_i\coloneqq\frac{ {e}_{i}}{\sqrt{\lambda_i}}=L^{-\frac12}{e}_{i}$, $i \in \mathbb{N}$. Observe that since $\rU=D(L^{\frac12})$ isometrically the sequence $(\tilde{e}_i)$
is an  orthonormal basis of ${\rU}$, cf.\ \eqref{norm_in_U_and_H_by_L^1/2}.
Thus, by using \eqref{eqn-op_L_ip} we have for  u $\in \rU$
\begin{equation*}
 \ip{ u}{ {e}_{i}}{\rH} {e}_{i}   = \Ip{ u}{ \lambda_i \frac{ {e}_{i}}{\sqrt{\lambda_i}}   }{\rH} \frac{ {e}_{i}}{\sqrt{\lambda_i}}
  = \Ip{ u}{ L \frac{ {e}_{i}}{\sqrt{\lambda_i}}  }{\rH} \frac{ {e}_{i}}{\sqrt{\lambda_i}}   = \Ip{ u}{ \frac{ {e}_{i}}{\sqrt{\lambda_i}}  }{\rU} \frac{ {e}_{i}}{\sqrt{\lambda_i}},
\end{equation*}
which along with  \eqref{eqn-P_n in H} imply
\begin{equation*} \label{eqn-P_n in U}
  \tilde{P}_{n} u = \sum_{i=1}^{n} \ip{ u}{ \tilde{e}_i }{\rU} \tilde{e}_i, \qquad u \in \rU .
\end{equation*}
This last equation easily implies that $\tilde{P}_n$ is  an   ${\rU}$-orthogonal projection in  ${\rU}$  and that
$ \sup_{n}\Vert \tilde{P}_n\Vert_{\mathscr{L}(\rU)}\leq 1$. Thus, the proof of  part (i) is complete.

Part (ii).  This equality follows from identity \eqref{eqn-U'-H}.

Part (iii).  According to part (i), $\tilde{P}_n$ is an orthogonal projection in $\rU$ and hence its operator norm is equal to $1$. Then the dual map $\tilde{P}_n^\prime\colon  \rU^\prime \to \rU^\prime$ also has norm $1$ in $\mathscr{L}(U')$. Therefore, it suffices to prove that
the map $P_n$ considered as a map from $\rU^\prime$ to $\rU^\prime$, denoted for the time being by $Q_n$,  is equal to $\tilde{P}_n^\prime$. That is, we need to show that
\begin{align*}
\duality{Q_n\phi}{x}=\duality{\phi}{\tilde{P}_nx}, \;\;\mbox{ for all }\phi \in \rU^\prime,\, x \in \rU.
\end{align*}
For this purpose we first observe that
\begin{equation}\label{eqn-Q_n}
Q_n=i^\prime \circ \mathrm{R} \circ i \circ P_n,
\end{equation}
where $P_n\colon \rU^\prime \to \rU$ is defined by \eqref{eqn-P_n in U'}, $i\colon \rU \embed \rH$ is the natural embedding introduced in \eqref{eqn-i-U to H}, $\mathrm{R}\colon \rH\to \rH^\prime$ is the isometric isomorphism derived from the Riesz Lemma and $i^\prime\colon \rH^\prime \to \rU^\prime$ is the dual map of $i$.

Next, let us choose and fix $\phi \in \rU^\prime$ and $ x \in \rU$. Then  we have the following chain of equalities
\begin{align*}
\duality{\phi}{\tilde{P}_nx}
&=\lb{\phi},\sum_{j=1}^{n} \ip{ x}{ \tilde{e}_j }{\rU} \tilde{e}_j \rbb \\
&=\lb{\phi},\sum_{j=1}^{n} \ip{ x}{ {e}_j }{\rH} {e}_j \rbb
=\sum_{j=1}^{n} \ip{ x}{ {e}_j }{\rH} \;\duality{\phi}{ {e}_j} \\
&= \sum_{j=1}^{n} \duality{\phi}{ {e}_j}\; \ip{ i{e}_j }{ ix}{\rH}
= \bigg( i\Big( \sum_{j=1}^{n} \duality{\phi}{ {e}_j} {e}_j \Big) , ix \bigg)_{\rH}
\\ & =
\ip{ i(P_n \phi)}{ix}{\rH}=\ddual{\rH^\prime}{\mathrm{R}i(P_n \phi)}{ix}{\rH}\\
& =\duality{i^\prime \mathrm{R}i(P_n \phi)}{x}
=\duality{Q_n \phi}{x}.
\end{align*}
The last line completes the proof of the part (iii).

Part (iv). The second convergence   was proved in  \cite[Lemma\ 2.3 and 2.4]{Brzezniak+Motyl_2013}. In order to prove the first one
let us choose and fix $u^\ast\in U^\prime$. By the density of the embedding $\rU \subset U^\prime$ there exists $(u_k)_{k\in \mathbb{N}}\subset \rU$ such that $u_k \to u^\ast$ in $U^\prime$. From part (iii) we see that
\begin{align*}
 \lvert P_n u^\ast - u^\ast\rvert_{U^\prime} &\leq \lVert P_n\rVert_{\mathcal{L}(U^\prime)} \lvert u^\ast- u_k \rvert_{U^\prime} + \lvert P_n u_k  -  u_k \rvert_{\rU^\prime} + \lvert u_k -u^\ast\rvert_{U^\prime}\\
 &\le  2 \lvert u_k -u^\ast\rvert_{U^\prime} + C\lvert P_nu_k -u_k \rvert_{\rU}.
\end{align*}
From assertion \eqref{eqn-P_n_convergence} we infer that $\lim_{n \to \infty }  \left\vert {P}_{n} u^\ast -u^\ast \right\vert_{\rU^\prime}=0$. This completes the proof of part (iv) and the lemma.
\end{proof}

\begin{remark}\label{rem-P_n}
\begin{itemize}
\item[(i)] 
Although by Lemma \ref{lem_P_n} the operators $(P_n)_{n=1}^\infty$ are contractions and hence uniformly bounded in  $\rU$, $\rH$  and  $\rU^\prime$,
we do not know whether these operators  are uniformly bounded in the spaces  $\rV$or $\rV_s$. In particular, we do not know whether the following  are true:
\begin{align*}
\lim_{n \to \infty }  \normb{{P}_{n} u -u }{\rV}{} =0, \;\; u\in \rV,
\\
\lim_{n \to \infty }  \normb{{P}_{n} u -u }{\rV_s}{} =0, \;\; u\in \rV_s.
\end{align*}
Of course, since both $\rV \cup \rV_s \subset \rU^\prime$, the expressions of the LHSs above make sense.\\
\item[(ii)]  On the other hand, it follows from part (iv) of Lemma \ref{lem_P_n}  and \eqref{eqn-U embed V} that  $ u  \in \rU$,  
$\tilde{P}_{n} u \to u$ in $\rV$.
\end{itemize}
\end{remark}

Now, let us choose and fix   a sequence  $({\theta }_{n })_{n=1}^\infty$ of ${\ccal }^{\infty }$-class functions ${\theta }_{n } \colon  \mathbb{R}_+ \to [0,1]$,    such that
\begin{equation}\label{eqn-theta_n}
\1_{[0,n]} \leq {\theta }_{n} \leq  \1_{[0,n+1]},\;\; n \in \mathbb{N}.
\end{equation}

Next, for every $n\in \mathbb{N}$ we define the following nonlinear functions
\begin{align}\label{eqn-B_n}
\rB_n \colon  {\rH}_{n} \times {\rH}_{n} \ni (u,v)& \mapsto {\theta }_{n}(|u {|}_{\rU^\prime})  {P}_{n} \bigl( \rB(
u,v)\bigr) \in  {\rH}_{n}
\end{align}
where  ${P}_{n}\colon \rH \to \rH_n$. In view of \eqref{eqn-B-2} and since $\rH_n \subset \rV_s$, the map $B_n$ is of $\ccal^\infty$-class. Note that contrary to the map $\rB$, the function  $\rB_n$ is no longer bilinear.

With an intentional abuse of notation we will denote by $\rB_n$ the map
\begin{equation}\label{eqn-B_n_quadratic}
B_n\colon {\rH}_n \ni u \mapsto {\rB}_{n} (u,u) \in {\rH}_n.
\end{equation}
Obviously, this map is also of $\ccal^\infty$-class and Lipschitz on balls.

We also define the following linear maps
\begin{align}
\label{eqn-A_n}
\acal_n\colon {\rH}_n \ni u   &\mapsto  ({P}_{n} \circ \iota^\prime)   \acal u  \in {\rH}_n,
\end{align}
where  now ${P}_{n}\colon U^\prime \to H_n$,
 $\iota^\prime: \rV^\prime \embed \rU^\prime$ is the dual of the map
 $\iota: \rU \embed \rV$ introduced in \eqref{eqn-i-U to H}.
Hence,  $\acal_{n}$ is   well defined and  the map $\acal_n\colon {\rH}_n \to {\rH}_n$ is linear and bounded.

Let us also note the following useful identities involving the map operators ${\rB}_{n}$ and $\acal_{n}$ that will be used later in the paper. If $u,v,w \in H_{n}$, then

\begin{align}
\label{eqn-B_n_b}
\ip{\rB_n(u,v)}{w}{H_n}
&=\ip{\rB_n(u,v)}{w}{\rH}
=\ddual{\rV^\prime }{\rB_n(u,v)}{w}{\rV}{}=\duality{\rB_n(u,v)}{w}
\\
&={\theta }_{n}(|u {|}_{\rU^\prime}) \duality{P_n \rB(
u,v)}{w}={\theta }_{n}(|u {|}_{\rU^\prime}) \duality{ \rB(
u,v)}{\tilde{P}_n w}
\nonumber\\
&={\theta }_{n}(|u {|}_{\rU^\prime}) \duality{ \rB( 
u,v)}{ w}
={\theta }_{n}(|u {|}_{\rU^\prime})b(u,v,w).
\nonumber
\end{align}

Similarly, we have  the following useful identities involving the linear maps  $\acal_{n}$.  If $u,w \in H_{n}$, then

\begin{align}
\nonumber
    \ip{\acal_n u}{w}{H_n}&=\ip{\acal_n u}{w}{\rH}= \ddual{\rV^\prime }{\acal_n u}{w}{\rV}{}=\duality{ \acal_n u}{w}{}=\duality{ P_n \circ i^\prime \acal u}{w}{}
\\
= &\duality{  i^\prime(\acal u)}{\tilde{P}_n w}{}=\duality{  \acal u}{w}{} =\ip{\nabla u}{\nabla w}{\bL^2}.
\label{eqn-A_n_duality}
\end{align}

\section{Statement of the problem} \label{sec-statement}

With the notation introduced in the previous section we can rewrite  Problem \eqref{eqn-reduced_Navier_Stokes} into the following abstract stochastic evolution equations
\begin{equation}
\begin{aligned}
\ud u(t) & +\bigl( \acal u(t) + \rB \bigl( u(t) \bigr) \bigr) \dt
\\&= f(t) \dt 
+ \int_{Y} F(t,u(t),y) \, \tilde{\eta } (\ud t,\ud y), \;\; t \in [0,T]  \\
u(0) &= {u}_{0} .
\end{aligned}
\label{eqn-SNSEs}
\end{equation}
Before we give a precise definition of a solution to equation \eqref{eqn-SNSEs} let us emphasize 
that Equation \eqref{eqn-SNSEs} is derived from \eqref{eqn-reduced_Navier_Stokes} by applying the Leray-Helmholtz projection $P$ to the latter. Since 
the gradient vector fields belong to the kernel of $P$, the  term  $\nabla p$ disappears. Let us present an alternative, but based on the same principles, 
explanation how  one can deduce equation \eqref{eqn-SNSEs} from \eqref{eqn-reduced_Navier_Stokes}. Multiplying the latter equation  by an arbitrary test function $\phi \in \mathscr{V}$ and employing the identities \eqref{eqn-Acal_ilsk_Dir}, \eqref{eqn-form_b}  and  \eqref{eqn-Helmoholtz}, we arrive at the following It\^o integral equation 
\begin{equation}
\label{eqn-SNSEs-weak}
\begin{aligned}
\duality{u(t)}{\phi} & + \int_0^t  \duality{\acal u(s)}{\phi}\ds    + b( u(s), u(s), \phi) \ds
\duality{u_0}{\phi}
\\& + \int_0^t  \duality{f(s)}{\phi} \ds 
+ \int_{Y} \duality{F(s,u(s),y)}{\phi} \, \tilde{\eta } (\ud s,\ud y), \;\; t \geq 0. 
\end{aligned}
\end{equation}

The above is the weak form of equation \eqref{eqn-SNSEs}.

We are going now to present the space in which the solution will live and a list of assumptions.

\subsection{Assumptions} \label{subsec-assumptions}

The following spaces have been introduced in \cite{Brzezniak+Motyl_2013}, \cite{Motyl_2013}. They are
similar  to some spaces  considered in \cite{Mikulevicius+Rozovskii_2005}.

\begin{itemize}
\item ${L}_{w}^{2}(0,T;V)$ is the space ${L}^{2}(0,T;V)$ endowed with the weak topology which will be denoted by $\tcal_1$.
\item ${L}^{2}(0,T;{\rH}_{\loc})$ is the space of (equivalence classes of) Borel measurable functions $u\colon[0,T] \to H$ such that for every $R \in \mathbb{N}$,
\begin{align*}
{q}_{T,R}^{}(u)
&\coloneqq\normb{u}{{L}^{2}(0,T;\HR)}{}
\coloneqq \biggl( \int_{0}^{T}\int_{B_R}|u(t,x){|}^{2}\dx\dt {\biggr) }^{\frac{1}{2}} \\
&= \biggl( \int_{0}^{T} \normb{u(t)}{{L}^{2}(B_R)}{2} \dt {\biggr) }^{\frac{1}{2}} <\infty,
\end{align*}
where  the spaces  $\HR$ have been defined in  \eqref{eqn-HR_VR}.
The topology generated by the seminorm $({q}_{T,R}{)}_{R\in \mathbb{N}}$ will be denoted by $\tcal_2 $. Let us note ${L}^{2}(0,T;{\rH}_{\loc})$ is not complete, and hence neither  a  Fr\'{e}chet, space but  ${L}^{2}(0,T;{\rH}_{\loc}) \cap {L}_{w}^{2}(0,T;H)$ is a   Fr\'{e}chet space.
\item $\mathbb{D} ([0,T];\rU^{\prime })$ is the space of c\`adl\`ag functions $u\colon[0,T] \to\rU^{\prime }$ endowed with the Skorohod topology which will be denoted by $\tcal_3$.
\item $\mathbb{D} ([0,T];{\rH}_{w})$ is the space of c\`adl\`ag functions $u\colon[0,T] \to {\rH}_{w}$, where the space ${\rH}_{w}$ is equal to the space ${\rH}$ but endowed with its weak topology. The Skorokhod topology on $\mathbb{D} ([0,T];{\rH}_{w})$ is denoted by $\tcal_4$.
\end{itemize}
Let us note, as it can be easily proved,  that ${L}^{2}(0,T;{\rH}_{\loc})$ is a separable Fr{\'e}chet space, see \cite[Def.\ 1.8]{Rudin_functional}.

We now introduce
the following topological vector space
\begin{equation} \label{eqn-Z_cadlag_NS}
 \zcal_T  : = {L}_{w}^{2}(0,T;V) \cap {L}^{2}(0,T;{\rH}_{\loc}) \cap \mathbb{D} ([0,T];U^\prime)
\cap \mathbb{D} ([0,T];{\rH}_{w})
\end{equation}
whose topology $\tcal$ is the topology generated by the four topologies $\tcal_i$, $i=1,2,3,4$.

\begin{assum-P}
\label{item-P_Y_is_standard} $(Y, \ycal )$ is a standard measurable space, that is, it is Borel-isomorphic to $\N$ or $\{0,1\}^\N$,  see \cite[Definition\ I.3.3]{Ikeda+Watanabe_1989},  and $\nu$ is a $\sigma$-finite measure on $(Y, \ycal )$.
\end{assum-P}
\begin{remark}\label{rem-Blackwell space}
We believe that Assumption P.\ref{item-P_Y_is_standard} can be relaxed by assuming that $(Y, \ycal )$ is a Blackwell space, see \cite[Section\ II.1]{Jacod_Shiryaev} or \cite[Section\ 6.10(iv)]{Bogachev_II} for the definition and properties.
\end{remark}

\begin{assum-F}\label{assum-F1}
The map $F\colon [0,T]\times \rU^\prime  \times Y \to \rU^\prime $
is  a measurable function.
\end{assum-F}

\begin{assum-F}\label{assum-F2}
The   restriction
$F\colon [0,T]\times H \times Y \to H $ of the function $F$
is a well defined and   a measurable function.
There exist  constants $L>0$ and  $C_2>0$ such that
\begin{align}
    \int_{Y} \normb{F(t,{u}_{1},y)- F(t,{u}_{2},y)}{\rH}{2} \, \nu (\ud y) &\leq  L \normb{u_{1}-{u}_{2}}{\rH}{2}
   , \; {u}_{1}, \! {u}_{2} \in H, \;t \in [0,T],  \label{eqn-F_Lipschitz_cond}
\\
\label{eqn-F_linear_growth}
    \int_{Y} \normb{F(t,u,y) }{\rH}{2} \, \nu (\ud y) &\leq  {C}_{2} (1 + \normb{u }{\rH}{2}), \;  u \in H , \; t \in [0,T],
\end{align}
\end{assum-F}

\begin{assum-F}\label{assum-F3}
Moreover, for every $\phi \in \vcal $ the following Nemytski map ${\tilde{F}}_{\phi}$ associated to $F$ and  defined by
\begin{equation*} \label{eqn-F**}
\begin{aligned}
{\tilde{F}}_{\phi} \colon    \zcal_T    \ni u &\mapsto \bigl\{  [0,T] \times Y \ni (t,y) \mapsto  \ip{F(t,u({t}),y)}{\phi}{\rH} \bigr\}
  \\ &\in {L}^{2}([0,T]\times Y, \Leb\otimes \nu ; \mathbb{R} )
     \end{aligned}
\end{equation*}
is continuous.
\end{assum-F}

\begin{assum-F}\label{assum-F4}
The following Nemytski map $\textbf{F}$ associated to map $F$  defined by
\begin{align*}
\textbf{F}\colon  \zcal_T
\ni u \mapsto  \bigl\{  [0,T] \times Y \ni (t,y) & \mapsto      F(t,u({t}),y) \in\rU^\prime \bigr\} \\
&\in {L}^{2}([0,T]\times Y,\Leb\otimes \nu, U^\prime )
      \quad
      \nonumber
\end{align*}
is continuous.
\end{assum-F}

\begin{assum-F}\label{assum-F5}
There exist  a number  $\gamma >0$  and   a constant ${C}_{\gamma}>0$ such that
\begin{align}
\label{eqn-F_linear_growth-gamma}
    \int_{Y} \normb{F(t,u,y) }{\rH}{8+2\gamma} \, \nu (\ud y) &\leq  {C}_{\gamma} (1 + \normb{u }{\rH}{8+2\gamma}), \;  u \in H , \; t \in [0,T],
\end{align}
\end{assum-F}

\begin{assum-A}
\label{assum-A.1} ${u}_{0} \in {\rH}$, 
$f \in {L}^{4+\gamma }([0,T];\rV^\prime)$,  
where $\gamma  $ is the number from Assumption F\ref{assum-F5}. 
\end{assum-A}

\begin{remark} If $Y$ is a separable Banach space and $\nu$ is a L\'evy measure on $Y$, then   $\nu(\{0\})=0$, and therefore
\[\int_{Y} \1_{\{ 0\}} (F(t,x,y))  \, \nu (\ud y) =0  \mbox{ for all }x \in H  \mbox{ and }t \in [0,T].\]
Hence, in this case, the corresponding part of Assumption F.\ref{assum-F2} in \cite{Motyl_2013} is satisfied. \\
  Let us observe that by the H{\"o}lder inequality,  conditions \eqref{eqn-F_linear_growth}  and  \eqref{eqn-F_linear_growth-gamma} imply that
for every $p \in [2,8+2\gamma]$ there exists a constant ${C}_{p}$ such that
\begin{align}
\label{eqn-F_linear_growth-p}
    \int_{Y} \normb{F(t,u,y) }{\rH}{p} \, \nu (\ud y) &\leq  {C}_{p} (1 + \normb{u }{\rH}{p}), \;  u \in H , \; t \in [0,T].
\end{align}
To see this, note that for any $p\in (2, 8+2\gamma)$ there exist  $C_p>0$ and $\theta\in (0,1)$ such that $1/p= (1-\theta)/2+ \theta/(8+2\gamma)$ and
\begin{equation}
    \begin{aligned}
     \int_{Y} \normb{F(t,u,y) }{\rH}{p} \, \nu (\ud y) &\leq  C_p  \left( \int_{Y} \normb{F(t,u,y) }{\rH}{2} \, \nu (\ud y) \right)^{\frac{p(1-\theta)}{2}} 
     \\
    & \left( \int_{Y} \normb{F(t,u,y) }{\rH}{8+2\gamma} \, \nu (\ud y) \right)^{\frac{p\theta}{8+2\gamma}}.
\end{aligned}
\end{equation}
By applying \eqref{eqn-F_linear_growth}, \eqref{eqn-F_linear_growth-gamma} and the following classical inequality \begin{equation*}  a_1^r+a_2^r \le (  a_1+ a_2)^r, a_1, a_2\ge 0, r\ge 1, \end{equation*}  we infer  that
\begin{align*}
      \int_{Y} \normb{F(t,u,y) }{\rH}{p} \, \nu (\ud y) &\leq  C_p (1+ \lvert u\rvert^2_\rH)^{\frac{p(1-\theta)}{2}}  (1+ \lvert u\rvert^{8+2\gamma}_\rH)^{\frac{p\theta}{8+2\gamma}} \\
      & \le C_p (1+ \lvert u\rvert_\rH)^{p(1-\theta)}   (1+ \lvert u\rvert_\rH)^{p\theta} 
       \le C_p (1+ \lvert u \rvert^p_{\rH}).
\end{align*}
\end{remark}
We now state an  important remark  on the Assumptions F.\ref{assum-F3} and F.\ref{assum-F4}.
\begin{remark}
Assumptions F.\ref{assum-F3} and F.\ref{assum-F4} look very similar. However, let us point out that
\begin{trivlist}
  \item[(i)]  Assumption F.\ref{assum-F3} is the continuity of some map with values in $H_w$;
  \item[(ii)] Assumption F.\ref{assum-F4} is the continuity of some map with values in $U^\prime$;
\item[(iii)]  $H_w$ may not be continuously embedded into $U^\prime$ and therefore Assumption  F.\ref{assum-F3} does not  in general imply F.\ref{assum-F4}.
\end{trivlist}
\end{remark}

Before we continue  let us formulate a result that will be useful in examples of maps  satisfying Assumptions F.\ref{assum-F1}-F.\ref{assum-F5}.

\begin{lemma}\label{lem-new auxiliary} If $u_n \to u$ in $\mathbb{D} ([0,T];\rU^\prime) $ then
$\int_0^T  \vert u_n(t)-u(t) \vert_{\rU^\prime}^2 \dt \to 0$.
\end{lemma}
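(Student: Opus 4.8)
The plan is to exploit the explicit characterization of convergence in the Skorokhod space $\mathbb{D}([0,T];\rU^\prime)$ and then upgrade pointwise convergence to $L^2$ convergence by a bounded convergence argument. First I would recall that, since $\rU^\prime$ is a normed (indeed Hilbert) space, convergence $u_n \to u$ in the Skorokhod topology means there exist increasing homeomorphisms $\lambda_n\colon [0,T]\to[0,T]$ with $\lambda_n(0)=0$ and $\lambda_n(T)=T$ such that $\sup_{t\in[0,T]}|\lambda_n(t)-t|\to 0$ and $\sup_{t\in[0,T]}|u_n(\lambda_n(t))-u(t)|_{\rU^\prime}\to 0$. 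From the second condition one immediately reads off uniform boundedness: since a c\`adl\`ag function on a compact interval is bounded, and $\lambda_n$ is a bijection of $[0,T]$, there is a constant $M$ with $\sup_n\sup_{t}|u_n(t)|_{\rU^\prime}\le M$ and $\sup_{t}|u(t)|_{\rU^\prime}\le M$.

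The second step is to establish $u_n(t)\to u(t)$ in $\rU^\prime$ at every continuity point $t$ of $u$. Fix such a $t$ and set $s_n:=\lambda_n^{-1}(t)$. Because $\sup_t|\lambda_n(t)-t|\to 0$ forces $\sup_t|\lambda_n^{-1}(t)-t|\to 0$, we have $s_n\to t$, whence $u(s_n)\to u(t)$ by continuity of $u$ at $t$. Writing $u_n(t)=u_n(\lambda_n(s_n))$ and using the triangle inequality,
\[
|u_n(t)-u(t)|_{\rU^\prime}\le \sup_{\tau\in[0,T]}|u_n(\lambda_n(\tau))-u(\tau)|_{\rU^\prime}+|u(s_n)-u(t)|_{\rU^\prime},
\]
and both terms on the right tend to $0$.

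Since $u$ is c\`adl\`ag it has at most countably many discontinuities, so its continuity points form a set of full Lebesgue measure in $[0,T]$; therefore $|u_n(t)-u(t)|_{\rU^\prime}^2\to 0$ for a.e.\ $t\in[0,T]$. Combined with the uniform bound we obtain the constant majorant $|u_n(t)-u(t)|_{\rU^\prime}^2\le (2M)^2$, valid for all $n$ and all $t$, which is integrable on the finite interval $[0,T]$. The bounded convergence theorem then gives $\int_0^T|u_n(t)-u(t)|_{\rU^\prime}^2\dt\to 0$, as claimed.

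The only delicate point I anticipate is the a.e.\ pointwise convergence. One cannot hope to control $|u_n(t)-u(t)|_{\rU^\prime}$ at a jump time of $u$, which is precisely why the argument is restricted to the full-measure set of continuity points; and one must justify that the time changes invert with $\lambda_n^{-1}\to\mathrm{id}$ uniformly before using continuity of $u$ along $s_n\to t$. The uniform boundedness and the concluding bounded convergence step are routine.
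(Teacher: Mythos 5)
Your proof is correct and follows essentially the same route as the paper: almost-everywhere pointwise convergence of $u_n(t)$ to $u(t)$ (valid at the co-countable set of continuity points of $u$), a uniform sup-norm bound, and the dominated (bounded) convergence theorem. The only difference is that you derive the two auxiliary facts directly from the time-change characterization of Skorokhod convergence, whereas the paper asserts the a.e.\ convergence and cites an external lemma for the uniform bound $\sup_n\sup_{t\in[0,T]}\lvert u_n(t)\rvert_{\rU^\prime}<\infty$.
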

\begin{proof}
Note that if $u_n \to u$ in $\mathbb{D} ([0,T];\rU^\prime) $, then $u_n(t) \to u(t)$ in $\rU^\prime$ for  Leb-a.a. $t \in [0,T]$. Observe also that by \cite[Lemma\ 2]{Brzezniak+Hornung+Manna}
the assumption implies that
\begin{equation*}
	\sup_{n \in \mathbb{N}} \sup_{\tinOT} \lvert u_n (t)\rvert_{\rU^\prime}^2<\infty.
\end{equation*}
Hence, by the Lebesgue Dominated Convergence Theorem we infer that
\begin{equation*}\int_0^T  \vert u_n(t)-u_n(t) \vert_{\rU^\prime}^2 \dt \to 0.\end{equation*}
\end{proof}

\begin{example}\label{example-F is linear}
A particularly important example of the noise coefficient $F$ is when it is linear with respect to $u$. We can then write
\begin{equation}
\label{eqn-F is linear}
F(t,u,y)=G(t,y)u, \mbox{ for appropriate } (t,u,y).
\end{equation}
In this framework, our Assumptions take the following simpler form.
\begin{trivlist}
\item[(G1-2)]For every
$(t,y) \in [0,T] \times Y$,
$G(t,y)$ is a bounded linear map from $U^\prime$ to $U^\prime$ and from $H$ to $H$. Moreover,  the associated maps, denoted by the same symbols,
\[
 G \colon [0,T]\times  Y \to  \mathscr{L}(U^\prime) \cap  \mathscr{L}(H)
\]
are strongly  measurable, i.e. for all $u^\prime \in U^\prime$ and  $u \in H$, the maps
\[
G(\cdot,\cdot)u^\prime\colon [0,T]\times  Y \ni (t,y) \mapsto  G(t,y)u^\prime \in U^\prime
\]
and
\[
G(\cdot,\cdot)u \colon [0,T]\times  Y \ni (t,y) \mapsto  G(t,y)u \in H
\]
are measurable.
\item[(G3)]
There exists  a constant $C_3>0$ such that
\begin{align}
\label{eqn-G_linear_growth}
    \int_{Y} \normb{G(t,y)u }{\rH}{2} \, \nu (\ud y) &\leq  {C}_{3}  \normb{u }{\rH}{2}, \;  u \in H , \; t \in [0,T].
\end{align}
\item[(G4)] For every  $\phi \in \vcal $ the associated Nemytski map ${\tilde{G}}_{\phi}$ defined by
\begin{equation*} \label{eqn-G**}
\begin{aligned}
{\tilde{G}}_{\phi}& \colon    \zcal_T   \ni u \mapsto \bigl\{  [0,T] \times Y \ni (t,y) \mapsto  \ip{G(t,y)u(t)}{\phi}{\rH} \bigr\}
  \\ &\hspace{2truecm} \in {L}^{2}([0,T]\times Y, \Leb\otimes \nu ; \mathbb{R} )
     \end{aligned}
\end{equation*}
is continuous.
\item[(G5)] The linear map
\begin{align*}
\textbf{G}\colon  \zcal_T
\ni u \mapsto  \bigl\{  [0,T] \times Y \ni (t,y) & \mapsto      G(t,y)u(t) \in\rU^\prime \bigr\} \\
&\in {L}^{2}([0,T]\times Y,\Leb\otimes \nu, U^\prime )
      \quad
      \nonumber
\end{align*}
is continuous.
\item[(G6)] There exist  a number  $\gamma >0$  and   a constant ${C}_{\gamma}>0$ such that
\begin{align}
\label{eqn-G_linear_growth-gamma}
    \int_{Y} \normb{G(t,y)u }{\rH}{8+2\gamma} \, \nu (\ud y) &\leq  {C}_{\gamma}  \normb{u }{\rH}{8+2\gamma}, \;  u \in H , \; t \in [0,T].
\end{align}
\end{trivlist}
\begin{remark}\label{rem-G4-G5 follow from G1-2?}A natural question arises whether Assumptions (G4) and (G5) follow from (G1-2).
The simplest example of function $G$ is the following one
\[
G(t,y)= \mathbf{G}, \mbox{ for all } (t,y) \in [0,T] \times Y,
\]
where $\mathbf{G}\in \mathscr{L}(U^\prime) \cap  \mathscr{L}(H)$.\\
If additionally   $\mathbf{G}$ maps $\vcal$ into itself, then Assumption (G4) follows from Assumptions (G1-2). Indeed, we have
\[
\ip{G(t,y)u(t)}{\phi}{\rH} =\ip{u(t)}{G(t,y)^\ast\phi}{\rH}, \mbox{ for all } (t,y) \in [0,T] \times Y,\;\; \phi \in  \vcal.
\]
\end{remark}

We also have the following result which implies that condition \eqref{eqn-G_linear_growth-U'} is sufficient for assumption (\textbf{G}.5) to hold.
\begin{lemma}\label{lem-G-aux}
Assume that the map $G$ satisfies assumptions (G1-2) and the following modification of (G3):
\begin{trivlist}
\item[(G3)']
There exists  a constant $C_3^\prime>0$ such that
\begin{align}
\label{eqn-G_linear_growth-U'}
    \int_{Y} \normb{G(t,y)u }{\rU^\prime}{2} \, \nu (\ud y) &\leq  {C}_{3}^\prime  \normb{u }{\rU^\prime}{2}, \;  u \in \rU^\prime , \; t \in [0,T].
\end{align}
\end{trivlist}
Then
\begin{trivlist}
\item[(G5)'] the linear map
\begin{align*}
\textbf{G}\colon  \zcal_T
\ni u \mapsto  \bigl\{  [0,T] \times Y \ni (t,y) & \mapsto      G(t,y)u(t) \in\rU^\prime \bigr\} \\
&\in {L}^{2}([0,T]\times Y,\Leb\otimes \nu, U^\prime )
      \quad
      \nonumber
\end{align*}
is continuous with respect to the $ \mathbb{D} ([0,T];U^\prime)$ topology.
\end{trivlist}
\end{lemma}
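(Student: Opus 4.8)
The plan is to reduce everything to the linearity of $\mathbf{G}$, the pointwise-in-$t$ bound supplied by assumption (G3)', and the auxiliary Lemma~\ref{lem-new auxiliary}. Since the Skorokhod space $\mathbb{D}([0,T];\rU^\prime)$ is metrizable and the target $L^2([0,T]\times Y,\Leb\otimes\nu,\rU^\prime)$ is a Banach space, it suffices to establish sequential continuity: whenever $u_n\to u$ in $\mathbb{D}([0,T];\rU^\prime)$ we must show $\mathbf{G}(u_n)\to\mathbf{G}(u)$ in $L^2$.

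First I would check that $\mathbf{G}$ indeed takes values in $L^2([0,T]\times Y,\Leb\otimes\nu,\rU^\prime)$. For $u\in\zcal_T$ the path is c\`adl\`ag with values in $\rU^\prime$, hence bounded, $\sup_{\tinOT}\normb{u(t)}{\rU^\prime}{}<\infty$. Integrating the estimate (G3)' in $t$ gives
\begin{align*}
\int_0^T\int_{Y}\normb{G(t,y)u(t)}{\rU^\prime}{2}\,\nu(\ud y)\dt
&\le C_3^\prime\int_0^T\normb{u(t)}{\rU^\prime}{2}\dt \\
&\le C_3^\prime\,T\,\sup_{\tinOT}\normb{u(t)}{\rU^\prime}{2}<\infty,
\end{align*}
so $\mathbf{G}(u)$ is well defined once its measurability is secured.

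The continuity statement is then immediate. By linearity of $\mathbf{G}$ and a second application of (G3)',
\begin{align*}
\norm{\mathbf{G}(u_n)-\mathbf{G}(u)}_{L^2([0,T]\times Y,\Leb\otimes\nu,\rU^\prime)}^2
&=\int_0^T\int_{Y}\normb{G(t,y)\bigl(u_n(t)-u(t)\bigr)}{\rU^\prime}{2}\,\nu(\ud y)\dt \\
&\le C_3^\prime\int_0^T\normb{u_n(t)-u(t)}{\rU^\prime}{2}\dt.
\end{align*}
By Lemma~\ref{lem-new auxiliary}, convergence $u_n\to u$ in $\mathbb{D}([0,T];\rU^\prime)$ forces $\int_0^T\normb{u_n(t)-u(t)}{\rU^\prime}{2}\dt\to 0$, whence the right-hand side tends to $0$. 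Since continuity with respect to the coarser $\tcal_3$-topology a fortiori yields continuity with respect to the full topology $\tcal$ of $\zcal_T$, assumption (G5) follows as well.

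The only genuinely delicate point, which I would treat carefully rather than dismiss, is the joint measurability of $(t,y)\mapsto G(t,y)u(t)$ needed for $\mathbf{G}(u)$ to be a bona fide element of $L^2$. Assumption (G1-2) grants measurability of $(t,y)\mapsto G(t,y)u^\prime$ only for each \emph{fixed} $u^\prime\in\rU^\prime$. To upgrade this I would use that a c\`adl\`ag path is regulated, hence a uniform limit of $\rU^\prime$-valued step functions $u^{(k)}$; for each step function $(t,y)\mapsto G(t,y)u^{(k)}(t)$ is measurable as a finite combination of the fixed-vector maps, and the pointwise bound $\normb{G(t,y)\bigl(u^{(k)}(t)-u(t)\bigr)}{\rU^\prime}{}\le \norm{G(t,y)}_{\mathscr{L}(\rU^\prime)}\,\sup_{\tinOT}\normb{u^{(k)}(t)-u(t)}{\rU^\prime}{}$ shows $G(t,y)u^{(k)}(t)\to G(t,y)u(t)$ pointwise, so the limit map is measurable. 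With measurability in hand, the displayed estimates complete the argument.
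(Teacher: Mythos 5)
Your proof is correct and follows essentially the same route as the paper: exploit the linearity of $G(t,y)$, apply the bound (G3)' pointwise in $t$, and invoke Lemma \ref{lem-new auxiliary} to send $\int_0^T \vert u_n(t)-u(t)\vert_{\rU^\prime}^2\,\dt$ to zero. The additional remarks on well-definedness and joint measurability of $(t,y)\mapsto G(t,y)u(t)$ are sensible details the paper leaves implicit, but they do not change the argument.
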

\begin{proof}[Proof of Lemma \ref{lem-G-aux}]
Let us assume that  $(u_n)_{n\in \mathbb{N}}$ is an $\zcal_T$-valued sequence  such that $u_n \to u$ in the topology of $\mathbb{D} ([0,T];U^\prime)$.
 Then, we have the following chain of  inequalities 
\begin{align*}
&\int_0^T  \int_{Y} \vert G(t,y)u_n(t)- G(t,y)u(t) \vert_{\rU^\prime}^2 \,  \nu(\ud y)\dt 
\\&=
\int_0^T  \int_{Y} \vert G(t,y) \left( u_n(t)-u(t) \right) \vert_{\rU^\prime}^2 \,  \nu(\ud y)\dt 
 \leq {C}_{3}^\prime \int_0^T   \vert u_n(t)-u(t) \vert_{\rU^\prime}^2 \dt
\end{align*}
Thus, by Lemma \ref{lem-new auxiliary}   we conclude the proof of the Lemma.
    \end{proof}
\end{example}

Before we proceed further let us give another example of a map $F$ satisfying Assumptions F.\ref{assum-F1}-F.\ref{assum-F5}.
This example is related to Example \ref{example-F is linear}.

\begin{example}\label{exa:F_satisfying_assumptions}
Consider a one dimensional L\'evy process $L$ and the  corresponding PRM $\eta$. Then, we have $Y=\mathbb{R}$ which is a standard measurable space. Furthermore, we assume that the L\'evy measure $\nu$ of $\eta$ satisfies $\nu(\{0\})=0$ and that there exists  a number  $\gamma >0$  such that
for every $p \in \{ 2,8+2\gamma \} $
\begin{equation*} \int_{\mathbb{R}\setminus(-1,1)} \vert y\vert ^p \, \nu(\ud y) <\infty. \end{equation*}
Let $F$ be the  bilinear multiplicative map defined by
\[
F(t,u,y)\coloneqq uy, \;\; (t,u,y)\in [0,T]\times\rU^\prime \times \mathbb{R}.
\]
It is clear that $F$ satisfies Assumption F.\ref{assum-F1}. Since $\nu(\{0\})=0$ and $\int_{\mathbb{R}} \vert y\vert ^p\, \nu(\ud y) <\infty$ it is not difficult to show that $F$ satisfies the Assumptions F.\ref{assum-F2}-F.\ref{assum-F3} and Assumption F.\ref{assum-F5}.
What only remains to do is to verify whether  $F$ satisfies Assumption F.\ref{assum-F4}.

For this aim, let $(u_n)_{n\in \mathbb{N}}\subset \zcal_T$ such that $u_n \to u$ in the topology of $\mathbb{D} ([0,T];U^\prime) $.
 Then, we have the following chain of  inequalities
\begin{align*}
\int_0^T  \int_{Y} \vert F(t,u_n(t),y)- F(t,u(t),y) \vert_{\rU^\prime}^2 \,  \nu(\ud y)\dt  &=
\int_0^T  \int_{\mathbb{R}} \vert u_n(t)y-u(t)y \vert_{\rU^\prime}^2 \,  \nu(\ud y)\dt \\
=\int_0^T  \int_{\mathbb{R}} \vert u_n(t)-u(t) \vert_{\rU^\prime}^2 y^2 \, \nu(\ud y)\dt
&= \int_{\mathbb{R}} y^2 \, \nu(\ud y)  \int_0^T  \vert u_n(t)-u(t) \vert_{\rU^\prime}^2 \dt.
\end{align*}

Lemma \ref{lem-new auxiliary}  implies that
\begin{equation*}\int_0^T  \int_{Y} \vert F(t,u_n(t),y)- F(t,u(t),y) \vert_{\rU^\prime}^2 \,  \nu(\ud y)\dt \to 0,\end{equation*} as $n\to \infty$
because $\int_{\mathbb{R}} y^2 \, \nu(\ud y) <\infty$ and $\int_0^T  \vert u_n(t)-u_n(t) \vert_{\rU^\prime}^2 \dt \to 0$ since
$u_n \to u$ in $\mathbb{D} ([0,T];U^\prime) $. 
\end{example}

\subsection{Definitions and the main result} \label{subsec-definitions}

We now continue with the concept of solutions of the Problem \eqref{eqn-SNSEs} and the statement of the main result of this paper.
\begin{definition}  \rm  \label{def-solution martingale}
 \bf A martingale solution \rm of  equation (\ref{eqn-SNSEs})
is a system
\begin{equation*}\bigl( \Omega, \fcal,  \fmath ,\mathbb{P},u , \eta\bigr),\end{equation*}
where
\begin{itemize}
\item[$\bullet $]  $\bigl( \Omega, \fcal,  \fmath, \mathbb{P}  \bigr) $ is a complete filtered probability space with a filtration $\fmath = \left( \fcal_{t} \right)_{t \ge 0} $, satisfying the usual assumption,
\item[$\bullet $]  $\eta $  is a time homogeneous Poisson random measure on $(\R_+ \times \rY, \mathscr{B}(\R_+) \otimes \ycal )$ over
$(\Omega , \fcal , \fmath , \mathbb{P})$ with $\sigma$-finite  intensity measure $\Leb \otimes \nu$ and the corresponding compensated Poisson random measure $\tilde{\eta}$.

\item[$\bullet $] $u\colon  [0,T] \times \Omega \to {\rH}$ is a predictable process with $\mathbb{P}$-a.s.\ paths
\begin{align*}
  u(\cdot , \omega ) \in \mathbb{D} \bigl( [0,T]; {\rH}_{w} \bigr)
   \cap {L}^{2}(0,T;V )
\end{align*}
such that for all $ t \in [0,T] $  the following identity holds $\mathbb{P}$-a.s. for all $v \in V $
\begin{equation*}
    \begin{aligned}
\ip{u(t)}{v}{\rH} & + \int_{0}^{t} \ddual{\rV^\prime }{\acal u(s)}{v}{\rV}{}\ds
+ \int_{0}^{t} \ddual{\rV^\prime }{\rB(u(s))}{v}{\rV}{}\ds
   =\ip{{u}_{0}}{v}{\rH} 
   \\
   &+ \int_{0}^{t} \ddual{\rV^\prime }{f(s)}{v}{\rV}{}\ds
 + \int_{0}^{t} \int_{Y} \ip{F(s,u(s-),y)}{v}{\rH} \,  \tilde{\eta} (\ud s,\ud y).
\end{aligned}
\end{equation*}
\end{itemize}
\end{definition}
\begin{remark}
According to our proof the $\mathbb{P}$-full set does not depend on $v\in V$.
\end{remark}
Note that the spaces $U$ and $U^\prime$ are purely auxiliary and do no appear in the definition above.

\begin{theorem} \label{thm-main-existence}
 There exists a martingale solution of the problem (\ref{eqn-SNSEs}) provided assumptions A.\ref{assum-A.1}, P.\ref{item-P_Y_is_standard}  and  F.\ref{assum-F1}-F.\ref{assum-F5}
 are satisfied. \label{assum-F5-used-1}
\end{theorem}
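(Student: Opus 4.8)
The plan is to combine the Faedo--Galerkin method with the Skorokhod--Jakubowski representation theorem and then to identify the limit by the martingale approach of Kallianpur and Xiong, rather than by the martingale-problem route used in \cite{Brz+Haus+Raza_2018_reaction_diffusion}. First I would project equation \eqref{eqn-SNSEs} onto the finite dimensional spaces $\rH_n=\rU_n$ by means of the operators $P_n$, replacing $\acal$ and $\rB$ by their truncated Galerkin analogues $\acal_n$ and $\rB_n$ from \eqref{eqn-A_n} and \eqref{eqn-B_n}. Since $\rB_n$ is bounded and Lipschitz on balls (because of the cut-off $\theta_n$), $\acal_n$ is linear and bounded, and $F$ obeys the Lipschitz and linear-growth bounds of Assumption F.\ref{assum-F2}, the resulting finite dimensional stochastic differential equation driven by $\tilde{\eta}$ has, on the original space $(\Omega,\fcal,\fmath,\mathbb{P})$, a unique global càdlàg solution $\tun$. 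I would then reproduce the uniform a priori estimates of \cite{Motyl_2013,Motyl_2014}: applying the Itô formula to $|\tun(t)|_\rH^2$ and to its higher powers, using the cancellation $b(u,v,v)=0$, the growth bound \eqref{eqn-F_linear_growth-p} and the Burkholder--Davis--Gundy inequality, yields bounds on $\sup_n\mathbb{E}\sup_{t\le T}|\tun(t)|_\rH^{p}$ and on $\sup_n\mathbb{E}\bigl(\int_0^T|\tun(s)|_\rV^2\,\ds\bigr)$, the highest moments relying on the stronger integrability of Assumption F.\ref{assum-F5}.

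These estimates, together with the compact embeddings $\VR\embed\HR$ of \eqref{eqn-comp_VR_HR} and $\rH\embed\rU^\prime$ of \eqref{eqn-h-to-U'-compact}, give tightness of the laws of $(\tun)$ on the space $\zcal_T$ of \eqref{eqn-Z_cadlag_NS}; the càdlàg components are handled through an Aldous-type condition verified from the bounds on the drift and on the stochastic integral. As $\zcal_T$ is not metrizable I would invoke Jakubowski's form \cite{Jakubowski_1998} of the Skorokhod theorem to produce a new probability space $(\tOmega,\tfcal,\tP)$ carrying random variables $(\tun)$ (relabelled) with the same laws as the Galerkin solutions and a limit $\tu$ with $\tun\to\tu$ $\tP$-a.s.\ in the four topologies $\tcal_1,\dots,\tcal_4$; lower semicontinuity of norms transfers the a priori bounds to $\tu$, giving the required paths in $\mathbb{D}([0,T];\rH_w)\cap L^2(0,T;V)$. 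The strong local convergence in $L^2(0,T;\HR)$ is exactly what is needed to pass to the limit in the quadratic nonlinearity.

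The heart of the argument is the identification of $\tu$. Instead of passing directly to the limit in the stochastic integral, I would prove, using the a.s.\ convergence together with the uniform integrability supplied by the $(8+2\gamma)$-moment bounds, that the process $\Mbar$ displayed before the statement is an $\tfcal$-martingale, and that for every $\phi\in\rU$ the real-valued process $\Nbar_\phi$ is an $\tfcal$-martingale. From the form of $\Nbar_\phi$ one reads off that the predictable quadratic variation of the scalar martingale $t\mapsto\duality{\Mbar(t)}{\phi}$ equals $\int_0^t\int_Y\ip{F(s,\tu(s),y)}{\phi}{\rH}^2\,\nu(\ud y)\,\ds$, so that $\Mbar$ carries precisely the covariance structure dictated by the coefficient $F$. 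I would then check that $\Mbar$ is \emph{purely discontinuous}, which places us in the setting of the Martingale Representation Theorem \ref{thm-martingale_rep} for Hilbert-space valued purely discontinuous martingales proved in Appendix \ref{App:Appendix-RepThm}.

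The main obstacle is twofold. First, the martingale property of $\Mbar$ and $\Nbar_\phi$ is obtained by taking a conditional-expectation identity valid for the Galerkin processes $\tun$ and passing to the limit; this needs not only the a.s.\ convergence in $\zcal_T$ but also uniform integrability of every term, including the nonlinear and compensated-integral contributions, which is exactly where Assumption F.\ref{assum-F5} enters. Second, the representation theorem is delicate in the Hilbert-space setting, since the classical Kallianpur--Xiong statement is phrased for co-nuclear spaces; its Hilbert-space version in the appendix furnishes a time homogeneous Poisson random measure $\eta$ on $(\tOmega,\tfcal,\tP)$ with intensity $\Leb\otimes\nu$ such that $\Mbar(t)=\int_0^t\int_Y F(s,\tu(s-),y)\,\tilde{\eta}(\ud s,\ud y)$. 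Substituting this into the definition of $\Mbar$ recovers exactly the variational identity of Definition \ref{def-solution martingale}, so that $\bigl(\tOmega,\tfcal,\tF,\tP,\tu,\eta\bigr)$ is the sought martingale solution.
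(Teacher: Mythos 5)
Your overall route coincides with the paper's: Galerkin approximation with the truncated nonlinearity, the a priori estimates of Lemma \ref{lem-Galerkin_estimates}, tightness on $\zcal_T$ and the Jakubowski--Skorokhod theorem, the auxiliary martingales $\Mbar$ and $\Nbar_\phi$, pure discontinuity of $\Mbar$, and finally the representation theorem of Appendix \ref{App:Appendix-RepThm}. However, there is a genuine gap at the point where you invoke that theorem. You argue that identifying the predictable quadratic variation $\langle \Mbarphi{}\rangle(t)=\int_0^t\int_Y\ip{F(s,\tu(s),y)}{\phi}{\rH}^2\,\nu(\ud y)\,\ds$ together with pure discontinuity ``places us in the setting of'' Theorem \ref{thm-martingale_rep}. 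It does not. The hypothesis \eqref{eqn-N_M_hat} of that theorem concerns the \emph{compensator of the integer-valued jump measure} $\eta_{\Mbar}$ of $\Mbar$: one must show that for Borel sets $A$ separated from the origin the process
\[
\sum_{0<s\leq t}\1_A(\Delta\Mbar(s))-\int_0^t\int_Y\1_A\bigl(F(s,\tu(s),y)\bigr)\,\nu(\ud y)\,\ds
\]
is a martingale. This is strictly stronger than the second-moment identity encoded in $\Nbar_\phi$ and $\langle\Mbarphi{}\rangle$: the quadratic variation only pins down $\Ebar\bigl[\sum_{s\le t}\ip{\Delta\Mbar(s)}{\phi}{\rH}^2\bigr]$, and two quite different jump intensities can share all such second moments. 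Without the compensator identification the representation theorem simply cannot be applied, and nothing in your sketch supplies it.

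In the paper this is the content of Proposition \ref{prop_compensated_martingale}, whose proof occupies all of Subsection 6.3: one approximates $\1_A$ by Lipschitz functions $a_k$ supported away from $0$, shows that the corresponding compensated processes $\Nbar_{a_k,n}$ are martingales on the Galerkin level (via Lemma \ref{lem-purely discontinuous-New}), passes to the limit in $n$ and then in $k$ with uniform integrability, and finally extends from closed annular sets to all of $\mathscr{A}_0$ by a $\pi$--$\lambda$ argument. You would need to add this entire layer. A secondary, more cosmetic point: Theorem \ref{thm-martingale_rep} delivers the Poisson random measure on an \emph{extension} $(\underline{\Omega},\underline{\fcal},\underline{\mathbb{P}})$ of $(\Omegabar,\tfcal,\Pbar)$, not on $(\Omegabar,\tfcal,\Pbar)$ itself, so the final solution system lives on that extension.
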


The next two sections contain numerous intermediate results needed in the proof of this theorem which  will be given in Section \ref{sec-existence-III}.

\section{Galerkin approximation} \label{sec-existence-I}
In this section we construct a system of $n$ stochastic differential equations with globally Lipschitz coefficients on a finite dimensional space $\rH_n \subset \rH$ for arbitrary $n\in \mathbb{N}$. We will also derive uniform estimates for the soltuions of these SDEs.
In all our results in this section we require Assumption F.\ref{assum-F2}.
Whenever we require Assumption F.\ref{assum-F5} we mention this explicitly. \label{assum-F5-used-2}

\subsection{Faedo-Galerkin approximation}
\label{subsec-Galerkin}

Thanks to \cite[Theorem\ I.8.1]{Ikeda+Watanabe_1989} we find a time homogeneous PRM $\eta$  on $(\mathbb{R}_{+}\times Y, \mathscr{B}(\mathbb{R}_{+})\times \ycal)$ such that its intensity measure is $\Leb \otimes \nu$ and constructed on a probability space $(\Omega, \mathscr{F}, \mathbb{P})$. Denote by $\mathscr{N}$ the collection of all subsets of the null sets in $\mathscr{F}$.
The compensated PRM corresponding to $\eta$ is denoted by $\tilde{\eta}$. We equipped the probability space with the following filtration
\begin{equation*}\label{eqn-filtration_tF}
\begin{aligned}
    {\mathbb{F}} &\coloneqq({{\fcal }}_{t})_{\tinOT}, 
    \\
    \mbox{ where } &\\
    &{{\fcal }}_{t} = \sigma\bigl(  \mathscr{H}_{t+} \cup \mathscr{N}\bigr)   \\
\mbox{  and } &\\
\mathscr{H}_{t+}&\coloneqq \bigcap_{s>t} \mathscr{H}_{s} \mbox{ with } \mathscr{H}_{s}\coloneqq  \sigma \{ \eta([0, r]\times A): \, \, r \le s,\, A\in \ycal \}. \\
\end{aligned}
\end{equation*}
Note that the filtration $ {\mathbb{F}}$ satisfies the usual conditions and $\eta$ is $\mathbb{F}$-adapted.

Now, we consider the following Faedo-Galerkin approximation of the Problem \eqref{eqn-SNSEs}
  in the space $ {\rH}_{n}$
\begin{eqnarray}\label{eqn-Galerkin}
&    {u}_{n} (t) & =  {P}_{n} {u}_{0} - \int_{0}^{t}\bigl( {P}_{n} \acal {u}_{n} (s)  + {\rB}_{n}  \bigl({u}_{n} (s) \bigr) - {P}_{n} f (s)  \bigr) \ds  \nonumber   \\
&  &+  \int_{0}^{t} \int_{Y} {P}_{n} F(s,{u}_{n} ({s}{-}),y) \, \tilde{\eta } (\ud s,\ud y) ,
\quad t \in [0,T].
\end{eqnarray}

It is not difficult to prove that the map  ${\rB}_{n}\colon {\rH}_n \to {\rH}_n $ is globally Lipschitz continuous.
Thus, thanks to the linearity of $\acal_n$ and \eqref{eqn-F_Lipschitz_cond} in Assumption F.\ref{assum-F2}\label{assum-F2-used1} we see that  \eqref{eqn-Galerkin} is  a system of stochastic differential equations with globally Lipschitz coefficients. Thus, by applying \cite[Theorem 9.1]{Ikeda+Watanabe_1989} we obtain the following existence result. 

\begin{lemma}[\cite{Motyl_2013}, Lemma 3] \label{lem-Galerkin_existence}
For each $n \in \mathbb{N} $,   equation (\ref{eqn-Galerkin}) has  a unique  global $\fmath $-adapted,  c\`{a}dl\`{a}g ${\rH}_{n}$ valued solution  ${u}_{n}$ .
\end{lemma}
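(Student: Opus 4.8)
The plan is to recognise \eqref{eqn-Galerkin} as a finite-dimensional stochastic differential equation driven by the compensated Poisson random measure $\tilde{\eta}$ whose coefficients are globally Lipschitz, and then to quote the standard existence-and-uniqueness theory for such equations, namely \cite[Theorem 9.1]{Ikeda+Watanabe_1989}. First I would identify $\rH_n=\Span\{e_1,\dots,e_n\}$ with $\R^n$ through the orthonormal basis, so that \eqref{eqn-Galerkin} takes the generic form
\[
\ud u_n(t) = b_n(t,u_n(t)) \dt + \int_Y g_n(t,u_n(t-),y)\, \tilde{\eta}(\ud t,\ud y),
\]
with drift $b_n(t,u) \coloneqq -P_n \acal u - B_n(u) + P_n f(t)$, jump coefficient $g_n(t,u,y)\coloneqq P_n F(t,u,y)$, and initial datum $P_n u_0\in\rH_n$. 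Since $\rH_n$ is finite-dimensional, all the norms that appear below ($\rH$, $\rV$, $\rV_s$, $\rU^\prime$) are equivalent on it, which lets me freely pass between the estimates recorded in Section \ref{subsec-approcximation of maps}.

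Next I would verify the hypotheses of the cited theorem, i.e.\ the global Lipschitz and linear-growth conditions on $b_n$ and $g_n$. The operator $\acal_n=P_n\circ\iota^\prime\circ\acal$ is linear and bounded on $\rH_n$ by \eqref{eqn-A_n}, hence globally Lipschitz; the term $B_n$ is globally Lipschitz as already observed just before the lemma, the cut-off factor $\theta_n(\lvert u\rvert_{\rU^\prime})$ in \eqref{eqn-B_n} taming the quadratic growth of $\rB$; and $P_n f(t)$ is merely a time-dependent inhomogeneity. For the jump coefficient, since $P_n$ is a contraction on $\rH$ by \eqref{eqn-P_n contraction on H}, the Lipschitz bound \eqref{eqn-F_Lipschitz_cond} of Assumption F.\ref{assum-F2} yields
\[
\int_Y \lvert P_n F(t,u_1,y) - P_n F(t,u_2,y)\rvert_{\rH}^2\, \nun(\ud y) \le L\,\lvert u_1-u_2\rvert_{\rH}^2,
\]
and the growth bound \eqref{eqn-F_linear_growth} gives the corresponding sublinear estimate on $g_n$ in the $L^2(\nun)$ sense; measurability and $\fmath$-predictability of $g_n$ follow from Assumptions F.\ref{assum-F1}--F.\ref{assum-F2}. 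With these bounds in hand, \cite[Theorem 9.1]{Ikeda+Watanabe_1989} produces a unique global $\fmath$-adapted càdlàg $\rH_n$-valued solution, which is the assertion.

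The one genuine subtlety — and the reason the cut-off is built into $B_n$ in the first place — is \emph{global} versus merely local existence. Without the factor $\theta_n$ the map $\rB$ is only locally Lipschitz (it is quadratic), so Picard iteration would deliver a solution only up to an explosion time; the truncation $\theta_n(\lvert u\rvert_{\rU^\prime})$ makes $B_n$ both globally Lipschitz and bounded on $\rH_n$, which is exactly what rules out explosion. This is where I would spend the most care: checking that $u\mapsto \theta_n(\lvert u\rvert_{\rU^\prime})P_n\rB(u,u)$ is globally Lipschitz, using that $\rB$ is smooth, that $\theta_n$ has bounded support and bounded derivative, and that $u\mapsto\lvert u\rvert_{\rU^\prime}$ is Lipschitz, so that the product of a bounded smooth factor with a compactly supported cut-off remains globally Lipschitz on the finite-dimensional space $\rH_n$.
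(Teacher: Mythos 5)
Your proposal is correct and follows essentially the same route as the paper: the authors likewise observe that $\acal_n$ is linear and bounded, that the cut-off $\theta_n$ renders $\rB_n$ globally Lipschitz on $\rH_n$, and that \eqref{eqn-F_Lipschitz_cond} together with the contractivity of $P_n$ on $\rH$ gives the Lipschitz bound on the jump coefficient, before invoking \cite[Theorem 9.1]{Ikeda+Watanabe_1989}. The only difference is one of emphasis: the paper asserts the global Lipschitz property of $\rB_n$ without proof, whereas you spell out the role of the truncation, which is a worthwhile addition but not a different argument.
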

Now,
by employing equalities \eqref{eqn-B_n_b} and \eqref{eqn-A_n_duality}
the  following three lemmata can be established with exactly the same proofs as in \cite[Lemma\ 3 and 4]{Motyl_2013} which hold provided the noise coefficient $F$ satisfies  condition \eqref{eqn-F_linear_growth} in Assumption \ref{assum-F2} and  Assumption F.\ref{assum-F5}. \label{assum-F5-used-3}.

Lemma \ref{lem-Galerkin_estimates}  is proved by using the finite dimensional It\^{o} formula, see \cite{Ikeda+Watanabe_1989} or \cite{Metivier},
and the Burkholder-Davis-Gundy (BDG) inequality, see \cite[Theorem\ 26.12]{Kallenberg_2002}.

\begin{lemma}[\cite{Motyl_2013}, Lemma 4]
\label{lem-Galerkin_estimates}
The processes $({u}_{n} {)}_{n \in \mathbb{N} }$ from Lemma \ref{lem-Galerkin_existence} satisfy the following conditions. 
\begin{trivlist} 
\item[(i)] There exists a positive constant ${C}_{2}>0$ 
\begin{align}
  &   \mathbb{E} \left[ \int_{0}^{T} \normb{ {u}_{n} (s)}{\rV}{2} \ds \right] \le {C}_{2},\;\; \mbox{ for every } n \in \mathbb{N},
  \label{eqn-V_estimate}.
\end{align}
\item[(ii)] There exists a positive constant ${C}_{1}$  such that every    $p\in [1,4+\gamma]$ 
\begin{equation} \label{eqn-H_estimate-p>2}
  \mathbb{E} \left[ \sup_{0 \le s \le T } \normb{{u}_{n} (s) }{\rH}{p} \right] \le {C}_{1}^{\frac{p}{4+\gamma}}
 ,\;\; \mbox{ for every } n \in \mathbb{N}.
\end{equation}
\end{trivlist}
\end{lemma}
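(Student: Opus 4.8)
The plan is to apply the finite-dimensional It\^o formula to the function $x \mapsto |x|_{\rH}^p$ along the solution $u_n$ of \eqref{eqn-Galerkin}, first with $p=2$ to obtain (i) and then with general $p \in [2,4+\gamma]$ to obtain (ii); the remaining range $p\in[1,2)$ then follows from the case $p=2$ by Jensen's inequality. Two structural facts make the computation work. The first is the identity \eqref{eqn-A_n_duality}, which gives $\ip{P_n \acal u_n}{u_n}{\rH} = \normb{\nabla u_n}{\bL^2}{2}$, so that the Stokes term produces a favourable dissipation. The second is the antisymmetry encoded in \eqref{eqn-B_n_b}, which yields $\ip{\rB_n(u_n)}{u_n}{\rH} = \theta_n(|u_n|_{\rU^\prime})\,b(u_n,u_n,u_n) = 0$, so that the nonlinear term drops out entirely.

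First I would prove the $V$-estimate. Applying It\^o to $|u_n|_{\rH}^2$ and invoking the two facts above, the drift contributes $-2\normb{\nabla u_n}{\bL^2}{2}\,\ds$ from the Stokes operator, $0$ from $\rB_n$, and $2\,\duality{f}{u_n}\,\ds$ from the forcing, while the jump part contributes a mean-zero martingale together with the compensator $\int_Y \normb{P_n F(s,u_n(s-),y)}{\rH}{2}\,\nu(\ud y)\,\ds$. Taking expectations kills the martingale; Young's inequality absorbs part of the dissipation and leaves lower-order terms $|u_n|_{\rH}^2$ and $C\normb{f}{\rV^\prime}{2}$, while the compensator is bounded, using that $P_n$ is a contraction on $\rH$ (see \eqref{eqn-P_n contraction on H}) and the linear growth \eqref{eqn-F_linear_growth}, by $C_2\int_0^t (1+\mathbb{E}|u_n(s)|_{\rH}^2)\,\ds$. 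This yields
\[ \mathbb{E}|u_n(t)|_{\rH}^2 + \mathbb{E}\int_0^t \normb{\nabla u_n(s)}{\bL^2}{2}\,\ds \le |u_0|_{\rH}^2 + C\int_0^t \bigl(1+\mathbb{E}|u_n(s)|_{\rH}^2\bigr)\,\ds + C\int_0^T \normb{f(s)}{\rV^\prime}{2}\,\ds, \]
and Gronwall's lemma gives $\sup_t \mathbb{E}|u_n(t)|_{\rH}^2 \le C$ uniformly in $n$. Reinserting this bound controls $\mathbb{E}\int_0^T \normb{\nabla u_n}{\bL^2}{2}\,\ds$, and since $\normb{u_n}{\rV}{2} = |u_n|_{\rH}^2 + \normb{\nabla u_n}{\bL^2}{2}$ by \eqref{eqn-norm_V}, estimate \eqref{eqn-V_estimate} follows.

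For the higher moments I would apply It\^o to $|u_n|_{\rH}^p$ with $p \in [2,4+\gamma]$. The Stokes term now produces $-p|u_n|_{\rH}^{p-2}\normb{\nabla u_n}{\bL^2}{2} \le 0$, which may simply be discarded; the nonlinear term again vanishes; and the forcing term is handled by Young as before. The new difficulty is the jump contribution. The compensator arising from the It\^o formula is estimated through the elementary bound $\bigl|\,|x+h|_{\rH}^p - |x|_{\rH}^p - p|x|_{\rH}^{p-2}\ip{x}{h}{\rH}\,\bigr| \le C\bigl(|x|_{\rH}^{p-2}|h|_{\rH}^2 + |h|_{\rH}^p\bigr)$, applied with $h = P_n F(s,u_n(s-),y)$: integrating in $y$, the interpolated growth bound \eqref{eqn-F_linear_growth-p} (valid for $p \le 8+2\gamma$) controls the $|h|_{\rH}^p$ term and \eqref{eqn-F_linear_growth} together with Young controls $|x|_{\rH}^{p-2}|h|_{\rH}^2$. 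The supremum is brought inside the expectation by the BDG inequality applied to the purely discontinuous martingale part, whose bracket is split by Young's inequality so that a small multiple of $\mathbb{E}\sup_{s\le t}|u_n(s)|_{\rH}^p$ is absorbed into the left-hand side; what remains is $C\int_0^t (1+\mathbb{E}\sup_{r\le s}|u_n(r)|_{\rH}^p)\,\ds$, and Gronwall yields \eqref{eqn-H_estimate-p>2}.

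The main obstacle is precisely this jump term. Estimating the BDG bracket of the stochastic integral produces, after the inequality $|\,|x+h|_{\rH}^p-|x|_{\rH}^p\,|^2 \le C(|x|_{\rH}^{2(p-1)}|h|_{\rH}^2 + |h|_{\rH}^{2p})$, a term of the form $\int_Y |F(s,u_n(s-),y)|_{\rH}^{2p}\,\nu(\ud y)$, which at the top exponent $p=4+\gamma$ requires moments of $F$ of order $2p = 8+2\gamma$; this is exactly why Assumption F.\ref{assum-F5} is imposed, and it is what allows the estimate to close at $p=4+\gamma$. All the remaining manipulations (Young's inequality, Gronwall, and the Jensen reduction for $p<2$) are routine. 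Since the coefficients in \eqref{eqn-Galerkin} differ from the genuinely bilinear problem only through the harmless cut-off $\theta_n$, which affects none of the sign structure above, the argument is identical to that of \cite[Lemma 3 and 4]{Motyl_2013}.
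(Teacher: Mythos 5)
Your proposal follows essentially the same route as the paper's own proof in Appendix G: It\^o's formula applied to $|x|_{\rH}^p$, the cancellations from \eqref{eqn-A_n_duality} and \eqref{eqn-B_n_b}, the Taylor/Mean Value bounds on the jump compensator and on the BDG bracket (the latter forcing moments of $F$ of order $2p=8+2\gamma$, i.e.\ Assumption F.5), Young, Gronwall, and Jensen for $p<2$. The only step you elide is the localization by the stopping times ${\tau}_{n}(R)$ of \eqref{eq:stopping_time} followed by Fatou as $R\to\infty$, which the paper uses to justify that the stochastic integral is a genuine mean-zero martingale and that the quantities entering Gronwall are finite; this is a routine but necessary addition to your argument.
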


Note that it is sufficient to prove assertion  \eqref{eqn-H_estimate-p>2} only  for  $p=4+\gamma$. The case $p< 4+\gamma$ follows from the former by the Jensen inequality.
We recall the proof in Appendix \ref{app-proof_of_apriori_est}.

\subsection{Tightness}
\label{subsect-tightness}

As is Section \ref{sec-preliminaries} we chose and fix   $\s > \frac{d}{2}+1$. Let us recall that the space
 ${\rV}_{\s}$ defined in  identity \eqref{eqn-V_s}
 is dense in $\rV$ and the embedding ${j}_{\s}\colon  {\rV}_{\s} \embed V$ is continuous. Let us also recall that the  Hilbert space $\rU$   introduced at the end of the Section \ref{sec-preliminaries} is a dense subset of ${\rV}_{\s}$ and as mentioned in \eqref{eqn-U_comp_V_s} the natural embedding ${\iota }_{\s}\colon \rU \embed  {\rV}_{\s}$ is compact.

With this in mind, we will apply a standard tightness condition \cite[Corollary\ 1]{Motyl_2013} with $q=2$.

For each $n \in \mathbb{N} $, the solution ${u}_{n} $ of the Galerkin equation defines a measure
$\lcal ({u}_{n})$ on $(\zcal_T , \tcal )$, where the space $\zcal_T$ has been defined in \eqref{eqn-Z_cadlag_NS}.
Using \cite[Corollary\ 1]{Motyl_2013}, with $q=2$,  one can  prove that the set of measures
$\bigl\{ \lcal ({u}_{n} ) : n \in \mathbb{N}  \bigr\} $ is tight on $(\zcal_T , \tcal )$. The inequalities \eqref{eqn-V_estimate} and \eqref{eqn-H_estimate-p>2} with $p=2$ in Lemma \ref{lem-Galerkin_estimates} are of crucial importance.

\begin{lemma} \label{lem-comp_Galerkin}
The set of measures $\bigl\{ \lcal ({u}_{n} ) : n \in \mathbb{N}  \bigr\} $ is tight on $(\zcal_T , \tcal )$.
\end{lemma}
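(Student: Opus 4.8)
The plan is to verify the hypotheses of the tightness criterion \cite[Corollary~1]{Motyl_2013}, applied with $q=2$. This criterion is tailored to the space $\zcal_T$ and exploits the compact embeddings $\VR\embed\HR$ from \eqref{eqn-comp_VR_HR} and $\rH\embed\rU^\prime$ from \eqref{eqn-h-to-U'-compact}; it reduces tightness of $\bigl\{\lcal(u_n):n\in\mathbb{N}\bigr\}$ on $(\zcal_T,\tcal)$ to three requirements on the sequence $(u_n)$: a uniform bound in $L^2(0,T;\rV)$, a uniform bound in the supremum-in-time $\rH$-norm, and an Aldous-type continuity condition in $\rU^\prime$. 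The first two are delivered at once by Lemma~\ref{lem-Galerkin_estimates}: inequality \eqref{eqn-V_estimate} gives $\sup_n\E\int_0^T\normb{u_n(s)}{\rV}{2}\ds\le C_2$, and \eqref{eqn-H_estimate-p>2} with $p=2$ gives $\sup_n\E\sup_{\sinOT}\normb{u_n(s)}{\rH}{2}<\infty$. Thus the only substantive work is the Aldous condition.

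To establish it, I would fix $\eta,\eps>0$, an arbitrary sequence $(\tau_n)$ of $\mathbb{F}$-stopping times with $\tau_n\le T$, and $\theta\in[0,\delta]$, and decompose the increment of $u_n$ over the interval $\bigl(\tau_n,(\tau_n+\theta)\wedge T\bigr]$ according to \eqref{eqn-Galerkin} into four pieces: the Stokes drift $-\int P_n\acal u_n\ds$, the nonlinear drift $-\int \rB_n(u_n)\ds$, the forcing $\int P_n f\ds$, and the compensated stochastic integral. I then estimate each in the $\rU^\prime$-norm, using that the projections may be discarded at cost $1$ since $\norm{P_n}_{\mathscr{L}(\rU^\prime)}=1$ by part (iii) of Lemma~\ref{lem_P_n}. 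For the Stokes term, $\acal\colon\rV\to\rV^\prime$ is bounded by \eqref{eqn-Acal_ilsk_Dir} and $\rV^\prime\embed\rU^\prime$, so its $\rU^\prime$-norm is at most $c\int_{\tau_n}^{(\tau_n+\theta)\wedge T}\normb{u_n(s)}{\rV}{}\ds$, whose expectation is $\le c\,\delta^{1/2}\bigl(\E\int_0^T\normb{u_n(s)}{\rV}{2}\ds\bigr)^{1/2}=O(\delta^{1/2})$ by Cauchy--Schwarz and \eqref{eqn-V_estimate}. For the nonlinear term, $\theta_n\le1$ and the bound $\normb{\rB(u,u)}{\rV_s^\prime}{}\le c\normb{u}{\rH}{2}$ from \eqref{eqn-estimate_B_ext} give $\normb{\rB_n(u_n(s))}{\rU^\prime}{}\le c\normb{u_n(s)}{\rH}{2}$, so this piece is bounded by $c\,\delta\sup_{\sinOT}\normb{u_n(s)}{\rH}{2}$ and is $O(\delta)$ in expectation by \eqref{eqn-H_estimate-p>2}. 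For the forcing, H\"older's inequality and $f\in L^{4+\gamma}(0,T;\rV^\prime)$ from Assumption~A.\ref{assum-A.1} yield the deterministic bound $c\,\norm{f}_{L^{4+\gamma}(0,T;\rV^\prime)}\,\delta^{(3+\gamma)/(4+\gamma)}$.

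For the stochastic integral I would invoke the It\^o isometry for the $\rU^\prime$-valued compensated Poisson integral. Since both $P_n$ and $i^\prime\colon\rH\embed\rU^\prime$ are contractions, $\normb{P_n F(s,u_n(s-),y)}{\rU^\prime}{}\le\normb{F(s,u_n(s-),y)}{\rH}{}$, whence by the linear-growth bound \eqref{eqn-F_linear_growth}
\begin{equation*}
\E\,\Bigl\lvert\int_{\tau_n}^{(\tau_n+\theta)\wedge T}\!\!\int_Y P_n F(s,u_n(s-),y)\,\tilde{\eta}(\ud s,\ud y)\Bigr\rvert_{\rU^\prime}^{2}
\le C_2\,\E\!\int_{\tau_n}^{(\tau_n+\theta)\wedge T}\!\!\bigl(1+\normb{u_n(s)}{\rH}{2}\bigr)\ds
\le C\,\delta,
\end{equation*}
the last inequality again using \eqref{eqn-H_estimate-p>2}. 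Applying the Chebyshev (respectively Markov) inequality to the four pieces and summing gives
\begin{equation*}
\sup_n\sup_{0\le\theta\le\delta}\mathbb{P}\bigl(\bigl\lvert u_n((\tau_n+\theta)\wedge T)-u_n(\tau_n)\bigr\rvert_{\rU^\prime}\ge\eta\bigr)
\le C(\eta)\bigl(\delta^{1/2}+\delta^{\frac{3+\gamma}{4+\gamma}}+\delta\bigr),
\end{equation*}
uniformly in $n$, so choosing $\delta$ small makes the right-hand side at most $\eps$; this is the Aldous condition, and the proof is complete. I expect the main obstacle to be organisational rather than conceptual: one must apply each operator bound in exactly the right space so that $P_n$ can be removed with constant $1$, and keep track of the stopping-time truncation, but once the uniform estimates of Lemma~\ref{lem-Galerkin_estimates} are available no genuinely new difficulty arises.
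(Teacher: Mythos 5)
Your proposal follows exactly the route the paper takes: it invokes the tightness criterion \cite[Corollary~1]{Motyl_2013} with $q=2$, feeds in the uniform bounds \eqref{eqn-V_estimate} and \eqref{eqn-H_estimate-p>2} (with $p=2$), and verifies the Aldous condition in $\rU^\prime$ by splitting the increment of $u_n$ according to \eqref{eqn-Galerkin} -- the paper itself omits these details and simply refers to \cite[Lemma~5]{Motyl_2013}, where the same decomposition and estimates appear. Your term-by-term bounds (Cauchy--Schwarz for the Stokes part, \eqref{eqn-estimate_B_ext} for the nonlinearity, H\"older for $f$, and the It\^o isometry with the contractivity of $P_n$ and of $\rH\embed\rU^\prime$ for the stochastic integral) are all sound.
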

\begin{proof}
This lemma can be  proved with the same approach as used in \cite[Lem.\ 5]{Motyl_2013}, thus we omit the proof. Let us point out that as it is explicitely visible from the proof  of Theorem 2 therein, one needs assumption \eqref{eqn-h-to-U'-compact}, i.e. that the embedding $H \embed U^\prime$ is compact.
\end{proof}

\subsection{Application of the Skorohod-Jakubowski Theorem }\label{subsec-SJ}

We will use the following Jakubowski's version of the Skorokhod Theorem in the form given by
Brze\'{z}niak and Ondrej\'{a}t  \cite{Brz+Ondrejat_2011}, see also Jakubowski \cite{Jakubowski_1998}. The version below does not use almost sure convergence of
\cite{Brz+Ondrejat_2011} but a stronger pointwise convergence  as in \cite[Theorem 2]{Jakubowski_1998}.

\begin{theorem} \rm (Theorem A.1 in \cite{Brz+Ondrejat_2011}) \it
\label{thm-Jakubowski_Skorokhod_Ondrejat}
Let $\xcal $ be a topological space such that there exists a sequence $\{ {f}_{m}\} $ of continuous functions ${f}_{m}\colon \xcal  \to \mathbb{R} $ that separates points of $\xcal $. Let us denote by $\scal $ the $\sigma $-algebra generated by the maps $\{ {f}_{m}\} $.
Then
\begin{description}
\item[(j1) ] every compact subset of $\xcal $ is metrizable;
\item[(j2) ] if $({\mu }_{m})$ is a tight sequence of probability measures on $(\xcal ,\scal )$, then there exists a subsequence $({m}_{k})$, a probability space $(\Omega , \fcal ,\mathbb{P})$ with $\xcal $-valued Borel measurable variables ${\xi }_{k}, \xi $ such that  ${\mu }_{{m}_{k}}$ is the law of ${\xi }_{k}$
and ${\xi }_{k}$ converges to $\xi $ on $\Omega $.
Moreover, the law of $\xi $ is a Radon measure.
\end{description}
\end{theorem}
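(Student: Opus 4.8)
The plan is to reduce the whole statement to the classical Skorokhod representation theorem on a compact metrizable space, using the separating sequence $\{f_m\}$ to build an embedding. Replacing each $f_m$ by $\arctan\circ f_m$ if necessary, I may assume the $f_m$ take values in $[0,1]$ and set $f\coloneqq (f_m)_{m=1}^{\infty}\colon \mathcal{X}\to \mathbb{K}$, where $\mathbb{K}\coloneqq [0,1]^{\mathbb{N}}$ is compact and metrizable. Since the $f_m$ separate points, $f$ is injective; since each $f_m$ is continuous, $f$ is continuous; and since $\mathcal{S}$ is generated by the $f_m$ while the Borel $\sigma$-algebra of $\mathbb{K}$ is generated by the coordinate projections, $f$ is measurable from $(\mathcal{X},\mathcal{S})$ to $(\mathbb{K},\mathscr{B}(\mathbb{K}))$. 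The separation property also forces $\mathcal{X}$ to be Hausdorff, since distinct points are separated by the preimages under some $f_m$ of disjoint open subsets of $[0,1]$.

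For (j1) I would argue that on any compact $K\subset \mathcal{X}$ the restriction $f|_{K}$ is a continuous bijection from the compact space $K$ onto $f(K)\subset \mathbb{K}$; because $\mathbb{K}$ is Hausdorff this map is automatically a homeomorphism. As $\mathbb{K}$ is metrizable, so is its subspace $f(K)$, and hence so is $K$. This establishes that every compact subset of $\mathcal{X}$ is metrizable.

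For (j2) I would push the measures forward into $\mathbb{K}$. Using tightness, first pick an increasing sequence of compact sets $K_j\subset \mathcal{X}$ with $\mu_m(K_j)>1-2^{-j}$ for all $m$; each $f(K_j)$ is compact, hence closed, in $\mathbb{K}$. The laws $\nu_m\coloneqq \mu_m\circ f^{-1}$ are probability measures on the compact metric space $\mathbb{K}$, so the family is automatically tight and Prokhorov's theorem yields a subsequence with $\nu_{m_k}\to \nu$ weakly. The classical Skorokhod theorem on $\mathbb{K}$, in Jakubowski's sharpened form giving everywhere (not merely almost sure) convergence, provides a probability space $(\Omega,\mathcal{F},\mathbb{P})$ and $\mathbb{K}$-valued variables $Z_k,Z$ with laws $\nu_{m_k},\nu$ and $Z_k\to Z$ on $\Omega$. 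Using injectivity, through the identity $f^{-1}(f(K_j))=K_j$, together with the portmanteau theorem applied to the closed sets $f(K_j)$, I would check that $\nu$ and each $\nu_{m_k}$ concentrate on the $\sigma$-compact set $D\coloneqq \bigcup_j f(K_j)\subset f(\mathcal{X})$; hence $Z_k,Z\in D$ almost surely. Since $f^{-1}$ is continuous, hence Borel, on each compact $f(K_j)$, it is Borel on $D$, so $\xi_k\coloneqq f^{-1}(Z_k)$ and $\xi\coloneqq f^{-1}(Z)$ are well-defined $\mathcal{S}$-measurable $\mathcal{X}$-valued variables with the prescribed laws, and the law of $\xi$ is concentrated on the $\sigma$-compact set $\bigcup_j K_j$, hence Radon.

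I expect the \emph{genuinely delicate} step to be the final one: upgrading the convergence $Z_k\to Z$ in the metric space $\mathbb{K}$ to convergence $\xi_k\to \xi$ in the original, possibly strictly finer and non-metrizable, topology of $\mathcal{X}$. The inverse $f^{-1}$ is continuous on each compact piece $f(K_j)$ but need not be continuous on all of $D$, so convergence does not transfer automatically. The way around this is to show that for almost every $\omega$ the entire tail $\{\xi_k(\omega)\}$ together with $\xi(\omega)$ is trapped in a single compact set $K_{N(\omega)}$; on that compact, $f^{-1}$ is a homeomorphism and the convergence transfers. Securing this compact containment is exactly the non-trivial core of Jakubowski's argument, and it is obtained by carrying out the Skorokhod coupling in a manner that respects the nested compacts $K_j$ produced by tightness, so that the mass escaping to higher layers is controlled uniformly along the sequence. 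Once the containment is in place, the conclusion, including the everywhere convergence and the Radon property of the law of $\xi$, follows directly.
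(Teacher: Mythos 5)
The paper does not prove this statement at all: it is quoted verbatim as Theorem A.1 of \cite{Brz+Ondrejat_2011} (going back to \cite{Jakubowski_1998}), so there is no internal proof to compare yours against. Judged on its own terms, your reduction is the right architecture and part (j1) is complete and correct: the map $f=(f_m)\colon \xcal\to[0,1]^{\mathbb N}$ is a continuous injection, a continuous bijection from a compact space onto a subset of a Hausdorff space is a homeomorphism, and metrizability of $K$ follows. Your bookkeeping in (j2) is also sound as far as it goes: compact sets lie in $\scal$ (since $K=f^{-1}(f(K))$ with $f(K)$ closed in $\mathbb K$), the portmanteau argument shows $\nu$ charges $D=\bigcup_j f(K_j)$ fully, $f^{-1}$ is Borel on $D$, and the laws of $\xi_k,\xi$ come out right.

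The genuine gap is exactly the step you flag and then do not carry out. If you apply the classical Skorokhod representation on $\mathbb K$ as a black box, the resulting $Z_k(\omega)\to Z(\omega)$ gives you nothing about $\xi_k(\omega)=f^{-1}(Z_k(\omega))$ converging in $\xcal$: the points $Z_k(\omega)$ can converge in $[0,1]^{\mathbb N}$ while each lies in $f(K_k)\setminus f(K_{k-1})$, so the preimages escape every compact and the convergence does not transfer through $f^{-1}$, which is only continuous layer by layer. Asserting that the coupling can be ``carried out in a manner that respects the nested compacts'' is not a proof; it is a restatement of the theorem's main content. Jakubowski's actual argument replaces the black-box Skorokhod step by an explicit layered construction: one extracts a further subsequence along which the restricted measures $\mu_{m_k}\vert_{K_j}$ converge weakly on each compact metrizable $K_j$, and then builds the representation (via an auxiliary uniform random variable and a decomposition of $[0,1]$ into ranges indexed by the layers) so that, almost surely, $\xi(\omega)$ and all but finitely many $\xi_k(\omega)$ land in one and the same $K_{N(\omega)}$; only then does the homeomorphism $f\vert_{K_{N(\omega)}}$ let you transfer the convergence. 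Without that construction your proposal proves a strictly weaker statement (representation with convergence of the images in $[0,1]^{\mathbb N}$, not in $\xcal$), so the core of (j2) is missing.
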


Using  Theorem \ref{thm-Jakubowski_Skorokhod_Ondrejat}, we obtain the following corollary which we will apply to construct a martingale solution to the Navier-Stokes equations.

\begin{corollary}  \label{cor-Skorokhod_Z}
Let $({y }_{n}{)}_{n \in \mathbb{N} }$ be a sequence of $\zcal_T $-valued random variables such that their laws $\lcal ({y }_{n})$ on $(\zcal_T ,\tcal )$ form a tight sequence of probability measures.
Then there exists a subsequence $({n}_{k})$, a probability space $(\Omegabar , \tfcal ,\Pbar )$ and $\zcal_T $-valued random variables $\tilde{y}, {\tilde{y}}_{n_k} $, $k \in \mathbb{N} $ such that the variables ${y }_{n_k}$
and ${\tilde{y}}_{n_k}$ have the same laws on $\zcal_T $
and ${\tilde{y}}_{n_k}$ converges to $\tilde{y}$ on $\Omegabar $.
\end{corollary}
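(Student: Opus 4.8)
The plan is to deduce Corollary \ref{cor-Skorokhod_Z} as a direct application of Theorem \ref{thm-Jakubowski_Skorokhod_Ondrejat} with $\xcal = \zcal_T$ and $\mu_m = \lcal(y_m)$. The tightness hypothesis required in part (j2) is part of the assumption of the corollary, so the only thing that genuinely needs to be checked is the \emph{structural} hypothesis of the theorem: that $(\zcal_T,\tcal)$ carries a countable family $\{f_m\}$ of $\tcal$-continuous real-valued functions separating its points. Once this is in place, (j2) produces a subsequence $(n_k)$, a probability space $(\Omegabar,\tfcal,\Pbar)$ and $\zcal_T$-valued random variables $\tilde y_{n_k},\tilde y$ with $\lcal(\tilde y_{n_k})=\lcal(y_{n_k})$ and $\tilde y_{n_k}\to\tilde y$ everywhere on $\Omegabar$, which is exactly the assertion.

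To build the separating family I would work factor by factor, since $\tcal$ is generated by the four topologies $\tcal_i$ and hence each $\tcal_i$-continuous function is automatically $\tcal$-continuous. For $L^2_w(0,T;\rV)$ one uses that $L^2(0,T;\rV)$ is a separable Hilbert space: taking a countable dense set $\{g_j\}$ there, the functionals $u\mapsto (u,g_j)_{L^2(0,T;\rV)}$ are $\tcal_1$-continuous and separate points. For $L^2(0,T;\rH_{\loc})$, which is a separable Fréchet space, the seminorms $q_{T,R}$ together with the maps $u\mapsto d(u,w_j)$ for a countable dense $\{w_j\}$ are $\tcal_2$-continuous and separate points. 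For $\mathbb{D}([0,T];\rU^\prime)$, since $\rU^\prime$ is a separable Hilbert space the Skorokhod space is Polish, hence admits a countable family of $\tcal_3$-continuous point-separating functions. Finally, for $\mathbb{D}([0,T];{\rH}_{w})$ I would fix a countable dense set $\{h_k\}\subset\rH$, observe that each map $u\mapsto \ip{u(\cdot)}{h_k}{\rH}$ is a continuous map into the Polish space $\mathbb{D}([0,T];\R)$, and compose it with a countable point-separating family on $\mathbb{D}([0,T];\R)$.

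The union of these four countable families is then a countable family of $\tcal$-continuous functions separating points of $\zcal_T$; in fact the $\mathbb{D}([0,T];\rU^\prime)$ family already separates points on its own, because an element of $\zcal_T$ is determined by its $\rU^\prime$-valued c\`adl\`ag representative (two c\`adl\`ag functions agreeing Lebesgue-a.e.\ agree everywhere, so the $L^2$-type factors add no new identifications). The remaining routine verification is that the laws $\lcal(y_n)$, which are a priori Borel measures on $(\zcal_T,\tcal)$, are defined on the $\sigma$-algebra $\scal$ generated by $\{f_m\}$; this is immediate since the $f_m$ are Borel and, by (j1), $\scal$ and the Borel $\sigma$-algebra coincide on compact sets.

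I expect the main subtlety to be the factor $\mathbb{D}([0,T];{\rH}_{w})$. Because ${\rH}_{w}$ is non-metrizable, $\mathbb{D}([0,T];{\rH}_{w})$ is genuinely a non-metric space, which is precisely the reason the Jakubowski version of the Skorokhod theorem must be invoked rather than the classical one. The care lies in confirming that the evaluation-against-$h_k$ maps are indeed $\tcal_4$-continuous into $\mathbb{D}([0,T];\R)$ and that this yields a bona fide countable point-separating family for this factor; everything else reduces to standard separability arguments on Polish or separable Fréchet spaces. These verifications follow the pattern already established in \cite{Brzezniak+Motyl_2013} and \cite{Motyl_2013}, so the corollary is essentially a bookkeeping assembly of those facts followed by a citation of Theorem \ref{thm-Jakubowski_Skorokhod_Ondrejat}.
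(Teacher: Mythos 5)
Your proposal is correct and follows essentially the same route as the paper: both reduce the corollary to Theorem \ref{thm-Jakubowski_Skorokhod_Ondrejat}(j2) once $(\zcal_T,\tcal)$ is known to carry a countable point-separating family of continuous functions. The paper simply outsources that verification to \cite[Remark 2, p.\ 905]{Motyl_2013}, whereas you reconstruct it factor by factor (correctly noting that the $\mathbb{D}([0,T];\rU^\prime)$ family already separates points of $\zcal_T$ on its own), so the only difference is that you supply details the paper handles by citation.
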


\begin{proof}
It is proven in \cite[Remark\ 2, p.\ 905]{Motyl_2013} that the space $\zcal_T$ satisfies the assumptions of Theorem \ref{thm-Jakubowski_Skorokhod_Ondrejat}, thus the assertions of the Corollary follows from Theorem \ref{thm-Jakubowski_Skorokhod_Ondrejat}-(j2).
\end{proof}

Now, owing to Lemma \ref{lem-comp_Galerkin} and Corollary \ref{cor-Skorokhod_Z} we can find
a probability space
\begin{equation}\label{eqn-new_probability_space}
\bigl( \Omegabar ,\tfcal ,\Pbar  \bigr),
\end{equation}
 an increasing   sequence $({n}_{k}{)}_{k}$ of natural numbers and
$ \zcal_T $-valued random variables $\tu$, $\tunk$, $k \in \mathbb{N} $
defined on $\bigl( \Omegabar ,\tfcal ,\Pbar  \bigr)$, such that $\tunk$ has the same law as $\unk$  on $\zcal_T$ and
\begin{equation}
\tunk  \to \tu  \mbox{ in } \zcal_T
    \quad  {\mbox{ on } \Omegabar.} \label{eqn-Skorokhod_appl_convergence}
\end{equation}

Without loss of generality, we can  and will denote the subsequence $(\tunk)_{k}$  by $(\tun)_{n}$ or simply by $(\tun)$. In view of Definition \ref{eqn-Z_cadlag_NS} of the space $\zcal_T$,  the above assertion  \eqref{eqn-Skorokhod_appl_convergence} is equivalent to the following assertions:
\begin{align}\label{eqn-conv_L^2_w(0,T;V)}
\tun \to \tu \mbox{ in } {L}_{w}^{2}(0,T;V)       \quad  {\mbox{ on } \Omegabar},\\
\tun \to \tu \mbox{ in } {L}^{2}(0,T;{\rH}_{\loc})     \quad  {\mbox{ on } \Omegabar},
\label{eqn-conv_L^2(0,T;H_loc)}
\\
\tun \to \tu \mbox{ in }  \mathbb{D} ([0,T];{\rH}_{w}) \quad  {\mbox{ on } \Omegabar},
\label{eqn-conv_D(0,T;H_w)}
\\
\tun \to \tu \mbox{ in }  \mathbb{D} ([0,T];\rU^\prime) \quad  {\mbox{ on } \Omegabar}.
\label{eqn-conv_D(0,T;U')}
\end{align}
To close this subsection we state the following important remark.
\begin{remark}\label{rem-process ubar} It follows 
from the definition \eqref{eqn-Z_cadlag_NS} of the space  $ \zcal_T={L}_{w}^{2}(0,T;V) \cap {L}^{2}(0,T;{\rH}_{\loc}) \cap \mathbb{D} ([0,T];U^\prime)
\cap \mathbb{D} ([0,T];{\rH}_{w}) $
that
the  $ \zcal_T $-valued random variables $\tu$ defined on $\bigl( \Omegabar ,\tfcal ,\Pbar  \bigr) $ induces
a  measurable process (denoted by the same letter)
\[
\tu\colon [0,T]\times \Omegabar \to V \cap {\rH}_{\loc},
\]
whose trajectories belong  to the space
$\zcal_T$, see e.g. \cite[Proposition B.1]{Brzezniak+Rana_2021}.
\end{remark}

\subsection{A priori estimates for the processes on the new probability space}
\label{subsec_a_priori}

In the earlier parts of the paper  we viewed  $\tu$ as a $ \zcal_T $-valued random variable, but from now we treat it as an $\rU^\prime$-valued process, which can be done as in Remark \ref{rem-process ubar}, see also \cite[Proposition\ B.1]{Brzezniak+Rana_2021}. Moreover, in view of the Kuratowski Theorem, we can view it as an $\rH$-valued process as well and
as an  important consequence of \eqref{eqn-conv_D(0,T;H_w)} and \cite[Lemma 2]{Brzezniak+Hornung+Manna}
the trajectories of the  sequence of processes $(\tun)$ are uniformly bounded in $\rH$, i.e.\ they satisfy the following condition
		\begin{align}\label{eqn-boundedness_H}
	\sup_{n\in\N}\sup_{\tinOT}\normb{\tun(t)}{\rH}{}+\sup_{\tinOT}\normb{\tu(t)}{\rH}{}<\infty, \;\; \mbox{ on } \Omegabar.
	\end{align}
This observation will be crucial in our argument below. Since every weakly convergent sequence is bounded we infer  from \eqref{eqn-conv_L^2_w(0,T;V)} that
		\begin{align}\label{eqn-boundedness_L^2(0,T,V)}
	\sup_{n\in\N}  \normb{\tun}{{L}^{2}(0,T;V)}{} + \normb{\tu}{{L}^{2}(0,T;V)}{}<\infty,\;\; \mbox{ on } \Omegabar.
	\end{align}

We finish these preliminary comments on the conclusions from the Skorohod Theorem by the following result.

\begin{lemma}\label{lem_conv_L^2_w(0,T;H)} The following two assertions hold {on  $\Omegabar$}
\begin{align}\label{eqn-conv_L^2_w(0,T;H)}
\tun \to \tu \mbox{ \it in } {L}_{w}^{2}(0,T;{\rH})
\end{align}
and,  for every $v \in L^2(0,T;V)$,
\begin{align}\label{eqn-conv_L^2_w(0,T;V-dot)}
\int_0^T \ip{\nabla \tun (s)}{\nabla v(s)}{\bL^2}  \ds \to \int_0^T  \ip{\nabla \overline{u}(s)}{\nabla v(s)}{\bL^2} \ds.
\end{align}
\end{lemma}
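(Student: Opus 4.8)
The plan is to prove the weak $L^2(0,T;\rH)$ convergence \eqref{eqn-conv_L^2_w(0,T;H)} first, and then to read off \eqref{eqn-conv_L^2_w(0,T;V-dot)} by combining it with the weak convergence in $L^2_w(0,T;\rV)$ that we already have from \eqref{eqn-conv_L^2_w(0,T;V)}. For the first assertion I would start from the observation that, by \eqref{eqn-boundedness_H}, on $\Omegabar$ the sequence $(\tun)$ is bounded in $L^2(0,T;\rH)$ uniformly in $n$, since $\normb{\tun}{L^2(0,T;\rH)}{2}\le T\,\sup_{\tinOT}\normb{\tun(t)}{\rH}{2}$, and the same bound holds for $\tu$ (so in particular $\tu\in L^2(0,T;\rH)$). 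For a sequence bounded in the Hilbert space $L^2(0,T;\rH)$, weak convergence to $\tu$ will follow once we verify $\int_0^T \ip{\tun(s)}{g(s)}{\rH}\ds\to\int_0^T\ip{\tu(s)}{g(s)}{\rH}\ds$ for $g$ ranging over a dense subset of $L^2(0,T;\rH)$; the uniform bound then extends this to every $g\in L^2(0,T;\rH)$ by a routine $3\varepsilon$-argument.

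The dense subset I would use is
\[
D \coloneqq \bigl\{ g \in L^2(0,T;\rH):\ \exists\, R\in\mathbb{N},\ g(s)\in\HR \text{ for Leb-a.e. } s\in[0,T]\bigr\}.
\]
Because $\vcal\subset\bigcup_{R}\HR$ and $\vcal$ is dense in $\rH$ by \eqref{eqn-H}, the $\rH$-valued step functions with values in $\bigcup_R\HR$ are dense in $L^2(0,T;\rH)$, so $D$ is dense. For $g\in D$ with $g(s)$ supported in $B_R$, the pairing only involves $\tun-\tu$ restricted to $(0,T)\times B_R$, so Cauchy--Schwarz gives
\[
\Bigl|\int_0^T \ip{\tun(s)-\tu(s)}{g(s)}{\rH}\ds\Bigr|\le q_{T,R}(\tun-\tu)\,\normb{g}{L^2(0,T;\rH)}{},
\]
and the right-hand side tends to $0$ by the local strong convergence \eqref{eqn-conv_L^2(0,T;H_loc)}. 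This yields \eqref{eqn-conv_L^2_w(0,T;H)}.

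For the second assertion I would simply test the weak convergence \eqref{eqn-conv_L^2_w(0,T;V)} against an arbitrary $v\in L^2(0,T;\rV)$ and expand the $\rV$-inner product by \eqref{eqn-norm_V}, i.e.\ $\ip{u}{w}{\rV}=\ip{u}{w}{\rH}+\ip{\nabla u}{\nabla w}{\bL^2}$, obtaining
\[
\int_0^T \ip{\tun}{v}{\rH}\ds+\int_0^T\ip{\nabla\tun}{\nabla v}{\bL^2}\ds\longrightarrow\int_0^T\ip{\tu}{v}{\rH}\ds+\int_0^T\ip{\nabla\tu}{\nabla v}{\bL^2}\ds.
\]
Since $v\in L^2(0,T;\rV)\subset L^2(0,T;\rH)$, the first assertion \eqref{eqn-conv_L^2_w(0,T;H)} already handles the two $\rH$-pairings; cancelling them leaves exactly \eqref{eqn-conv_L^2_w(0,T;V-dot)}.

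The only genuinely delicate point is the interface between the $L^2(0,T;\rH_{\loc})$ topology, which controls $\tun-\tu$ only on bounded balls, and the global $\rH$-pairing used to define weak convergence. This is resolved precisely by the density of $D$: a general $g\in L^2(0,T;\rH)$ is approximated by spatially compactly supported elements, which is legitimate because the elements of $\vcal$ are compactly supported and dense in $\rH$. The remaining ingredients---the uniform bound from \eqref{eqn-boundedness_H} and the $3\varepsilon$-argument---are entirely standard.
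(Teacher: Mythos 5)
Your proposal is correct, but for the first assertion it takes a genuinely different route from the paper. The paper deduces \eqref{eqn-conv_L^2_w(0,T;H)} in one line from \eqref{eqn-conv_L^2_w(0,T;V)}: the natural embedding $i\colon L^2(0,T;\rV)\embed L^2(0,T;\rH)$ is bounded, so its adjoint $i^\ast\colon L^2(0,T;\rH)\to L^2(0,T;\rV)$ is bounded, and testing the $\rV$-weak convergence against $i^\ast g$ gives $\rH$-weak convergence immediately. You instead bypass the $\rV$-weak convergence entirely and build \eqref{eqn-conv_L^2_w(0,T;H)} from the uniform bound \eqref{eqn-boundedness_H} together with the local strong convergence \eqref{eqn-conv_L^2(0,T;H_loc)}, via density of spatially compactly supported test functions and a $3\varepsilon$-argument; this is sound (the density of your set $D$ follows from the density of $\vcal$ in $\rH$ and of step functions in $L^2(0,T;\rH)$, and the Cauchy--Schwarz estimate against $q_{T,R}(\tun-\tu)$ is exactly what the $L^2(0,T;{\rH}_{\loc})$ topology controls), just longer than necessary given that \eqref{eqn-conv_L^2_w(0,T;V)} is already in hand. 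What your route buys is a proof that would survive even without weak $L^2(0,T;\rV)$ convergence, using only the two ``cheaper'' pieces of information; what the paper's route buys is brevity. For the second assertion your argument --- expand $\ip{\cdot}{\cdot}{\rV}=\ip{\cdot}{\cdot}{\rH}+\ip{\nabla\cdot}{\nabla\cdot}{\bL^2}$, test \eqref{eqn-conv_L^2_w(0,T;V)} against $v$, and cancel the $\rH$-pairings using \eqref{eqn-conv_L^2_w(0,T;H)} --- is exactly the paper's proof of \eqref{eqn-conv_L^2_w(0,T;V-dot)}.
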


\begin{proof}[Proof of Lemma \ref{lem_conv_L^2_w(0,T;H)}]
Let us choose and fix an arbitrary $\omega \in \Omegabar$. Then, by   \eqref{eqn-conv_L^2_w(0,T;V)}, $\tun=\tun(\omega) \to \tu=\tu(\omega)$ weakly in $L^2(0,T;V)$.
Since the natural embedding $i\colon  L^2(0,T;V) \embed L^2(0,T;{\rH})$ is linear and bounded,  its adjoint $i^\ast\colon  L^2(0,T;{\rH}) \to L^2(0,T;V)$ is linear and bounded as well. Hence  we infer that $\tun \to \tu$ weakly in $L^2(0,T;{\rH})$.\\
The second claim follows by observing that for every  $v \in L^2(0,T;V)$,
\begin{align*}
\int_0^T \ip{\nabla \overline{u}_n(s)}{\nabla v(s)}{\bL^2} \ds
= \int_0^T\ip{\overline{u}_n(s)}{v(s)}{V} - \ip{\overline{u}_n(s)}{v(s)}{H} \ds,
\end{align*}
from which along with \eqref{eqn-conv_L^2_w(0,T;V)} and \eqref{eqn-conv_L^2_w(0,T;H)} we see  that
\begin{align*}
\int_0^T \ip{\nabla \overline{u}_n(s)}{\nabla v(s)}{\bL^2} \ds &\to \int_0^T\ip{\overline{u}(s)}{v(s)}{V} - \ip{\overline{u}(s)}{v(s)}{H} \ds \\
&\qquad = \int_0^T \ip{\nabla \overline{u}(s)}{\nabla v(s)}{\bL^2} \ds.
\end{align*}
\end{proof}

We also have the following estimates.
\begin{lemma}\label{lem-Galerkin_estimates'}
\begin{enumerate}
\item $\overline{u}_n$ is almost surely in $\mathbb{D}([0,T];P_nH)$ and for any $p\in [2, 4+\gamma)$ there exists a constant $C_1(p)>0$
\begin{equation} \label{eqn-H_estimate'}
 \sup_{n\in \mathbb{N} } \overline{\mathbb{E}}\left[ \sup_{ s\in [0,T] } \bigl| \tun (s){\bigr| }_{\rH}^{p}\right] \le {C}_{1}(p),
\end{equation}
\begin{equation} \label{eqn-V_estimate'}
  \sup_{n\in \mathbb{N} } \overline{\mathbb{E}}\left[ \int_{0}^{T} {\bigl\| \tun (s)\bigr\| }_{\rV}^{2} \ds \right] \le {C}_{2}.
\end{equation}
\item $\overline{u}$ satisfies the following estimates
\begin{equation} \label{eqn-tu_V_estimate}
\overline{\mathbb{E}} \left[ \int_{0}^{T} \lVert \tu (s)\rVert_{\rV}^{2}\ds \right] < \infty,
\end{equation}
\begin{equation} \label{eqn-tu_H_estimate}
\overline{\mathbb{E}}\left[ \sup_{\sinOT } \bigl| \tu (s){\bigr| }_{\rH}^{2}\right] < \infty.
\end{equation}
\end{enumerate}
\end{lemma}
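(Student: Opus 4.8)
The plan is to deduce Part~(i) from the a priori bounds for the Galerkin solutions in Lemma~\ref{lem-Galerkin_estimates} by transferring them along the equality of laws of $\unk$ and $\tunk$ on $\zcal_T$, and then to obtain Part~(ii) for the limit $\tu$ from the uniform bounds of Part~(i) via Fatou's lemma combined with the weak lower semicontinuity of the relevant norms.

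For Part~(i), I would introduce the two path functionals
\begin{equation*}
\Phi_p(u)=\sup_{s\in[0,T]}\lvert u(s)\rvert_{\rH}^{p},
\qquad
\Psi(u)=\int_0^T \lVert u(s)\rVert_{\rV}^{2}\ds,
\qquad u\in\zcal_T,
\end{equation*}
and show that both are Borel measurable on $(\zcal_T,\tcal)$; in fact each is sequentially lower semicontinuous. For $\Psi$ this is the weak lower semicontinuity of the $L^2(0,T;\rV)$-norm, which is already controlled by the topology $\tcal_1$ of $L^2_w(0,T;V)$. For $\Phi_p$ one reduces the supremum over $[0,T]$ to a supremum over a countable dense set by right-continuity of the paths in $\mathbb{D}([0,T];\rH_w)$, and then combines the weak lower semicontinuity of $\lvert\cdot\rvert_{\rH}$ with the time-change characterisation of the Skorokhod topology $\tcal_4$. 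Since $\tunk$ and $\unk$ have the same law on $\zcal_T$ and these functionals are nonnegative and measurable, monotone approximation gives $\overline{\mathbb{E}}[\Phi_p(\tunk)]=\mathbb{E}[\Phi_p(\unk)]$ and $\overline{\mathbb{E}}[\Psi(\tunk)]=\mathbb{E}[\Psi(\unk)]$, whose right-hand sides are bounded uniformly in $n$ by \eqref{eqn-H_estimate-p>2} and \eqref{eqn-V_estimate}. This yields \eqref{eqn-H_estimate'} and \eqref{eqn-V_estimate'}. The assertion $\tun\in\mathbb{D}([0,T];P_n\rH)$ almost surely follows in the same spirit: the Galerkin solution $u_n$ lives in the finite-dimensional closed subspace $\rH_n=P_n\rH$, the set $\{u\in\zcal_T:\ u(s)\in P_n\rH$ for all $s\in[0,T]\}$ is Borel, and equality of laws transfers this full-measure property from $\unk$ to $\tunk$.

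For Part~(ii), I would fix $\omega\in\Omegabar$ and use the Skorokhod convergence \eqref{eqn-Skorokhod_appl_convergence}. From $\tun\to\tu$ in $L^2_w(0,T;V)$, see \eqref{eqn-conv_L^2_w(0,T;V)}, and weak lower semicontinuity of the $L^2(0,T;\rV)$-norm one gets $\Psi(\tu)\le\liminf_n\Psi(\tun)$ pointwise on $\Omegabar$, so Fatou's lemma together with \eqref{eqn-V_estimate'} gives
\begin{equation*}
\overline{\mathbb{E}}\left[\int_0^T\lVert\tu(s)\rVert_{\rV}^{2}\ds\right]
\le\liminf_{n\to\infty}\overline{\mathbb{E}}\left[\int_0^T\lVert\tun(s)\rVert_{\rV}^{2}\ds\right]\le C_2,
\end{equation*}
which is \eqref{eqn-tu_V_estimate}. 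In the same way, $\tun\to\tu$ in $\mathbb{D}([0,T];\rH_w)$, see \eqref{eqn-conv_D(0,T;H_w)}, and the lower semicontinuity of $\Phi_2$ established above give $\Phi_2(\tu)\le\liminf_n\Phi_2(\tun)$ pointwise, and a second application of Fatou with \eqref{eqn-H_estimate'} for $p=2$ yields \eqref{eqn-tu_H_estimate}.

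I expect the main obstacle to be the verification that $\Phi_p$ is sequentially lower semicontinuous, and hence Borel measurable, on $\mathbb{D}([0,T];\rH_w)$ with its Skorokhod topology: the supremum ranges over an uncountable set while the state space carries only the weak topology of $\rH$, so one must simultaneously exploit the c\`adl\`ag structure (to restrict to a countable dense set and to deal with the jump times), the weak lower semicontinuity of $\lvert\cdot\rvert_{\rH}$, and the definition of $\tcal_4$ through uniformly small time changes. Once this functional-analytic point is settled, the remainder is the routine equality-of-laws-plus-Fatou scheme; if one prefers, the uniform bound \eqref{eqn-boundedness_H} can be used to dominate the pointwise $\liminf$s directly.
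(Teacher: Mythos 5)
Your proposal is correct. Part~(i) is essentially the paper's argument: the paper likewise establishes Borel measurability of the two path functionals by lower semicontinuity (citing Jakubowski's result that the sets $\{u\in\mathbb{D}([0,T];{\rH}_w):\sup_t\abs{u(t)}_{\rH}\le r\}$ are closed, and the weak lower semicontinuity of the $L^2(0,T;\rV)$-norm), handles the $\mathbb{D}([0,T];P_n\rH)$ claim by showing that this space is a Borel subset of $\mathbb{D}([0,T];\rU^\prime)\cap L^2(0,T;\rH_{\loc})$ (via the Kuratowski theorem rather than your countable-dense-set reduction), and then transfers the bounds of Lemma \ref{lem-Galerkin_estimates} along the equality of laws. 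Part~(ii) is where you genuinely diverge: the paper does not use Fatou at all, but instead applies the Banach--Alaoglu theorem to the uniform bounds \eqref{eqn-V_estimate'} and \eqref{eqn-H_estimate'} to extract subsequences converging weakly in $L^2([0,T]\times\Omegabar;\rV)$ and weakly-star in $L^2(\Omegabar;L^\infty(0,T;\rH))$, identifies the limits with $\tu$ using the $\Pbar$-a.s.\ convergence in $\zcal_T$, and concludes $\tu$ belongs to those Bochner spaces. Your route --- pointwise-in-$\omega$ lower semicontinuity of $\Psi$ and $\Phi_2$ along the Skorokhod convergence, followed by Fatou --- is more elementary, reuses the measurability work already done for Part~(i), and sidesteps the subsequence-identification step; the paper's route yields the slightly stronger by-product that the whole sequence converges weakly (resp.\ weakly-star) in the product-space sense, which is of independent use. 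Both are valid proofs of the stated estimates.
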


\begin{proof}
(i) Let us  observe that by the Kuratowski Theorem, see \cite[Theorem\ I.3.9]{Parthasarathy_1967},
$\mathbb{D} ([0,T];{P}_{n} H)$ is a Borel subset of $\mathbb{D} ([0,T];\rU^{\prime }) \cap {L}^{2}(0,T;{\rH}_{\loc})$. Indeed, both spaces $\mathbb{D} ([0,T];{P}_{n} H)$  and $\mathbb{D} ([0,T];\rU^{\prime }) \cap {L}^{2}(0,T;{\rH}_{\loc})$ are Polish and the natural embedding of the former into the latter is continuous (and hence Borel measurable) and obviously injective.
Since ${u}_{n}\in \mathbb{D} ([0,T];{P}_{n} H)$ $\mathbb{P}$-a.s.\ and $\tun $ and ${u}_{n}$ have the same laws, we infer that
\begin{align*}
  \lcal (\tun ) \bigl( \mathbb{D} ([0,T]; {P}_{n} H )  \bigr) =1 , \qquad n \ge 1 .
\end{align*}

Firstly, note that the function 
\begin{equation}\label{measurable_on_D}
\mathscr{Z}_T \ni u \mapsto \sup_{ s\in [0,T] } \bigl| u(s){\bigr| }_{\rH}^{p}
\end{equation}
is measurable.
Indeed, similarly to \cite[Appendix E]{Brzezniak_Ferrario_Zanella}, it is enough to show that the function \eqref{measurable_on_D} is lower semicontinuous.
This follows from the fact that the set
\[\{u \in D([0, T],H_w) : \displaystyle \sup_{t\in [0,T]} \abs{u(t)}_H \leq r\}\]

is closed in $D([0, T],H_w)$, see \cite[Proposition 1.6(v)]{Jakubowski_1986}.

The function
\begin{equation}
    \mathscr{Z}_T \ni u \mapsto \int_{0}^{T} {\bigl\| u(s)\bigr\| }_{\rV}^{2} \ds
\end{equation}
is also measurable since  it is $\mathscr{B}(L_w^2(0,T;V))/\mathscr{B}(\mathbb{R})$ measurable. Indeed, it is lower-semicontinuous in view of Corolaries to \cite[Theorem 3.12]{Rudin_functional}.

Since $\tun $ and ${u}_{n}$ have the same laws, and $\mathbb{D} ([0,T];{P}_{n} H)$ is a Borel subset of
$\mathbb{D} ([0,T];\rU^{\prime }) \cap {L}^{2}(0,T;{\rH}_{\loc})$, thus  by (\ref{eqn-V_estimate})  and (\ref{eqn-H_estimate-p>2}) with $p=2$  we obtain the estimates \eqref{eqn-H_estimate'} and \eqref{eqn-V_estimate'}.

(ii) By the Banach-Alaoglu Theorem and  inequality (\ref{eqn-V_estimate'}) we infer that the sequence $(\tun )$ contains subsequence, still denoted by $(\tun )$, which is  weakly   convergent in the space ${L}^{2}([0,T]\times \Omegabar ; V )$.
Since by (\ref{eqn-Skorokhod_appl_convergence}) $\Pbar $-a.s.\ $\tun \to \tu $ in $\zcal_T $, we conclude that $\tun  \to \tu$  weakly in  ${L}^{2}([0,T]\times \Omegabar ; V )$ and that
$\tu \in {L}^{2}([0,T]\times \Omegabar ; V )$, i.e.\ \eqref{eqn-tu_V_estimate} holds.

Similarly, by the Banach-Alaoglu Theorem and inequality (\ref{eqn-H_estimate'}) with $p=2$ we can choose a subsequence of $(\tun )$, still denoted by $(\tun )$, which is   weakly-star convergent in the space ${L}^{2}(\Omegabar ; {L}^{\infty }(0,T;{\rH}))$.
Since by \eqref{eqn-conv_D(0,T;H_w)}  $\Pbar $-a.s.\ $\tun \to \tu $ in $\mathbb{D} ([0,T];{\rH}_{w})$,  we infer that $\tun  \to \tu $  weakly-star  in  ${L}^{2}(\Omegabar ; {L}^{\infty }(0,T;{\rH}))$  and that \eqref{eqn-tu_H_estimate} holds.
\end{proof}

\section{Auxiliary Martingales} \label{sec-existence-II}

In this section, whenever we use Assumption F.\ref{assum-F5},  we specify this explicitly.  \label{assum-F5-used-4} The main results in this section are Lemmata \ref{lem-tM is a martingale} and \ref{lem-tN is a martingale}.

\medskip

Let us recall, see Lemma \ref{lem-Galerkin_existence},  that  for each $n \in \mathbb{N} $,  the process  ${u}_{n}$ is the unique  $\fmath $-adapted,  c\`{a}dl\`{a}g ${\rH}_{n}$-valued solution to the Galerkin approximation equation (\ref{eqn-Galerkin}).

For every $n \in \mathbb{N}$ and $\phi \in\rU$ let us define the following two ${\rH}_n$-,  and respectively, $\mathbb{R
}$-valued processes ${{M}}_{n}$ and   $N_{n,\phi}$, on the  original probability space $\bigl( {\Omega}, \fcal, {\mathbb{P}}, {\mathbb{F}} \bigr)$, by, for $t \in [0,T]$,
\begin{equation}  \label{eqn-M_n}
{{M}}_{n} (t)
   \coloneqq u_{n} (t) - u_n (0)  + \int_{0}^{t} {P}_{n} \acal u_{n} (s) \ds
  + \int_{0}^{t} \rB_n  \bigl( u_{n} (s)  \bigr) \ds
  - \int_{0}^{t} {P}_{n} f(s) \ds
\end{equation}
and
\begin{align}\nonumber
  N_{n,\phi}(t) &\coloneqq\duali{u_{n} (t)}{\phi}{2} - \duali{ u_n(0)}{\phi}{2} \\
  & +2  \int_0^t \duality{{P}_{n} \acal u_{n} u(s) \,+ \rB_{n}(u_{n} (s)){-P_nf(s)}}{\phi}\;\duality{u_{n} (s)}{\phi} \, \ds
 \notag  \\
  &-\int_0^t \int_{Y} \langle {P}_{n}  F(s,u_{n} (s),y),\phi \rangle^2 \, \nun(\ud y) \ds ,  \quad
t \in [0,T].
  \label{eqn-N_n,phi}
\end{align}

It follows from identity \eqref{eqn-Galerkin} that
\begin{equation}  \label{eqn-M_n_2}
{{M}}_{n} (t)=\int_{0}^{t} \int_{Y} {P}_{n} F(s,{u}_{n} ({s}{-}),y)\; \tilde{\eta} (\ud s,\ud y) , \;\;\; \tinOT.
\end{equation}

\begin{lemma}\label{lemma-M_N martingale}
For every $n \in \mathbb{N}$, the process ${{M}}_{n}$ is a square integrable $\mathbb{F}$-martingale and 
\begin{align}
\label{eqn-M_n_3}
\sup_{n \in \mathbb{N}}\mathbb{E} \left[ \sup_{\tinOT}\bigl\vert  {{M}}_{n} (t) \bigr\vert_{\rH}^2 \right]
&\leq C_2 \sup_{n \in \mathbb{N}}
\mathbb{E} \Bigl[   \int_0^{T} \int_{Y} \rvert P_n F(s, u_n(s),y)\vert_{\rH}^2 \, \nun(\ud y) \ds
\Bigr]
\\
&\leq C_2 \sup_{n \in \mathbb{N}} \left( 1+ \mathbb{E} \left[ \int_0^T  \lvert u_n (t) \rvert^2_\rH \dt \right] \right) < \infty. 
\label{eqn-M_n_3_B}
\end{align}
Moreover, if Assumption F.\ref{assum-F5} \label{assum-F5-used-5} is satisfied,  then for every $p\in [2, 4+\gamma] $ and for every $n \in \mathbb{N}$, the process ${{M}}_{n}$ is a $p$-integrable $\mathbb{F}$-martingale
\begin{align}\label{eqn-M_n_3-p}
\sup_{n \in \mathbb{N}}\mathbb{E} \left[ \sup_{\tinOT}\bigl\vert  {{M}}_{n} (t) \bigr\vert_{\rH}^p \right]
&\leq C_p \; \sup_{n \in \mathbb{N}}
\mathbb{E} \Bigl[   \Bigl( \int_0^{T} \int_{Y} \rvert P_n F(s, u_n(s),y)\vert_\rH^2 \, \nun(\ud y) \ds \Bigr)^\frac{p}2
\\
\nonumber
&\hspace{2truecm}+\int_0^{T} \int_{Y} \rvert P_n F(s, u_n(s),y)\vert_\rH^p \, \nun(\ud y) \ds
\Bigr]
\\
\label{eqn-M_n_3-p-B}
&\leq C_p \sup_{n \in \mathbb{N}} \left( 1+ \mathbb{E} \left[ \int_0^T  \lvert u_n (t) \rvert^p_\rH \dt \right] \right) < \infty .
\end{align}
The constants $C_2$, $C_p$ are generic positive constants which can change from line to line but which do not depend on $n$.
\end{lemma}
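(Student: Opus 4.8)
The plan is to exploit the representation \eqref{eqn-M_n_2}, which exhibits $M_n$ as the stochastic integral of the integrand $g_n(s,y)\coloneqq P_n F(s,u_n(s-),y)$ against the compensated Poisson random measure $\tilde\eta$, and then to apply the theory of such integrals recalled after \eqref{eqn-H.6} together with the appropriate moment inequalities. The first step is to verify that $g_n$ belongs to the class $\mathbf{F}_p^2$. Predictability is immediate, since $u_n$ is $\mathbb{F}$-adapted and c\`{a}dl\`{a}g by Lemma~\ref{lem-Galerkin_existence}, so $u_n(\cdot-)$ is predictable, $F$ is measurable by Assumption F.\ref{assum-F2}, and $P_n$ is bounded and linear. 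For the square-integrability I would combine $\Vert P_n\Vert_{\mathscr{L}(\rH)}=1$ from \eqref{eqn-P_n contraction on H} with the linear growth bound \eqref{eqn-F_linear_growth} to obtain the pointwise estimate
\[
\int_Y \normb{P_n F(s,u_n(s),y)}{\rH}{2}\,\nun(\ud y)\le \int_Y \normb{F(s,u_n(s),y)}{\rH}{2}\,\nun(\ud y)\le C_2\bigl(1+\normb{u_n(s)}{\rH}{2}\bigr),
\]
and then integrate in $s$ and take expectation, invoking \eqref{eqn-H_estimate-p>2} with $p=2$ to conclude finiteness, uniformly in $n$. This places $g_n$ in $\mathbf{F}_p^2$ and hence $M_n$ is a square-integrable $\mathbb{F}$-martingale.

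For the bound \eqref{eqn-M_n_3} I would apply Doob's $L^2$-inequality followed by the It\^o isometry \eqref{eqn-Ito isometry}, which gives $\E[\sup_{\tinOT}\normb{M_n(t)}{\rH}{2}]\le 4\,\E\int_0^T\int_Y\normb{g_n(s,y)}{\rH}{2}\,\nun(\ud y)\,\ds$; the estimate \eqref{eqn-M_n_3_B} is then precisely the displayed growth bound combined once more with \eqref{eqn-H_estimate-p>2}. For the $p$-integrable refinement under Assumption F.\ref{assum-F5}, the plan is to replace Doob's inequality by a Burkholder--Davis--Gundy (Kunita-type) inequality for Poisson stochastic integrals, which is exactly what yields the two structurally distinct terms appearing in \eqref{eqn-M_n_3-p}: the compensator bracket term raised to the power $p/2$ and the separate $L^p$ term. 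I would estimate the former using \eqref{eqn-F_linear_growth} and Jensen's inequality, and the latter using the interpolated growth bound \eqref{eqn-F_linear_growth-p} (which itself follows from F.\ref{assum-F2} and F.\ref{assum-F5}); the a priori estimate \eqref{eqn-H_estimate-p>2} for the exponent $p\in[2,4+\gamma]$ then controls both, yielding \eqref{eqn-M_n_3-p-B}.

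The main obstacle is the correct form and application of the jump-type BDG/Kunita inequality. In contrast to the continuous-martingale case it produces two terms rather than one, and the delicate point is to ensure that the resulting constant $C_p$ is genuinely independent of $n$. This $n$-uniformity rests entirely on the fact that $\Vert P_n\Vert_{\mathscr{L}(\rH)}=1$ for every $n$, so that projecting by $P_n$ never increases the relevant $\rH$-norms, together with the $n$-uniform a priori estimates of Lemma~\ref{lem-Galerkin_estimates}. Once these ingredients are assembled the remaining manipulations are routine.
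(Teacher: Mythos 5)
Your proposal is correct and follows essentially the same route as the paper's proof: identify $M_n$ via \eqref{eqn-M_n_2} as a compensated Poisson stochastic integral with integrand in $\mathbf{F}_p^2$, obtain \eqref{eqn-M_n_3} from a maximal inequality for square-integrable martingales (the paper invokes the BDG inequality of M\'etivier where you use Doob plus the It\^o isometry \eqref{eqn-Ito isometry} — an equivalent step for $p=2$), and obtain the two-term bound \eqref{eqn-M_n_3-p} from a Kunita/BDG-type inequality for Poisson integrals (the paper cites inequality (2.32) of \cite[Theorem 2.6]{Zhu+Brze+Liu_2019}), with the $n$-uniformity coming exactly as you say from $\Vert P_n\Vert_{\mathscr{L}(\rH)}=1$, the growth conditions \eqref{eqn-F_linear_growth}, \eqref{eqn-F_linear_growth-p} and the a priori estimates \eqref{eqn-H_estimate-p>2}. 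Your explicit verification that the integrand lies in $\mathbf{F}_p^2$ is a detail the paper leaves implicit, but it does not change the argument.
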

\begin{proof}[Proof of Lemma \ref{lemma-M_N martingale}. Part I]
Let us first observe that because of \eqref{eqn-M_n_2}, the process $M_n$ is a square-integrable $H$-valued martingale.  Thus inequality 
\eqref{eqn-M_n_3} 
follows from     the BDG inequality, see \cite[Theorem 20.6]{Metivier}.
The first inequality in \eqref{eqn-M_n_3_B} follows by applying  
the linear growth condition \eqref{eqn-F_linear_growth} in Assumption F.\ref{assum-F2}.\label{assum-F2-used2}
The second inequality in \eqref{eqn-M_n_3_B} follows from inequality (\ref{eqn-H_estimate-p>2}) with $p=2$.
\end{proof}

\begin{proof}[Proof of Lemma \ref{lemma-M_N martingale}; part II] 
Let us choose and fix $p \in (2, 4+\gamma]$. 
The first part of inequality \eqref{eqn-M_n_3-p} follows from inequality (2.32) in \cite[Theorem 2.6]{Zhu+Brze+Liu_2019}. Inequality \eqref{eqn-M_n_3-p-B} is a consequence of \eqref{eqn-M_n_3-p} and the linear growth assumption \eqref{eqn-F_linear_growth-gamma} in Assumption \ref{assum-F5}.
The second  part of inequality \eqref{eqn-M_n_3-p-B} follows from the first one and inequality \eqref{eqn-H_estimate-p>2} in Lemma \ref{lem-Galerkin_estimates}.
\end{proof}

For every  $n \in \mathbb{N}$ and $\phi \in\rU$, let us define another pair of   processes ${\Mbar}_{n} $ and $\Nbar_{n,\phi}$ by
\begin{equation}
\label{eqn-tMn}
\begin{aligned}
{\Mbar}_{n} (t)
   &= \tun (t) \,  -   {P}_{n} \tu (0)  + \int_{0}^{t} {P}_{n} \acal \tun (s) \ds
  \\ &+ \int_{0}^{t} {\rB}_{n}  \bigl( \tun (s)  \bigr) \ds
  - \int_{0}^{t} {P}_{n} f(s) \ds, \;\; \tinOT
  \end{aligned}
\end{equation}
and
\begin{equation}
\begin{aligned}
  \Nbar_{n,\phi}(t) &=\duali{\tun (t)}{\phi}{2} - \duali{\overline{u}_n(0)}{\phi}{2} \notag \\
  &+2  \int_0^t \duality{{P}_{n} \acal \tun u(s)+B_{n}(\tun (s))-P_nf(s) }{\phi}\;\duality{\tun (s)}{\phi} \ds\notag  \\
  &-\int_0^t \int_{Y} \langle {P}_{n}  F(s,\tun (s),y),\phi \rangle^2 \,  \nun(\ud y) \ds, \;\; \tinOT.
  \end{aligned}
  \label{eqn-N_n_bar^phi}
\end{equation}

Since $H_n$ is finite dimensional and these processes are c\`adl\`ag in $H$ equipped with the weak topology, it follows that  ${\Mbar}_{n}$ has  trajectories in $\mathbb{D} ([0,T];H_n)$ and therefore also in $\mathbb{D} ([0,T];H) $ and $ \mathbb{D} ([0,T];\rU^\prime)$. Consequently, $\Nbar_{n,\phi}$ has trajectories in  $\mathbb{D} ([0,T];\mathbb{R}) $.

Also, since $\tun$ has trajectories in $\mathbb{D} ([0,T];H_n)$ and $\phi\in U\subset H$, by \eqref{eqn-U'-H_2} we have 
\begin{equation}
\begin{aligned}
  \Nbar_{n,\phi}(t) &=\ip{\tun (t)}{\phi}{H}^2 - \ip{\overline{u}_n(0)}{\phi}{H}^2 \notag \\
  &+2  \int_0^t \duality{{P}_{n} \acal \tun u(s)+B_{n}(\tun (s))}{\phi}\;\duality{\tun (s)}{\phi} \ds\notag  \\
  &-\int_0^t \int_{Y} \langle {P}_{n}  F(s,\tun (s),y),\phi \rangle^2 \,  \nun(\ud y) \ds, \;\; \tinOT.
  \end{aligned}
  \label{eqn-N_n_bar^phi-02}
\end{equation}

\begin{definition}\label{def-filtration_tF_n new}
Let  $\mathscr{N}$ be the family of $\Pbar$-null sets of the $\sigma$-field $\tfcal$. In what follows we will  introduce  a set of auxiliary filtrations. 
For every $n\in \mathbb{N}$,
 let $\mathbb{H}_n=(\mathscr{H}_{n,s}: s\geq 0)$,
 $\mathbb{H}_n^+=(\mathscr{H}_{n,t}^{+}: t\geq 0)$, ${\tF_n}=({\overline{\fcal }}_{n,t}:t\geq 0) $
  be  the filtrations defined by 
\begin{align}
\mathscr{H}_{n,s}&\coloneqq  \sigma \{ \tun (r), \, \, r \le s \}, \;\; s \geq 0, 
\\
\mathscr{H}_{n,t}^{+}&\coloneqq \bigcap_{s>t} \mathscr{H}_{n,s},  \;\; t \geq 0,
\\
{\overline{\fcal }}_{n,t} &\coloneqq \sigma\bigl(  \mathscr{H}_{n,t}^{+} \cup \mathscr{N}\bigr), \;\; t \geq 0,
\label{eqn-filtration_tF_n}
\end{align}
\end{definition}
We have the following result. 
\begin{proposition} \label{prop-filtration_tF_n new}
  The filtration ${\tF_n}$ is the $(\overline{\mathscr{F}},\overline{\mathbb{P}})$-completion,
in the sense of \cite[1.4]{Jacod_Shiryaev}, of the right-continuous filtration $\bigl(\mathscr{H}_{n,t}^{+}\bigr)_{t\geq 0}$ and 
the following equality holds 
\begin{equation}\label{eqn-filtration_limit-2}
 {\overline{\fcal }}_{n,t} = \bigcap_{s>t}  \sigma\bigl(  \mathscr{H}_{n,s} \cup \mathscr{N}\bigr).
\end{equation}
In particular,  the filtration ${\tF_n}$ is the usual augmentation of filtration $\mathbb{H}_n$ and satisfies the usual hypothesis, see   \cite[1.4]{Jacod_Shiryaev}.
    \end{proposition}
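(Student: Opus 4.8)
The plan is to dispatch the two assertions separately. The first is essentially definitional, while the second rests on a reverse (backward) martingale convergence argument.

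To begin, I would note that by the very definition \eqref{eqn-filtration_tF_n} the $\sigma$-field $\overline{\mathscr{F}}_{n,t}=\sigma(\mathscr{H}_{n,t}^{+}\cup\mathscr{N})$ is obtained from $\mathscr{H}_{n,t}^{+}$, the $t$-th member of the right-continuous filtration $\mathbb{H}_n^{+}$, by adjoining all $\overline{\mathbb{P}}$-null sets. This is exactly the $(\overline{\mathscr{F}},\overline{\mathbb{P}})$-completion in the sense of \cite[1.4]{Jacod_Shiryaev}, so the first statement requires nothing further.

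The heart of the matter is the equality \eqref{eqn-filtration_limit-2}. The inclusion $\overline{\mathscr{F}}_{n,t}\subseteq\bigcap_{s>t}\sigma(\mathscr{H}_{n,s}\cup\mathscr{N})$ is immediate, since $\mathscr{H}_{n,t}^{+}\subseteq\mathscr{H}_{n,s}$ for every $s>t$, whence $\sigma(\mathscr{H}_{n,t}^{+}\cup\mathscr{N})\subseteq\sigma(\mathscr{H}_{n,s}\cup\mathscr{N})$ for each such $s$, and so it is contained in the intersection. For the reverse inclusion I would fix a set $A$ in the intersection and put $X=\1_A$. Picking a sequence $s_k\downarrow t$, one checks that $\bigcap_k\mathscr{H}_{n,s_k}=\mathscr{H}_{n,t}^{+}$. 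I would then invoke the standard fact that augmenting by null sets leaves conditional expectations unchanged, i.e.\ $\overline{\mathbb{E}}[X\mid\sigma(\mathscr{H}_{n,s}\cup\mathscr{N})]=\overline{\mathbb{E}}[X\mid\mathscr{H}_{n,s}]$ $\overline{\mathbb{P}}$-a.s.; since $A\in\sigma(\mathscr{H}_{n,s_k}\cup\mathscr{N})$, the left-hand side equals $X$, so $\overline{\mathbb{E}}[X\mid\mathscr{H}_{n,s_k}]=X$ a.s.\ for every $k$. By the reverse martingale convergence theorem applied to the decreasing family $(\mathscr{H}_{n,s_k})_k$, the left-hand side converges a.s.\ to $\overline{\mathbb{E}}[X\mid\mathscr{H}_{n,t}^{+}]$, and therefore $X=\overline{\mathbb{E}}[X\mid\mathscr{H}_{n,t}^{+}]$ a.s. Consequently $\1_A$ agrees $\overline{\mathbb{P}}$-a.s.\ with an $\mathscr{H}_{n,t}^{+}$-measurable function, so $A$ differs from a set of $\mathscr{H}_{n,t}^{+}$ by a null set and hence lies in $\overline{\mathscr{F}}_{n,t}$. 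I expect this to be the main obstacle: one must both justify the commutation of conditioning with the null-set augmentation and correctly pass to the limit $s_k\downarrow t$ through backward martingale convergence, since in general the completion and the right-continuization of a filtration need not commute.

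Finally, the usual hypothesis would follow formally from \eqref{eqn-filtration_limit-2}. Completeness holds because $\mathscr{N}\subseteq\overline{\mathscr{F}}_{n,0}\subseteq\overline{\mathscr{F}}_{n,t}$ by construction, and right-continuity is obtained from the computation $\bigcap_{r>t}\overline{\mathscr{F}}_{n,r}=\bigcap_{r>t}\bigcap_{s>r}\sigma(\mathscr{H}_{n,s}\cup\mathscr{N})=\bigcap_{s>t}\sigma(\mathscr{H}_{n,s}\cup\mathscr{N})=\overline{\mathscr{F}}_{n,t}$, using \eqref{eqn-filtration_limit-2} in the first and last equalities. This identifies $\tF_n$ with the usual augmentation of $\mathbb{H}_n$ and shows it satisfies the usual conditions.
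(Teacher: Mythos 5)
Your proof is correct. The paper itself disposes of the proposition in one line, by referring to the argument in the proof of Lemma \ref{Lem:Indep-Increments-augmentation} and to \cite[Lemma 7.8]{Kallenberg_2002}, i.e.\ it simply cites the standard fact that the right-continuization of the completed filtration coincides with the completion of the right-continuized one; you instead prove that fact from scratch. Your route --- reduce to $X=\1_A$, use that adjoining null sets does not change conditional expectations (this is exactly identity \eqref{eqn-expectation_wrt_extension} established in the paper's Appendix), and then apply backward martingale convergence along $s_k\downarrow t$ to the decreasing family $(\mathscr{H}_{n,s_k})_k$ with $\bigcap_k\mathscr{H}_{n,s_k}=\mathscr{H}_{n,t}^{+}$ --- is sound, and the final step (passing from $\1_A=\overline{\mathbb{E}}[\1_A\mid\mathscr{H}_{n,t}^{+}]$ a.s.\ to $A\in\sigma(\mathscr{H}_{n,t}^{+}\cup\mathscr{N})$ via the level set $\{Y>1/2\}$) is handled correctly. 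For comparison, the proof in Kallenberg avoids martingale theory altogether: for each $k$ one picks $A_k\in\mathscr{H}_{n,s_k}$ with $\overline{\mathbb{P}}(A\triangle A_k)=0$ and takes $B=\limsup_k A_k$, which lies in every $\mathscr{H}_{n,s_m}$ and hence in $\mathscr{H}_{n,t}^{+}$, with $\overline{\mathbb{P}}(A\triangle B)=0$; this buys a more elementary argument, while yours is a clean application of a single convergence theorem. The easy inclusion and the deduction of the usual conditions at the end are exactly as in the cited references.
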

    \begin{proof}
        Equality \eqref{eqn-filtration_limit-2} follows by the argument as in the proof of  Lemma \ref{Lem:Indep-Increments-augmentation}, see also \cite[Lemma\ 7.8]{Kallenberg_2002}. 
    \end{proof}

We begin with the following three  more auxiliary results.

\begin{lemma}
\label{lem-continuity on the Skorokhod space}
The function
\[
D([0,T],U') \ni z \mapsto
{\theta }_{n}(|z( \cdot) {|}_{\rU^\prime})
\in D([0,T],\mathbb{R})
\]
is continuous.
\end{lemma}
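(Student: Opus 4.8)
The plan is to reduce the statement to the elementary observation that $\theta_n$ composed with the $\rU^\prime$-norm is a globally Lipschitz real-valued function on $\rU^\prime$, and that post-composition with a Lipschitz map is continuous on the Skorokhod space because one may reuse the same time changes.

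First I would record that $\theta_n$ is globally Lipschitz: by \eqref{eqn-theta_n} we have $\theta_n \equiv 1$ on $[0,n]$ and $\theta_n \equiv 0$ on $[n+1,\infty)$, so $\theta_n$ is a $\ccal^\infty$ function whose derivative is continuous and supported in the compact interval $[n,n+1]$; hence $L_n := \sup_{r \ge 0}|\theta_n^\prime(r)| < \infty$ is a Lipschitz constant for $\theta_n$. Since the map $\rU^\prime \ni u \mapsto |u|_{\rU^\prime}$ is $1$-Lipschitz by the reverse triangle inequality, the composite
\[
\psi_n \colon \rU^\prime \ni u \mapsto \theta_n(|u|_{\rU^\prime}) \in \mathbb{R}
\]
is globally Lipschitz with constant $L_n$. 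In particular $\psi_n$ is continuous, so for any càdlàg $z\colon [0,T]\to\rU^\prime$ the function $t \mapsto \psi_n(z(t))$ is again càdlàg, inheriting right-continuity and the existence of left limits from $z$ together with the continuity of $\psi_n$; this shows the map in the statement is well defined into $D([0,T],\mathbb{R})$.

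Next, since $D([0,T],\rU^\prime)$ is metrizable, it suffices to check sequential continuity. So I would take $z_k \to z$ in $D([0,T],\rU^\prime)$ and use the definition of the Skorokhod topology: there exist time changes $\lambda_k$ (strictly increasing continuous bijections of $[0,T]$) with
\[
\sup_{t \in [0,T]} |\lambda_k(t) - t| \to 0 \quad\text{and}\quad \sup_{t\in[0,T]} |z_k(t) - z(\lambda_k(t))|_{\rU^\prime} \to 0.
\]
Using the \emph{same} $\lambda_k$, the Lipschitz bound on $\psi_n$ yields
\[
\sup_{t\in[0,T]} \bigl| \psi_n(z_k(t)) - \psi_n(z(\lambda_k(t))) \bigr| \le L_n \sup_{t\in[0,T]} |z_k(t) - z(\lambda_k(t))|_{\rU^\prime} \to 0,
\]
which together with $\sup_t|\lambda_k(t)-t|\to 0$ is precisely the assertion that $\psi_n(z_k(\cdot)) \to \psi_n(z(\cdot))$ in $D([0,T],\mathbb{R})$, as required.

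I expect essentially no serious obstacle here. The only point that needs care is that post-composition with a merely continuous (rather than uniformly continuous) function need not be Skorokhod-continuous in general, so the argument genuinely relies on $\psi_n$ being uniformly continuous — which is exactly what the Lipschitz property of $\theta_n$ coming from \eqref{eqn-theta_n} provides. One could alternatively exploit the fact that the range of each càdlàg path is relatively compact and argue via uniform continuity of $\psi_n$ on that range, but the global Lipschitz bound makes this detour unnecessary.
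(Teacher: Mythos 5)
Your proof is correct and follows essentially the same route as the paper: both arguments take time changes $\lambda_k$ from the standard characterisation of the Skorokhod topology and exploit the uniform continuity of $\theta_n$ composed with the $\rU^\prime$-norm (you make the Lipschitz constant explicit, the paper simply invokes uniform continuity) to conclude convergence with the same time changes. No gaps.
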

\begin{proof} Let $z_k \to z$ in $D([0,T],U')$. By the standard characterisation of the Skorokhod topology, see e.g. \cite[Theorem VI.1.14, p. 328]{Jacod_Shiryaev}, we know that there exists a sequence $\lambda_k : [0,T] \to [0.T]$ such that
\begin{equation*}\lim_{k\to + \infty} \sup_{t \in [0,T]} |\lambda_k(t)-t| \to 0, \qquad \lim_{k \to +\infty} \sup_{t \in [0,T]} |z_k(\lambda_k(t))-z(t)|_{U'} \to 0.\end{equation*}
It follows  from the uniform continuity of $\theta_n$ that
\begin{equation*}\lim_{k \to +\infty} \sup_{t \in [0,T]} |\theta_n(|z_k(\lambda_k(t))|_{U'})-\theta_n(|z(t)|_{U'})| \to 0.\end{equation*}
Thus $\theta_n(|z_k|_{U'}) \to \theta_n(|z|_{U'})$ in $D([0,T],\mathbb{R})$.
\end{proof}

\begin{lemma}\label{lem-B.1}
Assume that  $u \in {L}^{2}(0,T;H)$ and   ${({u}_{n} )}_{n} $ is  a bounded sequence in ${L}^{2}(0,T;H)$ such that ${u}_{n} \to u $ in ${L}^{2}(0,T;{H}_{\loc})$. Let 
$s$ be chosen as in \eqref{eqn-number s}.  
Then  for  every  $\psi \in {V}_{s}$,
\begin{equation} \label{eqn-lem-B.1}
  \lim_{n \to \infty } \int_{0}^{T} \vert  \ddual{V_s^\prime}{ B \bigl( {u}_{n} (r) ,{u}_{n} (r) \bigr)  }{ \psi }{V_s}\, -
    \ddual{V_s^\prime}{ B \bigl( u (r),u(r)  \bigr)  }{\psi }{V_s}\, \vert \, dr .
\end{equation}
\end{lemma}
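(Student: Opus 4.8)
The plan is to show that the expression in \eqref{eqn-lem-B.1} tends to $0$. The guiding idea is to exploit the antisymmetry of the trilinear form $b$ in order to shift the spatial derivative from the (merely locally convergent and $\rH$-bounded) sequence onto the fixed, smooth test function $\psi$, after which only $\bL^2$-control of the $u_n$ is needed. First I would record the reformulation. By definition $\ddual{V_s^\prime}{B(v,w)}{\psi}{V_s}=b(v,w,\psi)$, and the antisymmetry $b(v,w,\psi)=-b(v,\psi,w)$, which extends by continuity from $\rV$ to $v,w\in\rH$, $\psi\in\rV_s$ thanks to estimates \eqref{eqn-b-inequality}--\eqref{eqn-b-inequality-2}, yields for every $w\in\rH$
\begin{equation*}
\ddual{V_s^\prime}{B(w,w)}{\psi}{V_s}=b(w,w,\psi)=-b(w,\psi,w)=-\int_{\R^d}(w\cdot\nabla\psi)\cdot w\dx .
\end{equation*}
This integral is well defined because $\nabla\psi\in\bL^\infty(\R^d)$ by \eqref{eqn-H^s-1 to L^infty} while $w\in\bL^2(\R^d)$. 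Consequently, for a.e.\ $r\in[0,T]$ (whose label I suppress),
\begin{equation*}
b(u_n,u_n,\psi)-b(u,u,\psi)=-\int_{\R^d}\bigl[(u_n\cdot\nabla\psi)\cdot u_n-(u\cdot\nabla\psi)\cdot u\bigr]\dx .
\end{equation*}

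Next I would split $\R^d=B_R\cup B_R^c$ with $R>0$ to be fixed, and treat the two regions separately. On the complement $B_R^c$ the integrand is bounded pointwise, up to a dimensional constant, by $\lvert\nabla\psi\rvert\,(\lvert u_n\rvert^2+\lvert u\rvert^2)$, so after integrating in $r$ the outer region contributes at most
\begin{equation*}
\sup_{\lvert x\rvert>R}\lvert\nabla\psi(x)\rvert\cdot\Bigl(\sup_{n}\int_0^T\lvert u_n(r)\rvert_{\rH}^2\dr+\int_0^T\lvert u(r)\rvert_{\rH}^2\dr\Bigr).
\end{equation*}
Since $s-1>\tfrac{d}{2}$ by \eqref{eqn-number s}, we have $\nabla\psi\in\mathbb{H}^{s-1}(\R^d)$, and such functions are continuous and vanish at infinity (a standard consequence of the Fourier characterisation together with the Riemann--Lebesgue lemma); hence $\sup_{\lvert x\rvert>R}\lvert\nabla\psi(x)\rvert\to0$ as $R\to\infty$. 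Because $(u_n)$ is bounded in $L^2(0,T;\rH)$ and $u\in L^2(0,T;\rH)$, given $\eps>0$ I can fix $R$ so large that this outer contribution is $<\eps/2$ uniformly in $n$.

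For the local part over $B_R$ I would write the integrand as $((u_n-u)\cdot\nabla\psi)\cdot u_n+(u\cdot\nabla\psi)\cdot(u_n-u)$, bound it using $\nabla\psi\in\bL^\infty$ and the Cauchy--Schwarz inequality on $B_R$ by $\lVert\nabla\psi\rVert_{\bL^\infty}\bigl(\lvert u_n\rvert_{L^2(B_R)}+\lvert u\rvert_{L^2(B_R)}\bigr)\lvert u_n-u\rvert_{L^2(B_R)}$, and then apply Cauchy--Schwarz in time to obtain
\begin{equation*}
\lVert\nabla\psi\rVert_{\bL^\infty}\Bigl(\int_0^T\bigl(\lvert u_n\rvert_{L^2(B_R)}+\lvert u\rvert_{L^2(B_R)}\bigr)^2\dr\Bigr)^{1/2} q_{T,R}(u_n-u).
\end{equation*}
The first factor is bounded uniformly in $n$ by the $L^2(0,T;\rH)$-boundedness, while $q_{T,R}(u_n-u)\to0$ as $n\to\infty$ by the assumed convergence $u_n\to u$ in $L^2(0,T;\rH_{\loc})$. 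Thus, for the $R$ fixed above, the local part is $<\eps/2$ for all $n$ large, and combining the two estimates gives $\int_0^T\lvert b(u_n,u_n,\psi)-b(u,u,\psi)\rvert\dr<\eps$ for $n$ large, proving the claim. The hard part is genuinely the non-compactness of $\R^d$: since $u_n\to u$ only locally and $\psi$ need not be compactly supported, one cannot estimate over all of $\R^d$ at once, and the two devices above — antisymmetry, which removes any need to control $\nabla u_n$, and the decay $\nabla\psi\in C_0(\R^d)$, which controls the tail uniformly in $n$ — are exactly what resolves it.
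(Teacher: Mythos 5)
Your proof is correct, but it takes a genuinely different route from the one in the paper (given in full in Appendix G as a strengthening of \cite[Lemma B.1]{Brzezniak+Motyl_2013}). The paper first proves the convergence for $\psi \in \vcal$: since such a $\psi$ has compact support in some ball $B_R$, the localized estimate \eqref{eqn-b_R} together with the bilinear splitting $B(u_n,u_n)-B(u,u)=B(u_n-u,u_n)+B(u,u_n-u)$ and the H\"older inequality reduces everything to $\Vert u_n-u\Vert_{L^2(0,T;\HR)}\to 0$. It then passes to general $\psi\in\rV_s$ by density of $\vcal$ in $\rV_s$, controlling the approximation error uniformly in $n$ via the bound $\vert B(v,w)\vert_{\rV_s^\prime}\le c\,\vert v\vert_{\rH}\vert w\vert_{\rH}$ from \eqref{eqn-estimate_B_ext} and the $L^2(0,T;\rH)$-boundedness of $(u_n)$. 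You instead keep $\psi$ fixed, use antisymmetry to rewrite the pairing as $-\int(w\cdot\nabla\psi)\cdot w\,\dx$, and split the \emph{spatial domain} $\R^d=B_R\cup B_R^c$, handling the tail by the decay $\nabla\psi\in C_0(\R^d)$ (a genuine, correctly justified upgrade of the embedding \eqref{eqn-H^s-1 to L^infty}, which the paper only states as $\embed C_b$). Both arguments produce an $\varepsilon$ uniform in $n$ from the tail and finish the compact part with the $L^2(0,T;\rH_{\loc})$ convergence; in effect your Riemann--Lebesgue decay plays the role that density of $\vcal$ in $\rV_s$ plays in the paper. Your version is more self-contained on $\R^d$ and dispenses with \eqref{eqn-b_R}; the paper's density argument is the more robust template if one wanted to replace $\R^d$ by a general unbounded domain, where decay of $\nabla\psi$ at infinity is less immediate than density of test functions.
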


\begin{proof}
See \cite[Lemma B.1]{Brzezniak+Motyl_2013}. But note that our result is stronger although the proof is essentially the same.
\end{proof}

\begin{lemma}
\label{lem-Sk+L1 convergence}
Assume that $x_n \to x $ in  $D([0,T])$ and $u_n \to u$ in $L^1(0,T)$. Then
\[
\int_0^T x_n(t)u_n(t) \dt \to  \int_0^T x(t)u(t) \dt
\]
 \end{lemma}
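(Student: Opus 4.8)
The plan is to reduce the assertion to the dominated convergence theorem after recording two elementary consequences of Skorokhod convergence. First I would note that convergence in $D([0,T])$ forces uniform boundedness of the sequence: applying \cite[Lemma\ 2]{Brzezniak+Hornung+Manna} (with $\R$ in place of $U'$) to the convergent sequence $x_n \to x$ yields
\[
M \coloneqq \sup_{n\in\mathbb{N}} \sup_{\tinOT} \abs{x_n(t)} < \infty.
\]
Since $x$ is c\`adl\`ag on the compact interval $[0,T]$ it is itself bounded, so after enlarging $M$ we may also assume $\sup_{\tinOT}\abs{x(t)} \le M$.

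Second, I would invoke the standard fact that Skorokhod convergence implies pointwise convergence at every continuity point of the limit, i.e. $x_n(t) \to x(t)$ for every $t$ at which $x$ is continuous. Because $x$ is c\`adl\`ag it has at most countably many discontinuities, so the exceptional set is Lebesgue-null and hence $x_n(t) \to x(t)$ for Lebesgue-almost every $t \in [0,T]$. (Alternatively one can extract this directly from the time-change characterisation already used in the proof of Lemma \ref{lem-continuity on the Skorokhod space}, combined with the a.e. continuity of $x$.)

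With these two facts in hand the argument is a two-term splitting:
\[
\int_0^T x_n(t)u_n(t)\dt - \int_0^T x(t)u(t)\dt = \int_0^T x_n(t)\bigl(u_n(t)-u(t)\bigr)\dt + \int_0^T \bigl(x_n(t)-x(t)\bigr)u(t)\dt.
\]
The first integral is bounded in modulus by $M\,\norm{u_n-u}_{L^1(0,T)}$, which tends to $0$ by hypothesis. In the second integral the integrand converges to $0$ for a.e.\ $t$ and is dominated by $2M\,\abs{u(t)}$, an $L^1(0,T)$ function (note $u\in L^1(0,T)$ as the $L^1$-limit of $u_n$); hence the dominated convergence theorem shows it tends to $0$ as well. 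Combining the two estimates gives the claim.

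The only genuine subtlety — which I would flag rather than a real obstacle — is that Skorokhod convergence yields neither uniform nor everywhere-pointwise convergence of $x_n$ to $x$, so one must route through the continuity points of $x$ and use that the jumps are countable. Establishing the uniform bound $M$ \emph{before} applying dominated convergence is essential, since it supplies the dominating function $2M\abs{u}$ and also controls the first term.
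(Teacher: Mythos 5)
Your proposal is correct and follows essentially the same route as the paper's proof: the same two-term splitting, the uniform bound on $(x_n)$ from Skorokhod convergence, almost-everywhere pointwise convergence via the countability of the discontinuities of $x$, and the dominated convergence theorem with dominating function a constant multiple of $\abs{u}$. The only cosmetic difference is the order of the two terms in the decomposition.
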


\begin{proof} There exist positive constant $A>0$ and $C_k>0$ such that
\begin{align}\label{eqn-bounds-x}
A&\coloneqq\sup_{k} \sup_{t \in [0,T]} |x_k(t)| < \infty,
\\
\label{eqn-bounds-u}
C_k&\coloneqq \int_0^T \vert u_k(t)-u(t)\vert \dt \to 0.
\end{align}
Moreover, the set $C$ of discontinuities of function $x$ is at most countable and by \cite[Proposition\ VI.2.1 (b.5)]{Jacod_Shiryaev},
\[
x_k(t) \to x(t) \mbox{ for every } t \in [0,T]\setminus C.
\]
Therefore,
\begin{align}\label{eqn-x_ku_k-xu}
&\left\vert
\int_0^T x_k(t)u_k(t) \dt -  \int_0^T x(t)u(t) \dt \right\vert
\\
\nonumber
& \leq
\int_0^T \vert x_k(t)-x(t) \vert \vert u(t)\vert \dt
+
 \int_0^T  \vert x_k(t))\vert \vert u_k(t)- u(t) \vert \dt
 \\
&\leq \int_0^T \vert x_k(t)-x(t) \vert \vert u(t)\vert \dt
+  AC_k.
\end{align}
By assumptions, $C_k \to 0$ while the first integral on the right-hand side above converges to $0$ by the Lebesgue Dominated Convergence Theorem because the integrand  $\vert x_k(t)-x(t) \vert \vert u(t)\vert$ (i) converges to $ 0$ for almost all $t \in [0,T]$ and (ii) is  bounded by the integrable function $t \mapsto 2 B \vert u(t) \vert$ for some constant $B>0$.
\end{proof}

 \begin{lemma}\label{lem-5.6}
  Let $ \psi \in\rU$ and $n\in \mathbb{N}$ be fixed. Then, the map
\begin{equation}\label{eq-function_J_s_t_psi}
        J^n_{s,t,\psi}\colon \zcal_T \ni z \mapsto \int_{s}^{t}  {\theta }_{n}(|z(r) {|}_{\rU^\prime}) \; b(z(r),z(r),\psi)
        \dr         \in\mathbb{R}
\end{equation}
     is measurable.
 \end{lemma}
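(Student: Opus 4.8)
The plan is to show that $J^n_{s,t,\psi}$ is \emph{sequentially continuous} on $\zcal_T$ and then to upgrade this to Borel measurability by restricting to an exhausting family of metrizable subsets. First I would take an arbitrary sequence $(z_k)$ with $z_k\to z$ in $\zcal_T$; by the definition \eqref{eqn-Z_cadlag_NS} of the topology $\tcal$ this means that $z_k\to z$ simultaneously in $L^2_w(0,T;\rV)$, in $L^2(0,T;\rH_{\loc})$, in $\mathbb{D}([0,T];\rU^\prime)$ and in $\mathbb{D}([0,T];\rH_w)$, and the aim is to prove $J^n_{s,t,\psi}(z_k)\to J^n_{s,t,\psi}(z)$.

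For the sequential continuity I would write the integrand as a product $x_k(r)\,u_k(r)$, where $x_k(r):=\theta_n(|z_k(r)|_{\rU^\prime})$ and $u_k(r):=b(z_k(r),z_k(r),\psi)$. The convergence $x_k\to x:=\theta_n(|z(\cdot)|_{\rU^\prime})$ in $\mathbb{D}([0,T];\R)$ follows from Lemma \ref{lem-continuity on the Skorokhod space} applied to $z_k\to z$ in $\mathbb{D}([0,T];\rU^\prime)$. For the second factor, weak convergence in $L^2(0,T;\rV)$ forces $(z_k)$ to be bounded in $L^2(0,T;\rV)$, hence bounded in $L^2(0,T;\rH)$ since $\rV\embed\rH$ continuously; together with $z_k\to z$ in $L^2(0,T;\rH_{\loc})$ and the fact that $\psi\in\rU\subset\rV_s$, Lemma \ref{lem-B.1} gives $u_k\to u:=b(z(\cdot),z(\cdot),\psi)$ in $L^1(0,T)$. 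Replacing $u_k$ by $\1_{[s,t]}u_k$, which still converges to $\1_{[s,t]}u$ in $L^1(0,T)$, and applying Lemma \ref{lem-Sk+L1 convergence} yields $\int_s^t x_k(r)u_k(r)\dr\to\int_s^t x(r)u(r)\dr$, that is $J^n_{s,t,\psi}(z_k)\to J^n_{s,t,\psi}(z)$.

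The subtlety, and the main obstacle, is that $\tcal$ is not metrizable (the weak topology of $L^2(0,T;\rV)$ is not), so sequential continuity does not by itself give continuity, and hence not directly measurability. To bypass this I would set, for $R\in\N$,
\[
K_R:=\Bigl\{z\in\zcal_T:\ \sup_{t\in[0,T]}|z(t)|_{\rH}\le R,\ \int_0^T\|z(r)\|_{\rV}^2\dr\le R^2\Bigr\}.
\]
Both functionals $z\mapsto\sup_{t}|z(t)|_{\rH}$ and $z\mapsto\int_0^T\|z(r)\|_{\rV}^2\dr$ are lower semicontinuous on $\zcal_T$, as already recorded in the proof of Lemma \ref{lem-Galerkin_estimates'}, so each $K_R$ is a closed, hence Borel, subset of $\zcal_T$, and $K_R\uparrow\zcal_T$ because every element of $\zcal_T$ lies in $L^2(0,T;\rV)$ and is bounded in $\rH$ by \cite[Lemma 2]{Brzezniak+Hornung+Manna}. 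On $K_R$ all four topologies restrict to metrizable ones: the weak $L^2(0,T;\rV)$ topology is metrizable on norm-bounded sets of the separable Hilbert space $L^2(0,T;\rV)$, the topologies of $L^2(0,T;\rH_{\loc})$ and $\mathbb{D}([0,T];\rU^\prime)$ are metrizable throughout, and the Skorokhod topology of $\mathbb{D}([0,T];\rH_w)$ is metrizable on the set of paths with values in the fixed (weakly compact, metrizable) ball $\{|x|_\rH\le R\}$; the join of finitely many metrizable topologies is metrizable, so $\tcal|_{K_R}$ is metrizable. Consequently the restriction $J^n_{s,t,\psi}\big|_{K_R}$, being sequentially continuous on a metrizable space, is continuous and therefore Borel measurable. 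Finally, for every Borel set $A\subseteq\R$,
\[
\bigl(J^n_{s,t,\psi}\bigr)^{-1}(A)=\bigcup_{R\in\N}\Bigl(\bigl(J^n_{s,t,\psi}\big|_{K_R}\bigr)^{-1}(A)\Bigr),
\]
a countable union of Borel subsets of $\zcal_T$, which proves that $J^n_{s,t,\psi}$ is measurable. I expect the verification that $\tcal|_{K_R}$ is metrizable, in particular for the $\mathbb{D}([0,T];\rH_w)$ component on norm-bounded paths, to be the one point requiring the most care.
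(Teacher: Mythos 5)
Your proposal is correct and follows essentially the same route as the paper: restrict to the bounded sets $\zcal_T^m=\{z:\ \lvert z\rvert_{L^2(0,T;V)}\le m,\ \lvert z\rvert_{L^\infty([0,T];\rH)}\le m\}$ (your $K_R$), where the trace topology is metrizable, and establish sequential continuity there by combining Lemma \ref{lem-continuity on the Skorokhod space}, Lemma \ref{lem-B.1} and Lemma \ref{lem-Sk+L1 convergence}. Your write-up is in fact slightly more careful than the paper's on the bookkeeping (closedness of the $K_R$ via lower semicontinuity, exhaustion of $\zcal_T$, and the countable-union argument for preimages), but the underlying idea is identical.
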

Let us point out that in view of inequality \eqref{eqn-b-inequality}, the  assertion in Lemma \ref{lem-5.6} makes sense.

\begin{proof}[Proof of Lemma \ref{lem-5.6}]
Let $ \psi \in U$ and $n\in \mathbb{N}$ be fixed. It is sufficient to prove measurability of $J^n_{s,t,\psi}$ restricted to every subset $\zcal_T^m$ of $\zcal_T$, where
\begin{equation} \label{eqn-Z_cadlag_NS-n}
 \zcal_T^m  \coloneqq \bigl\{ z \in \zcal_T:  \vert z \vert_{L^{2}(0,T;V)} \leq m, \vert z \vert_{L^\infty ([0,T];{\rH})} \leq m\bigr\}
\end{equation}
endowed with the trace topology from $\zcal_T$.
Note that the space $\zcal_T^m$ is metrizable and hence continuity of $J^n_{s,t,\psi}$ restricted to
$\zcal_T^m$ is equivalent to the sequential continuity.

Let us choose and fix a sequence $(x_\ell)_{\ell \in \mathbb{N}}\subset \zcal^m_T$ such that $x_\ell \to x \mbox{  in } \zcal_T^m$.
By the definition of the space $\zcal_T^m$ we see that  the sequence $(x_\ell)_{\ell \in \mathbb{N}}\subset L^2(0,T; H)$ is bounded and $ x_\ell \to x$  in  $L^2(0,T; H_{\loc}).$
Hence, by Lemma \ref{lem-B.1} we infer that  $b(x_\ell, x_\ell, \psi)
\to b(x, x, \psi)$ in $L^1(0,T)$.

On the other hand, by Lemma
\ref{lem-continuity on the Skorokhod space}, 
\[
{\theta }_{n}(|x_\ell( \cdot) {|}_{\rU^\prime}) \to {\theta }_{n}(|x( \cdot) {|}_{\rU^\prime}) \mbox{ in } D([0,T]).
\]
Therefore, by Lemma \ref{lem-Sk+L1 convergence}, we deduce that
\begin{equation*}
 \int_{0}^T
 {\theta }_{n}(|x_\ell(t) {|}_{\rU^\prime}) b(x_\ell{(t)}, x_\ell{(t)}, \psi)
 \dt  \to  \int_{0}^T
{\theta }_{n}(|x(t) {|}_{\rU^\prime})  b(x{(t)}, x{(t)}, \psi)
 \dt .
\end{equation*}
Consequently, the map $J^n_{s,t,\psi}$ is lower semi-continuous, and hence measurable.
\end{proof}

The following results are consequences, via Lemma \ref{lemma-M_N martingale}, of our Assumptions F.\ref{assum-F2}\label{assum-F2-used3}
 and F.\ref{assum-F5}. \label{assum-F5-used-6}

\begin{lemma}\label{lem-tM_n is a martingale}
\begin{enumerate}
\item[(i)] For every $n\in \mathbb{N}$, the process ${\Mbar}_{n} $ is a square integrable $\rU^\prime$-valued ${\tF}_{n}$-martingale. Moreover,
\begin{equation}\label{eqn-2-moment of bar M_n}
 \sup_{n \in \mathbb{N}}  \Ebar \left[ \sup_{\tinOT}\lvert \Mbar_n(t) \rvert^2_{\rU^\prime} \right] <\infty.
\end{equation}
\item[(ii)] Moreover, if Assumption F.\ref{assum-F5} \label{assum-F5-used-7} is satisfied,  then for every $p\in [2, 4+\gamma] $ and for every $n \in \mathbb{N}$, the process
 ${\Mbar}_{n} $ is a $p$-integrable $\rU^\prime$-valued ${\tF}_{n}$-martingale. Moreover,
\begin{equation}\label{eqn-p-moment of bar M_n}
 \sup_{n \in \mathbb{N}} \Ebar \left[ \sup_{\tinOT}\lvert \Mbar_n(t) \rvert^p_{\rU^\prime} \right] <\infty.
\end{equation}
\end{enumerate}
\end{lemma}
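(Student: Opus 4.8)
The plan is to exhibit both $M_n$ and ${\Mbar}_n$ as the images of one and the same non-anticipating Borel functional applied to $u_n$ and $\tun$ respectively, and then to transport the martingale property and all moment bounds from the original space to the new one using only that $u_n$ and $\tun$ have the same law on $\zcal_T$. Concretely, I would define
\[
\Phi_n\colon \zcal_T \to \mathbb{D}([0,T];\rU^\prime), \qquad
\Phi_n(z)(t) \coloneqq z(t) - P_n u_0 + \int_0^t P_n \acal z(s)\ds + \int_0^t \rB_n(z(s))\ds - \int_0^t P_n f(s)\ds,
\]
so that, by \eqref{eqn-M_n} and \eqref{eqn-tMn}, $M_n = \Phi_n(u_n)$ and ${\Mbar}_n = \Phi_n(\tun)$; here one uses that $\tun(0)=P_n\tu(0)=P_n u_0$, which holds because $\tun(0)$ has the same law as the deterministic $u_n(0)=P_n u_0$ and $\tu(0)=u_0$. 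Since each $\Phi_n(z)(t)$ depends only on the restriction $z|_{[0,t]}$, the functional is non-anticipating, so $M_n$ is adapted to the filtration generated by $u_n$; and since $\tun$ and $u_n$ have identical laws on $\zcal_T$, the $\mathbb{D}([0,T];\rU^\prime)$-valued processes ${\Mbar}_n$ and $M_n$ (which in fact take values in the finite-dimensional $H_n$), jointly with their driving processes, have identical laws as well.

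First I would check that $\Phi_n$ is Borel measurable. The evaluation $z\mapsto z(t)$ and the linear term $z\mapsto \int_0^t P_n\acal z(s)\ds$ are measurable by standard arguments on the Skorokhod and $L^2_w(0,T;V)$ components of $\zcal_T$, while $\int_0^t P_n f(s)\ds$ is deterministic; the only genuinely nonlinear contribution is $\int_0^t \rB_n(z(s))\ds$, whose measurability (component by component against the basis $(e_i)$, via \eqref{eqn-B_n_b}) is exactly the content of Lemma \ref{lem-5.6}. Next, recall from Lemma \ref{lemma-M_N martingale}, through the representation \eqref{eqn-M_n_2}, that $M_n$ is a square-integrable $\rH$-valued $\mathbb{F}$-martingale; since $M_n=\Phi_n(u_n)$ is adapted to the filtration generated by $u_n$, which is contained in $\mathbb{F}$, the tower property yields that $M_n$ is also a martingale for the filtration generated by $u_n$. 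This is equivalent to
\[
\E\bigl[(M_n(t)-M_n(s))\,\gamma\bigl(u_n(r_1),\dots,u_n(r_j)\bigr)\bigr]=0
\]
for all $0\le r_1\le\cdots\le r_j\le s\le t\le T$ and all bounded continuous $\gamma\colon (\rU^\prime)^j\to\mathbb{R}$. As the left-hand side is the expectation of a bounded Borel functional of $u_n$, the equality of laws transfers it verbatim to $\tun$ and ${\Mbar}_n$ on $\bigl(\Omegabar,\tfcal,\Pbar\bigr)$. This shows that ${\Mbar}_n$ is a martingale for the filtration $\mathbb{H}_n$ generated by $\tun$, and hence, since its paths are c\`adl\`ag and by Proposition \ref{prop-filtration_tF_n new} the filtration ${\tF_n}$ is the usual augmentation of $\mathbb{H}_n$, for ${\tF_n}$ itself; the $\rU^\prime$-valued property is just the componentwise one because ${\Mbar}_n$ is $H_n$-valued.

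For the moment bounds I would combine the equality in law of ${\Mbar}_n$ and $M_n$ with the fact, recorded after \eqref{eqn-h-to-U'-compact}, that the embedding $\rH\embed\rU^\prime$ is a contraction, so that $\lvert{\Mbar}_n(t)\rvert_{\rU^\prime}\le\lvert{\Mbar}_n(t)\rvert_{\rH}$ pathwise. Since $u\mapsto\sup_{\tinOT}\lvert u(t)\rvert_{\rU^\prime}^2$ is a Borel functional on $\mathbb{D}([0,T];\rU^\prime)$, equality of laws gives
\[
\Ebar\Bigl[\sup_{\tinOT}\lvert{\Mbar}_n(t)\rvert^2_{\rU^\prime}\Bigr]=\E\Bigl[\sup_{\tinOT}\lvert M_n(t)\rvert^2_{\rU^\prime}\Bigr]\le \E\Bigl[\sup_{\tinOT}\lvert M_n(t)\rvert^2_{\rH}\Bigr],
\]
and the right-hand side is bounded uniformly in $n$ by \eqref{eqn-M_n_3}--\eqref{eqn-M_n_3_B}, which yields \eqref{eqn-2-moment of bar M_n}. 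Part (ii) is proved identically, replacing the exponent $2$ by $p\in[2,4+\gamma]$ and invoking \eqref{eqn-M_n_3-p}--\eqref{eqn-M_n_3-p-B}, which require Assumption F.\ref{assum-F5}; the $p$-integrability then upgrades the martingale property automatically. The main obstacle is the path-space bookkeeping of the first step: one must make precise that $\Phi_n$ lands in the Skorokhod space $\mathbb{D}([0,T];\rU^\prime)$ as a genuinely Borel map, route the nonlinear term through Lemma \ref{lem-5.6}, and observe that transferring the martingale identity needs only \emph{measurability} (not continuity) of the finite-dimensional evaluation functionals, which is legitimate precisely because the two laws coincide exactly rather than merely in a limit.
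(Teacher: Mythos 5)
Your proposal is correct and follows essentially the same route as the paper: both arguments rest on the measurability of the drift functional (via Lemma \ref{lem-5.6}), the transfer of the martingale identity through equality of laws of $u_n$ and $\tun$ by testing against bounded measurable functionals of the past, the passage from the raw filtration of $\tun$ to $\tF_n$ via the augmentation result, and the moment bounds inherited from Lemma \ref{lemma-M_N martingale}. The only cosmetic difference is that you test against cylindrical functions $\gamma(u_n(r_1),\dots,u_n(r_j))$ while the paper tests against bounded continuous $\mathscr{D}_s$-measurable $h$ on $\mathbb{D}([0,T];\rU^\prime)$ together with a functional monotone class argument; these are interchangeable.
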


\begin{proof}[Proof of Lemma  \ref{lem-tM_n is a martingale}]
We assume here that $p$ is a fixed exponent  belonging to the interval $[2,4+\gamma]$.
For $p=2$ we will use Lemma \ref{lemma-M_N martingale} while  for $p\in (2,4+\gamma]$ we will use the second part of  Lemma \ref{lemma-M_N martingale}.

Following \cite{Jacod_Shiryaev} we denote by $\mathscr{D}_t^0$ the $\sigma$-algebra generated by all maps \begin{equation*}\mathbb{D}([0,T], U^\prime)\ni x \mapsto x(s)\in U^\prime,\, s\le t .\end{equation*} We also set $\mathscr{D}_t=\bigcap_{s>t} \mathscr{D}^0_s,\, t\in [0,T]$. Note that $\mathbf{D}=(\mathscr{D}_t)_{t\in [0,T]}$ is a filtration.
 Let
		\[
		h\colon  \mathbb{D}([0,T];\rU^\prime)  \to \mathbb{R}
		\]
		be a bounded, continuous function and $\mathscr{D}_s$-measurable. Let us also choose and fix $\psi \in\rU$. 

Let us first observe that the process ${M}_{n} $ is a square integrable, $\rU^\prime$-valued $\mathbb{F}$-martingale. Hence, we have
\begin{equation*} \label{eqn-EtMn}
\E \left[ \ddual{\rV^\prime }{ {{M}}_{n} (t)-{{M}}_{n} (s)}{\psi }{\rV}{} \, h\bigl( {u}_{n}  \bigr)\right]
 = 0
\end{equation*}
Moreover, thanks to the linearity of $P_n, \acal$ and Lemma \ref{lem-5.6}, with $\psi$ replaced by $P_n\psi$, we infer  that the function
\begin{equation}\label{eqn-I_st(psi)}
\begin{aligned}
    I_{s,t,\psi}\colon \zcal_T \ni z &\mapsto \Ip{ \int_{s}^{t} {P}_{n} \acal  z(r) \dr
  + \int_{s}^{t} {\rB}_{n}  ( z(r)   ) \dr
  \\& - \int_{s}^{t} {P}_{n} f(r) \dr }{\psi}{}\!{}_{\rH}  \in\mathbb{R}
  \end{aligned}
  \end{equation}
is measurable.

Thus, since by \eqref{eqn-Skorokhod_appl_convergence}, the processes  $\tun$ and $u_n$ have the same laws on $\zcal_T$, the laws of the  $\mathbb{R}$-valued random variables, see \eqref{eqn-M_n},
\[\ddual{\rV^\prime }{ {{M}}_{n} (t)-{{M}}_{n} (s)}{\psi }{\rV}{} \, h \bigl( {u}_{n}  \bigr)
=I_{s,t,\psi}({u}_{n})  h \bigl( {u}_{n} \bigr) 
\]
and, see \eqref{eqn-tMn},
\[\ddual{\rV^\prime }{  \Mbar_n(t)-\Mbar_n (s)}{\psi }{\rV}{} \, h \bigl( \tun  \bigr)
=I_{s,t,\psi}(\tun)  h \bigl( {\tun}\bigr)  
\]
are equal.
Therefore, we infer  that
\begin{equation} \label{eqn-EtMn_tilde}
\mathbb{E} \biggl[\ddual{\rV^\prime }{ {{M}}_{n} (t)-{{M}}_{n} (s)}{\psi }{\rV}{} \, h \bigl( {u}_{n}  \bigr)\biggr]=\overline{\E} \left[ \ddual{\rV^\prime }{  \Mbar_n(t)-\Mbar_n (s)}{\psi }{\rV}{} \,
 h \bigl( \tun  \bigr) \right]
 = 0.
\end{equation}
With this at hand, by using standard Functional Monotone Class argument as in \cite[Proposition IX.1.1]{Jacod_Shiryaev} we see that for any $s,t\in [0,T]$ with $s\le t$ and  $h\colon  \mathbb{D}([0,T];\rU^\prime)  \to \mathbb{R}$ bounded and $\mathscr{D}_s$-measurable we have
\begin{equation*}
\te\bigl[ \duality{ \Mbar_n (t)-\Mbar_n (s) }{\psi }{} \, h \bigl( \tu  \bigr) \bigr]=0,
\end{equation*}
which implies that $\Mbar_n$ is a $\Fbar$-martingale with respect to the filtration generated by $\overline{u}_n$. By Lemma \ref{Lem:Indep-Increments-augmentation} it is a martingale with respect to the filtration $\tF_n$.

 If $p \in [2, 4+\gamma]$, then thanks to the measurability of \eqref{measurable_on_D}, the random variables $\sup_{\tinOT} \lvert M_n(t) \rvert^p_{\rU^\prime}$ and $\sup_{\tinOT} \lvert \Mbar_n(t) \rvert^p_{\rU^\prime}$,  have the same law.
Therefore, by the first part of Lemma \ref{lemma-M_N martingale} in the case when $p=2$,  and by Assumption F.\ref{assum-F5} \label{assum-F5-used-8} and the second part of Lemma \ref{lemma-M_N martingale} in the case when $p \in (2, 4+\gamma]$, we infer that
\begin{equation}\label{Eq:Expect-M_n=tM_n}
\sup_{n \in \mathbb{N}}  \Ebar \left[ \sup_{\tinOT} \lvert \Mbar_n(t) \rvert^{p}_{\rU^\prime} \right]
= \sup_{n \in \mathbb{N}} \E \left[ \sup_{\tinOT} \lvert M_n(t) \rvert^{p}_{\rU^\prime} \right] <\infty.
\end{equation}
Hence, the proof of Lemma  \ref{lem-tM_n is a martingale} is complete.
\end{proof}

The above proof of Lemma \ref{lem-tM_n is a martingale} relies on the fact that the original process $M_n$ is a martingale on the original filtered probability space
$\bigl( {\Omega}, \fcal, {\mathbb{P}}, {\mathbb{F}} \bigr)$, where  ${\mathbb{F}}=(\fcal_t)_{t\geq 0}$.
Similarly, the proof of the following Lemma 
will rely on the fact that the original process $N_{n,\phi}$ defined by \eqref{eqn-N_n,phi} is a martingale on the original filtered probability space, which will be justified using the It\^o formula.

In the following Lemma we use
estimate \eqref{eqn-H_estimate-p>2} 
 with $p=4$, which was proven under  linear growth condition \eqref{eqn-F_linear_growth-p}  in   Assumption F.\ref{assum-F5} with $p=4$.\label{assum-F5-used-9}

\begin{lemma}\label{lem-tN_n is a martingale} Assume that $\phi \in\rU$.
Then the process $\Nbar_{n,\phi}$ is a square integrable $\mathbb{R}$-valued ${\tF}_{n}$-martingale.
\end{lemma}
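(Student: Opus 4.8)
The plan is to mirror the two-step strategy already used for $\overline{M}_n$ in Lemma~\ref{lem-tM_n is a martingale}. First I would show that the \emph{original} process $N_{n,\phi}$ defined in \eqref{eqn-N_n,phi} is a square integrable $\mathbb{F}$-martingale on $(\Omega,\mathscr{F},\mathbb{P},\mathbb{F})$, and then transport this property to the new space using that $u_n$ and $\tun$ have the same law on $\zcal_T$, a Functional Monotone Class argument, and the augmentation results Proposition~\ref{prop-filtration_tF_n new} and Lemma~\ref{Lem:Indep-Increments-augmentation}.

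For the first step I would apply the finite-dimensional It\^o formula to the real semimartingale $X_n(t):=\duality{u_n(t)}{\phi}=\ip{u_n(t)}{\phi}{\rH}$. By the Galerkin equation \eqref{eqn-Galerkin}, $X_n$ has continuous finite-variation part $-\int_0^{\cdot}\duality{P_n\acal u_n(r)+\rB_n(u_n(r))-P_nf(r)}{\phi}\dr$ and purely discontinuous martingale part $\int_0^{\cdot}\int_Y\langle P_nF(r,u_n(r-),y),\phi\rangle\,\tilde{\eta}(\ud r,\ud y)$. Since $X_n$ has no continuous martingale component, It\^o's formula for $g(x)=x^2$ gives
\begin{equation*}
X_n(t)^2=X_n(0)^2+\int_0^t 2X_n(r-)\,\ud X_n(r)+\sum_{r\le t}\bigl(\Delta X_n(r)\bigr)^2 ,
\end{equation*}
where $\Delta X_n(r)=\langle P_nF(r,u_n(r-),p(r)),\phi\rangle$ at the jump times of the underlying point process $p$. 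Writing $\sum_{r\le t}(\Delta X_n(r))^2=\int_0^t\int_Y\langle P_nF,\phi\rangle^2\,\tilde{\eta}(\ud r,\ud y)+\int_0^t\int_Y\langle P_nF,\phi\rangle^2\,\nu(\ud y)\dr$ and separating the drift part of $2X_n(r-)\,\ud X_n(r)$, the continuous finite-variation contribution together with the compensator is exactly $-N_{n,\phi}(t)+(\text{two }\tilde{\eta}\text{-integrals})$; that is, $N_{n,\phi}$ coincides with a sum of stochastic integrals against $\tilde{\eta}$ and hence is an $\mathbb{F}$-martingale with $N_{n,\phi}(0)=0$. Square integrability then follows from the BDG inequality: the worst term, $2\int_0^{\cdot}\int_Y X_n(r-)\langle P_nF,\phi\rangle\,\tilde{\eta}$, is controlled by $\overline{\mathbb{E}}\int_0^T X_n(r)^2\int_Y\langle P_nF,\phi\rangle^2\nu(\ud y)\dr\lesssim |\phi|_{\rH}^4\,\mathbb{E}\sup_{t}|u_n(t)|_{\rH}^2(1+|u_n|_{\rH}^2)$, so one needs the fourth-moment bound \eqref{eqn-H_estimate-p>2} with $p=4$, which is exactly where Assumption~F.\ref{assum-F5} (through \eqref{eqn-F_linear_growth-p} with $p=4$) enters.

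For the transfer I would introduce, for $s\le t$ and fixed $n,\phi$, the functional
\begin{equation*}
\begin{aligned}
K_{s,t,\phi}(z)&:=\ip{z(t)}{\phi}{\rH}^2-\ip{z(s)}{\phi}{\rH}^2\\
&\quad+2\int_s^t\duality{P_n\acal z(r)+\rB_n(z(r))-P_nf(r)}{\phi}\,\duality{z(r)}{\phi}\dr\\
&\quad-\int_s^t\!\int_Y\langle P_nF(r,z(r),y),\phi\rangle^2\,\nu(\ud y)\dr ,
\end{aligned}
\end{equation*}
on $\zcal_T$, so that $N_{n,\phi}(t)-N_{n,\phi}(s)=K_{s,t,\phi}(u_n)$ and $\overline{N}_{n,\phi}(t)-\overline{N}_{n,\phi}(s)=K_{s,t,\phi}(\tun)$ (the compensator being unaffected by replacing $z(r-)$ by $z(r)$). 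The measurability of $K_{s,t,\phi}$ on $\zcal_T$ is verified term by term on the metrizable sublevel sets $\zcal_T^m$ of \eqref{eqn-Z_cadlag_NS-n}: the evaluation terms are continuous on $\mathbb{D}([0,T];\rH_w)$; the linear $\acal$- and $f$-terms are treated as the map $I_{s,t,\psi}$ of \eqref{eqn-I_st(psi)}; the nonlinear term is handled as in Lemma~\ref{lem-5.6}, now carrying the extra uniformly bounded factor $\duality{z(r)}{\phi}$ and combining Lemmas~\ref{lem-continuity on the Skorokhod space}, \ref{lem-B.1} and \ref{lem-Sk+L1 convergence}; and the $F$-compensator is measurable by Assumption~F.\ref{assum-F3} together with the growth bound \eqref{eqn-F_linear_growth}. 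With $K_{s,t,\phi}$ measurable, equality of laws yields, for every bounded $\mathscr{D}_s$-measurable $h\colon\mathbb{D}([0,T];\rU^\prime)\to\mathbb{R}$,
\begin{equation*}
\overline{\mathbb{E}}\bigl[(\overline{N}_{n,\phi}(t)-\overline{N}_{n,\phi}(s))\,h(\tun)\bigr]=\mathbb{E}\bigl[(N_{n,\phi}(t)-N_{n,\phi}(s))\,h(u_n)\bigr]=0 ,
\end{equation*}
and the Monotone Class argument of \cite[Proposition IX.1.1]{Jacod_Shiryaev} upgrades this to the martingale property for the filtration generated by $\tun$; Proposition~\ref{prop-filtration_tF_n new} and Lemma~\ref{Lem:Indep-Increments-augmentation} then give it for $\tF_n$. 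Square integrability on $\overline{\Omega}$ is inherited since $\sup_t|\overline{N}_{n,\phi}(t)|^2$ is a measurable functional of $\tun$ with the same law as the corresponding functional of $u_n$.

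The main obstacle I anticipate is the measurability of the nonlinear drift functional carrying the additional factor $\duality{z(r)}{\phi}$: because the product of two Skorokhod-convergent sequences need not converge in $\mathbb{D}([0,T])$, one cannot simply multiply the convergences, and must instead use the $L^1(0,T)$-convergence of $b(z,z,\cdot)$ from Lemma~\ref{lem-B.1} against the uniformly bounded, Skorokhod-convergent scalar factors as in Lemma~\ref{lem-Sk+L1 convergence}. A secondary point requiring care is the exact identification of the It\^o terms with $N_{n,\phi}$, including the harmless $r-$ versus $r$ discrepancy in the compensator and the self-adjointness of $P_n$ used to pass $P_n$ onto $\phi$.
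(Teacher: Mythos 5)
Your proposal is correct and follows essentially the same route as the paper: apply the finite-dimensional It\^o formula to $\duality{u_n}{\phi}$ to identify $N_{n,\phi}$ with a stochastic integral against $\tilde{\eta}$ (hence a square integrable $\mathbb{F}$-martingale, using the linear growth condition and the moment estimate \eqref{eqn-H_estimate-p>2} with $p=4$), and then transfer the martingale property to $\Nbar_{n,\phi}$ exactly as in Lemma \ref{lem-tM_n is a martingale} via equality of laws, a Functional Monotone Class argument and Lemma \ref{Lem:Indep-Increments-augmentation}. The only difference is that you spell out the measurability of the functional $K_{s,t,\phi}$ (combining Lemmas \ref{lem-continuity on the Skorokhod space}, \ref{lem-B.1} and \ref{lem-Sk+L1 convergence}), a detail the paper subsumes under ``closely follow the proof of Lemma \ref{lem-tM_n is a martingale}''.
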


\begin{proof}
Let $\phi \in\rU$ and $n \in \mathbb{N}$.
Applying the It\^o formula \cite[Theorem II.5.1]{Ikeda+Watanabe_1989}, see also Theorem \ref{thm-Ito-JS-I.4.57},
to the real  semimartingale $X_n= \duality{{u}_{n}}{\phi}$, where the process ${u}_n$ is given by \eqref{eqn-Galerkin}, we get for $t \in [0,T]$
\begin{equation}
\begin{aligned}
\duali{ u_n(t)}{\phi}{2} &= \duali{P_n u_0}{\phi}{2} -2  \int_0^t \duality{P_n \acal  u_n(s)+ \rB_n(u_n(s))}{\phi}\;\duality{ u_n(s)}{\phi} \ds
\\ 
&+\int_0^t \int_{Y} \langle {P}_{n}  F(s,{u}_{n} (s),y),\phi \rangle^2 \, \nun(\ud y) \ds \\
& + 2 \int_0^t \int_{Y} \duality{u_n(s-)}{\phi}\;\duality{P_n F(s,{u}_{n} (s-),y) }{\phi}\, \tilde{\eta}(\ud s,\ud y).
\end{aligned}
\label{eqn-K_6.1.18_A}
\end{equation}
Using the definition \eqref{eqn-N_n,phi} of the process $  N_{n, \phi}$ it follows that equality \eqref{eqn-K_6.1.18_A} can be rewritten in the following way
\begin{align*}
{N}_{n,\phi}(t) =2 \int_0^t \int_{Y} \duality{u_n(s-)}{\phi}\;\duality{P_n F(s,{u}_{n} (s-),y) }{\phi}\, \tilde{\eta}(\ud s,\ud y), \;\;\; t \in [0,T].
\end{align*}
From the linear growth condition \eqref{eqn-F_linear_growth} in Assumption F.\ref{assum-F2}\label{assum-F2-used4} along with the estimate \eqref{eqn-H_estimate-p>2} 
 with $p=4$
  we infer that the process defined on the right hand side of the above equation is a square integrable $\mathbb{F}$-martingale, see \cite[Section\ 3]{Brzezniak_Liu_Zhu}.
This completes the proof of the fact that ${N}_{n,\phi}$ is a square integrable  $\mathbb{F}$-martingale.

With the above result at hand we can closely follow the proof of Lemma \ref{lem-tM_n is a martingale} and show that $\Nbar_{n, \phi}$ is a square integrable  $\tF_n$-martingale.
\end{proof}

Let us recall that according to Remark \ref{rem-process ubar}
the  $ \zcal_T $-valued random variables $\tu$ defined on $\bigl( \Omegabar ,\tfcal ,\Pbar  \bigr) $ induces
a  measurable process (denoted by the same letter)
\[
\tu\colon [0,T]\times \Omegabar \to V \cap {\rH}_{\loc},
\]
whose trajectories belong to the space $\zcal_T$.

Let us introduce the following limiting filtration
\begin{equation}\label{eqn-filtration_limit}
\begin{aligned}
{\tF} &\coloneqq({\overline{\fcal }}_{t})_{\tinOT},  \mbox{ where } {\overline{\fcal }}_{t} = \sigma\bigl(  \mathscr{H}_{t+} \cup \mathscr{N}\bigr)   \\
\mbox{  and } \mathscr{H}_{t+}&\coloneqq \bigcap_{s>t} \mathscr{H}_s \mbox{ with } \mathscr{H}_s\coloneqq  \sigma \{ \tu (r), \, \, r \le s \}.
\end{aligned}\end{equation}

Let us define an  $\rU^\prime$-valued  process ${\Mbar}$  by the following formula (on full $\Omegabar$)
\begin{equation}  \label{eqn-tM}
\begin{aligned}
{\Mbar}  (t)
   &= \tu (t) \,  -    \tu (0)
   - \int_{0}^{t}  f(s) \ds
   \\ &+ \int_{0}^{t}  \acal \tu (s) \ds
  + \int_{0}^{t} {\rB}  \bigl( \tu (s)  \bigr) \ds,  \quad
t \in [0,T].
\end{aligned}
\end{equation}

Also, given $\phi \in\rU$ let us define,  also on full $\Omegabar$, a  $\mathbb{R}$-valued process $\Nbar_\phi$ by

\begin{align}
\begin{aligned}
  \Nbar_\phi(t) &=\duali{\tu (t)}{\phi}{2} - \duali{u_0}{\phi}{2} \\
  &+2  \int_0^t \duality{\acal \tu (s)+\rB(\tu (s))-f(s)}{\phi}\;\duality{\tu (s)}{\phi} \ds
  \\
  &-\int_0^t \int_{Y} \ip{F(s,\tu (s),y)}{\phi}{\rH}^2 \, \nun(\ud y) \ds, \;\; t\in [0,T].
  \end{aligned}
\label{eqn-Nbar^phi}
\end{align}

We have the following important results.

\begin{lemma}\label{lem-tM is a martingale}
The process ${\Mbar} $ is a square integrable $\rU^\prime$-valued ${\tF}$-martingale and all its trajectories belong to the space $\mathbb{D} ([0,T];\rU^\prime)$.
\end{lemma}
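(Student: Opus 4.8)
The plan is to obtain $\Mbar$ as a limit of the auxiliary martingales $\Mbar_n$ defined in \eqref{eqn-tMn} and to transfer the martingale property to the limit. By Lemma \ref{lem-tM_n is a martingale} each $\Mbar_n$ is a square integrable $\rU^\prime$-valued $\tF_n$-martingale with $\sup_n\Ebar[\sup_{\tinOT}|\Mbar_n(t)|^2_{\rU^\prime}]<\infty$, and, as established in the proof of that lemma (cf.\ \eqref{eqn-EtMn_tilde}), for every $\psi\in\rU$, every $s\le t$ and every bounded continuous $\mathscr{D}_s$-measurable $h\colon\mathbb{D}([0,T];\rU^\prime)\to\mathbb{R}$ one has $\Ebar[\duality{\Mbar_n(t)-\Mbar_n(s)}{\psi}\,h(\tun)]=0$. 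First I would fix such $\psi$ and $h$ and prove that $\duality{\Mbar_n(t)}{\psi}\to\duality{\Mbar(t)}{\psi}$ $\Pbar$-a.s.\ for every $t$ at which $\tu$ is weakly continuous, and then pass to the limit in this identity.

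For the pointwise convergence I would move each projection $P_n$ onto the test function by Lemma \ref{lem_P_n}(ii), setting $\psi_n\coloneqq\tilde P_n\psi$, so that apart from the leading term $\duality{\tun(t)}{\psi}=\ip{\tun(t)}{\psi}{\rH}$ all contributions are paired with $\psi_n$. The leading term converges to $\ip{\tu(t)}{\psi}{\rH}=\duality{\tu(t)}{\psi}$ by the weak convergence $\tun(t)\rightharpoonup\tu(t)$ in $\rH$ coming from \eqref{eqn-conv_D(0,T;H_w)} (valid at continuity points of $\tu$) together with \eqref{eqn-boundedness_H}. The initial and forcing terms converge because $\psi_n\to\psi$ in $\rU$ and in $\rV$ (Lemma \ref{lem_P_n}(iv) and Remark \ref{rem-P_n}(ii)), while $\tu(0)\in\rU^\prime$ and $f\in L^1(0,T;\rV^\prime)$. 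For the Stokes term I would use $\duality{\acal\tun(s)}{\psi_n}=\ip{\nabla\tun(s)}{\nabla\psi_n}{\bL^2}$, split $\psi_n=\psi+(\psi_n-\psi)$, apply Lemma \ref{lem_conv_L^2_w(0,T;V-dot)} to the $\psi$-part, and control the remainder by $\|\tun\|_{L^2(0,T;\rV)}\,\|\psi_n-\psi\|_{\rV}\to0$ using \eqref{eqn-boundedness_L^2(0,T,V)} and Remark \ref{rem-P_n}(ii).

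The nonlinear term is the crux and the main obstacle. Here I would first use \eqref{eqn-boundedness_H} and the fact that $\rH\embed\rU^\prime$ is a contraction to get $\sup_n\sup_{s}|\tun(s)|_{\rU^\prime}<\infty$ $\Pbar$-a.s., so that by \eqref{eqn-theta_n} the cutoff $\theta_n(|\tun(s)|_{\rU^\prime})$ equals $1$ for all large $n$; the cutoff thus disappears in the limit, matching the cutoff-free term $\rB(\tu)$ in \eqref{eqn-tM}. Using Lemma \ref{lem_P_n}(ii) the contribution reduces to $\int_0^t b(\tun(s),\tun(s),\psi_n)\,ds$, which I would again split via $\psi_n=\psi+(\psi_n-\psi)$: the $\psi$-part converges by Lemma \ref{lem-B.1} (note $\psi\in\rU\subset\rV_s$), while the remainder is bounded, via \eqref{eqn-b-inequality}, by $c\,\|\psi_n-\psi\|_{\rV_s}\int_0^t|\tun(s)|^2_{\rH}\,ds$. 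The delicate point is that $\tilde P_n$ need not converge to the identity on $\rV_s$ for general elements (Remark \ref{rem-P_n}(i)); this is circumvented by the continuity of the embedding $\rU\embed\rV_s$ from \eqref{eqn-U_comp_V_s}, which gives $\|\psi_n-\psi\|_{\rV_s}\le C\|\psi_n-\psi\|_{\rU}\to0$ precisely because $\psi\in\rU$, so the remainder vanishes by \eqref{eqn-boundedness_H}.

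Finally I would pass to the limit in the expectation. Since $h$ is bounded and $\psi$ fixed, the uniform bound of Lemma \ref{lem-tM_n is a martingale} makes $\{\duality{\Mbar_n(t)-\Mbar_n(s)}{\psi}\,h(\tun)\}_n$ bounded in $L^2(\Omegabar)$, hence uniformly integrable, while $h(\tun)\to h(\tu)$ by continuity of $h$ and \eqref{eqn-conv_D(0,T;U')}; Vitali's theorem then yields $\Ebar[\duality{\Mbar(t)-\Mbar(s)}{\psi}\,h(\tu)]=0$ for continuity points $s<t$. A functional monotone class argument (as in \cite[Proposition IX.1.1]{Jacod_Shiryaev}, exactly as in Lemma \ref{lem-tM_n is a martingale}) extends this to all bounded $\mathscr{D}_s$-measurable $h$, right-continuity of $\Mbar$ in $\rU^\prime$ extends it to all $s\le t$, and Lemma \ref{Lem:Indep-Increments-augmentation} promotes the martingale property from the filtration generated by $\tu$ to its augmentation $\tF$. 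Square integrability follows by estimating each term of \eqref{eqn-tM} directly in $\rU^\prime$ using \eqref{eqn-tu_V_estimate}, \eqref{eqn-tu_H_estimate}, the bound $\|\rB(\tu(s))\|_{\rV_s^\prime}\le c|\tu(s)|^2_{\rH}$ from \eqref{eqn-estimate_B_ext}, and the fourth moment, obtained from \eqref{eqn-H_estimate'} with $p=4$ and Fatou applied to the quadratic term $\int_0^t\rB(\tu(s))\,ds$. The càdlàg property in $\rU^\prime$ holds because $\tu\in\mathbb{D}([0,T];\rU^\prime)$ while the four time-integrals are continuous in $\rU^\prime$.
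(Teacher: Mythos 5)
Your proposal is correct and follows the same overall architecture as the paper's proof: pass from the $\tF_n$-martingales $\Mbar_n$ of \eqref{eqn-tMn} to $\Mbar$ via $\Pbar$-a.s.\ convergence of $\duality{\Mbar_n(t)}{\psi}$ at the continuity points $t\in C$, then uniform integrability and Vitali, a functional monotone class argument, extension from $C$ to $[0,T]$ by right-continuity, and Lemma \ref{Lem:Indep-Increments-augmentation} to pass to the augmented filtration. Two local steps differ from the paper, and both are sound. For the nonlinear term, the paper's Lemma \ref{lem-pointwise_conv} removes the cutoff via \eqref{eqn-boundedness_H} and then invokes \cite[Cor.\ B.2]{Brzezniak+Motyl_2013} to absorb the projection $\tilde P_n$ on the test function; you instead give a self-contained argument, handling the $\psi$-part by Lemma \ref{lem-B.1} and the remainder by \eqref{eqn-b-inequality} together with $\lVert\tilde P_n\psi-\psi\rVert_{\rV_s}\le C\lVert\tilde P_n\psi-\psi\rVert_{\rU}\to 0$, which correctly exploits that $\psi\in\rU$ and sidesteps the failure of $\tilde P_n\to I$ on $\rV_s$ noted in Remark \ref{rem-P_n}. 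For square integrability, the paper uses weak$^\ast$ lower semicontinuity of the $\rU^\prime$-norm and Fatou against the uniform bound \eqref{eqn-2-moment of bar M_n}, which only requires second moments; your direct term-by-term estimate of \eqref{eqn-tM} needs $\Ebar\bigl[\sup_t\lvert\tu(t)\rvert_{\rH}^4\bigr]<\infty$ to control $\bigl(\int_0^T\lvert\tu(s)\rvert_{\rH}^2\,\ds\bigr)^2$, hence leans on Assumption F.\ref{assum-F5} through \eqref{eqn-H_estimate'} with $p=4$ and a Fatou/weak-convergence transfer to $\tu$; since F.\ref{assum-F5} is assumed throughout, this costs nothing here, but the paper's route is marginally more economical.
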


\begin{lemma}\label{lem-tN is a martingale}
Let $\phi \in\rU$. Then, the process ${\Nbar_\phi} $ is a square integrable ${\tF}$-martingale.
\end{lemma}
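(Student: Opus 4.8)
The plan is to realise $\Nbar_\phi$ as the limit of the processes $\Nbar_{n,\phi}$ from \eqref{eqn-N_n_bar^phi}, which are square integrable $\tF_n$-martingales by Lemma \ref{lem-tN_n is a martingale}, and to pass to the limit in the defining martingale identity exactly as in the proof of Lemma \ref{lem-tM_n is a martingale}. Fix $\phi\in\rU$ and let $h\colon\mathbb{D}([0,T];\rU^\prime)\to\R$ be bounded, continuous and $\mathscr{D}_s$-measurable. Lemma \ref{lem-tN_n is a martingale} gives
\[
\overline{\mathbb{E}}\bigl[\bigl(\Nbar_{n,\phi}(t)-\Nbar_{n,\phi}(s)\bigr)\,h(\tun)\bigr]=0,\qquad 0\le s\le t\le T .
\]
Since $\tun\to\tu$ in $\mathbb{D}([0,T];\rU^\prime)$ on $\Omegabar$ by \eqref{eqn-conv_D(0,T;U')}, continuity of $h$ yields $h(\tun)\to h(\tu)$ pointwise. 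It then remains to prove (i) the pathwise convergence $\Nbar_{n,\phi}(t)\to\Nbar_\phi(t)$ for every $t$ outside the at most countable set of fixed times of discontinuity of $\tu$, and (ii) a uniform integrability bound permitting the passage to the limit. A functional monotone class argument as in \cite[Proposition IX.1.1]{Jacod_Shiryaev} upgrades the resulting identity to all bounded $\mathscr{D}_s$-measurable $h$, showing that $\Nbar_\phi$ is a martingale for the filtration generated by $\tu$, and Lemma \ref{Lem:Indep-Increments-augmentation} then gives the $\tF$-martingale property.

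For step (i) I would work pathwise on $\Omegabar$ and treat the four groups of terms in \eqref{eqn-N_n_bar^phi} separately. At a continuity time $t$ of $\tu$, the convergence \eqref{eqn-conv_D(0,T;H_w)} gives $\tun(t)\rightharpoonup\tu(t)$ in $\rH$, whence $\ip{\tun(t)}{\phi}{\rH}^2\to\ip{\tu(t)}{\phi}{\rH}^2$, while $\tun(0)=P_nu_0\to u_0$ in $\rH$ handles the initial term. Setting $g_n(s)=\duality{\tun(s)}{\phi}$, the bound \eqref{eqn-boundedness_H} and pointwise convergence give $g_n\to g:=\duality{\tu}{\phi}$ in $L^2(0,t)$ and boundedly. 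For the $\acal$-term, Lemma \ref{lem_P_n}(ii) and \eqref{eqn-Acal_ilsk_Dir} yield $\duality{P_n\acal\tun(s)}{\phi}=\ip{\nabla\tun(s)}{\nabla\tilde{P}_n\phi}{\bL^2}$; since $\tilde{P}_n\phi\to\phi$ in $\rV$ (Remark \ref{rem-P_n}(ii)), the multiplier $s\mapsto g_n(s)\nabla\tilde{P}_n\phi$ converges strongly in $L^2((0,t)\times\R^d)$, so pairing with the weakly convergent $\nabla\tun$ (from \eqref{eqn-conv_L^2_w(0,T;V)}) gives the limit. For the $\rB_n$-term, pathwise $\sup_{n,s}|\tun(s)|_{\rU^\prime}<\infty$, so $\theta_n(|\tun(s)|_{\rU^\prime})=1$ for $n$ large, and by \eqref{eqn-B_n} and Lemma \ref{lem_P_n}(ii) the integrand reduces to $b(\tun(s),\tun(s),\tilde{P}_n\phi)\,g_n(s)$; estimate \eqref{eqn-b-inequality} controls $b(\tun,\tun,\tilde{P}_n\phi-\phi)$ by $c\,|\tilde{P}_n\phi-\phi|_{\rV_s}\int_0^t|\tun(s)|_\rH^2\ds\to0$, Lemma \ref{lem-B.1} gives $b(\tun,\tun,\phi)\to b(\tu,\tu,\phi)$ in $L^1(0,t)$, and multiplication by the bounded $g_n$ with dominated convergence closes this term. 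The $f$-term follows by dominated convergence from $\tilde{P}_n\phi\to\phi$ in $\rV$ and $f\in L^1(0,T;\rV^\prime)$. Finally, for the compensator term Assumption F.\ref{assum-F3} gives $\ip{F(\cdot,\tun(\cdot),\cdot)}{\phi}{\rH}\to\ip{F(\cdot,\tu(\cdot),\cdot)}{\phi}{\rH}$ in $L^2([0,T]\times Y,\Leb\otimes\nu)$, the gap between $\tilde{P}_n\phi$ and $\phi$ being absorbed by $|\tilde{P}_n\phi-\phi|_\rH\to0$ and the linear growth \eqref{eqn-F_linear_growth}; convergence of the squared $L^2$-norms restricted to $[0,t]\times Y$ then follows.

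For step (ii), the key observation is that $\Nbar_{n,\phi}$ and the original $N_{n,\phi}$ of \eqref{eqn-N_n,phi} are the same measurable functional of $\tun$, respectively $u_n$, and these share their law on $\zcal_T$, so $\overline{\mathbb{E}}[\sup_t|\Nbar_{n,\phi}(t)|^2]=\mathbb{E}[\sup_t|N_{n,\phi}(t)|^2]$. On the original space the It\^o computation of Lemma \ref{lem-tN_n is a martingale} represents $N_{n,\phi}$ as the purely discontinuous integral $2\int_0^t\int_Y\duality{u_n(s-)}{\phi}\,\duality{P_nF(s,u_n(s-),y)}{\phi}\,\tilde{\eta}(\ud s,\ud y)$; hence the It\^o isometry and the Burkholder--Davis--Gundy inequality bound $\mathbb{E}[\sup_t|N_{n,\phi}(t)|^2]$ by $C|\phi|_\rH^4\,\mathbb{E}\int_0^T|u_n(s)|_\rH^2\int_Y|F(s,u_n(s),y)|_\rH^2\,\nu(\ud y)\ds$, which by \eqref{eqn-F_linear_growth} and the a priori estimate \eqref{eqn-H_estimate-p>2} with $p=4$ (valid under Assumption F.\ref{assum-F5}) is finite and uniform in $n$. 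This $L^2(\Omegabar)$-bound supplies the uniform integrability needed to conclude $\overline{\mathbb{E}}[(\Nbar_\phi(t)-\Nbar_\phi(s))h(\tu)]=0$, and Fatou's lemma gives $\overline{\mathbb{E}}[\sup_t|\Nbar_\phi(t)|^2]<\infty$, i.e. square integrability of $\Nbar_\phi$.

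The hard part will be step (i): unlike the linear $\Mbar$, the process $\Nbar_\phi$ carries the quadratic term $\ip{\tu(t)}{\phi}{\rH}^2$ and products of the drift with $\duality{\tu(s)}{\phi}$, so one must pair weak $L^2(0,T;\rV)$-convergence against the strongly convergent multiplier $g_n\nabla\tilde{P}_n\phi$ while simultaneously carrying the $n$-dependent projection $\tilde{P}_n\phi$ through the nonlinear form $b$ and through $F$. The attendant bookkeeping, restricting the quadratic boundary term to continuity times of $\tu$ and extending the identity to all $s\le t$ by right-continuity of $\Nbar_\phi$ and of $\tF$, is the delicate point.
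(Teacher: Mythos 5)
Your proposal is correct and follows essentially the same route as the paper: the paper likewise passes to the limit in the martingale identity for $\Nbar_{n,\phi}$ tested against bounded continuous $\mathscr{D}_s$-measurable functionals $h(\tun)$, with the pathwise convergence of the quadratic boundary term, the $\acal$-, $\rB_n$- and $F$-terms isolated in Lemma \ref{L:pointwise_conv-B} (whose content you re-derive inline), uniform integrability obtained from the equality of laws, the It\^o/BDG bound and the $p=4$ moment estimate \eqref{eqn-H_estimate-p>2}, and the extension from the co-countable set $C$ to all times by right-continuity, Vitali and Lemma \ref{Lem:Indep-Increments-augmentation}. The only cosmetic difference is in how the individual convergences are organised (e.g.\ your strong-times-weak pairing for the $\acal$-term versus the paper's two-step density argument), which does not affect the validity of the argument.
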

In order to prove Lemmata  \ref{lem-tM is a martingale} and \ref{lem-tN is a martingale} we will need to pass to the limit as $n\to \infty$. For this aim we  will need the following auxiliary results.

\begin{lemma}\label{lem-pointwise_conv}
\begin{enumerate}
\item[(i)] There exists a set $C \subset [0,T]$ such that
\begin{enumerate}
\item[(c1)]\label{item-c1} the complement $ [0,T] \setminus C$ of $C$ is at most countable,
\item[(c2)] $\{0,T\}\subset  C$,
\item[(c3)] for every $t \in C$,
\begin{align}\label{eqn-pointwise_conv-a}
\lim_{n \to \infty } \tun (t)= \tu (t) \mbox{ in } \rU^\prime, \; \mbox{  on $\Omegabar$ }.
\end{align}
\end{enumerate}
\item[(ii)] For all $s,t \in [0,T]$ with $s \le t $ and  every $\psi  \in\rU$, we have on $\Omegabar$,
\begin{align}\label{eqn-pointwise_conv-b}
\lim_{n \to \infty } \int_{s}^{t} \ddual{\rV^\prime }{ \acal_n \tun (\sigma) }{ \psi }{\rV}{} \, \ud\sigma
 =\int_{s}^{t} \ddual{\rV^\prime }{ \acal \tu (\sigma) }{ \psi }{\rV}{}\, \ud\sigma , \quad    \\
 \label{eqn-pointwise_conv-c}
 \lim_{n \to \infty }
 \int_{s}^{t} \ddual{\rV^\prime }{ \rB_n \bigl( \tun (\sigma )  \bigr) }{ \psi }{\rV}{}  \, \ud\sigma
= \int_{s}^{t} \ddual{\rV^\prime }{ \rB\bigl( \tu (\sigma ) \bigr) }{\psi }{\rV}{}  \, \ud\sigma .
\end{align}
\end{enumerate}
\end{lemma}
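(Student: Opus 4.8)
The plan is to prove part (i) by replacing the (path-dependent) set of continuity points coming from Skorokhod convergence by a single deterministic set, namely the complement of the fixed times of discontinuity of $\tu$; and to prove part (ii) by a \emph{pathwise} argument in which the Galerkin projection $P_n$ is transferred onto the smooth test function $\psi$ (where convergence \emph{is} available), after which the already-established convergences of Lemmata \ref{lem_conv_L^2_w(0,T;H)} and \ref{lem-B.1} finish the job.

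\textbf{Part (i).} By \eqref{eqn-conv_D(0,T;U')} we have $\tun\to\tu$ in $\mathbb D([0,T];\rU^\prime)$ for every $\omega\in\Omegabar$, and by the standard characterisation of the Skorokhod topology \cite[Prop.\ VI.2.1]{Jacod_Shiryaev} (already used in the proof of Lemma \ref{lem-Sk+L1 convergence}) this forces $\tun(t)\to\tu(t)$ in $\rU^\prime$ at $t\in\{0,T\}$ and at every continuity point $t$ of $\tu(\cdot,\omega)$. To make the exceptional set independent of $\omega$, I would set $C\coloneqq\{0,T\}\cup\{t\in(0,T):\Pbar(\Delta\tu(t)\neq 0\text{ in }\rU^\prime)=0\}$. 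Then $[0,T]\setminus C$ is exactly the set of fixed times of discontinuity of the c\`adl\`ag $\rU^\prime$-valued process $\tu$, which is at most countable: were some $A_\varepsilon\coloneqq\{t:\Pbar(\lvert\Delta\tu(t)\rvert_{\rU^\prime}>\varepsilon)>0\}$ uncountable, one could extract distinct $t_1,t_2,\dots$ and $\delta>0$ with $\Pbar(\lvert\Delta\tu(t_i)\rvert_{\rU^\prime}>\varepsilon)>\delta$, and the reverse Fatou lemma would give a positive-probability event carrying infinitely many jumps of size exceeding $\varepsilon$, contradicting the c\`adl\`ag property. This yields (c1) and (c2); and for $t\in C$ the time $t$ is $\Pbar$-a.s.\ a continuity point of $\tu$, whence (c3).

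\textbf{Part (ii), linear term.} Fix $\omega$ and $\psi\in\rU$. Using \eqref{eqn-V'-U'}, Lemma \ref{lem_P_n}(ii) and \eqref{eqn-Acal_ilsk_Dir} as in the chain \eqref{eqn-A_n_duality}, but keeping the projection on the test function, one obtains $\ddual{\rV^\prime}{\acal_n\tun(\sigma)}{\psi}{\rV}{}=\ip{\nabla\tun(\sigma)}{\nabla\tilde{P}_n\psi}{\bL^2}$. I would split this as $\ip{\nabla\tun(\sigma)}{\nabla(\tilde{P}_n\psi-\psi)}{\bL^2}+\ip{\nabla\tun(\sigma)}{\nabla\psi}{\bL^2}$. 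Integrating the second summand over $[s,t]$ and applying \eqref{eqn-conv_L^2_w(0,T;V-dot)} with $v=\1_{[s,t]}\psi\in L^2(0,T;\rV)$ gives convergence to $\int_s^t\ip{\nabla\tu(\sigma)}{\nabla\psi}{\bL^2}\,\ud\sigma=\int_s^t\ddual{\rV^\prime}{\acal\tu(\sigma)}{\psi}{\rV}{}\,\ud\sigma$, where the last identity is \eqref{eqn-Acal_ilsk_Dir}. The first summand is bounded in absolute value by $(t-s)^{1/2}\lVert\tun\rVert_{L^2(0,T;\rV)}\lVert\tilde{P}_n\psi-\psi\rVert_{\rV}$, which tends to $0$ by \eqref{eqn-boundedness_L^2(0,T,V)} together with $\tilde{P}_n\psi\to\psi$ in $\rV$ from Remark \ref{rem-P_n}(ii). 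This proves \eqref{eqn-pointwise_conv-b}.

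\textbf{Part (ii), nonlinear term; main obstacle.} The quadratic term is the crux, since three approximations act simultaneously — the projection $P_n$ (which, by Remark \ref{rem-P_n}(i), is \emph{not} known to be uniformly bounded on $\rV$ or $\rV_s$), the cut-off $\theta_n$, and the nonlinearity itself. As in \eqref{eqn-B_n_b}, transferring $P_n$ onto $\psi$ via Lemma \ref{lem_P_n}(ii) gives $\ddual{\rV^\prime}{\rB_n(\tun(\sigma))}{\psi}{\rV}{}=\theta_n(\lvert\tun(\sigma)\rvert_{\rU^\prime})\,b(\tun(\sigma),\tun(\sigma),\tilde{P}_n\psi)$. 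First I would replace $\tilde{P}_n\psi$ by $\psi$: since $0\le\theta_n\le1$, estimate \eqref{eqn-b_estimate_V} bounds the resulting error by $c\,\lVert\tilde{P}_n\psi-\psi\rVert_{\rV}\int_s^t\lVert\tun(\sigma)\rVert_{\rV}^2\,\ud\sigma\to0$, again by \eqref{eqn-boundedness_L^2(0,T,V)} and Remark \ref{rem-P_n}(ii). Next I would discard the cut-off: by \eqref{eqn-boundedness_H} and the contraction $\rH\embed\rU^\prime$, the quantity $K(\omega)\coloneqq\sup_n\sup_{\sigma\in[0,T]}\lvert\tun(\sigma)\rvert_{\rU^\prime}$ is finite, so $\theta_n(\lvert\tun(\sigma)\rvert_{\rU^\prime})=1$ for all $\sigma$ once $n\ge K(\omega)$. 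Finally, for such $n$ it remains to show $\int_s^t b(\tun,\tun,\psi)\,\ud\sigma\to\int_s^t b(\tu,\tu,\psi)\,\ud\sigma$, which is precisely Lemma \ref{lem-B.1} applied with the test function $\psi\in\rU\subset\rV_s$: its hypotheses hold because $(\tun)$ is bounded in $L^2(0,T;\rH)$ by \eqref{eqn-boundedness_H} and converges in $L^2(0,T;\rH_{\loc})$ by \eqref{eqn-conv_L^2(0,T;H_loc)}. Combining the three steps yields \eqref{eqn-pointwise_conv-c}, and since every estimate is pathwise the convergence holds on all of $\Omegabar$.
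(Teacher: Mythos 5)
Your proposal is correct and follows essentially the same route as the paper: the same set $C$ of $\Pbar$-a.s.\ continuity times for part (i), the same transfer of $\tilde{P}_n$ onto the test function followed by the splitting $\ip{\nabla\tun}{\nabla\psi}{\bL^2}+\ip{\nabla\tun}{\nabla(\tilde{P}_n\psi-\psi)}{\bL^2}$ for the linear term, and the same three reductions (projection onto $\psi$, removal of the cut-off via the uniform bound \eqref{eqn-boundedness_H}, then passage to the limit in the trilinear form using $L^2(0,T;\rH_{\loc})$-convergence plus $L^2(0,T;\rH)$-boundedness) for the nonlinear term. The only cosmetic differences are that you prove countability of the fixed discontinuity times directly instead of citing Billingsley, and you invoke the in-paper Lemma \ref{lem-B.1} with an explicit projection-error estimate where the paper cites \cite[Cor.\ B.2]{Brzezniak+Motyl_2013}.
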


\begin{proof}[Proof of Lemma \ref{lem-pointwise_conv}]
By assertion \eqref{eqn-conv_D(0,T;U')} we infer that $\tu$ belongs to  $\mathbb{D} ([0,T];\rU^\prime)$ on $\Omegabar$.  Thus,    by \cite[Section \ 13]{Billingsley_1999},
the set
\begin{equation}\label{eqn-C_set}
C =C(\tu) \coloneqq \{ t \in (0,T) : \overline{\mathbb{P}}( \Delta \tu (t) \not= 0) = 0 \} \cup \{ 0,T \},
\end{equation}
satisfies the condition (c1), i.e.\ the set $[0,T]\setminus C$ is at most countable, and (c2).

Let us choose and fix $t \in C$. Then, since $\tun \to \tu $ in $\mathbb{D} ([0,T];\rU^\prime)$  on $\Omegabar$, by \cite[Th.\ 12.5(i)]{Billingsley_1999}  we infer that
\begin{equation*}\label{eqn-weak-Conv-in H}
\tun(t) \to \tu(t) \mbox{  in $\rU^\prime$,  }\mbox{  on $\Omegabar$ }.
\end{equation*}
Hence Claim \eqref{eqn-pointwise_conv-a} holds.

\begin{proof}[We move on to prove assertion \eqref{eqn-pointwise_conv-b}] 
Let us choose  and fix time instants $s,t \in [0,T]$ such that  $s\le t$ and $\psi  \in\rU$.
Next we  observe that  without loss of generality we can assume that $s=0$. Let us now choose and fix  $\omega\in {\Omegabar}$.
Note that by definitions of $\acal_n$ and $\tilde{P}_n$, and equation \eqref{eqn-Acal_ilsk_Dir} we have
\begin{align*}
\ddual{\rV^\prime }{ \acal_n \tun (\sigma) }{ \psi }{\rV}{}
= \ddual{\rV^\prime }{ \acal \tun (\sigma) }{\tilde{P}_{n} \psi }{\rV}{}
= \ip{\nabla \tun (\sigma)}{\nabla \tilde{P}_{n} \psi}{\bL^2}.
\end{align*}
Note also that 
\begin{align*}
\int_{0}^{t} \ddual{\rV^\prime }{ \acal_n \tun (\sigma) }{ \psi }{\rV}{} \, \ud\sigma
&= \int_{0}^{t} \ip{\nabla \tun (\sigma)}{\nabla \tilde{P}_{n} \psi}{\bL^2} \, \ud\sigma  
\\
=\int_{0}^{t} \ip{\nabla \tun (\sigma)}{\nabla  \psi}{\bL^2} \, \ud\sigma  
&+
\int_{0}^{t} \ip{\nabla \tun (\sigma)}{\nabla (\tilde{P}_{n} \psi-   \psi)}{\bL^2} \, \ud\sigma  
\end{align*}
and that by assertion \ref{eqn-conv_L^2_w(0,T;V-dot)}
\begin{align*}
\lim_{n \to \infty }\int_{0}^{t} \ip{\nabla \tun (\sigma)}{\nabla  \psi}{\bL^2} \, \ud\sigma  
=\int_{0}^{t} \ip{\nabla \tu (\sigma)}{\nabla \psi}{\bL^2} \, \ud\sigma
=\int_{0}^{t} \ddual{\rV^\prime }{ \acal \tu (\sigma) }{ \psi }{\rV}{} \, \ud\sigma.
\end{align*}
Moreover,  since  on the one hand, by  Lemma \ref{lem_P_n}, $\tilde{P}_{n} \psi \to \psi $ in $\rU$ and hence,  by the second part of Remark \ref{rem-P_n}, also in $\rV$ 
and,  on the other hand, by inequality \eqref{eqn-V_estimate} in Lemma  \ref{lem-Galerkin_estimates}
 the sequence $(\acal \tun )_{n=1}^\infty$ is bounded in $L^2(0,T;\rV^\prime)$, we infer that 
\begin{align*}
\int_{0}^{t} \ip{\nabla \tun (\sigma)}{\nabla (\tilde{P}_{n} \psi-  \psi)}{\bL^2} \, \ud\sigma &=
\int_{0}^{t}  \ddual{\rV^\prime }{ \acal \tun (\sigma) }{\tilde{P}_{n} \psi - \psi}{\rV}{}  \, \ud\sigma \to 0.  
\end{align*}
Hence the assertion  \eqref{eqn-pointwise_conv-b} follows. 
\end{proof}

\begin{proof}[Now we prove \eqref{eqn-pointwise_conv-c}]
Since by \eqref{eqn-conv_L^2_w(0,T;V)}, as in the proof of \eqref{eqn-pointwise_conv-b},  $\tun \to \tu $
in ${L}^{2}_{w}(0,T;V)$, we infer that  $\tu  \in {L}^{2}(0,T;V)$ on $\overline{\Omega}$ and the sequence
$(\tun (\cdot ,\omega ){)}_{n=1 }^\infty$ is bounded in ${L}^{2}(0,T;V)$ for all $\omega \in \overline{\Omega}$.
Thus for all $\omega \in \overline{\Omega}$. $\tu  \in {L}^{2}(0,T;{\rH})$ and the sequence $(\tun {)}_{n=1 }^\infty$ is bounded in ${L}^{2}(0,T;{\rH})$, as well.

By  \eqref{eqn-boundedness_H}  the sequence $(\tun {)}_{n \ge 1 }$ is  bounded in $B_b([0,T];H)$, i.e.\ we can find  $N=N(\omega) \in \mathbb{N} $ such that
{
\begin{equation}\label{eqn-boundedness_in_U'}
\sup_{n \ge 1} \; \sup_{t \in [0,T]} \;\;\norm{\tun (t) }_{\rH}{} \le N .
\end{equation}
}
Thus, if $n \geq  N $ then  ${\chi }_{n}(\tun ) = \tun $ and consequently
\[
\rB_n\bigl( \tun ,\tun \bigr)=  P_n \rB\bigl( {\chi }_{n}(\tun ),\tun \bigr)
 = P_n \rB\bigl( \tun ,\tun \bigr).
\]
Hence assertion \eqref{eqn-pointwise_conv-c} follows from  \cite[Cor.\ B.2]{Brzezniak+Motyl_2013} because $\tun (\cdot ,\omega ) \to \tu (\cdot ,\omega )$ in ${L}^{2}(0,T,{\rH}_{\loc})$ by
\eqref{eqn-conv_L^2(0,T;H_loc)}   and the sequence $(\tun (\cdot ,\omega ){)}_{n\ge 1 }$ is bounded in ${L}^{2}(0,T;{\rH})$  by \eqref{eqn-boundedness_H}. This completes the proof of  assertion \eqref{eqn-pointwise_conv-c}.
\end{proof}
Since all assertions are proved,   the proof of  Lemma \ref{lem-pointwise_conv} is complete.
\end{proof}

\begin{remark}\label{rem-lem-pointwise_conv}
Let us note that  it follows from equality \eqref{eqn-pointwise_conv-a} in Lemma \ref{lem-pointwise_conv} that for every $t\in C$ and every $\psi \in U $
		\[\lim_{n \to \infty } \ip{\tun (t)}{ \psi }{\rH} = \ip{\tu (t)}{\psi }{\rH},
		 \quad  \Pbar \mbox{ - a.s.}\]
We now claim that the above convergence also holds for all $\psi \in H$.

Indeed, let us fix $\psi \in H$. Since $U$ is dense in H, we infer that for every $\varepsilon >0$ there exists ${\psi }_{\varepsilon } \in U$ such that $|\psi -{\psi }_{\varepsilon }|_H < \varepsilon $.
We have
\[
({\bar{u}}_{n}(t) -\bar{u}(t),\psi )_H 
= ({\bar{u}}_{n}(t) -\bar{u}(t),\psi -{\psi }_{\varepsilon })_H  + ({\bar{u}}_{n}(t) -\bar{u}(t),{\psi }_{\varepsilon })_H 
\]
and hence
\[
|({\bar{u}}_{n}(t) -\bar{u}(t),\psi )_H|  
\le  (|{\bar{u}}_{n}(t)|_H  +|\bar{u}(t)|_H )\cdot |\psi -{\psi }_{\varepsilon }|_H  + |({\bar{u}}_{n}(t) -\bar{u}(t),{\psi }_{\varepsilon })_H|  .
\]
By  \eqref{eqn-boundedness_H}  we can find $K=K(\omega )$ such that 
\[
\sup_{n \in \mathbb{N}} \bigl( \sup_{t\in [0,T]} |{\bar{u}}_{n}(t)|_H + \sup_{t\in [0,T]} |\bar{u}(t)|_H \bigr) \le K .
\]
Thus
\[
|({\bar{u}}_{n}(t) -\bar{u}(t),\psi )_H| \le \varepsilon K + |({\bar{u}}_{n}(t) -\bar{u}(t),{\psi }_{\varepsilon })_H| .
\]
Passing to the upper limit as $n \to \infty $ and using  $\lim_{n\to \infty } ({\bar{u}}_{n}(t) -\bar{u}(t),{\psi }_{\varepsilon })_H =0 $, we obtain
\[
\limsup_{n\to \infty } |({\bar{u}}_{n}(t) -\bar{u}(t),\psi )_H| \le \varepsilon K .
\]
Since $\varepsilon $ is arbitrary, we infer that 
\[
\lim_{n\to \infty } ({\bar{u}}_{n}(t)-\bar{u}(t),\psi )_H =0,
\]
which completes the proof of the claim.
\qed

Let us also emphasize that  the set $C$ is chosen according to the process $\tu$ being considered as $\rU^\prime$-valued c\`adl\`ag process.
		
Let us also observe  that both the left-hand sides  of the equalities \eqref{eqn-pointwise_conv-b} and \eqref{eqn-pointwise_conv-c} in Lemma \ref{lem-pointwise_conv}  make sense as the ${\rH}$-scalar product.
However, the right-hand sides of \eqref{eqn-pointwise_conv-b} and \eqref{eqn-pointwise_conv-c}  make sense only as the $V^\prime-V$ (or $\rU^\prime-U$) duality. Moreover,
in order to prove these two identities  we need to pass to the limit and for this purpose we  use the $V^\prime-V$-duality and we have to consider  $\psi \in \rU$, see Remark
\ref{rem-P_n} for an explanation why taking $\psi \in V$ would not work.\\
This and the previous ones apply as well to Lemma \ref{L:pointwise_conv-B} below.
\end{remark}

We now give the promised proof of Lemma \ref{lem-tM is a martingale}.

\begin{proof}[{Proof of Lemma \ref{lem-tM is a martingale}}]	
Let us first observe that in view of the paths properties of the process $\ubar$,  all trajectories of the process ${\Mbar} $
   belong to  the space  $\mathbb{D} ([0,T];\rU^\prime)$.

		The rest of the proof will be divided into three steps.
		
\begin{proof}[\textbf{Step 1}]   The aim of this step is to prove the square integrability of $\Mbar$. For this purpose let us fix an arbitrary time instant $t\in C$, where $C$ is the set from the first part  of Lemma \ref{lem-pointwise_conv}.
Then, by \eqref{eqn-tMn}  we have
\begin{align*}
\duality{{\Mbar}_{n} (t)}{\psi }{}
= & \duality{\tun (t)}{ \psi } -\duality{P_n \bar{u}(0)}{\psi }
+ \int_{0}^{t} \duality{ {P}_{n} \acal \tun (\sigma) }{ \psi }{} \ud\sigma \\
& +\int_{0}^{t} \duality{ {P}_{n} \rB \bigl( {\chi }_{n}(\tun (\sigma  )),\tun (\sigma ) \bigr)  }{ \psi }{}\ud\sigma \\
& -\int_{0}^{t} \duality{ {P}_{n} f(\sigma) }{ \psi }{} \ud\sigma .
\end{align*}
By   Lemma \ref{lem-pointwise_conv}, we infer that
\begin{equation*} \label{eqn-new-mart_pointwise_conv}
\lim_{n \to \infty }  \duality{{\Mbar}_{n} (t)}{\psi }{} = \duality{\Mbar (t)}{\psi }{},		\quad  \Pbar \mbox{ - a.s.},
\end{equation*}
which clearly implies that $\Pbar$-a.s.
\begin{equation*}
\Mbar_n(t) \to \Mbar(t) \mbox{  weak}^\ast \mbox{  in }\rU^\prime.
\end{equation*}
The latter assertion implies that
		\begin{equation*}
		\lvert \Mbar(t) \rvert_{\rU^\prime}^2\le\liminf_{n \to \infty}  \lvert \Mbar_n(t) \rvert_{\rU^\prime}^2, \;\; \mbox{ $\Pbar$-a.s.}.
		\end{equation*}
		 By applying the Fatou Lemma we see that
		\begin{align*}
		\Ebar \left[ \lvert \Mbar(t) \rvert^2_{\rU^\prime} \right] \le \Ebar \left[ \liminf_{n \to \infty } \lvert \Mbar_n(t) \rvert_{\rU^\prime}^2   \right] \leq \liminf_{n \to \infty } \Ebar  \left[ \lvert \Mbar_n(t) \rvert_{\rU^\prime}^2 \right],
		\end{align*}
		which implies that
		\begin{align}\label{Eq:p-integrability of tM}
		\Ebar \left[ \lvert \Mbar(t) \rvert^2_{\rU^\prime} \right] \le \sup_{n \in \mathbb{N}} \sup_{r\in [0,T]} \Ebar  \left[ \lvert \Mbar_n(r) \rvert_{\rU^\prime}^2 \right]
  = \sup_{n \in \mathbb{N}} \sup_{r\in [0,T]} \E \left[ \lvert M_n(r) \rvert^2_{\rU^\prime} \right] < \infty
		\end{align}
where the last equality is due to \eqref{Eq:Expect-M_n=tM_n} and the finiteness is due to  Lemma \ref{lemma-M_N martingale}.
Thus we infer that
		\begin{equation}\label{eqn-new-Square-Int-on-C}
		\sup_{r\in C }\Ebar \left[ \lvert \Mbar(r) \rvert_{\rU^\prime}^2 \right] \le \sup_{n \in \mathbb{N}}\sup_{s\in [0,T]} \E \left[ \lvert M_n(s) \rvert_{\rU^\prime}^2 \right] <\infty,
		\end{equation}
		which shows that $\Mbar(t)$ is square integrable for all $t\in C$.
		
		Our next goal is to show that  $\Mbar(t)$ is square integrable for every $\tinOT$. In order to achieve this aim we fix an arbitrary $\tinOT$ and consider a $C$-valued sequence $(t_m)_{m \in \mathbb{N}}$ such that $t_m \searrow t$ as $m \to \infty$.
		With this in mind, we deduce from the right-continuity of the paths of $\Mbar$ that $\Pbar$-a.s.
		\begin{equation*}
		\Mbar(t_m) \to \Mbar(t) \mbox{  in }\rU^\prime.
		\end{equation*}
		Thus, similarly to  the proof of assertion \eqref{Eq:p-integrability of tM} we can prove that
		\begin{equation*}
		\Ebar \left[ \lvert \Mbar(t) \rvert^2_{\rU^\prime} \right] \le \sup_{m \in \mathbb{N}} \Ebar \left[ \lvert \Mbar(t_m)\rvert^2_{\rU^\prime} \right].
		\end{equation*}
		With this inequality at hand and \eqref{eqn-new-Square-Int-on-C} we now easily conclude that for all $\tinOT$
		we have \begin{align*} \Ebar \left[ \lvert \Mbar(t) \rvert^2_{\rU^\prime} \right]<\infty.\end{align*}
This completes the proof in Step 1.
\end{proof}

\begin{proof}[\textbf{Step 2}]
Following \cite{Jacod_Shiryaev} we denote by $\mathscr{D}_t^0$ the $\sigma$-algebra generated by all maps \begin{equation*}\mathbb{D}([0,T], U^\prime)\ni x \mapsto x(s)\in U^\prime,\, s\le t .\end{equation*} We also set $\mathscr{D}_t=\bigcap_{s>t} \mathscr{D}^0_s,\, t\in [0,T]$. Note that $\mathbf{D}=(\mathscr{D}_t)_{t\in [0,T]}$ is a filtration.
Let $C$ be the set from the assertion (a) of Lemma \ref{lem-pointwise_conv}.
		Let us choose and fix $s,t \in C$,  with $s \le t$, and  $\psi \in\rU$. {Let us note that the almost sure set considered below is independent of $\psi$.}   Let
		\[
		h\colon  \mathbb{D}([0,T];\rU^\prime)  \to \mathbb{R}
		\]
		be a bounded, continuous function and $\mathscr{D}_s$-measurable.
  The aim of this step is to show that 
\begin{equation}\label{equality_for_martingale_h}
    \te\bigl[ \duality{ \Mbar (t)-\Mbar (s) }{\psi }{} \, h \bigl( \tu  \bigr) \bigr]=0.
\end{equation}
  
For this purpose, we start observing that by \eqref{eqn-tMn}  we have
\begin{align*}
&\duality{{\Mbar}_{n} (t)-{\Mbar}_{n} (s)}{\psi }{} \\
& \qquad=  \duality{\tun (t)}{ \psi } - \duality{\tun (s)}{ \psi} + \int_{s}^{t} \duality{ {P}_{n} \acal \tun (\sigma) }{ \psi }{} \, \ud\sigma \\
& \qquad\quad +\int_{s}^{t} \duality{ {P}_{n} \rB \bigl( {\chi }_{n}(\tun (\sigma  )),\tun (\sigma ) \bigr)  }{ \psi }{}\, \ud\sigma -\int_{s}^{t} \duality{ {P}_{n} f(\sigma) }{ \psi }{} \, \ud\sigma.
\end{align*}
		By   Lemma \ref{lem-pointwise_conv}, we infer that
		\begin{equation} \label{eqn-mart_pointwise_conv}
		\lim_{n \to \infty }  \duality{{\Mbar}_{n} (t)-{\Mbar}_{n} (s)}{\psi }{} = \duality{\Mbar (t)-\Mbar (s)}{\psi }{},
		\quad  \Pbar \mbox{ - a.s.}
		\end{equation}
		Since  $\tun \to \tu $ $\Pbar$-a.s. in $\mathbb{D}([0,T]; U^\prime)$ and $h$ is a continuous function, we infer that 
		\begin{equation}\label{eqn-mart_pointwise_conv-hres}
		\lim_{n \to \infty }h(\tun ) =h( \tu),\;\; \mbox{$\Pbar $-a.s.}.
		\end{equation}
		Let us define a function $f_n\colon  \Omegabar \to \mathbb{R}$ by
		\begin{equation*}
		{f}_{n}(\omega ) \coloneqq \bigl( \duality{{\Mbar}_{n} (t, \omega )}{\psi }{} - \duality{{\Mbar}_{n} (s, \omega )}{\psi }{} \bigr)
		\, h \bigl( \tun \bigr) , \qquad \omega \in \Omegabar .
		\end{equation*}
		We will prove that the sequence  $\{ {f}_{n} {\} }_{n \in \mathbb{N} }$ is  uniformly integrable.
		For this purpose we  claim  that
		\begin{equation} \label{eqn-mart_uniform_int}
		\sup_{n \ge 1}  \Ebar \bigl[ {| {f}_{n} |}^{2} \bigr] < \infty.
		\end{equation}
		Indeed, from the boundedness of the function $h$, the triangle and the Cauchy-Schwarz inequalities  we infer that
		\begin{align*}
		\Ebar \bigl[ {| {f}_{n} |}^{2} \bigr]  \le 2 \lvert h \rvert_{L^\infty}^{2} \Vert \psi \Vert^{2}_{U} \left( \lVert \Mbar_{n}(t) \rVert^{2}_{\rU^\prime} + \lVert \Mbar_{n}(s) \rVert^{2}_{\rU^\prime}  \right),
		\end{align*}
	which along with the inequality \eqref{eqn-2-moment of bar M_n} imply \eqref{eqn-mart_uniform_int}.

The just proven uniform integrability of the sequence $\{ {f}_{n} {\} }_{n \in \mathbb{N} }$ together with the  Vitali  Convergence Theorem, the assertions 		(\ref{eqn-mart_pointwise_conv}) and \eqref{eqn-mart_pointwise_conv-hres}, the martingale property of $\Mbar_{n}$  and the $\mathscr{D}_s$-measurability of $h$  imply that
\begin{align*}
		\lim_{n \to \infty } \te\bigl[ \duality{{\Mbar}_{n} (t)-{\Mbar}_{n} (s) }{\psi }{}\, h \bigl( \tun  \bigr) \bigr]
		= \te\bigl[ \duality{ \Mbar (t)-\Mbar (s) }{\psi }{} \, h \bigl( \tu  \bigr) \bigr]=0.
\end{align*}
Hence, for all $s,t\in C$ with $s\le t$ we have
\begin{equation*}
    \te\bigl[ \duality{ \Mbar (t)-\Mbar (s) }{\psi }{} \, h \bigl( \tu  \bigr) \bigr]=0
\end{equation*}

\end{proof}

\begin{proof}[\textbf{Step 3}] 
Our aim in this step is to prove the identity 
\begin{equation*}\te\bigl[ \duality{ \Mbar (t)-\Mbar (s) }{\psi }{} \, h \bigl( \tu  \bigr) \bigr]=0,\end{equation*} 
for all $t,s \in [0,T]$ with $s\le t$, which will imply that $\Mbar$ is a $\Fbar$-martingale. 
For this purpose we choose and fix  $t,s \in [0,T]$ with $s< t$ and  we consider two $C$-valued  sequences $(t_m)_{m \in \mathbb{N}}, (s_m)_{m \in \mathbb{N}}$ satisfying
		\begin{itemize}
			\item $s<s_m < t <t_m$ for every $m \in \mathbb{N}$,
			\item $s_m \searrow s$ and $t_m \searrow t$ as $m \to \infty$.
		\end{itemize}
  We also choose and fix \[
		h\colon  \mathbb{D}([0,T];\rU^\prime)  \to \mathbb{R}
		\]
	 a bounded, continuous function and $\mathscr{D}_s$-measurable.
  Note that thanks to \eqref{equality_for_martingale_h} we have
  \begin{equation*}
      \te\bigl[ \duality{ \Mbar (t_m)-\Mbar (s_m) }{\psi }{} \, h \bigl( \tu  \bigr) \bigr]=0
  \end{equation*}
Thanks to the right-continuity of the paths of the process $\duality{\Mbar}{\psi}{}$
		we have,  $\Pbar$-a.s.
		\begin{align}
		h(\tu) \duality{\Mbar(s_m)}{\psi}{} & \to h(\tu) \duality{\Mbar(s)}{\psi}{},\label{eqn-mart-convalongD}\\
		h(\tu)\duality{\Mbar(t_m) }{\psi}{} & \to  h(\tu) \duality{\Mbar(t) }{\psi}{}.\label{eqn-mart-tildem-rightcont}
\end{align}

Also, from   \eqref{eqn-new-Square-Int-on-C}, the convergence \eqref{eqn-mart_pointwise_conv} and the boundedness of $h$ we deduce that
\begin{align*}
		\sup_{r \in C }\Ebar \left[ \lvert h(\tu) \duality{\Mbar(r)}{\psi}{}\rvert^2 \right] <\infty,
\end{align*}
which implies that
\begin{align*}
	\sup_{m \in \mathbb{N}  }\Ebar \left[ \lvert h(\tu) \duality{\Mbar(s_m)}{\psi}{}\rvert^2 \right] +
		\sup_{m \in \mathbb{N}  }\Ebar \left[ \lvert h(\tu) \duality{\Mbar(t_m)}{\psi}{}\rvert^2  \right] < \infty.
\end{align*}
Thus, the sequences $\duality{\Mbar(t_m)}{\psi}{} h(\tu)$ and $\duality{\Mbar(s_m)}{\psi}{} h(\tu)$ are uniformly integrable which together with \eqref{eqn-mart-convalongD},  \eqref{eqn-mart-tildem-rightcont}, the Vitali Convergence Theorem and \eqref{equality_for_martingale_h} imply  that
\begin{align}
\te\bigl[ \duality{ \Mbar (t)-\Mbar (s) }{\psi }{} \, h \bigl( \tu  \bigr) \bigr]= \lim_{m\to \infty}	\te\bigl[ \duality{ \Mbar (t_m)-\Mbar (s_m) }{\psi }{} \, h \bigl( \tu  \bigr) \bigr]=0.	
\end{align}
This completes the proof of \textbf{Step 3}.  \end{proof}
Having established Steps 1-3, we  completed the proof of Lemma \ref{lem-tM is a martingale}.
\end{proof}

In order to prove Lemmata  \ref{lem-tN is a martingale} we will need to pass to the limit as $n\to \infty$. For this aim, we  will need the following additional auxiliary result, compare with previous Lemma \ref{lem-pointwise_conv}.

\begin{lemma}\label{L:pointwise_conv-B}
The following assertions hold on $\Omegabar$.
\begin{enumerate}
\item[(i)] If $s,t \in [0,T]$ such that $s \le t $ and   $\phi,\psi  \in\rU$, then
\begin{align}\label{eqn-conv_B_i)}
	 &\lim_{n \to \infty } \int_{s}^{t} \duality{ \acal_n \tun (\sigma) }{ \psi }{}\; \duality{\tun(\sigma)}{\phi}{} \, \ud\sigma
	\\&\hspace{2truecm}=\int_{s}^{t} \duality{ \acal \tu (\sigma) }{ \psi }{}\;\duality{\tu(\sigma)}{\phi}{}\, \ud\sigma.
\nonumber
\end{align}
\item[(ii)] If $\phi \in  {\rH}$, then  in  $L^2([0,T]\times Y,\Leb\otimes \nu)$, 
\begin{align}\label{eqn-F.4_stronger_final}
	&\ip{P_n F(r,\tun(r),y)}{\phi}{H}  \to \ip{F(r,\tu(r),y)}{\phi}{H}.
\end{align}
In particular, for every $\phi \in  U$,
\begin{align}\label{eqn-conv_F_iii)}
	 &\lim_{n \to \infty } \int_s^t \int_{Y} \left(P_n F(r,\tun(r),y),  \phi \right)_H^2 \, \nun(\ud y) \dr 
         \\\nonumber 
	&\hspace{2truecm}
     \int_s^t \int_{Y} \left(  F(r,\tu (r),y),\phi \right)_H^2 \, \nun(\ud y) \dr.
\end{align}

\item[(iii)] Moreover, if $\phi,\psi  \in U$, then
\begin{align}\label{eqn-conv_B_ii)}
			&\lim_{n \to \infty }
	\int_{s}^{t} \duality{ \rB_n \bigl( \tun (\sigma )  \bigr) }{\psi }{} \;\duality{\tun(\sigma)}{\phi}{} \, \ud\sigma
    \\
	&\hspace{2truecm}= \int_{s}^{t} \duality{ \rB\bigl( \tu (\sigma ) \bigr) }{\psi }{}\;\duality{\tu(\sigma)}{\phi}{}  \, \ud\sigma.
\nonumber
\end{align}
\end{enumerate}
\end{lemma}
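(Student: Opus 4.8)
The plan is to establish the three assertions separately, the unifying difficulty being that each integrand is a \emph{product} of two $\sigma$-dependent factors; consequently Lemma \ref{lem-pointwise_conv} cannot simply be quoted, since one may not pass to the limit in a product of two merely weakly convergent sequences. The crux in every case is to isolate the factor that converges \emph{strongly}. Fixing $\omega\in\Omegabar$, I would first record the behaviour of the common weight $g_n(\sigma):=\duality{\tun(\sigma)}{\phi}{}=\ip{\tun(\sigma)}{\phi}{\rH}$. By Lemma \ref{lem-pointwise_conv}(i) we have $\tun(\sigma)\to\tu(\sigma)$ in $\rU^\prime$ for every $\sigma$ in the co-countable set $C$, so (since $\phi\in\rU$) $g_n\to g:=\ip{\tu(\cdot)}{\phi}{\rH}$ Lebesgue-a.e.; combined with the uniform bound \eqref{eqn-boundedness_H} and dominated convergence this gives $g_n\to g$ in $L^2(0,T)$, with $\sup_n\norm{g_n}_{L^\infty}<\infty$.

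For assertion (i) I would put $A_n(\sigma):=\duality{\acal_n\tun(\sigma)}{\psi}{}=\ip{\nabla\tun(\sigma)}{\nabla\tilde P_n\psi}{\bL^2}$, as in the proof of \eqref{eqn-pointwise_conv-b}. Splitting $\nabla\tilde P_n\psi=\nabla\psi+\nabla(\tilde P_n\psi-\psi)$ and testing against an arbitrary $\chi\in L^2(0,T)$: the $\psi$-part converges by \eqref{eqn-conv_L^2_w(0,T;V-dot)} applied to $v=\chi\psi\in L^2(0,T;\rV)$, while the remainder is bounded by $\norm{\nabla(\tilde P_n\psi-\psi)}_{\bL^2}\norm{\tun}_{L^2(0,T;\rV)}\norm{\chi}_{L^2}\to0$ using $\tilde P_n\psi\to\psi$ in $\rV$ and \eqref{eqn-boundedness_L^2(0,T,V)}. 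Hence $A_n\to A:=\ip{\nabla\tu}{\nabla\psi}{\bL^2}$ weakly in $L^2(0,T)$, with $\sup_n\norm{A_n}_{L^2}<\infty$. A weak--strong pairing then closes the argument: in $\int_s^t A_ng_n=\int_s^t A_n(g_n-g)+\int_0^T A_n(g\1_{[s,t]})$ the first term is $\le\norm{A_n}_{L^2}\norm{g_n-g}_{L^2}\to0$ and the second converges by testing the weak limit against $g\1_{[s,t]}\in L^2(0,T)$.

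For assertion (iii) the new ingredients are the truncation $\theta_n$ and the projection $\tilde P_n$. As in the proof of \eqref{eqn-pointwise_conv-c}, the bound \eqref{eqn-boundedness_H} together with the fact that $\rH\embed\rU^\prime$ is a contraction furnishes an $N=N(\omega)$ with $\theta_n(\abs{\tun(\sigma)}_{\rU^\prime})=1$ for all $\sigma$ and all $n\ge N$, whence $\duality{\rB_n(\tun(\sigma))}{\psi}{}=b(\tun(\sigma),\tun(\sigma),\tilde P_n\psi)$. I would write $b(\tun,\tun,\tilde P_n\psi)=b(\tun,\tun,\psi)+b(\tun,\tun,\tilde P_n\psi-\psi)$: by Lemma \ref{lem-B.1} (applicable via \eqref{eqn-conv_L^2(0,T;H_loc)} and \eqref{eqn-boundedness_H}) the first summand converges to $b(\tu,\tu,\psi)$ in $L^1(0,T)$, while \eqref{eqn-b-inequality} bounds the second by $c\,\abs{\tilde P_n\psi-\psi}_{\rV_{s}}\int_s^t\abs{\tun(\sigma)}_{\rH}^2\,\ud\sigma\to0$, since $\tilde P_n\psi\to\psi$ in $\rU$ hence in $\rV_{s}$ by Lemma \ref{lem_P_n}(iv) and the continuity of $\rU\embed\rV_{s}$. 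Thus $B_n:=\duality{\rB_n(\tun)}{\psi}{}\to\duality{\rB(\tu)}{\psi}{}$ strongly in $L^1(0,T)$, and since $g_n$ is uniformly bounded with $g_n\to g$ a.e., the splitting $\int_s^t B_ng_n=\int_s^t(B_n-B)g_n+\int_s^t B(g_n-g)$ yields \eqref{eqn-conv_B_ii)} (first term $\le\sup_n\norm{g_n}_{L^\infty}\norm{B_n-B}_{L^1}$, second by dominated convergence with dominant $2\sup_n\norm{g_n}_{L^\infty}\abs{B}\in L^1$). I expect this strong $L^1$ convergence of $b(\tun,\tun,\tilde P_n\psi)$ to be the main obstacle, precisely because $P_n$ is not known to be bounded on $\rV_{s}$ (cf.\ Remark \ref{rem-P_n}); the point that rescues it is that $\tilde P_n\psi\to\psi$ holds in $\rU$, and therefore in $\rV_{s}$, for the \emph{fixed} $\psi\in\rU$.

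Assertion (ii) is of a different nature and rests on Assumption F.\ref{assum-F3}. Using the self-adjointness of $P_n$ on $\rH$, I would write $\ip{P_nF(r,\tun(r),y)}{\phi}{\rH}=\ip{F(r,\tun(r),y)}{P_n\phi}{\rH}$ and decompose the difference from $\ip{F(r,\tu(r),y)}{\phi}{\rH}$ into $\ip{F(r,\tun,y)}{P_n\phi-\phi}{\rH}$ plus $\ip{F(r,\tun,y)}{\phi}{\rH}-\ip{F(r,\tu,y)}{\phi}{\rH}$. The first term tends to $0$ in $L^2([0,T]\times Y)$, its square being at most $\abs{P_n\phi-\phi}_{\rH}^2\int_0^T\!\!\int_Y\abs{F(r,\tun,y)}_{\rH}^2\,\nu(\ud y)\,\dr$, which is finite by \eqref{eqn-F_linear_growth} and \eqref{eqn-boundedness_H}, while $\abs{P_n\phi-\phi}_{\rH}\to0$. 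The second term vanishes in $L^2$ for $\phi\in\vcal$ by Assumption F.\ref{assum-F3} and $\tun\to\tu$ in $\zcal_T$; the general case $\phi\in\rH$ then follows from the density of $\vcal$ in $\rH$, approximating $\phi$ by $\phi_\eps\in\vcal$ and controlling $\ip{F(r,\tun,y)-F(r,\tu,y)}{\phi-\phi_\eps}{\rH}$ in $L^2$ uniformly in $n$ through the Lipschitz bound \eqref{eqn-F_Lipschitz_cond} and \eqref{eqn-boundedness_H}. This establishes \eqref{eqn-F.4_stronger_final}; finally \eqref{eqn-conv_F_iii)} follows because $L^2([0,T]\times Y)$-convergence of the integrands forces $L^1([0,T]\times Y)$-convergence of their squares, which upon integration over $[s,t]\times Y$ gives the claim.
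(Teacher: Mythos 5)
Your proof is correct, and while it rests on the same basic principle as the paper's --- in a product of two $n$-dependent factors one must identify which factor converges strongly --- the decompositions differ in instructive ways. For part (i) the paper splits the product difference directly into $I_n(\phi)+I\!I_n(\phi)$, handling $I_n$ by weak convergence in $L^2(0,T;V)$ against the fixed test function $\mathbf{1}_{[0,t]}\ip{\tu(\cdot)}{\phi}{}\nabla\psi$ and $I\!I_n$ by the strong $L^2(0,T;{\rH}_{\loc})$ convergence together with a localization to $B_R$ and density of $\vcal$ in $\rH$; you instead extract strong $L^2(0,T)$ convergence of the scalar weight $g_n=\duality{\tun(\cdot)}{\phi}{}$ from the pointwise convergence on the co-countable set $C$ (Lemma \ref{lem-pointwise_conv}) plus \eqref{eqn-boundedness_H} and dominated convergence, and then invoke a clean weak--strong pairing in $L^2(0,T)$. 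Your route avoids the $B_R$-localization entirely but exploits that $\phi\in\rU$ (the paper's $I\!I_n$ argument is built to work for $\phi\in\rH$, which is more than the lemma requires). For part (iii) the paper redoes the trilinear estimates from scratch with explicit $\normb{\cdot}{H_{B_R}}{}$ bounds, whereas you reuse Lemma \ref{lem-B.1} to get $L^1(0,T)$ convergence of $\duality{\rB(\tun)}{\psi}{}$ and then pair with the uniformly bounded, a.e.-convergent $g_n$ (essentially the mechanism of Lemma \ref{lem-Sk+L1 convergence}); this is shorter and correctly isolates the two genuine obstacles, namely the removal of the truncation $\theta_n$ via $N(\omega)$ and the convergence $\tilde{P}_n\psi\to\psi$ in $\rV_s$, which is exactly what rescues the argument given that $P_n$ is not known to be bounded on $\rV_s$. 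Part (ii) is essentially identical to the paper's, except that you control the density step with the Lipschitz condition \eqref{eqn-F_Lipschitz_cond} where the paper uses the uniform $L^2(Z,m;\rH)$ bound from the linear growth condition; both suffice thanks to \eqref{eqn-boundedness_H}, and your observation that $L^2$ convergence of the integrands yields $L^1$ convergence of their squares handles the restriction to $[s,t]$ slightly more cleanly than the paper's appeal to convergence of norms.
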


\noindent
Let us remind that the inner product in the Hilbert space $\rH$ we denote  by   $ \ip{ \cdot}{ \cdot}{\rH}$.

\begin{proof}[Proof of Lemma \ref{L:pointwise_conv-B}] Without loss of generality we can assume that $s=0$. Let us choose and fix $t \in [0,T]$,    $\phi,\psi  \in\rU$ and $\omega\in \Omegabar$.

\begin{proof}[\textbf{Proof of (i)}] We begin with proving
	\begin{equation*}\label{eqn-conv_B_i_01}
	\lim_{n \to \infty} \int_{0}^{t} \ip{\nabla \tun (s)}{\nabla \psi}{\bL^2} \; \duality{\tun(s)}{\phi}{} \ds
	= \int_{0}^{t} \ip{\nabla \tu (s)}{\nabla \psi}{\bL^2}  \;\duality{\tu(s)}{\phi}{}\ds
	\end{equation*}
Indeed, we have
\begin{align}\nonumber
& \int_{0}^{t} \ip{\nabla \tun (s)}{\nabla \psi}{\bL^2}  \; \duality{\tun(s)}{\phi}{} \ds - \int_{0}^{t} \ip{\nabla \tu (s)}{\nabla \psi}{\bL^2} \;\duality{\tu(s)}{\phi}{}\ds \\
& = \int_0^T \ip{\nabla (\tun (s) -\tu(s))}{\mathbf{1}_{[0,t]}(s)\ip{\tu(s)}{\phi}{} \nabla \psi}{\bL^2}  \ds
\\
&+ \int_{0}^{t} \ip{\nabla \tun (s)}{\nabla \psi}{\bL^2} \ip{\tun(s)-\tu(s) }{\phi}{} \ds  =:  I_n(\phi) + I{\!}I_n(\phi)
\label{eqn-conv_B_i_01-additional}
\end{align}
The convergence of the first integral on the right-hand side of equality  \eqref{eqn-conv_B_i_01-additional} to 0 follows from Lemma \ref{lem_conv_L^2_w(0,T;H)} because $\tun  -\tu \to 0$ in $L^2_{w}(0,T;V)$    by \eqref{eqn-conv_L^2_w(0,T;V)}, and the function $[0,T] \ni s \mapsto \mathbf{1}_{[0,t]}(s)  \ip{\tu(s)}{\phi}{} \psi \in V$ belongs to $L^2(0,T;V)$.
The proof that the second integral above converges to $0$ consists of two steps. In the first step  we take $\phi \in \vcal$ and then choose $R\in \mathbb{N}$ such that $\supp (\phi) \subset B_R$. Then, also because $\normb{\phi}{L^2(B_R)}{} \leq \normb{\phi}{\rH}{}$,  we get
\begin{align*}
	I{\!}I_n(\phi) & \leq  \normb{\psi}{\rV}{}\normb{\phi}{L^2(B_R)}{} \left(\int_0^T \normb{\tun(s)}{\rV}{2} \ds\right)^{1/2}    \left(\int_0^T \normb{\tun(s)-\tu(s)}{L^2(B_R)}{2} \ds\right)^{1/2}
\end{align*}
which converges to $0$ as $n\to \infty$ by \eqref{eqn-conv_L^2(0,T;H_loc)} and \eqref{eqn-boundedness_L^2(0,T,V)}.
In the second step we take an arbitrary $\phi \in {\rH}$ and an arbitrary $\eps>0$. By the density of $\vcal$ in ${\rH}$  we find $\phi_\eps  \in \vcal$ such that $\normb{\phi-\phi_\eps}{\rH}{}<\eps (1+2 \sup_n \normb{\tun}{L^2(0,T;V)}{})^{-2}$ and
	then we find $N_\eps\in \mathbb{N}$ such that
	$I{\!}I_n(\phi_\eps)\leq \eps$ for all $n \geq N_\eps$. We thus get, for all $n\geq N_\eps$,
	\[I{\!}I_n(\phi)=I{\!}I_n(\phi_\eps)+I{\!}I_n(\phi_\eps-\psi)<\frac{\eps}{2}+\normb{\phi-\phi_\eps}{\rH}{}\normb{\tun}{L^2(0,T;V)}{2} \leq \eps,
	\]
	what completes the proof that $\lim_{n\to \infty}I{\!}I_n(\phi)=0$ for every $\phi\in {\rH}$.

Next we observe that because $\normb{{P}_{n} \psi- \psi}{\rV}{} \to 0$ and, by \eqref{eqn-boundedness_L^2(0,T,V)},  \begin{equation*}\sup_n  \int_{0}^{T} \normb{ \tun (s) }{\rV}{^2} \ds <\infty,\end{equation*} we have
\begin{equation*}\label{eqn-conv_B_i_02}
	\lim_{n \to \infty} \int_{0}^{t} \ip{\nabla \tun (s)}{\nabla P_n\psi}{\bL^2} \; \duality{\tun(s)}{\phi}{} \ds
	= \int_{0}^{t}  \ip{\nabla \tu (s)}{\nabla \psi}{\bL^2}  \;\duality{\tu(s)}{\phi}{}\ds.
\end{equation*}
We conclude the proof by observing that
\begin{align*}
	& \biggl\vert
	\int_{0}^{t} \duality{ \acal_n \tun (s) }{ \psi }{} \; \duality{\tun(s)}{\phi}{} \ds
	- \int_{0}^{t} \duality{ \acal \tu (s) }{ \psi }{}\;\duality{\tu(s)}{\phi}{}\ds \biggr\vert
	\\
	&= \biggl\vert
	\int_{0}^{t} \duality{ \acal \tun (s) }{{P}_{n} \psi }{} \; \duality{\tun(s)}{\phi}{} \ds
	- \int_{0}^{t} \duality{ \acal \tu (s) }{ \psi }{}\;\duality{\tu(s)}{\phi}{}\ds \biggr\vert
	\\
	&= \biggl\vert
	\int_{0}^{t} \ip{\nabla \tun (s)}{\nabla P_n \psi}{\bL^2} \; \duality{\tun(s)}{\phi}{} \ds
	- \int_{0}^{t} \ip{\nabla \tu (s)}{\nabla \psi}{\bL^2} \;\duality{\tu(s)}{\phi}{}\ds \biggr\vert.
\end{align*}
Hence the proof of claim \eqref{eqn-conv_B_i)} is complete.
\end{proof}

\begin{proof}[\textbf{Proof of (ii)}]
Let us temporarily denote
$Z=[0,T]\times Y$, $m=\Leb\otimes \nu$ and
	\begin{align*}
	\xi_n(r,y)=F(r,\tun(r,\omega),y),\;\;\; & \xi(r,y)=F(r,\tu(r,\omega),y), \;\; (r,y) \in Z.
	\end{align*}
Then, by  \eqref{eqn-conv_L^2(0,T;H_loc)} and Assumption F.\ref{assum-F3} \label{assum-F3-used trully-1} we infer that 	
	\begin{equation}\label{eqn-F.4_weaker}
 \ip{\xi_n}{\phi}{}  \to \ip{\xi}{\phi}{} \mbox{ in  } L^2(Z,m), \mbox{for every } \phi \in  \vcal.
	\end{equation}

Next, by  the linear growth condition \eqref{eqn-F_linear_growth} in Assumption F.\ref{assum-F2}\label{assum-F2-used5}  and \eqref{eqn-conv_L^2_w(0,T;H)}, there exists a constant $C_1=C_1(\omega)$ such that
	\begin{equation}\label{eqn-uniform_xi_n}
 \begin{aligned}
	\int_Z \normb{\xi_n(z)}{\rH}{2} m(\ud z)&= \int_0^T \int_Y \normb{\xi_n(r,y)}{\rH}{2} \,\nu(\ud y)\dr
\\
&\leq C  \int_0^T (1+ \normb{\tun(r)}{\rH}{2})\dr \leq CT(1+C_1), \;\; n\in\mathbb{N}.
\end{aligned}
	\end{equation}
	Therefore, we infer that the following stronger version of \eqref{eqn-F.4_weaker} holds:
	\begin{align}\label{eqn-F.4_stronger}
	\ip{\xi_n}{\phi}{}  \to \ip{\xi}{\phi}{} \mbox{ in  } L^2(Z,m),  \mbox{ for every }\phi \in  H.
	\end{align}
	Next we claim that even stronger than above assertion holds, i.e.
	\begin{align}\label{eqn-F.4_stronger_23}
	\ip{\xi_n}{P_n\phi}{}  \to \ip{\xi}{\phi}{} \mbox{ in  } L^2(Z,m),  \mbox{ for every }\phi \in  H.
	\end{align}
	Indeed, let us choose and fix  $\phi \in  {\rH}$. Then we have
	\begin{align*}
	\Vert  \ip{\xi_n}{P_n\phi}{}  - \ip{\xi}{\phi}{}  \Vert_{L^2(Z,m)}
	&\leq  \Vert  \ip{\xi_n}{\phi}{}  - \ip{\xi}{\phi}{}  \Vert_{L^2(Z,m)}  +\Vert  \ip{\xi_n}{P_n\phi-\phi}{}    \Vert_{L^2(Z,m)} \\
	&\leq \Vert  \ip{\xi_n}{\phi}{}  - \ip{\xi}{\phi}{}  \Vert_{L^2(Z,m)}  + \normb{P_n\phi-\phi}{\rH}{}   \normb{\xi_n}{L^2(Z,m)}{}
	\end{align*}
	The right-hand side converges to $0$ because of \eqref{eqn-F.4_stronger} and \eqref{eqn-uniform_xi_n}.
This completes the proof of assertion \eqref{eqn-F.4_stronger_final}.
Observe that \eqref{eqn-conv_F_iii)} follows from \eqref{eqn-F.4_stronger_23} because it implies that
	\[
	\normb{\ip{\xi_n}{P_n\phi}{}}{L^2(Z,m)}{2} \to  \normb{\ip{\xi}{\phi}{}}{L^2(Z,m)}{2}
	\]
which is equivalent to \eqref{eqn-conv_F_iii)}.
This completes our alternative proof of  claim \eqref{eqn-F.4_stronger_final}.
\end{proof}

\begin{proof}[\textbf{Proof of (iii)}] We will prove first the following, easier than \eqref{eqn-conv_B_ii)}, assertion.
	\begin{align}
	\label{eqn-conv_02}
	&\int_0^t \duality{ \rB(\tun (s))}{\psi}\;\duality{\tun (s)}{\phi} \, \ds \to \int_0^t \duality{ \rB(\tu (s))}{\psi}\;\duality{\tu (s)}{\phi} \, \ds.
	\end{align}
	
	We begin with a case when  $\psi,\phi  \in \vcal$. Then we  choose $R\in \mathbb{N}$ such that $\supp (\phi) \subset B_R$.
	We have the following chain of inequalities, see the proof of \cite[Lemma\ B.1]{Brzezniak+Motyl_2013},
	\begin{align*}
	& \biggl\vert \int_0^t \duality{ \rB(\tun (s))}{\psi}\;\duality{\tun (s)}{\phi} \, \ds - \int_0^t \duality{ \rB(\tu (s))}{\psi}\;\duality{\tu (s)}{\phi} \, \ds \biggr\vert \\
	&\leq \biggl\vert \int_0^t b(\tun (s)-\tu(s),\tun (s), \psi) \ip{\tu (s)}{\phi}{} \, \ds \biggr\vert +
	\biggl\vert \int_0^t b(\tu(s),\tun (s)-\tu(s), \psi) \ip{\tu (s)}{\phi}{} \, \ds \biggr\vert\\
	&+\biggl \vert \int_0^t b(\tun(s),\tun (s), \psi) \ip{\tun (s)-\tu (s)}{\phi}{} \, \ds \biggr\vert\\
	&\leq c \normb{\phi}{}{}\normb{\psi}{V_s}{} \biggl( \int_0^t \normb{\tun (s)-\tu(s)}{H_{B_R}}{} \normb{\tun (s)}{\rH}{}\normb{\tun (s)}{\rH}{}  \, \ds
	\\
	&+ \int_0^t \normb{\tu(s)}{\rH}{}\normb{\tun (s)-\tu(s)}{H_{B_R}}{}\normb{\tun (s)}{\rH}{}  \, \ds \\
	&+\int_0^t \normb{\tun(s)}{\rH}{}\normb{\tun (s)-\tu(s)}{H_{B_R}}{}\normb{\tun-\tu(s) (s)}{H_{B_R}}{}  \, \ds
	\biggr)
	\\
	&\leq c \normb{\phi}{}{}\normb{\psi}{V_s}{}   \biggl( T^{\frac12}\sup_{s \in [0,T]} \normb{\tun (s)}{\rH}{^2} \biggl( \int_0^T \normb{\tun (s)-\tu(s)}{H_{B_R}}{2}   \, \ds \biggr)^{\frac12}
	\\
	&+ T^{\frac12} \sup_{s \in [0,T]} \normb{\tu(s)}{\rH}{}\normb{\tun (s)}{\rH}{}  \biggl(\int_0^t \normb{\tun (s)-\tu(s)}{H_{B_R}}{}  \, \ds  \biggr)^{\frac12} \\
	&+\sup_{s \in [0,T]} \normb{\tun(s)}{\rH}{}  \int_0^t \normb{\tun (s)-\tu(s)}{H_{B_R}}{2}  \, \ds.
	\biggr)
	\end{align*}
Thus the claim \eqref{eqn-conv_02} follows from claims \eqref{eqn-conv_L^2(0,T;H_loc)} and \eqref{eqn-boundedness_H}. Because the space $\vcal $ is dense in $V_s$, we can easily deduce that \eqref{eqn-conv_02}  holds for all  $\psi, \phi \in U$.  The details are omitted. An interested reader can compare with  the last part of the proof of \cite[Lemma\ B.1]{Brzezniak+Motyl_2013}.
Hence the proof of claim \eqref{eqn-conv_B_ii)} is complete.
\end{proof}

All this  completes the proof of the assertion (iii) and also the Lemma \ref{L:pointwise_conv-B}.
\end{proof}

We are now ready to present the proof of Lemma \ref{lem-tN is a martingale}.
\begin{proof}[Proof of Lemma \ref{lem-tN is a martingale}]	
	The proof is very similar to the proof of Lemma \ref{lem-tM is a martingale}, thus we will only outline the  main differences of the two proofs.
	
		\noindent \textbf{Step 1.} The aim in this step is to prove that for each $\tinOT$ the random variable $\Nbar_{\phi}(t)$ is square integrable. In order to do this we  let $C$ be the set defined in  \eqref{eqn-C_set} and we   fix $s,t \in C$,  with $s \le t$, and  $\psi \in\rU$.

	With Lemma \ref{L:pointwise_conv-B} at hand, we can argue as in the proof of Lemma \ref{lem-tM is a martingale} and prove that
	\begin{equation*} \label{eqn-new-mart_pointwise_conv-tN}
	\lim_{n \to \infty }  ({\Nbar}_{n, \phi} (t)-{\Nbar}_{n,\phi} (s)) = {\Nbar_{\phi} (t)-\Nbar_{\phi} (s)},
	\quad  \Pbar \mbox{ - a.s.}
	\end{equation*}

	From the equality of laws of  $u_n$ and $\tu_n$, the fact that both $N_{n, \phi}$ and ${\Nbar}_{n, \phi}  $ are square integrable  martingales,
	the embedding $U\embed {\rH}$, the Cauchy-Schwarz inequality
	and the application of the BDG inequality we infer that  there exists a constant $c_1>0$ independent of $n\in \mathbb{N}$ such that
	\begin{align*}
	\sup_{t \in [0,T]}  \Ebar  \bigl[ \lvert {\Nbar}_{n, \phi} (t) \lvert^2 \bigr]
	= & \sup_{t \in [0,T]}  \mathbb{E}  \bigl[   \lvert N_{n, \phi} (t) \rvert^2 \bigr]
	 \leq  \mathbb{E}  \bigl[ \sup_{t \in [0,T]}   \lvert N_{n, \phi} (t) \rvert^{2}\bigr] \\
	\le &  c_1 \lVert \psi \rVert^2_{U}  \mathbb{E}
	\Bigl[ \Bigl( \int_0^T \int_{Y} \rvert P_n F(s,u_n(s),y)\vert^2_\rH \lvert u_n(s) \rvert^2_\rH  \, \nun(\ud y) \ds \Bigr) ^{\frac{1}{2}} \Bigr].
\nonumber
	\end{align*}
	Since the restriction to ${\rH}$  of the map ${P}_{n} $ is the orthogonal projection from ${\rH}$ onto ${\rH}_{n}$,
	by the linear growth condition \eqref{eqn-F_linear_growth} in Assumption F.\ref{assum-F2}\label{assum-F2-used6}, we see that there exists a constant $c_2>0$ independent of $n\in \mathbb{N}$ such that

	\begin{equation} \label{eqn-new-mart_BDG_est_1-tN}
	\sup_{t \in [0,T]}  \Ebar  \bigl[ \lvert {\Nbar}_{n, \phi} (t) \lvert^2 \bigr]
	\leq  c_2  \lVert \psi \rVert_{U}^2  \left( 1 + \mathbb{E} \left[ \sup_{s\in [0,T]} \lvert u_n(s) \rvert^4 \right] \right).
	\end{equation}
	From the last inequality \eqref{eqn-new-mart_BDG_est_1-tN}   along with the estimate \eqref{eqn-H_estimate-p>2}
 with $p=4$,   we deduce that
	\begin{equation*} \label{eqn-new-mart_uniform_int_2-tN}
	\sup_{t \in [0,T]}  \Ebar  \bigl[ \lvert {\Nbar}_{n, \phi} (t) \lvert^2 \bigr] <\infty.
	\end{equation*}
	Owing to the convergence \eqref{eqn-mart_pointwise_conv-tN}, the Fatou Lemma  and  \eqref{eqn-new-mart_uniform_int_2-tN} we can  argue as in the proof of  \eqref{eqn-new-Square-Int-on-C}  and  show that
	\begin{align}\label{Eq:new-Square-tN-on-C}
	\sup_{r \in C  }\Ebar \left[ \lvert{\Nbar_\phi(r)}\rvert^2 \right] <\infty,
	\end{align}
	which yields the square integrability of $\Nbar_{\phi}(t)$ for each $t\in C$.

 For  general $\tinOT$ we consider a $C$-valued sequence $(t_m)_{m \in \mathbb{N}}$ such that $t_m \searrow t$ as $m \to \infty$.
Note that \eqref{Eq:new-Square-tN-on-C} implies that
	\begin{align}\label{Eq:new-Square-tNm-on-C}
	\sup_{m \in \mathbb{N}  }\Ebar \left[ \lvert{\Nbar_\phi(t_m)}\rvert^2 \right] <\infty.
	\end{align}
	Observe also that $\Nbar_\phi^2(t_m)$ satisfies \eqref{Eq:new-Square-tNm-on-C}.  Owing to the right-continuity of the paths of $\Nbar_{\phi}$ we have
	\begin{equation*} \Nbar^2_\phi(t_m) \to \Nbar^2_\phi(t), \mbox{  $\Pbar$-a.s.} \end{equation*}
This  along with the Fatou Lemma and \eqref{Eq:new-Square-tNm-on-C} imply that $\Nbar_\phi (t)$ is square integrable for each $\tinOT$.
	This completes the first step of the proof.

 \noindent \textbf{Step 2.}  Following the idea of  Lemma \ref{lem-tM is a martingale} we denote by $\mathscr{D}_t^0$ the $\sigma$-algebra generated by all maps \begin{equation*}\mathbb{D}([0,T], U^\prime)\ni x \mapsto x(s)\in U^\prime,\, s\le t .\end{equation*} We also set $\mathscr{D}_t=\bigcap_{s>t} \mathscr{D}^0_s,\, t\in [0,T]$. Note that $\mathbf{D}=(\mathscr{D}_t)_{t\in [0,T]}$ is a filtration.
Let $C$ be the set from the assertion (a) of Lemma \ref{lem-pointwise_conv}. Let us choose and fix $s,t \in C$,  with $s \le t$, and  $\psi \in\rU$. {Let us note that the almost sure set considered below is independent of $\psi$.}   Let
		\[
		h\colon  \mathbb{D}([0,T];\rU^\prime)  \to \mathbb{R}
		\]
		be a bounded, continuous function and $\mathscr{D}_s$-measurable.

  With Lemma \ref{L:pointwise_conv-B} at hand, we can argue as in the proof of Lemma \ref{lem-tM is a martingale} and prove that
	\begin{equation} \label{eqn-mart_pointwise_conv-tN}
	\lim_{n \to \infty }  h(\tun) ({\Nbar}_{n, \phi} (t)-{\Nbar}_{n,\phi} (s)) = h(\tu ) ({\Nbar_{\phi} (t)-\Nbar_{\phi} (s)}),
	\quad  \Pbar \mbox{ - a.s.}
	\end{equation}

	Let us denote
	\begin{equation*}
	{g}_{n}(\omega ) \coloneqq \bigl( \Nbar_{n, \phi} (t, \omega )- {\Nbar}_{n, \phi} (s, \omega ) \bigr)
	\, h \bigl( \tun  \bigr) , \qquad \omega \in \Omegabar .
	\end{equation*}
Thanks to \eqref{Eq:new-Square-tNm-on-C}  we can use the same idea as in the proof of assertion \eqref{eqn-mart_uniform_int} and prove that
	\begin{equation*} \label{eqn-mart_uniform_int-tN}
	\sup_{n \ge 1}  \Ebar \bigl[ {| {g}_{n} |}^{2} \bigr] < \infty,
	\end{equation*}
	which yields the uniform integrability of the sequence $\{ {g}_{n} {\} }_{n \in \mathbb{N} }$.

Now, the convergence
	(\ref{eqn-mart_pointwise_conv-tN}), the uniform integrability of  $\{ {g}_{n} {\} }_{n \in \mathbb{N} }$, the martingale property of $\Nbar_{n,\phi}(t)$ and the Vitali Convergence Theorem yield that for all $s,t\in C$, $s\le t$
	\begin{equation*}
	\lim_{n \to \infty } \Ebar\bigl[ \bigl({\Nbar}_{n, \phi} (t)-{\Nbar}_{n, \phi} (s) \bigr) h \bigl( \tun \bigr) \bigr]
	= \Ebar \bigl[  \bigl(\Nbar_\phi (t)-\Nbar_\phi (s)  \bigr) h \bigl( \tu \bigr) \bigr]=0.
	\end{equation*}
 Hence, for $s,t\in C$ with $s\le t$ we have
\begin{equation*} \Ebar \bigl[  \bigl(\Nbar_\phi (t)-\Nbar_\phi (s)  \bigr) h \bigl( \tu \bigr) \bigr]=0.\end{equation*}

For general $t,s \in [0,T]$ with $s< t$ we consider two $C$-valued sequences $(t_m)_{m \in \mathbb{N}}, (s_m)_{m \in \mathbb{N}}$ satisfying
	\begin{itemize}
		\item $s<s_m$ and $t<t_m$ for every $m \in \mathbb{N}$,
		\item $s_m \le t_m$ for every $m \in \mathbb{N}$,
		\item $s_m \searrow s$ and $t_m \searrow t$ as $m \to \infty$.
	\end{itemize}
 By the right-continuity of $\Nbar_\phi$ we see that $\Pbar$-a.s.
	\begin{equation*} \Nbar_\phi(t_m) \to \Nbar_\phi(t) \mbox{  and } \Nbar_\phi(s_m) \to \Nbar_\phi(s).\end{equation*}
 Notice also that, as above, we can use \eqref{Eq:new-Square-tNm-on-C}	and show that
 the sequences $h(\tu) \Nbar_\phi(t_m)$ and $ h(\tu)\Nbar_\phi(s_m) $ are uniformly integrable. Thus, we can proceed as in Step 2 of the proof of Lemma \ref{lem-tM is a martingale} and prove that
 \begin{equation*} \lim_{m\to \infty}\Ebar \bigl[  \bigl(\Nbar_\phi (t_m)-\Nbar_\phi (s_m)  \bigr) h \bigl( \tu \bigr) \bigr]=\Ebar \bigl[  \bigl(\Nbar_\phi (t)-\Nbar_\phi (s)  \bigr) h \bigl( \tu \bigr) \bigr]= 0.\end{equation*}
This also completes the proof of the Lemma   \ref{lem-tN is a martingale}.

\end{proof}

\section{Existence of solutions} \label{sec-existence-III}

In this section we use the Assumptions F.\ref{assum-F2} and F.\ref{assum-F5}. \label{assum-F5-used-10}  When we use the latter  assumption, we specify this explicitly.
In particular, we assume these assumptions in  our Lemma \ref{lem-purely discontinuous-New}. To be precise we assume the growth condition \eqref{eqn-F_linear_growth-p} in Assumption F.\ref{assum-F5} \label{assum-F5-used-11} with $p=4$  only and  then use, also with $p=4$,  the a'priori estimates \eqref{eqn-H_estimate-p>2} from Lemma \ref{lem-Galerkin_estimates}.

Hereafter, without loss of generality, we can and will assume that  the external force $f$  in equation \eqref{eqn-SNSEs} is equal to $0$.

\subsection{Preliminaries}
\label{subsec-Preliminaries}

In this subsection we will discuss a notion of a purely discontinuous martingale.

Let us begin by  recalling  the following  standard,   well known  fact,  see 
\cite[\S I.4e and Theorem I.4.2]{Jacod_Shiryaev}.

\begin{proposition}\label{prop-predictable quadratic covariation}
    If $M$ and $N$ are  two  locally square integrable $\mathbb{R}$-valued martingales,  then there exists a unique (up to an evanescent set) predictable $\mathbb{R}$-valued process of finite variation $\langle M,N \rangle$ such that the process 
    \[
    MN - \langle M,N\rangle
    \] is a local martingale.
 \end{proposition}
\begin{definition}\label{def-predictable quadratic covariation}
 If $M$ and $N$ are  two  locally square integrable $\mathbb{R}$-valued martingales, then the process 
    $\langle M,N \rangle$ 
    from Proposition \ref{prop-predictable quadratic covariation} is called 
    the \emph{predictable quadratic covariation process} of (the processes) $M$ and $N$.
\end{definition}
Note that while Proposition \ref{prop-predictable quadratic covariation} and Definition \ref{def-predictable quadratic covariation} are stated for real-valed martingales, they  have natural extension to Hilbert space  valued martingales, see below and  \cite[Section 20]{Metivier}.

The following definition presented directly in an infinite dimensional version is taken from 
\cite[Theorem 20.5]{Metivier}. 

\begin{definition}\label{def-quadratic variation}
If $M$ is a  square integrable  martingale taking values in a Hilbert space $X$ with norm $\normb{\cdot}{}{}$, then  
the quadratic variation process of $M$ is an $\mathbb{R}$-valued process $[M,M]$   such that for every $ t\in [0,T]$, $[M,M](t) $  is  the limit in $L^1=L^1(\Omega,\mathscr{F},\mathbb{P})$ over the partitions of $[0,T]$ with mesh converging to $0$, i.e. 
\begin{equation}\label{eqn-quadratic_variation-Metivier}
[M,M](t) = L^1-\lim \sum_{i=1}^n \left\vert M(t_i^{n} \wedge t )-M(t_{i-1}^{n} \wedge t )\right\vert^2.
\end{equation}
\end{definition}
The existence and further properties of the processes $[M,M]$ was proven in \cite[Theorem 20.5]{Metivier}.

According to \cite[Theorem 4.47]{Jacod_Shiryaev}, whose proof can be traced back to \cite[Proposition\ 4.44 and  Theorem\ 4.31]{Jacod_Shiryaev}, one can show
the following variation  of \eqref{eqn-quadratic_variation-Metivier}, namely that the convergence is  in probability, uniformly on compact intervals, i.e.\ for every $T>0$,
\begin{equation}\label{eqn-quadratic_variation-Jacod Shiryaev}
\sup_{ t\in [0,T]} \bigg\vert \sum_{i=1}^n \left(M(t_i^{n} \wedge t )-M(t_{i-1}^{n} \wedge t )\right)^2 -[M,M]_{t}\bigg\vert
\stackrel{\mathclap{\mathbb{P}}}{\to} 0.
\end{equation}

 For two square integrable $\mathbb{R}$-valued martingales $M$ and $N$, the bracket $[M,N]$ is  defined by the polarisation formula, i.e. 
\begin{equation}\label{eqn-bracket}
 [M,N]\coloneqq \frac14 [M+N,M+N]-\frac14 [M-N,M-N].
\end{equation}
 
The following result, see  \cite[Theorem \ I.4.52]{Jacod_Shiryaev}, \cite[Theorem 3.4.2]{Kallianpur_Xiong} and \cite[Theorem 20.5]{Metivier},   relates these two brackets. We will use notation \eqref{eqn-f-jumps}.
\begin{proposition}\label{prop-brackets}
If  $M$ and $N$ are  two square integrable  martingales taking values in Hilbert space $(X, \langle .,.\rangle_{X})$, then 
\begin{equation}\label{eqn-K-3.4.2}
[M,N]_t=\langle M^{\mathrm{c}},N^{\mathrm{c}}\rangle_t+ \sum_{s\in [0,t]} \langle \Delta M(s), \Delta N(s)\rangle_{X},\;\;\mathbb{P}\mbox{-a.s., for every } t\in [0,T].
\end{equation}
\end{proposition}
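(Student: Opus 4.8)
The plan is to reduce the Hilbert-space identity to its scalar counterpart, which is precisely \cite[Theorem I.4.52]{Jacod_Shiryaev}, by projecting the two martingales onto an orthonormal basis. First I would reduce to the diagonal case $M=N$: by the polarisation formula \eqref{eqn-bracket}, by the corresponding bilinearity of the trace predictable covariation $\langle\cdot,\cdot\rangle$, and by the bilinearity of $(a,b)\mapsto\langle a,b\rangle_X$ applied pointwise to the jumps, the general statement follows once one knows $[M,M]_t=\langle M^{\mathrm{c}},M^{\mathrm{c}}\rangle_t+\sum_{s\in[0,t]}\lvert\Delta M(s)\rvert_X^2$. Since the trajectories of $M$ lie, up to an evanescent set, in a separable closed subspace of $X$, I may fix an orthonormal basis $(e_k)_{k=1}^\infty$ of that subspace and set $M_k:=\langle M,e_k\rangle_X$, which are real-valued square integrable martingales.

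In the diagonal case I would argue as follows. From Definition \ref{def-quadratic variation} together with Parseval's identity $\lvert v\rvert_X^2=\sum_k\langle v,e_k\rangle_X^2$ applied to the increments, one obtains $[M,M]_t=\sum_k[M_k,M_k]_t$, the partial sums increasing to $[M,M]_t$ (all terms being nonnegative). Applying the scalar result \cite[Theorem I.4.52]{Jacod_Shiryaev} to each $M_k$ gives $[M_k,M_k]_t=\langle M_k^{\mathrm{c}},M_k^{\mathrm{c}}\rangle_t+\sum_{s\in[0,t]}(\Delta M_k(s))^2$. I then identify the two resulting sums separately. Using the canonical decomposition $M=M(0)+M^{\mathrm{c}}+M^{\mathrm{d}}$ into continuous and purely discontinuous parts, see \cite[Section 20]{Metivier}, one checks that $M_k^{\mathrm{c}}=\langle M^{\mathrm{c}},e_k\rangle_X$, because $\langle M^{\mathrm{c}},e_k\rangle_X$ is continuous while $\langle M^{\mathrm{d}},e_k\rangle_X$ is purely discontinuous, so the uniqueness of the scalar decomposition forces this equality; consequently $\sum_k\langle M_k^{\mathrm{c}},M_k^{\mathrm{c}}\rangle_t=\langle M^{\mathrm{c}},M^{\mathrm{c}}\rangle_t$. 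For the jump term, since $\Delta M_k(s)=\langle\Delta M(s),e_k\rangle_X$, an application of Tonelli's theorem (all summands being nonnegative) yields $\sum_k\sum_{s\in[0,t]}(\Delta M_k(s))^2=\sum_{s\in[0,t]}\sum_k\langle\Delta M(s),e_k\rangle_X^2=\sum_{s\in[0,t]}\lvert\Delta M(s)\rvert_X^2$. Summing the scalar identity over $k$ and combining the two limits completes the diagonal case, and polarisation finishes the proof.

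The main obstacle is the step $\sum_k\langle M_k^{\mathrm{c}},M_k^{\mathrm{c}}\rangle_t=\langle M^{\mathrm{c}},M^{\mathrm{c}}\rangle_t$, i.e.\ the interchange of the infinite sum over the coordinates with the extraction of the continuous part and with the operation $\langle\cdot,\cdot\rangle$ defining the predictable quadratic variation. I would handle it by recalling that $\lvert M^{\mathrm{c}}\rvert_X^2-\langle M^{\mathrm{c}},M^{\mathrm{c}}\rangle$ is a local martingale and that $\lvert M^{\mathrm{c}}\rvert_X^2=\sum_k(M_k^{\mathrm{c}})^2$, so that each finite partial sum $\sum_{k\le K}\bigl((M_k^{\mathrm{c}})^2-\langle M_k^{\mathrm{c}},M_k^{\mathrm{c}}\rangle\bigr)$ is a local martingale; letting $K\to\infty$, the predictable increasing processes $\sum_{k\le K}\langle M_k^{\mathrm{c}},M_k^{\mathrm{c}}\rangle$ increase to a predictable process $A$ with $\lvert M^{\mathrm{c}}\rvert_X^2-A$ a local martingale, and uniqueness (Proposition \ref{prop-predictable quadratic covariation}, extended to the Hilbert-valued setting) identifies $A=\langle M^{\mathrm{c}},M^{\mathrm{c}}\rangle$. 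Performing this reduction in the diagonal case keeps every term nonnegative and increasing, which is exactly what makes the monotone-convergence and uniform-integrability bookkeeping clean.
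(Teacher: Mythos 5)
Your argument is correct, but it is worth pointing out that the paper itself offers no proof of Proposition \ref{prop-brackets} at all: the statement is introduced with a bare citation of the scalar case \cite[Theorem I.4.52]{Jacod_Shiryaev} together with \cite[Theorem 3.4.2]{Kallianpur_Xiong} and \cite[Theorem 20.5]{Metivier} for the infinite-dimensional version. What you have written is therefore a genuine addition rather than a reproduction: a self-contained derivation of the Hilbert-space identity from the scalar Jacod--Shiryaev theorem. The route you take --- polarisation to reduce to $M=N$, coordinate projections $M_k=\langle M,e_k\rangle_X$, the scalar identity for each $M_k$, and then the two limit interchanges --- is exactly the standard way the Hilbert-valued statement is obtained in the literature you would otherwise have to cite, and the two delicate points are handled correctly: the identification $M_k^{\mathrm{c}}=\langle M^{\mathrm{c}},e_k\rangle_X$ does follow from the uniqueness of the continuous/purely-discontinuous decomposition (using Definition \ref{def-martingael purely discontinuous} to see that $\langle M^{\mathrm{d}},e_k\rangle_X$ is purely discontinuous), and the identification of $\sum_k\langle M_k^{\mathrm{c}},M_k^{\mathrm{c}}\rangle$ with $\langle M^{\mathrm{c}},M^{\mathrm{c}}\rangle$ via uniqueness of the compensator of $\lvert M^{\mathrm{c}}\rvert_X^2$ is the right mechanism. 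One small bookkeeping point you may want to make explicit: the convergence $\sum_{k\le K}[M_k,M_k]_t\nearrow[M,M]_t$ is most cleanly justified by observing that the difference equals $[Q_KM,Q_KM]_t$ for the tail projection $Q_K$, whose expectation $\mathbb{E}\lvert Q_KM(T)\rvert_X^2-\mathbb{E}\lvert Q_KM(0)\rvert_X^2$ tends to $0$; likewise the absolute convergence of $\sum_s\langle\Delta M(s),\Delta N(s)\rangle_X$ (via Cauchy--Schwarz against the two diagonal jump sums) should be noted before splitting the polarised sum. With those remarks the proof is complete and could replace the citation in the paper.
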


For the following definition, see  \cite[Definition\ 3.4.1]{Kallianpur_Xiong} or \cite[Definition\ I.4.11]{Jacod_Shiryaev}, and \cite{Marinelli+Rockner_2014} or \cite[Theorem 20.5]{Metivier} for the infinite dimensional case. 

\begin{definition}\label{def-purely discontinuous martingale}
A square integrable $\mathbb{R}$-valued (local) martingale $M$ such that $M(0)=0$  is purely discontinuous if and only if  for any square integrable real continuous martingale $N$ the product $MN$ is a (local) martingale.
\end{definition}

The following result is taken from \cite[Theorem I.4.18]{Jacod_Shiryaev}.

\begin{proposition}\label{prop-decomposition of a martingale}
Every $\mathbb{R}$-valued local martingale $M$ can be uniquely, up to indistinguishability,   represented as a sum of a continuous local martingale $M^c$ and purely discontinuous local martingale $M^d$, i.e. 
\[
M=M^c+M^d.
\]
The  continuous part $X^c$ of a semimartingale $X$   is defined as the continuous part of the martingale part of $X$, see \cite[Proposition\ I.4.27]{Jacod_Shiryaev}.
    \end{proposition}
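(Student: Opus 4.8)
The plan is to prove the result first for square integrable martingales by a Hilbert space projection argument and then extend to arbitrary local martingales by localization. Denote by $\mathcal{H}^2_0$ the space of square integrable $\mathbb{R}$-valued martingales $M$ with $M(0)=0$, equipped with the inner product $(M,N)\coloneqq \mathbb{E}[M(T)N(T)]=\mathbb{E}[[M,N]_T]$, which makes it a Hilbert space, and let $\mathcal{H}^{2,c}_0$ be the subspace of those with continuous paths. First I would show that $\mathcal{H}^{2,c}_0$ is closed in $\mathcal{H}^2_0$: if $M_n\to M$ in $\mathcal{H}^2_0$ with each $M_n$ continuous, then by Doob's maximal inequality $\mathbb{E}[\sup_t|M_n(t)-M(t)|^2]\to 0$, so a subsequence converges uniformly in $t$ almost surely, whence $M$ has continuous paths.

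The core step is to identify the orthogonal complement $(\mathcal{H}^{2,c}_0)^\perp$ with the purely discontinuous martingales in the sense of Definition \ref{def-purely discontinuous martingale}. One inclusion follows directly from the definition of orthogonality together with Proposition \ref{prop-brackets}. For the converse I would take $N\in(\mathcal{H}^{2,c}_0)^\perp$ and upgrade the scalar relation $\mathbb{E}[N(T)M(T)]=0$ to the statement that $NM$ is a martingale for every continuous $M\in\mathcal{H}^2_0$. This is achieved by exploiting the stability of $\mathcal{H}^{2,c}_0$ under stopping: for any stopping time $\tau$ the stopped process $M^\tau$ is again continuous and square integrable, so orthogonality applied to $M^\tau$ yields $\mathbb{E}[N(\tau)M(\tau)]=0$, and by the standard optional-sampling characterization of the martingale property this forces $NM$ to be a martingale. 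Granting this, the orthogonal projection theorem gives, for every $M\in\mathcal{H}^2_0$, a decomposition $M=M^c+M^d$ with $M^c\in\mathcal{H}^{2,c}_0$ and $M^d\in(\mathcal{H}^{2,c}_0)^\perp$ purely discontinuous.

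Uniqueness I would settle as follows: if $M^c+M^d=\widetilde{M}^c+\widetilde{M}^d$ are two such decompositions, then $L\coloneqq M^c-\widetilde{M}^c=\widetilde{M}^d-M^d$ is simultaneously continuous and purely discontinuous with $L(0)=0$. Being continuous, $L$ equals its own continuous part, so applying the purely discontinuous property (Definition \ref{def-purely discontinuous martingale}) with $N=L$ and invoking Proposition \ref{prop-brackets} gives $[L,L]_T=\langle L,L\rangle_T=0$, hence $\mathbb{E}[L(T)^2]=0$ and $L\equiv 0$. Finally, to pass from square integrable martingales to a general local martingale $M$, I would choose a localizing sequence $(\tau_k)$ of stopping times for which each $M^{\tau_k}-M(0)$ is square integrable, decompose each stopped martingale, and use the already established uniqueness to verify that the pieces are consistent, i.e.\ that $(M^c)^{\tau_k}$ and $(M^d)^{\tau_k}$ agree on overlapping intervals; letting $k\to\infty$ then yields the global continuous and purely discontinuous parts.

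I expect the main obstacle to be the core step, namely the promotion of the scalar orthogonality relation to the process-level martingale property of $NM$, which relies on the stability of the continuous martingales under stopping and on a careful application of optional sampling. A secondary technical point is the localization reduction, where one must verify that a general local martingale admits a localization making the stopped pieces square integrable, handling if necessary a finite-variation (hence automatically purely discontinuous) summand separately.
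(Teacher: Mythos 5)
This proposition is not proved in the paper at all: it is quoted verbatim from \cite[Theorem I.4.18]{Jacod_Shiryaev} (together with \cite[Proposition I.4.27]{Jacod_Shiryaev} for the definition of $X^c$), so there is no in-paper argument to compare yours against. Your proposal is essentially the standard proof of that cited theorem, as found in Jacod--Shiryaev or Dellacherie--Meyer: the closedness of $\mathcal{H}^{2,c}_0$ via Doob's inequality, the identification of its orthogonal complement with the purely discontinuous martingales by upgrading the scalar relation $\mathbb{E}[N(T)M(T)]=0$ to the martingale property of $NM$ through stopping and optional sampling, and the uniqueness via a process that is simultaneously continuous and purely discontinuous. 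All of these steps are sound. The one place that needs more than you give it is the localization: a general local martingale need \emph{not} admit a localizing sequence making $M^{\tau_k}-M(0)$ square integrable (large jumps obstruct this), so the reduction must first invoke the fundamental decomposition $M=M(0)+M'+M''$ with $M'$ having bounded jumps (hence locally bounded, so locally in $\mathcal{H}^2$) and $M''$ a local martingale of locally finite variation, which is then shown to be purely discontinuous directly from $[M'',N]=\sum_s \Delta M''(s)\,\Delta N(s)$. You flag this only as a possible ``secondary technical point,'' but it is in fact unavoidable and is itself a nontrivial result (\cite[Proposition I.4.17]{Jacod_Shiryaev}). A second, minor item to make explicit in the patching argument is that pure discontinuity is stable under stopping, which is what lets you apply uniqueness to conclude that $(M^{c})^{\tau_k}$ and $(M^{d})^{\tau_k}$ are consistent across the localizing sequence. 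With those two points filled in, your proof is complete and is, for practical purposes, the proof behind the citation.
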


As a consequence of Proposition \ref{prop-brackets}, one deduces the following result, see \cite[Th.\ 3.4.2]{Kallianpur_Xiong}.
\begin{corollary}\label{cor-purely discontinuous}
A  square integrable   $\mathbb{R}$-valued martingale $M$  is   purely discontinuous  if and only if  for every $t \in [0,T]$, 
\begin{equation}\label{eqn-purely_discontinuous_martingale}
[M,M]_{t} = \sum_{s\leq t} \vert \Delta M(s) \vert^2, \;\;\mathbb{P}\mbox{-a.s.}.
\end{equation}
\end{corollary}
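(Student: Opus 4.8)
The plan is to reduce everything to the identity of Proposition~\ref{prop-brackets} together with the uniqueness of the decomposition in Proposition~\ref{prop-decomposition of a martingale}. Specialising Proposition~\ref{prop-brackets} to the real-valued case $X=\mathbb{R}$ and to $N=M$ gives, $\mathbb{P}$-a.s. for every $t\in[0,T]$,
\[
[M,M]_t=\langle M^{\mathrm c},M^{\mathrm c}\rangle_t+\sum_{s\le t}\vert\Delta M(s)\vert^2 .
\]
Consequently the asserted equality $[M,M]_t=\sum_{s\le t}\vert\Delta M(s)\vert^2$ holds for all $t$ if and only if $\langle M^{\mathrm c},M^{\mathrm c}\rangle\equiv 0$. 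Thus the corollary will follow once I show that $\langle M^{\mathrm c},M^{\mathrm c}\rangle\equiv 0$ is equivalent to $M^{\mathrm c}\equiv 0$, and that $M^{\mathrm c}\equiv 0$ is equivalent to $M$ being purely discontinuous.

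First I would treat the equivalence $\langle M^{\mathrm c},M^{\mathrm c}\rangle\equiv 0\iff M^{\mathrm c}\equiv 0$. Since $M(0)=0$ (as required by Definition~\ref{def-purely discontinuous martingale}), the continuous part satisfies $M^{\mathrm c}(0)=0$, and because $M^{\mathrm c}$ is a continuous local martingale one has, after localisation, $\mathbb{E}\bigl[\vert M^{\mathrm c}(t)\vert^2\bigr]=\mathbb{E}\bigl[\langle M^{\mathrm c},M^{\mathrm c}\rangle_t\bigr]$. Hence $\langle M^{\mathrm c},M^{\mathrm c}\rangle\equiv0$ forces $M^{\mathrm c}(t)=0$ $\mathbb{P}$-a.s. for every $t$, and by right-continuity of the paths $M^{\mathrm c}\equiv0$; the converse direction is immediate.

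Next I would establish $M^{\mathrm c}\equiv0\iff M$ purely discontinuous, which is where Proposition~\ref{prop-decomposition of a martingale} enters. If $M^{\mathrm c}\equiv0$, then the canonical decomposition reads $M=M^{\mathrm d}$, so $M$ is purely discontinuous by Definition~\ref{def-purely discontinuous martingale}. Conversely, if $M$ is purely discontinuous, then $M=0+M$ exhibits $M$ as the sum of the continuous local martingale $0$ and the purely discontinuous local martingale $M$; by the uniqueness (up to indistinguishability) part of Proposition~\ref{prop-decomposition of a martingale} this representation must coincide with $M=M^{\mathrm c}+M^{\mathrm d}$, whence $M^{\mathrm c}\equiv0$. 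Chaining the three equivalences yields the statement.

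The steps are all short, so I do not anticipate a genuine obstacle; the only point requiring care is the identity $\mathbb{E}[\vert M^{\mathrm c}(t)\vert^2]=\mathbb{E}[\langle M^{\mathrm c},M^{\mathrm c}\rangle_t]$ combined with the standard fact that a continuous local martingale of finite variation starting at $0$ vanishes, which is what rigorously justifies the implication $\langle M^{\mathrm c},M^{\mathrm c}\rangle\equiv0\Rightarrow M^{\mathrm c}\equiv0$; and ensuring that Proposition~\ref{prop-brackets}, stated for Hilbert-space-valued martingales, is correctly specialised to $X=\mathbb{R}$ with $N=M$ so that the continuous-part contribution appears precisely as $\langle M^{\mathrm c},M^{\mathrm c}\rangle_t$.
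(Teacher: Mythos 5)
Your proposal is correct and follows essentially the same route the paper intends: the paper gives no detailed argument, stating only that the corollary is "a consequence of Proposition \ref{prop-brackets}" (with a pointer to Kallianpur--Xiong), and your write-up simply fills in the routine details — specialising \eqref{eqn-K-3.4.2} to $N=M$, $X=\mathbb{R}$, and using the uniqueness in Proposition \ref{prop-decomposition of a martingale} together with the standard fact that a continuous local martingale with vanishing predictable quadratic variation and zero initial value is identically zero.
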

The assertion of the Corollary above is often used as a definition of a purely discontinuous martingale, see e.g.  \cite[page 156]{Yaroslavtsev_2021}.
For vector-valued martingales we follow the  recent papers  \cite[Section\ 6]{Yaroslavtsev_2020} or \cite[p. 156]{Yaroslavtsev_2021} by Yaroslavtsev.

\begin{definition}\label{def-martingael purely discontinuous}
Let $(X, \langle .,.\rangle_X)$ be a Hilbert space.
A X-valued  martingale $M$ is purely discontinuous if for every $h\in {X}$ the $\mathbb{R}$-valued  martingale $\langle M,h \rangle_X$ is purely discontinuous.
\end{definition}

To define  the predictable quadratic covariations of Hilbert space valued martingales we follow  \cite[Theorem\ 18.6]{Metivier}.

For two square integrable  martingales $M$  and $N$ taking values in a Hilbert space $X$ the  quadratic covariations are defined as in \cite[Theorem\ 23.4]{Metivier}. 

\begin{definition}\label{def-predictable quadratic covariation-H}
If $M$ and $N$ are two square integrable martingales taking values in a Hilbert space $(X,\langle \cdot , \cdot \rangle_{X})$, then     
the predictable quadratic covariation  $\langle M,N \rangle$ of processes $M$  and $N$ is the  unique predictable real process of finite variation such that the process
\[\langle M, N \rangle_{X} - \langle M, N \rangle\] is a martingale.
\end{definition}

\subsection{Identification of the quadratic variation}
\label{subsec_identification}

In this, as well in all subsequent subsections, we assume that $T>0$ is given and fixed  and we consider

\begin{itemize}
\item  the probability space $\bigl( \Omegabar ,\tfcal ,\Pbar  \bigr) $  introduced at the beginning of subsection \ref{subsec-SJ}, see formula \eqref{eqn-new_probability_space};
\item $ \zcal_T $-valued random variables $\tu$ defined on $\bigl( \Omegabar ,\tfcal ,\Pbar  \bigr) $  such that \eqref{eqn-Skorokhod_appl_convergence} holds;
\item the filtration ${\tF} \coloneqq({\overline{\fcal }}_{t})_{\tinOT}$,  defined in equality  \eqref{eqn-filtration_limit};
\item the process $\Mbar$ defined in formula \eqref{eqn-tM}.
\end{itemize}

Let us recall that according to Remark \ref{rem-process ubar}
the  $ \zcal_T $-valued random variables $\tu$ defined on $\bigl( \Omegabar ,\tfcal ,\Pbar  \bigr) $ induces
a  measurable process (see Remark \ref{rem-process ubar})
\[
\tu\colon [0,T]\times \Omegabar \to V \cap {\rH}_{\loc},
\]
whose trajectories belong to the space
$\zcal_T$.

Let us also  choose  and fix an arbitrary element $\phi \in\rU$
and  let us define a $\mathbb{R}$-valued process
$\Mbarphi{}$ by

\begin{align}\label{definition_M_bar-phi}
\Mbarphi{}(t) &\coloneqq \duality{\Mbar(t)}{\phi}
\\&=
\duality{\tu(t)}{\phi} - \duality{\bar{u}(0)}{\phi} +  \int_0^t \duality{\acal \tu(s)}{\phi} \ds \notag  \\
&\quad +\int_0^t \duality{\rB(\tu(s))}{\phi} \ds,
\;\; \tinOT,
\nonumber
\end{align}
 where $\Mbar$ is the process defined in formula \eqref{eqn-tM}.

Let us immediately observe that in view of Lemma \ref{lem-tM is a martingale}
the process $\Mbarphi{}$ is a square integrable $\mathbb{R}$-valued ${\tF}$-martingale and
all its trajectories belong to  $\mathbb{D} ([0,T])$.

We begin with the analysis of the first process appearing on the right-hand side of the above identity \eqref{definition_M_bar-phi}.

\begin{lemma}\label{lem-u-bar is a semimartingale}
The process $\duality{\tu}{\phi}\coloneqq \bigl\{\duality{\tu(t)}{\phi}:\, \tinOT\bigr\}$
is a $\mathbb{R}$-valued semimartingale.
Moreover, its quadratic variation process is equal to the quadratic variation process of the process $\Mbarphi{}$, i.e.\ $\Pbar$-a.s.
\begin{equation}\label{eqn-K_6.1.17}
[\duality{\tu}{\phi},\duality{\tu}{\phi}](t) =[\Mbarphi{},\Mbarphi{}](t), \;\; \tinOT.
\end{equation}
\end{lemma}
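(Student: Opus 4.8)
The plan is to read a semimartingale decomposition of $\duality{\tu}{\phi}$ directly off \eqref{definition_M_bar-phi} and then exploit that the bounded-variation part is continuous. Rearranging \eqref{definition_M_bar-phi}, we obtain for $\tinOT$
\begin{equation*}
\duality{\tu(t)}{\phi} = \duality{\bar{u}(0)}{\phi} + \Mbarphi{}(t) + A_\phi(t), \qquad A_\phi(t)\coloneqq - \int_0^t \duality{\acal \tu(s)}{\phi}\ds - \int_0^t \duality{\rB(\tu(s))}{\phi}\ds .
\end{equation*}
Here $\duality{\bar{u}(0)}{\phi}$ is constant in $t$ and $\overline{\fcal}_0$-measurable, while $\Mbarphi{} = \duality{\Mbar}{\phi}$ is, by Lemma \ref{lem-tM is a martingale}, a square integrable $\tF$-martingale with trajectories in $\mathbb{D}([0,T])$. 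Thus it suffices to show that $A_\phi$ is an adapted, continuous process of finite variation; the semimartingale property of $\duality{\tu}{\phi}$ then follows at once.

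First I would check that $A_\phi$ is well defined and absolutely continuous on $\Omegabar$. By \eqref{eqn-V'-U'} and \eqref{eqn-Acal_ilsk_Dir}, $\duality{\acal \tu(s)}{\phi} = \ip{\nabla \tu(s)}{\nabla \phi}{\bL^2}$, so the Cauchy--Schwarz inequality gives $|\duality{\acal \tu(s)}{\phi}| \le \normb{\tu(s)}{\rV}{}\normb{\phi}{\rV}{}$, which is integrable over $[0,T]$ because $\tu \in L^2(0,T;\rV)$ by \eqref{eqn-boundedness_L^2(0,T,V)}. Likewise $\duality{\rB(\tu(s))}{\phi} = b(\tu(s),\tu(s),\phi)$, and since $\phi\in \rU \subset \rV_{s}$ inequality \eqref{eqn-b-inequality} yields $|b(\tu(s),\tu(s),\phi)| \le c\normb{\tu(s)}{\rH}{2}\normb{\phi}{\rV_{s}}{}$, which is integrable over $[0,T]$ because $\tu \in L^\infty(0,T;\rH)$ on $\Omegabar$ by \eqref{eqn-boundedness_H}. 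Hence both integrals define absolutely continuous, adapted, finite-variation processes, so $A_\phi$ is a continuous adapted process of finite variation and $\duality{\tu}{\phi}$ is an $\tF$-semimartingale.

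For the quadratic-variation identity \eqref{eqn-K_6.1.17} I would invoke bilinearity of the bracket together with the standard fact that the quadratic covariation of any semimartingale with a continuous finite-variation process vanishes. Dropping the constant term and expanding,
\begin{equation*}
[\duality{\tu}{\phi},\duality{\tu}{\phi}] = [\Mbarphi{}+A_\phi,\Mbarphi{}+A_\phi] = [\Mbarphi{},\Mbarphi{}] + 2[\Mbarphi{},A_\phi] + [A_\phi,A_\phi],
\end{equation*}
and since $A_\phi$ is continuous of finite variation it has no continuous martingale part and no jumps, whence $[\Mbarphi{},A_\phi]=0$ and $[A_\phi,A_\phi]=0$ (cf.\ \eqref{eqn-K-3.4.2} in Proposition \ref{prop-brackets} and \cite[Theorem I.4.52]{Jacod_Shiryaev}). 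This gives \eqref{eqn-K_6.1.17}. Equivalently, continuity of $A_\phi$ means that $\Delta\duality{\tu}{\phi} = \Delta\Mbarphi{}$ and that the continuous martingale parts of $\duality{\tu}{\phi}$ and $\Mbarphi{}$ coincide, so the representation \eqref{eqn-K-3.4.2} assigns both processes the same value $\Pbar$-a.s.\ for every $\tinOT$.

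The argument is essentially routine once the decomposition is in place; the only point requiring care is the pathwise integrability estimates guaranteeing that $A_\phi$ has finite variation, which is precisely where the a priori bounds \eqref{eqn-boundedness_H} and \eqref{eqn-boundedness_L^2(0,T,V)} enter. I do not expect a genuine obstacle, but I would be careful to record that all assertions hold on the full set $\Omegabar$ and that the exceptional null set is independent of $\tinOT$, so that \eqref{eqn-K_6.1.17} indeed holds simultaneously for all $t$.
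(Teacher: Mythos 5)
Your proof is correct and follows essentially the same route as the paper: both read the semimartingale decomposition $\duality{\tu}{\phi}=\duality{\tu(0)}{\phi}+\Mbarphi{}+A_\phi$ off the definition of $\Mbar$, note that the finite-variation part is continuous, and conclude the bracket identity. The only cosmetic difference is that the paper cites \cite[Proposition I.4.49(d)]{Jacod_Shiryaev} directly for \eqref{eqn-K_6.1.17}, whereas you rederive it via bilinearity of the bracket; your added integrability estimates for $A_\phi$ are a harmless (and welcome) elaboration.
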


\begin{proof}[Proof of Lemma \ref{lem-u-bar is a semimartingale}]
Let us begin by an observation that according to formula \eqref{eqn-tM} and Lemma \ref{lem-tM is a martingale}, it follows from definition of a semimartingale, \cite[Definition\  I.4.21]{Jacod_Shiryaev}, that the process $\duality{\tu}{\phi}$ is a real semimartingale.
Moreover,
because its finite variation part is a continuous process, identity \eqref{eqn-K_6.1.17}  follow immediately from \cite[Proposition\  I.4.49(d)]{Jacod_Shiryaev}.\label{ZB-additional explanation-01}

The proof of Lemma \ref{lem-u-bar is a semimartingale} is complete.
\end{proof}

\begin{proposition}
\label{prop_angle_bracket}
The  predictable quadratic variation of the martingale $\Mbarphi{}$ satisfies $\Pbar$-a.s.
\begin{equation}
\label{angle_brackef_M_phi_bar}
\langle \Mbarphi{} \rangle (t)
= \int_0^t \int_{Y} \ip{F(s,\tu(s),y)}{\phi}{\rH}^2\, \nun(\ud y) \ds, \;\; t \in [0,T].
\end{equation}
\end{proposition}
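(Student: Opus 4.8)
The plan is to identify the continuous, adapted process
\[
A(t) := \int_0^t \int_{Y} \ip{F(s,\tu(s),y)}{\phi}{\rH}^2 \, \nun(\ud y) \ds
\]
as the predictable quadratic variation of $\Mbarphi{}$, by exhibiting $[\Mbarphi{},\Mbarphi{}]-A$ as a local martingale and then appealing to the uniqueness of the predictable compensator in Proposition \ref{prop-predictable quadratic covariation}. Throughout I use the standing assumption $f=0$ of this section. Write $X(t) := \duality{\tu(t)}{\phi}$. By \eqref{definition_M_bar-phi} and Lemma \ref{lem-tM is a martingale}, $X$ is a real semimartingale whose martingale part is the square integrable martingale $\Mbarphi{}$ and whose finite variation part $X(0) - \int_0^t \duality{\acal \tu(s) + \rB(\tu(s))}{\phi} \ds$ is continuous.

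First I would apply the integration by parts formula to $X^2$, namely
\[
X(t)^2 - X(0)^2 = 2 \int_0^t X(s-) \, \ud X(s) + [X,X](t),
\]
and substitute $\ud X(s) = \ud \Mbarphi{}(s) - \duality{\acal \tu(s)+\rB(\tu(s))}{\phi} \ds$. Since the drift is integrated against Lebesgue measure, $X(s-)$ may be replaced by $X(s)$ there (the two differ only on the at most countable jump set), giving
\[
X(t)^2 - X(0)^2 = 2 \int_0^t X(s-) \, \ud \Mbarphi{}(s) - 2\int_0^t X(s) \duality{\acal \tu(s)+\rB(\tu(s))}{\phi} \ds + [X,X](t).
\]
Comparing this with the definition \eqref{eqn-Nbar^phi} of $\Nbar_\phi$ (with $f=0$), the two drift integrals cancel exactly against the term $+2\int_0^t \duality{\acal \tu(s)+\rB(\tu(s))}{\phi}\, X(s)\, \ds$ appearing in $\Nbar_\phi$, and one is left with
\[
[X,X](t) - A(t) = \Nbar_\phi(t) - 2\int_0^t X(s-) \, \ud \Mbarphi{}(s).
\]

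To finish, I would invoke Lemma \ref{lem-u-bar is a semimartingale}, which gives $[X,X] = [\Mbarphi{},\Mbarphi{}]$, so that
\[
[\Mbarphi{},\Mbarphi{}](t) - A(t) = \Nbar_\phi(t) - 2\int_0^t X(s-) \, \ud \Mbarphi{}(s).
\]
The right-hand side is a local martingale: $\Nbar_\phi$ is a square integrable martingale by Lemma \ref{lem-tN is a martingale}, while the stochastic integral of the left-continuous (hence predictable) and locally bounded integrand $X(s-)$ against the martingale $\Mbarphi{}$ is a local martingale. Since $A$ is continuous, adapted (hence predictable) and of finite variation with $A(0)=0$, and since $(\Mbarphi{})^2-[\Mbarphi{},\Mbarphi{}]$ is a local martingale, it follows that $(\Mbarphi{})^2-A$ is a local martingale; the uniqueness part of Proposition \ref{prop-predictable quadratic covariation} then yields $\langle \Mbarphi{}\rangle = A$, which is precisely \eqref{angle_brackef_M_phi_bar}. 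The main obstacle is the careful bookkeeping in the integration by parts step: one must check that the finite variation part of $X$ is genuinely continuous, so that it contributes nothing to $[X,X]$ and the drift terms cancel cleanly — this is exactly the content of Lemma \ref{lem-u-bar is a semimartingale}. A minor point to confirm is that a local martingale suffices to identify the predictable compensator, so that no integrability beyond Lemmata \ref{lem-tM is a martingale}--\ref{lem-tN is a martingale} is needed.
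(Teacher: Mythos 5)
Your proposal is correct and follows essentially the same route as the paper: the paper likewise applies the It\^o formula \eqref{eqn-Ito_JS_I.4.58_b} to the semimartingale $\duality{\tu}{\phi}$, cancels the drift terms against the definition of $\Nbar_\phi$, uses Lemma \ref{lem-u-bar is a semimartingale} to replace $[\duality{\tu}{\phi},\duality{\tu}{\phi}]$ by $[\Mbarphi{},\Mbarphi{}]$, and concludes from the martingale property of $\Nbar_\phi$, of the stochastic integral $\int_0^\cdot \duality{\tu(s-)}{\phi}\,\ud\Mbarphi{}(s)$, and of $(\Mbarphi{})^2-[\Mbarphi{},\Mbarphi{}]$. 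Your closing remarks about the continuity of the finite variation part and the sufficiency of the local martingale property are exactly the points the paper relies on.
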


\begin{proof}[Proof of Proposition \ref{prop_angle_bracket}]
We follow the lines of the proof of   \cite[Lemma\ 6.1.10]{Kallianpur_Xiong}.
Let us choose and fix an arbitrary element $\phi \in\rU$. Let us note  that the process defined by the right hand side of equality \eqref{angle_brackef_M_phi_bar} is continuous and therefore is ${\tF}$-predictable. Thus it is enough to show that the difference
\begin{equation*}(\Mbarphi{})^2(t) - \int_0^t \int_{Y} \ip{F(s,\tu(s),y)}{\phi}{\rH}^2\, \nun(\ud y) \ds.\;\; t \in [0,T],\end{equation*}
is a martingale.

\textbf{Step 1.}
Applying the It\^o formula \eqref{eqn-Ito_JS_I.4.58_b} to the $\mathbb{R}$-valued semimartingale $\xi=\bigl\{ \xi(t)= \duality{\tu(t)}{\phi}, \; \tinOT\bigr\}$, see \eqref{definition_M_bar-phi} we get
\begin{align}
  \begin{aligned}
\duali{\tu(t)}{\phi}{2} &= \duali{\tu(0)}{\phi}{2} -2  \int_0^t \duality{\acal \tu(s)+\rB(\tu(s))}{\phi}\;\duality{\tu(s)}{\phi} \, \ds
\\
&\quad + 2\int_0^t \duality{\tu(s-)}{\phi} \, \ud \Mbarphi{} (s)+[\duality{\tu}{\phi},\duality{\tu}{\phi}](t), \;\; \tinOT.
\end{aligned}
\label{eqn-K_6.1.18}
\end{align}

Recalling  the definition of the $\mathbb{R}$-valued process $\Nbar_\phi$, see  formula \eqref{eqn-Nbar^phi}, we find that the above equality \eqref{eqn-K_6.1.18} can be rewritten as
\begin{align*}\begin{aligned}
  \Nbar_\phi(t) &=2\int_0^t \duality{\tu(s-)}{\phi} \, \ud \Mbarphi{} (s)+[\duality{\tu}{\phi},\duality{\tu}{\phi}](t)\\
  &\qquad -\int_0^t \int_{Y} \ip{F(s,\tu(s),y)}{\phi}{\rH}^2\, \nun(\ud y) \ds, \;\;  \;\; \tinOT.
  \end{aligned}
\end{align*}

From Lemma \ref{lem-u-bar is a semimartingale}
we infer that $\Pbar$-a.s.
\begin{align*}
\begin{aligned}
    \Nbar_\phi(t) &=2\int_0^t \duality{\tu(s-)}{\phi} \, \ud \Mbarphi{} (s)+[\Mbarphi{},\Mbarphi{}](t)
    \\
    &\quad -\int_0^t \int_{Y} \ip{F(s,\tu(s),y)}{\phi}{\rH}^2\, \nun(\ud y) \ds, \;\;  \;\; \tinOT.
    \end{aligned}
\end{align*}
The last equality can be rearranged as
\begin{align}\label{eqn-K_6.1.19}
\begin{aligned}
  (\Mbarphi{}(t))^2 & -\int_0^t \int_{Y} \ip{F(s,\tu(s),y)}{\phi}{\rH}^2\, \nun(\ud y) \ds\\
  &=\bigl(  (\Mbarphi{}(t) )^2 -[\Mbarphi{},\Mbarphi{}](t) \bigr)+\Nbar_\phi(t)-2\int_0^t \duality{\tu(s-)}{\phi} \, \ud \Mbarphi{} (s)
  , \;\;  \;\; \tinOT.
  \end{aligned}
\end{align}

\textbf{Step 2.} Let us note that each of the three processes on the right hand side of equality \eqref{eqn-K_6.1.19} is a martingale. Indeed, $\Nbar_\phi$ is a martingale by Lemma \ref{lem-tN is a martingale}, the process expressed as  integral is a local martingale by \cite[Theorem\ I.4.40a]{Jacod_Shiryaev}
and the process $( \Mbarphi{} )^2 -[\Mbarphi{},\Mbarphi{}]$ is a martingale by \cite[Corollary\ 2 p.\ 125]{Metivier}.

Thus the proof of Proposition \ref{prop_angle_bracket} is complete.
\end{proof}

\subsection{Pure discontinuity of the martingale \texorpdfstring{$M_\phi$}{Mp}}
\label{subsec_pure}

In a forthcoming Proposition \ref{prop-purely discontinuous} we  will prove that for  every $\phi \in\rU$,
the martingale $\Mbarphi{}$  defined  in  \eqref{definition_M_bar-phi},  is purely discontinuous. {This result is an analogue of \cite[Theorem\  6.1.3]{Kallianpur_Xiong}.}

We begin  with formulating  the main result of this subsection. After this has been achieved  we will  present three  lemmata which will be used in the  proof of our main result. Let us emphasize that these two results are in the framework of the original probability space while the main result is in the framework of the new probability space. At the end of this section we will turn back to the proof of the main result.

\begin{proposition}
\label{prop-purely discontinuous}
Assume that $\phi \in\rU$.
The martingale $\Mbarphi{}$,  defined  in  formula \eqref{definition_M_bar-phi},  is purely discontinuous.
\end{proposition}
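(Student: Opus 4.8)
The plan is to verify the intrinsic criterion of Corollary \ref{cor-purely discontinuous}. Since $\Mbarphi{}(0)=0$ and, by Lemma \ref{lem-tM is a martingale}, $\Mbarphi{}$ is a square integrable $\tF$-martingale, it suffices to prove that $\Pbar$-a.s.\ $[\Mbarphi{},\Mbarphi{}]_t=\sum_{s\le t}|\Delta\Mbarphi{}(s)|^2$ for all $\tinOT$; equivalently, by Proposition \ref{prop-brackets}, that the continuous martingale part $(\Mbarphi{})^{\mathrm c}$ satisfies $\langle(\Mbarphi{})^{\mathrm c}\rangle\equiv 0$. Taking expectations, and using that $\Mbarphi{}$ is an $L^2$-martingale, this reduces to the scalar identity
\begin{equation*}
\Ebar\bigl[(\Mbarphi{}(T))^2\bigr]=\Ebar\Bigl[\sum_{s\le T}|\Delta\Mbarphi{}(s)|^2\Bigr].
\end{equation*}
The left-hand side is already known: combining $\Ebar[(\Mbarphi{}(T))^2]=\Ebar[[\Mbarphi{},\Mbarphi{}]_T]$ with Lemma \ref{lem-u-bar is a semimartingale} and Proposition \ref{prop_angle_bracket} gives $\Ebar[(\Mbarphi{}(T))^2]=\Ebar\int_0^T\int_Y\ip{F(s,\tu(s),y)}{\phi}{\rH}^2\,\nun(\ud y)\,\ds$. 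Moreover the decomposition $[\Mbarphi{},\Mbarphi{}]_T=\langle(\Mbarphi{})^{\mathrm c}\rangle_T+\sum_{s\le T}|\Delta\Mbarphi{}(s)|^2$ into non-negative terms already yields $\Ebar[\sum_{s\le T}|\Delta\Mbarphi{}(s)|^2]\le\Ebar[(\Mbarphi{}(T))^2]$, so only the reverse inequality remains.

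First I would record the corresponding property on the original probability space. By \eqref{eqn-M_n_2} the martingale $M_n(t)=\int_0^t\int_Y P_nF(s,u_n(s-),y)\,\tilde{\eta}(\ud s,\ud y)$ is a stochastic integral against the compensated Poisson random measure, hence purely discontinuous, and so is the real martingale $M_n^\phi\coloneqq\duality{M_n}{\phi}=\int_0^t\int_Y\ip{P_nF(s,u_n(s-),y)}{\phi}{\rH}\,\tilde{\eta}(\ud s,\ud y)$; Corollary \ref{cor-purely discontinuous} together with the It\^o isometry \eqref{eqn-Ito isometry} then gives $\E[\sum_{s\le T}|\Delta M_n^\phi(s)|^2]=\E[(M_n^\phi(T))^2]=\E\int_0^T\int_Y\ip{P_nF(s,u_n(s),y)}{\phi}{\rH}^2\,\nun(\ud y)\,\ds$. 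Because $\phi\in\rU$, the map $\mathbb{D}([0,T];\rU')\ni x\mapsto\duality{x}{\phi}\in\mathbb{D}([0,T])$ is continuous; the jump functional and the quadratic variation are Borel functionals arising as the limits \eqref{eqn-quadratic_variation-Jacod Shiryaev}; and $\tun$ and $u_n$ have the same law on $\zcal_T$. Consequently the pure-discontinuity identity transfers to $\Omegabar$ for $\Mbar_n^\phi\coloneqq\duality{\Mbar_n}{\phi}$, whose finite-variation part is continuous, and
\begin{equation*}
\Ebar\Bigl[\sum_{s\le T}|\Delta\Mbar_n^\phi(s)|^2\Bigr]=\Ebar\int_0^T\int_Y\ip{P_nF(s,\tun(s),y)}{\phi}{\rH}^2\,\nun(\ud y)\,\ds.
\end{equation*}

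It remains to pass to the limit $n\to\infty$, which I expect to be the main difficulty. The right-hand side above converges to $\Ebar\int_0^T\int_Y\ip{F(s,\tu(s),y)}{\phi}{\rH}^2\,\nun(\ud y)\,\ds=\Ebar[(\Mbarphi{}(T))^2]$ by assertion \eqref{eqn-conv_F_iii)} of Lemma \ref{L:pointwise_conv-B} and the uniform integrability furnished by the moment estimates of Lemma \ref{lem-tM_n is a martingale}. Also, since $T\in C$, the pointwise convergence \eqref{eqn-pointwise_conv-a} together with the uniform bound \eqref{eqn-2-moment of bar M_n} yields $\Mbar_n^\phi(T)\to\Mbarphi{}(T)$ in $L^2(\Omegabar)$, i.e.\ $\Mbar_n^\phi\to\Mbarphi{}$ in the $L^2$-sense of martingales. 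The delicate point is that the jump functional $x\mapsto\sum_{s\le T}|\Delta x(s)|^2$ is \emph{not} continuous for the Skorokhod topology (jumps of $\tun$ may merge, and purely discontinuous paths may approximate a path with a continuous component), so the left-hand side cannot be pushed through the limit termwise.

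To overcome this I would appeal to the closedness, in the $L^2$-sense, of the space of purely discontinuous martingales: if $\Mbar_n^\phi\to\Mbarphi{}$ as square integrable martingales on a common stochastic basis, the limit inherits pure discontinuity, forcing $\langle(\Mbarphi{})^{\mathrm c}\rangle_T=0$ and hence $(\Mbarphi{})^{\mathrm c}\equiv 0$. The terminal $L^2$-convergence is in hand, but this is precisely the step that must be handled with care, since each $\Mbar_n^\phi$ is a martingale with respect to the filtration $\tF_n$ generated by $\tun$, whereas $\Mbarphi{}$ is an $\tF$-martingale; terminal $L^2$-convergence by itself does not preclude a continuous part emerging in the limit, and ruling this out—equivalently, controlling the passage $\tF_n\rightsquigarrow\tF$ so that pure discontinuity is preserved—is the technical heart of the argument and the analogue of \cite[Theorem\ 6.1.3]{Kallianpur_Xiong}. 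This is the obstacle I would expect to be hardest, and it is exactly the kind of limiting argument for purely discontinuous martingales flagged as subtle in the Introduction.
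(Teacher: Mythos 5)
Your overall skeleton is the right one and matches the paper's: reduce pure discontinuity to the scalar expectation identity $\Ebar\bigl[\sum_{s\le t}(\Delta \Mbarphi{}(s))^2\bigr]=\Ebar\bigl[[\Mbarphi{},\Mbarphi{}](t)\bigr]$, use Proposition \ref{prop_angle_bracket} to identify the right-hand side with $\Ebar\int_0^t\int_Y\ip{F(s,\tu(s),y)}{\phi}{\rH}^2\,\nun(\ud y)\,\ds$, and conclude from the non-negativity of $[\Mbarphi{},\Mbarphi{}]-\sum(\Delta\Mbarphi{})^2=\langle\Mbarphi{\mathrm{c}}\rangle$ that this difference vanishes. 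Your observation that only one inequality needs proving, and that it suffices to work at $t=T\in C$ since $\langle\Mbarphi{\mathrm{c}}\rangle$ is nondecreasing, is also correct.

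However, the proof has a genuine gap exactly where you flag it, and the remedy you propose does not close it. The "closedness of the space of purely discontinuous square-integrable martingales" is a statement about a closed subspace of $\mathcal{H}^2$ on a \emph{fixed} stochastic basis; here each $\Mbar_{n,\phi}$ is a martingale only for its own filtration $\tF_n$, and $L^2$-convergence of the terminal values $\Mbar_{n,\phi}(T)\to\Mbarphi{}(T)$ does not by itself give convergence in $\mathcal{H}^2(\tF)$, so no stability theorem applies. Since the functional $x\mapsto\sum_{s\le T}|\Delta x(s)|^2$ is indeed discontinuous on $\mathbb{D}([0,T])$, the identity you transfer from the Galerkin level cannot be passed to the limit as stated. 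The paper circumvents this with a truncation device you are missing: it replaces $r\mapsto r^2$ by an increasing sequence of bounded Lipschitz functions $g_m$ satisfying Condition \ref{condition-g function} (in particular vanishing near $0$ and dominated by $r^2$ there). For such $g$ the jump functional $x\mapsto\sum_{s\le t}g(\Delta\duality{x(s)}{\phi})$ \emph{is} continuous on the Skorokhod space (only finitely many jumps contribute), which is what makes the passage $n\to\infty$ work (Lemmas \ref{lem-purely discontinuous-New}, \ref{Lem:Compensator} and \ref{lem-purely discontinuous-from the proof of main result}, using the representation of $\sum_s g(\Delta\langle u_n(s),\phi\rangle)$ as a Poisson integral, equality of laws, and uniform integrability from the It\^o isometry). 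One then obtains $\Ebar[\sum_{s\le t}g_m(\Delta\langle\tu(s),\phi\rangle)]=\Ebar[\int_0^t\int_Y g_m(\ip{F(s,\tu(s),y)}{\phi}{\rH})\,\nu(\ud y)\ds]$ for each $m$ and lets $m\to\infty$ by monotone convergence to recover the $r^2$ identity. Without this (or an equivalent) mechanism, your argument stops short of the conclusion.
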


\begin{condition}\label{condition-g function}
Assume that we are given a function $g:\R \to \R_+$ that is  Lipschitz and even function such that for some $a>0$ 
\begin{align}\label{condition-g function-ineq}
0 \leq g(r) \leq r^2  \; \; \mbox{ for }r \in [-a,a].
\end{align}
\end{condition}

Note that under Condition \ref{condition-g function}, the function $g$ satisfies the following linear  growth condition
\begin{align}\label{eqn-pure linear growth condition}
g(r) &\leq c \vert r \vert , \;\; r \in \mathbb{R},
\end{align}
 as this is used in one of the proofs.

\begin{lemma}\label{lem-measurability}
Assume that a function  $g \colon  \mathbb{R} \to \mathbb{R}_+$  satisfies Condition \ref{condition-g function}.
 If  $ {z}_k \to {z}$ in $\zcal_T$,  then
\begin{equation*}
\int_0^t \int_Y g(\ip{P_k F(s,z_k(s),y)}{\phi}{\rH})\, \nu(\ud y) \ds
\to \int_0^t \int_Y g(\ip{F(s,z(s),y)}{\phi}{\rH})\, \nu(\ud y) \ds.
\end{equation*}
\end{lemma}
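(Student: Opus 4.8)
The plan is to reduce the statement to an $L^1$-convergence of a composed nonlinearity and to push this convergence through the quadratic control built into Condition~\ref{condition-g function}. Write $Z = [0,T]\times Y$, $m = \Leb\otimes\nu$, $Z_t = [0,t]\times Y$, and set
\[
\xi_k(s,y) \coloneqq \ip{P_k F(s,z_k(s),y)}{\phi}{\rH} = \ip{F(s,z_k(s),y)}{P_k\phi}{\rH}, \qquad \xi(s,y) \coloneqq \ip{F(s,z(s),y)}{\phi}{\rH},
\]
the second identity using that $P_k$ restricted to $\rH$ is the orthogonal projection and hence self-adjoint. First I would observe that $(z_k)$ is an $\zcal_T$-convergent sequence of exactly the type treated in the proof of Lemma~\ref{L:pointwise_conv-B}(ii): its convergence in $\zcal_T$ yields convergence in $L^2(0,T;\rH_\loc)$ (cf.\ \eqref{eqn-conv_L^2(0,T;H_loc)}) and, via \cite[Lemma\ 2]{Brzezniak+Hornung+Manna}, a uniform bound $\sup_k\sup_{\tinOT}\normb{z_k(t)}{\rH}{}<\infty$. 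Consequently the argument producing \eqref{eqn-F.4_stronger_final}/\eqref{eqn-F.4_stronger_23} applies verbatim, using Assumption F.\ref{assum-F3}, the linear growth \eqref{eqn-F_linear_growth}, and $P_k\phi\to\phi$ in $\rH$, and gives $\xi_k \to \xi$ in $L^2(Z,m)$.

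From here the Cauchy--Schwarz inequality gives
\[
\normb{\xi_k^2-\xi^2}{L^1(Z,m)}{} \le \normb{\xi_k-\xi}{L^2(Z,m)}{}\,\normb{\xi_k+\xi}{L^2(Z,m)}{}\to 0,
\]
so that $\xi_k^2\to\xi^2$ in $L^1(Z,m)$; in particular the family $\{\xi_k^2\}_k$ is uniformly integrable and tight on $(Z,m)$. Next I would invoke Condition~\ref{condition-g function}: continuity of $g$ gives $g(\xi_k)\to g(\xi)$ pointwise wherever $\xi_k\to\xi$, while the bound \eqref{condition-g function-ineq} together with the linear growth \eqref{eqn-pure linear growth condition} yields, for all $r\in\mathbb{R}$,
\[
0\le g(r) \le r^2\,\1_{[-a,a]}(r) + \tfrac{c}{a}\,r^2\,\1_{\{\vert r\vert>a\}}(r) \le \Bigl(1+\tfrac{c}{a}\Bigr)r^2 ,
\]
whence $0\le g(\xi_k)\le (1+c/a)\,\xi_k^2$. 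Thus $\{g(\xi_k)\}_k$ is dominated by the $L^1(Z,m)$-convergent family $\{(1+c/a)\xi_k^2\}_k$ and is therefore itself uniformly integrable and tight.

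To conclude I would use the subsequence principle together with the Vitali Convergence Theorem. It suffices to show that every subsequence of $\bigl(\int_{Z_t} g(\xi_k)\,\ud m\bigr)_k$ has a further subsequence converging to $\int_{Z_t} g(\xi)\,\ud m$. Given a subsequence, the $L^2(Z,m)$-convergence $\xi_k\to\xi$ permits extraction of a further subsequence (not relabelled) along which $\xi_k\to\xi$ holds $m$-a.e.\ on $Z$, and hence $g(\xi_k)\to g(\xi)$ $m$-a.e.\ by continuity of $g$. Along this subsequence $\{g(\xi_k)\}$ is uniformly integrable and tight and converges $m$-a.e., so the Vitali Convergence Theorem yields $g(\xi_k)\to g(\xi)$ in $L^1(Z_t,m)$, that is,
\[
\int_0^t \int_Y g\bigl(\ip{P_k F(s,z_k(s),y)}{\phi}{\rH}\bigr)\,\nu(\ud y)\,\ds \to \int_0^t \int_Y g\bigl(\ip{F(s,z(s),y)}{\phi}{\rH}\bigr)\,\nu(\ud y)\,\ds ,
\]
which is the claim. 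The main obstacle is exactly this last passage: because $\nu$ is only $\sigma$-finite, $(Z,m)$ carries infinite measure, so $L^2$-convergence of $\xi_k$ does \emph{not} by itself give $L^1$-convergence of $g(\xi_k)$. The quadratic-at-the-origin and linear-growth structure imposed in Condition~\ref{condition-g function} is precisely what lets $g(\xi_k)$ be dominated by the $L^1$-convergent sequence $\xi_k^2$, after which uniform integrability and the subsequence trick close the argument.
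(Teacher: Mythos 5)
Your proof is correct and follows essentially the same route as the paper's: establish $L^2(\Leb\otimes\nu)$-convergence of the inner products $\ip{P_kF(s,z_k(s),y)}{\phi}{\rH}$, pass to an a.e.-convergent subsequence, dominate $g$ by a constant multiple of the square, and close with a generalized dominated-convergence argument plus the subsequence principle. The only cosmetic differences are that you invoke Vitali via uniform integrability and tightness where the paper cites the generalized Lebesgue theorem from Bogachev, and that you control the $(P_k-\Id)\phi$ term by Cauchy--Schwarz and the uniform $L^2$ bound rather than by the fourth-moment Vitali argument; both variants are sound.
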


\begin{proof}
The proof is based on the proof of \cite[Lemma\ 6.1.7]{Kallianpur_Xiong}.

Let us fix an arbitrary  time instance $t \in [0,T]$ and a $\zcal_T$-valued sequence $ ({z}_k)_{k \in \mathbb{N}}$ such that $ {z}_k \to {z}$ in $\zcal_T$.
We will show that
\begin{equation}
\label{by_F.4-appendix'}
\int_0^t \int_{Y} \ip{P_k F(s,{z}_k(s),y)-F(s,{z}(s),y)}{\phi}{\rH}^2 \, \nu(\ud y) \ds \to 0.
\end{equation}
We have
\begin{align}
\label{by_triangle_ineq}
&\hspace{-1truecm}\int_0^t \int_{Y} \ip{P_k F(s,{z}_k(s),y)-F(s,{z}(s),y)}{\phi}{\rH}^2 \,  \nu(\ud y) \ds  \notag \\
&\leq 2 \int_0^t \int_{Y} \ip{ P_k F(s,{z}_k(s),y)-F(s,{z}_k(s),y)}{\phi}{\rH}^2 \, \nu(\ud y) \ds  \notag \\
& \quad + 2 \int_0^t \int_{Y} \ip{ F(s,{z}_k(s),y)-F(s,{z}(s),y)}{\phi}{\rH}^2 \, \nu(\ud y) \ds.
\end{align}
Since by assumptions  ${z}_k \to {z}$ in the space $\zcal_T$,  
 by Assumption F.\ref{assum-F3} \label{assum-F3-used trully-2} we infer  that
\begin{equation}
\label{by_F.4-appendix}
\int_0^t \int_{Y} \ip{ F(s,{z}_k(s),y)-F(s,{z}(s),y)}{ \phi}{\rH}^2 \, \nu(\ud y) \ds \to 0.
\end{equation}
Secondly, by the Vitali Theorem, we have
\begin{equation}
\label{by_Vitali}
\begin{aligned}
&\hspace{-1truecm} \int_0^t \int_{Y} \ip{P_k F(s,{z}_k(s),y)-F(s,{z}_k(s),y)}{\phi}{\rH}^2 \, \nu(\ud y) \ds \\
&= \int_0^t \int_{Y} \ip{ F(s,{z}_k(s),y) }{(P_k -\Id)\phi} {\rH}^2 \, \nu(\ud y) \ds
\to 0.
\end{aligned}
\end{equation}

Let us note that the  Vitali Convergence Theorem can be applied here because  by  inequality \eqref{eqn-F_linear_growth-p} 
and assertion \eqref{eqn-boundedness_H} we have
\begin{align}
&\hspace{-2truecm}\sup_{k \in \N} \int_0^t \int_{Y} \ip{F(s,{z}_k(s),y)}{(P_k -\Id)\phi}{ \rH}^4\, \nu(\ud y) \ds \notag \\&\hspace{-1truecm}\leq 2  \normb{\phi}{\rH}{4}  \sup_{k \in \N} \int_0^t \int_Y \normb{F(s,{z}_k(s),y)}{\rH}{4} \, \nu(\ud y) \ds \notag \\
&\leq 2C_4 \normb{\phi}{\rH}{4}  \sup_{k \in \N} \int_0^t ( 1+ \normb{{z}_k(s)}{\rH}{4}) \ds  < \infty.
\label{integrability}
\end{align}

Now, consider an arbitrary subsequence $(n_k)$.
Since, by \eqref{by_F.4-appendix'},
\begin{equation*}
\ilsk{{P}_{n_k} F(s,{z}_{n_k}(s);y)}{\phi }{H} \to \ilsk{F(s,z(s);y)}{\phi }{H} 
\mbox{  in } L^2([0,t]\times Y; \Leb \otimes \nu), 
\end{equation*}
we can choose a further  subsequence, which for simplicity of notation is denoted by  $(k)$, 
  such that
\begin{equation*}
\ilsk{P_k F(s,z_k(s);y)}{\phi }{H}  \; \to \; \ilsk{F(s,z(s);y) }{\phi }{H}
\;\;\; \Leb \otimes \nu \mbox{-almost everywhere}.
\end{equation*}
By the continuity of the function $g$ we infer  that
\begin{equation*}
g(\ip{F(s,{z}_{k}(s),y)}{ \phi}{ \rH}) \to g(\ip{F(s,z(s),y)}{\phi}{\rH}),
\;\;\;\Leb \otimes \nu\mbox{-almost everywhere}.
\end{equation*}
Note that \eqref{by_F.4-appendix'} implies the convergence of the $L^2$-norms:
\begin{equation*}
\int_0^t \int_Y \ip{P_k F(s,{z}_k(s),y)}{\phi}{ \rH}^2\, \nu(\ud y) \ds
\to \int_0^t \int_Y \ip{F(s,{z}(s),y)}{\phi}{ \rH}^2\, \nu(\ud y) \ds.
\end{equation*}
Moreover, since $\Leb \otimes \nu$-almost everywhere
\begin{equation*}
g\left( \ip{P_k F(s,{z}_k(s),y)}{\phi}{ \rH} \right)
\leq c \ip{P_k F(s,{z}_k(s),y)}{\phi}{ \rH}^2,
\end{equation*}
and  ${z}$ belongs to $L^2(0,T;V)$ we infer that
\begin{align*}
\begin{aligned}
\int_0^t \int_Y \ip{ F(s,{z}(s),y)}{\phi}{ \rH}^2\, \nu(\ud y) \ds&\leq C_2 \abs{\phi}_{\rH }^{2} \int_0^T (1 + \normb{z(s)}{\rH}{2} )\ds \\
&\leq C_2 \abs{\phi}_{\rH }^{2} \int_0^T (1 + \norm{z(s)}_\rV^2 )\ds
< \infty.
\end{aligned}
\end{align*}
Thus, by \cite[Theorem\ 2.8.8]{Bogachev_I} we infer that
\begin{equation*}
\label{finite_g_integral_for_conv_lemma}
\int_0^t \int_Y g(\ip{P_k F(s,z_k(s),y)}{\phi}{\rH})\, \nu(\ud y) \ds
\to \int_0^t \int_Y g(\ip{F(s,z(s),y)}{\phi}{\rH})\, \nu(\ud y) \ds.
\end{equation*}
Since this holds for a subsequence of an arbitrary subsequence, it follows that this convergence holds for the original sequence.
This finishes the proof of the claimed  assertion. Therefore, the proof of Lemma \ref{lem-measurability} is complete.
\end{proof}

Let us recall that in the following result $u_n$ is the unique solution to \eqref{eqn-Galerkin}. In particular, $u_n$ is an $U^\prime$-valued c\`adl\`ag process.

\begin{lemma}
\label{lem-purely discontinuous-New}
Assume that $\phi \in\rU$ and a function   $g\colon \R \to \R_+$
satisfies Condition \ref{condition-g function}.
Then, for every $n\in \mathbb{N}$ and every  $t \in [0,T]$, the following identities hold $\mathbb{P}$-almost surely,
\begin{align}
\sum_{s\in [0,t]} g(\Delta \langle {u}_n(s), \phi \rangle)
&=\int_0^t \int_{Y} g\bigl(  \ip{P_n F(s,{u}_n(s-),y)}{\phi}{\rH}\, \bigr) {\eta}(\ud s,\ud y),
\label{eqn-uncompensated_integral_1_New}
	\end{align}
and
\begin{align}
\label{eqn-compensated_integral_1_New}
\int_0^t \int_{Y} g\bigl(  \ip{P_n F(s,{u}_n(s-),y)}{\phi}{\rH}\, \bigr) \tilde{\eta}(\ud s,\ud y) &=	\sum_{s\in [0,t]} g(\Delta \langle {u}_n(s), \phi \rangle)
\\ \nonumber
&\hspace{-3truecm}-\int_0^t \int_{Y} g\bigl(  \ip{P_n F(s,{u}_n(s),y)}{\phi}{\rH}\, \bigr) \ud s\, \nu(\ud y).
\end{align}
\end{lemma}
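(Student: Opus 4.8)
The plan is to exploit the structure of the Galerkin equation \eqref{eqn-Galerkin} (with $f=0$): the three drift integrals appearing there are $\Leb$-absolutely continuous in $t$, hence continuous, so the only source of jumps of the $\rH_n$-valued process $u_n$ is the stochastic integral term. Writing $\eta=\eta_p$ for the Poisson point process $p$ associated with $\eta$, and recalling that the compensator $\int_0^{\cdot}\!\int_Y P_n F\,\mu$ is continuous in time, the jump of the compensated integral coincides with that of the uncompensated one, so that at every jump time $s\in(0,t]\cap\dom(p)$ one has $\Delta u_n(s)=P_n F(s,u_n(s-),p(s))$ and $\Delta u_n(s)=0$ otherwise. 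Thus, in the notation \eqref{eqn-f-jumps}, the first step is to record that
\[
\Delta\langle u_n(s),\phi\rangle=\ip{P_n F(s,u_n(s-),p(s))}{\phi}{\rH},\qquad s\in(0,t]\cap\dom(p),
\]
while $\Delta\langle u_n(s),\phi\rangle=0$ for every other $s$.

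The second step is to check that the integrand $(s,y,\omega)\mapsto g\bigl(\ip{P_n F(s,u_n(s-),y)}{\phi}{\rH}\bigr)$ belongs to $\mathbf{F}_p^1\cap\mathbf{F}_p^2$, so that both the integral \eqref{eqn-H.5} against $\eta$ and the integral \eqref{eqn-H.6} against $\tilde\eta$ are well defined. Predictability is immediate, since $u_n(\cdot-)$ is left-continuous and $\mathbb{F}$-adapted, $F$ is measurable, and $g$, $P_n$ and $\ip{\cdot}{\phi}{\rH}$ are continuous. For the moments I would combine the quadratic bound \eqref{condition-g function-ineq} near the origin with the linear growth \eqref{eqn-pure linear growth condition} at infinity, together with $\Vert P_n\Vert_{\mathscr{L}(\rH)}=1$ from \eqref{eqn-P_n contraction on H} and $|\ip{P_n F}{\phi}{\rH}|\le \normb{\phi}{\rH}{}\normb{F}{\rH}{}$, to obtain the pointwise bounds $g(\ip{P_n F}{\phi}{\rH})\le C\,\normb{\phi}{\rH}{2}\,\normb{F(s,u_n(s),y)}{\rH}{2}$ and $g(\ip{P_n F}{\phi}{\rH})^2\le C\bigl(\normb{F}{\rH}{2}+\normb{F}{\rH}{4}\bigr)$. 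Integrating these in $y$ against $\nu$ and invoking the growth conditions \eqref{eqn-F_linear_growth} and \eqref{eqn-F_linear_growth-p} with $p=4$, and then the a priori estimate \eqref{eqn-H_estimate-p>2} with $p=4$ from Lemma \ref{lem-Galerkin_estimates}, yields the required finiteness of the first and second moments; in particular \eqref{eqn-F_p^1 subset F_p-1} then guarantees that the sum in \eqref{eqn-H.5} is $\mathbb{P}$-a.s.\ absolutely convergent.

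With these in hand the conclusion is short. Since $g$ is even and $g(0)=0$ (a consequence of Condition \ref{condition-g function}), the sum $\sum_{s\in[0,t]}g(\Delta\langle u_n(s),\phi\rangle)$ collapses to a sum over the jump times $s\in(0,t]\cap\dom(p)$, and by Step~1 and the defining formula \eqref{eqn-H.5} this sum equals $\int_0^t\!\int_Y g\bigl(\ip{P_n F(s,u_n(s-),y)}{\phi}{\rH}\bigr)\,\eta(\ud s,\ud y)$, which is exactly \eqref{eqn-uncompensated_integral_1_New}. For \eqref{eqn-compensated_integral_1_New} I would then subtract the compensator via the definition \eqref{eqn-H.6},
\[
\int_0^t\!\int_Y g(\cdots)\,\tilde\eta(\ud s,\ud y)=\int_0^t\!\int_Y g(\cdots)\,\eta(\ud s,\ud y)-\int_0^t\!\int_Y g(\cdots)\,\mu(\ud s,\ud y),
\]
and replace $u_n(s-)$ by $u_n(s)$ in the compensator integral, which is legitimate because $u_n(s-)=u_n(s)$ outside the at most countable set of jump times, a $\Leb$-null set; combining this with \eqref{eqn-uncompensated_integral_1_New} gives \eqref{eqn-compensated_integral_1_New}.

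The main obstacle is Step~2: one must patch together the two regimes of the bound on $g$ (quadratic near $0$, linear at infinity) so that the estimate closes using only the growth conditions available for $F$, and this is precisely the point where Assumption F.\ref{assum-F5}, through \eqref{eqn-F_linear_growth-p} with $p=4$, and the fourth-moment bound \eqref{eqn-H_estimate-p>2} enter. The jump identification in Step~1 is conceptually routine but must be stated carefully, as it rests on the continuity of the compensator and on the absence of jumps in the drift part of \eqref{eqn-Galerkin}.
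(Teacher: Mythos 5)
Your proposal is correct and follows essentially the same route as the paper: identify the jumps of $\langle u_n,\phi\rangle$ with those of the stochastic integral (the drift and the compensator being continuous), verify the integrand lies in $\mathbf{F}_p^1\cap\mathbf{F}_p^2$ via the growth of $g$ and the a priori bounds, use $g(0)=0$ to turn the jump sum into the integral against $\eta$, and subtract the compensator using \eqref{eqn-H.6}. The only (harmless) deviation is in the second-moment bound, where you invoke the fourth-moment estimates while the paper gets by with the global linear growth \eqref{eqn-pure linear growth condition} and only the $p=2$ bounds.
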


\begin{proof}[Proof of Lemma \ref{lem-purely discontinuous-New}]
Let us choose and fix an arbitrary $\tinOT$.
Let us begin with an observation that the integrals in \eqref{eqn-uncompensated_integral_1_New} and \eqref{eqn-compensated_integral_1_New} are well defined and almost surely finite. This is true provided  the integrand belongs to $\mathbf{F}_p^1 \cap \mathbf{F}_p^2$, see \cite[Section\ 3]{Brzezniak_Liu_Zhu}.
Below we will show that this is the case.  For this purpose, we first apply, \eqref{eqn-pure linear growth condition}, the linear growth condition \eqref{eqn-F_linear_growth} in Assumption F.\ref{assum-F2} and the a'priori estimates (\ref{eqn-H_estimate-p>2}) with $p=2$ from Lemma \ref{lem-Galerkin_estimates} to infer the following chain of inequalities.
\begin{align}
 & \hspace{-2truecm} \E \left[ \int_0^t \int_{Y}  \vert g\bigl(  \ip{P_n F(s,{u}_n(s),y)}{\phi}{\rH}\, \bigr) \vert^2 \ud s\, \nu(\ud y) \right]
 \label{eqn-4.15}
  \\& \leq c^2 {\E} \left[ \int_0^t \int_{Y} \vert \ip{P_n F(s,{u}_n(s),y)}{\phi}{\rH}\vert^{2} \, \nun(\ud y) \ds \right]  \nonumber \\
  &= c^2 {\E} \left[ \int_0^t \int_{Y} \ip{F(s,{u}_n(s),y)}{P_n \phi}{\rH}^{2} \,  \nun(\ud y) \ds \right]
   \nonumber \\
 & \leq c^2 \vert   P_n \phi\vert_{\rH}^{2} {\E} \left[ \int_0^t \int_{Y}  \vert   F(s,{u}_n(s),y) \vert^{2} \,  \nun(\ud y) \ds \right]
  \nonumber\\
&  \leq c^2 \vert   \phi\vert_{\rH}^{2} {\E} \left[ \int_0^t \int_{Y}  \vert   F(s,{u}_n(s),y) \vert^{2} \,  \nun(\ud y) \ds \right]
 \nonumber \\ &\leq c^2 C_2 \normb{\phi }{\rH}{2}  {\E} \left[ \int_0^t (1+ \normb{{u}_n(s) }{\rH}{2})  \ds \right]
\nonumber\\&\leq c^2 C_2\normb{\phi }{\rH}{2} \Bigl(T+  {\E} \left[ \int_0^T  \normb{{u}_n(s) }{\rH}{2})  \ds  \right] \Bigr)<\infty.
 \nonumber
\end{align}

Secondly, since $g$  satisfies condition \eqref{condition-g function-ineq}
\begin{align*}
 \E  \bigg[  \int_0^t & \int_{Y}  \vert g\bigl(  \ip{P_n F(s,{u}_n(s),y)}{\phi}{\rH}\, \bigr) \vert \,\ud s\, \nu(\ud y) \bigg] \\
 &\leq c {\E} \bigg[ \int_0^t \int_{Y} \vert \ip{P_n F(s,{u}_n(s),y)}{\phi}{\rH}\vert^2 \, \nun(\ud y) \ds \bigg]
<\infty.
\nonumber
\end{align*}

Let $\phi \in\rU$ and $g\colon \R \to \R_+$ be as in the statement of the Lemma. We will prove \eqref{eqn-uncompensated_integral_1_New}.
It follows from \eqref{eqn-Galerkin} that $\Pbar$-a.s.

\[
\begin{aligned}
\Delta  \ddual{U'}{u_n (s)}{\phi }{U} \; = \; \Delta \ilsk{u_n (s)}{\phi }{H}
\; &= \; \Delta  \int_{0}^{s} \int_{Y} \ilsk{P_n F(r,u_n (r-),y}{\phi }{H} \tilde{\eta } (\ud r,\ud y).
\end{aligned}  
\]
Since the integrand belongs to  ${\mathbf{F}}_{p}^{1}$,
\[ 
\begin{aligned}
&\int_{0}^{s} \int_{Y} \ilsk{P_n F(r,u_n (r-),y}{\phi }{H} \tilde{\eta } (\ud y,\ud r) \\
\; &= \; 
\int_{0}^{s} \int_{Y} \ilsk{P_n F(r,u_n (r-),y}{\phi }{H} \eta  (\ud r,\ud y)
- \underset{=\int_{0}^{s} \int_{Y} \ilsk{P_n F(r,u_n (r),y}{\phi }{H} \nu (\ud y) \ud r }{\underbrace{\int_{0}^{s} \int_{Y} \ilsk{P_n F(r,u_n (r-),y}{\phi }{H} \nu (\ud y) \ud r }} .
\end{aligned}
\]
Since moreover,
\[
\Delta \int_{0}^{s} \int_{Y} \ilsk{P_n F(r,u_n (r),y}{\phi }{H} \nu (\ud y) \ud r  \; = \; 0 ,
\]
we have
\[ 
\Delta \int_{0}^{s} \int_{Y} \ilsk{P_n F(r,u_n (r-),y}{\phi }{H} \tilde{\eta }(\ud r,\ud y) 
\; = \; \Delta  \int_{0}^{s} \int_{Y} \ilsk{P_n F(r,u_n (r-),y}{\phi }{H} \eta  (\ud r,\ud y).
\]
Thus
\[
\begin{aligned}
\Delta  \ddual{U'}{u_n (s)}{\phi }{U} 
\; &= \; \Delta  \int_{0}^{s} \int_{Y} \ilsk{P_n F(r,u_n (r-),y}{\phi }{H} \eta  (\ud r,\ud y). 
\end{aligned}  
\]
Note that the processes ${(\int_{0}^{s} \int_{Y} \ilsk{P_n F(r,u_n (r-),y}{\phi }{H} \tilde{\eta } (\ud r,\ud y) )}_{s \in [0,T]}$ and 

${(\int_{0}^{s} \int_{Y} \ilsk{P_n F(r,u_n (r-),y}{\phi }{H} \eta  (\ud y,\ud r) )}_{s \in [0,T]}$ are  c\`{a}dl\`{a}g.
We have
\[
\begin{aligned}
&\Delta  \int_{0}^{s} \int_{Y} \ilsk{P_n F(r,u_n (r-),y}{\phi }{H} \eta  (\ud r,\ud y)
\\
\; &= \;  \iint_{(0,s] \times Y} \ilsk{P_n F(r,u_n (r-),y}{\phi }{H} \eta  (\ud r,\ud y)
- \lim_{m \to \infty } \iint_{(0,s_m] \times Y} \ilsk{P_n F(r,u_n (r-),y}{\phi }{H} \eta  (\ud r,\ud y)
\end{aligned}
\]
where ${(t_m)}_{m \in \mathbb{N} } \subset (0,s) $ is a sequence such that $s_m \nearrow s$.
We will use the definition of the integral with respect of the Poisson random measure.
Let $p$ be the $\mathbb{F}$-optional $Y$-valued point process corresponding to the Poisson random measure $\eta $ and let ${D}_{p}$ denote its domain
\[
\begin{aligned}
&\iint_{(0,s] \times Y} \ilsk{P_n F(r,u_n (r-),y}{\phi }{H} \eta  (\ud r,\ud y)
-  \iint_{(0,s_m] \times Y} \ilsk{P_n F(r,u_n (r-),y}{\phi }{H} \eta  (\ud r,\ud y)
\\
\; &= \; \sum_{\stackrel{s_m < r \le s}{r \in {D}_{p}}} \ilsk{P_n F(r,u_n (r-),p(r)}{\phi }{H} \ind{{D}_{p}}(r) 
\end{aligned}
\]
Passing to the limit as $m \to \infty $ we infer that  

\begin{align*}
& \iint_{(0,s] \times Y} \ilsk{P_n F(r,u_n (r-),y}{\phi }{H} \eta (\ud r,\ud y)\\
& \hspace{2truecm}- \lim_{m\to \infty }  \iint_{(0,s_m] \times Y} \ilsk{P_n F(r,u_n (r-),y}{\phi }{H} \eta  (\ud r,\ud y) 
\\
 &= \;  \ilsk{P_n F(s,u_n (s-),p(s)}{\phi }{H} \ind{{D}_{p}}(s) .
\end{align*}

Thus we get 
\[
\Delta  \ddual{U'}{u_n (s)}{\phi }{U} \; = \;  \ilsk{P_n F(s,u_n (s-),p(s)}{\phi }{H} \ind{{D}_{p}}(s) .
\] 
Let us next observe that because $g(0) = 0$ we have, for every $(\omega ,s) \in  \Omega \times [0,T]$,
\[
g\bigl( \ilsk{P_n F(s,u_n (s-),p(s)}{\phi }{H} \ind{{D}_{p}}(s) \bigr)
\; = \; g\bigl( \ilsk{P_n F(s,u_n (s-),p(s)}{\phi }{H} \bigr) \ind{{D}_{p}}(s)
\]
Therefore, we deduce that for every $(\omega ,t) \in \Omega \times [0,T]$,

\begin{align*}
&\sum_{s\in [0,t]} g\bigl( \ilsk{P_n F(s,u_n (s-),p(s)}{\phi }{H} \ind{{D}_{p}}(s) \bigr)
\\&\hspace{2truecm}  = \; \sum_{s\in [0,t]} g\bigl( \ilsk{P_n F(s,u_n (s-),p(s)}{\phi }{H}  \ind{{D}_{p}}(s) \bigr)
\\
 & \hspace{2truecm}= \; \sum_{s\in [0,t]} g\bigl( \ilsk{P_n F(s,u_n (s-),p(s)}{\phi }{H} \bigr) \ind{{D}_{p}}(s)
\\
&= \; \int_{0}^{t} \int_{Y} g\bigl( \ilsk{P_n F(s,u_n (s-),y}{\phi }{H} \bigr) \, \eta (\ud s,\ud y).
\end{align*}

Thus the proof of assertion \eqref{eqn-uncompensated_integral_1_New} is complete.
	
Next let us  observe that for each $n$,  the definition of the integral with respect to a compensated Poisson random measure, see  \cite[equality  (3.4.1)]{Kallianpur_Xiong} and/or formula \eqref{eqn-H.6}, we have $\Pbar$-a.s.
\begin{align*}
&\int_0^t \int_{Y} g\bigl(  \ip{P_n F(s,{u}_n(s-),y)}{\phi}{\rH}\, \bigr) \tilde{\eta}(\ud s,\ud y) \\ 
& \hspace{2truecm}  =	 \int_0^t \int_{Y} g\bigl(  \ip{P_n F(s,{u}_n(s-),y)}{\phi}{\rH}\, \bigr) {\eta}(\ud s,\ud y)
\\ & \hspace{3truecm}  -\int_0^t \int_{Y} g\bigl(  \ip{P_n F(s,{u}_n(s),y)}{\phi}{\rH}\, \bigr) \, \nu(\ud y) \ud s.
\end{align*}
By applying now equality \eqref{eqn-uncompensated_integral_1_New} we deduce that equality \eqref{eqn-compensated_integral_1_New} holds.
\end{proof}

The previous results were  formulated on the original probability space.
The next result is formulated for the new filtered probability space as listed at the beginning of Subsection \ref{subsec_identification}.
Let us  recall that $(\tun)_{n}$ is a sequence of $ \zcal_T$-valued random variables defined on the probability space $\bigl( \Omegabar ,\tfcal ,\Pbar  \bigr)$, see \eqref{eqn-new_probability_space}, such that
   \begin{align*}
  & \tun  \to \tu  \mbox{  in } \zcal_T
    \quad  \mbox{ on  }\Omegabar.
\end{align*}

The last assertion implies assertions \eqref{eqn-conv_L^2_w(0,T;V)}-\eqref{eqn-conv_D(0,T;U')}.

For a topological space $X$, we define an evaluation map
\begin{equation*}\pi_t: \mathbb{D}([0,T];X) \ni z \mapsto  z(t) \in X.\end{equation*}

\begin{lemma}\label{Lem:Compensator}
	Let us assume that a function   $g\colon \R \to \R_+$
satisfies Condition \ref{condition-g function}.
	Assume that  $\phi \in\rU$. For $n\in \mathbb{N}$ let us consider the maps
\[\Psi_n,  \Psi: \zcal_T \to \mathbb{D}([0,T]; \mathbb{R})\] defined by for $u \in \zcal_T$ and $\tinOT$
	\begin{align}\label{eqn-Psi_n}
	\Psi_n(u)(t)&=\sum_{s\in [0,t]} g(\Delta \langle u(s), \phi\rangle)-\int_0^t\int_{Y} g \left(\langle P_n F(s, u(s),y) , \phi \rangle\right)\, \nu(\ud y) \ds,\\
\label{eqn-Phi}
		\Psi(u)(t)&=\sum_{s\in [0,t]} g(\Delta \langle u(s), \phi\rangle)-\int_0^t\int_{Y} g\left(\langle  F(s, u(s),y) , \phi \rangle\right)\, \nu(\ud y) \ds.
	\end{align}
Then, for every $\tinOT$, the maps $\pi_t\circ \Psi_n:\zcal_T \to \mathbb{R}$ and $\pi_t\circ \Psi:\zcal_T \to \mathbb{R}$ are measurable and, if
$t_0\in C$, where the set $C=C(\tu)$ has been defined in \eqref{eqn-C_set}, the following equality holds
	\begin{equation}\label{eqn-6.33}
	\lim_{n \to \infty} \Psi_n(\tun)(t_0) = \Psi(\tu)(t_0) 
 \mbox{ on  }\Omegabar. 
	\end{equation}
\end{lemma}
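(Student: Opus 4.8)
The statement of Lemma~\ref{Lem:Compensator} has two parts: measurability of the maps $\pi_t\circ\Psi_n$ and $\pi_t\circ\Psi$, and the pointwise convergence \eqref{eqn-6.33} along the set $C$. I would treat them separately, handling measurability first since it is the routine half and the convergence second since it contains the real difficulty.

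\textbf{Measurability.} Fix $t\in[0,T]$. The map $\pi_t\circ\Psi_n$ is a difference of two terms. The second term, $\int_0^t\int_Y g(\langle P_n F(s,u(s),y),\phi\rangle)\,\nu(\ud y)\ds$, is essentially the object already shown to be a continuous (hence measurable) function of $u\in\zcal_T$ in Lemma~\ref{lem-measurability} (with $g$ satisfying Condition~\ref{condition-g function} and using Assumption~F.\ref{assum-F3} together with the linear-growth estimate \eqref{eqn-F_linear_growth-p}); so I would invoke that lemma directly, or rather its proof, to get continuity of $u\mapsto\int_0^t\int_Y g(\langle P_n F(s,u(s),y),\phi\rangle)\,\nu(\ud y)\ds$. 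The first term, the jump sum $\sum_{s\in[0,t]}g(\Delta\langle u(s),\phi\rangle)$, is a functional on $\mathbb{D}([0,T];U')$; here I would argue measurability by expressing the jump sum as a limit of measurable finite-mesh functionals of the coordinate evaluations $\pi_{s_j}$, or more cleanly by noting that on $\mathbb{D}([0,T];U')$ the map $z\mapsto\sum_{s\le t}g(\Delta\langle z(s),\phi\rangle)$ is measurable with respect to the Skorokhod $\sigma$-algebra $\mathscr{D}_t$ (the jumps of a c\`adl\`ag function and their images under the continuous even function $g$ are countable and measurably recoverable). Combining the two gives measurability of $\pi_t\circ\Psi_n$, and the identical argument with $P_n$ replaced by the identity gives measurability of $\pi_t\circ\Psi$.

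\textbf{Convergence \eqref{eqn-6.33}.} Fix $\omega\in\Omegabar$ and $t_0\in C$. The compensator term converges by the same mechanism as in Lemma~\ref{lem-measurability}: since $\tun\to\tu$ in $\zcal_T$ on $\Omegabar$, that lemma yields
\[
\int_0^{t_0}\!\!\int_Y g\bigl(\langle P_n F(s,\tun(s),y),\phi\rangle\bigr)\,\nu(\ud y)\ds
\;\to\;
\int_0^{t_0}\!\!\int_Y g\bigl(\langle F(s,\tu(s),y),\phi\rangle\bigr)\,\nu(\ud y)\ds .
\]
The hard part is the convergence of the jump sums,
\[
\sum_{s\in[0,t_0]}g(\Delta\langle\tun(s),\phi\rangle)\;\to\;\sum_{s\in[0,t_0]}g(\Delta\langle\tu(s),\phi\rangle),
\]
because the jump functional is \emph{not} continuous on the Skorokhod space: jumps can split, merge, or migrate across $t_0$ under the time-changes $\lambda_k$ appearing in the Skorokhod characterisation. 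This is where $t_0\in C$ is essential. My plan is to exploit the convergence $\tun\to\tu$ in $\mathbb{D}([0,T];U')$ from \eqref{eqn-conv_D(0,T;U')} together with the fact that $t_0$ is a continuity point of $\tu$ (so $\Delta\tu(t_0)=0$, $\overline{\mathbb{P}}$-a.s.\ by the definition \eqref{eqn-C_set} of $C$), which prevents a jump from escaping exactly at the endpoint $t_0$. Concretely I would pass through the identity \eqref{eqn-uncompensated_integral_1_New} of Lemma~\ref{lem-purely discontinuous-New}, which rewrites the jump sum $\sum_{s\in[0,t_0]}g(\Delta\langle\tun(s),\phi\rangle)$ as the \emph{uncompensated} Poisson integral $\int_0^{t_0}\!\int_Y g(\langle P_n F(s,\tun(s-),y),\phi\rangle)\,\eta(\ud s,\ud y)$ on the original probability space, and hence — since $\tun$ and $u_n$ have equal law on $\zcal_T$ and the relevant functionals are measurable — the jump sum equals, in law, the sum of compensator plus compensated integral. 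Then the convergence of the jump sums reduces to the already-established convergence of the compensator term plus control of the compensated (martingale) part, for which I would use the $L^2$-bound \eqref{integrability}-style estimate coming from \eqref{eqn-F_linear_growth-p} and \eqref{eqn-boundedness_H}.

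\textbf{Main obstacle.} The genuine difficulty is the discontinuity of the jump functional on the Skorokhod space; the whole point of restricting to $t_0\in C$ and of the decomposition \eqref{eqn-uncompensated_integral_1_New}--\eqref{eqn-compensated_integral_1_New} is to convert an ill-behaved pointwise-jump limit into a sum of terms each of which converges by the $L^2$/Vitali machinery already developed (Lemma~\ref{lem-measurability}, assertions \eqref{eqn-F.4_stronger_final} and \eqref{eqn-conv_F_iii)} of Lemma~\ref{L:pointwise_conv-B}, and the uniform bound \eqref{eqn-boundedness_H}). I would be careful to note, as in Lemma~\ref{lem-measurability}, that the convergence is first obtained along a subsequence of an arbitrary subsequence (via $\Leb\otimes\nu$-a.e.\ convergence of the integrands) and then upgraded to the full sequence, and that the $\overline{\mathbb{P}}$-null exceptional set is the one fixed through the choice of $C=C(\tu)$ and is independent of $\phi$.
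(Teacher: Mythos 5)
Your measurability argument is acceptable and close in spirit to the paper's, although the paper gets it more cleanly: it invokes \cite[Corollary 2.4.2]{Kallianpur_Xiong} to assert that $z \mapsto \sum_{s\in[0,\cdot]} g(\Delta z(s))$ is a \emph{continuous} map from $\mathbb{D}([0,T];\rU^\prime)$ into $\mathbb{D}([0,T];\R)$, composes with the continuous pairing $z\mapsto \duality{z}{\phi}$ and the measurable evaluation $\pi_t$, and handles the compensator term exactly as you propose (via the argument of Lemma \ref{lem-measurability}, reducing $\Psi_n$ to $\Psi$ through $\ip{P_nF}{\phi}{\rH}=\ip{F}{P_n\phi}{\rH}$).

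The convergence part of your proposal, however, has a genuine gap, and it stems from a mistaken premise. You assert that the jump functional is not continuous on the Skorokhod space and therefore reroute through the identity \eqref{eqn-uncompensated_integral_1_New}. But what fails to be continuous is only the evaluation $z\mapsto \sum_{s\in[0,t_0]}g(\Delta\duality{z(s)}{\phi})$ at a \emph{fixed} time; the process-valued map $z\mapsto \bigl(\sum_{s\in[0,t]}g(\Delta\duality{z(s)}{\phi})\bigr)_{t\in[0,T]}$ \emph{is} continuous into $\mathbb{D}([0,T])$ by \cite[Corollary 2.4.2]{Kallianpur_Xiong}, and this is precisely the tool the paper uses. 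The convergence \eqref{eqn-conv_D(0,T;U')} then gives convergence of the jump-sum processes in $\mathbb{D}([0,T])$ on $\Omegabar$, and the hypothesis $t_0\in C$ is used only at the last step, via \cite[Theorem 12.5(i)]{Billingsley_1999} (the limit has a.s.\ no jump at $t_0$, so $\pi_{t_0}$ may be applied), to obtain \eqref{conv_g_jumps-new-2}. Your alternative cannot be completed as stated: the identity \eqref{eqn-uncompensated_integral_1_New} lives on the \emph{original} probability space, where the Poisson random measure $\eta$ exists; on $\Omegabar$ no Poisson random measure is available at this stage (constructing one is the goal of Section \ref{sec-existence-III}), so the jump sum of $\tun$ cannot be rewritten as a Poisson integral there. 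Equality of laws transfers distributional information and $L^2$ bounds, but it cannot yield the pointwise-on-$\Omegabar$ convergence that \eqref{eqn-6.33} asserts; moreover the uniform $L^2$ control of the compensated martingale part does not make that part vanish, so even convergence in probability of the jump sums would not follow from your decomposition. The law-transfer and Vitali machinery you describe is exactly what the paper deploys \emph{afterwards}, in Lemma \ref{lem-purely discontinuous-from the proof of main result}, where the already-established almost sure convergence \eqref{eqn-6.33} is an input, not an output; using it to prove \eqref{eqn-6.33} inverts the logical order.
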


\begin{proof}
Let us choose and fix: an element $\phi\in\rU$, a function $g\colon \R \to \R_+$.  Let  the set  $C$ be as in the statement of the lemma, see also \eqref{eqn-C_set}.
Let us first observe that by \cite[Corollary\  2.4.2]{Kallianpur_Xiong} the map
\begin{equation*}
G: \mathbb{D}([0,T];\rU^\prime) \to \mathbb{D}([0,T];\R), \qquad G(z)(t)= \sum_{s \in [0,t]} g(\Delta z(s)), \; t\in [0,T],     
\end{equation*}
is continuous. Combining this with the continuity of
\begin{equation*}\mathbb{D}([0,T];\rU^\prime) \ni z \mapsto \duality{z}{\phi} \in \mathbb{D}([0,T];\R)\end{equation*}
and the measurability of the evaluation map $\pi_t$,
see \cite[Theorem\ 12.5(ii)]{Billingsley_1999} establishes the measurability of
\[\zcal_T \ni u \mapsto \sum_{s\in [0,t]} g(\Delta \langle u(s), \phi\rangle) \in \mathbb{R}
\]

For the measurability of the maps $\pi_t \circ \Psi_n$ and $\pi_t \circ \Psi$, for $\tinOT$,
it is sufficient to consider the case of $\pi_t\circ \Psi$ as the measurability of $\pi_t \circ \Psi_n$ is obtained through the identity $\ip{ P_n F(s,{u}(s),y)}{ \phi}{\rH} =
\ip{ F(s,{u}(s),y)}{ P_n \phi}{ \rH}$. To finish this part of the  proof it is sufficient to prove that the map
\[\zcal_T \ni u \mapsto \int_0^t \int_{Y} g(\ip{ F(s,{u}(s),y)}{ \phi}{ \rH})\, \nu(\ud y) \ds \in \mathbb{R}
\]
is continuous.
This can be done by repeating the argument in Lemma \ref{lem-measurability}. 

We show \eqref{eqn-6.33}.
Choose an arbitrary element $t_0\in C$.
Since by \eqref{eqn-conv_D(0,T;U')}

\begin{equation*}
\lim_{n \to \infty} \langle \tun,\phi \rangle = \duality{\tu}{\phi} \mbox{ in  } \mathbb{D}([0,T]) \;\; {\mbox{ on } \Omegabar},
\end{equation*}
it follows from \cite[Corollary\ 2.4.2]{Kallianpur_Xiong} that
\begin{equation*}
\label{conv_g_jumps-new}
\lim_{n \to \infty} \sum_{s\in [0,\cdot]} g(\Delta \langle \tun(s), \phi \rangle) = \sum_{s\in [0,\cdot]} g(\Delta \duality{\tu(s)}{\phi})
\;\;\mbox{in  $\mathbb{D}([0,T])$} {\mbox{ on }  \Omegabar},
\end{equation*}
i.e. the following sequence of functions
\begin{equation*}
[0,T] \ni t \mapsto \sum_{s\in [0,t]} g(\Delta \langle \tun(s), \phi \rangle)    
\end{equation*}
converges in $\mathbb{D}([0,T])$ to a function  $[0,T] \ni t \mapsto \sum_{s\in [0,t]} g(\Delta \duality{\tu(s)}{\phi})$ {$\mbox{ on }  \Omegabar$}.

Since $t_0 \in C$ we infer that the process 
\begin{equation*}
    t \mapsto \sum_{s \leq t} g(\Delta \duality{\tu(s)}{\phi}),
\end{equation*}    
    with probability $1$ has no jump at $t_0$.
By the continuous mapping theorem, see \cite[Theorem\ 2.3]{van_der_Vaart} and \cite[Theorem\ 12.5(i)]{Billingsley_1999},

\begin{equation}
\label{conv_g_jumps-new-2}
\sum_{s\in [0,t_0]} g(\Delta \langle \tun(s), \phi \rangle) \to \sum_{s\in [0,t_0]} g(\Delta \duality{\tu(s)}{\phi}) \qquad \Pbar\mbox{ -a.s.}
\end{equation}
On the other hand, by Lemma \ref{lem-measurability} we infer  that $\Pbar$-almost surely,
\begin{equation*}
\int_0^{t_0} \int_{Y} g(\ip{P_n F(s,\tun(s),y)}{\phi}{\rH})\, \nun(\ud y) \ds \to \int_0^{t_0} \int_{Y} g(\ip{F(s,\tu(s),y)}{\phi}{\rH}\,)\, \nun(\ud y) \ds.
\end{equation*}
By combining \eqref{eqn-Psi_n} and  \eqref{eqn-Phi} with \eqref{conv_g_jumps-new-2} and the last equality we complete the proof of the Lemma.
\end{proof}

\begin{lemma}
\label{lem-purely discontinuous-from the proof of main result}
Assume that $\phi \in\rU$ and a function   $g\colon \R \to \R_+$
satisfies Condition \ref{condition-g function}. Let $C=C(\tu)$  be  the set defined by 	equality \eqref{eqn-C_set}.
Then for every $t \in C$,
\begin{equation}
\label{eqn-expectation_equals_0}
\Ebar \left[\sum_{s\in [0,t]} g(\Delta \langle \tu(s), \phi\rangle)-\int_0^t\int_{Y} g\left( \ip{F(s,\tu(s),y)}{\phi}{\rH}\right)\, \nu(\ud y) \ds \right]=0.
\end{equation}

\end{lemma}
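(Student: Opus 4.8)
The plan is to recognise that the left-hand side of \eqref{eqn-expectation_equals_0} is exactly $\Ebar[\Psi(\tu)(t)]$, with $\Psi$ the limiting functional from \eqref{eqn-Phi}, and to obtain its vanishing by passing to the limit in the identities $\Ebar[\Psi_n(\tun)(t)]=0$, which are themselves inherited from the original probability space. So there are three moves: (a) zero expectation of $\Psi_n(u_n)(t)$ on the original space; (b) transfer of this to the new space by equality of laws; (c) passage to the limit $n\to\infty$ for $t\in C$.

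First I would record the zero-mean property on the original space. By Lemma \ref{lem-purely discontinuous-New}, in particular identity \eqref{eqn-compensated_integral_1_New}, for every $n\in\mathbb{N}$ and $t\in[0,T]$ one has, $\mathbb{P}$-a.s.,
\[
\Psi_n(u_n)(t)=\int_0^t\int_{Y} g\bigl(\ip{P_n F(s,u_n(s-),y)}{\phi}{\rH}\bigr)\,\tilde{\eta}(\ud s,\ud y),
\]
where $\Psi_n$ is the functional in \eqref{eqn-Psi_n}. The integrand lies in $\mathbf{F}_p^1\cap\mathbf{F}_p^2$, which is precisely what the estimate \eqref{eqn-4.15} establishes, so the right-hand side is a square-integrable martingale vanishing at $t=0$; hence $\E[\Psi_n(u_n)(t)]=0$ for all $n$ and all $t$. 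Next, since by Lemma \ref{Lem:Compensator} the map $\pi_t\circ\Psi_n\colon\zcal_T\to\mathbb{R}$ is measurable, and since $u_n$ and $\tun$ have the same law on $\zcal_T$ by \eqref{eqn-Skorokhod_appl_convergence}, the variables $\Psi_n(u_n)(t)$ and $\Psi_n(\tun)(t)$ share the same law, so
\[
\Ebar[\Psi_n(\tun)(t)]=\E[\Psi_n(u_n)(t)]=0,\qquad n\in\mathbb{N},\; t\in[0,T].
\]

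It then remains to let $n\to\infty$ for a fixed $t\in C$. Lemma \ref{Lem:Compensator}, via \eqref{eqn-6.33}, supplies the pointwise convergence $\Psi_n(\tun)(t)\to\Psi(\tu)(t)$ on $\Omegabar$, so the only thing needed to interchange limit and expectation is uniform integrability of $\{\Psi_n(\tun)(t)\}_n$. I would get this from a uniform $L^2$-bound: using equality of laws once more, $\Ebar[|\Psi_n(\tun)(t)|^2]=\E[|\Psi_n(u_n)(t)|^2]$, and then the It\^o isometry \eqref{eqn-Ito isometry} for the compensated integral, the linear growth \eqref{eqn-pure linear growth condition} of $g$, the identity $\ip{P_nF}{\phi}{\rH}=\ip{F}{P_n\phi}{\rH}$ together with the contraction property of $P_n$ on $\rH$, the growth bound \eqref{eqn-F_linear_growth}, and the a priori estimate \eqref{eqn-H_estimate-p>2} with $p=2$ give
\[
\sup_n\Ebar[|\Psi_n(\tun)(t)|^2]=\sup_n\E[|\Psi_n(u_n)(t)|^2]<\infty,
\]
which is exactly the chain already displayed in \eqref{eqn-4.15}. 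Boundedness in $L^2$ yields uniform integrability, so the Vitali Convergence Theorem gives
\[
\Ebar[\Psi(\tu)(t)]=\lim_{n\to\infty}\Ebar[\Psi_n(\tun)(t)]=0,
\]
which, by the definition \eqref{eqn-Phi} of $\Psi$, is precisely \eqref{eqn-expectation_equals_0}.

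The only genuinely delicate point is this uniform $L^2$-bound underpinning the uniform integrability; but because $\Psi_n(u_n)(t)$ is a compensated Poisson integral on the original space, it reduces through the It\^o isometry to the already-available moment estimates for the Galerkin solutions, so no new work is required there. Everything else — the equality of laws and the almost sure convergence — is provided directly by Lemma \ref{Lem:Compensator}, so the argument is essentially bookkeeping once the compensated-integral representation of Lemma \ref{lem-purely discontinuous-New} is in hand.
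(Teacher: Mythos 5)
Your proposal is correct and follows essentially the same route as the paper's proof: representation of $\Psi_n(u_n)(t)$ as a compensated Poisson integral via Lemma \ref{lem-purely discontinuous-New}, transfer of the zero-mean property to the new space through equality of laws and the measurability of $\pi_t\circ\Psi_n$, a uniform $L^2$-bound from the It\^o isometry and the chain of estimates in \eqref{eqn-4.15}, and finally the Vitali Convergence Theorem combined with the almost sure convergence \eqref{eqn-6.33} for $t\in C$. No gaps.
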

\begin{proof}[Proof of Lemma \ref{lem-purely discontinuous-from the proof of main result}]
Let us choose and fix  $\phi\in\rU$ and, for the time being, an arbitrary   $t\in [0,T]$. We will prove
equality \eqref{eqn-expectation_equals_0}.
 For this purpose we consider  the maps $\Psi_n$, $n\in \mathbb{N}$, and $\Psi\colon \zcal_T \to  \mathbb{D}([0,T]; \mathbb{R})$ defined by
\eqref{eqn-Psi_n} and \eqref{eqn-Phi} respectively. Since $g$ satisfies Condition \ref{condition-g function},
from Lemma \ref{lem-purely discontinuous-New} we infer that
	\begin{align}
	\label{compensated_integral-New}
	\bigl(\Psi_n(u_n)\bigr)(t)
	&= \int_0^t \int_{Y} g(\ip{P_n F(s,{u}_n(s-),y)}{\phi}{\rH}\,)) \, \tilde{{\eta}}(\ud y, \ud s), \;\;\mathbb{P}\mbox{-a.s}.
	\end{align}
	On the other hand, by Lemma \ref{lem-measurability} the maps $\pi_t\circ \Psi_n\colon \zcal_T \to \mathbb{R}$ and $\pi_t\circ \Psi\colon \zcal_T \to \mathbb{R}$ are measurable.  Hence, owing to the equality of laws of $\tun$ and $u_n$ and \eqref{compensated_integral-New}  we see that for every $n \in \mathbb{N}$ we have
	\begin{equation*}\label{Eq:Conv-in L1}
	\mathbb{E} \left[ \Psi_n(u_n)(t) \right] = \overline{\E} \left[ \Psi_n(\tun)(t) \right]=0
	\end{equation*}
and
\begin{equation*}\label{eqn-6.43}
\begin{aligned}
\overline{\E} \left[ \vert\Psi_n(\tun)(t)\vert^2 \right] &= \mathbb{E} \left[ \vert\Psi_n(u_n)(t)
\vert^2 \right] \\ &= {\E} \left[ \left( \int_0^t \int_{Y} g(\ip{P_n F(s,u_n(s-)},y){\phi}{\rH}\,) \tilde{\eta}(\ud y) \ds \right)^2 \right].
\end{aligned}
  \end{equation*}

From the last equality along with the It\^o isometry for $\mathbb{R}$-valued processes and the same argument as in \eqref{eqn-4.15} we infer that there exist a constant $C_0>0$ such that for all $n\in \mathbb{N}$,
\begin{align*}
&\overline{\E} \left[ (\Psi_n(\tun)(t))^2 \right] =  {\E} \left[ \int_0^t \int_{Y} \vert g(\ip{P_n F(s,{u}_n(s),y)}{\phi}{\rH})\, \vert^2 \,  \nun(\ud y) \ds \right] \leq C_0.
\end{align*}

From now on we  assume that $t\in C$.
Thus,   the family  $\{\Psi_n(\tun)(t): n \in\mathbb{N}\}$ is uniformly integrable  and therefore by the Vitali Theorem and  assertion \eqref{eqn-6.33} in  Lemma \ref{Lem:Compensator} we deduce  that 
\begin{equation*}
\label{convergence_of_Psi_n_to_Psi}
\te[\Psi_n(\tun)(t)] \to \Ebar \left[ \Psi(\tu)(t) \right].
\end{equation*}
Therefore we infer that
\begin{equation*}\overline{\E} \left[\Psi(\tu)(t)\right]=0 .\end{equation*}
Thus, we have  proved  equality  \eqref{eqn-expectation_equals_0} as claimed.
\end{proof}

\begin{proof}[Proof of Proposition \ref{prop-purely discontinuous}]
Let us choose and fix $\phi\in\rU$. Let $C$ be the set defined by equality \eqref{eqn-C_set}.
We divide the proof into two steps.

\textbf{Step 1.} We will  prove that the martingale $\Mbarphi{}$ satisfies  equality \eqref{eqn-purely_discontinuous_martingale} for every  $t\in C$.  For this purpose let us choose and fix $t\in C$.

Let us consider a function $g\colon \mathbb{R} \ni r \mapsto r^2  \in [0,\infty)$.
Then we can find  a sequence  $(g_m)_{m=1}^\infty $  of functions such that each $g_m$ satisfies Condition \ref{condition-g function} with $a=m$ and $g_m(r) \toup  g(r)$ for all $r \in \R$.
Since $t\in C$, we can apply Lemma \ref{lem-purely discontinuous-from the proof of main result} and see that the identity \eqref{eqn-expectation_equals_0} is satisfied  with $g$ replaced by $g_m$.  Thus, by the Monotone Convergence Theorem we deduce that
\begin{equation*}
\lim_{m\to \infty}\Ebar \left[\sum_{s\in [0,t]} g_m(\Delta \langle \tu(s), \phi\rangle)-\int_0^t\int_{Y} g_m\left( \ip{F(s,\tu(s),y)}{\phi}{\rH}\right)\, \nu(\ud y) \ds \right]=0.
\end{equation*}
That is, we have
\begin{equation}
\label{expectation_of_squared_jumps-New}
\overline{\E} \left[ \sum_{s\in [0,t]} (\Delta \;\duality{\tu(t)}{\phi})^2 \right]
= \overline{\E} \left[ \int_0^t \int_{Y} \ip{F(s,\tu(s),y)}{\phi}{\rH}^2 \, \nun(\ud y) \ds \right].
\end{equation}
Note that finiteness of both terms above follows form Assumption F.\ref{assum-F2}\label{assum-F2-used0} and  Lemma  \ref{lem-Galerkin_estimates'}.

Next let $\Mbarphi{}$ be the martingale defined in \eqref{definition_M_bar-phi}. It follows from the proof of  \cite[Proposition\ I.4.50b)]{Jacod_Shiryaev}  that  for every $t \in [0,T]$, 
\begin{equation*}\overline{\E} \left[ \langle \Mbarphi{},\Mbarphi{} \rangle(t) \right] = \overline{\E} \left[ [\Mbarphi{},\Mbarphi{}](t) \right].
\end{equation*}
From this equality along with 
$\Pbar$-a.s. for $s\in [0,T]$,    \begin{equation*}\Delta \left( \duality{\tu}{\phi} \right)(s) = \Delta \Mbarphi{} (s)  \mbox{ and }   \langle \Mbarphi{\mathrm{c}},\Mbarphi{\mathrm{c}}\rangle (s) = \langle \dualityc{\tu}{\phi},
  \dualityc{\tu}{\phi}\rangle (s)\end{equation*} 
  by
 \eqref{expectation_of_squared_jumps-New} and Proposition \ref{prop_angle_bracket} we infer  the following chain of equations
\begin{align}
\label{quadratic_var_sum_squared_jumps-New}
&\overline{\E} \left[ \sum_{s\in [0,t]} (\Delta \, \duality{\Mbar(s)}{\phi})^2 \right]
= \overline{\E} \left[ \sum_{s\in [0,t]} (\Delta \,\duality{\tu(s)}{\phi})^2 \right]  \\
&= \overline{\E} \left[ \int_0^t \int_{Y} \ip{F(s,\tu(s),y)}{\phi}{\rH}^2 \, \nun(\ud y) \ds \right]
= \overline{\E} \left[ \langle \Mbarphi{},\Mbarphi{} \rangle(t) \right]  = \overline{\E} \left[ [\Mbarphi{},\Mbarphi{}](t) \right].
\notag
\end{align}

Let us recall that by \cite[Theorem\  3.4.2]{Kallianpur_Xiong}, see also \cite[Lemma\ 4.51 and Theorem\ 4.52]{Jacod_Shiryaev},  we have $\Pbar$-a.s.,
\begin{equation}
\label{two_brackets_and_jumps-New}
[\Mbarphi{},\Mbarphi{}](t)- \sum_{s\leq t} \bigl( \Delta \Mbarphi{}(s)\bigr)^2
= \big\langle \Mbarphi{\mathrm{c}},\Mbarphi{\mathrm{c}}\big\rangle(t), \;\; t \in [0,T].
\end{equation}
Thus the random variable defined by \eqref{two_brackets_and_jumps-New} is,  on the one hand non-negative, because the right-hand side of \eqref{two_brackets_and_jumps-New} is non-negative,  and  on the other hand, by \eqref{quadratic_var_sum_squared_jumps-New}, has expectation equal to $0$. Therefore this random variable is  almost surely equal to $0$, i.e.
$\Pbar$-a.s
\begin{equation*}
  [\Mbarphi{},\Mbarphi{}](t)- \sum_{s\in  [0,t]} \bigl( \Delta \Mbarphi{}(s)\bigr)^2
= 0, \;\; t \in C.
\end{equation*}
 Therefore the proof in Step 1 is complete.

\textbf{Step 2.}  In Step 1 we proved that
the martingale $ \Mbarphi{} $ satisfies  equality \eqref{eqn-purely_discontinuous_martingale} for every  $t\in C$. Now we will show that  it satisfies equality \eqref{eqn-purely_discontinuous_martingale} for every  $\tinOT$.

By applying  \cite[Lemma\ 4.51 and Theorem\ 4.52]{Jacod_Shiryaev}, see also formula \eqref{two_brackets_and_jumps-New} above, we infer
that the predictable quadratic variation of the process $\Mbarphi{\mathrm{c}}$ is  equal to $0$ for every $t \in C$.
Since the  process $\Mbarphi{\mathrm{c}}$   is a continuous martingale and since by \cite[Proposition\  17.2]{Metivier} the predictable quadratic variation of a continuous martingale is continuous,   by the density of the set $C$ in the interval $[0,T]$, we infer
the predictable quadratic variation of the process $\Mbarphi{\mathrm{c}}$ is  equal to $0$ for every $t \in [0,T]$.
Thus, by \cite[Lemma\ 4.51 and Theorem\ 4.52]{Jacod_Shiryaev}, see also formula \eqref{two_brackets_and_jumps-New} above, we conclude
that the martingale $\Mbarphi{}$ satisfies  equality
\eqref{eqn-purely_discontinuous_martingale} for every  $\tinOT$.
This finishes the proof of Step 2.

From Step 1 and Step 2 we  conclude that the square integrable martingale $\Mbarphi{}$ is a purely discontinuous martingale.
\end{proof}

\subsection{Identifying the compensator \texorpdfstring{$M_\phi$}{Mp}}

In this subsection we work in the framework of  the new filtered probability space as listed at the beginning of Section \ref{subsec_identification}. {\color{black} The main result of this section, i.e. Proposition \ref{prop_compensated_martingale},  states that for any Borel set $A$, separated from $0$, the process $\overline{N}_A$ defined in \eqref{eqn-compensated_martingale_2} is a martingale. The proof consists of several preparatory lemmas. Firstly, we define some continuous functions $a_k$ approximating an indicator function $\1_A$ and prove certain properties of those. Then we use them to construct two families of martingales $\overline{N}_{a_k,n}$ and $\overline{N}_{a_k,n}$ approximating $\overline{N}_A$. We then show that $\overline{N}_A$ is martingale for every set which is closed, bounded and separated from $0$. Finally, the martingale property follows for any Borel set separated from $0$.}

Note that in this whole sub-section we consider the processes and their jumps without any  duality pairing. Let us recall that
according to Lemma \ref{lem-tM is a martingale} the process ${\Mbar}$ is a square integrable $\rU^\prime$-valued  ${\tF}$-martingale.
Moreover, by the same lemma, all trajectories of the martingale ${\Mbar}$ are $\rU^\prime$-valued c\`adl\`ag. Note  that the $\rU^\prime$-valued jump process $\Delta \Mbar$ is well-defined.
Finally, by Proposition \ref{prop-purely discontinuous},  the martingale  ${\Mbar} $ is purely discontinuous. Thus it  follows from \cite[Corollary\ 2.4.2]{Kallianpur_Xiong} that for every continuous
function $\tilde{g}\colon \rU^\prime \to \mathbb{R}$ vanishing in a ball around the origin $0\in\rU^\prime $, the real  process $G(\Mbar)$ defined by
\begin{align}\label{eqn-KX_2.4.10}
(G(\Mbar))(t)\coloneqq\sum\limits_{0<s\leq t} \tilde{g}(\Delta \Mbar(s)), \;\;\; \tinOT,
  \end{align}
{is c\`adl\`ag and all its trajectories belong to the Skorokhod space $\mathbb{D}([0,T])$.}

 The process $\Delta \Mbar$ can also be seen  as an  ${\rH}$-valued process  provided  the limit in formula \eqref{eqn-f-jumps} is understood in the weak topology of $\rH$.
Moreover,  since by Lemma \ref{lem-tM is a martingale} the process $\Mbar$ is ${\tF}$-adapted  and $\rU^\prime$-valued c\`adl\`ag, we infer from \cite[Corollary\ I.1.25]{Jacod_Shiryaev}  and  the separability of $U^\prime$ that  both the processes  $\left(\Mbar(t-)\right)_{t\in [0,T]}$ and $\Delta \Mbar$ (resp. $G(\Mbar)$) are  $\rU^\prime$-valued (resp. $\mathbb{R}$-valued)  ${\tF}$-optional.

\begin{remark}
\label{rem_compansated_martingale}
Because ${\Mbar} $ is  a   $\rU^\prime$-valued c\`adl\`ag process,  according to an infinite-dimensional version of 
\cite[Proposition\ II.1.16, p. 69]{Jacod_Shiryaev}, there exists an  $\overline{\mathbb{N}}$-valued random measure $\mu$ on $[0,T]\times\rU^\prime$ such that for every $\tomega\in \Omegabar$,
\begin{equation*}\label{eqn-JS_II.1.16}
  \mu(\tomega;(s,t]\times A)=\sum_{r \in (s,t]}\1_{\{\Delta\, \Mbar(\tomega,s)\not=0\}}\1_A(\Delta \Mbar(\tomega,s))
\end{equation*}
for  all pairs $(s,t)\in [0,T]^2_{+}$ and every  set $A \in \mathscr{B}(\rU^\prime)$.
Hence,
\[
\sum\limits_{0<s\leq t} \1_A(\Delta \Mbar(\tomega,s))= \mu(\tomega;(0,t]\times A), \;\; \tomega\in \Omegabar.
\]
Let us note that the left-hand side in the identity above is the
first term on the right-hand side of  the definition \eqref{eqn-compensated_martingale_2} below. \\
Note also that by \cite[Definition\ II.1.13, p.\ 68]{Jacod_Shiryaev}, the random measure  $\mu$  is $\tF$-optional and, by \cite[Definition\ II.1.4 and Prop.\ I.1.24]{Jacod_Shiryaev}, the left-continuous and   $\tF$-adapted function
$\1_{\Omegabar\times (s,t]\times A}$ is  $\tF$-optional as well.
Therefore, by  \cite[Definition\ II.1.6, p.\ 66]{Jacod_Shiryaev} we deduce that
the process
\[\Omegabar \times [0,T] \ni (\tomega,t) \mapsto  \sum\limits_{0<s\leq t} \1_A(\Delta \Mbar(\tomega,s)) \in \mathbb{R}
\]
is $\tF$-optional.  Hence by \cite[Proposition\ I.1.21 a)]{Jacod_Shiryaev},  we infer that it is $\tF$-adapted.
In particular,  for every $\tinOT$, the random variable
\[\Omegabar  \ni \tomega \mapsto  \sum\limits_{0<s\leq t} \1_A(\Delta \Mbar(s)) \in \mathbb{R}
\]
is $\bar{\fcal}_t$-measurable.
\end{remark}

\begin{definition}\label{def-class A}
\begin{enumerate}
    \item We denote by  $\mathscr{A}_0$ the set of Borel sets $A \subset\rU^\prime \setminus\{0\}$ such that
{\color{black}\begin{equation}\label{ass-compensated martingale}
\Ebar \left[ \sum\limits_{0<s\leq T} \1_A(\Delta \Mbar(s)) \right] < \infty.
\end{equation}}

\item Let $(\alpha,\beta)  \in (0, \infty)^{2}_{+}$. We denote 
\begin{align*}\label{eqn-A-S_alpha-beta}
 S_{\alpha,\beta} &\coloneqq \{ u \in\rU^\prime : \alpha \leq \abs{u}_{\rU^\prime} \leq \beta \},
 \\
 S_\alpha&\coloneqq\{u\in U^\prime: \alpha \le \abs{u}_{U^\prime}\}.
\end{align*}

By $\Aoneab$, respectively $\Atwoa$, we will denote  the set of all closed subsets $A$ of $U^\prime \setminus\{0\}$ such that 
\begin{equation*}\label{eqn-A-S_alpha-beta}
  A \subset S_{\alpha,\beta} \mbox{ or  respectively, }A\subset S_\alpha .
\end{equation*}
\item  We put
\[
\mathscr{A}_1\coloneqq\bigcup_{(\alpha,\beta)  \in (0, \infty)^{2}_{+}} \Aoneab.
\]
\end{enumerate}
\end{definition}

We will now present the main result of the subsection, {\color{black}which is our version of \cite[Lemma\ 6.1.11]{Kallianpur_Xiong}},  a proof of which will be the focus of the remainder of this subsection.

\begin{proposition}
\label{prop_compensated_martingale}
Let $A \in \mathscr{A}_0$. Then, the  process $\Nbar_A$ defined by
\begin{equation}\label{eqn-compensated_martingale_2}
\Nbar_A(t) \coloneqq \sum\limits_{0<s\leq t} \1_A(\Delta \Mbar(s)) - \int_0^t \int_{Y} \1_A(F(s,\tu(s),y))\, \nun(\ud y) \ds, \;\;  \tinOT,
\end{equation}
is an   ${\tF}$-martingale.
\end{proposition}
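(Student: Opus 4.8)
The plan is to remove the discontinuity of $\1_A$ by approximating it with continuous functions, exactly paralleling the strategy used for Proposition \ref{prop-purely discontinuous} and Lemmata \ref{lem-tM is a martingale}--\ref{lem-tN is a martingale}. First I would establish a $\rU^\prime$-valued analogue of Lemma \ref{lem-purely discontinuous-New}: for any bounded continuous $\tilde g\colon \rU^\prime \to \R_+$ vanishing on a ball around $0$, the jump of the Galerkin solution satisfies $\Delta u_n(s) = P_n F(s,u_n(s-),p(s))\1_{D_p}(s)$, and hence, because $\tilde g(0)=0$,
\[
\sum_{0<s\le t}\tilde g(\Delta u_n(s)) = \int_0^t\int_Y \tilde g\bigl(P_n F(s,u_n(s-),y)\bigr)\,\eta(\ud s,\ud y),
\]
so that the compensated process $N_{\tilde g,n}(t) \coloneqq \sum_{0<s\le t}\tilde g(\Delta u_n(s)) - \int_0^t\int_Y \tilde g(P_n F(s,u_n(s),y))\,\nu(\ud y)\ds$ is a martingale on the original space $(\Omega,\fcal,\mathbb{P},\F)$. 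The proof is verbatim that of Lemma \ref{lem-purely discontinuous-New}, with $g(\langle\cdot,\phi\rangle)$ replaced by $\tilde g(\cdot)$; integrability is in fact easier here, since $\tilde g$ is bounded and supported away from $0$.

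Second, for a fixed continuous approximant $a_k$ I would transfer the martingale property to the new space. Using that the jump functional $z\mapsto \sum_{0<s\le\cdot} a_k(\Delta z(s))$ is continuous on $\mathbb{D}([0,T];\rU^\prime)$ by \cite[Corollary 2.4.2]{Kallianpur_Xiong}, the convergence $\tun\to\tu$ in $\mathbb{D}([0,T];\rU^\prime)$ from \eqref{eqn-conv_D(0,T;U')}, and a compensator-convergence statement in the spirit of Lemmata \ref{lem-measurability} and \ref{Lem:Compensator} (with $a_k$ in place of $g(\langle\cdot,\phi\rangle)$), together with the equality of laws of $\tun$ and $u_n$, I would run the three-step argument of Lemma \ref{lem-tM is a martingale}: square integrability via the Fatou Lemma and \eqref{ass-compensated martingale}; the martingale identity $\te[(\overline N_{a_k}(t)-\overline N_{a_k}(s))h(\tu)]=0$ on the continuity set $C$ of \eqref{eqn-C_set} via the Vitali Convergence Theorem and a functional monotone class argument; and extension to all $s\le t$ by right-continuity of paths. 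This yields that $\overline N_{a_k}(t)\coloneqq\sum_{0<s\le t}a_k(\Delta\Mbar(s)) - \int_0^t\int_Y a_k(F(s,\tu(s),y))\,\nu(\ud y)\ds$ is an $\tF$-martingale.

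Third, I would let $k\to\infty$ to recover $A$ and then enlarge the class of admissible sets. For $A\in\Aoneab$ (closed, contained in the annulus $S_{\alpha,\beta}$) choose continuous $a_k\downarrow\1_A$ vanishing near $0$ and for large norm; since on $[0,T]$ the jumps of $\Mbar$ landing in $S_{\alpha,\beta}$ are finite in number with expectation controlled by \eqref{ass-compensated martingale}, and since $F(s,\tu(s),\cdot)\in A$ forces $\abs{F}_{\rU^\prime}\ge\alpha$, the dominated convergence theorem gives $\overline N_{a_k}(t)\to\overline N_A(t)$ in $L^1(\Omegabar)$ for each $\tinOT$; as an $L^1$-limit of martingales is a martingale, $\overline N_A$ is an $\tF$-martingale for every $A\in\mathscr{A}_1$. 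Finally, a general $A\in\mathscr{A}_0$ is exhausted by the bounded sets $A_j\coloneqq A\cap S_{1/j,j}$, each separated from $0$; approximating each $A_j$ from inside by closed subsets through inner regularity of the relevant finite Borel measures on the Polish space $\rU^\prime\setminus\{0\}$, and passing to the limit by monotone and dominated convergence, again justified by \eqref{ass-compensated martingale}, transfers the martingale property to $\overline N_A$.

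The hardest part will be organising the double limit. One must keep the convergences $n\to\infty$ (for fixed continuous $a_k$) and $k\to\infty$ (continuous functions to the indicator) compatible and uniformly integrable, so that the martingale property survives both passages. The delicate points are that $\nu$ may be infinite near $0\in\rU^\prime$, which is neutralised by separating $A$ from $0$; that the Skorokhod-continuity of the jump functional is available only for continuous integrands and only delivers convergence at points of $C$; and that the compensator convergence relies on Assumption F.\ref{assum-F3} together with the a priori bounds \eqref{eqn-boundedness_H} and \eqref{eqn-boundedness_L^2(0,T,V)}, exactly as in the proof of Lemma \ref{lem-measurability}.
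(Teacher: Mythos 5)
Your proposal follows essentially the same route as the paper: a $\rU^\prime$-valued version of Lemma \ref{lem-purely discontinuous-New} for the continuous approximants $a_k$ of $\1_A$, transfer of the martingale property from $u_n$ to $\tun$ to $\tu$ via equality of laws, uniform integrability, the Vitali theorem and right-continuity of paths (this is the content of Lemmata \ref{lem-Step 1}--\ref{lem-Step 4}), then the limit $k\to\infty$ for closed sets in an annulus (Lemma \ref{lem-propostion for A_2}), and finally exhaustion of a general $A\in\mathscr{A}_0$ by $A\cap S_{1/j,j}$ (Lemma \ref{lem-reduced}). The only divergence is the passage from closed to arbitrary Borel subsets of the annulus: the paper runs a Dynkin $\pi$--$\lambda$ argument (Corollary \ref{cor-propostion for A_2}), whereas you invoke inner regularity of the relevant finite expectation measures on the Polish space $S_{1/j,j}$; this also works, provided you choose a single sequence of closed sets that is simultaneously good for the jump measure and for the compensator (e.g.\ by regularising their sum), so that $\Nbar_{C_m}(t)\to\Nbar_{A_j}(t)$ in $L^1(\Omegabar)$ and the martingale property passes to the limit.
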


Let us choose and fix  $\beta>\alpha >0$. We also fix a set $A\in \Aoneab$. 

Consider a sequence $(a_\emm)_{\emm = 2}^\infty $ of continuous and bounded functions on $\rU^\prime$ defined by
\begin{equation}\label{eqn-a_n}
a_\emm(u) \coloneqq f \left( \frac{k^2 \rho(u,A)^2}{\alpha^2} \right), \;\;u\in \rU^\prime,
\end{equation}
where 
$f:[0,\infty) \to [0,1])$ is a $ C^1$-class  decreasing function such that
\[
\1_{\{0\}} \leq f \leq \1_{[0,\frac34)}.
\]
and $\rho(u,A)$ is the $\rU^\prime$-distance of the vector $u \in\rU^\prime$ from the set $A$.

Note that $a_\emm(u) \in [0,1]$ for every $u\in \rU^\prime$ and  $a_\emm(u)=0$ if $\rho(u,A)\geq \frac{\sqrt{3}\alpha}{2k}$. 

Let us show  few more properties of the functions $a_k$, which will be used later.

\begin{lemma}\label{lem-properties-of-a_k}
Functions $a_k$ defined in \eqref{eqn-a_n} satisfy
\begin{enumerate}
\item[(i)] 
$a_k \todown \1_A \mbox{  as } k \to \infty$.
\item[(ii)] The support of $a_k$ is contained in $S_{\frac{\alpha}{2}}$.
\item[(iii)] For future reference we also show  that
\begin{equation}\label{linear_growth_of_a_k}
a_k(u) \leq \left(\frac{2}{\alpha} \abs{u}_{\rU^\prime}\right)^{\ell} \qquad \mbox{  for all } u \in\rU^\prime, k \geq 2, \ell\ge 1.
\end{equation}
\item[(iv)] For $u_1,u_2 \in \rU^\prime$
\begin{equation}\label{eqn-a_k_Lipschitz}
\vert a_\emm(u_2)-a_\emm(u_1) \vert
\leq  \frac{k^2}{a^2}\norm{f}_{\mathrm{Lip}} \abs{u_1-u_2}_{\rU^\prime} \bigl( \rho(u_1,A)+ \rho(u_2,A)\bigr),
\end{equation}
where  $\norm{f}_{\mathrm{Lip}}$ is the Lipschitz constant of the function $f$.
\end{enumerate}
\end{lemma}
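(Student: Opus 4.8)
The plan is to derive all four properties from a handful of elementary facts about the profile function $f$ and the distance function $\rho(\cdot,A)$. Since $\1_{\{0\}}\leq f\leq \1_{[0,3/4)}$ we have $f(0)=1$ and $f(r)=0$ for $r\geq \tfrac34$; moreover $f$ is decreasing, and the map $\rU^\prime\ni u\mapsto \rho(u,A)$ is $1$-Lipschitz with $\rho(u,A)=0$ if and only if $u\in A$, this last equivalence using that $A$ is closed. I would record these observations at the start and then treat each item in turn; throughout I abbreviate $g(u)\coloneqq \frac{k^2\rho(u,A)^2}{\alpha^2}$, so that $a_k=f\circ g$.

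For part (i), monotonicity is immediate: for fixed $u$ the argument $g(u)$ is nondecreasing in $k$, so $a_k(u)=f(g(u))$ is nonincreasing because $f$ is decreasing. For the pointwise limit I would split into two cases. If $u\in A$ then $\rho(u,A)=0$ and $a_k(u)=f(0)=1=\1_A(u)$ for every $k$. If $u\notin A$ then $\rho(u,A)>0$, hence $g(u)\to\infty$ and, once $g(u)\geq\tfrac34$, that is for $k\geq \frac{\sqrt3\,\alpha}{2\rho(u,A)}$, we get $a_k(u)=0=\1_A(u)$. This yields $a_k\todown\1_A$. For part (ii), I would note that $a_k(u)\neq 0$ forces $g(u)<\tfrac34$, i.e. $\rho(u,A)<\frac{\sqrt3\,\alpha}{2k}\leq \frac{\sqrt3}{4}\alpha$ for $k\geq 2$. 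Since $A\subset S_{\alpha,\beta}\subset S_\alpha$, every $v\in A$ satisfies $\abs{v}_{\rU^\prime}\geq\alpha$, so the triangle inequality gives $\abs{u}_{\rU^\prime}\geq \alpha-\rho(u,A)>\alpha-\frac{\sqrt3}{4}\alpha=\frac{4-\sqrt3}{4}\alpha>\frac{\alpha}{2}$. Hence $\{a_k\neq 0\}\subset\{\abs{\cdot}_{\rU^\prime}>\alpha/2\}$, and passing to closures (the enlarged set $\{\abs{\cdot}_{\rU^\prime}\geq \alpha/2\}=S_{\alpha/2}$ being closed) shows $\supp a_k\subset S_{\alpha/2}$.

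Parts (iii) and (iv) are then short. For (iii) the inequality is trivial when $a_k(u)=0$, the right-hand side being nonnegative; when $a_k(u)\neq 0$, the bound $\abs{u}_{\rU^\prime}>\alpha/2$ from (ii) gives $\frac{2}{\alpha}\abs{u}_{\rU^\prime}>1$, so $\bigl(\tfrac2\alpha\abs{u}_{\rU^\prime}\bigr)^\ell\geq 1\geq a_k(u)$ for every $\ell\geq 1$, using $a_k(u)\leq 1$. For (iv) I would use the Lipschitz bound for $f$ to write $\vert a_k(u_2)-a_k(u_1)\vert\leq \norm{f}_{\mathrm{Lip}}\,\vert g(u_2)-g(u_1)\vert$, then factor the difference of squares $\rho(u_2,A)^2-\rho(u_1,A)^2=\bigl(\rho(u_2,A)-\rho(u_1,A)\bigr)\bigl(\rho(u_2,A)+\rho(u_1,A)\bigr)$, and finally apply the $1$-Lipschitz property $\vert\rho(u_2,A)-\rho(u_1,A)\vert\leq\abs{u_1-u_2}_{\rU^\prime}$ to arrive at \eqref{eqn-a_k_Lipschitz}.

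None of the steps presents a genuine obstacle; the only point requiring care is part (ii), where the explicit constant $\frac{\sqrt3}{2}$ coming from $\{f>0\}\subset[0,\tfrac34)$ together with the restriction $k\geq 2$ must be tracked so as to guarantee the strict inequality $\abs{u}_{\rU^\prime}>\alpha/2$; the remaining arguments are routine bookkeeping.
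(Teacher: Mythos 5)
Your proof is correct and follows essentially the same route as the paper: the paper dismisses (i) and (ii) as obvious (you supply the details, using that $A$ is closed and $A\subset S_\alpha$), proves (iii) by the same dichotomy (the paper splits on $\abs{u}_{\rU^\prime}\geq\alpha/2$ versus $\abs{u}_{\rU^\prime}<\alpha/2$, which forces $\rho(u,A)\geq\alpha/2$ and hence $a_k(u)=0$; your split on whether $a_k(u)$ vanishes is the same argument read contrapositively), and proves (iv) by exactly the same Lipschitz-plus-difference-of-squares computation.
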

Let us note that inequality \eqref{linear_growth_of_a_k} with $\ell =2 $ is used in the proof of Lemma \ref{lem-Claim 5.1}.

\begin{proof}
Points (i) and (ii) are obvious. We show (iii). Indeed, if $u$ is such that $\abs{u}_{\rU^\prime}\geq \frac{\alpha}{2}$, then inequality \eqref{linear_growth_of_a_k} follows from
\[a_k(u) \leq 1 \leq \left(\frac{2}{\alpha} \abs{u}_{\rU^\prime}\right)^{\ell} \mbox{  for all } u \in\rU^\prime, k \geq 2, \ell\ge 1.
\]
On the other hand, if $u$ is such that $\abs{u}_{\rU^\prime} < \frac{\alpha}{2}$, then $\rho(u,A) \geq \frac{\alpha}{2}$ because $A \subset S_{\alpha}$. This implies that $\frac{k^2 \rho(u,A^2)}{\alpha^2} \geq 1$ for all $k\geq 2$ and thus $a_k(u) = 0$.

We show (iv). Let us observe that because of the following property of the distance function $\rho(\cdot, A)$
	\begin{equation}
	\label{distance_is_Lipschitz}
	\abs{\rho(x,A)-\rho(y,A)}
	\leq \abs{x-y}_{\rU^\prime},\; x,y\in\rU^\prime,
	\end{equation}
we may estimate
\begin{equation*}
\begin{aligned}
\vert a_\emm(u_2)-a_\emm(u_1) \vert
 &\leq  \frac{k^2}{a^2}\norm{f}_{\mathrm{Lip}}   \abs{ \rho(u_2,A)^2 - \rho(u_1,A)^2} \\ &\leq  \frac{k^2}{a^2}\norm{f}_{\mathrm{Lip}}
\abs{u_1-u_2}_{\rU^\prime} \bigl( \rho(u_1,A)+ \rho(u_2,A)\bigr).
\end{aligned}
\end{equation*}
\end{proof}

\begin{lemma}\label{lem-Step 1}
Let $(a_k)_{k\ge 2}$ be the sequence defined in \eqref{eqn-a_n}.
Let $z \in \zcal_T$ and $(z_n)$ be a $\zcal_T$-valued sequence  such that
\begin{align}\label{eqn-Conv-L2Hloc}
z_n \to z \mbox{  in } \zcal_T  \mbox{ as $n \to \infty$}.
\end{align}

Then, for every  $m\in \N$ and every integer $k> 2$ we have
\begin{equation}\label{Eq:cont-phi-2}
\begin{aligned}
    &\lim_{n\to \infty} \int_0^t \int_{Y} a_\emm(P_m F(s,z_n(s),y))\, \nu (\ud y) \ds \\
&  \hspace{2truecm}= \int_0^t \int_{Y} a_\emm(P_m F(s,z(s),y))\, \nu (\ud y) \ds
\end{aligned}
\end{equation}
and 	
	\begin{equation}\label{Eq:cont-phi}
	\begin{aligned}
    &
    \lim_{n\to \infty} \int_0^t \int_{Y} a_\emm(P_n F(s,z_n(s),y))\, \nu (\ud y) \ds 
    \\ &  \hspace{2truecm}
    = \int_0^t \int_{Y} a_\emm(F(s,z(s),y)) \, \nu (\ud y) \ds.
	\end{aligned}
    \end{equation}
\end{lemma}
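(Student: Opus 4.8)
The plan is to follow the scheme of the proof of Lemma~\ref{lem-measurability}, the only genuine change being that the argument of the cut-off functions $a_k$ now lives in $\rU^\prime$ rather than being paired against a fixed test vector in $\rH$. Accordingly, I would first reduce both \eqref{Eq:cont-phi-2} and \eqref{Eq:cont-phi} to a single strong convergence in $L^2([0,T]\times Y,\Leb\otimes\nu;\rU^\prime)$ of the relevant sequence of integrands, and then close the argument by a Pratt-type dominated convergence. Writing $\xi_n(s,y)\coloneqq F(s,z_n(s),y)$ and $\xi(s,y)\coloneqq F(s,z(s),y)$, the basic input is Assumption~F.\ref{assum-F4}: since $z_n\to z$ in $\zcal_T$, the continuity of the Nemytski map $\mathbf{F}$ yields $\xi_n\to\xi$ in $L^2([0,T]\times Y,\Leb\otimes\nu;\rU^\prime)$.

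For \eqref{Eq:cont-phi-2}, where the index $m$ in $P_m$ is frozen, the convergence $P_m\xi_n\to P_m\xi$ in $L^2(\rU^\prime)$ is immediate because $P_m\in\mathscr{L}(\rU^\prime)$ is a fixed bounded operator. For \eqref{Eq:cont-phi} the projection $P_n$ moves together with the argument, and this is the step I expect to be the main obstacle. I would handle it by the decomposition $P_n\xi_n-\xi=P_n(\xi_n-\xi)+(P_n-\Id)\xi$, whence
\begin{equation*}
\norm{P_n\xi_n-\xi}_{L^2(\rU^\prime)}\le\norm{\xi_n-\xi}_{L^2(\rU^\prime)}+\norm{(P_n-\Id)\xi}_{L^2(\rU^\prime)},
\end{equation*}
using $\norm{P_n}_{\mathscr{L}(\rU^\prime)}=1$ from Lemma~\ref{lem_P_n}(iii). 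The first term vanishes by Assumption~F.\ref{assum-F4}; the second vanishes by the dominated convergence theorem, since $(P_n-\Id)\xi(s,y)\to 0$ in $\rU^\prime$ for $\Leb\otimes\nu$-a.e.\ $(s,y)$ by the first convergence in Lemma~\ref{lem_P_n}(iv), while $|(P_n-\Id)\xi(s,y)|_{\rU^\prime}\le 2|\xi(s,y)|_{\rU^\prime}$ provides a fixed $L^2$ majorant. Thus $P_n\xi_n\to\xi$ in $L^2(\rU^\prime)$ as well.

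In either case we are left with a sequence $\zeta_n$ converging to $\zeta$ in $L^2([0,T]\times Y,\Leb\otimes\nu;\rU^\prime)$, where $\zeta_n=P_m\xi_n,\ \zeta=P_m\xi$ in the first case and $\zeta_n=P_n\xi_n,\ \zeta=\xi$ in the second, and it remains to pass to the limit in $\int_0^t\int_Y a_k(\zeta_n)\,\nu(\ud y)\ds$. Extracting a subsequence gives $\zeta_n\to\zeta$ pointwise $\Leb\otimes\nu$-a.e., and continuity of $a_k$ yields $a_k(\zeta_n)\to a_k(\zeta)$ a.e. To dominate, I would invoke the growth bound \eqref{linear_growth_of_a_k} with $\ell=2$, so that $0\le a_k(\zeta_n)\le(2/\alpha)^2|\zeta_n|_{\rU^\prime}^2=:G_n$; the $L^2$-convergence $\zeta_n\to\zeta$ forces $G_n\to G\coloneqq(2/\alpha)^2|\zeta|_{\rU^\prime}^2$ in $L^1$, so that $\int G_n\to\int G<\infty$ and, along the chosen subsequence, $G_n\to G$ a.e. The generalized dominated convergence theorem \cite[Theorem~2.8.8]{Bogachev_I}, applied precisely as in Lemma~\ref{lem-measurability}, then delivers the convergence of the integrals. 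Since the limit does not depend on the extracted subsequence, the standard subsequence-of-every-subsequence argument promotes this to convergence of the full sequence, which is exactly \eqref{Eq:cont-phi-2} and \eqref{Eq:cont-phi}.
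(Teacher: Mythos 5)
Your argument is correct, and the decisive input is the same as in the paper's proof: both hinge on the strong convergence $P_nF(\cdot,z_n(\cdot),\cdot)\to F(\cdot,z(\cdot),\cdot)$ in $L^2([0,T]\times Y,\Leb\otimes\nu;\rU^\prime)$, obtained from Assumption F.\ref{assum-F4} together with the splitting $P_n\xi_n-\xi=P_n(\xi_n-\xi)+(P_n-\Id)\xi$, the contractivity of $P_n$ on $\rU^\prime$ from Lemma \ref{lem_P_n}, and dominated convergence for the $(P_n-\Id)\xi$ term (the paper carries this out in the estimate culminating in \eqref{eqn-F-Uprime-2} and the lines following it). Where you genuinely diverge is in how this $L^2$ convergence is converted into convergence of $\int_0^t\int_Y a_k(\cdot)\,\nu(\ud y)\ds$: the paper stays quantitative, exploiting the specific structure of $a_k$ — the Lipschitz estimate \eqref{eqn-a_k_Lipschitz}, the sets $D_n$, $D$ of \eqref{Good-set} on which $\rho(\cdot,A)\le\abs{\cdot}_{\rU^\prime}$, and a three-term decomposition $I_n^1+I_n^2+I_n^3$ controlled by Cauchy--Schwarz and Chebyshev-type bounds — whereas you use only continuity of $a_k$, the quadratic majorant \eqref{linear_growth_of_a_k} with $\ell=2$ (valid because $\supp a_k\subset S_{\alpha/2}$ by Lemma \ref{lem-properties-of-a_k}), and the generalized dominated convergence theorem along a.e.-convergent subsequences, closed by the subsequence-of-every-subsequence principle. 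Your route is softer and shorter, and it makes transparent that Lemma \ref{lem-Step 1} is really an instance of the same mechanism as Lemma \ref{lem-measurability}; the paper's route, following Kallianpur--Xiong, yields an explicit bound of the error in terms of $\norm{P_nF(\cdot,z_n,\cdot)-F(\cdot,z,\cdot)}_{L^2(\rU^\prime)}$, which is not needed elsewhere in the paper, so nothing is lost by your simplification. One small point worth making explicit in a final write-up: the majorant is needed precisely because $\Leb\otimes\nu$ is only $\sigma$-finite, so the boundedness $0\le a_k\le 1$ alone does not suffice; your invocation of \eqref{linear_growth_of_a_k} handles this correctly.
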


\begin{proof}[Proof of Lemma \ref{lem-Step 1}]
The proof of  convergence \eqref{Eq:cont-phi-2} is similar, in fact easier than, to the proof  convergence \eqref{Eq:cont-phi} and hence we will only
provide a detailed proof of the latter. For this purpose, we shall closely follow the proof of \cite[Lemma\ 6.1.7]{Kallianpur_Xiong}.  Let us first observe  since
$z_n \to z $ in $\mathscr{Z}_T$, then also $z_n \to z$ in $\mathbb{D}([0,T]; H_w)$ and hence by \cite[Lemma 2]{Brzezniak+Hornung+Manna}  $z_n$ and $z$  satisfy the following a priori estimates 
\begin{equation} \label{eqn-z_H_estimate}
 \sup_{n \in \mathbb{N}}\sup_{\sinOT } \bigl| z_n (s){\bigr| }_{\rH}^{2} +\sup_{\sinOT } \bigl| z (s){\bigr| }_{\rH}^{2}  < \infty.
\end{equation}
Let
	\begin{align}
\label{Good-set}
\begin{aligned}
	D_n &\coloneqq \left\{ (t,y) \in [0,T] \times Y : \rho(P_n F(t,z_n(t),y)),A) \leq \frac{\alpha}{2} \right\},  \\
	D &\coloneqq \left\{ (t,y) \in [0,T] \times Y : \rho(F(t,z(t),y), A) \leq \frac{\alpha}{2} \right\}.
\end{aligned}
	\end{align}
It follows from the definition \eqref{eqn-a_n} of the functions $a_\emm$  that for every $k \ge 2 $
	\begin{equation*}
	\label{integrand_zero1}
	(s,y) \in D_n^c \implies a_\emm(P_n F(s,z_n(s),y)) = 0
	\end{equation*}
	and
	\begin{equation}
	\label{integrand_zero2}
	(s,y) \in D^c \implies a_\emm(F(s,z(s),y)) = 0.
	\end{equation}
Therefore we can estimate by \eqref{eqn-a_k_Lipschitz}:
	\begin{align}\label{integral_to_estimate}
	& \abs{\int_0^t \int_Y a_\emm(P_n F(s,z_n(s),y)) - a_\emm(F(s,z(s),y))\, \nu (\ud y) \ds} \notag \\
	&\leq \frac{k^2}{a^2} \norm{f}_{\mathrm{Lip}} \int_0^t \int_Y \abs{ \rho(P_n F(s,z_n(s),y)),A)^2 - \rho(F(s,z(s),y),A)^2} \1_{D_n \cap D}(s,y) \, \nu(\ud y) \ds \notag \\
	&\quad + \int_0^t \int_Y a_\emm(P_n F(s,z_n(s),y)) \1_{D_n \cap D^c}(s,y)\, \nu (\ud y) \ds \notag \\
	&\quad + \int_0^t \int_Y  a_\emm(F(s,z(s),y)) \1_{D_n^c \cap D}(s,y)\, \nu (\ud y) \ds
\\
	&\quad=: \norm{f}_{\mathrm{Lip}} \frac{k^2}{a^2} I_n^1 + I_n^2 + I_n^3 \notag
	\end{align}

Note that for all $u\in U'$
\begin{equation}
\label{trick_with_balls_2}
\mbox{  if } \rho(u,A) \leq \frac \alpha{2}, \mbox{  then } \rho(u,A) \leq \abs{u}_{U'}.
\end{equation}
Indeed, suppose by contradiction that  
there exists $u_0 \in U'$ such that $\rho (u_0,A) \le \frac{\alpha }{2}$  and  ${|u_0|}_{U'} < \rho (u_0,A)$. Since $A \subset S_{\alpha}$, we have that $\rho(0,A)\geq \alpha$. By applying the triangle inequality we infer that 
\begin{align}
\alpha \le  \rho (0,A) \le  \rho (0,u_0) + \rho (u_0,A)
 =  {|u_0|}_{U'} + \rho (u_0,A) <  2 \rho (u_0,A)  \le  \alpha ,
\nonumber\end{align}
which is a contradiction thanks to the strict inequality.

Note that by the definition \eqref{Good-set} of the set $D_n$ and by \eqref{trick_with_balls_2} we have
\begin{equation}\label{trick_with_balls}
\rho(P_n F(s,z_n(s),y),A) \leq \abs{P_n F(s,z_n(s),y)}_{U'}\quad \mbox{ for $(s,y) \in D_n$}.
\end{equation}
Also, by the inequalities \eqref{distance_is_Lipschitz} and \eqref{trick_with_balls} we infer  that for $(s,y) \in D_n \cap D$
\begin{align*}
&\hspace{1.5truecm} \abs{\rho(P_n F(s,z_n(s),y),A)^2 - \rho(F(s,z(s),y),A)^2}  \\
&= \abs{\rho(P_n F(s,z_n(s),y),A) - \rho(F(s,z(s),y),A)} \left( \rho(P_n F(s,z_n(s),y),A) + \rho(F(s,z(s),y),A) \right) \notag \\
&\leq  \normb{P_n F(s,z_n(s),y) - F(s,z(s),y)}{\rU^\prime}{} \left( \normb{P_n F(s,z_n(s),y)}{\rU^\prime}{}  + \normb{F(s,z(s),y)}{\rU^\prime}{}  \right).
\notag
\end{align*}
Using this together with the H\"older inequality we can estimate the term $I_n^1$ as follows:
\begin{align*}
I_n^1 &\leq \left( \int_0^t \int_{Y} \normb{P_n F(s,z_n(s),y) - F(s,z(s),y)}{\rU^\prime}{2} \1_{D_n\cap D}(s,y) \, \nu(\ud y) \ds  \right)^{1/2} \\
& \times \biggl( \left(\int_0^t \int_{Y} \normb{P_n F(s,z_n(s),y) }{\rU^\prime}{2} \1_{D_n\cap D}(s,y) \, \nu(\ud y) \ds  \right)^{1/2}
\\&\hspace{2truecm}+\left(\int_0^t \int_{Y} \normb{ F(s,z(s),y)}{\rU^\prime}{2} \1_{D_n\cap D}(s,y) \, \nu(\ud y) \ds  \right)^{1/2}
\biggr)
\end{align*}

Since  the linear map  $P_n$ is a contraction in $\rH$, see  \eqref{eqn-P_n contraction on H},  by the linear growth condition \eqref{eqn-F_linear_growth} in  Assumption F.\ref{assum-F2}\label{assum-F2-used7},
by the continuity of the embedding  ${\rH} \embed\rU^{\prime }$, see \eqref{eqn-embeddings}, 
and by \cite[Lemma 2]{Brzezniak+Hornung+Manna},  there exists a constants $c>0$ and $C_2>0$ such that for every $n \in \mathbb{N}$,
\begin{align*}
&\int_0^t \int_{Y} \normb{P_n F(s,z_n(s),y) }{\rU^\prime}{2} \1_{D_n\cap D}(s,y) \, \nu(\ud y) \ds  \\
&\leq c \int_0^t \int_{Y} \normb{P_n F(s,z_n(s),y) }{\rH}{2} \, \nu(\ud y) \ds \leq \int_0^T \int_{Y} \normb{F(s,z_n(s),y) }{\rH}{2} \, \nu(\ud y) \ds
\\
&\leq C_2\int_0^T  (1+\normb{z_n(s) }{\rH}{2} ) \ds \leq C_2(T+\sup_{s\in [0,T]}\normb{z_n(s) }{\rH}{2} ).
\end{align*}
A similar inequality holds for the limit process $z$, i.e.
\begin{align*}
&\int_0^t \int_{Y} \normb{ F(s,z(s),y)}{\rU^\prime}{2} \1_{D_n\cap D}(s,y) \, \nu(\ud y) \ds  \leq c\int_0^t \int_{Y} \normb{ F(s,z(s),y) }{\rH}{2} \, \nu(\ud y) \ds
\\&\leq C_2\int_0^T  (1+\normb{z(s) }{\rH}{2} ) \ds \leq C_2(T+ \sup_{s\in [0,T]}\normb{z(s) }{\rH}{2} ).
 \end{align*}
Moreover, by \eqref{eqn-z_H_estimate} we deduce  the following estimate
\begin{equation*}
\label{Eq:F-Uprime}
I_n^1
\leq C \left( \int_0^t \int_{Y} \normb{P_n F(s,z_n(s),y) - F(s,z(s),y)}{\rU^\prime}{2} \1_{D_n}(s,y) \1_D(s,y) \, \nu(\ud y) \ds  \right)^{1/2}.
\end{equation*}
By the triangle inequality
\begin{equation*}\label{Eq:F-Uprime-1}
\begin{aligned}
I_n^1
\leq 2C \biggl( \int_0^t \int_{Y} \biggl(& \normb{P_n F(s,z_n(s),y) - P_n F(s,z(s),y)}{\rU^\prime}{2} \\ & \quad + \normb{P_n F(s,z(s),y) -  F(s,z(s),y)}{\rU^\prime}{2}  \biggr) \1_{D_n\cap D}(s,y) \, \nu(\ud y) \ds  \biggr)^{1/2}.
\end{aligned}
\end{equation*}
Next, by Lemma \ref{lem_P_n}(iii)  we find that $\sup_{n \in \N} \norm{P_n}_{\mathscr{L}(\rU^\prime,\rU^\prime)} \leq 1$. Therefore, we have
\begin{equation}\label{eqn-F-Uprime-2}
\begin{aligned}
I_n^1
\leq & 2C  \biggl( \int_0^t \int_{Y} \normb{ F(s,z_n(s),y) -  F(s,z(s),y)}{\rU^\prime}{2} \,  \nu(\ud y) \ds  \\  & \quad +  \int_0^t \int_{Y} \normb{P_n F(s,z(s),y) -  F(s,z(s),y)}{\rU^\prime}{2} \,  \nu(\ud y) \ds  \biggr)^{1/2}.
\end{aligned}
\end{equation}

By using  Assumption F.\ref{assum-F4} \label{assum-F4-used-2}
 and \eqref{eqn-Conv-L2Hloc} we see that the first term on the right hands side of \eqref{eqn-F-Uprime-2}  converges to 0 as $n \to \infty$. Regarding  the second term,  we argue as follows.
Since the embedding $\rH \embed \rU^\prime$ is continuous, we have
\begin{align}\label{eqn-a01}
\begin{aligned}
&\int_0^t \int_{Y} \normb{P_n F(s,z(s),y) -  F(s,z(s),y)}{\rU^\prime}{2} \,  \nu(\ud y) \ds
\\
&\leq C \int_0^t \int_{Y} \normb{P_n F(s,z(s),y) -  F(s,z(s),y)}{\rH}{2} \,  \nu(\ud y) \ds.
\end{aligned}
 \end{align}
Next, we have
\begin{align*}
 \normb{P_n F(s,z(s),y) -  F(s,z(s),y)}{\rH}{2} &=
 \normb{(I-P_n) F(s,z(s),y)}{\rH}{2}, \\
\normb{(I-P_n) F(s,z(s),y)}{\rH}{2} &\todown 0 \mbox{  as } n\to \infty.
\end{align*}
Finally, by \eqref{eqn-z_H_estimate} 
\begin{align*}
\int_0^t \int_{Y} \normb{(I-P_n) F(s,z(s),y)}{\rH}{2} \,  \nu(\ud y) \ds &\leq
\int_0^t \int_{Y} \normb{F(s,z(s),y)}{\rH}{2} \,  \nu(\ud y) \ds \\
 &\hspace{-2truecm}\leq C_2 \int_0^t (1+
\normb{z(s))}{\rH}{2} ) \ds <\infty.
\end{align*}
Hence, by the Lebesgue Dominated Convergence Theorem we infer that
\begin{equation*}
\int_0^t \int_{Y} \normb{P_n F(s,z(s),y) -  F(s,z(s),y)}{\rU^\prime}{2} \,  \nu(\ud y) \ds  \to 0 \mbox{ as } n\to \infty.
\end{equation*}
Thus, we proved that the term $I_n^1$ introduced in equality \eqref{integral_to_estimate} satisfies
\begin{equation}\label{Eq:I1tozero}
I_n^1 \to 0 \mbox{ as }n \to \infty.
\end{equation}
	
Next, similarly to \eqref{integrand_zero2}  we see that for $k \geq 2$
	\begin{equation*}
	\rho(P_nF(s,z_n(s),y),A) > \frac{\alpha}{\emm} \implies a_\emm(P_nF(s,z_n(s),y)) = 0.
	\end{equation*}
	Thus,
	\begin{align*}
	I_n^2
	&\leq \int_0^t \nu \left( y \in Y : (s,y) \in D_n \cap D^c, \rho (P_nF(s,z_n(s),y),A) \leq \frac{\alpha}{k} \right) \ds \notag \\
	&\leq  \int_0^t \nu \left( y \in Y : \abs{\rho(P_n F(s,z_n(s),y),A) - \rho(F(s,z(s),y),A)} \geq \left( \frac12 - \frac{1}{\emm} \right) \alpha \right) \ds \notag \\
	&\leq 
    \frac{4\emm^2}{\alpha^2(\emm-2)^2}
    \int_0^t \int_Y \abs{\rho(P_n F(s,z_n(s),y),A) - \rho(F(s,z(s),y),A)}^2 \, \nu(\ud y) \ds
	\end{align*}
	By \eqref{distance_is_Lipschitz}
	\begin{align*}
	I_n^2
	&\leq 
    \frac{4\emm^2}{\alpha^2(\emm-2)^2}
    \int_0^t \int_Y \normb{P_n F(s,z_n(s),y) - F(s,z(s),y)}{\rU^\prime}{2} \, \nu(\ud y) \ds .
	\end{align*}
With this estimate in mind we can follow the same reasoning as in the proof of assertion \eqref{Eq:I1tozero} and show that as $n \to \infty$
\begin{equation*}
I_n^2= \int_0^t \int_Y a_\emm(P_n F(s,z_n(s),y)) \1_{D_n \cap D^c}(s,y)\, \nu (\ud y) \ds \to 0.
\end{equation*}
Similarly, we also prove  that  as $n \to \infty$
\begin{equation*}
I_n^3= \int_0^t \int_Y a_\emm( F(s,z(s),y)) \1_{D_n^c \cap D}(s,y)\, \nu (\ud y) \ds \to 0.
\end{equation*}
Plugging the last two assertions   and \eqref{Eq:I1tozero} into \eqref{integral_to_estimate} implies \eqref{Eq:cont-phi}.
 This completes the proof of Lemma \ref{lem-Step 1}.
\end{proof}

We now proceed with the statement and the proof of the following result.
\begin{lemma}\label{lem-Claim 5.1}
Let $z\in \zcal_T$, $\alpha>0$ and $A\in \Atwoa$. {\color{black}Let $a_k$ be defined by \eqref{eqn-a_n}.}
Then, for every pair $(s,t)\in [0,T]^2_{+}$,
\begin{equation}\label{eqn-Conv-Int-a_k}
\int_s^t\int_Y a_k(F(r, z(r), y) )\, \nu(\ud y) \ud r \to \int_s^t \int_Y \1_A (F(r, z(r), y))  \,  \nu(\ud y) \ud r.
\end{equation}
\end{lemma}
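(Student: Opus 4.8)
The plan is to obtain \eqref{eqn-Conv-Int-a_k} from the pointwise monotone convergence $a_k \searrow \1_A$ proved in Lemma \ref{lem-properties-of-a_k}(i), combined with the Lebesgue Dominated Convergence Theorem on the measure space $([s,t]\times Y,\,\Leb\otimes\nu)$. First I would fix a pair $(s,t)\in [0,T]^2_{+}$ and observe that, since each $a_k$ is continuous and $F(\cdot,z(\cdot),\cdot)$ is measurable by Assumption F.\ref{assum-F1}, the integrands $(r,y)\mapsto a_k(F(r,z(r),y))$ are measurable; likewise $\1_A\circ F(\cdot,z(\cdot),\cdot)$ is measurable because $A$ is closed, hence Borel. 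Substituting $u=F(r,z(r),y)$ into the convergence of Lemma \ref{lem-properties-of-a_k}(i) gives, for every $(r,y)$,
\[
a_k(F(r,z(r),y)) \searrow \1_A(F(r,z(r),y)) \qquad \text{as } k\to\infty.
\]

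The heart of the argument is to produce a single $\Leb\otimes\nu$-integrable dominating function, uniform in $k$. Here I would invoke the growth estimate \eqref{linear_growth_of_a_k} from Lemma \ref{lem-properties-of-a_k}(iii) with $\ell=2$, which yields $a_k(u)\le (2/\alpha)^2\abs{u}_{\rU^\prime}^2$ for all $u\in\rU^\prime$ and all $k\ge 2$. Using that the embedding $\rH\embed\rU^\prime$ is continuous, indeed a contraction (see \eqref{eqn-embeddings} and \eqref{eqn-U to H is contraction}), we obtain
\[
a_k(F(r,z(r),y)) \le \frac{4}{\alpha^2}\,\abs{F(r,z(r),y)}_{\rU^\prime}^2 \le \frac{4}{\alpha^2}\,\abs{F(r,z(r),y)}_{\rH}^2 .
\]
The right-hand side is independent of $k$, and by the linear growth condition \eqref{eqn-F_linear_growth} in Assumption F.\ref{assum-F2} its integral over $[s,t]\times Y$ is bounded by $\tfrac{4C_2}{\alpha^2}\int_s^t (1+\abs{z(r)}_{\rH}^2)\,\dr$, which is finite because $z\in\zcal_T\subset L^2(0,T;V)\subset L^2(0,T;\rH)$.

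With this dominating function in hand, the Dominated Convergence Theorem applies directly and yields \eqref{eqn-Conv-Int-a_k}. I do not anticipate a genuine obstacle: the only point requiring care is that the domination be uniform in $k$, which is precisely why the uniform bound \eqref{linear_growth_of_a_k} (rather than the Lipschitz estimate \eqref{eqn-a_k_Lipschitz}, whose constant $k^2/a^2$ blows up with $k$) is the right tool. One should also record that the closedness of $A$ is what makes the limit equal to $\1_A$: it guarantees $\rho(u,A)=0 \iff u\in A$, the fact underlying Lemma \ref{lem-properties-of-a_k}(i).
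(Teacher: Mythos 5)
Your proof is correct and follows essentially the same route as the paper: pointwise monotone convergence $a_k\searrow \1_A$ from Lemma \ref{lem-properties-of-a_k}(i), a $k$-uniform dominating function obtained from the growth bound \eqref{linear_growth_of_a_k} with $\ell=2$ together with the linear growth condition \eqref{eqn-F_linear_growth} and $z\in L^2(0,T;\rH)$, and then the Dominated Convergence Theorem. The only cosmetic difference is that the paper dominates by $a_2$ (using monotonicity of the sequence) whereas you dominate directly by $(2/\alpha)^2\abs{\cdot}_{\rU^\prime}^2$; these are the same estimate.
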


\begin{proof}[Proof of  Lemma \ref{lem-Claim 5.1}] 
It is sufficient to assume that $s=0$.
Let us choose and fix    $z\in \zcal_T$, $\beta>\alpha>0$ and
$A\in \Aoneab$.
Then, thanks to Lemma \ref{lem-properties-of-a_k}(i) we infer  that
\begin{equation}\label{eqn-a.e.-conv-a_k}
 a_k(F(s, z(s), y) ) \todown  \1_A (F(s, z(s), y)) \mbox{ $\Leb \otimes \nu  $-a.e.} .
\end{equation}
Furthermore, since $z\in \zcal_T$ by using the linear growth condition \eqref{eqn-F_linear_growth} from Assumption F.\ref{assum-F2}\label{assum-F2-used8}
and the property  \eqref{linear_growth_of_a_k} with $\ell =2 $ of the function $a_2$, we infer  that
\begin{align*}
\int_0^t \int_{Y} a_2(F(s,z(s), y))\, \nu (\ud y) \ds \le C_2 \int_0^t \int_Y \lVert F(s, z(s), y) \rVert^2_{\rU^\prime} \, \nu(\ud y) \ds \nonumber  \\
\le C_2^\prime \int_0^T (1+ \lVert z(s) \rVert^2_{\rH}) \ds<\infty.
\end{align*}
 Thus, the convergence \eqref{eqn-a.e.-conv-a_k} and the Lebesgue Dominated  Convergence Theorem imply  \eqref{eqn-Conv-Int-a_k} which completes the proof of the Lemma.
\end{proof}

\begin{lemma}\label{lem-Claim 5.2} Let us assume that $z\in \D([0,T];\rU^\prime )$, $\alpha>0$
and $A \in \Atwoa$.
Let $(a_k)_{k\ge 2}$ be a sequence of functions {satisfying assumptions of Lemma \ref{lem-Claim 5.1}.}

Then for every pair $(s,t)\in [0,T]^2_{+}$, the following convergence holds
\begin{equation}\label{eqn-conv_sums}
 \sum_{s< r\le t} a_k(\Delta z(r)) \to \sum_{ s< r\le t} \1_A(\Delta z(r)).
\end{equation}
\end{lemma}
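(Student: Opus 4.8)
The plan is to exploit the fact that, because $z$ is c\`adl\`ag, the two sums in \eqref{eqn-conv_sums} are in reality \emph{finite} sums indexed by a set of jump times that does not depend on $k$; once this is established, the statement reduces to the pointwise convergence $a_k \searrow \1_A$ recorded in Lemma \ref{lem-properties-of-a_k}(i).

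First I would record that, since $z \in \mathbb{D}([0,T];\rU^\prime)$ and the interval $(s,t]$ is bounded, the set
\[
J \coloneqq \bigl\{ r \in (s,t] : \abs{\Delta z(r)}_{\rU^\prime} \ge \tfrac{\alpha}{2} \bigr\}
\]
is finite; this is the standard property that a c\`adl\`ag function has at most finitely many jumps exceeding a fixed positive threshold on a compact interval. Next, by Lemma \ref{lem-properties-of-a_k}(ii) the support of each $a_k$, $k \ge 2$, is contained in $S_{\alpha/2}$, so that $a_k(\Delta z(r)) = 0$ whenever $\abs{\Delta z(r)}_{\rU^\prime} < \alpha/2$; likewise, since $A \in \Atwoa$ satisfies $A \subset S_\alpha$, the function $\1_A$ vanishes off $S_\alpha \subset S_{\alpha/2}$. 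Consequently, for every $k \ge 2$,
\[
\sum_{s < r \le t} a_k(\Delta z(r)) = \sum_{r \in J} a_k(\Delta z(r)),
\qquad
\sum_{s < r \le t} \1_A(\Delta z(r)) = \sum_{r \in J} \1_A(\Delta z(r)),
\]
where $J$ is the \emph{same} fixed finite index set in both identities, independent of $k$.

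Finally, for each fixed $r \in J$, Lemma \ref{lem-properties-of-a_k}(i) (using that $A$ is closed) gives $a_k(\Delta z(r)) \to \1_A(\Delta z(r))$ as $k \to \infty$. Since $J$ is finite, I may pass to the limit term by term in the finite sum $\sum_{r \in J} a_k(\Delta z(r))$, which produces $\sum_{r \in J} \1_A(\Delta z(r))$ and hence \eqref{eqn-conv_sums}.

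I do not anticipate a serious obstacle here: the only point requiring genuine care is the observation that the collection of jump times actually contributing to either sum is finite and the \emph{same} for all $k$, since this is precisely what legitimises the passage to the limit (a naive interchange of the limit in $k$ with an \emph{a priori} infinite sum over jumps would otherwise need separate justification). That finiteness is supplied jointly by the c\`adl\`ag regularity of $z$ and by the uniform support bound $\supp a_k \subset S_{\alpha/2}$, valid for every $k \ge 2$.
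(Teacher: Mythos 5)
Your argument is correct and coincides with the paper's own proof: both isolate the finite set of jump times of $z$ in $(s,t]$ of size at least $\alpha/2$, observe via the support property of the $a_k$ and the inclusion $A\subset S_\alpha$ that both sums run over this same fixed finite set for every $k$, and then pass to the limit term by term using $a_k \todown \1_A$ from Lemma \ref{lem-properties-of-a_k}(i). No gaps.
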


\begin{proof}[Proof of Lemma \ref{lem-Claim 5.2}] Let us choose and fix
$z\in \D([0,T];\rU^\prime )$, $\alpha>0$
and $A \in \Atwoa$.
Since $z$ is c\`adl\`ag there exists a finite set $ r_1<\cdots<r_N \in [s, t]$ at which $z$ has jumps of size no smaller than $\frac{\alpha}{2}$. Hence, if $r\in [s, t]$ is such that  $a_k(\Delta z(r))\not=0$, then $ r \in \{r_1,\ldots,r_N\} $.
Therefore, from part (i) of Lemma \ref{lem-properties-of-a_k} 
we infer that
\begin{align*}
 \sum_{s< r\le t} a_k(\Delta z(r)) &=   \sum_{i=1}^N a_k(\Delta z(r_i))
  \todown \sum_{i=1}^N \1_A (\Delta z(r_i)) =\sum_{ s< r\le t} \1_A(\Delta z(r)).
\end{align*}
The proof of Lemma \ref{lem-Claim 5.2} is complete.
\end{proof}

\begin{lemma}\label{lem-Step 2}
Let $(a_k)_{k\ge 2}$ be the sequence defined in \eqref{eqn-a_n}.
Then, for every $\emm \in \mathbb{N}$, the  real    process  $\Nbar_{a_\emm,n}$ defined by
\begin{equation}
\label{eqn-N_A,k,n_bar}
\Nbar_{a_\emm,n}(t) = \sum\limits_{s\leq t} a_\emm(\Delta \overline{M}_{n}(s)) - \int_0^t \int_{Y} a_\emm( P_n F(s,\tu_n(s),y) )\, \nu (\ud y) \ds, \;\; t \in [0,T],
\end{equation}
is an   $\tF_n$-martingale, where the filtration $\tF_n$ was introduced in Definition \ref{def-filtration_tF_n new}.
\end{lemma}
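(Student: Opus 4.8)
The plan is to reduce the assertion to the corresponding martingale property on the original probability space $\bigl(\Omega,\fcal,\mathbb{P},\mathbb{F}\bigr)$ and then transport it to the new space by equality of laws, following closely the two-stage scheme already used in the proof of Lemma \ref{lem-tM_n is a martingale}. Throughout, the key structural fact is that the drift and compensator terms in \eqref{eqn-M_n} and \eqref{eqn-tMn} are absolutely continuous, so that $\Delta M_n(s)=\Delta u_n(s)$ and $\Delta \Mbar_n(s)=\Delta \tu_n(s)$; hence $\Nbar_{a_\emm,n}$ and its original-space counterpart $N_{a_\emm,n}$ (obtained from \eqref{eqn-N_A,k,n_bar} by replacing $\tu_n,\Mbar_n$ by $u_n,M_n$) are values of one and the same measurable functional evaluated at $\tu_n$ and $u_n$.

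First I would prove the vector-valued analogue of Lemma \ref{lem-purely discontinuous-New} on the original space. By \eqref{eqn-Galerkin}, $\Delta M_n(s)=\Delta u_n(s)=P_n F(s,u_n(s-),p(s))\1_{D_p}(s)$, where $p$ is the $\mathbb{F}$-optional point process associated with $\eta$. Since $a_\emm$ is continuous, bounded and vanishes in a neighbourhood of the origin (in particular $a_\emm(0)=0$), the argument of Lemma \ref{lem-purely discontinuous-New} gives, $\mathbb{P}$-a.s. for every $\tinOT$,
\[
\sum_{s\le t} a_\emm(\Delta M_n(s))=\int_0^t\int_Y a_\emm\bigl(P_n F(s,u_n(s-),y)\bigr)\,\eta(\ud s,\ud y),
\]
and, after compensating, $N_{a_\emm,n}(t)=\int_0^t\int_Y a_\emm(P_n F(s,u_n(s-),y))\,\tilde\eta(\ud s,\ud y)$. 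The integrand lies in $\mathbf{F}_p^1\cap\mathbf{F}_p^2$: using the linear growth bound \eqref{linear_growth_of_a_k} with $\ell=2$, the contraction properties of $P_n$ on $\rH$ and of $\rH\embed\rU^\prime$, and the growth condition \eqref{eqn-F_linear_growth}, one controls $a_\emm(P_n F)\le C\abs{F}_\rH^2$ and (since $a_\emm\le 1$, so $a_\emm^2\le a_\emm$) bounds both $\Ebar\int\int\abs{a_\emm(P_n F)}\,\nu\,\ds$ and $\Ebar\int\int\abs{a_\emm(P_n F)}^2\,\nu\,\ds$ by $C(1+\E\int_0^T\abs{u_n}_\rH^2\,\ds)<\infty$ via \eqref{eqn-H_estimate-p>2} with $p=2$. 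Consequently $N_{a_\emm,n}$ is a square integrable $\mathbb{F}$-martingale.

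For the transfer I would argue as in Lemma \ref{lem-tM_n is a martingale}. The jump-sum part is a measurable functional of the path through the continuity of $G$ in \cite[Corollary 2.4.2]{Kallianpur_Xiong} composed with $\pi_t$, and the compensator part is continuous, hence measurable, by Lemma \ref{lem-Step 1}; write $N_{a_\emm,n}(t)=\Phi_t(u_n)$ and $\Nbar_{a_\emm,n}(t)=\Phi_t(\tu_n)$ for this common functional $\Phi_t\colon\zcal_T\to\R$. Fix $s\le t$ and let $h\colon\mathbb{D}([0,T];\rU^\prime)\to\R$ be bounded, continuous and $\mathscr{D}_s$-measurable. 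Since $u_n$ is $\mathbb{F}$-adapted, $h(u_n)$ is $\mathscr{F}_s$-measurable, and the martingale property of $N_{a_\emm,n}$ gives $\E[(\Phi_t(u_n)-\Phi_s(u_n))\,h(u_n)]=0$. By the equality of laws of $u_n$ and $\tu_n$ on $\zcal_T$,
\[
\Ebar\bigl[(\Nbar_{a_\emm,n}(t)-\Nbar_{a_\emm,n}(s))\,h(\tu_n)\bigr]=\E\bigl[(N_{a_\emm,n}(t)-N_{a_\emm,n}(s))\,h(u_n)\bigr]=0.
\]
A functional monotone class argument as in \cite[Proposition IX.1.1]{Jacod_Shiryaev} extends this to all bounded $\mathscr{D}_s$-measurable $h$, so $\Nbar_{a_\emm,n}$ is a martingale with respect to the filtration generated by $\tu_n$; by Lemma \ref{Lem:Indep-Increments-augmentation} and Proposition \ref{prop-filtration_tF_n new} it is then an $\tF_n$-martingale.

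The main obstacle will be Step 1, namely the clean verification of the pathwise identity for $\sum_{s\le t}a_\emm(\Delta M_n(s))$ and its compensation for the $\rU^\prime$-valued jump process $\Delta M_n=\Delta u_n$, together with confirming the integrability class $\mathbf{F}_p^1\cap\mathbf{F}_p^2$; once these are in place the transfer is routine and parallels Lemma \ref{lem-tM_n is a martingale} verbatim.
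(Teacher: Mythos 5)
Your proposal is correct and follows essentially the same route as the paper: reduce to the identity $N_{a_\emm,n}(t)=\int_0^t\int_Y a_\emm(P_nF(s,u_n(s-),y))\,\tilde\eta(\ud s,\ud y)$ on the original space via Lemma \ref{lem-purely discontinuous-New} (the paper invokes \eqref{eqn-compensated_integral_1_New} with $g=a_\emm$, while you rightly note a vector-valued analogue is what is actually needed, since the jumps here are $\rU^\prime$-valued rather than scalar), then transfer the martingale property by measurability of the functional (Lemma \ref{lem-Step 1} plus \cite[Corollary 2.4.2]{Kallianpur_Xiong}), equality of laws, a monotone class argument, and finally Proposition \ref{prop-filtration_tF_n new} with Lemma \ref{Lem:Indep-Increments-augmentation}. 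Your integrability check via \eqref{linear_growth_of_a_k} with $\ell=2$ and $a_\emm\le 1$ matches the paper's appeal to membership in $\mathbf{F}_p^2$.
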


\begin{proof}[Proof of Lemma \ref{lem-Step 2}]
Let us choose and fix  $\emm \in \mathbb{N}$ with $k\ge 2$.
Let us define an auxiliary real   process ${N}_{a_\emm,n}$ by
\begin{equation}
\label{eqn-N_A,k,n}
{N}_{a_\emm,n}(t) = \sum\limits_{s\leq t} a_\emm(\Delta {M}_{n}(s)) - \int_0^t \int_{Y} a_\emm(P_n F(s, u_n(s),y))\, \nu (\ud y) \ds, \;\; t \in [0,T].
\end{equation}
Notice that by Remark \ref{rem_compansated_martingale} and by \cite[Corollary\ 2.4.2]{Kallianpur_Xiong}, the first terms on the right-hand sides of identities \eqref{eqn-N_A,k,n} and \eqref{eqn-N_A,k,n_bar} are $\mathbb{R}$-valued, ${\F}$-, respectively ${\tF_n}$-optional, c\`adl\`ag processes.

To prove that the process $\Nbar_{a_\emm,n}$ is a $\tF_n$-martingale we argue as follows.
We first notice that because the jumps of the processes $\overline{M}_{n} $, resp.\ ${M}_{n} $,  are the same as the jumps of the processes $\overline{u}_n$, resp.\ $u_n$, see equality \eqref{eqn-tMn},  we have that on $\Omegabar$
\begin{equation*}
\Nbar_{a_\emm,n}(t) = \sum\limits_{s\leq t} a_\emm(\Delta  \overline{u}_n(s)) - \int_0^t \int_{Y} a_\emm( P_n F(s,\tu_n(s),y) )\, \nu (\ud y) \ds, \;\; t \in [0,T],
\end{equation*}
 and on $\Omega$,
\begin{equation*}
{N}_{a_\emm,n}(t) = \sum\limits_{s\leq t} a_\emm(\Delta  {u}_n(s)  ) - \int_0^t \int_{Y} a_\emm( P_n F(s,  u_n(s),y))\, \nu (\ud y) \ds, \;\; t \in [0,T].
\end{equation*}
Next, by identity \eqref{eqn-compensated_integral_1_New} in Lemma \ref{lem-purely discontinuous-New} applied to the function $g=a_\emm$ we infer that
 every  $t \in [0,T]$, the following identity hold $\mathbb{P}$-almost surely,
\begin{align}
{N}_{a_\emm,n}(t)  &=\int_0^t \int_{Y} a_\emm \bigl(   P_n F(s, {u}_n(s),y)  \bigr) \tilde{{\eta}}(\ud s,\ud y), \;\; t\geq 0.
\label{eqn-uncompensated_integral-3-new}
\end{align}
Thus, after noting that the  integrand belongs to the space $\mathbf{F}_p^2$ with $p$ equal to the Poisson point process corresponding to the Poisson random measure $\eta$, we infer that $N_{a_\emm,n}$ is an $\F$-martingale, see \cite[Section\ 3]{Brzezniak_Liu_Zhu}.

Now let us consider  the map $\Psi_{a_\emm,n}\colon  \zcal_T \to \mathbb{D}([0,T]; \mathbb{R})$ defined  by
\begin{equation}\label{eqn-Psi_nk}
\left(\Psi_{a_\emm,n}(u)\right)(t)
= \sum_{s\in [0,t]} a_\emm (\Delta u(s)) -\int_0^t\int_{Y} a_\emm \left( P_n F(s, u(s),y) \right) \, \nu(\ud y) \ds
\end{equation}
for $u \in \zcal_T$ and $\tinOT$.

Thanks to assertion \eqref{Eq:cont-phi-2} in Lemma \ref{lem-Step 1} and \cite[Corollary\ 2.4.2]{Kallianpur_Xiong}, we infer  that the map $\Psi_{a_k,n}$ is measurable. This, along with the measurability of the evaluation map $\pi_t\colon  \mathbb{D}([0,T]; \mathbb{R})\to \mathbb{R} $, see proof of   \cite[Lemma\ 4]{Lindvall_1973} and a related result in \cite[Theorem\ 12.5(ii)]{Billingsley_1999}, implies  that  the function $\pi_t\circ \Psi_{a_\emm,n}$ is measurable for all $\tinOT$. Thus, by the martingale property of $N_{a_\emm,n}$ and the equality of laws of $\tu_n$ and $u_n$ on $\zcal_T$ we deduce that for every pair $(s,t) \in [0,T]^2_{+}$ and
 every continuous and bounded function $h\colon  \mathbb{D}([0,s];\rU^\prime)  \to \mathbb{R}$
the following equality holds
\begin{equation}\label{eqn-4.84}
\mathbb{E} \left[ h(u_{n}{\lvert _{[0,s]}}) \left( N_{a_\emm,n} (t) - N_{a_\emm,n}(s)\right) \right] = 	\overline{\mathbb{E}} \left[ h(\tu_n{\lvert _{[0,s]}}) \left( \Nbar_{a_\emm,n} (t) - \Nbar_{a_\emm,n}(s)\right) \right] = 0.
\end{equation}
Hence, since $h$, $s$ and $t$  are  arbitrary, we infer that the  process $\Nbar_{a_\emm,n}$ is a martingale with respect to the filtration generated by $\overline{u}_n$. 
 Since by Proposition \ref{prop-filtration_tF_n new}, the filtration $\tF_n$ is equal to the usual augmentation of the 
 filtration $\mathbb{H}_n$ generated by $\overline{u}_n$ (see Definition \ref{def-filtration_tF_n new}),  in view of  Lemma \ref{Lem:Indep-Increments-augmentation},  the proof of Lemma \ref{lem-Step 2} is complete.
\end{proof}

Note that according to Remark \ref{rem_compansated_martingale} the process $\Nbar_A$ is well defined.

According to our  next result,  whose proof will be given at the end of this subsection,  in order to prove Proposition \ref{prop_compensated_martingale} it is sufficient to prove it for a certain smaller  class  of sets $A$.

\begin{lemma}\label{lem-reduced}
If the process $\Nbar_A$ defined by \eqref{eqn-compensated_martingale_2} is an   ${\tF}$-martingale for every  set  $A\in  \sigma\bigl(\Aoneab\bigr)$ for some $\beta>\alpha>0$,
 then  the process $\Nbar_A$ is an   ${\tF}$-martingale for every
set $A \in \mathscr{A}_0$.

\end{lemma}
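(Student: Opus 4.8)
The plan is to deduce the full statement from the hypothesis by exhausting $\rU^\prime \setminus \{0\}$ with annuli and then passing to a monotone limit. First I would fix $A \in \mathscr{A}_0$ and set
\begin{equation*}
A_n \coloneqq A \cap S_{1/n,n}, \qquad S_{1/n,n} = \bigl\{ u \in \rU^\prime : \tfrac1n \le \abs{u}_{\rU^\prime} \le n \bigr\}, \quad n \in \mathbb{N}.
\end{equation*}
Since $\rU^\prime \setminus \{0\} = \bigcup_{n} S_{1/n,n}$ and the annuli increase with $n$, we have $A_n \uparrow A$ and hence $\1_{A_n} \uparrow \1_A$ pointwise. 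Each $A_n$ is a Borel subset of the closed annulus $S_{1/n,n}$; as closed sets generate the Borel $\sigma$-algebra of a metric space, this gives $A_n \in \sigma\bigl(\mathscr{A}_1(1/n,n)\bigr)$. Thus every $A_n$ is one of the sets to which the hypothesis applies, and consequently $\Nbar_{A_n}$ is an ${\tF}$-martingale for each $n$.

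Next I would let $n\to\infty$. By the Monotone Convergence Theorem the two terms in the definition \eqref{eqn-compensated_martingale_2} of $\Nbar_{A_n}$ increase to the corresponding terms for $A$; in particular $\Nbar_{A_n}(t) \to \Nbar_A(t)$ $\Pbar$-a.s.\ for each $\tinOT$. To justify the interchange of limit and expectation I need integrable majorants. The jump-counting part is bounded uniformly in $n$ by $\Sigma \coloneqq \sum_{0<s\le T}\1_A(\Delta \Mbar(s))$, and $\Sigma \in L^1(\Omegabar)$ is exactly the defining property \eqref{ass-compensated martingale} of $\mathscr{A}_0$. For the compensator part I would exploit the martingale identity: since $\Nbar_{A_n}(0)=0$, the martingale property gives $\Ebar[\Nbar_{A_n}(T)]=0$, i.e.
\begin{equation*}
\Ebar\Bigl[ \sum_{0<s\le T}\1_{A_n}(\Delta\Mbar(s))\Bigr] = \Ebar\Bigl[ \int_0^T\!\!\int_Y \1_{A_n}(F(s,\tu(s),y))\,\nun(\ud y)\ds\Bigr].
\end{equation*}
Letting $n\to\infty$ and using the Monotone Convergence Theorem on both sides shows that $\Pi \coloneqq \int_0^T\int_Y \1_A(F(s,\tu(s),y))\,\nun(\ud y)\ds$ satisfies $\Ebar[\Pi]=\Ebar[\Sigma]<\infty$, so $\Pi \in L^1(\Omegabar)$ as well.

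With the majorant $\Sigma+\Pi$ available, I would conclude via the standard characterization of martingales. For $0\le s\le t\le T$ and any bounded $\overline{\fcal}_s$-measurable $h$ we have $\abs{h\,(\Nbar_{A_n}(t)-\Nbar_{A_n}(s))} \le 2\norm{h}_\infty(\Sigma+\Pi) \in L^1(\Omegabar)$, so the Dominated Convergence Theorem yields
\begin{equation*}
\Ebar\bigl[ h\,(\Nbar_A(t)-\Nbar_A(s))\bigr] = \lim_{n\to\infty}\Ebar\bigl[ h\,(\Nbar_{A_n}(t)-\Nbar_{A_n}(s))\bigr] = 0,
\end{equation*}
the last equality being the martingale property of $\Nbar_{A_n}$. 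Combined with $\Nbar_A(t)\in L^1(\Omegabar)$ and the adaptedness of $\Nbar_A$ (its jump part is $\overline{\fcal}_t$-measurable by Remark \ref{rem_compansated_martingale}, and the compensator is continuous and adapted), this is precisely the assertion that $\Nbar_A$ is an ${\tF}$-martingale. The main obstacle is the middle step: the class $\mathscr{A}_0$ only guarantees integrability of the jump-counting part, so the real work lies in transferring this integrability to the compensator via the martingale identity for the truncations $A_n$, which is what ultimately provides the domination needed to pass the martingale property through the limit.
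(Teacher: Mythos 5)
Your proof is correct and follows essentially the same route as the paper: truncate $A$ by the annuli, $A_n \coloneqq A \cap S_{1/n,n}$, apply the hypothesis (via Corollary \ref{cor-propostion for A_2}) to each Borel set $A_n \subset S_{1/n,n}$, and pass to the limit $n\to\infty$. The only divergence is in how the limit is justified: the paper asserts that $\Nbar_{A_n}(r) \nearrow \Nbar_A(r)$ and invokes the Monotone Convergence Theorem for conditional expectations, which is imprecise since $\Nbar_{A_n}$ is the \emph{difference} of two sequences that each increase in $n$ and is therefore not itself monotone. You instead construct an explicit $L^1$ majorant $\Sigma+\Pi$ — using \eqref{ass-compensated martingale} for the jump-counting part and the identity $\Ebar[\Nbar_{A_n}(T)]=0$ together with monotone convergence to transfer integrability to the compensator part — and then conclude by dominated convergence. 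This is a slightly longer but cleaner execution of the same idea, and it supplies exactly the integrability needed to make the paper's limiting step rigorous.
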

The proof of this lemma requires several lemmata and is postponed to page \pageref{Proof-of-Lemma-reduced}.
We will also prove the following companion result.

\begin{lemma}\label{lem-propostion for A_2}
Let $A\in \mathscr{A}_1$, i.e.\ $A\in \Aoneab$ for some $\beta>\alpha>0$.
Then, the process $\Nbar_A$ defined by
\eqref{eqn-compensated_martingale_2}
is an   ${\tF}$-martingale.
\end{lemma}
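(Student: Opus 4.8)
The plan is to obtain the martingale property of $\Nbar_A$ by a double approximation, passing first to the limit $n\to\infty$ and then $k\to\infty$ in the martingales $\Nbar_{a_k,n}$ furnished by Lemma \ref{lem-Step 2}. Let us first record that $\Aoneab\subset\mathscr{A}_0$, so that $\Nbar_A$ is well defined with integrable jump part: since $A\subset S_\alpha$ we have $\1_A(\Delta\Mbar(s))\leq\alpha^{-2}\lvert\Delta\Mbar(s)\rvert_{\rU^\prime}^2$, whence
\[
\overline{\E}\Bigl[\sum_{0<s\leq T}\1_A(\Delta\Mbar(s))\Bigr]\leq \alpha^{-2}\,\overline{\E}\bigl[[\Mbar,\Mbar]_T\bigr]<\infty
\]
by the square integrability of $\Mbar$ from Lemma \ref{lem-tM is a martingale}. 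The analogous bound with $a_2$ in place of $\1_A$, using the growth estimate \eqref{linear_growth_of_a_k} with $\ell=2$, will supply the integrable dominating functions needed in Step 2.

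\textbf{Step 1 ($n\to\infty$, fixed $k\geq2$).} Define
\[
\Nbar_{a_k}(t)\coloneqq\sum_{0<s\leq t}a_k(\Delta\Mbar(s))-\int_0^t\int_Y a_k(F(s,\tu(s),y))\,\nun(\ud y)\,\ds,\quad\tinOT,
\]
and show it is a martingale by repeating the argument of Lemma \ref{lem-tM is a martingale}. With $C=C(\tu)$ from \eqref{eqn-C_set}, fix $s,t\in C$ ($s\leq t$) and a bounded continuous $\mathscr{D}_s$-measurable $h\colon\mathbb{D}([0,T];\rU^\prime)\to\mathbb{R}$. From the martingale identity for $\Nbar_{a_k,n}$ and the equality of laws of $\tun$ and $u_n$ one has $\overline{\E}\bigl[h(\tun)(\Nbar_{a_k,n}(t)-\Nbar_{a_k,n}(s))\bigr]=0$. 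Passing $n\to\infty$: for the compensator I would invoke assertion \eqref{Eq:cont-phi} of Lemma \ref{lem-Step 1}; for the jump part I would use that the jumps of $\Mbar_n$ equal those of $\tun$, together with continuity of $z\mapsto\sum_{s\leq\cdot}a_k(\Delta z(s))$ on $\mathbb{D}([0,T];\rU^\prime)$ from \cite[Corollary\ 2.4.2]{Kallianpur_Xiong}, the convergence $\tun\to\tu$ in $\mathbb{D}([0,T];\rU^\prime)$, and the continuous mapping theorem at $t\in C$; while $h(\tun)\to h(\tu)$ by continuity. Uniform integrability follows from the representation \eqref{eqn-uncompensated_integral-3-new}, the It\^o isometry, the linear growth \eqref{eqn-F_linear_growth} and the a priori bound \eqref{eqn-H_estimate-p>2}, giving $\sup_n\overline{\E}[\lvert\Nbar_{a_k,n}(t)\rvert^2]=\sup_n\E[\lvert N_{a_k,n}(t)\rvert^2]<\infty$, so Vitali's theorem applies. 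This yields $\overline{\E}[h(\tu)(\Nbar_{a_k}(t)-\Nbar_{a_k}(s))]=0$ for $s,t\in C$, and the usual approximation by $C$-valued sequences $s_m\searrow s$, $t_m\searrow t$ (right-continuity plus the square integrability just obtained) extends it to all $s\leq t$. Hence $\Nbar_{a_k}$ is a martingale with respect to the filtration generated by $\tu$, and by Lemma \ref{Lem:Indep-Increments-augmentation} and Proposition \ref{prop-filtration_tF_n new} it is an $\tF$-martingale.

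\textbf{Step 2 ($k\to\infty$).} By Lemma \ref{lem-properties-of-a_k}(i), $a_k\todown\1_A$, so Lemma \ref{lem-Claim 5.2} applied pathwise with $z=\Mbar$ gives $\sum_{s<r\leq t}a_k(\Delta\Mbar(r))\todown\sum_{s<r\leq t}\1_A(\Delta\Mbar(r))$, while Lemma \ref{lem-Claim 5.1} gives $\int_s^t\int_Y a_k(F(r,\tu(r),y))\,\nun(\ud y)\,\ud r\todown\int_s^t\int_Y\1_A(F(r,\tu(r),y))\,\nun(\ud y)\,\ud r$. Both sequences are dominated by their $k=2$ analogues, namely by $\tfrac{4}{\alpha^2}[\Mbar,\Mbar]_t$ and by $C\int_0^T(1+\lvert\tu(s)\rvert_{\rH}^2)\,\ds$ via \eqref{linear_growth_of_a_k} and \eqref{eqn-F_linear_growth}, which have finite $\Pbar$-expectation by Lemma \ref{lem-tM is a martingale} and \eqref{eqn-tu_H_estimate}. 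Thus $\Nbar_{a_k}(t)\to\Nbar_A(t)$ both $\Pbar$-a.s.\ and in $L^1(\Omegabar)$ for every $\tinOT$. Since each $\Nbar_{a_k}$ is an $\tF$-martingale and conditional expectation is an $L^1$-contraction, passing to the $L^1$-limit in $\overline{\E}[\Nbar_{a_k}(t)\mid\overline{\mathscr{F}}_s]=\Nbar_{a_k}(s)$ shows $\Nbar_A$ is an $\tF$-martingale, which completes the proof.

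The main obstacle I anticipate is the two-layer integrability bookkeeping: in Step 1 the uniform-integrability bound must be uniform in $n$ for fixed $k$ and is secured through the stochastic-integral representation on the original space and the Galerkin estimates, whereas in Step 2 one must exhibit a single integrable dominating function valid for all $k$, which is precisely where the hypothesis $A\subset S_\alpha$ (ensuring $\Aoneab\subset\mathscr{A}_0$) and the quadratic growth of $a_2$ enter decisively. The only remaining delicacy, the identification of the correct filtration $\tF$, is resolved once in Step 1 via the $\mathscr{D}_s$-measurable test functions and the augmentation results, after which the $L^1$-limit in Step 2 inherits the martingale property automatically.
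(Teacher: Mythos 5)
Your proposal is correct and follows essentially the same route as the paper: Step 1 reproduces the content of Lemmata \ref{lem-Step 3} and \ref{lem-Step 4} (passing $n\to\infty$ in the martingales $\Nbar_{a_k,n}$ of Lemma \ref{lem-Step 2}, first on the set $C$ and then by right-continuity), and Step 2 is the paper's own argument for Lemma \ref{lem-propostion for A_2}, using Lemmata \ref{lem-Claim 5.1} and \ref{lem-Claim 5.2} for the pointwise limits and an $L^1$-convergence argument (you use domination via the $k=2$ bound and $[\Mbar,\Mbar]_T$; the paper uses uniform second-moment bounds and Vitali — equivalent in effect). The only cosmetic slip is citing Proposition \ref{prop-filtration_tF_n new} (which concerns $\tF_n$) where the relevant fact is that $\tF$ is the usual augmentation of the filtration generated by $\tu$, so Lemma \ref{Lem:Indep-Increments-augmentation} alone suffices.
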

Lemma \ref{lem-propostion for A_2} will also be proved later, but for now let us  formulate a corollary to it. Note that 
$\mathscr{B}(S_{\alpha,\beta})=\sigma\bigl(\Aoneab\bigr)$.

\begin{corollary} \label{cor-propostion for A_2}
If  $A\in  \mathscr{B}(S_{\alpha,\beta})$ for some $\beta>\alpha>0$,
then the process $\Nbar_A$ defined by
\eqref{eqn-compensated_martingale_2}
is an   ${\tF}$-martingale.
\end{corollary}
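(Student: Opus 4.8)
The plan is to deduce the corollary from Lemma \ref{lem-propostion for A_2} by a monotone-class (Dynkin $\pi$--$\lambda$) argument, exploiting that $A \mapsto \Nbar_A$ depends linearly on the indicator $\1_A$ through the explicit formula \eqref{eqn-compensated_martingale_2}. Fix $\beta>\alpha>0$. First I would record that the family $\Aoneab$ of closed subsets of $S_{\alpha,\beta}$ is a $\pi$-system, since a finite intersection of closed sets contained in $S_{\alpha,\beta}$ is again closed and contained in $S_{\alpha,\beta}$. Moreover $S_{\alpha,\beta}$ itself is closed, bounded and separated from $0$, hence $S_{\alpha,\beta}\in \Aoneab$, and by Lemma \ref{lem-propostion for A_2} the process $\Nbar_{S_{\alpha,\beta}}$ is an $\tF$-martingale. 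Since by hypothesis $\mathscr{B}(S_{\alpha,\beta})=\sigma(\Aoneab)$, it suffices to prove that the class
\[
\mathscr{D} \coloneqq \bigl\{ A \in \mathscr{B}(S_{\alpha,\beta}) : \Nbar_A \text{ is an } \tF\text{-martingale} \bigr\}
\]
is a Dynkin system containing the $\pi$-system $\Aoneab$.

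Next I would verify the three defining properties of a $\lambda$-system. Containment $\Aoneab \subseteq \mathscr{D}$ is exactly Lemma \ref{lem-propostion for A_2}, and in particular $S_{\alpha,\beta}\in \mathscr{D}$. For stability under proper differences, note that whenever $A\subseteq B$ one has $\1_{B\setminus A}=\1_B-\1_A$, whence $\Nbar_{B\setminus A}=\Nbar_B-\Nbar_A$ by \eqref{eqn-compensated_martingale_2}; a difference of two (integrable) $\tF$-martingales is again an $\tF$-martingale, so $B\setminus A\in \mathscr{D}$. The genuinely delicate point is stability under increasing limits: given $A_n\in \mathscr{D}$ with $A_n\uparrow A$, I must pass to the limit in the martingale identity $\Ebar[(\Nbar_{A_n}(t)-\Nbar_{A_n}(s))\,h]=0$, valid for all $s\le t$ and every bounded $\overline{\fcal}_s$-measurable $h$, to obtain the same identity for $A$.

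For that passage I would argue by dominated convergence. Since $A_n\uparrow A$, both $\sum_{0<r\le \cdot}\1_{A_n}(\Delta \Mbar(r))$ and $\int_0^{\cdot}\int_Y \1_{A_n}(F(r,\tu(r),y))\,\nun(\ud y)\dr$ increase pointwise to the corresponding quantities for $A$, and they are dominated respectively by $\sum_{0<r\le T}\1_{S_{\alpha,\beta}}(\Delta \Mbar(r))$ and $\int_0^T\int_Y \1_{S_{\alpha,\beta}}(F(r,\tu(r),y))\,\nun(\ud y)\dr$. Both dominating functions are $\Pbar$-integrable: the second because on $S_{\alpha,\beta}$ one has $\1_{S_{\alpha,\beta}}(F)\le \alpha^{-2}\abs{F}_{\rU^\prime}^2\le c\,\alpha^{-2}\abs{F}_{\rH}^2$ by continuity of $\rH\embed\rU^\prime$, so its expectation is finite by the linear growth condition \eqref{eqn-F_linear_growth} in Assumption F.\ref{assum-F2} together with the a priori estimate \eqref{eqn-tu_H_estimate}; and the first because $S_{\alpha,\beta}\in \Aoneab\subseteq \mathscr{A}_0$, so the integrability condition \eqref{ass-compensated martingale} holds (equivalently, the number of jumps of $\Mbar$ of $\rU^\prime$-size at least $\alpha$ is bounded by $\alpha^{-2}[\Mbar,\Mbar]_T$, integrable since $\Mbar$ is square integrable by Lemma \ref{lem-tM is a martingale}). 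Hence $\Nbar_{A_n}(r)\to \Nbar_A(r)$ in $L^1(\Pbar)$ for each $r$, and as $h$ is bounded the martingale identity passes to the limit, giving $A\in \mathscr{D}$.

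Having checked that $\mathscr{D}$ is a Dynkin system containing the $\pi$-system $\Aoneab$, the $\pi$--$\lambda$ theorem yields $\mathscr{D}\supseteq \sigma(\Aoneab)=\mathscr{B}(S_{\alpha,\beta})$, which is exactly the assertion of Corollary \ref{cor-propostion for A_2}. I expect the increasing-limit step to be the main obstacle, namely producing integrable dominating functions uniform in $n$; this is precisely where the restriction to sets inside the shell $S_{\alpha,\beta}$, bounded away from $0$ and bounded above, is essential.
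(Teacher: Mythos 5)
Your proposal is correct and follows essentially the same route as the paper: a Dynkin $\pi$--$\lambda$ argument over the $\pi$-system $\Aoneab$, with Lemma \ref{lem-propostion for A_2} supplying the base case, linearity of $A\mapsto\Nbar_A$ giving stability under proper differences, and a limit argument for increasing unions. The only (cosmetic) difference is that at the increasing-union step the paper invokes monotone convergence on $\Nbar_{B_n}$ directly, whereas you pass to the limit via dominated convergence with the explicit integrable dominating quantities for $S_{\alpha,\beta}$ — which is, if anything, the more careful justification, since $\Nbar_{B_n}$ is a difference of two monotone sequences rather than itself monotone.
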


Now, we will formulate the following lemma where therein the filtration $\mathbb{H}= (\mathscr{H}_t)_{t\in [0,T]}$ is the filtration generated by $\tu$, see \eqref{eqn-filtration_limit}.
\begin{lemma}\label{lem-Step 3}
Let $C$ be the set defined in  \eqref{eqn-C_set}. Let  $\Nbar_{a_\emm}=\left( \Nbar_{a_\emm}(t): t\geq 0\right)$ be the $\mathbb{R}$-valued process defined by
\begin{equation}\label{eqn-Nbar_A,k}
\Nbar_{a_\emm}(t) \coloneqq \sum\limits_{s\leq t} a_\emm(\Delta \overline{M} (s)) - \int_0^t \int_{Y} a_\emm(F(s,\tu(s),y))\, \nu (\ud y) \ds,\;\; t\geq 0.
\end{equation}
Then for every pair  $(s,t)\in C^2_{+}$ 
\begin{equation}\label{eqn-NANK_Mart-Prop-setC-0}
\Ebar \bigl[  \Nbar_{a_\emm}  (t)\lvert \mathscr{H}_{s} \bigr]= \Nbar_{a_\emm}(s).
\end{equation}
Moreover,
\begin{align}\label{eqn-Square-NAN-on-C-2}
\sup_{k \ge 2}\;\;\sup_{t \in C  }\; \Ebar \left[ \lvert{\Nbar_{a_\emm}(t)}\rvert^{2} \right] <\infty.
\end{align}
\end{lemma}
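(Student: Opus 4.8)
The plan is to derive both assertions by passing to the limit $n\to\infty$ in the martingale identity for the approximating processes $\Nbar_{a_\emm,n}$ established in Lemma~\ref{lem-Step 2}, exactly in the spirit of the proofs of Lemmata~\ref{lem-tM is a martingale} and \ref{lem-tN is a martingale}.

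First I would prove a second moment bound for $\Nbar_{a_\emm,n}$ that is uniform in both $\emm$ and $n$. By identity \eqref{eqn-uncompensated_integral-3-new}, the process $N_{a_\emm,n}$ on the original probability space equals the stochastic integral of $a_\emm(P_n F(\cdot,u_n(\cdot),\cdot))$ against $\tilde\eta$, so the It\^o isometry \eqref{eqn-Ito isometry} gives
\[
\mathbb{E}\bigl[\vert N_{a_\emm,n}(t)\vert^2\bigr]=\mathbb{E}\int_0^t\int_Y \vert a_\emm(P_n F(s,u_n(s),y))\vert^2\,\nu(\ud y)\ds.
\]
Since $a_\emm\le 1$, the $\emm$-independent growth estimate \eqref{linear_growth_of_a_k} with $\ell=2$ yields $\vert a_\emm(u)\vert^2\le a_\emm(u)\le (2/\alpha)^2\vert u\vert_{\rU^\prime}^2$; combining this with the continuity of $\rH\embed\rU^\prime$, see \eqref{eqn-embeddings}, the contraction property \eqref{eqn-P_n contraction on H} of $P_n$ on $\rH$, the linear growth \eqref{eqn-F_linear_growth} of $F$, and the a priori estimate \eqref{eqn-H_estimate-p>2} with $p=2$, I obtain $\sup_{\emm\ge 2}\sup_n \mathbb{E}[\vert N_{a_\emm,n}(t)\vert^2]<\infty$. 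Because the evaluation $u\mapsto (\pi_t\circ\Psi_{a_\emm,n})(u)$, cf.\ \eqref{eqn-Psi_nk}, is measurable on $\zcal_T$ and $\tun,u_n$ share the same law, the same bound holds for $\Nbar_{a_\emm,n}(t)$, uniformly in $\emm$, $n$ and $\tinOT$.

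Next I would establish that $\Nbar_{a_\emm,n}(t)\to\Nbar_{a_\emm}(t)$ $\Pbar$-a.s.\ for every $t\in C$. As the finite variation parts of $\Mbar_n$ and $\Mbar$ are continuous, one has $\Delta\Mbar_n=\Delta\tun$ and $\Delta\Mbar=\Delta\tu$; since $a_\emm$ is continuous and, by Lemma~\ref{lem-properties-of-a_k}(ii), vanishes in a ball about the origin, the jump functional $z\mapsto\sum_{s\le\cdot}a_\emm(\Delta z(s))$ is continuous from $\mathbb{D}([0,T];\rU^\prime)$ to $\mathbb{D}([0,T])$ by \cite[Corollary~2.4.2]{Kallianpur_Xiong}, so \eqref{eqn-conv_D(0,T;U')} gives convergence of the jump sums in $\mathbb{D}([0,T])$. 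Because $t\in C$, the limiting jump process $\Pbar$-a.s.\ has no jump at $t$, whence evaluation at $t$ converges $\Pbar$-a.s.\ by \cite[Theorem~12.5(i)]{Billingsley_1999}; the compensator terms converge by assertion \eqref{Eq:cont-phi} of Lemma~\ref{lem-Step 1} applied $\omega$-wise with $z_n=\tun$, $z=\tu$. The bound \eqref{eqn-Square-NAN-on-C-2} then follows from this a.s.\ convergence, the uniform second moment bound above, and the Fatou Lemma.

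Finally, for the martingale identity \eqref{eqn-NANK_Mart-Prop-setC-0}, fix $(s,t)\in C^2_{+}$ and a bounded continuous $\mathscr{D}_s$-measurable function $h\colon\mathbb{D}([0,T];\rU^\prime)\to\mathbb{R}$. By Lemma~\ref{lem-Step 2} and the equality of laws of $\tun$ and $u_n$ we have $\Ebar[h(\tun)(\Nbar_{a_\emm,n}(t)-\Nbar_{a_\emm,n}(s))]=0$ for all $n$. The uniform $L^2$ bound together with the boundedness of $h$ renders the sequence $\{h(\tun)(\Nbar_{a_\emm,n}(t)-\Nbar_{a_\emm,n}(s))\}_n$ uniformly integrable, while $h(\tun)\to h(\tu)$ by \eqref{eqn-conv_D(0,T;U')} and the continuity of $h$, and $\Nbar_{a_\emm,n}$ converges a.s.\ at $s,t\in C$ by the previous step. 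The Vitali Convergence Theorem then yields $\Ebar[h(\tu)(\Nbar_{a_\emm}(t)-\Nbar_{a_\emm}(s))]=0$, and a functional monotone class argument (as in \cite[Proposition~IX.1.1]{Jacod_Shiryaev}) upgrades this to \eqref{eqn-NANK_Mart-Prop-setC-0} with respect to $\mathbb{H}$. The main obstacle is the a.s.\ convergence of the jump-sum term at the prescribed instants $t\in C$: it requires coupling the Skorokhod-space continuity of the jump functional with the defining property of $C$ so that $t$ is almost surely a continuity point of the limiting jump process and evaluation commutes with the limit.
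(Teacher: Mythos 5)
Your proposal is correct and follows essentially the same route as the paper's proof: transfer of the martingale property from $\Nbar_{a_\emm,n}$ (Lemma \ref{lem-Step 2}) via equality of laws, almost sure convergence at instants of $C$ obtained by combining the Skorokhod-continuity of the jump functional from \cite[Corollary 2.4.2]{Kallianpur_Xiong} with \cite[Theorem 12.5(i)]{Billingsley_1999} and Lemma \ref{lem-Step 1} for the compensator term, a $k$- and $n$-uniform second moment bound from \eqref{eqn-uncompensated_integral-3-new} and \eqref{linear_growth_of_a_k}, then Vitali, a functional monotone class argument, and Fatou. The only cosmetic difference is that you use the It\^o isometry and $\ell=2$ where the paper uses the BDG inequality and $\ell=1$; both yield the required uniform bound.
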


\begin{proof}[Proof of Lemma \ref{lem-Step 3}] First we will deal with the proof of assertion \eqref{eqn-NANK_Mart-Prop-setC-0}. For this aim we closely follow \cite{Jacod_Shiryaev} and denote by $\mathscr{D}_t^0$ the $\sigma$-algebra generated by all maps \begin{equation*}\mathbb{D}([0,T], U^\prime)\ni x \mapsto x(s)\in U^\prime,\, s\le t .\end{equation*} We also set $\mathscr{D}_t=\bigcap_{s>t} \mathscr{D}^0_s,\, t\in [0,T]$. Note that $\mathbf{D}=(\mathscr{D}_t)_{t\in [0,T]}$ is a filtration. Next, let us choose and fix a pair  $(s,t)\in C^2_{+}$ and a continuous and  bounded function $h\colon  \mathbb{D}([0,T];\rU^\prime)  \to \mathbb{R}$ such that $h$ is $\mathscr{D}_s$-measurable. By using standard Functional Monotone Class argument as in \cite[Proposition IX.1.1]{Jacod_Shiryaev}, to prove \eqref{eqn-NANK_Mart-Prop-setC-0} it is enough to prove that 
\begin{equation}\label{eqn-NANK_Mart-Prop-setC-2}
\Ebar \bigl[ (\Nbar_{a_\emm}(t)-\Nbar_{a_\emm}(s)) h(\overline{u} ) \bigr] = 0.
\end{equation}

Let us define, compare with \eqref{eqn-Psi_nk},  a map $\Psi_{a_\emm}\colon  \zcal_T \to \mathbb{D}([0,T]; \mathbb{R})$  by
\begin{equation*}\label{eqn-Psi_k}
\left(\Psi_{a_\emm}(u)\right)(t)
=\sum_{s\in [0,t]} a_\emm (\Delta u(s))-\int_0^t\int_{Y} a_\emm \left( F(s, u(s),y) \right) \, \nu(\ud y) \ds.
\end{equation*}
We first claim that
\begin{equation*}\label{Eq:NANK-ASCONV-C}
\lim_{n \to \infty }  \Psi_{a_\emm,n}(\tu_n)(t) -\Psi_{a_\emm}(\tu)(t)=0 \quad \mbox{  on $\bar{\Omega}$. }
\end{equation*}
 We will separately show the convergence of the first and the second terms defining the functions $\Psi_{a_\emm,n}(\tu_n)$ and $\Psi_{a_\emm}(\tu)$.

Since the evaluation map $\pi_t\colon  \mathbb{D}([0,T];\rU^\prime) \ni x \mapsto x(t)\in\rU^\prime $ is continuous if and only if $x$ is continuous at $t$, the convergence of the second terms follows from the definition of the set $C$, the convergence \eqref{eqn-Skorokhod_appl_convergence}
according to which  $\tun \to \tu$ in $\zcal_T$ on $\Omegabar$ and \cite[Corollary \ 2.4.2]{Kallianpur_Xiong}.
The convergence of the first terms follows from  Lemma \ref{lem-Step 1}.

Let us also notice that  $\lim_{n \to \infty }h(\tun ) =h( \tu )$ on $\Omegabar$.  Hence,  we also  have on $\Omegabar$
\begin{equation}\label{eqn-NANK-ASCONV}
\lim_{n \to \infty} \left( \Nbar_{a_\emm,n}(t)-\Nbar_{a_\emm, n}(s) \right) h(\overline{u}_n ) = \left( \Nbar_{a_\emm}(t)-\Nbar_{a_\emm}(s) \right) h(\overline{u} ).
\end{equation}
We now prove the uniform integrability of the random variables
\begin{equation*}\left(\Nbar_{a_\emm,n}(t)-\Nbar_{a_\emm,n}(s)\right) h(\overline{u}_n ), \quad n \in \N.\end{equation*}

Obviously it is sufficient to prove the following stronger property 
\begin{align}\label{eqn-Square-NANK-on-C}
C_4\coloneqq\sup_{n \in \mathbb{N}} \sup_{r \in [0,T] }\Ebar \left[ \lvert{\Nbar_{a_\emm,n}(r)}\rvert^\power \right]<\infty.
\end{align}
In order to prove \eqref{eqn-Square-NANK-on-C}
we observe first that by the measurability of $\pi_t\circ \Psi_{a_\emm,n} $ and  the equality of laws of $\tun$ and $u_n$ on $\zcal_T$
\begin{align*}
& \sup_{r \in [0,T]  }\Ebar \left[ \lvert{\Nbar_{a_\emm,n}(r)}\rvert^\power \right] = \sup_{r \in [0,T]  }\E  \left[\lvert{ N_{a_\emm,n}(r)}\rvert^\power \right] \le \E \left[ \sup_{r \in [0,T]  } \lvert{ N_{a_\emm,n}(r)}\rvert^\power \right].
\end{align*}
Next, by the Burkholder-Davis-Gundy inequality applied to the real-valued process ${N}_{a_\emm,n}$ satisfying equality \eqref{eqn-uncompensated_integral-3-new}, see \cite[Theorem\ 26.12]{Kallenberg_2002}, 
there exists a constant $c_3>0$
such that for all $n \in \mathbb{N}$ and  $k \ge 2$
\begin{align*}
\E \left[ \sup_{r \in [0,T]} \lvert{N_{a_\emm,n}(r)}\rvert^\power \right] &\le c_3\E \left[  \int_0^T \int_Y \left(a_\emm( F_n(s, u_n(s),y))\right)^2  \, \nu (\ud y) \ds\right] \\
&\leq c_3 \frac{2^2}{\alpha^2}
 \E \left[ \int_0^T \int_Y \norm{P_n F(s, u_n(s),y)}_{\rU^\prime}^2 \,  \nu (\ud y) \ds \right]
\\
&\leq c_3^\prime \E \left[ \int_0^T \int_Y \norm{P_n F(s, u_n(s),y)}_{\rH}^2 \, \nu (\ud y) \ds \right],
\end{align*}
where the second inequality is a consequence of the condition \eqref{linear_growth_of_a_k} with $\ell=1$ and the last
inequality is a consequence of the  boundedness of the natural embedding $\rH  \embed\rU^\prime  $.

Since   $P_n\colon  \rH \to \rH$,  $n\in \mathbb{N}$, is an orthogonal  projection, see \eqref{eqn-P_n in H}, by the linear growth condition \eqref{eqn-F_linear_growth} in Assumption F.\ref{assum-F2}\label{assum-F2-used9}
and the estimate (\ref{eqn-H_estimate-p>2}) with $p=2$ we deduce that there exists a constant $c_4>0$ such that for all $n \in \mathbb{N}$ and all $k \ge 2$

\begin{align}\nonumber
 \E \left[    \int_0^T \int_Y \norm{P_n F(s, u_n(s),y)}_{\rH}^2 \, \nu (\ud y) \ds  \right]
&\le
\E \left[  c_4   \int_0^T \int_Y \norm{F(s, u_n(s),y)}_{\rH}^2 \, \nu (\ud y) \ds
\right]
 \\
& \le  c_4    \E \left[ \int_0^T \left( 1 + \norm{u_n(s)}_\rH^\power \right)  \ud s \right] <\infty.
\label{sup_over_0_T}
\end{align}
All the previous inequalities  together with the a' priori estimate (\ref{eqn-H_estimate-p>2}) with $p=2$ from  Lemma \ref{lem-Galerkin_estimates} imply the sought estimate \eqref{eqn-Square-NANK-on-C}.

Now, by the  Vitali Convergence Theorem applied to the almost sure convergence \eqref{eqn-NANK-ASCONV} we infer that \eqref{eqn-NANK_Mart-Prop-setC-2}, and hence \eqref{eqn-NANK_Mart-Prop-setC-0}, holds.

Finally, we will deal with the proof of inequality \eqref{eqn-Square-NAN-on-C-2}.  Let us choose and fix $k\in \mathbb{N}\setminus\{0,1\}$ and $t \in C$. Then by \eqref{eqn-NANK-ASCONV}, the Fatou Lemma and inequality \eqref{eqn-Square-NANK-on-C}  we have the following chain of inequalities
\begin{align*}
\Ebar \left[ \lvert{\Nbar_{a_\emm}(t)}\rvert^\power \right] &= \Ebar \left[ \lvert{ \lim_{n \to \infty} \Nbar_{a_\emm,n}(t) }\rvert^\power \right] = \Ebar \left[ \lim_{n \to \infty} \lvert{ \Nbar_{a_\emm,n}(t) }\rvert^\power \right] \\
& \leq \liminf_{n \to \infty} \Ebar \left[  \lvert{ \Nbar_{a_\emm,n}(t) }\rvert^\power \right] \leq C_4.
\end{align*}

Note that the finiteness follows from Assumption Assumption F.\ref{assum-F2}\label{assum-F2-used10}.
This obviously implies inequality \eqref{eqn-Square-NAN-on-C-2} and  hence, the proof of Lemma \ref{lem-Step 3} is complete.
\end{proof}

\begin{lemma}\label{lem-Step 4}
In the framework of Lemma \ref{lem-Step 3}, the process
$\Nbar_{a_\emm}$ is an $\tF$-martingale.
\end{lemma}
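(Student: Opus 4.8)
The plan is to upgrade the martingale identity of Lemma \ref{lem-Step 3}, which holds only for times in the dense set $C$ and only against the filtration $\mathbb{H}=(\mathscr{H}_t)$ generated by $\tu$, into the full $\tF$-martingale property. Recall from Remark \ref{rem_compansated_martingale} and \cite[Corollary\ 2.4.2]{Kallianpur_Xiong} that $\Nbar_{a_\emm}$ is $\mathbb{R}$-valued, c\`adl\`ag and $\tF$-adapted (its jump part is $\tF$-optional and its compensator part is continuous and adapted), and that by \eqref{eqn-Square-NAN-on-C-2} it is square integrable at every $t\in C$. The argument is a verbatim transposition of Step 1 and Step 3 of the proof of Lemma \ref{lem-tM is a martingale} together with the proof of Lemma \ref{lem-tN is a martingale}, so I would only indicate the main points and leave the routine repetitions to the reader.

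First I would establish that $\Nbar_{a_\emm}(t)$ is square integrable for \emph{every} $\tinOT$, not merely for $t\in C$. For an arbitrary $\tinOT$ I would pick a $C$-valued sequence $(t_m)_{m\in\mathbb{N}}$ with $t_m\searrow t$; the right-continuity of the paths of $\Nbar_{a_\emm}$ gives $\Nbar_{a_\emm}(t_m)\to\Nbar_{a_\emm}(t)$ $\Pbar$-a.s., and the Fatou Lemma combined with \eqref{eqn-Square-NAN-on-C-2} yields
\begin{equation*}
\Ebar\left[\lvert \Nbar_{a_\emm}(t)\rvert^2\right] \le \liminf_{m\to\infty}\Ebar\left[\lvert \Nbar_{a_\emm}(t_m)\rvert^2\right] \le \sup_{r\in C}\Ebar\left[\lvert \Nbar_{a_\emm}(r)\rvert^2\right] <\infty,
\end{equation*}
exactly as in the square integrability step of the proof of Lemma \ref{lem-tN is a martingale}.

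Next I would extend the martingale identity from pairs in $C^2_{+}$ to all pairs $s\le t$ in $[0,T]$. Fixing $s<t$, I would choose two $C$-valued sequences $(s_m)_{m\in\mathbb{N}}$ and $(t_m)_{m\in\mathbb{N}}$ with $s<s_m\le t_m$, $t<t_m$, $s_m\searrow s$ and $t_m\searrow t$, together with a bounded, continuous, $\mathscr{D}_s$-measurable function $h\colon \mathbb{D}([0,T];\rU^\prime)\to\mathbb{R}$ as in the proof of Lemma \ref{lem-Step 3}. Assertion \eqref{eqn-NANK_Mart-Prop-setC-2}, applied at the pair $(s_m,t_m)\in C^2_{+}$, gives $\Ebar[(\Nbar_{a_\emm}(t_m)-\Nbar_{a_\emm}(s_m))\,h(\tu)]=0$. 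The right-continuity of the paths of $\Nbar_{a_\emm}$ yields $\Pbar$-a.s.\ $\Nbar_{a_\emm}(t_m)\to\Nbar_{a_\emm}(t)$ and $\Nbar_{a_\emm}(s_m)\to\Nbar_{a_\emm}(s)$, while the uniform bound \eqref{eqn-Square-NAN-on-C-2} and the boundedness of $h$ show that the sequences $\Nbar_{a_\emm}(t_m)\,h(\tu)$ and $\Nbar_{a_\emm}(s_m)\,h(\tu)$ are uniformly integrable. The Vitali Convergence Theorem then permits passing to the limit, giving $\Ebar[(\Nbar_{a_\emm}(t)-\Nbar_{a_\emm}(s))\,h(\tu)]=0$ for all such $s,t$ and $h$; by the functional monotone class argument as in \cite[Proposition IX.1.1]{Jacod_Shiryaev} this says precisely that $\Nbar_{a_\emm}$ is a martingale with respect to $\mathbb{H}$. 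Finally, since by \eqref{eqn-filtration_limit} the filtration $\tF$ is the usual augmentation of $\mathbb{H}$, Lemma \ref{Lem:Indep-Increments-augmentation} upgrades the $\mathbb{H}$-martingale property to the desired $\tF$-martingale property.

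I do not expect a genuinely new obstacle here, since the limiting scheme is identical to the ones already carried out for $\Mbar$ and $\Nbar_\phi$. The only points that require care are to check that the uniform $L^2$ estimate \eqref{eqn-Square-NAN-on-C-2} really supplies the uniform integrability needed for the Vitali theorem at each passage to the limit, and to invoke the augmentation Lemma \ref{Lem:Indep-Increments-augmentation} correctly, relying on the c\`adl\`ag regularity of $\Nbar_{a_\emm}$, to move from the filtration generated by $\tu$ to its completed, right-continuous version $\tF$.
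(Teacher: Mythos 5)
Your proposal is correct and follows essentially the same route as the paper's proof: approximation of $(s,t)$ by $C$-valued sequences from the right, right-continuity of the paths, uniform integrability supplied by \eqref{eqn-Square-NAN-on-C-2}, the Vitali Convergence Theorem, a functional monotone class argument, and finally Lemma \ref{Lem:Indep-Increments-augmentation} to pass from the filtration generated by $\tu$ to its usual augmentation $\tF$. The only difference is that you make the square integrability of $\Nbar_{a_\emm}(t)$ for arbitrary $t\in[0,T]$ explicit as a preliminary step, which the paper leaves implicit; this is a harmless (indeed welcome) addition rather than a different argument.
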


\begin{proof}[Proof of Lemma \ref{lem-Step 4}]
Let us  choose and fix a pair  $(s,t)\in [0,T]^2_{+}$ such that $s<t$.  We consider an  $C^2_{+}$-valued sequence $(s_m,t_m)_{m \in \mathbb{N}}$  satisfying
\begin{itemize}
	\item $s<s_m < t<t_m$ for every $m \in \mathbb{N}$,
	\item $s_m \searrow s$ and $t_m \searrow t$ as $m \to \infty$.
\end{itemize}
Let $\mathbf{D}=(\mathscr{D}_t)_{t\in [0,T]}$ be the filtration defined in the proof of Lemma \ref{lem-Step 3}. We also fix an arbitrary $h: \mathbb{D}([0,T]; U^\prime) \to \mathbb{R} $ bounded, continuous and $\mathscr{D}_s$-measurable function. 
Observe that by Lemma \ref{lem-Step 3} and its proof, see equality \eqref{eqn-NANK_Mart-Prop-setC-2} and inequality \eqref{eqn-Square-NAN-on-C-2},  we have

\begin{align}
\label{Eq:NANK_Mart-Prop-setC-1}
&\Ebar\bigl[  \left(\Nbar_{a_\emm}  (t_m)-\Nbar_{a_\emm}(s_m)\right)h(\tu)\bigr]=0,\qquad \mbox{ for all } m \in \mathbb{N},
\\
\label{Eq:Square-NAN-on-C-3}
&\sup_{k \ge 2} \sup_{m \in \mathbb{N}  }\Ebar \left[ \lvert{\Nbar_{a_\emm}(t_m)}\rvert^\power \right] <\infty.
\end{align}
The second property above and the boundedness of $h$  imply that the sequences $(\Nbar_{a_\emm}(t_m))_{m\in \mathbb{N}}$,  $(\Nbar_{a_\emm}(s_m))_{m\in \mathbb{N}}$  and $\left( \bigl[\Nbar_{a_\emm}(t_m)-\Nbar_{a_\emm}(s_m)\bigr]\right)_{m \in \mathbb{N}}$ are uniformly integrable. Thus, we can proceed as in Step 3 of the proof of Lemma \ref{lem-tM is a martingale}  and prove that
\begin{align}
 &\Nbar_{a_\emm} (s_m)\to  \Nbar_{a_\emm}(s) \mbox{  in } L^1(\Omegabar),\label{Eq:Conv-NAK-L3}\\
& \Ebar\bigl[  \left(\Nbar_{a_\emm}  (t)-\Nbar_{a_\emm}(s)\right)h(\tu)\bigr] =  \lim_{m\to \infty}\Ebar\bigl[  \left(\Nbar_{a_\emm}  (t_m)-\Nbar_{a_\emm}(s_m)\right)h(\tu)\bigr]=0.\nonumber
\end{align}
The second set of equations  along with a standard Functional Monotone Class argument  imply that 
\begin{equation*}
    \Ebar \bigl[  \Nbar_{a_\emm}  (t)\lvert \mathscr{H}_{s} \bigr]= \Nbar_{a_\emm}(s).
\end{equation*}
Since the filtration $\Fbar=(\tfcal_t)_{t\in [0,T]}$ is the usual augmentation of the filtration $(\mathscr{H}_{t})_{t\in [0,T]}$ generated by $\tu$, we infer from Lemma \ref{Lem:Indep-Increments-augmentation} that $\Nbar_{a_\emm}$ is an $\bar{\mathbb{F}}$-martingale. This completes of the proof of Lemma \ref{lem-Step 4}.
\end{proof}

We now give the promised proof of Lemma \ref{lem-propostion for A_2}.
\begin{proof}[Proof of Lemma \ref{lem-propostion for A_2}] Let us choose and fix $A \in \Atwoa$ for some $\alpha>0$. Let us point out that this set satisfies assumptions of Lemmata   \ref{lem-Claim 5.1} and \ref{lem-Claim 5.2}.
Next,  let us choose and fix a pair  $(s,t)\in [0,T]^2_{+}$. Let us recall that the process ${\Nbar }_A$ has been defined in equality \eqref{eqn-compensated_martingale_2}.

Let us observe that by equality \eqref{eqn-Conv-Int-a_k} from Lemma \ref{lem-Claim 5.1} and by equality \eqref{eqn-conv_sums} from Lemma \ref{lem-Claim 5.2}
we have respectively the following two assertions $\Pbar$-a.s.
 \begin{align*}
& \int_s^t\int_Y a_k(F(r, \tu(r), y) )\, \nu(\ud y) \, \ud r \to \int_s^t \int_Y 1_A (F(r, \tu(r), y))  \,  \nu(\ud y) \dr, \\
&  \sum_{s\le r\le t} a_k(\Delta \Mbar(r)) \to \sum_{s\le r\le t} \1_A(\Delta \Mbar(r)) \mbox{  as }  k\to \infty.
 \end{align*}
 Therefore, in view of definition  \eqref{eqn-compensated_martingale_2} of the process $\Nbar_A$ and definition \eqref{eqn-Nbar_A,k}  of the process  $\Nbar_{a_\emm}$ we infer that
  $\Pbar$-a.s.
 \begin{equation*}
 \Nbar_{a_\emm} (t)- \Nbar_{a_\emm} (s) \to \Nbar_{A}(t)-\Nbar_{A}(s) \mbox{  as }  k\to \infty .
 \end{equation*}

 Let us observe that  by a similar argument we used to prove inequality \eqref{eqn-Square-NAN-on-C-2} in Lemma \ref{lem-Step 3}, and by using
 \eqref{eqn-Square-NAN-on-C-2}, \eqref{Eq:Square-NAN-on-C-3}  and \eqref{Eq:Conv-NAK-L3} we can deduce that
 \begin{equation*}
 \sup_{k \ge 2}  \Ebar \Bigl[\lvert \Nbar_{a_\emm} (s)\rvert^\power + \Ebar \lvert \Nbar_{a_\emm} (t)\rvert^\power  \Bigr] <\infty.
 \end{equation*}
 This  implies that the sequence $\rvert \Nbar_{a_\emm}(t)\rvert$, $k\in \mathbb{N}$,  is uniformly integrable and therefore by the Vitali Convergence Theorem we infer that
 \begin{equation*}
\label{convergence_N_A_k_in_L^2}
 \Nbar_{a_\emm}(t) \to \Nbar_A(t) \mbox{ in $L^1(\Omegabar)$ as } k\to \infty.
 \end{equation*}
 This along with Lemma \ref{lem-Step 4} according to which
\begin{equation*}
\te\left(\left[ \Nbar_{a_\emm}(t) - \Nbar_{a_\emm}(s)\right]\bigg \vert \bar{\mathscr{F}}_s \right)=0,
\end{equation*}
we deduce  that
\begin{equation*}
\begin{aligned}
\te\left(\left[\Nbar_{A}(t) - \Nbar_{A}(s) \right] \biggl\vert \bar{\mathscr{F}}_s\right)= \lim_{k \to \infty} \te\left(\left[\Nbar_{a_\emm}(t) - \Nbar_{a_\emm}(s) \right] \biggl\vert \bar{\mathscr{F}}_s\right)
= 0 \mbox{ in $L^1(\Omegabar)$}.
\end{aligned}
\end{equation*}
This concludes the proof of Lemma \ref{lem-propostion for A_2}.
\end{proof}

The following proof provides some details to the 5 lines long proof of  \cite[Lemma\ 6.1.11]{Kallianpur_Xiong}.
{Since we have not been able to follow that proof we have decided to provide a detailed proof as below. Our proof refers to Lemma \ref{lem-Step 3} which was formulated and proved earlier on. }

\begin{proof}[Proof of Corollary \ref{cor-propostion for A_2}]
 Let us choose and fix $\beta>\alpha >0$ and  $t\geq s\geq  0$. Let $A\in \sigma(\mathscr{A}_1(\alpha, \beta))$. We shall prove that
 \begin{equation*}
     \overline{\mathbb{E}} \left[ \Nbar_A(t)\lvert \bar{\mathscr{F}_s} \right]= \Nbar_A(s).
 \end{equation*}
 For this purpose let us consider the family
 \begin{equation*}
     \mathscr{B}_{\alpha, \beta}= \{ B \in \mathscr{B}(S_{\alpha, \beta}): \overline{\mathbb{E}}\left[ \Nbar_B(t)\lvert \bar{\mathscr{F}_s} \right] = \Nbar_B(s),\, \mbox{ $\tP$-a.s.}  \} .
 \end{equation*}
 Note that if  $B\in \mathscr{B}(S_{\alpha, \beta})$ is closed, then it follows from the proof of  Lemma \ref{lem-propostion for A_2}  that $\overline{\mathbb{E}}\left[\Nbar_B(t)\lvert \bar{\mathscr{F}_s}\right]$ is well-defined. If  $B\in \mathscr{B}(S_{\alpha, \beta})$ is open,  then there exists a closed (the trace of a Borel closed set in $U^\prime$ on $S_{\alpha, \beta}$) $C\in \mathscr{B}_{\alpha, \beta}$ such that $B= S_{\alpha, \beta}\setminus C$ and, by the proof of  Lemma \ref{lem-propostion for A_2}, we see that $\overline{\mathbb{E}} \left[ \Nbar_{B} \right]= \overline{\mathbb{E}}\left[\Nbar_{S_{\alpha, \beta}}\right] -\overline{\mathbb{E}}\left[\Nbar_C\right] <\infty$, which implies that $\overline{\mathbb{E}}\left[\Nbar_B(t)\lvert \bar{\mathscr{F}_s} \right]$ is well-defined.

 Next, it is not difficult to see that $\mathscr{A}_1(\alpha, \beta)$ is a $\pi$-system, i.e., it is  closed under the formation of finite intersections. We also have the following:
 \begin{enumerate}
     \item $S_{\alpha, \beta}\in \mathscr{B}_{\alpha, \beta}$. In fact this follows from the fact that $S_{\alpha, \beta} \in \mathscr{A}_{\alpha, \beta}$ and Lemma \ref{lem-propostion for A_2}.
     \item If $A,B\in \mathscr{B}_{\alpha, \beta}$ with $A\subset B$, then $B\setminus A\in \mathscr{B}_{\alpha, \beta}$. In fact, since $A\subset B$, we have $1_{B\setminus A }=\1_{B}- \1_A$ and $\tP$-a.s.
     \begin{align*}
     \Nbar_{B\setminus A}= & \Nbar_B - \Nbar_A, \\
         \overline{\mathbb{E}}[\Nbar_{B\setminus A}(t)\lvert \bar{\mathscr{F}_s}]=&  \overline{\mathbb{E}}[(\Nbar_{B}(t)-\Nbar_A(t)) \lvert \bar{\mathscr{F}_s}]= \overline{\mathbb{E}}[\Nbar_{B}(t)\lvert \bar{\mathscr{F}_s}]- \overline{\mathbb{E}}[\Nbar_{A}(t)\lvert \bar{\mathscr{F}_s}]\\
        = & \Nbar_{B}(s) -\Nbar_A(s) =\Nbar_{B\setminus A}(s).
     \end{align*}
     \item If $(B_n)_{n \in \mathbb{N}} \subset \mathscr{B}_{\alpha,\beta}, \mbox{  with } B_n \subset B_{n+1} \mbox{  for all } n\in \mathbb{N}$,
 then $B\coloneqq\bigcup_{n\ge 1} B_n \in \mathscr{B}_{\alpha,\beta}$. In fact, it follows from the Monotone Convergence Theorem that $\displaystyle \Nbar_{B_n} (r)\nearrow \Nbar_{B}(r)$ on $\overline{\Omega}$ for any $r\in [0, T]$ and $\tP$-a.s.
 \begin{equation}
 \begin{aligned}
\overline{\mathbb{E}}[\Nbar_B(t) \lvert \bar{\mathscr{F}}_s ] &= \overline{\mathbb{E}}[ \lim_{n\to \infty} \Nbar_{B_n}(t) \lvert \bar{\mathscr{F}}_s ]= \lim_{n\to \infty} \overline{\mathbb{E}} [\Nbar_{
B_n}(t)\lvert \bar{\mathscr{F}}_s]
\\
&= \lim_{n\to \infty} \Nbar_{B_n}(s)= \Nbar_B(s).
 \end{aligned}
 \end{equation}
 \end{enumerate}
From the observations (i)-(iii) we infer that  $\mathscr{B}_{\alpha, \beta}$ is a $\lambda$-system, which along with  \cite[Theorem \ I.3.2]{Billingsley_2012} implies that $\sigma\bigl( \Aoneab\bigr) \subset \mathscr{B}_{\alpha,\beta}$. Thus, from this last result and the definition of $\mathscr{B}_{\alpha, \beta}$, we easily complete the proof of the Corollary \ref{cor-propostion for A_2}.
\end{proof}

\begin{proof}[Proof of Lemma \ref{lem-reduced}]\label{Proof-of-Lemma-reduced}

Let us choose and fix  an arbitrary $A\in \mathscr{A}_0$, i.e., a  Borel set $A \subset\rU^\prime\setminus \{0\}$ for which \eqref{ass-compensated martingale} holds. Let us define a sequence $(A_n)_{n=1}^\infty$ of Borel sets by
\begin{equation*}
\label{eqn_A_n}
A_n\coloneqq A \cap S_{1/n,n}.
\end{equation*}
Let us choose and fix $s<t \in [0,T]$.  Because $A_n$ is a Borel subset of $S_{1/n,n}$, it follows from
 Corollary \ref{cor-propostion for A_2} that for every $n \in \mathbb{N}$,  $\Pbar$-almost surely,
 \begin{equation}\label{eqn-cond at s-1}
 \Ebar[\Nbar_{A_n}(t) \vert \bar{\mathscr{F}}_s] =    \Nbar_{A_n}(s).
 \end{equation}
Observe that  $\displaystyle \Nbar_{A_n} (r)\nearrow \Nbar_{A}(r)$ on $\overline{\Omega}$ for any $r\in [0, T]$. Thus, from \eqref{eqn-cond at s-1} and the Monotone Convergence Theorem we deduce that $\overline{\mathbb{P}}$-almost surely
 \begin{equation}\label{eqn-cond at s}
 \Ebar[\Nbar_{A}(t) \vert \bar{\mathscr{F}}_s] =    \Nbar_{A}(s).
 \end{equation}
Because the pair $(s,t)\in [0,T]^2_+$ is arbitrary, this completes the proof of Lemma \ref{lem-reduced}.
\end{proof}

\begin{proof}[\textbf{End of the proof of Proposition \ref{prop_compensated_martingale}}]
As explained earlier,
 Proposition \ref{prop_compensated_martingale} follows from Lemma \ref{lem-reduced}.
    \end{proof}

\subsection{Applying   the Martingale Representation Theorem}
\label{subsec_application}

Now, we  are ready to complete the proof of the  main result of our paper, see Theorem \ref{thm-main-existence}.

\begin{proof}[Proof of Theorem \ref{thm-main-existence}] 
We will now apply Theorem \ref{thm-martingale_rep} about the  representation theorem of  purely discontinuous martingales in terms of a Poisson random measure. For this aim we will verify all assumptions
of that theorem.  We begin with fixing  the notation. We put
\begin{itemize}
\item[-] $\bX =\rU^\prime$, where $\rU$ is the Hilbert space introduced in \eqref{eqn-U_comp_V_s};
\item[-] as the filtered probability space $\mathfrak{A}$ we consider
   the probability space $\bigl( \Omegabar ,\tfcal ,\Pbar  \bigr)$  introduced at the beginning of subsection \ref{subsec-SJ}, endowed with  the filtration ${\tF} \coloneqq({\overline{\fcal }}_{t})$ defined in equality \eqref{eqn-filtration_limit};
\item[-] as the martingale $M$ we consider the process $\Mbar$ defined in equation \eqref{eqn-tM};
\item[-] as the space $(\rY,\ycal)$ we take the space $(\rY,\ycal)$ from Assumption F.\ref{assum-F1};
\item[-] as the map $\theta$ we consider the following function
\begin{equation}
\label{eqn-theta map}
\theta\colon   [0,\infty)\times Y \times \Omegabar \ni (t, y,\tomega) \mapsto   F(t,\tu(t-,\tomega),y) \in\rU^\prime,
\end{equation}
where the function $F$ is from Assumption F.\ref{assum-F2} and $\tu $ is the $ \zcal_T $-valued random variable introduced at the beginning of subsection \ref{subsec-SJ}.
\item[-] As the measure $m$ we take the measure $\nu$ from Assumption \ref{item-P_Y_is_standard}.
\end{itemize}

Note that with this choice of function  $\theta$, assumption \eqref{eqn-integrability_in_representation_theorem}
 is satisfied. Indeed,
\begin{equation}
\label{interability2}
\Ebar \left[ \int_0^T \int_Y \norm{F(t,\tu(t-),y)}_{\rU^\prime}^2 \, \nu(\ud y) \dt \right]
\leq C_2 \Ebar \left[ \int_0^T \bigl(1 + \norm{\tu (t)}_\rH^2\bigr) \dt \right]
< \infty
\end{equation}
by Assumption F.\ref{assum-F2} and assertion \eqref{eqn-boundedness_H}.

Obviously, our space $\bX$ is a separable Banach space. According to Assumption F.\ref{assum-F1}, $(Y,\ycal)$ is a standard measurable space.

According to Lemma \ref{lem-tM is a martingale} the process ${\Mbar} $ is a square integrable, $\rU^\prime$-valued c\`adl\`ag ${\tF}$-martingale.

Moreover, the martingale  ${\Mbar} $ is purely discontinuous, see Proposition \ref{prop-purely discontinuous}.

Since $\tu $ is a  $ \zcal_T $-valued random variable, all the trajectories of the corresponding process $\tu(t)$, $\tinOT$, are $\rU^\prime$-valued c\`adl\`ag. Hence, since the process $\tu$ is
also $\tF$-adapted, its left limit version $\tu(t-)$,  $\tinOT$, is a $\rU^\prime$-valued $\tF$-predictable process. Therefore, by the measurability property of the function $F$ from Assumption F.\ref{assum-F1},\label{assum-F1-used}  the generalized process 
$F(t,\tu(t-,\tomega),y)$,  $(t,\tomega,y)\in [0,T]\times \Omegabar\times \rY$, is  $\tF$-predictable, see \cite[Definition II.3.3]{Ikeda+Watanabe_1989}.
This implies that the function $\theta$ is $\tF$-predictable.

Next, let $\eta_{\Mbar}$ on $\rU^\prime \setminus \{0\}$ be the integer-valued random measure defined by
\begin{equation*}
\eta_{\Mbar}(t,A;\tomega) = \sum_{s\in (0,t]} \1_A(\Delta \Mbar(s;\tomega)), \;\; t>0, \; A\in \mathscr{B}(\rU^\prime \setminus \{0\}), \tomega \in \Omegabar.
\end{equation*}
We will show that $\eta_{\Mbar}$ is a random measure of class (QL), see Definition \ref{def-class-QL}.

For this purpose let us choose and fix a set $A\in \mathscr{B}(\rU^\prime \setminus \{0\})$ satisfying \eqref{ass-compensated martingale}. Then we claim  that by Proposition  \ref{prop_compensated_martingale}
the process  $\tilde{\eta}_{\Mbar}(t,A)$, $t\geq 0$,
defined by
\begin{align}\label{eqn-tilde_eta_M}
\tilde{\eta}_{\Mbar}(t,A)
&\coloneqq \eta_{\Mbar}(t,A) - \int_0^t \int_Y \1_A(F(s,\tu(s-),y)) \, \nu(\ud y) \ds \\
&= \sum_{s\in (0,t]} \1_A(\Delta \Mbar(s)) - \int_0^t \int_Y \1_A(F(s,\tu(s),y)) \, \nu(\ud y) \ds, \;\; t\geq 0,
\nonumber
\end{align}
is an ${\tF}$-martingale. We conclude that the compensator $\hat{\eta}_{\overline{M}}$ of $\eta_{\overline{M}}$ is given by
\begin{equation*}\hat{\eta}_{\Mbar}(t, A) = \int_0^t \nu \left( y \in Y : F(t,\tu(s),y) \in A \right) \ds.\end{equation*}

Inside equality  \eqref{eqn-tilde_eta_M} we can freely replace $\tu(s-)$ with $\tu(s)$ because the functions $(0,t] \ni s \mapsto \tu(s-) \in\rU^\prime$ and $(0,t] \ni s \mapsto \tu(s) \in\rU^\prime$ differ at a most countable set, as the latter function is c\`adl\`ag, so that their Lebesgue integrals are equal.
Therefore we  conclude that the random measure  $\eta_{\Mbar}$ is a of class (QL) and that assumption \eqref{eqn-N_M_hat} holds.

We conclude by applying Theorem \ref{thm-martingale_rep} according to which  there exists an extension $(\underline{\Omega},\underline{\fcal}, \underline{\mathbb{P}}, \underline{\mathbb{F}})$ of  $(\Omegabar,\Fbar, \Pbar, \overline{\mathbb{F}})$ and a Poisson random measure $\underline{\eta}$ with characteristic measure $\nu$ such that for every $t\in \mathbb{R}_+$,

\begin{equation}
\label{by_martingale_representation}
\begin{aligned}
    \Mbar(t) = \int_0^t \int_Y F(s,\tu(s-, \underline{\omega}),y) \tilde{\underline{\eta}}(\ud s,\ud y) \mbox{ in }\rU^\prime, \underline{\mathbb{P}}\mbox{-almost surely}.
\end{aligned}
\end{equation}
Note that $\tu$ trivially extends to a process $\underline{u}$ defined on $\underline{\Omega}$.
Substituting \eqref{by_martingale_representation} into \eqref{eqn-tM} we see that $(\underline{\Omega},\underline{\fcal}, \underline{\mathbb{P}}, \underline{\fcal}_t, \underline{u},\underline{\eta})$ is a solution of the Navier-Stokes equation.

The proof of Theorem \ref{thm-main-existence} is thus complete.
\end{proof}

\appendix

\section{The It\^{o} formula}

It\^o formula play an important role in this paper. In this short section we recall the following special case of the It\^o Theorem, see  \cite[Theorem\ I.4.57]{Jacod_Shiryaev}.

\begin{theorem}\label{thm-Ito-JS-I.4.57}
Suppose that $\xi$ is  real-valued  semimartingale. Then for every $t\geq 0$, almost surely
\begin{align}\label{eqn-Ito_JS_I.4.58}
  \xi(t)^2 &=\xi(0)^2+ \int_0^t 2 \xi(s-)\, \ud \xi(s) + \langle \xi^{\mathrm{c}}\rangle(t) + \sum_{s\leq t}  (\Delta \xi(s))^2, \\
\label{eqn-Ito_JS_I.4.58_b}
&=\xi(0)^2+ \int_0^t 2 \xi(s-) \, \ud \xi(s)+ [\xi](t).
\end{align}
\end{theorem}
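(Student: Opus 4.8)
The plan is to obtain both displayed identities as the special case $f(x)=x^2$ of the general It\^o formula for real semimartingales, \cite[Theorem\ I.4.57]{Jacod_Shiryaev}. First I would recall that for a $C^2$ function $f\colon \mathbb{R}\to\mathbb{R}$ and a real semimartingale $\xi$ that formula reads, almost surely for every $t\geq 0$,
\[
f(\xi(t)) = f(\xi(0)) + \int_0^t f'(\xi(s-))\, \ud \xi(s) + \tfrac12 \int_0^t f''(\xi(s-))\, \ud \langle \xi^{\mathrm{c}}\rangle(s) + \sum_{s\leq t} \bigl( f(\xi(s)) - f(\xi(s-)) - f'(\xi(s-))\,\Delta \xi(s)\bigr),
\]
where $\xi^{\mathrm{c}}$ is the continuous (local) martingale part of $\xi$, cf.\ Proposition \ref{prop-decomposition of a martingale}, and the jump series converges absolutely by the integrability built into the general statement.

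Next I would specialise to $f(x)=x^2$, for which $f'(x)=2x$ and $f''(x)=2$. The first integral then becomes $\int_0^t 2\xi(s-)\,\ud\xi(s)$, and the quadratic-variation term simplifies to $\tfrac12\int_0^t 2\,\ud\langle \xi^{\mathrm{c}}\rangle(s) = \langle \xi^{\mathrm{c}}\rangle(t)$. The one computation that requires a line is the collapse of each summand in the jump term: expanding,
\[
\xi(s)^2 - \xi(s-)^2 - 2\xi(s-)\bigl(\xi(s)-\xi(s-)\bigr) = \bigl(\xi(s)-\xi(s-)\bigr)^2 = (\Delta \xi(s))^2,
\]
so that the jump series equals $\sum_{s\leq t}(\Delta \xi(s))^2$. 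Collecting these three contributions yields exactly the first identity \eqref{eqn-Ito_JS_I.4.58}.

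To pass from \eqref{eqn-Ito_JS_I.4.58} to \eqref{eqn-Ito_JS_I.4.58_b} I would invoke the decomposition of the quadratic variation of a semimartingale into its continuous and purely discontinuous parts, namely $[\xi](t) = \langle \xi^{\mathrm{c}}\rangle(t) + \sum_{s\leq t}(\Delta \xi(s))^2$; this is the scalar case $M=N=\xi$ of Proposition \ref{prop-brackets} (equivalently \cite[Theorem\ I.4.52]{Jacod_Shiryaev}). Substituting this equality into \eqref{eqn-Ito_JS_I.4.58} replaces $\langle \xi^{\mathrm{c}}\rangle(t) + \sum_{s\leq t}(\Delta\xi(s))^2$ by $[\xi](t)$ and gives \eqref{eqn-Ito_JS_I.4.58_b}. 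Since the whole argument is a direct specialisation of cited results, I do not expect a genuine obstacle; the only points meriting care are the algebraic identity above and the observation that the bracket decomposition is applied here to a general semimartingale (not merely a square-integrable martingale), for which the quoted Jacod--Shiryaev statements are precisely what is needed.
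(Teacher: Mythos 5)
Your proposal is correct and follows essentially the same route as the paper: the paper's proof simply cites \cite[Theorem\ I.4.57]{Jacod_Shiryaev} for the first identity and \cite[Theorem\ I.4.52]{Jacod_Shiryaev} for the equality of the two displayed formulas, which is exactly the specialisation to $f(x)=x^2$ and the bracket decomposition $[\xi]=\langle \xi^{\mathrm{c}}\rangle+\sum_{s\leq t}(\Delta\xi(s))^2$ that you carry out explicitly. Your version merely fills in the routine algebra that the paper leaves implicit.
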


\begin{proof}
The theorem is a direct corollary from \cite[Theorem\ I.4.57]{Jacod_Shiryaev}.  The fact that \eqref{eqn-Ito_JS_I.4.58} and \eqref{eqn-Ito_JS_I.4.58_b} are equal follows from \cite[Theorem\ I.4.52]{Jacod_Shiryaev}.
\end{proof}

This special case of the Ito formula is used in the proof of Proposition \ref{prop_angle_bracket}.

\section{An extension of a probability space}

We begin with recalling   \cite[Definition\ II.7.1, p.\ 89]{Ikeda+Watanabe_1989}. We recommend the reader to read also Definition II.7.2 from the same source.
\begin{remark}\label{rem-notation-integral wrt point process}
Let us note that when we write $\int_0^t \int_{\mathbb{U}} \cdot \eta(\ud s, \ud u)$, then the value of the point process $p$ corresponding to $\eta$ at $0$ does not play any role, whereas its value at $t$ does influence the integral. For this reason some authors, see for instance \cite{Ikeda+Watanabe_1989}, write $\int_0^{t+} \int_{\mathbb{U}} \cdot \eta(\ud s, \ud u)$ or $\int_{(0,t]} \int_{\mathbb{U}} \cdot \eta(\ud s, \ud u)$.
\end{remark}

\begin{definition}\label{def-extension of probability space}
We say that a system $
\bigl(\tilde{\mathfrak{A}}, \pi  \bigr)$,
where
$
\tilde{\mathfrak{A}}=\bigl( \tilde{\Omega}, \tilde{\fcal}, \tilde{\mathbb{P}}, \tilde{\mathbb{F}}  \bigr)
$
is a filtered probability space,  with  $\tilde{\mathbb{F}}=(\tilde{\fcal}_t)_{t\geq 0}$ , and $
\pi\colon  \tilde{\Omega} \to \Omega
$
is a map,  is an extension of a
filtered probability space
$
\bigl( {\Omega}, {\fcal}, {\mathbb{P}}, {\mathbb{F}} \bigr), \mbox{ where } {\mathbb{F}}=({\fcal}_t)_{t\geq 0} ,
$
if and only if  the following four conditions are satisfied
\begin{trivlist}
\item[(i)] $\pi$ is $\tilde{\fcal}/{\fcal}$-measurable;
\item[(ii)] $\pi$ is $\tilde{\fcal}_t/{\fcal}_t$-measurable for every $t\geq 0$,
\item[(iii)] $\mathbb{P}=\tilde{\mathbb{P}} \circ \pi^{-1}$,
\item[(iv)] for every $ {\fcal}$-measurable and $ {\mathbb{P}}$-a.s.\ bounded, function  $X\colon  \Omega \to \mathbb{R}$, and every $t\geq 0$, the following equality holds
\[
\tilde{\mathbb{E}}\bigl[ \tilde{X}\vert \tilde{\fcal}_t \bigr]=
\mathbb{E}\bigl[ {X}\vert {\fcal}_t \bigr] \circ \pi, \;\; \tilde{\mathbb{P}} \mbox{-a.s.}
\]
where
\[
\tilde{X}\coloneqq X \circ \pi \;\;\; :\tilde{\Omega} \to \mathbb{R}.
\]
\end{trivlist}
\end{definition}

The notion of predictability used in the following obvious result was defined in   \cite[Definition \ II.3.3]{Ikeda+Watanabe_1989}, see also \cite[Definition\ 3.4.4]{Kallianpur_Xiong}.

\begin{lemma}\label{lem-extension of probability space}
Let  $(\mathbf{X},\mathscr{B}_\mathbf{X})$ and $(\mathbf{U}, \mathcal{U})$ be two measurable spaces.
If
\[
\theta\colon  (0,\infty) \times \mathbf{U}\times \Omega \to \mathbf{X}
\]
is a generalized predictable  process, then
the process $\tilde{\theta}$ defined by
\begin{equation}\label{eqn-tilde_theta}
\tilde{\theta}\colon  (0,\infty) \times \mathbf{U}\times \tilde{\Omega} \ni (t,u,\tilde{\omega}) \mapsto \theta\bigl(t,u,\pi(\tilde{\omega})\bigr)\in  {\color{teal}\mathbf{X}}
\end{equation}
is a generalized predictable  process, see \cite[Definition II.3.3]{Ikeda+Watanabe_1989}.
\end{lemma}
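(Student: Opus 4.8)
The plan is to recognise that \textquotedblleft generalized predictable\textquotedblright{} in the sense of \cite[Definition II.3.3]{Ikeda+Watanabe_1989} means measurability with respect to a concretely generated $\sigma$-field, and then to reduce the whole assertion to the measurability of a single projection-type map. Let $\mathcal{P}$ denote the generalized predictable $\sigma$-field on $(0,\infty)\times\mathbf{U}\times\Omega$; it is generated by the family $\mathcal{G}$ of \textquotedblleft predictable rectangles\textquotedblright
\[
(s,t]\times A\times B,\qquad 0\le s<t<\infty,\; A\in\mathcal{U},\; B\in\fcal_s,
\]
and let $\tilde{\mathcal{P}}$ denote the corresponding $\sigma$-field on $(0,\infty)\times\mathbf{U}\times\tilde{\Omega}$, generated analogously by rectangles $(s,t]\times A\times\tilde{B}$ with $\tilde{B}\in\tilde{\fcal}_s$. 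By hypothesis $\theta$ is $\mathcal{P}/\mathscr{B}_\mathbf{X}$-measurable, and by \eqref{eqn-tilde_theta} we have $\tilde{\theta}=\theta\circ\Pi$, where
\[
\Pi\colon (0,\infty)\times\mathbf{U}\times\tilde{\Omega}\ni(t,u,\tilde{\omega})\mapsto\bigl(t,u,\pi(\tilde{\omega})\bigr)\in(0,\infty)\times\mathbf{U}\times\Omega.
\]
Thus it suffices to prove that $\Pi$ is $\tilde{\mathcal{P}}/\mathcal{P}$-measurable, for then the composition $\theta\circ\Pi$ is automatically $\tilde{\mathcal{P}}/\mathscr{B}_\mathbf{X}$-measurable, which is exactly the predictability of $\tilde{\theta}$.

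The key step is to check measurability of $\Pi$ on the generating family $\mathcal{G}$, using the standard fact that $\Pi^{-1}\bigl(\sigma(\mathcal{G})\bigr)=\sigma\bigl(\Pi^{-1}(\mathcal{G})\bigr)$, so it is enough that $\Pi^{-1}(G)\in\tilde{\mathcal{P}}$ for every $G\in\mathcal{G}$. First I would compute, for a rectangle $G=(s,t]\times A\times B$,
\[
\Pi^{-1}(G)=\bigl\{(r,u,\tilde{\omega}):\,(r,u,\pi(\tilde{\omega}))\in G\bigr\}=(s,t]\times A\times\pi^{-1}(B).
\]
Now I would invoke condition (ii) of Definition \ref{def-extension of probability space}, according to which $\pi$ is $\tilde{\fcal}_s/\fcal_s$-measurable for every $s\ge 0$; since $B\in\fcal_s$, this yields $\pi^{-1}(B)\in\tilde{\fcal}_s$. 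Consequently $\Pi^{-1}(G)=(s,t]\times A\times\pi^{-1}(B)$ is itself a predictable rectangle on the extended space, hence lies in $\tilde{\mathcal{P}}$. This establishes $\Pi^{-1}(\mathcal{G})\subset\tilde{\mathcal{P}}$ and therefore $\Pi^{-1}(\mathcal{P})\subset\tilde{\mathcal{P}}$, i.e.\ $\Pi$ is $\tilde{\mathcal{P}}/\mathcal{P}$-measurable, completing the argument.

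There is no genuine obstacle here, which matches the description of the statement as \textquotedblleft obvious\textquotedblright; the only thing requiring a little care is the bookkeeping of the generating family. In particular one should make sure that the chosen generators (including, if one works on $[0,\infty)$ rather than $(0,\infty)$, the boundary sets $\{0\}\times A\times B$ with $B\in\fcal_0$) do generate $\mathcal{P}$ and that the mark coordinate $u\in\mathbf{U}$, which $\Pi$ leaves untouched, is handled by the factor $\mathcal{U}$ in the product. Both points are immediate from the definition of the generalized predictable $\sigma$-field, and the entire content of the lemma is the single application of the filtration-compatibility condition (ii) to transport the time-$s$ measurable set $B$ back through $\pi$.
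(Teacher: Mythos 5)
Your proof is correct and follows essentially the same route as the paper's: both arguments reduce the claim to the $\tilde{\mathscr{P}}/\mathscr{P}$-measurability of the map $(t,u,\tilde{\omega})\mapsto(t,u,\pi(\tilde{\omega}))$, check it on the generating predictable rectangles, and invoke condition (ii) of the extension definition (that $\pi$ is $\tilde{\fcal}_s/\fcal_s$-measurable) to pull the time-$s$ set back through $\pi$. The only cosmetic difference is that the paper first factorizes the generalized predictable $\sigma$-field as $\mathscr{P}\otimes\mathcal{U}$ and cites M\'etivier for the generators (including the $\{0\}\times F_0$ sets), whereas you work directly with the three-factor rectangles; your closing remark about the boundary sets covers the same point.
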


\begin{proof}[Proof of Lemma \ref{lem-extension of probability space}]

Let $\tilde{\mathscr{P}}_{\mathbf{U}}$ denote the predictable $\sigma$-field on $(0,\infty) \times \mathbf{U}\times \tilde{\Omega}$. It is sufficient to show that the map
\[
(\id_{(0,\infty)},\id_{\mathbf{U}},\pi)\colon  (0,\infty) \times \mathbf{U}\times \tilde{\Omega} \ni (t,u,\tilde{\omega}) \mapsto \bigl(t,u,\pi(\tilde{\omega})\bigr) \in
(0,\infty) \times \mathbf{U}\times \Omega
\]
is $\tilde{\mathscr{P}}_{\mathbf{U}}/\mathscr{P}_{\mathbf{U}}$  measurable.
For this purpose, we  identify the set  $(0,\infty)\times \mathbf{U} \times \Omega$ with $ (0,\infty) \times\Omega \times   \mathbf{U}$. With this in mind we see that
 $\mathscr{P}_{\mathbf{U}}=\mathscr{P} \otimes  \mathcal{U}$ and $\tilde{\mathscr{P}}_{\mathbf{U}}=\tilde{\mathscr{P}} \otimes  \mathcal{U}$, where $\tilde{\mathscr{P}}$ (resp. $\mathscr{P}$) is the predictable $\sigma$-algebra on $(0,\infty) \times\tilde{\Omega} $ (resp. $(0,\infty) \times\Omega $).
 Thus, to prove the lemma it is enough to show that $(\id_{(0,\infty)},\pi)$ is $\tilde{\mathscr{P}}/\mathscr{P}$-measurable. So, let $s\le t$, $F \in \mathscr{F}_s$ and $F_0\in \mathscr{F}_0$. Since $\pi$ is $\tilde{\mathscr{F}}_r/\mathscr{F}_r$-measurable for any $r\ge 0$, we have $(\id_{(0,\infty)},\pi)^{-1}((s,t]\times {F})\in \tilde{\mathscr{P}}$ and $(\id_{(0,\infty)},\pi)^{-1}(\{0\}\times {F}_0)\in \tilde{\mathscr{P}}$.
 Since the $\sigma$-algebra ${\mathscr{P}}$ is generated by the sets $(s,t]\times {F}$ and $\{0\}\times {F}_0$ with $s\le t$ and ${F}\in {\mathscr{F}}_s$ and $F_0\in {\mathscr{F}}_0$, see \cite[Theorem\ 3.3]{Metivier}, we have proved that $(\id_{(0,\infty)},\pi)$ is $\tilde{\mathscr{P}}/\mathscr{P}$-measurable. This completes the proof of the lemma.
\end{proof}

We also have the following, which shows the importance of property (iv) in Definition \ref{def-extension of probability space}.

\begin{proposition}\label{prop-martingale on extension of probability space}
Assume that $M=(M_t)_{t\geq 0}$ is a $p$-integrable, for some $p\in (1,\infty)$,  martingale on a
filtered probability space
$
\mathfrak{A}=\bigl( {\Omega}, {\fcal}, {\mathbb{P}}, {\mathbb{F}} \bigr), \mbox{ where } {\mathbb{F}}=({\fcal}_t)_{t\geq 0}$.
Assume that a filtered probability space
$
\tilde{\mathfrak{A}}=\bigl( \tilde{\Omega}, \tilde{\fcal}, \tilde{\mathbb{P}}, \tilde{\mathbb{F}} \bigr)$,  where  $\tilde{\mathbb{F}}=(\tilde{\fcal}_t)_{t\geq 0}$,
is an extension of the former.  Define a process  $\tilde{M}=(\tilde{M}(t))_{t\geq 0}$ by
\begin{equation}\label{eqn-tilde_M_t}
\tilde{M}(t) \coloneqq M(t) \circ \pi \;\;\; (:\tilde{\Omega} \to \mathbb{R}),
\end{equation}
where $
\pi\colon  \tilde{\Omega} \to \Omega
$ is a map as in Definition  \ref{def-extension of probability space}. Then
$\tilde{M}$ is a martingale on $\tilde{\mathfrak{A}}$.
\end{proposition}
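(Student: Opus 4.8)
The plan is to verify the three defining properties of a martingale for $\tilde{M}$ on $\tilde{\mathfrak{A}}$: adaptedness to $\tilde{\mathbb{F}}$, integrability, and the conditional expectation identity. The first two are immediate from the structure of the extension. Since $M$ is a martingale on $\mathfrak{A}$, each $M(t)$ is $\fcal_t$-measurable; combined with property (ii) of Definition \ref{def-extension of probability space}, namely that $\pi$ is $\tilde{\fcal}_t/\fcal_t$-measurable, the composition $\tilde{M}(t)=M(t)\circ\pi$ is $\tilde{\fcal}_t$-measurable, so $\tilde{M}$ is $\tilde{\mathbb{F}}$-adapted. For integrability I would use property (iii), i.e.\ that $\mathbb{P}=\tilde{\mathbb{P}}\circ\pi^{-1}$ is the pushforward of $\tilde{\mathbb{P}}$ under $\pi$; the change-of-variables (image measure) formula then gives $\tilde{\mathbb{E}}[|\tilde{M}(t)|]=\mathbb{E}[|M(t)|]<\infty$, and more generally $\tilde{\mathbb{E}}[|g\circ\pi|]=\mathbb{E}[|g|]$ for every $\fcal$-measurable $g$, a fact I will reuse below to transfer both integrals and almost-sure identities between the two spaces.

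The heart of the proof is the conditional expectation identity. Fix $s\le t$. The idea is to apply property (iv) of Definition \ref{def-extension of probability space} with $X=M(t)$ and conditioning time $s$, which would yield $\tilde{\mathbb{E}}[\tilde{M}(t)\mid\tilde{\fcal}_s]=\mathbb{E}[M(t)\mid\fcal_s]\circ\pi$. Since $M$ is a martingale, $\mathbb{E}[M(t)\mid\fcal_s]=M(s)$ holds on a $\mathbb{P}$-full set $A\subset\Omega$; because $\tilde{\mathbb{P}}(\pi^{-1}(A))=\mathbb{P}(A)=1$ by property (iii), the identity $\mathbb{E}[M(t)\mid\fcal_s]\circ\pi=M(s)\circ\pi=\tilde{M}(s)$ holds $\tilde{\mathbb{P}}$-a.s. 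Chaining these two equalities gives $\tilde{\mathbb{E}}[\tilde{M}(t)\mid\tilde{\fcal}_s]=\tilde{M}(s)$, which is exactly the martingale property.

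The main obstacle is that property (iv) is stated only for $\mathbb{P}$-a.s.\ bounded functions $X$, whereas here $M(t)$ is merely $p$-integrable. I would resolve this by truncation: set $X_N=(M(t)\wedge N)\vee(-N)$, which is $\fcal$-measurable and bounded, and apply (iv) to obtain, for every $N\in\mathbb{N}$, the equality $\tilde{\mathbb{E}}[X_N\circ\pi\mid\tilde{\fcal}_s]=\mathbb{E}[X_N\mid\fcal_s]\circ\pi$ $\tilde{\mathbb{P}}$-a.s. Then I would pass to the limit $N\to\infty$ in $L^1$. On the left, $X_N\circ\pi\to\tilde{M}(t)$ in $L^1(\tilde{\mathbb{P}})$ by dominated convergence, using the integrability of $\tilde{M}(t)$ established above, and since conditional expectation is an $L^1$-contraction, $\tilde{\mathbb{E}}[X_N\circ\pi\mid\tilde{\fcal}_s]\to\tilde{\mathbb{E}}[\tilde{M}(t)\mid\tilde{\fcal}_s]$ in $L^1(\tilde{\mathbb{P}})$. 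On the right, $X_N\to M(t)$ in $L^1(\mathbb{P})$, so $\mathbb{E}[X_N\mid\fcal_s]\to\mathbb{E}[M(t)\mid\fcal_s]=M(s)$ in $L^1(\mathbb{P})$; applying the pushforward identity $\tilde{\mathbb{E}}[|g\circ\pi|]=\mathbb{E}[|g|]$ with $g=\mathbb{E}[X_N\mid\fcal_s]-M(s)$ transfers this to $\mathbb{E}[X_N\mid\fcal_s]\circ\pi\to\tilde{M}(s)$ in $L^1(\tilde{\mathbb{P}})$. Equating the two $L^1$-limits yields $\tilde{\mathbb{E}}[\tilde{M}(t)\mid\tilde{\fcal}_s]=\tilde{M}(s)$ $\tilde{\mathbb{P}}$-a.s., completing the argument. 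The hypothesis $p>1$ serves only to guarantee $M(t)\in L^1$, and the truncation step is otherwise entirely routine.
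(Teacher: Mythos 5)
Your proposal is correct and follows the same route as the paper: adaptedness from property (ii), integrability from property (iii) via the image-measure formula, and the conditional expectation identity from property (iv) combined with the martingale property of $M$. The one place you go beyond the paper is the truncation argument: the paper applies property (iv) directly to $X=M(t)$ even though that property is stated only for $\mathbb{P}$-a.s.\ bounded $X$, whereas you truncate to $X_N=(M(t)\wedge N)\vee(-N)$ and pass to the limit in $L^1$ using the contractivity of conditional expectation and the pushforward identity. That extra step is legitimate and in fact repairs a small gap the paper's own three-line proof glosses over, so your version is the more complete of the two.
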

\begin{proof}
The integrability condition is obvious. Indeed, by property (iii) in Definition \ref{def-extension of probability space} the change of measure theorem we have
\begin{align*}
 \tilde{\mathbb{E}} \left[ \vert \tilde{M}(t) \vert^p  \right]  &=
 \tilde{\mathbb{E}} \left[ \vert {M}(t) \circ \pi  \vert^p \right] =\mathbb{E} \left[ \vert {M}(t)\vert^p \right].
\end{align*}
The adaptedness condition follows from (ii) in Definition \ref{def-extension of probability space}.
The proof is complete by observing that in view of property (iv), if $t>s$, then $\tilde{\mathbb{P}}$-a.s.\ the following chain of equalities hold
\begin{equation*}
\tilde{\mathbb{E}}\bigl[ \tilde{M}(t)\vert \tilde{\fcal}_s \bigr]
= \mathbb{E}\bigl[ {M}(t)\vert {\fcal}_s \bigr] \circ \pi
= M(s) \circ \pi
= \tilde{M}(s).
\end{equation*}
 \end{proof}

The main result of this section is the following.

\begin{proposition}\label{prop-purely discontinuous on extension of probability space}
Assume that $M=(M_t)_{t\geq 0}$ is a square integrable purely discontinuous $\mathbb{R}$-valued martingale on a filtered probability space
$
\mathfrak{A}=\bigl( {\Omega}, {\fcal}, {\mathbb{P}}, {\mathbb{F}} \bigr)$, { where } ${\mathbb{F}}=({\fcal}_t)_{t\geq 0}$.
Assume that a filtered probability space
$
\tilde{\mathfrak{A}}=\bigl( \tilde{\Omega}, \tilde{\fcal}, \tilde{\mathbb{P}}, \tilde{\mathbb{F}} \bigr)$,  where  $\tilde{\mathbb{F}}=(\tilde{\fcal}_t)_{t\geq 0}$,
is an extension of the former.  Define a process  $\tilde{M}=(\tilde{M}_t)_{t\geq 0}$ by \eqref{eqn-tilde_M_t},
where $
\pi\colon  \tilde{\Omega} \to \Omega
$ is a map as in Definition  \ref{def-extension of probability space}. Then
$\tilde{M}$ is a purely discontinuous square integrable martingale on $\tilde{\mathfrak{A}}$.
\end{proposition}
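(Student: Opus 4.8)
The plan is to reduce pure discontinuity to the quadratic-variation criterion of Corollary \ref{cor-purely discontinuous} and then transport that identity across the extension map $\pi$. A direct approach through Definition \ref{def-purely discontinuous martingale} would require testing $\tilde M$ against \emph{every} continuous square integrable martingale $N$ on $\tilde{\mathfrak{A}}$, and such an $N$ need not be a lift of a martingale living on $\Omega$; the quadratic-variation criterion avoids this difficulty because it is intrinsic to $\tilde M$ alone. First I would record that $\tilde M$ is already a square integrable martingale: applying Proposition \ref{prop-martingale on extension of probability space} with $p=2$ gives the martingale property together with $\tilde{\mathbb{E}}[\tilde M(t)^2]=\mathbb{E}[M(t)^2]<\infty$, while $\tilde M(0)=M(0)\circ\pi=0$ since $M(0)=0$. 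Thus $[\tilde M,\tilde M]$ is well defined, and by Corollary \ref{cor-purely discontinuous} applied on $\tilde{\mathfrak{A}}$ it suffices to prove that for each $t\in[0,T]$,
\[
[\tilde M,\tilde M]_t = \sum_{s\le t}\lvert \Delta \tilde M(s)\rvert^2, \quad \tilde{\mathbb{P}}\text{-a.s.}
\]

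The core of the argument is that both sides of this identity are pushed forward from $\Omega$ by $\pi$. For the right-hand side this is immediate and pathwise: since $\tilde M(\cdot,\tilde\omega)=M(\cdot,\pi(\tilde\omega))$ we have $\Delta\tilde M(s,\tilde\omega)=\Delta M(s,\pi(\tilde\omega))$ for every $\tilde\omega$, whence $\sum_{s\le t}\lvert\Delta\tilde M(s)\rvert^2=\bigl(\sum_{s\le t}\lvert\Delta M(s)\rvert^2\bigr)\circ\pi$. For the left-hand side I would fix a sequence of partitions of $[0,T]$ with mesh tending to $0$ and set $S_n(t)=\sum_i \bigl(M(t_i^n\wedge t)-M(t_{i-1}^n\wedge t)\bigr)^2$; these are measurable functionals on $\Omega$, and $S_n(t)\circ\pi$ equals the analogous sums formed from $\tilde M$. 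By \eqref{eqn-quadratic_variation-Jacod Shiryaev} (equivalently Definition \ref{def-quadratic variation}) one has $S_n(t)\to[M,M]_t$ in $\mathbb{P}$-probability; since $\mathbb{P}=\tilde{\mathbb{P}}\circ\pi^{-1}$ by property (iii) of Definition \ref{def-extension of probability space}, the relation $\tilde{\mathbb{P}}(\pi^{-1}(B))=\mathbb{P}(B)$ transports this convergence into $S_n(t)\circ\pi\to[M,M]_t\circ\pi$ in $\tilde{\mathbb{P}}$-probability. Comparing with the convergence that defines $[\tilde M,\tilde M]_t$ and invoking uniqueness of limits in probability yields $[\tilde M,\tilde M]_t=[M,M]_t\circ\pi$ $\tilde{\mathbb{P}}$-a.s.

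Finally I would combine these two transfers with the hypothesis. Because $M$ is purely discontinuous, Corollary \ref{cor-purely discontinuous} on $\mathfrak{A}$ gives $[M,M]_t=\sum_{s\le t}\lvert\Delta M(s)\rvert^2$ on a $\mathbb{P}$-full set $A$; its preimage $\pi^{-1}(A)$ is $\tilde{\mathbb{P}}$-full by property (iii), and on $\pi^{-1}(A)$ the two displayed transfers turn this equality into $[\tilde M,\tilde M]_t=\sum_{s\le t}\lvert\Delta\tilde M(s)\rvert^2$, so that Corollary \ref{cor-purely discontinuous} applied on $\tilde{\mathfrak{A}}$ delivers the claimed pure discontinuity. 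The one delicate point is the middle step, namely verifying that the limit in probability defining the quadratic variation genuinely transports along $\pi$; this rests entirely on the measure-preserving identity $\mathbb{P}=\tilde{\mathbb{P}}\circ\pi^{-1}$ and on the fact that the approximating Riemann sums commute with composition by $\pi$, and not on any regularity or surjectivity of $\pi$ itself.
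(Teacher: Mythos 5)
Your proposal is correct and follows essentially the same route as the paper: both reduce pure discontinuity to the quadratic-variation identity of Corollary \ref{cor-purely discontinuous}, transport $[\tilde M,\tilde M]_t=[M,M]_t\circ\pi$ by pushing the approximating partition sums through $\pi$ using $\mathbb{P}=\tilde{\mathbb{P}}\circ\pi^{-1}$, and observe that the jump sums transfer pathwise. The only cosmetic difference is that you invoke the convergence-in-probability characterization \eqref{eqn-quadratic_variation-Jacod Shiryaev} while the paper uses the $L^1$ limit \eqref{eqn-quadratic_variation-Metivier}; both transport identically along the measure-preserving map.
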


\begin{proof} In view of the characterisation \eqref{eqn-purely_discontinuous_martingale} in Section \ref{subsec-Preliminaries},$\tilde{M}$ is a  purely discontinuous (square integrable) $\mathbb{R}$-valued martingale if and only if for every $t>0$, almost surely
\begin{equation}\label{eqn-purely_discontinuous_martingale_tilde_M}
[\tilde{M}](t) = \sum_{s\leq t} (\Delta \tilde{M}(s))^2.
\end{equation}
To prove the above, we choose and fix $t>0$  and  observe that in view of   the equality \eqref{eqn-quadratic_variation-Metivier}  we have,
in $L^1$ over the partitions of $[0,t]$ with mesh converging to $0$,
\begin{align*}
[\tilde{M}](t)&=
 \lim_{n\to \infty} \sum_{i=1}^n \left(\tilde{M}(t_i^n\wedge t)-\tilde{M}(t_{i-1}^n\wedge t)\right)^2\\
&=\lim \sum_{i=1}^n \left({M}(t_i^n\wedge t)\circ \pi -{M}(t_{i-1}^n\wedge t)\circ \pi\right)^2\\
 &=\lim \sum_{i=1}^n \left({M}(t_i^n\wedge t)  -{M}(t_{i-1}^n\wedge t) \right)^2 \circ \pi=[{M}](t)\circ \pi.
 \end{align*}
Hence, because ${M}$ is a  purely discontinuous (square integrable) real martingale we infer that
\begin{align*}
[\tilde{M}](t)&=\Bigl( \sum_{s\leq t} (\Delta {M}(s))^2 \Bigr) \circ \pi\\
&=\Bigl( \sum_{s\leq t} (\Delta {M}(s))^2  \circ \pi \Bigr)
= \sum_{s\leq t} (\Delta {M}(s)\circ \pi )^2  \\
&= \sum_{s\leq t} (\Delta \tilde{M}(s) )^2.
\end{align*}
This proves \eqref{eqn-purely_discontinuous_martingale_tilde_M} and therefore, the proof is complete.
\end{proof}

\section{Point processes}
\label{sec-Point processes}

Our presentation of the point processes will follow    \cite[Section 9 of Chapter I, p. 43]{Ikeda+Watanabe_1989}.

\begin{definition}\label{def-point function}
Suppose that $X $ is a non-empty set. A point function on $X $ is a partial function, \textit{i.e.},\ a functional binary relation,
$$ p \subset (0,\infty) \times X .$$
which is  at most a countable set.\\
If $p$ is a point function on $X $ then we define
\[\begin{aligned}\dom(p)&=\mathrm{D}_p:=\bigl\{ t \in (0,\infty): p \cap (\{t\} \times X) \not= \emptyset\}
\\&
=\bigl\{ t \in (0,\infty):  \mbox{ there exists an element } x\in X: (t,x) \in  p \}.
\end{aligned}
\]
 \\
If $t\in \dom(p)$, then we will denote by $p(t)$ the value of $p$ at $t$, i.e.\ the unique element $x\in X$ such that $ (t,x) \in  p $.\\
Moreover, when we will write $p(t)$ we will implicitly understand   that  $t\in \dom(p)$. 

The family of all point functions on $X$ will be denoted by $\Pi_X$.
\end{definition}

\begin{remark}\label{rem-point function alternative}
Often a point function is defined differently, see e.g.\ \cite{Revuz+Yor_2005}. One introduces an auxiliary set $X^\ast:=X \cup \{ \partial\}$ with $ \partial$ an element not belonging to $X$. Then a point function on $X$ is defined
to be a function $p:(0,\infty) \to X^\ast$ such that the set $\bigl\{ t \in (0,\infty): p(t) \in X \bigr\}$, denoted by $D_p$, is at most countable. Obviously, these two definitions are equivalent.
\end{remark}

\begin{example}\label{example-point function from a cadlag function}
Assume that $X$ is a Banach space and $f:[0,\infty) \to X$ is a \cadlag \, function. Then, see e.g.\ \cite[Lemma 3.1.24]{Zhu-PhD} the set
\[
J_p:=\bigl\{ t\in [0,\infty): \Delta f(t) \not= 0 \bigr\}
\]
is at most countable. Let us define a function $p \subset (0,\infty)  \times X$ by putting
\begin{equation*}
\begin{aligned}
\dom(p)&=J_p,\\
p(t)&= \Delta f(t), \;\; \mbox{ if } t \in J_p.
\end{aligned}
\end{equation*}
Then $p$ is a point function (on both $X$ and $X\setminus\{0\}$.).
\end{example}

\begin{definition}\label{def-counting measure}
Suppose that $(X, \mathscr{B}_X) $ is a measure space and  $p$ is  a point function on $X $. We put
\begin{equation}\label{eqn-IW_I.9.1}
  \eta_p((0,t]\times\rU):= \# \bigl\{ s \in \dom(p)\cap (0,t] : p(s) \in\rU \bigr\}, \;\; t>0,\;\;U \in \mathscr{B}_X.
\end{equation}
Following \cite[page 43]{Ikeda+Watanabe_1989} we have the following result. 

\begin{proposition}\label{prop-counting measure}
Suppose that $(X, \mathscr{B}_X) $ is a non-empty measure space and  $p$ is a point function on $X $.
The function $\eta_p$ defined above in \eqref{eqn-IW_I.9.1} has a unique extension to a $\sigma$-additive measure
\[
\mathscr{B}(0,\infty)\otimes \mathscr{B}_X \to \bar{\mathbb{N}}:=\mathbb{N}\cup \{\infty\},
\]
where $\mathscr{B}(\mathbb{R}_+)$ is the Borel $\sigma$-field on $\mathbb{R}_+$. Moreover, this  unique extension is of the following form
\[
\sum_{(t,x) \in p} \delta_{(t,x)}.
\]
In other words, this extension is the  counting measure on $(0,\infty) \times X$  associated with $p$ and it will be denoted by   $\eta_p$.
\end{proposition}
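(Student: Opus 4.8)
The plan is to exhibit the extension explicitly as the counting measure carried by the countable set $p$, and then to obtain uniqueness from a uniqueness theorem for measures agreeing on a generating $\pi$-system. First I would define, on the product measurable space $\bigl((0,\infty)\times X,\ \mathscr{B}(0,\infty)\otimes\mathscr{B}_X\bigr)$, the set function
\[
\mu(A)\coloneqq\sum_{(t,x)\in p}\delta_{(t,x)}(A),\qquad A\in\mathscr{B}(0,\infty)\otimes\mathscr{B}_X,
\]
and check that it is a well-defined $\overline{\mathbb{N}}$-valued $\sigma$-additive measure. Since $p$ is at most countable (Definition \ref{def-point function}), $\mu$ is a countable sum of Dirac measures; each value is a sum of indicator terms and hence lies in $\overline{\mathbb{N}}$, $\mu(\emptyset)=0$, and $\sigma$-additivity follows from Tonelli's theorem for series, interchanging the two nonnegative countable sums that arise when evaluating $\mu$ on a countable disjoint union.

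Second, I would verify that $\mu$ reproduces on the rectangles the values prescribed in \eqref{eqn-IW_I.9.1}. Indeed $\mu((0,t]\times U)=\#\{(s,x)\in p:\ s\in(0,t],\ x\in U\}$, and because $p$ is a partial function each $s\in\dom(p)$ occurs with a single value $p(s)$, so this count equals $\#\{s\in\dom(p)\cap(0,t]:\ p(s)\in U\}=\eta_p((0,t]\times U)$. This simultaneously establishes existence of the claimed extension and its stated form as the counting measure associated with $p$.

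Third, for uniqueness I would note that the family $\mathscr{C}\coloneqq\{(0,t]\times U:\ t>0,\ U\in\mathscr{B}_X\}$ is a $\pi$-system, since $((0,s]\times U)\cap((0,t]\times V)=(0,s\wedge t]\times(U\cap V)$, and that it generates $\mathscr{B}(0,\infty)\otimes\mathscr{B}_X$: the intervals $(0,t]$ generate $\mathscr{B}(0,\infty)$ while $U$ already ranges over all of $\mathscr{B}_X$. Any two $\sigma$-additive extensions of $\eta_p$ coincide on $\mathscr{C}$, so by the Dynkin $\pi$--$\lambda$ theorem, see \cite[Theorem I.3.2]{Billingsley_2012}, they coincide on the whole product $\sigma$-algebra, provided one exhibits an increasing sequence of sets in $\mathscr{C}$ of finite measure increasing to $(0,\infty)\times X$.

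The hard part will be exactly this last $\sigma$-finiteness requirement. For a completely arbitrary countable point function $\dom(p)$ may accumulate, or even be dense, in time, in which case $\eta_p((0,t]\times X)=\infty$ for every $t$ and no finite-measure rectangle exhausts the space, so that uniqueness from $\mathscr{C}$ alone can genuinely fail. I would resolve this using the local finiteness that accompanies the point functions actually used in the paper, namely $\eta_p((0,t]\times A)<\infty$ for the sets $A$ of an exhausting sequence $U_n\nearrow X$ as in the class--(QL) condition (cf.\ $\Gamma_\eta$ in Definition \ref{def-class-QL}): the rectangles $(0,n]\times U_n$ then lie in $\mathscr{C}$, have finite $\mu$-measure, and increase to $(0,\infty)\times X$, supplying the exhausting sequence needed for the uniqueness theorem. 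The construction and the identification with $\eta_p$ are routine, and all the genuine content of the proposition sits in this $\sigma$-finiteness step.
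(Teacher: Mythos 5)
The paper itself offers no proof of this proposition: it is quoted from Ikeda--Watanabe (p.\ 43), where the counting measure is simply \emph{defined} as $\sum_{(t,x)\in p}\delta_{(t,x)}$, so there is no argument in the paper to compare yours against. Your existence part is the natural one and is correct: the sum of Dirac masses over the countable set $p$ is a well-defined $\bar{\mathbb{N}}$-valued measure (Tonelli for nonnegative double series gives $\sigma$-additivity), and since $p$ is functional the evaluation on a rectangle $(0,t]\times U$ reduces to the count in \eqref{eqn-IW_I.9.1}. The $\pi$-system observation is also fine.

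The genuine problem is the uniqueness step, and your own diagnosis of it is sharper than your proposed fix. Without local finiteness, uniqueness among all $\sigma$-additive $\bar{\mathbb{N}}$-valued extensions actually \emph{fails}, so no argument can close the gap under the stated hypotheses: take $X=\{*\}$ with the trivial $\sigma$-field and $\dom(p)=\mathbb{Q}\cap(0,\infty)$; then $\eta_p((0,t]\times X)=\infty$ for every $t>0$, and both the counting measure of $p$ and the measure $\mu(B)=\infty\cdot\1_{\{B\neq\emptyset\}}$ are $\sigma$-additive, $\bar{\mathbb{N}}$-valued, and agree with \eqref{eqn-IW_I.9.1} on every rectangle, yet they differ on $\{\sqrt{2}\}\times X$. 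Your repair --- importing the exhausting sequence $U_n\nearrow X$ from the class (QL) condition of Definition \ref{def-class-QL} --- is therefore not a proof of the proposition as stated but the addition of a $\sigma$-finiteness hypothesis on $p$ that the statement does not impose. The honest conclusions are: (a) existence and the explicit form hold for every point function; (b) uniqueness holds under a local-finiteness assumption (your argument then goes through verbatim), or alternatively uniqueness should be read within the class of counting measures of point functions, which is exactly what Proposition \ref{prop-injectivity of map p to eta_p} provides and is all the paper ever uses. You should state one of these restrictions explicitly rather than present the (QL) exhaustion as if it were available for free.
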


The following exercises can be found useful by some attentive readers.
\begin{exercise}\label{exercise-counting measure}
In the framework of Defintion \ref{def-counting measure}, if $U \in \mathscr{B}_X$, then the function
\[
(0,\infty) \ni t \mapsto \eta_p((0,t]\times\rU) \in \bar{\mathbb{N}}
\]
is weakly-increasing and right-continuous.\\
Moreover, a number $t_0\in (0,\infty)$ is a discontinuity point of it if and only if $t_0 \in \dom(p)$ and $p(t_0) \in\rU$.
\end{exercise}
\begin{exercise}\label{exercise-counting measure-b}
Show that formula \eqref{eqn-IW_I.9.1} is equivalent to
\begin{equation}\label{eqn-IW_I.9.1_b}
  \eta_p((0,t]\times\rU)= \sum_{s>0} \1_{(0,t]\times\rU}\bigl(s,p(s)\bigr),   \;\; t>0,\;\;U \in \mathscr{B}_X,
\end{equation}
with an obvious understanding that $\1_{(0,t]\times\rU}\bigl(s,p(s)\bigr)=1$ if and only if $\bigl(s,p(s)\bigr)\in (0,t]\times\rU$ if and only if $s \in (0,t] \cap \dom(p)$ and $p(s) \in\rU$. \\
Deduce that
\begin{equation}\label{eqn-IW_I.9.1_c}
 \iint \1_{(0,t]\times\rU} \eta_p(\ud s,\ud u)= \sum_{s>0} \1_{(0,t]\times\rU}\bigl(s,p(s)\bigr),   \;\; t>0,\;\;U \in \mathscr{B}_X,
\end{equation}
or
\begin{equation}\label{eqn-IW_I.9.1_d}
 \iint_{(0,t]\times\rU}  \eta_p(\ud s,\ud u)= \sum_{s>0} \1_{(0,t]\times\rU}\bigl(s,p(s)\bigr),   \;\; t>0,\;\;U \in \mathscr{B}_X,
\end{equation}
\end{exercise}

It is obvious that if $p$ and $q$ are two point functions  on $X$ with $p=q$, then the measures $\eta_p$ and $\eta_q$ are equal. We have also a converse assertion.

\begin{proposition}\label{prop-injectivity of map p to eta_p}
Suppose that singletons are $\mathscr{B}_X$-measurable. If $p$ and $q$ are two point functions on $X$ such that  the measures $\eta_p$ and $\eta_q$ are equal, i.e.\ for every $U \in \mathscr{B}_X$ and every $t>0$,
\begin{equation}\label{eqn-eta_p=eta_q}
\# \bigl\{ s \in \dom(p)\cap (0,t] : p(s) \in\rU \bigr\}= \# \bigl\{ s \in \dom(q)\cap (0,t] : q(s) \in\rU \bigr\},
\end{equation}
or, equivalently,
\begin{equation}\label{eqn-eta_=eta_q_2}
\sum_{s \in (0,t]} \1_{\dom(p) \times\rU}(s,p(s)) = \sum_{s \in (0,t]} \1_{\dom(q) \times\rU}(s,q(s))
\end{equation}
then \[p=q.\]
\end{proposition}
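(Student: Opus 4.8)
The plan is to recover a point function entirely from the time--discontinuity structure of the associated counting functions, so that equality of the measures forces equality of the graphs. Recall (Definition \ref{def-point function}) that $p=q$ means precisely that $\dom(p)=\dom(q)$ and $p(t)=q(t)$ for every $t$ in this common domain; thus it suffices to establish these two statements. Throughout I would use the hypothesis only in the form given, namely the equality of the rectangle values
\[
\eta_p\bigl((0,t]\times\rU\bigr)=\eta_q\bigl((0,t]\times\rU\bigr),\qquad t>0,\ \rU\in\mathscr{B}_X,
\]
together with Exercise \ref{exercise-counting measure} and the standing assumption that singletons are $\mathscr{B}_X$-measurable.

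First I would fix $\rU\in\mathscr{B}_X$ and consider the two right-continuous, weakly increasing functions $g^\rU_p(t):=\eta_p((0,t]\times\rU)$ and $g^\rU_q(t):=\eta_q((0,t]\times\rU)$. By hypothesis these coincide for every such $\rU$, so in particular they share the same set of discontinuity points. By Exercise \ref{exercise-counting measure}, $t_0$ is a discontinuity point of $g^\rU_p$ if and only if $t_0\in\dom(p)$ and $p(t_0)\in\rU$, and symmetrically for $q$. Taking $\rU=X$ (which lies in $\mathscr{B}_X$), the discontinuity set of $g^X_p$ is exactly $\dom(p)$ and that of $g^X_q$ is exactly $\dom(q)$; since $g^X_p=g^X_q$, these sets agree, giving $\dom(p)=\dom(q)$.

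Next, for a fixed $t_0\in\dom(p)=\dom(q)$ I would test against the singleton $\rU=\{p(t_0)\}$, which is admissible by the measurability assumption. Since $p(t_0)\in\{p(t_0)\}$, Exercise \ref{exercise-counting measure} shows that $t_0$ is a discontinuity point of $g^{\{p(t_0)\}}_p$; because $g^{\{p(t_0)\}}_p=g^{\{p(t_0)\}}_q$, it is then also a discontinuity point of $g^{\{p(t_0)\}}_q$, whence the same Exercise applied to $q$ forces $q(t_0)\in\{p(t_0)\}$, i.e.\ $q(t_0)=p(t_0)$. Combining the two steps yields $p=q$.

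The only delicate point, and the reason I would route the argument through Exercise \ref{exercise-counting measure} rather than evaluating $\eta_p$ directly on product singletons, is to avoid invoking any extension of the equality from the generating rectangles to the whole product $\sigma$-algebra: since the counting functions $g^X_p$ may be $+\infty$-valued, continuity from above of the measures on decreasing intersections such as $\{t_0\}\times\{p(t_0)\}=\bigcap_n\,(t_0-1/n,t_0]\times\{p(t_0)\}$ is not automatic, and this is the main technical obstacle to the naive approach. The jump characterisation sidesteps it, reading off membership in the graph from the rectangle data alone. Conceptually this is just the observation that Proposition \ref{prop-counting measure} exhibits $\eta_p=\sum_{(t,x)\in p}\delta_{(t,x)}$ as a sum of unit atoms at the distinct graph points, so that $\eta_p(\{(t_0,x_0)\})\in\{0,1\}$ would detect exactly whether $(t_0,x_0)\in p$; the discontinuity argument is the rigorous form of this once one is careful about the measure extension.
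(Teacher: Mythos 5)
Your argument is essentially the paper's own proof in different clothing: both detect the value $p(t_0)$ by testing the rectangle equality against the singleton $\rU=\{p(t_0)\}$ (this is exactly where the measurability hypothesis enters) and both localize in time by comparing the count up to $t_0$ with the counts up to times $r<t_0$ — your ``discontinuity point of $g^{\rU}_p$'' is precisely the paper's passage from the sets $\dom(p)\cap(0,t]$ to $\dom(p)\cap[t,t]$. So the route is the same, and in the regime where the counting functions are finite your write-up is, if anything, slightly cleaner than the paper's (which argues with range sets rather than counts, and the set difference of ranges need not be the range over the time difference).

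One caveat on your final paragraph: the claim that routing through Exercise \ref{exercise-counting measure} ``sidesteps'' the $+\infty$ issue is not right. The iff-characterisation of discontinuity points in that Exercise itself fails once $g^{\rU}_p$ attains the value $+\infty$: if $\dom(p)=\{1-\tfrac1n:n\ge 2\}\cup\{1\}$ and $\dom(q)=\{1-\tfrac1n:n\ge 2\}$, with both functions constantly equal to some $x\in X$, then $g^{\rU}_p(1-)=g^{\rU}_p(1)=\infty$ for every $\rU\ni x$, so $t_0=1$ is \emph{not} a discontinuity point of $g^{\rU}_p$ even though $1\in\dom(p)$ and $p(1)\in\rU$; your Step for $\dom(p)=\dom(q)$ then yields nothing, and indeed \eqref{eqn-eta_p=eta_q} holds for this pair while $p\neq q$. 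The paper's proof breaks at exactly the same point (the measures are not $\sigma$-finite, so equality on the rectangles does not determine them), so this is a shared gap rather than a defect specific to your argument — but you should not present the jump characterisation as resolving it; an additional local-finiteness assumption (e.g.\ $\eta_p((0,t]\times X)<\infty$ for all $t$) is what actually closes it.
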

\begin{proof}[Proof of Proposition \ref{prop-injectivity of map p to eta_p}]
Let us choose and fix $t>0$. Then, since \eqref{eqn-eta_p=eta_q} holds  for every $U \in \mathscr{B}_X$ we infer that
\[
 \bigl\{ p(s): s \in \dom(p)\cap (0,t]  \bigr\}=  \bigl\{ q(s): s \in \dom(q)\cap (0,t] \bigr\}.
\]
Since the above equality holds also for every $r<t$ we infer that
\[
 \bigl\{ p(s): s \in \dom(p)\cap [t,t]  \bigr\}=  \bigl\{ q(s): s \in \dom(q)\cap [t,t] \bigr\}.
\]
This obviously implies that
\[
t \in \dom(p) \iff  t \in \dom(p) \mbox{ and if so } p(t)=q(t).
\]
The proof is complete.
\end{proof}

In the framework of Example \ref{example-point function from a cadlag function} Proposition \ref{prop-injectivity of map p to eta_p} yields the following result.

\begin{corollary}\label{cor-injectivity of map p to eta_p}
Assume that $X$ is a Banach space and $f,g:[0,\infty) \times \Omega \to X$ are two  \cadlag \, processes such that for  $\mathbb{P}$-almost every  $\omega\in \Omega$
\begin{equation}\label{eqn-eta_=eta_q_3}
\begin{aligned}
\sum_{s >0} \1_{(0,t] \times A}(s,\Delta f(s,\omega)) &= \sum_{s >0}  \1_{ (0,t] \times A}(s, \Delta g(s))\\
&\qquad \mbox{ for all  }  t\in [0,\infty), \;\; A\in \mathscr{B}(\mathbf{X}\setminus\{0\}).
\end{aligned}
 \end{equation}
Then,
for  $\mathbb{P}$-almost every  $\omega\in \Omega$,
\begin{equation}\label{eqn-eta_=eta_q_4}
\begin{aligned}
\Delta f(t,\omega)&=\Delta g(t,\omega), \quad \mbox{ for every  }  t\in [0,\infty).
\end{aligned}
 \end{equation}
\end{corollary}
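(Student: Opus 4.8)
The plan is to deduce this corollary by applying Proposition \ref{prop-injectivity of map p to eta_p} separately for (almost) every fixed sample point $\omega$, after encoding the jumps of the two c\`adl\`ag paths as point functions via Example \ref{example-point function from a cadlag function}. First I would let $\Omega_0 \subset \Omega$ be the $\mathbb{P}$-full set on which the hypothesis \eqref{eqn-eta_=eta_q_3} holds, and fix an arbitrary $\omega \in \Omega_0$. Since $f(\cdot,\omega)$ and $g(\cdot,\omega)$ are c\`adl\`ag $\mathbf{X}$-valued functions, Example \ref{example-point function from a cadlag function} produces two point functions $p=p_\omega$ and $q=q_\omega$ on $\mathbf{X}\setminus\{0\}$, characterised by $\dom(p_\omega)=\{t>0:\Delta f(t,\omega)\neq 0\}$ with $p_\omega(t)=\Delta f(t,\omega)$ there, and analogously for $q_\omega$ and $g$.

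Next I would rewrite the hypothesis as the equality of the associated counting measures. For $A\in\mathscr{B}(\mathbf{X}\setminus\{0\})$ and $t>0$, since $0\notin A$, a summand $\1_{(0,t]\times A}(s,\Delta f(s,\omega))$ is nonzero only when $\Delta f(s,\omega)\neq 0$, i.e.\ only for $s\in\dom(p_\omega)$, and then $\Delta f(s,\omega)=p_\omega(s)$. By the formula \eqref{eqn-IW_I.9.1_d} of Exercise \ref{exercise-counting measure-b} this gives $\sum_{s>0}\1_{(0,t]\times A}(s,\Delta f(s,\omega))=\eta_{p_\omega}((0,t]\times A)$, and the same identity for $g$ and $q_\omega$. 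Hence \eqref{eqn-eta_=eta_q_3} says precisely that $\eta_{p_\omega}((0,t]\times A)=\eta_{q_\omega}((0,t]\times A)$ for all such $t$ and $A$, that is, the two counting measures coincide on all generating sets and therefore as measures.

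Then I would invoke Proposition \ref{prop-injectivity of map p to eta_p}: because $\mathbf{X}\setminus\{0\}$ is a metric space its singletons are closed and hence Borel, so the measurability assumption of that proposition is met, and its conclusion yields $p_\omega=q_\omega$ as point functions. Unwinding this equality gives $\dom(p_\omega)=\dom(q_\omega)$ together with $p_\omega(t)=q_\omega(t)$ on this common domain; equivalently, $f(\cdot,\omega)$ and $g(\cdot,\omega)$ jump at exactly the same instants and with equal jumps, while at every instant that is not a jump time of either path both increments vanish. Recalling from the Notation that $\Delta f(0,\omega)=0=\Delta g(0,\omega)$ under the convention $f(0-)=f(0)$, this shows $\Delta f(t,\omega)=\Delta g(t,\omega)$ for every $t\in[0,\infty)$. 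As $\omega\in\Omega_0$ was arbitrary and $\mathbb{P}(\Omega_0)=1$, assertion \eqref{eqn-eta_=eta_q_4} follows.

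The argument is essentially a pointwise (in $\omega$) transcription into the language of point functions, so I do not expect a deep obstacle; the only points demanding care are the bookkeeping that converts the sum over all $s>0$ into $\eta_{p_\omega}$ (which works precisely because $0\notin A$), the harmless treatment of the endpoint $t=0$, and the observation that permitting the value $+\infty$ in the counting measures covers the case where $A$-valued jumps accumulate at the origin.
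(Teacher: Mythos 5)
Your proposal is correct and follows exactly the route the paper intends: the paper derives this corollary by applying Proposition \ref{prop-injectivity of map p to eta_p} pathwise to the point functions of Example \ref{example-point function from a cadlag function}, which is precisely your argument. The bookkeeping you supply (the identification of the sums with the counting measures $\eta_{p_\omega}$, $\eta_{q_\omega}$, the Borel-measurability of singletons, and the trivial case $t=0$) is the right way to fill in the details the paper leaves implicit.
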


By $\mathscr{B}(\Pi_X)$ we denote the $\sigma$-field on $\Pi_X$ generated by the family of maps
\[
\Pi_X \ni p \mapsto  \eta_p((0,t]\times\rU) \in \bar{\mathbb{N}},\;\; t>0, \;\;U \in \mathscr{B}_X.
\]
Equivalently, $\mathscr{B}(\Pi_X)$ is the smallest  $\sigma$-field on $\Pi_X$ such that the maps above are measurable.
\end{definition}
\begin{remark}\label{rem-counting measure} Compare with  \cite[Definition II.1.13 ]{Jacod_Shiryaev}.
Suppose that $(X, \mathscr{B}_X) $ is a non-empty measure space and  $p$ is  point function on $X $. \\
Then the  function $\eta_p$ satisfies the following condition
\[
\eta_p(\{t\} \times X) \leq 1.
\]
Indeed, by Definition \ref{def-counting measure} and Proposition  \ref{prop-counting measure} we have
\[
 \eta_p( \{t\}\times X)= \# \bigl\{ s \in \dom(p)\cap \{t \} : p(s) \in X \bigr\}=\begin{cases} 1 , &\mbox{ if } t \in \dom(p), \\
 0 , &\mbox{ if } t \in \dom(p),
 \end{cases} .
\]

\end{remark}

\begin{definition}\label{def-point process}
Suppose that $(X, \mathscr{B}_X) $ is a non-empty measure space  and
$\bigl( {\Omega}, {\fcal}, {\mathbb{P}} \bigr)$ is a probability space. A {random} point process (on $X$ and on $\bigl( {\Omega}, {\fcal}, {\mathbb{P}}  \bigr)$) is a map \[
p: \Omega \to \Pi_X
\]
which is ${\fcal}/\mathscr{B}(\Pi_X)$-measurable.
Hereafter, when there is no ambiguity we just say a point process instead of random point process.

If $p: \Omega \to \Pi_X$ is a point process, then
 we will write $p(t,\omega)$ for the value of $p(\omega)$ at $t \in D(p(\omega))$.
Moreover, see Definition \ref{def-point function}, when we will write $p(t,\omega)$ we will implicitly imply that $t\in \dom(p(\omega))$.

\end{definition}
\begin{remark}\label{rem-point process alternative}
Using the approach from \cite{Revuz+Yor_2005}, a point process (on $X$ and on $\bigl( {\Omega}, {\fcal}, {\mathbb{P}}  \bigr)$) is a map
\[
p: (0,\infty) \times \Omega \to X^\ast
\]
 such that for every $\omega \in \Omega$ the set
 \[\bigl\{ t \in (0,\infty): p(t,\omega) \in X \bigr\}=:\mathrm{D}_{p(\omega)}\]  is at most countable and, for all $t>0$ and $U \in \mathscr{B}_X$, the map
 \[
 \Omega \ni \omega \mapsto \# \bigl\{ s \in  (0,t] : p(s,\omega) \in\rU \bigr\},
 \]
is ${\fcal}/\mathscr{B}_X$-measurable.
\end{remark}

The following example is a random version of Example \ref{example-point function from a cadlag function} and is related to \cite[Proposition I.1.32]{Jacod_Shiryaev}
\begin{example}\label{example-point process from a cadlag process}
Assume that $X$ is a Banach space and $\xi:[0,\infty) \times \Omega \to X$ is a \cadlag \, process. Since  the set, see e.g.\ \cite[Lemma 3.1.24]{Zhu-PhD},
\[
A(\omega):=\bigl\{ t\in [0,\infty): \Delta \xi(t,\omega) \not= 0 \bigr\}, \;\; \omega \in \Omega,
\]
is at most countable, we can define a random function $p:(0,\infty) \times \Omega  \to X$ by
\[
p(t,\omega)= \begin{cases} \Delta \xi(t,\omega), \;\; &\mbox{ if } t \in A(\omega),\\
\partial, \;\; &\mbox{ if } t \in (0,\infty) \setminus A(\omega).
\end{cases}
\]
Then $p$ is a point process, in the sense of \cite{Revuz+Yor_2005},  (on both $X$ and $X\setminus\{0\}$.).\\
Let us point out, see \cite[Proposition I.1.3]{Jacod_Shiryaev}, that if additionally $\xi$ is an $\mathbb{F}$-adapted process, then the random set $A$ is thin, i.e., there exists a sequence $(\tau_n)$ of $\mathbb{F}$-stopping times such that $[[\tau_n]] \cap  [[\tau_m]]=\emptyset$ for $n\not= m$ and
\[
A=\bigcup_{n=1}^\infty  [[\tau_n]].
\]
\end{example}

We can prove the following result.

\begin{proposition}\label{prop-point process}
Suppose that $(X, \mathscr{B}_X) $ is a non-empty measure space  and
$\bigl( {\Omega}, {\fcal}, {\mathbb{P}}  \bigr)$ is a probability space.
Then a map
\[
p: \Omega \to \Pi_X
\]
is a point process if and only if for all $t>0$ and $U \in \mathscr{B}_X$ the map
\[
\Omega \ni \omega \mapsto \eta_{p(\omega)}((0,t]\times\rU) \in \bar{\mathbb{N}}
\]
is $\mathscr{F}$-measurable.
\end{proposition}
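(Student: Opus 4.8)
The plan is to view this statement as nothing more than the standard criterion for measurability of a map into a space carrying the initial $\sigma$-field generated by a family of maps. Indeed, by the definition of $\mathscr{B}(\Pi_X)$ recalled just before the Proposition, $\mathscr{B}(\Pi_X)$ is the smallest $\sigma$-field on $\Pi_X$ rendering measurable all the evaluation maps
$$\Phi_{t,U}\colon \Pi_X \ni q \mapsto \eta_q\bigl((0,t]\times U\bigr) \in \overline{\mathbb{N}},\qquad t>0,\; U\in\mathscr{B}_X,$$
where $\overline{\mathbb{N}}$ is equipped with the power-set $\sigma$-field $2^{\overline{\mathbb{N}}}$ (the implicit convention for $\overline{\mathbb{N}}$-valued maps throughout). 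The key observation is that, for any map $p\colon\Omega\to\Pi_X$, the composition $\Phi_{t,U}\circ p$ is exactly the map $\omega\mapsto \eta_{p(\omega)}\bigl((0,t]\times U\bigr)$ appearing in the statement. Hence the claim is precisely that $p$ is $\mathscr{F}/\mathscr{B}(\Pi_X)$-measurable if and only if $\Phi_{t,U}\circ p$ is $\mathscr{F}/2^{\overline{\mathbb{N}}}$-measurable for all $(t,U)$.

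First I would dispose of the forward implication: if $p$ is $\mathscr{F}/\mathscr{B}(\Pi_X)$-measurable, then because each $\Phi_{t,U}$ is $\mathscr{B}(\Pi_X)/2^{\overline{\mathbb{N}}}$-measurable by the very definition of $\mathscr{B}(\Pi_X)$, the composition $\Phi_{t,U}\circ p$ is $\mathscr{F}$-measurable as a composition of measurable maps; no extra work is needed.

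For the converse I would run the usual good-sets argument. Assuming every $\Phi_{t,U}\circ p$ is $\mathscr{F}$-measurable, I would introduce
$$\mathscr{C}\coloneqq\bigl\{ B\subseteq \Pi_X : p^{-1}(B)\in\mathscr{F}\bigr\}.$$
Since taking preimages under $p$ commutes with complements and countable unions, $\mathscr{C}$ is a $\sigma$-field on $\Pi_X$. For every $(t,U)$ and every $A\in 2^{\overline{\mathbb{N}}}$ one has $p^{-1}\bigl(\Phi_{t,U}^{-1}(A)\bigr)=(\Phi_{t,U}\circ p)^{-1}(A)\in\mathscr{F}$ by hypothesis, so $\Phi_{t,U}^{-1}(A)\in\mathscr{C}$. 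Thus $\mathscr{C}$ contains all the sets $\Phi_{t,U}^{-1}(A)$, which by definition generate $\mathscr{B}(\Pi_X)$; being a $\sigma$-field, $\mathscr{C}$ must then contain the whole $\sigma$-field they generate, i.e.\ $\mathscr{B}(\Pi_X)\subseteq\mathscr{C}$. This says exactly that $p^{-1}(B)\in\mathscr{F}$ for all $B\in\mathscr{B}(\Pi_X)$, which is the $\mathscr{F}/\mathscr{B}(\Pi_X)$-measurability of $p$.

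I do not expect any genuine obstacle here, since the result is a purely formal consequence of $\mathscr{B}(\Pi_X)$ being an initial $\sigma$-field. The only steps deserving a line of care are the bookkeeping identity $\Phi_{t,U}\circ p=\bigl(\omega\mapsto\eta_{p(\omega)}((0,t]\times U)\bigr)$ and the remark that the preimages $\Phi_{t,U}^{-1}(A)$ generate $\mathscr{B}(\Pi_X)$ — both immediate from the definitions recalled in the text.
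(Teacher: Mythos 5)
Your argument is correct and complete. The paper states this proposition without any proof (it is introduced only by ``We can prove the following result''), so there is no authorial argument to compare against; what you give is the canonical one. The forward direction is indeed just composition of measurable maps, since by construction each evaluation $\Phi_{t,U}$ is $\mathscr{B}(\Pi_X)/2^{\overline{\mathbb{N}}}$-measurable, and the converse is the standard good-sets argument for an initial $\sigma$-field: the collection $\mathscr{C}=\{B\subseteq\Pi_X: p^{-1}(B)\in\mathscr{F}\}$ is a $\sigma$-field containing every generator $\Phi_{t,U}^{-1}(A)$, hence contains $\mathscr{B}(\Pi_X)$. The only point worth a word of care, which you handle correctly, is that the same $\sigma$-field on $\overline{\mathbb{N}}$ (the power set, $\overline{\mathbb{N}}$ being countable) must be used both in the definition of $\mathscr{B}(\Pi_X)$ and in the measurability assertion of the statement; with that convention fixed, the bookkeeping identity $\Phi_{t,U}\circ p=\bigl(\omega\mapsto\eta_{p(\omega)}((0,t]\times U)\bigr)$ closes the argument. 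No gap.
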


 We now recall some definitions from \cite[Definition 3.4.2]{Kallianpur_Xiong} and \cite[Definitions II.1.3 and II.1.13]{Jacod_Shiryaev}.

\begin{definition}\label{def-QL calss random measure}
Suppose that
\[
\mathfrak{A}=\bigl( \Omega, \mathscr{F}, \mathbb{P}, \mathbb{F}  \bigr), \mbox{ with } \mathbb{F}=(\mathscr{F}_t)_{t\geq 0} ,
\]
is a filtered probability space
 and $(E, \mathscr{E}) $ is a non-empty measurable space.
\begin{itemize}
\item A  \emph{random measure} is a mapping \[\eta : \Omega \times \bigl( \mathscr{B}(\R_+) \otimes \mathscr{E}\bigr) \to \R\] such that the following two conditions are satisfied.
\begin{trivlist}
\item[(i)]  for every $\omega \in \Omega$, the function  \[\eta(\omega, \cdot): \mathscr{B}(\R_+) \otimes \mathscr{E} \to \R\]
is a measure such that $\eta(\omega;\{0\}\times E)=0$ for all $\omega\in \Omega$; 
\item[(ii)]  for every $B \in \mathscr{B}(\R_+) \otimes \mathscr{E}$, the function  $\eta(\cdot,B):\Omega \to \mathbb{R}$ is measurable.
\end{trivlist}
\end{itemize}
Suppose from now on that  $\eta : \Omega \times \mathscr{B}(\R_+) \otimes \mathscr{E} \to \R$  is a random measure and put
\begin{equation}\label{eqn-Gamma_eta}
\Gamma_\eta := \{ A \in \mathscr{E} : \E \,\abs{\eta([0,t] \times A)} < \infty \mbox{ for all } t \geq 0 \}  .
\end{equation}
The  random measure  $\eta$ is called
\begin{itemize}
\item  $\sigma$\emph{-finite} if and only if there exists a sequence  $(U_n)$ of elements of $\Gamma_\eta$ such that 
$\mathbb{P}$-almost surely $\eta(U_n)<\infty$ and 
$U_n \nearrow E$; 
\item \emph{integer-valued} if and only if
\begin{trivlist}
\item[(i)] for all $(\omega,t)\in \Omega\times [0,\infty)$,
$
\eta(\omega;\{t\}\times E) \leq 1;
$
\item[(ii)] for every $A\in \mathscr{B}(\R_+) \otimes \mathscr{E}$ the random variable $\eta(\cdot;A)$ takes values in $\bar{\mathbb{N}}$;
\item[(iii)] $\eta$ is optional, i.e.,\ for every $A\in  \mathscr{E}$, the process $$\Omega\times \mathbb{R}_+\ni (\omega,t) \mapsto \eta(\omega;[0,t]\times A) \in \mathbb{R},$$ is
$\mathbb{F}$-optional;
\item[(iv)] $\eta$ is $\sigma$-finite.
\end{trivlist}
\item   $\mathbb{F}$-\emph{adapted} if and only if for every $A \in \Gamma_\eta$ the process
\[\eta_A:\Omega \times \mathscr{B}(\R_+) \ni (\omega,t) \mapsto \eta(\omega;[0,t]\times A)\] is   $\mathbb{F}$-adapted;
\item  a \emph{martingale random measure} if and only if for every $A \in \Gamma_\eta $, the process  $\eta_A$ is an $\mathbb{F}$-martingale;
\end{itemize}
\end{definition}

\begin{proposition}\label{prop-random counting measure}
Suppose that $(X, \mathscr{B}_X) $ is a non-empty measure space,
$\bigl( {\Omega}, {\fcal}, {\mathbb{P}}  \bigr)$ is a probability space
and  $p$ is a point process on $X$ and on $\bigl( {\Omega}, {\fcal}, {\mathbb{P}}  \bigr)$.\\
Then the function   defined by
\[
\Omega \ni \omega \mapsto \eta_{p(\omega)}((0,t]\times\rU) \in \bar{\mathbb{N}}
\]
for $t>0$ and $U \in \mathscr{B}_X$, extends to an "integer-valued random measure" in the sense of {the above definition, see also }
  \cite[Definition II.1.13]{Jacod_Shiryaev}.
 
\end{proposition}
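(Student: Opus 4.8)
The plan is to verify, one defining property at a time, the conditions of an integer-valued random measure from Definition \ref{def-QL calss random measure}, taking the state space to be $E = X$ with $\mathscr{E} = \mathscr{B}_X$ and letting $\eta$ be the map $(\omega, B) \mapsto \eta_{p(\omega)}(B)$ supplied by Proposition \ref{prop-counting measure}. First I would record the pathwise measure structure: for each fixed $\omega$, the point function $p(\omega)$ gives by Proposition \ref{prop-counting measure} a unique $\sigma$-additive $\bar{\mathbb{N}}$-valued measure $\eta_{p(\omega)} = \sum_{(t,x)\in p(\omega)}\delta_{(t,x)}$ on $\mathscr{B}(\R_+)\otimes\mathscr{B}_X$. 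Since $\dom(p(\omega))\subset(0,\infty)$ this measure charges no point of $\{0\}\times X$, so $\eta_{p(\omega)}(\{0\}\times X) = 0$, which is condition (i) of a random measure; being a counting measure its values lie in $\bar{\mathbb{N}}$, giving the integer-valued condition (ii). The integer-valued condition (i), that $\eta_{p(\omega)}(\{t\}\times X)\le 1$ for all $(\omega,t)$, is exactly the content of Remark \ref{rem-counting measure}, reflecting that a point function takes at most one value at each time.

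The first genuinely substantive step is the measurability condition (ii) of a random measure, namely that $\omega \mapsto \eta_{p(\omega)}(B)$ is $\mathscr{F}$-measurable for every $B \in \mathscr{B}(\R_+)\otimes\mathscr{B}_X$. By Proposition \ref{prop-point process} this holds on the generating rectangles $B = (0,t]\times U$, and by additivity $\eta_{p(\omega)}((s,t]\times U) = \eta_{p(\omega)}((0,t]\times U) - \eta_{p(\omega)}((0,s]\times U)$ it also holds on the bounded rectangles $(s,t]\times U$ together with $\{0\}\times U$. I would then let $\mathscr{C}$ be the class of all $B$ for which $\omega\mapsto\eta_{p(\omega)}(B)$ is measurable and run a monotone-class argument: $\mathscr{C}$ contains the $\pi$-system formed by these rectangles and is stable under increasing limits, because continuity from below of $\eta_{p(\omega)}$ forces $\eta_{p(\omega)}(B_n)\uparrow\eta_{p(\omega)}(B)$ when $B_n\uparrow B$, and measurability is preserved under monotone limits of $\bar{\mathbb{N}}$-valued functions. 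Concluding $\mathscr{C} = \mathscr{B}(\R_+)\otimes\mathscr{B}_X$ yields condition (ii).

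Finally I would treat the two filtration-dependent integer-valued conditions, optionality (iii) and $\sigma$-finiteness (iv), relative to a filtration $\mathbb{F}$ to which $p$ is adapted, the natural filtration generated by $p$ being the canonical choice. For optionality, fix $A\in\mathscr{B}_X$ and consider the counting process $N^A_t = \eta_{p(\omega)}([0,t]\times A)$; Proposition \ref{prop-point process} yields its $\mathscr{F}$-measurability, and since $N^A_t$ depends only on the restriction of $p(\omega)$ to $[0,t]$ it is $\mathscr{F}_t$-measurable once $p$ is $\mathbb{F}$-adapted, so that together with the monotonicity and right-continuity of $t\mapsto N^A_t$ from Exercise \ref{exercise-counting measure} the process is càdlàg and adapted, hence $\mathbb{F}$-optional. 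For $\sigma$-finiteness I would exhibit a sequence $U_n\uparrow X$ with $U_n\in\Gamma_\eta$ and $\eta([0,t]\times U_n)<\infty$ almost surely, available under the standing $\sigma$-finiteness hypothesis on the point processes considered in the paper.

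I expect the monotone-class measurability step to be the main obstacle, precisely because of the bookkeeping needed to handle the possibly infinite total mass of $\eta_{p(\omega)}$; to be safe I would first establish measurability for subsets of each bounded window $(0,T]\times U$ with $U$ ranging over the exhausting family $\Gamma_\eta$, where the relevant sections are finite, and then let the windows increase to $\R_+\times X$. The remaining verifications are direct consequences of Proposition \ref{prop-counting measure}, Remark \ref{rem-counting measure}, Proposition \ref{prop-point process} and Exercise \ref{exercise-counting measure}, so the proof reduces to assembling these earlier results in the order above.
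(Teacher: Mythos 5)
The paper states Proposition \ref{prop-random counting measure} without any proof, so there is no argument of the authors' to compare yours against; your write-up supplies the missing verification and it is correct. You assemble exactly the right ingredients: Proposition \ref{prop-counting measure} for the pathwise $\sigma$-additive extension, the fact that $\dom(p(\omega))\subset(0,\infty)$ for the vanishing on $\{0\}\times X$, Remark \ref{rem-counting measure} for $\eta_{p(\omega)}(\{t\}\times X)\le 1$, and Proposition \ref{prop-point process} plus a $\pi$--$\lambda$ argument for joint measurability in $\omega$. Your instinct about where the real work lies is also right: the Dynkin-system step needs closure under proper differences, which fails for sets of infinite measure, so the localization to windows $(0,T]\times U_n$ on which $\eta_{p(\omega)}$ is a.s.\ finite before passing to increasing limits is not optional bookkeeping but the essential repair; an alternative would be a measurable enumeration of the atoms of $p$, but that requires standardness of $X$ and is no simpler. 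Finally, you correctly observe that the last two clauses of the paper's definition of an integer-valued random measure (optionality and $\sigma$-finiteness) are not consequences of measurability of $p$ alone but need $p$ to be $\mathbb{F}$-adapted and $\sigma$-finite; the proposition as printed elides this, and flagging it, with the natural filtration and the standing $\sigma$-finiteness hypothesis as the intended reading, is the right call rather than a gap in your argument.
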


\begin{definition}\label{def-shift of point function}
Suppose that $X $ is a non-empty set and $p$ is  a point function on $X$.  Given $t \geq 0$ we define $\theta_tp$ by
\begin{equation}\label{eqn-shift_of_point_function}
\begin{aligned}
\dom(\theta_tp)&:=\bigl\{s \in (0,\infty): s+t \in \dom(p) \bigr\}=(\dom(p)-t)\cap (0,\infty), \\
\theta_tp(s)&:= p(s+t), \;\; s \in \dom(\theta_tp).
\end{aligned}
\end{equation}
In other words, if $\dom(p)=\{ s_i\}$, then $\dom(\theta_tp)=\{ s_i-t: s_i>t\}$ and $\theta_tp(s_i-t)=p(s_i)$.
\end{definition}

We have the following two results.

\begin{lemma}\label{lem-shift of point function}
Suppose that $X $ is a non-empty set, $p$ is  a point function on $X$ and  $t \geq 0$. Then
\[
\eta_{\theta_t p}\bigl( (0,s]\times\rU\bigr)=\eta_{p}\bigl( (0,t+s]\times\rU\bigr)-\eta_{p}\bigl( (0,t]\times\rU\bigr), \;\; s>0,\rU \in \mathscr{B}_X.
\]
\end{lemma}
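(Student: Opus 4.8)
The plan is to reduce everything to the defining formula \eqref{eqn-IW_I.9.1} for the counting measure and then exploit a simple change of index variable together with the additivity of cardinality over a disjoint union. No generating-family or measure-theoretic extension argument is needed here, since both sides are already given by \eqref{eqn-IW_I.9.1} on the generating sets $(0,s]\times\rU$.

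First I would unwind the left-hand side. By \eqref{eqn-IW_I.9.1} applied to the shifted point function $\theta_t p$,
\[
\eta_{\theta_t p}\bigl((0,s]\times\rU\bigr) = \#\bigl\{ r \in \dom(\theta_t p) \cap (0,s] : \theta_t p(r) \in \rU \bigr\}.
\]
By Definition \ref{def-shift of point function}, the condition $r \in \dom(\theta_t p)$ means $r+t \in \dom(p)$, and there $\theta_t p(r) = p(r+t)$. Substituting $\sigma = r+t$, the map $r \mapsto r+t$ is a bijection of $(0,s]$ onto $(t,t+s]$, so the index set above is in bijection with
\[
\bigl\{ \sigma \in \dom(p) \cap (t, t+s] : p(\sigma) \in \rU \bigr\},
\]
and hence the two sets have the same cardinality in $\overline{\mathbb{N}}$. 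Thus
\[
\eta_{\theta_t p}\bigl((0,s]\times\rU\bigr) = \#\bigl\{ \sigma \in \dom(p) \cap (t, t+s] : p(\sigma) \in \rU \bigr\}.
\]

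Next I would decompose the interval $(0,t+s] = (0,t] \sqcup (t,t+s]$ as a disjoint union, which induces a disjoint decomposition of the counting set for $\eta_p$ over $(0,t+s]$ into the pieces indexed over $(0,t]$ and over $(t,t+s]$. Taking cardinalities and using additivity of cardinality over disjoint sets (valued in $\overline{\mathbb{N}}$), I obtain the additive identity
\[
\eta_p\bigl((0,t+s]\times\rU\bigr) = \eta_p\bigl((0,t]\times\rU\bigr) + \#\bigl\{ \sigma \in \dom(p) \cap (t, t+s] : p(\sigma) \in \rU \bigr\}.
\]
Combining this with the previous display and rearranging yields the claimed identity.

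The argument is essentially bookkeeping, so I do not expect any serious obstacle; the only point requiring a little care is the handling of possibly infinite cardinalities. Since a point function is only required to be at most countable, $\eta_p\bigl((0,t]\times\rU\bigr)$ may a priori equal $+\infty$, in which case the difference on the right-hand side must be read through the additive identity above rather than as a literal subtraction. I would therefore establish the additive form first and then record the stated subtraction form as its immediate rearrangement, valid in $\overline{\mathbb{N}}$ with the usual convention, and as a genuine difference whenever $\eta_p\bigl((0,t]\times\rU\bigr) < \infty$, which is the only regime used in the applications.
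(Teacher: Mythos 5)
Your proof is correct, and it is precisely the ``straightforward calculation'' that the paper invokes without writing out: unwind \eqref{eqn-IW_I.9.1}, use the bijection $r\mapsto r+t$ from the definition of $\theta_t p$, and split $(0,t+s]=(0,t]\sqcup(t,t+s]$. Your closing remark about reading the subtraction through the additive identity when $\eta_p\bigl((0,t]\times\rU\bigr)=+\infty$ is a genuine point the paper glosses over, and stating the additive form first is the right way to handle it.
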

\begin{proof}[Proof of Lemma \ref{lem-shift of point function}] We omit the proof because the assertion of the lemma follows from straightforward calculations.

\end{proof}

\begin{proposition}\label{prop-shift of point process}
Suppose that $(X, \mathscr{B}_X) $ is a non-empty measure space  and
$\bigl( {\Omega}, {\fcal}, {\mathbb{P}}  \bigr)$ is a probability space.
If a map $p: \Omega \to \Pi_X$
is a point process, then for every $t\geq 0$,  the map
\[
\theta_p: \Omega \ni \omega  \mapsto  \theta_{p(\omega)} \in  \Pi_X
\]
 is also a point process.
\end{proposition}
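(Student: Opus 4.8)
The plan is to reduce the assertion to the measurability criterion of Proposition~\ref{prop-point process} and then to invoke the explicit shift formula for counting measures proved in Lemma~\ref{lem-shift of point function}. Throughout I fix $t\geq 0$ and, following Definition~\ref{def-shift of point function}, I write $\theta_t p$ for the map $\Omega\ni\omega\mapsto \theta_t\bigl(p(\omega)\bigr)$.

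First I would verify that $\theta_t p$ really maps into $\Pi_X$. For each fixed $\omega$, Definition~\ref{def-shift of point function} gives $\dom\bigl(\theta_t(p(\omega))\bigr)=\bigl(\dom(p(\omega))-t\bigr)\cap(0,\infty)$, which is at most countable because $\dom(p(\omega))$ is; hence $\theta_t(p(\omega))\in\Pi_X$ and $\theta_t p$ is a well-defined map $\Omega\to\Pi_X$.

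Next, by Proposition~\ref{prop-point process}, it suffices to show that for every $s>0$ and every $U\in\mathscr{B}_X$ the map
\[
\Omega\ni\omega\mapsto \eta_{\theta_t(p(\omega))}\bigl((0,s]\times U\bigr)\in\bar{\mathbb{N}}
\]
is $\mathscr{F}$-measurable. Applying Lemma~\ref{lem-shift of point function} pointwise in $\omega$ yields
\[
\eta_{\theta_t(p(\omega))}\bigl((0,s]\times U\bigr)=\eta_{p(\omega)}\bigl((0,t+s]\times U\bigr)-\eta_{p(\omega)}\bigl((0,t]\times U\bigr).
\]
Since $p$ is a point process, the other implication of Proposition~\ref{prop-point process} guarantees that both $\omega\mapsto\eta_{p(\omega)}\bigl((0,t+s]\times U\bigr)$ and $\omega\mapsto\eta_{p(\omega)}\bigl((0,t]\times U\bigr)$ are $\mathscr{F}$-measurable, so it remains only to argue that their difference is $\mathscr{F}$-measurable.

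The one place that calls for care is that these functions are $\bar{\mathbb{N}}$-valued, so the subtraction must be read in the extended sense. This is harmless: the inclusion $(0,t]\subset(0,t+s]$ forces $\eta_{p(\omega)}\bigl((0,t]\times U\bigr)\leq\eta_{p(\omega)}\bigl((0,t+s]\times U\bigr)$, so the right-hand side is an unambiguous element of $\bar{\mathbb{N}}$, and measurability of the difference of two ordered $\bar{\mathbb{N}}$-valued measurable functions follows by splitting $\Omega$ according to whether the larger term is finite or infinite. Once the criterion of Proposition~\ref{prop-point process} is verified for all $s$ and $U$, I conclude that $\theta_t p$ is a point process. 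I expect this $\bar{\mathbb{N}}$-valued bookkeeping to be the only genuine obstacle; all the structural content is supplied by Lemma~\ref{lem-shift of point function} together with the equivalence in Proposition~\ref{prop-point process}.
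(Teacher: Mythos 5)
Your proof is correct and follows essentially the same route as the paper, whose own proof is simply the remark that the statement follows from Lemma \ref{lem-shift of point function}; you supply the same reduction via the measurability criterion of Proposition \ref{prop-point process} together with the pointwise shift identity. The extra care you take with the $\bar{\mathbb{N}}$-valued subtraction is a reasonable filling-in of detail rather than a different argument.
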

\begin{proof}[Proof of Proposition \ref{prop-shift of point process}] Follows from  Lemma \ref{lem-shift of point function}.
\end{proof}

\begin{proposition}\label{prop-intensity measure of point process}
Suppose that $(X, \mathscr{B}_X) $ is a non-empty measure space  and
$\bigl( {\Omega}, {\fcal}, {\mathbb{P}}  \bigr)$ is a probability space.
If a map $p: \Omega \to \Pi_X$ is a point process, we put
\[
  n_p((0,t]\times\rU)= \mathbb{E} \bigl[  \eta_p((0,t]\times\rU) \bigr],  \;\; t>0,\;\;U \in \mathscr{B}_X.
\]
Then the function $n_p$ has a unique extension to a $\sigma$-additive $\bar{\mathbb{N}}:=\mathbb{N}\cup \{\infty\}$-valued function on the set $\mathscr{B}(0,\infty)\otimes \mathscr{B}_X$. \\
It is called the intensity measure of the point process $p$.
\end{proposition}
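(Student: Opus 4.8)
The plan is to realise the intensity measure as the expectation of the pathwise counting measure and to transfer the measure-theoretic properties of $\eta_{p(\omega)}$, already available from Proposition \ref{prop-counting measure}, to $n_p$ by integrating in $\omega$. Write $\mathscr{E}_X\coloneqq\mathscr{B}(0,\infty)\otimes\mathscr{B}_X$. By Proposition \ref{prop-counting measure}, for every fixed $\omega\in\Omega$ the set function $B\mapsto\eta_{p(\omega)}(B)$ is an $\overline{\mathbb{N}}$-valued measure on $\mathscr{E}_X$, and by Proposition \ref{prop-random counting measure} (together with condition (ii) in the definition of a random measure, see Definition \ref{def-QL calss random measure}) the map $\omega\mapsto\eta_{p(\omega)}(B)$ is $\mathscr{F}$-measurable for \emph{every} $B\in\mathscr{E}_X$, and not merely for the rectangles $(0,t]\times U$. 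Consequently the prescription
\[
n_p(B)\coloneqq\mathbb{E}\bigl[\eta_p(B)\bigr]=\int_\Omega\eta_{p(\omega)}(B)\,\mathbb{P}(\ud\omega)\in[0,\infty]
\]
is meaningful for every $B\in\mathscr{E}_X$ and, by construction, agrees with the prescribed values on the sets $(0,t]\times U$. This is the candidate extension; note that, being an expectation, it is $[0,\infty]$-valued rather than literally $\overline{\mathbb{N}}$-valued.

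Next I would verify that $n_p$ is $\sigma$-additive. First $n_p(\emptyset)=\mathbb{E}[\eta_p(\emptyset)]=0$. If $(B_k)_{k\in\mathbb{N}}\subset\mathscr{E}_X$ are pairwise disjoint, then for every $\omega$ the countable additivity of the measure $\eta_{p(\omega)}$ yields
\[
\eta_{p(\omega)}\Bigl(\bigcup_{k}B_k\Bigr)=\sum_{k}\eta_{p(\omega)}(B_k),
\]
the partial sums being non-negative and non-decreasing. Applying the Monotone Convergence Theorem to these partial sums permits the interchange of expectation and summation, so that
\[
n_p\Bigl(\bigcup_{k}B_k\Bigr)=\sum_{k}n_p(B_k).
\]
Hence $n_p$ is a $[0,\infty]$-valued measure on $\mathscr{E}_X$ extending the prescribed set function, which settles existence.

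For uniqueness I would exploit that the family $\mathcal{P}\coloneqq\{(0,t]\times U:\ t>0,\ U\in\mathscr{B}_X\}$ is a $\pi$-system, since $((0,t]\times U)\cap((0,s]\times V)=(0,t\wedge s]\times(U\cap V)$, and that it generates $\mathscr{E}_X$: the intervals $\{(0,t]:t>0\}$ form a $\pi$-system generating $\mathscr{B}(0,\infty)$, and the products of its members with the sets of $\mathscr{B}_X$ generate the product $\sigma$-field. Any $\sigma$-additive extension of the prescribed set function must coincide with $n_p$ on $\mathcal{P}$, whence, by the Dynkin $\pi$--$\lambda$ uniqueness theorem for measures, the two agree on $\sigma(\mathcal{P})=\mathscr{E}_X$, \emph{provided} $(0,\infty)\times X$ can be exhausted by an increasing sequence of $\mathcal{P}$-sets of finite $n_p$-measure. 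I expect this last exhaustion to be the only genuinely delicate point: it holds as soon as $n_p$ is $\sigma$-finite, which is the situation of interest here (for an intensity of the form $\Leb\otimes\nu$ with $\nu$ $\sigma$-finite one takes $(0,n]\times U_m$ with $U_m\nearrow X$ and $\nu(U_m)<\infty$), and it is precisely this $\sigma$-finiteness that upgrades the canonical extension $n_p=\mathbb{E}[\eta_p(\cdot)]$ to the \emph{unique} $\sigma$-additive extension. Collecting the three steps then completes the proof.
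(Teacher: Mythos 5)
The paper states this proposition without proof, so there is nothing to compare your argument against; judged on its own, your proposal is the natural and essentially correct argument: existence via $n_p(B)\coloneqq\mathbb{E}\bigl[\eta_p(B)\bigr]$ together with Tonelli/monotone convergence for $\sigma$-additivity, and uniqueness via the $\pi$--$\lambda$ theorem applied to the generating $\pi$-system of sets $(0,t]\times U$. You are also right that the value set should be $[0,\infty]$ rather than $\overline{\mathbb{N}}$ as literally written.

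Two caveats deserve emphasis. First, the measurability of $\omega\mapsto\eta_{p(\omega)}(B)$ for \emph{arbitrary} $B\in\mathscr{B}(0,\infty)\otimes\mathscr{B}_X$ is not automatic from the rectangle case: the obvious Dynkin-system argument breaks at the proper-difference step, since $\eta_{p(\omega)}(B\setminus A)=\eta_{p(\omega)}(B)-\eta_{p(\omega)}(A)$ is meaningless when $\eta_{p(\omega)}(A)=\infty$, and the generating rectangles may well have infinite mass. You outsource this to Proposition \ref{prop-random counting measure}, which is legitimate within the paper's framework, but that proposition is itself stated without proof and is subject to the same issue; a clean repair is to assume $\sigma$-finiteness of the point process (as in Definition \ref{def-sigma finite point process}) and run the Dynkin argument on the traces $B\cap U_n$ with $\mathbb{E}[\eta_p(U_n)]<\infty$, or to produce a measurable enumeration of the atoms when $X$ is a standard measurable space. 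Second, and more importantly, the uniqueness assertion is genuinely false without $\sigma$-finiteness of $n_p$ along the generating $\pi$-system: two measures can agree on a generating $\pi$-system and differ on the generated $\sigma$-field when both are infinite there (the classical counting-measure example). You identify this correctly as the delicate point, but since the proposition as stated imposes no $\sigma$-finiteness hypothesis, your proof — like the proposition itself — is only complete under that additional assumption, which should be made explicit.
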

The  nomenclature "intensity measure" is extracted from \cite[Definition I.9.1]{Ikeda+Watanabe_1989}. A related and similar name "intensity measure" of an extended Poisson measure  is used in \cite[Definition II.1.20]{Jacod_Shiryaev}. The notation used there is different.

\begin{definition}
\label{def-stationary point process}
Suppose that $(X, \mathscr{B}_X) $ is a non-empty measure space  and $\bigl( {\Omega}, {\fcal}, {\mathbb{P}}  \bigr)$ is a probability space. A point process
 (on $X$ and on $\bigl( {\Omega}, {\fcal}, {\mathbb{P}}  \bigr)$) $p: \Omega \to \Pi_X$ is called a stationary point process
if and only if for every $t>0$,  the random variable $\theta_tp:\Omega \to \Pi_X$ has the same law (on $\Pi_X$) as the random variable $p: \Omega \to \Pi_X$.
\end{definition}

Recall that a Poisson random measure and a Poisson random measure with respect to a filtration $\mathbb{F}$ were defined in Definitions \ref{def_PRM} and \ref{def_PRM_filtration}, respectively.

\begin{definition}
\label{def-Poisson point process}
Suppose that $(X, \mathscr{B}_X) $ is a non-empty measure space  and
$\bigl( {\Omega}, {\fcal}, {\mathbb{P}}  \bigr)$ is a probability space. A point process
 (on $X$ and on $\bigl( {\Omega}, {\fcal}, {\mathbb{P}}  \bigr)$) $p: \Omega \to \Pi_X$ is called a Poisson point process
if and only if its corresponding random measure $\eta_p$ is a Poisson random measure.
\end{definition}

We have the following well-known classical results.

\begin{proposition}\label{prop-Poisson point process stationary}
Suppose that $(X, \mathscr{B}_X) $ is a non-empty measure space and
$\bigl( {\Omega}, {\fcal}, {\mathbb{P}}  \bigr)$ is a probability space. A  Poisson point process $p: \Omega \to \Pi_X$
 (on $X$ and on $\bigl( {\Omega}, {\fcal}, {\mathbb{P}}  \bigr)$) is  stationary if and only if there exists a non-negative measure $n$ on $(X, \mathscr{B}_X)$
   such that
 \begin{equation}\label{eqn-IK_I.9.2}
 n_p((0,t]\times\rU)=t n(U), \;\; \mbox{ for all } t>0, \; u\in \mathscr{B}_X.
  \end{equation}
\end{proposition}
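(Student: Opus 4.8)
The plan is to prove the equivalence by exploiting the fact that a Poisson random measure is completely determined, in law, by its intensity measure. Since the law of a point process $p$ on $\Pi_X$ is encoded by the finite-dimensional distributions of the associated counting measure $\eta_p$, and since for a Poisson random measure these reduce (by Definition \ref{def_PRM}) to independent Poisson variables on disjoint sets whose means are prescribed by the intensity, two Poisson point processes have the same law if and only if they have the same intensity measure. Therefore, for Poisson point processes, stationarity in the sense of Definition \ref{def-stationary point process} is equivalent to shift-invariance of the intensity measure $n_p$, and the whole argument is driven by Lemma \ref{lem-shift of point function}, which computes
\[
\eta_{\theta_t p}\bigl((0,s]\times U\bigr)=\eta_{p}\bigl((0,t+s]\times U\bigr)-\eta_{p}\bigl((0,t]\times U\bigr),\qquad s>0,\ U\in\mathscr{B}_X.
\]

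For the implication $(\Leftarrow)$, assume such a measure $n$ exists. First I would observe that $\theta_t p$ is again a Poisson point process: the displayed identity realizes $\eta_{\theta_t p}$ as the restriction of $\eta_p$ to the time window $(t,t+s]$ shifted back by $t$, and restrictions of a Poisson random measure to a sub-region remain Poisson by the independence and Poisson properties on disjoint sets. Taking expectations in the identity above and using $n_p((0,r]\times U)=r\,n(U)$ gives
\[
n_{\theta_t p}\bigl((0,s]\times U\bigr)=(t+s)\,n(U)-t\,n(U)=s\,n(U)=n_p\bigl((0,s]\times U\bigr),
\]
so $\theta_t p$ and $p$ have identical intensity measures, hence identical laws, which is exactly stationarity.

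For the implication $(\Rightarrow)$, assume $p$ is stationary, so that $\theta_t p$ and $p$ have the same law and in particular the same intensity measure. Fixing $U\in\mathscr{B}_X$ and writing $g_U(t)\coloneqq n_p((0,t]\times U)=\mathbb{E}\bigl[\eta_p((0,t]\times U)\bigr]$, taking expectations in Lemma \ref{lem-shift of point function} together with $n_{\theta_t p}=n_p$ yields the Cauchy functional equation $g_U(t+s)=g_U(t)+g_U(s)$ for all $s,t>0$. Since, by Exercise \ref{exercise-counting measure}, $t\mapsto \eta_p((0,t]\times U)$ is weakly increasing, the function $g_U$ is nondecreasing, and a monotone additive function on $[0,\infty)$ is necessarily linear; hence $g_U(t)=t\,g_U(1)$ (the case $g_U\equiv\infty$ is handled separately and is consistent with the claimed formula under the convention $t\cdot\infty=\infty$). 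I would then set $n(U)\coloneqq g_U(1)=n_p((0,1]\times U)$, which is nonnegative, and countable additivity of $n$ follows at once from that of the intensity measure $n_p$ established in Proposition \ref{prop-intensity measure of point process}, restricted to the time slice $(0,1]$. This gives $n_p((0,t]\times U)=t\,n(U)$ for all $t>0$ and $U\in\mathscr{B}_X$, as required.

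The main obstacle I expect is not computational but conceptual: making rigorous the claim that equality of intensity measures forces equality of laws for Poisson point processes (this requires invoking the independence-plus-Poisson structure of Definition \ref{def_PRM} and checking it is preserved under the time-shift $\theta_t$), and justifying that the monotone solution of the additive equation is linear while correctly accommodating the possibility that $g_U(t)$ takes the value $+\infty$. The rest consists of routine manipulations of the counting-measure identity from Lemma \ref{lem-shift of point function} and of expectations.
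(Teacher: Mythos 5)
The paper offers no proof of Proposition \ref{prop-Poisson point process stationary}: it is stated as a ``well-known classical result'' (essentially Definition I.9.1 of Ikeda--Watanabe, where the product form of the intensity is built into the very definition of a stationary Poisson point process), so there is no argument of the paper's to compare yours against. Your route is the standard one and its substance is sound: the law of a Poisson point process on $(\Pi_X,\mathscr{B}(\Pi_X))$ is determined by its intensity measure (disjointify the generating sets, use properties (i)--(ii) of Definition \ref{def_PRM} and a $\pi$-system argument), the shift $\theta_t$ preserves the Poisson property and replaces the intensity by its restriction to $(t,\infty)\times X$ shifted back (Lemma \ref{lem-shift of point function}), and the additive monotone function $g_U(t)=n_p((0,t]\times U)$ must be linear. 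Setting $n(U)\coloneqq n_p((0,1]\times U)$ and deducing countable additivity from Proposition \ref{prop-intensity measure of point process} is also fine.

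The one step that does not survive scrutiny as written is the subtraction $n_{\theta_tp}((0,s]\times U)=(t+s)n(U)-t\,n(U)$ in the ``if'' direction: when $n(U)=\infty$ this is $\infty-\infty$, and the hypothesis $n_p((0,r]\times U)=r\,n(U)$ then says nothing about $n_p((t,t+s]\times U)$. This is not merely cosmetic. Take $X=\{\ast\}$ and a Poisson random measure on $(0,\infty)$ with intensity $\bigl(1+r^{-2}\1_{(0,1]}(r)\bigr)\,dr$; then $n_p((0,t]\times X)=\infty=t\cdot\infty$ for every $t>0$, so the hypothesis of the ``if'' direction holds with $n(X)=\infty$, yet $\eta_p((1,2]\times X)$ is Poisson with mean $1$ while $\eta_p((0,1]\times X)=\infty$ almost surely, so the process is not stationary. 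Hence the ``if'' implication genuinely requires $\sigma$-finiteness of $n$ (the standing convention of the paper, cf.\ Assumption P.\ref{item-P_Y_is_standard}); under that assumption your computation is repaired by evaluating $n_p((t,t+s]\times U)$ directly on sets of finite $n$-measure and passing to the limit along an exhausting sequence. State this hypothesis explicitly and make that small repair; the ``only if'' direction, where you handle $g_U\equiv\infty$ by additivity rather than subtraction, is already in order.
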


\begin{definition}\label{def-characteristic measure Poisson point process stationary}
In the frameword of Proposition \ref{prop-Poisson point process stationary},
the non-negative measure $n$ on $(X, \mathscr{B}_X)$    such that condition \eqref{eqn-IK_I.9.2} is satisfied
  is called  the \textbf{characteristic measure} of the Poisson point process $p$.
\end{definition}

Let us observe that obviously if such a measure $n$  exists, then  it is unique and it makes sense to speak about a stationary point process $p$ with
the characteristic measure $n$.

\begin{proposition}\label{prop-Poisson point process stationary-2}
Suppose that $(X, \mathscr{B}_X) $ is a non-empty measure space  and
$\bigl( {\Omega}, {\fcal}, {\mathbb{P}}  \bigr)$ is a probability space. If
$n$ is a non-negative measure  on $(X, \mathscr{B}_X)$ then a  point process $p: \Omega \to \Pi_X$  is a  stationary Poisson point process with the characteristic measure $n$ iff
for all $t>s \geq 0$, every $\ell \in \mathbb{N}$  and all pair-wise disjoint sets $U_1, \ldots,\rU_\ell \in \mathscr{B}_X$ the following identity holds
\begin{align}
\label{eqn-Poisson_point_process_stationary_2}
\mathbb{E} \Bigl[ e^{-\sum_{j=1}^\ell \lambda_j \eta_p((s,t] \times\rU_j) }\Big\vert \tilde{\fcal}_s \Bigr]
&=e^{(t-s)\sum_{j=1}^\ell (e^{-\lambda_j t}-1)n(U_j)}
\end{align}
where $\tilde{\fcal}_s$ is the $\sigma$-field generated by the following family  of random variables
\[
\eta_p((0,r]\times\rU): \;\; r\in (0,s], \;\; u \in \mathscr{B}_X.
\]
\end{proposition}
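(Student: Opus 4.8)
The statement is a Laplace-functional (or \emph{complete-independence}) characterisation of the stationary Poisson point process, and the plan is to prove both implications from three ingredients: the Laplace transform of a Poisson random variable, namely $\mathbb{E}\bigl[e^{-\lambda N}\bigr]=\exp\bigl(\mu(e^{-\lambda}-1)\bigr)$ when $N$ is Poisson with mean $\mu$; the elementary fact that a joint Laplace transform factorises if and only if the underlying random variables are independent; and a $\pi$--$\lambda$/monotone-class argument to pass from the generating rectangles $(s,t]\times U_j$ to the full product $\sigma$-field $\mathscr{B}(\R_+)\otimes\mathscr{B}_X$.

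For the direct implication I would assume that $p$ is a stationary Poisson point process with characteristic measure $n$. By Definition \ref{def-Poisson point process} the associated random measure $\eta_p$ is a Poisson random measure, and by Proposition \ref{prop-Poisson point process stationary} its intensity satisfies $n_p((s,t]\times U)=(t-s)\,n(U)$. First I would observe that the vector $\bigl(\eta_p((s,t]\times U_j)\bigr)_{j}$ depends only on the restriction of $\eta_p$ to $(s,t]\times X$, whereas $\tilde{\fcal}_s$ is generated by values of $\eta_p$ on sets contained in $(0,s]\times X$; since these time intervals are disjoint, the independence-over-disjoint-sets property (Definition \ref{def_PRM}(ii)), upgraded from finite families to the generated $\sigma$-fields via a $\pi$-system argument, shows that $\bigl(\eta_p((s,t]\times U_j)\bigr)_{j}$ is independent of $\tilde{\fcal}_s$. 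Hence the conditional expectation collapses to an ordinary one. Because the $U_j$ are pairwise disjoint, the sets $(s,t]\times U_j$ are pairwise disjoint as well, so the random variables $\eta_p((s,t]\times U_j)$ are independent and each is Poisson with mean $(t-s)n(U_j)$; factorising and inserting the Poisson Laplace transform produces exactly the right-hand side of \eqref{eqn-Poisson_point_process_stationary_2}.

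For the converse I would start from the case $s=0$, where $\tilde{\fcal}_0$ is trivial and the conditioning disappears. Varying $\ell$ and $\lambda_1,\dots,\lambda_\ell$ in \eqref{eqn-Poisson_point_process_stationary_2} then shows that the joint Laplace transform of $\bigl(\eta_p((0,t]\times U_j)\bigr)_j$ factorises, so these variables are independent, and by uniqueness of Laplace transforms each $\eta_p((0,t]\times U_j)$ is Poisson with mean $t\,n(U_j)$. This verifies conditions (i)--(ii) of Definition \ref{def_PRM} on the generating rectangles; a monotone-class argument in the spatial variable together with additivity in time then extends both the Poisson law and the independence to arbitrary members of $\mathscr{B}(\R_+)\otimes\mathscr{B}_X$, so that $\eta_p$ is a Poisson random measure with intensity $\Leb\otimes n$. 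Since its intensity has the product form $t\,n(U)$, Proposition \ref{prop-Poisson point process stationary} (read via \eqref{eqn-IK_I.9.2}) gives stationarity. To complete the identification, I would use the general-$s$ form of \eqref{eqn-Poisson_point_process_stationary_2} together with Lemma \ref{lem-shift of point function}, which rewrites $\eta_{\theta_t p}((0,s]\times U)=\eta_p((t,t+s]\times U)$, to check that the finite-dimensional Laplace transforms of $\theta_t p$ coincide with those of $p$; since the Laplace functional determines the law on $\Pi_X$, this yields the stationarity required by Definition \ref{def-stationary point process}.

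The routine portions are the Poisson Laplace computation and the factorisation--independence equivalence. The main obstacle is the bookkeeping needed to move between the two layers of the statement: \eqref{eqn-Poisson_point_process_stationary_2} is phrased only for a single time interval $(s,t]$ and pairwise disjoint spatial sets, whereas the definition of a Poisson random measure demands the Poisson law and mutual independence for \emph{all} disjoint members of the product $\sigma$-field and across several, possibly consecutive, time intervals. Bridging this gap requires a $\pi$--$\lambda$ argument in the spatial coordinate to upgrade from the $U_j$ to general measurable sets, and iterated conditioning via the tower property to reconstruct Laplace functionals of step functions that are constant on several successive time intervals. One must also be careful that the filtration generated by $p$, namely $(\tilde{\fcal}_s)$, is precisely the one with respect to which the independent-increment structure is verified, so that no extra completion or right-continuity hypothesis is implicitly assumed.
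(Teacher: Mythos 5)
The paper states Proposition \ref{prop-Poisson point process stationary-2} without supplying a proof (it is recalled as a classical fact in the appendix on point processes), so there is no internal argument to compare yours against. Your outline is the standard and essentially correct route: for the direct implication, independence of $\bigl(\eta_p((s,t]\times U_j)\bigr)_j$ from $\tilde{\fcal}_s$ via Definition \ref{def_PRM}(ii) plus a $\pi$-system upgrade, followed by the Poisson Laplace transform with mean $(t-s)n(U_j)$ coming from Proposition \ref{prop-Poisson point process stationary}; for the converse, the case $s=0$ plus uniqueness of Laplace transforms to recover the Poisson law and independence on rectangles, iterated conditioning to handle several time intervals, and a monotone-class extension to all of $\mathscr{B}(\R_+)\otimes\mathscr{B}_X$ before invoking Proposition \ref{prop-Poisson point process stationary} (or, redundantly, Lemma \ref{lem-shift of point function}) for stationarity in the sense of Definition \ref{def-stationary point process}. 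The two places where the real work lies — the grouping/$\pi$--$\lambda$ argument for independence from $\tilde{\fcal}_s$, and the passage from a single time interval with disjoint spatial sets to arbitrary disjoint measurable sets — are only sketched, but you identify the correct tools for both, and the steps are routine.

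One point you should not gloss over: you claim the computation ``produces exactly the right-hand side of \eqref{eqn-Poisson_point_process_stationary_2}'', but it does not. The Poisson Laplace transform gives $\exp\bigl((t-s)\sum_{j}(e^{-\lambda_j}-1)n(U_j)\bigr)$, whereas the displayed right-hand side carries $e^{-\lambda_j t}$ in place of $e^{-\lambda_j}$; this is evidently a typo in the statement (as is the missing quantification over $\lambda_j>0$, which your uniqueness-of-Laplace-transforms step in the converse silently requires). A correct proof should note and correct the exponent rather than assert agreement with the printed formula, since as literally written the identity is false for a stationary Poisson point process.
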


The following exposition is based on the Section II.3, p. 59,  of the monograph \cite{Ikeda+Watanabe_1989}.

\begin{definition}\label{def-adapted point process}
Suppose that $(X, \mathscr{B}_X) $ is a non-empty measure space  and that
\[
\mathfrak{A}=\bigl( \Omega, \mathscr{F}, \mathbb{P}, \mathbb{F}  \bigr), \mbox{ with } \mathbb{F}=(\mathscr{F}_t)_{t\geq 0} ,
\]
is a filtered probability space. We say that a point process $p: \Omega \to \Pi_X$ (on $X$ defined on $\Omega$) is $\mathbb{F}$-adapted if and only if for every $t>0$ and every $Y \in \mathscr{B}_X$ the random variable
\begin{equation}
\label{PRM_associated_to_point_process}
\eta_p((0,t] \times\rU):=\sum_{ s \in \dom(p)\cap (0,t]} \mathds{1}_U(p(s))
\end{equation}
is $\mathscr{F}_t$-measurable.
\end{definition}

\begin{definition}\label{def-sigma finite point process}
A point process $p: \Omega \to \Pi_X$  is $\sigma$-finite if and only if the associated  random measure is $\sigma$-finite.
If  a point process $p: \Omega \to \Pi_X$ (on $X$ defined on $\Omega$) is $\sigma$-finite  and $\mathbb{F}$-adapted, then we put
\begin{equation}
\label{eqn-Gamma_p}
\mathbf{\Gamma}_p:=\Bigl\{\rU\in \mathscr{B}_X: \;
\mathbb{E} \eta_p((0,t]\times\rU)< \infty, \;\; t>0\Bigr\}.
\end{equation}
\end{definition}

\begin{exercise}\label{exercise-sigma finite point process}
In the framework of Definition \ref{def-sigma finite point process}, if $U \in \mathbf{\Gamma}_p$, then  $\mathbb{P}$-a.s
\[
\eta_p((0,t]\times\rU)< \infty \mbox{ for all } t>0.
\]
\end{exercise}

In what follows we   use the usual  shorthand \[\eta(t,U):= \eta((0,t] \times\rU).\]

\begin{proposition}\label{prop-compensator}
Suppose that $(X, \mathscr{B}_X) $ is a non-empty measure space  and that
\[
\tilde{\mathfrak{A}}=\bigl( \tilde{\Omega}, \tilde{\fcal}, \tilde{\mathbb{P}}, \tilde{\mathbb{F}}  \bigr), \mbox{ with } \tilde{\mathbb{F}}=(\tilde{\fcal}_t)_{t\geq 0} ,
\]
is a filtered probability space and  $p: \Omega \to \Pi_X$ is a $\sigma$-finite  and $\mathbb{F}$-adapted   point process  (on $X$ defined on $\Omega$). Then the following holds.
\begin{trivlist}
\item[(i)] If $U \in \Gamma_p$, then
\[
(0,\infty) \ni t \mapsto  \eta_p((0,t]\times\rU)
\]
is an $\mathbb{F}$-adapted, integrable and increasing process.
\item[(ii)] If $U \in \Gamma_p$, then there exists a natural  $\mathbb{F}$-adapted integrable and increasing process  $\widehat{\eta}_p(t,U)$, $t\in (0,\infty)$, such that the process
$\tilde{\eta}_p(t,U)$, $t\in (0,\infty)$, defined by
\begin{equation}
\label{eqn-tilde_N_p}
\tilde{\eta}_p(t,U):= \eta_p(t,\rU)-\widehat{\eta}_p(t,U), \;\; t>0,
\end{equation}
is an $\mathbb{F}$-martingale.
\end{trivlist}
\end{proposition}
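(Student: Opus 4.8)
The plan is to read part (i) off the definitions and the preceding exercises, and to obtain part (ii) as the point-process incarnation of the Doob--Meyer decomposition applied to the counting process. For part (i), fix $U \in \Gamma_p$ and set $A_t := \eta_p((0,t]\times U)$. Adaptedness is immediate: since $p$ is $\mathbb{F}$-adapted, Definition \ref{def-adapted point process} gives that $A_t = \eta_p((0,t]\times U)$ is $\mathscr{F}_t$-measurable for every $t > 0$. Integrability at each time is exactly the defining property of the set $\Gamma_p$, see \eqref{eqn-Gamma_p}: $\mathbb{E}[A_t] = \mathbb{E}\,\eta_p((0,t]\times U) < \infty$ for all $t > 0$. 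Finally, that $t \mapsto A_t$ is (weakly) increasing and right-continuous is the content of Exercise \ref{exercise-counting measure}, since $A_t$ counts the points of $p$ lying in $(0,t]\times U$ and this count can only grow, jumping precisely at the jump times of $p$ that land in $U$; in particular $A$ is right-continuous with left limits.

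For part (ii), I would first observe that the process $A$ just described, being adapted, integrable, increasing and c\`adl\`ag, is automatically a submartingale (indeed $\mathbb{E}[A_t\mid\mathscr{F}_s]\ge A_s$ because $A_t\ge A_s$ and $A_s$ is $\mathscr{F}_s$-measurable). To invoke the Doob--Meyer decomposition I would check that $A$ is of class (DL): for any stopping time $\tau \le a$ one has $0 \le A_\tau \le A_a$ pointwise, and $A_a \in L^1$ by part (i), so the family $\{A_\tau : \tau \text{ a stopping time},\ \tau \le a\}$ is dominated by the integrable variable $A_a$ and hence uniformly integrable. Consequently the Doob--Meyer theorem (as in \cite[Ch.\ I]{Ikeda+Watanabe_1989}) applies on every bounded interval, and by the uniqueness of the decomposition these patch together to a global decomposition $A_t = \tilde{\eta}_p(t,U) + \widehat{\eta}_p(t,U)$, where $\widehat{\eta}_p(\cdot,U)$ is the unique natural (equivalently, predictable) $\mathbb{F}$-adapted integrable increasing process, the \emph{compensator}, and $\tilde{\eta}_p(t,U) = \eta_p(t,U) - \widehat{\eta}_p(t,U)$ is an $\mathbb{F}$-martingale. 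This is exactly \eqref{eqn-tilde_N_p}.

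The substance of the argument sits entirely in part (ii), and the only steps needing care are (a) verifying that the classical Doob--Meyer machinery is genuinely available, which reduces to the class (DL) property established above by domination, and (b) identifying the ``natural increasing process'' of the older terminology with the predictable compensator, so that $\widehat{\eta}_p(\cdot, U)$ is honestly increasing and $\mathbb{F}$-adapted. Both are standard. Let me emphasise that the hypotheses are used precisely here: the $\sigma$-finiteness of $p$ together with the restriction to $U \in \Gamma_p$ is exactly what supplies the integrability of $A_a$ that drives the whole decomposition, performed set-by-set. I therefore expect no essential obstacle beyond this bookkeeping; the result is simply Doob--Meyer read off for the counting measure associated with $p$.
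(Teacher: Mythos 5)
Your proof is correct and follows essentially the same route as the paper, which simply declares part (i) obvious and cites the compensator existence theorem \cite[Theorem II.1.8]{Jacod_Shiryaev} for part (ii); your class (DL) verification and the Doob--Meyer decomposition are exactly the standard argument behind that citation. Nothing further is needed.
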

\begin{proof}[Proof of Proposition \ref{prop-compensator}] 
Part (i) is obvious. Part (ii) follows from \cite[Theorem II.1.8]{Jacod_Shiryaev}.
\end{proof}

\begin{remark}In the framework of Proposition \ref{prop-compensator}, 
\begin{trivlist}
\item[(i)]
 the process \[t\mapsto \widehat{\eta}_p(t,U)\] is not continuous in general, i.e.\ there exists such examples; 
 \item[(ii)]
 the function $\Gamma_p \ni\rU \mapsto  \widehat{\eta}_p(t,U)$ is in general not $\sigma$-additive, i.e.\ there exists such examples.
 \end{trivlist}
 However, it is known, see 
 \cite[Theorem I.3.1]{Ikeda+Watanabe_1989},
 that if $X$ satsisfies some additional assumptions, e.g.\ $X$ is standard measurable space, see   \cite[Definition I.3.3]{Ikeda+Watanabe_1989}, e.g.\ a Borel subset of a Polish space,  then there exists a modification of $\widehat{\eta}_p$ such that for all $t>0$ and for almost all $\omega \in \Omega$, the map
 \[
 \Gamma_p \ni\rU \mapsto  \widehat{\eta}_p(t,U) \mbox{ is  $\sigma$-additive}.
 \]
\end{remark}

The following definition is copied,  word by word,  from \cite[Definition II.3.1]{Ikeda+Watanabe_1989}.
\begin{definition}\label{def-point process of class QL}
Let us assume that  that $(X, \mathscr{B}_X) $ is a non-empty measure space  and that
\[
\tilde{\mathfrak{A}}=\bigl( \tilde{\Omega}, \tilde{\fcal}, \tilde{\mathbb{P}}, \tilde{\mathbb{F}}  \bigr), \mbox{ with } \tilde{\mathbb{F}}=(\tilde{\fcal}_t)_{t\geq 0} ,
\]
is a filtered probability space. 
An $\tilde{\mathbb{F}}$-adapted and $\sigma$-finite  point process \[p: \tilde{\Omega} \to \Pi_X,  \]
 is of \textbf{(QL)} class   if and only if there exists a family
\[
\widehat{\eta}_p=\Bigl\{ \widehat{\eta}_p(t,U): \, t>0, \;\rU\in \Gamma_p \Bigr\}
\]
such that the following conditions are satisfied.
\begin{trivlist}
\item[(i)] For every $U\in \Gamma_p$,
\[(0,\infty) \ni t \mapsto \widehat{\eta}_p(t,U)
\]
is a continuous $\tilde{\mathbb{F}}$-adapted  and increasing process .
\item[(ii)] For every $t>0$,  $\tilde{\omega} \in \tilde{\Omega}$,
\[\mathscr{B}_X \ni\rU \mapsto \widehat{\eta}_p(t,U;\tilde{\omega}) \]
is, $\tilde{\mathbb{P}}$-a.s.,  a $\sigma$-finite measure on $(X,\mathscr{B}_X)$.
\item[(iii)] If $U \in \Gamma_p$,  the process
\begin{equation}
\label{eqn-tilde_N_p-2}
\tilde{\eta}_p(t,U):=\eta_p((0,t]\times\rU)-\widehat{\eta}_p(t,U), \;\; t>0,
\end{equation}
is an $\tilde{\mathbb{F}}$-martingale.
\end{trivlist}
The family $\widehat{\eta}_p(t,U)$ is called the \textbf{compensator} of the point process $p$, or the \textbf{compensator} of the integer valued random measure $\eta_p$.
\end{definition}

Let us now make some comments.

\begin{remark}\label{rem-comparison of compensators}
If a random measure $\eta$ is generated by a point process $p$,  then the 4th bullet point in Definition \ref{def-QL calss random measure} and Definition \ref{def-point process of class QL} are very similar but not identical. We don't know if  they are equivalent.  First of all, according to Definition \ref{def-sigma finite point process}, 
point process $p: \Omega \to \Pi_X$  is $\sigma$-finite if and only if  the associated  random measure is $\sigma$-finite. Secondly,
if  $\widehat{\eta}$  a $\sigma$-finite random measure such that conditions (ii) and (iii) of the former are satisfied, then we can define a family $\widehat{\eta}_p=\Bigl\{ \widehat{\eta}_p(t,U): \, t>0, \;\rU\in \Gamma_p \Bigr\}$ by
\[
\widehat{\eta}_p(t,U):= \widehat{\eta}\bigl( (0,t] \times\rU\bigr), t>0, \;\rU\in \Gamma_p
\]
and prove that this family satisfies conditions (i), (ii) and (iii) of the latter.  
We believe  the following equivalence:
$p$ is point process of class \textbf{(QL)} according to Definition \ref{rem-comparison of compensators} if and only if the random measure $\eta_p$ is of class \textbf{(QL)} according to Definition \ref{def-QL calss random measure}.
The implication `$\Leftarrow$' is obvious.
For `$\Rightarrow$' we'd need to show that $\hat{\eta}((0,t] \times\rU):= \hat{\eta}_p(t,U)$ extends to a $\sigma$-finite random measure on $\mathscr{B}(\R_+)\otimes \mathscr{B}_X$.
\end{remark}

\begin{remark}\label{rem-notation section 9 of Chapter I}
In  \cite[Section I.9]{Ikeda+Watanabe_1989} the authors use the notation $\mathscr{B}(X)$  instead of an earlier notation $\mathscr{B}_X$.  Moreover, they use a symbol  $\eta_p(t,U)$ instead of $\eta_p((0,t]\times\rU)$. We will continue to use the former notation.
\end{remark}

\begin{definition}
\label{def-Poisson F-point process}
Suppose that $(X, \mathscr{B}_X) $ is a non-empty measure space  and
\[
\mathfrak{A}=\bigl( \Omega, \mathscr{F}, \mathbb{P}, \mathbb{F}  \bigr), \mbox{ with } \mathbb{F}=(\mathscr{F}_t)_{t\geq 0} ,
\]
is a filtered probability space.
 A  point process $p: \Omega \to \Pi_X$
 (on $X$ defined on $\Omega$) is called an $\mathbb{F}$-Poisson   point process if and only if  it is
 $\mathbb{F}$-adapted, $\sigma$-finite Poisson   point process and, for every $t>0$,  the random variables
 \[\eta_p((0,t+h] \times\rU)-\eta_p((0,t] \times\rU), \;\; h>0, \;U\in \mathscr{B}_X
 \]
 are independent of $\mathscr{F}_t$.
\end{definition}

\begin{remark}\label{rem-Poisson F-point process}
A point process $p$ is an $\mathbb{F}$-Poisson point process if the associated random measure $\eta$ is an $\mathbb{F}$-Poisson random measure, see Definition \ref{def_PRM_filtration}.
\end{remark}

The following result is stated on page 60 of \cite{Ikeda+Watanabe_1989}.

\begin{proposition}
\label{prop-F Poisson pp class QL}
Suppose that $(X, \mathscr{B}_X) $ is a non-empty measure space and 
\[
\mathfrak{A} = \bigl( \Omega, \mathscr{F}, \mathbb{P}, \mathbb{F} \bigr), \mbox{ with } \mathbb{F} = (\mathscr{F}_t)_{t\geq 0} ,
\]
is a filtered probability space. Assume that a  point process $p: \Omega \to \Pi_X$ (on $X$ defined on $\Omega$) is  an $\mathbb{F}$-Poisson   point process. Then the following two conditions are equivalent.
 \begin{trivlist}
 \item[(a)] The point process $p: \Omega \to \Pi_X$  is of class \textbf{(QL)}  (with respect to filtration $\mathbb{F}$).
 \item[(b)] For every $U\in \Gamma_p$, the function
\[(0,\infty) \ni t \mapsto \mathbb{E}\bigl[ {\eta}_p((0,t] \times\rU)\bigr]
\]
is continuous.
\end{trivlist}
\end{proposition}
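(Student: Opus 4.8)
The plan is to exploit the fact that for a Poisson point process the compensator is forced to be deterministic: it must coincide with the intensity function $t\mapsto n_p((0,t]\times U)$ of Proposition~\ref{prop-intensity measure of point process}. Thus the content of the proposition is that the abstract continuity requirement (i) in the definition of class \textbf{(QL)} (Definition~\ref{def-point process of class QL}) reduces, in the Poisson case, to the continuity of the single deterministic function appearing in (b). Throughout I write $\widehat{n}(t,U)\coloneqq \mathbb{E}\bigl[\eta_p((0,t]\times U)\bigr]=n_p((0,t]\times U)$ for $U\in\Gamma_p$, the second equality being the definition of the intensity measure.

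For the implication (b) $\Rightarrow$ (a), I would take the deterministic family $\widehat{\eta}_p(t,U)\coloneqq\widehat{n}(t,U)$ as candidate compensator and verify the three conditions of Definition~\ref{def-point process of class QL}. Condition (i) is immediate: the map $t\mapsto\widehat{n}(t,U)$ is deterministic, hence trivially $\mathbb{F}$-adapted; it is increasing because $\eta_p$ is a measure and the intervals $(0,t]$ grow with $t$; and it is continuous precisely by hypothesis (b). Condition (ii) follows from Proposition~\ref{prop-intensity measure of point process}, since for fixed $t$ the map $U\mapsto n_p((0,t]\times U)$ is a restriction of the intensity measure $n_p$ and is $\sigma$-finite because $p$ is $\sigma$-finite (Definition~\ref{def-sigma finite point process}), the exhausting sequence being furnished by sets in $\Gamma_p$. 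The heart of this direction is condition (iii), and here I would use the defining independence property of an $\mathbb{F}$-Poisson point process (Definition~\ref{def-Poisson F-point process}): for $0\le s<t$ and $U\in\Gamma_p$ the increment $\eta_p((s,t]\times U)$ is independent of $\mathscr{F}_s$, whence
\begin{align*}
\mathbb{E}\bigl[\tilde{\eta}_p(t,U)-\tilde{\eta}_p(s,U)\mid\mathscr{F}_s\bigr]
&=\mathbb{E}\bigl[\eta_p((s,t]\times U)\bigr]-\bigl(\widehat{n}(t,U)-\widehat{n}(s,U)\bigr) \\
&=n_p((s,t]\times U)-n_p((s,t]\times U)=0,
\end{align*}
the first equality using that $\widehat{\eta}_p$ is deterministic. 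Hence $\tilde{\eta}_p(\cdot,U)$ is an $\mathbb{F}$-martingale and $p$ is of class \textbf{(QL)}.

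For the converse (a) $\Rightarrow$ (b), I would start from a compensator $\widehat{\eta}_p$ as in Definition~\ref{def-point process of class QL} and take expectations in the martingale identity for $\tilde{\eta}_p(\cdot,U)$. Since $\tilde{\eta}_p(0,U)=0$, the martingale property gives $\mathbb{E}[\eta_p((0,t]\times U)]=\mathbb{E}[\widehat{\eta}_p(t,U)]$ for every $t$. It then remains to transfer continuity: as $s\to t$ the variables $\widehat{\eta}_p(s,U)$ converge $\mathbb{P}$-a.s. to $\widehat{\eta}_p(t,U)$ by the pathwise continuity in (i), and because $t\mapsto\widehat{\eta}_p(t,U)$ is increasing and integrable it is dominated on any bounded interval $[0,T]$ by the integrable variable $\widehat{\eta}_p(T,U)$; the dominated (equivalently monotone) convergence theorem then yields continuity of $t\mapsto\mathbb{E}[\widehat{\eta}_p(t,U)]=\mathbb{E}[\eta_p((0,t]\times U)]$, which is (b).

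The main obstacle is conceptual rather than computational: one must recognise that the independent-increments structure of an $\mathbb{F}$-Poisson point process forces the candidate compensator to be the deterministic intensity function, so that the only surviving degree of freedom is its continuity in $t$. Once this is seen the two implications are short; the only points needing a little care are the $\sigma$-finiteness in part (ii) of the definition and the interchange of limit and expectation in the converse, both handled by the monotonicity and integrability already built into the hypotheses.
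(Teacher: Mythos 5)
Your proof is correct. Note that the paper does not actually prove this proposition: it is stated with a citation to page 60 of Ikeda--Watanabe, so there is no in-paper argument to compare against. Your argument is the standard one and is sound in both directions: for (b) $\Rightarrow$ (a) you correctly identify the deterministic intensity $t\mapsto n_p((0,t]\times U)$ as the candidate compensator and verify (i)--(iii) of Definition \ref{def-point process of class QL}, with the martingale property (iii) coming exactly from the independence of $\eta_p((s,t]\times U)$ from $\mathscr{F}_s$ in Definition \ref{def-Poisson F-point process}; for (a) $\Rightarrow$ (b) the passage from pathwise continuity of the compensator to continuity of its expectation via monotonicity and domination by the integrable variable $\widehat{\eta}_p(T,U)$ is the right mechanism. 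The only cosmetic remark is that in the converse you do not actually need $\tilde{\eta}_p(0,U)=0$: the martingale property alone gives $\mathbb{E}[\eta_p((0,t]\times U)]=\mathbb{E}[\widehat{\eta}_p(t,U)]+c$ for a constant $c$, and continuity of the right-hand side already yields (b).
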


\begin{corollary}
\label{cor-F Poisson pp class QL}
Suppose that $(X, \mathscr{B}_X) $ is a non-empty measure space  and 
\[
\mathfrak{A} = \bigl( \Omega, \mathscr{F}, \mathbb{P}, \mathbb{F} \bigr), \mbox{ with } \mathbb{F}=(\mathscr{F}_t)_{t\geq 0} ,
\]
is a filtered probability space. Assume that
  $p: \Omega \to \Pi_X$ is a  stationary $\mathbb{F}$-Poisson   point process
 (on $X$ defined on $\Omega$). Then
 the point process $p: \Omega \to \Pi_X$  is of class \textbf{(QL)}  (with respect to filtration $\mathbb{F}$) with compensator
 \[
 \widehat{\eta}_p(t,U):=tn(U),
 \]
where $n$ is the characteristic measure of $p$, see
Definition \ref{def-characteristic measure Poisson point process stationary} and
Proposition  \ref{prop-Poisson point process stationary-2}.
\end{corollary}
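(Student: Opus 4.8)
The plan is to deduce this corollary by combining the characterisation of stationarity in Proposition~\ref{prop-Poisson point process stationary} with the criterion for class \textbf{(QL)} in Proposition~\ref{prop-F Poisson pp class QL}, and then to identify the compensator explicitly by verifying the three defining conditions of Definition~\ref{def-point process of class QL} for the candidate $\widehat{\eta}_p(t,U):=t\,n(U)$.

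First I would record that, since $p$ is a stationary Poisson point process, Proposition~\ref{prop-Poisson point process stationary} furnishes a non-negative measure $n$ (its characteristic measure, Definition~\ref{def-characteristic measure Poisson point process stationary}) with $n_p((0,t]\times U)=t\,n(U)$ for all $t>0$ and $U\in\mathscr{B}_X$. By Proposition~\ref{prop-intensity measure of point process} the left-hand side equals $\mathbb{E}\bigl[\eta_p((0,t]\times U)\bigr]$, so $\mathbb{E}\bigl[\eta_p((0,t]\times U)\bigr]=t\,n(U)$. In particular, for $U\in\Gamma_p$ this quantity is finite for every $t>0$, which forces $n(U)<\infty$ and shows that the map $t\mapsto\mathbb{E}\bigl[\eta_p((0,t]\times U)\bigr]=t\,n(U)$ is continuous (indeed linear). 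Since $p$ is an $\mathbb{F}$-Poisson point process, condition (b) of Proposition~\ref{prop-F Poisson pp class QL} is thereby verified, and the equivalence (a)$\Leftrightarrow$(b) yields that $p$ is of class \textbf{(QL)}.

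It remains to identify the compensator. I would check directly that $\widehat{\eta}_p(t,U):=t\,n(U)$ satisfies the requirements of Definition~\ref{def-point process of class QL}. Condition (i) is immediate: for fixed $U\in\Gamma_p$ the deterministic map $t\mapsto t\,n(U)$ is continuous, increasing, and (being deterministic) $\mathbb{F}$-adapted. Condition (ii) holds because $U\mapsto t\,n(U)$ is a non-negative multiple of the $\sigma$-finite measure $n$. The heart of the matter is condition (iii), namely that $\tilde{\eta}_p(t,U):=\eta_p((0,t]\times U)-t\,n(U)$ is an $\mathbb{F}$-martingale. Adaptedness and integrability follow from $U\in\Gamma_p$, so the point is the conditional-expectation identity
\begin{equation*}
\mathbb{E}\bigl[\eta_p((s,t]\times U)\mid\mathscr{F}_s\bigr]=(t-s)\,n(U),\qquad 0\le s\le t .
\end{equation*}

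To obtain it I would first use the defining property of an $\mathbb{F}$-Poisson point process (Definition~\ref{def-Poisson F-point process}), according to which the increment $\eta_p((s,t]\times U)$ is independent of $\mathscr{F}_s$, so that the conditional expectation reduces to the ordinary expectation $\mathbb{E}\bigl[\eta_p((s,t]\times U)\bigr]$. Then I would invoke stationarity: by Lemma~\ref{lem-shift of point function} one has $\eta_{\theta_s p}((0,t-s]\times U)=\eta_p((0,t]\times U)-\eta_p((0,s]\times U)=\eta_p((s,t]\times U)$, and since by Definition~\ref{def-stationary point process} the shifted process $\theta_s p$ has the same law as $p$, it follows that $\mathbb{E}\bigl[\eta_p((s,t]\times U)\bigr]=\mathbb{E}\bigl[\eta_p((0,t-s]\times U)\bigr]=(t-s)\,n(U)$, which is exactly the required identity. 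I expect this martingale verification --- in particular the careful bookkeeping between the independence of increments and the shift/stationarity computation --- to be the only non-routine step; once it is in place, the three conditions of Definition~\ref{def-point process of class QL} are met and the uniqueness of the compensator identifies it with $t\,n(U)$.
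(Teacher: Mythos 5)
Your proof is correct. The paper states Corollary~\ref{cor-F Poisson pp class QL} without any proof, presenting it as an immediate consequence of Proposition~\ref{prop-Poisson point process stationary} (which gives $\mathbb{E}\bigl[\eta_p((0,t]\times U)\bigr]=t\,n(U)$) together with the criterion (b) of Proposition~\ref{prop-F Poisson pp class QL}; your first paragraph reproduces exactly that intended derivation. Your additional direct verification of conditions (i)--(iii) of Definition~\ref{def-point process of class QL} for the candidate $t\,n(U)$ --- in particular the martingale identity obtained by combining the independence of increments from Definition~\ref{def-Poisson F-point process} with the shift/stationarity computation via Lemma~\ref{lem-shift of point function} --- is sound, and it has the merit of pinning down the compensator explicitly rather than appealing only to the abstract existence furnished by the equivalence in Proposition~\ref{prop-F Poisson pp class QL}.
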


A converse to the above result is given in    \cite[Theorem I.6.2]{Ikeda+Watanabe_1989}.

\section{Representation Theorem for purely discontinuous martingales}\label{App:Appendix-RepThm}

In this section we state and prove a Representation Theorem for purely discontinuous martingales from Kallianpur and Xiong \cite[Theorem\  3.4.7]{Kallianpur_Xiong} adapted to our needs (for Hilbert-space valued processes rather than processes in a conuclear space).
The proof relies on  \cite[Theorem\ 3.4.6]{Kallianpur_Xiong}, which is taken from \cite[Theorem\ II.7.4]{Ikeda+Watanabe_1989}. They write that the result goes back to the following papers: Grigelionis \cite{Grigelionis} (in Russian), Karoui-Lepeltier \cite{El_Karoui_Lepeltier} (in Russian) and Tanaka \cite{Tanaka} (in English).

Before we proceed we will formulate the following clear generalization  of \cite[Theorem\ I.3.3.]{Ikeda+Watanabe_1989} because the aforementioned Lemma II.7.1 from therein follows easily from it.

\begin{theorem}\label{thm-regular conditional expectation with parameters}
Assume that    $(\mathbf{Z},\mathscr{B}_\mathbf{Z})$, $(\mathbf{X},\mathscr{B}_\mathbf{X})$ and $(\mathbf{Y},\mathscr{B}_\mathbf{Y})$ are three  measurable spaces.
Assume further that   $(\mathbf{Z},\mathscr{B}_\mathbf{Z})$ is a standard  measurable space and that $m$ is a non-negative measure on it. \\
Assume that a map
\begin{align*}
\theta\colon & \mathbf{Y} \times \mathbf{Z} \to \mathbf{X}
\end{align*}
is $\mathscr{B}_\mathbf{Y}\otimes \mathscr{B}_\mathbf{Z}/\mathscr{B}_\mathbf{X}$-measurable. For each  $y\in \mathbf{Y}$ let us put \footnote{Let us observe that $\theta_y$ is $ \mathscr{B}_\mathbf{Z}/\mathscr{B}_\mathbf{X}$-measurable.} $\theta_y\coloneqq\theta(y,\cdot)\colon \mathbf{Z} \to \mathbf{X}$ and let us assume that  $q(y,\cdot)$ is equal to the image of the measure $m$ via $\theta_y$:
\begin{equation}
\label{image_of_m_under_theta_y}
q(y,B) = m (\theta_y^{-1}(B)), \;\; B \in \mathbf{X}.
\end{equation}
We finally assume that for every $B \in \mathbf{X}$ the map
\begin{align}\label{eqn-q(y,cdot)}
   \mathbf{Y}  \ni y \mapsto q(y,B) \in [0,\infty]
\end{align}
is $\mathscr{B}_\mathbf{Y}$-measurable.

Then, there exists a function
\begin{align*}
Q\colon & \mathbf{Y} \times \mathbf{X} \times \mathscr{B}_{\mathbf{Z}} \to [0,\infty]
\end{align*}
such that the following three conditions are satisfied.
\begin{trivlist}
\item[(i)] for all $y \in \mathbf{Y}$ and $x \in \mathbf{X}$, the map
\begin{align*}
Q(y,x,\cdot)\colon & \mathscr{B}_Z \to [0,\infty]
\end{align*}
is a measure;
 \item[(ii)] for every $A\in \mathscr{B}_{\mathbf{Z}}$ the map
\begin{align*}
Q(\cdot,\cdot,A)\colon & \mathbf{Y} \times \mathbf{X}  \to [0,\infty]
\end{align*}
 is $\mathscr{B}_{\mathbf{x}} \otimes \mathscr{B}_{\mathbf{Y}}$-measurable,
 \item[(iii)] for all $y \in \mathbf{Y}$, $A\in \mathscr{B}_{\mathbf{Z}}$ and $B\in \mathscr{B}_{\mathbf{X}}$, the following equality holds:
\begin{align}\label{eqn-IW_II.7.28_a}
m (A \cap \theta_y^{-1}(B))
=\int_{\mathbf{X}}  \1_{B}(x) Q(y,x,A)\, q(y,\ud x).
\end{align}
\end{trivlist}
\end{theorem}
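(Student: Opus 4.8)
The plan is to read the statement as a \emph{parametrized disintegration} result: for each fixed $y$ the desired kernel $Q(y,\cdot,\cdot)$ is precisely the disintegration of $m$ along $\theta_y$, equivalently a regular conditional distribution of the identity map on $\mathbf{Z}$ given the sub-$\sigma$-field $\theta_y^{-1}(\mathscr{B}_\mathbf{X})$ re-expressed through $\theta_y$ by the Doob--Dynkin factorization lemma. Thus the whole assertion is the announced parametrized version of the classical regular conditional probability theorem on standard spaces, \cite[Theorem I.3.3]{Ikeda+Watanabe_1989}. First I would reduce to a finite measure $m$: writing $\mathbf{Z}=\bigcup_k Z_k$ with $Z_k\uparrow \mathbf{Z}$ and $m(Z_k)<\infty$ (legitimate since in the application $m=\nu$ is $\sigma$-finite), one builds $Q$ on each $Z_k$ and sums, joint measurability being preserved under countable sums. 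Since $(\mathbf{Z},\mathscr{B}_\mathbf{Z})$ is standard I would also fix once and for all a countable algebra $\mathscr{C}=\{C_j\}_{j\in\N}$ generating $\mathscr{B}_\mathbf{Z}$ together with a Borel isomorphism of $\mathbf{Z}$ onto a Borel subset of a compact metric space; this compactness is what later turns a finitely additive set function on $\mathscr{C}$ into a genuine measure.

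For the core construction, for each $C\in\mathscr{C}$ I introduce the set function
\[
\lambda_C(y,B)\;:=\;m\bigl(C\cap\theta_y^{-1}(B)\bigr)\;=\;\int_{\mathbf{Z}}\1_{C}(z)\,\1_{B}\bigl(\theta(y,z)\bigr)\,m(\ud z),\qquad B\in\mathscr{B}_\mathbf{X}.
\]
For fixed $y$ this is a finite measure on $\mathbf{X}$, and since $q(y,B)=m(\theta_y^{-1}(B))=0$ forces $\lambda_C(y,B)=0$ we have $\lambda_C(y,\cdot)\ll q(y,\cdot)$; write $g_C(y,\cdot)$ for a Radon--Nikodym density. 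Because $\theta$ is jointly $\mathscr{B}_\mathbf{Y}\otimes\mathscr{B}_\mathbf{Z}/\mathscr{B}_\mathbf{X}$-measurable the integrand above is jointly measurable in $(y,z)$, and Tonelli's theorem (valid as $m$ is $\sigma$-finite) shows $y\mapsto\lambda_C(y,B)$ is $\mathscr{B}_\mathbf{Y}$-measurable for every $B$; together with the assumed measurability of $y\mapsto q(y,B)$ this says that $(\lambda_C(y,\cdot))_y$ and $(q(y,\cdot))_y$ are measurable families of finite measures.

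The decisive point, which I expect to be the main obstacle, is to choose the densities so that $(y,x)\mapsto g_C(y,x)$ is \emph{jointly} $\mathscr{B}_\mathbf{Y}\otimes\mathscr{B}_\mathbf{X}$-measurable for all $j$ simultaneously. The difficulty is that $\mathbf{X}$ is not assumed countably generated, so a jointly measurable Radon--Nikodym derivative is not automatic. I would circumvent this by working only with the countably many densities $g_{C_j}$ and exploiting that both $\lambda_{C_j}(y,\cdot)$ and $q(y,\cdot)$ are images under $\theta_y$ of measures carried by the countably generated standard space $\mathbf{Z}$; parametrizing the classical construction of \cite[Theorem I.3.3]{Ikeda+Watanabe_1989}, one realizes each $g_{C_j}(y,\cdot)$ as a $q(y,\cdot)$-a.e. limit of jointly measurable ratio-approximants, yielding a jointly measurable limit. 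This is where the bulk of the technical work sits.

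It then remains to regularize $C\mapsto g_C(y,x)$ into a measure. The countably many algebra identities, namely finite additivity on $\mathscr{C}$, monotone continuity along the generating sequence, and $g_{\mathbf{Z}}(y,x)\le 1$ after normalization, each hold for $q(y,\cdot)$-a.e.\ $x$; being countable, the exceptional set $N\subset\mathbf{Y}\times\mathbf{X}$ on which some identity fails is $\mathscr{B}_\mathbf{Y}\otimes\mathscr{B}_\mathbf{X}$-measurable with $q(y,N_y)=0$ for every $y$, where $N_y$ is the $y$-section. Off $N$ the additive, continuous set function $C\mapsto g_C(y,x)$ extends uniquely, through the compactness of the embedding exactly as in \cite[Theorem I.3.3]{Ikeda+Watanabe_1989}, to a measure on $\mathscr{B}_\mathbf{Z}$ which I take as $Q(y,x,\cdot)$, setting $Q(y,x,\cdot):=0$ on $N$; this gives (i). Property (ii) follows by a monotone class argument: off $N$ the functions $Q(\cdot,\cdot,C)$ agree with the jointly measurable $g_C$, and the family of $A$ for which $(y,x)\mapsto Q(y,x,A)$ is jointly measurable is a $\lambda$-system (measures being continuous under monotone limits) containing the algebra $\mathscr{C}$, hence all of $\mathscr{B}_\mathbf{Z}$. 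Finally, for (iii) I verify $\int_{\mathbf{X}}\1_{B}(x)\,Q(y,x,A)\,q(y,\ud x)=m(A\cap\theta_y^{-1}(B))$ first for $A=C\in\mathscr{C}$, where it is the defining relation $\int_{B}g_C(y,x)\,q(y,\ud x)=\lambda_C(y,B)$, and then extend in $A$ by a Dynkin argument, both sides being measures in $A$ that agree on the generating algebra $\mathscr{C}$.
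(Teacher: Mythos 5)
First, a point of reference: the paper does not actually prove Theorem \ref{thm-regular conditional expectation with parameters}. It is stated in Appendix \ref{App:Appendix-RepThm} as a ``clear generalization'' of \cite[Theorem I.3.3]{Ikeda+Watanabe_1989}, followed only by Remark \ref{rem-regular conditional expectation with parameters}; no argument is supplied. So your proposal can only be judged on its own terms. Its architecture is the natural parametrization of the classical construction and is surely what the authors have in mind: a countable generating algebra $\{C_j\}$ on the standard space $\mathbf{Z}$, Radon--Nikodym densities $g_{C_j}(y,\cdot)$ of $\lambda_{C_j}(y,\cdot)=m(C_j\cap\theta_y^{-1}(\cdot))$ with respect to $q(y,\cdot)$, a jointly measurable exceptional set off which finite additivity and monotone continuity hold, extension to a measure on $\mathscr{B}_\mathbf{Z}$ via the Borel embedding into a compact metric space, and monotone-class arguments for (ii) and (iii). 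All of that is fine.

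The genuine gap is precisely the step you defer to ``the bulk of the technical work'': joint $\mathscr{B}_\mathbf{Y}\otimes\mathscr{B}_\mathbf{X}$-measurability of $(y,x)\mapsto g_{C_j}(y,x)$. For fixed $y$ the density exists on all of $\mathscr{B}_\mathbf{X}$ by the abstract Radon--Nikodym theorem (granting $\sigma$-finiteness of $q(y,\cdot)$, which incidentally is not guaranteed by the stated hypotheses, since a pushforward of a $\sigma$-finite measure need not be $\sigma$-finite), and then (iii) for all $B$ is its defining property. But the mechanism you propose for joint measurability --- ratio approximants --- requires a countable family of sets \emph{in} $\mathbf{X}$ along which the ratios converge to the density, i.e.\ it requires $\mathscr{B}_\mathbf{X}$ to be countably generated, which the theorem does not assume. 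The observation that $\lambda_{C_j}(y,\cdot)$ and $q(y,\cdot)$ are pushforwards from the countably generated $\mathbf{Z}$ supplies the countable family $\{C_j\}$ in the \emph{other} variable; it does not manufacture a countable generating $\pi$-system in $\mathbf{X}$. If the ratios are taken along some countably generated sub-$\sigma$-field $\mathscr{G}\subset\mathscr{B}_\mathbf{X}$, the limit is the density of $\lambda_{C_j}(y,\cdot)\vert_{\mathscr{G}}$ with respect to $q(y,\cdot)\vert_{\mathscr{G}}$, a conditional expectation of the true density, and identity (iii) would then be verified only for $B\in\mathscr{G}$. So either the theorem should carry the extra hypotheses that $\mathscr{B}_\mathbf{X}$ is countably generated and $q(y,\cdot)$ is $\sigma$-finite (both harmless for the paper's application, where $\mathbf{X}=\rU^\prime\setminus\{0\}$ is separable metric and $\sigma$-finiteness comes from the (QL) assumption), or the joint measurability must be obtained by a genuinely different argument; as written, your proof does not close this step.
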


\begin{remark}\label{rem-regular conditional expectation with parameters}
Let us observe that in view of {\color{teal}\eqref{image_of_m_under_theta_y}} and the change of measure Theorem  the equality \eqref{eqn-IW_II.7.28_a} takes the following equivalent form.
\begin{align}\label{eqn-IW_II.7.28_b}
m (A \cap \theta_y^{-1}(B))&= \int_{\mathbf{Z}}  \1_{B}(\theta(y,z)) Q(y,\theta(y,z),A)\, m(\ud y).
\end{align}

On the other hand, the formula \cite[(II.7.28) from Lemma II.7.1]{Ikeda+Watanabe_1989} is as follows. If
$f\colon  \mathbf{Y} \times \mathbf{X}  \to [0,\infty]$
 is $\mathscr{B}_{\mathbf{Z}} \otimes \mathscr{B}_{\mathbf{Z}}$-measurable, then for every $y \in \mathbf{Y}$,
\begin{align}\label{eqn-IW_II.7.28}
 \int_{\mathbf{Z}} f(\theta(y,z),z) m(\ud z)=   \int_{\mathbf{X}} \Bigl[\int_{\mathbf{Z}} f(x,z) Q(y,\theta(y,z),\ud z) \Bigr]\, q(y,\ud x).
 \end{align}
Taking a function $f$ defined by $f(x,z)=\1_B(x) \1_A(z)$ we can prove that the last equality \eqref{eqn-IW_II.7.28} reduces to
\eqref{eqn-IW_II.7.28_a}.
\end{remark}

We will be using the following version of  \cite[Theorem\  II.7.4]{Ikeda+Watanabe_1989} rewritten in terms of a  random measure in place of  a point process.

\begin{theorem}\label{thm_IW-Theorem II.7.4-PRM}
Assume that
\[
\mathfrak{A}=\bigl( {\Omega}, {\fcal}, {\mathbb{P}}, {\mathbb{F}} \bigr), \mbox{ where } {\mathbb{F}}=({\fcal}_t)_{t\geq 0} ,
\]
is a  filtered probability space. Assume that $(\mathbf{X},\mathscr{B}_\mathbf{X})$ is a measurable space and
$\eta$ is a  random measure on $\mathfrak{A}$ and $(\mathbf{X},\mathscr{B}_\mathbf{X})$.
Assume that $\eta$ is of class \textbf{(QL)} and there exists
an $\mathbb{F}$-predictable generalized process
\begin{equation}
 \label{eqn-N_q}
q\colon (0,\infty)\times \mathscr{B}_\mathbf{X} \times \Omega \to [0,\infty]
\end{equation}
such that  the compensator $\widehat{\eta}$  of $\eta$ satisfies, for  $\mathbb{P}$-almost every  $\omega\in \Omega$
\begin{equation}
 \label{eqn-N_compensator}
\begin{aligned}
\widehat{\eta}\bigl((0,t]\times A;\omega\bigr)&=\int_{(0,t]}\int_A q(s,\ud x,\omega) \ds, \mbox{ for all $t\in (0,\infty)$, $A\in \mathscr{B}_\mathbf{X}$}.
\end{aligned}
\end{equation}
Suppose  that $(\rY,\ycal)$ is a standard measurable space and  $m$ is a  $\sigma$-finite non-negative measure on it. Suppose  also that
\[
\theta\colon  (0,\infty) \times \rY\times \Omega \to \mathbf{X}^\ast\coloneqq\mathbf{X} \cup \{ \partial\}
\]
is another  $\mathbb{F}$-predictable generalized process
such that for  $\mathbb{P}$-almost every  $\omega\in \Omega$
\begin{equation}\label{eqn-IW_II.7.26_2}
  m(\{ u \in \rY: \theta(t,u,\omega) \in A\}) = q(t,A,\omega),  \mbox{ for all } t\in (0,\infty), \;\; A\in \mathscr{B}_\mathbf{X}.
\end{equation}
Then there exists an extension $\bigl(\tilde{\mathfrak{A}}, \pi  \bigr)$ of  $\mathfrak{A}$,
where
\[
\tilde{\mathfrak{A}}=\bigl( \tilde{\Omega}, \tilde{\fcal}, \tilde{\mathbb{P}}, \tilde{\mathbb{F}}  \bigr), \mbox{ with } \tilde{\mathbb{F}}=(\tilde{\fcal}_t)_{t\geq 0},
\]
 and there exists an  $\rY$-valued stationary Poisson point process $\tilde{p}$ on $\tilde{\mathfrak{A}}$,
with  the  corresponding  Poisson random measure  denoted by ${\eta}_{\tilde{p}}$,  such that
\begin{trivlist}
\item[(i)] the characteristic measure of $\tilde{p}$ is equal to the measure $m$,
\item[(ii)] and the following equality holds for 
{$\widetilde{\mathbb{P}}$}-almost every  
{$\tilde{\omega}\in \widetilde{\Omega}$}
 \end{trivlist}
 \begin{equation}\label{eqn-IW_II.7.27_a}
 \begin{aligned}
 \eta\bigl((0,t]\times A, \pi(\tilde{\omega})\bigr)&=
 \int_0^t \int_{\rY} (\1_A\circ \theta)(s,u,\pi(\tilde{\omega}))\, {\eta}_{\tilde{p}}(\ud s, \ud u)
 \\&= \# \bigl\{ s \in \dom({\tilde{p}(\tilde{\omega})}) \cap (0,t]: \theta(s,{\tilde{p}}(s,\tilde{\omega}),\pi(\tilde{\omega})) \in A \bigr\},
 \\&\mbox{for all  $t\in [0,\infty)$, $A\in \mathscr{B}_\mathbf{X}$}.
\end{aligned}
\end{equation}
\end{theorem}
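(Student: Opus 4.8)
The plan is to recognise this statement as the random-measure reformulation of the Ikeda--Watanabe Theorem II.7.4 and to reproduce that construction, with Theorem \ref{thm-regular conditional expectation with parameters} supplying the measurable disintegration that makes it work. First I would apply Theorem \ref{thm-regular conditional expectation with parameters} with $\mathbf{Z}=\rY$, parameter space $\mathbf{Y}=(0,\infty)\times\Omega$, and the map $\bigl((t,\omega),u\bigr)\mapsto\theta(t,u,\omega)$. The pushforward hypothesis \eqref{eqn-IW_II.7.26_2} identifies the image measure of $m$ under $\theta_{t,\omega}$ with $q(t,\cdot,\omega)$, and the predictability of $q$ gives the measurability \eqref{eqn-q(y,cdot)} required by that theorem. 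The output is a kernel $Q\bigl((t,\omega),x,\ud u\bigr)$ which, by \eqref{eqn-IW_II.7.28_a}, is for each $(t,\omega,x)$ a measure on $\rY$ carried by the fibre $\theta_{t,\omega}^{-1}(\{x\})$ and which disintegrates $m$ along $\theta_{t,\omega}$; intuitively $Q$ is the conditional law of the $\rY$-preimage given its $\theta$-image.

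Next I would build the extension $\tilde{\mathfrak{A}}$. Take $\tilde{\Omega}=\Omega\times\Omega_1\times\Omega_2$, where $\Omega_1$ carries an i.i.d.\ sequence of uniform variables used to sample from the kernels $Q$, and $\Omega_2$ carries an independent stationary Poisson point process on $\rY$ accounting for the ``cemetery'' fibre $\theta^{-1}(\{\partial\})$ and any mass of $m$ collapsed by $\theta$. With $\pi$ the coordinate projection onto $\Omega$ and $\tilde{\mathbb{F}}$ the usual augmentation of $\mathbb{F}$ by the auxiliary randomness, conditions (i)--(iv) of Definition \ref{def-extension of probability space} are verified directly; property (iv), the one that matters for transporting martingales, follows because the adjoined randomness is independent of $\mathscr{F}$.

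Then I would define $\tilde{p}$ atom by atom: enumerating the atoms $(s_k,x_k)$ of the class-(QL) measure $\eta$, I set $\tilde{p}(s_k)=u_k$, where $u_k\in\theta_{s_k}^{-1}(\{x_k\})$ is drawn from $Q\bigl((s_k,\omega),x_k,\cdot\bigr)$ via the uniform variable attached to $k$, and I adjoin the atoms of the independent cemetery process. By construction $\theta(s_k,u_k,\omega)=x_k$, so the counting identity \eqref{eqn-IW_II.7.27_a} holds pathwise. The substance of the proof is to check that $\tilde{p}$ is a stationary Poisson point process on $\rY$ with characteristic measure $m$; for this I would use the Laplace-functional criterion of Proposition \ref{prop-Poisson point process stationary-2}, i.e.\ verify \eqref{eqn-Poisson_point_process_stationary_2}. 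Conditioning on $\mathscr{F}$ and on the jump structure of $\eta$, the computation reduces to Watanabe's exponential-martingale identity: the compensator formula \eqref{eqn-N_compensator} controls the visible atoms, the disintegration \eqref{eqn-IW_II.7.28_a} rewrites integrals against $q(t,\cdot)$ as integrals against $m$, and the independence of the cemetery process supplies the remaining Poisson mass, so the conditional Laplace functional collapses to $\exp\bigl((t-s)\sum_j(e^{-\lambda_j}-1)m(U_j)\bigr)$.

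The main obstacle is precisely this Poisson verification: one must show that the union of the $Q$-sampled preimages and the independent cemetery atoms has the exact Poissonian conditional law given $\tilde{\mathscr{F}}_s$, with increments independent of the past. This is where the disintegration must be used with care, to transport the pushforward relation \eqref{eqn-IW_II.7.26_2} from the $\mathbf{X}$-side back to $m$ on $\rY$, and where the joint measurability and predictability furnished by Theorem \ref{thm-regular conditional expectation with parameters} are needed to ensure that the sampling map is itself predictable, so that $\tilde{p}$ is a bona fide point process in the sense of Proposition \ref{prop-point process}.
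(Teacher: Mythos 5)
Your proposal is sound in outline, but it takes a very different route from the paper: the paper's entire proof of Theorem \ref{thm_IW-Theorem II.7.4-PRM} is a one-line reduction to the point-process formulation in Ikeda--Watanabe \cite[Theorem II.7.4]{Ikeda+Watanabe_1989}, translated into random-measure language via the correspondence of Appendix C (with Remarks \ref{rem_compensator} and \ref{rem_predictability} handling the matching of hypotheses), whereas you reconstruct the Ikeda--Watanabe argument itself from scratch. Your reconstruction follows the standard strategy of that proof: disintegrate $m$ along the fibres of $\theta_{t,\omega}$ using the regular conditional distribution of Theorem \ref{thm-regular conditional expectation with parameters} (the analogue of \cite[Lemma II.7.1]{Ikeda+Watanabe_1989}), sample a preimage for each atom of $\eta$ from the kernel $Q$ on an enlarged space, adjoin an independent Poisson process carrying the mass of $m$ sent to $\partial$, and verify the Poisson property through the Laplace-functional criterion of Proposition \ref{prop-Poisson point process stationary-2}. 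What the paper's approach buys is brevity and safety: the delicate exponential-martingale computation and the independence-of-increments check are outsourced to the reference. What your approach buys is self-containment, but at the cost that the hardest step --- showing that the union of the $Q$-sampled preimages and the cemetery atoms has exactly the conditional Laplace functional $\exp\bigl((t-s)\sum_j(e^{-\lambda_j}-1)m(U_j)\bigr)$ given $\tilde{\mathscr{F}}_s$ --- remains only sketched, and writing it out would essentially reproduce the cited proof. One shared subtlety you should make explicit: to ``enumerate the atoms of $\eta$'' you must first pass from the random measure of class \textbf{(QL)} to an integer-valued random measure, i.e.\ to a point process; this is precisely the translation step the paper performs by citation, and Remark \ref{rem-comparison of compensators} notes that the equivalence of the two class-\textbf{(QL)} notions is not entirely routine.
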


\begin{proof}
Theorem \ref{thm_IW-Theorem II.7.4-PRM} follows from Theorem \cite[Theorem\ II.7.4]{Ikeda+Watanabe_1989}.
\end{proof}

\begin{remark}
\label{rem_compensator}
Let us observe that in view of assumption \eqref{eqn-IW_II.7.26_2} by the change of measure Theorem, identity  \eqref{eqn-N_compensator} can be written in the following equivalent form:
\begin{equation}
\label{eqn-N_compensator_b}
\begin{aligned}
\widehat{\eta}\bigl((0,t]\times A;\omega\bigr)&=\int_0^t \int_{\rY} (\1_{A} \circ \theta) (s,u,\omega)\, m(\ud u) \ds \quad \mbox{ for all } t\in (0,\infty),\,A\in \mathscr{B}_\mathbf{X}.
\end{aligned}
\end{equation}
\end{remark}

\begin{remark}
\label{rem_predictability}
Let us also observe that \eqref{eqn-IW_II.7.26_2} implies
\begin{align*}
q(t,A,\omega) &= \int_{\rY} (\1_A \circ   \theta)(t,u,\omega) \, m(\ud u), \mbox{ for all }t\in (0,\infty) \mbox{ and } A\in \mathscr{B}_\mathbf{X}.
\end{align*}
This implies that we do not have to assume the existence of the generalized process $q$. Moreover, if the generalized process $\theta$ is $\mathbb{F}$-predictable, by   the Fubini Theorem we infer  the $\mathbb{F}$-predictability of the generalized process $q$.

This observation is reflected by the way we formulate our next result, i.e.\ Theorem \ref{thm-martingale_rep}.
\end{remark}

Now we will formulate and prove a small modification of   \cite[Theorem\ 3.4.7]{Kallianpur_Xiong}. Note that by \cite[Prop.\ 2.2.2]{Veraar-PhD}, see also \cite[Proposition\ 2.2.1.]{Yaroslavtsev-PhD}, every  Banach space valued  martingale has a c\`adl\`ag version, hence, without loss of generality, we can assume below that the martingale $M$ is c\`adl\`ag.
The notation in the theorem below differs slightly from notation use in \cite{Kallianpur_Xiong}. In particular,
we use notation $(\rY,\ycal)$ instead of  $(\mathbf{U},\mathscr{B}_\mathbf{U})$.

\begin{theorem}\label{thm-martingale_rep}
Assume that
\[
\mathfrak{A}=\bigl( {\Omega}, {\fcal}, {\mathbb{P}}, {\mathbb{F}} \bigr), \mbox{ with } {\mathbb{F}}=({\fcal}_t)_{t\geq 0} ,
\]
is a  filtered probability space.
Assume that $\bX$ is a separable Banach space. Denote the Borel $\sigma$-field on $\bX$ (resp. on $\bX\setminus\{0\}$) by $\mathscr{B}({\bX})$ (resp. $\mathscr{B}(\bX\setminus\{0\})$).
Assume that $M=\bigl(M(t):\; t\geq 0\bigr)$ is a square integrable $\bX$-valued c\`adl\`ag,
purely discontinuous martingale defined on $\mathfrak{A}$.

Let $\eta_M$ be an  integer valued random measure  on  $(\mathbb{R}_+ \times \bX\setminus\{0\},\mathscr{B}(\mathbb{R}_+) \otimes \mathscr{B}(\bX\setminus\{0\}))$ and  $\mathfrak{A}$ associated with $M$,  defined for $\mathbb{P}$-almost every $\omega\in \Omega$ by
 \begin{equation}
 \label{eqn-N_M}
\eta_M\bigl((0,t]\times A;\omega\bigr)
\coloneqq\sum_{s\in (0,t]}\1_A(\Delta M(s,\omega))
\end{equation}
for all $t\in [0,\infty)$, $A\in \mathscr{B}(\bX\setminus\{0\})$.
\footnote{Note that $\eta_M\bigl((0,t]\times A;\omega\bigr) = \#\bigl\{s\in (0,t]: \Delta M(s,\omega) \in A \bigr\} = \sum_{s>0}\1_{(0,t] \times A}(s,\Delta M(s,\omega))$.}
Assume that  $\eta_M$ is of class \textbf{(QL)}.\\
Assume also that  $(\rY,\ycal)$ is  a standard measurable space, see \cite[Definition\ I.3.3]{Ikeda+Watanabe_1989},
 $m$ is a $\sigma$-finite non-negative measure defined  on it and
\[
\theta\colon  [0,\infty) \times \rY\times \Omega \to (\bX\setminus\{0\})^\ast\coloneqq(\bX\setminus\{0\}) \cup \{ \partial\}
\]
is a generalized predictable process such that
\begin{equation}
\label{eqn-integrability_in_representation_theorem}
\mathbb{E} \left[ \int_0^t \int_{\rY} \vert \theta(s,u) \vert^2 m(\ud u)\ds \right] < \infty.
\end{equation}
Assume finally that the compensator  $\widehat{\eta}_M$ of $\eta_M$   satisfies
\begin{equation}
 \label{eqn-N_M_hat}
\begin{aligned}
\widehat{\eta}_M\bigl((0,t]\times A;\omega\bigr)&=\int_0^t  m(\{ u \in \rY: \theta(s,u,\omega) \in A\}) \ds   \mbox{ for all } t>0,\,A\in \mathscr{B}(\bX\setminus\{0\}).
\end{aligned}
\end{equation}
Then there exists an extension
$\bigl(\tilde{\mathfrak{A}}, \pi  \bigr)$ of  $\mathfrak{A}$, where
\[
\tilde{\mathfrak{A}}=\bigl( \tilde{\Omega}, \tilde{\fcal}, \tilde{\mathbb{P}}, \tilde{\mathbb{F}}  \bigr), \mbox{ with } \tilde{\mathbb{F}}=(\tilde{\fcal}_t)_{t\geq 0} ,
\]
 and there exists a stationary $\rY$-valued  Poisson point   process $\tilde{p}$ on $\tilde{\mathfrak{A}}$,
with  the  corresponding the Poisson random measure  denoted by ${\eta}_{\tilde{p}}$,  such that
\begin{trivlist}
\item[(i)] the characteristic measure of the stationary Poisson point process $\tilde{p}$ is equal to the measure $m$, and
\item[(ii)] the following equality holds for $\tilde{\mathbb{P}}$-almost every  
$\tilde{\omega}\in \tilde{\Omega}$
 \end{trivlist}
 \begin{equation}\label{eqn-KX_3.4.6}
 \begin{aligned}
M(t, \pi(\tilde{\omega}))&=
 \int_0^t \int_{\rY}  \theta(s,u,\pi(\tilde{\omega}))\, \tilde{{\eta}}_{\tilde{p}}(\ud s,\ud u), \;\; \mbox{ for every }t\in [0,\infty).
 \end{aligned}
 \end{equation}
\end{theorem}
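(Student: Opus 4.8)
The plan is to deduce the representation directly from the reconstruction result Theorem \ref{thm_IW-Theorem II.7.4-PRM}, combined with the fact, recorded in Propositions \ref{prop-martingale on extension of probability space} and \ref{prop-purely discontinuous on extension of probability space}, that the martingale and pure-discontinuity properties survive passage to an extension. First I would apply Theorem \ref{thm_IW-Theorem II.7.4-PRM} with the space $\mathbf{X}$ there taken to be $\bX\setminus\{0\}$ and with the random measure $\eta$ there equal to the jump measure $\eta_M$ of $M$ defined in \eqref{eqn-N_M}. By hypothesis $\eta_M$ is of class \textbf{(QL)} and, by \eqref{eqn-N_M_hat}, its compensator is generated by the predictable process $\theta$ in precisely the form \eqref{eqn-IW_II.7.26_2}; moreover $(\rY,\ycal)$ is standard and $m$ is $\sigma$-finite. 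Hence Theorem \ref{thm_IW-Theorem II.7.4-PRM} produces the extension $\tilde{\mathfrak{A}}=\bigl(\tilde{\Omega},\tilde{\fcal},\tilde{\mathbb{P}},\tilde{\mathbb{F}}\bigr)$ with projection $\pi$, together with a stationary $\rY$-valued Poisson point process $\tilde p$ of characteristic measure $m$ (this already yields assertion (i)), whose associated Poisson random measure $\eta_{\tilde p}$ satisfies, for $\tilde{\mathbb{P}}$-almost every $\tilde\omega$, all $t$ and all $A\in\mathscr{B}(\bX\setminus\{0\})$,
\begin{equation*}
\eta_M\bigl((0,t]\times A,\pi(\tilde\omega)\bigr)=\int_0^t\int_\rY \1_A\bigl(\theta(s,u,\pi(\tilde\omega))\bigr)\,\eta_{\tilde p}(\ud s,\ud u).
\end{equation*}

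Next I would introduce the two candidate processes to be compared. Set $\tilde M(t)\coloneqq M(t)\circ\pi$ and let $N$ be the $\bX$-valued stochastic integral
\begin{equation*}
N(t)\coloneqq\int_0^t\int_\rY \theta(s,u,\pi(\tilde\omega))\,\tilde{\eta}_{\tilde p}(\ud s,\ud u),\qquad t\geq 0.
\end{equation*}
Applying Propositions \ref{prop-martingale on extension of probability space} and \ref{prop-purely discontinuous on extension of probability space} to each real martingale $\langle M,h\rangle$ with $h$ ranging over a countable separating subset of $\bX^\prime$, the process $\tilde M$ is a square integrable purely discontinuous $\tilde{\mathbb{F}}$-martingale on $\tilde{\mathfrak{A}}$ with $\tilde M(0)=0$. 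On the other hand, the integrability assumption \eqref{eqn-integrability_in_representation_theorem} guarantees that $N$ is a well-defined square integrable $\bX$-valued purely discontinuous martingale, being the compensated integral of a predictable integrand against the Poisson random measure $\eta_{\tilde p}$ of characteristic measure $m$.

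The decisive step is to verify that $\tilde M$ and $N$ have identical jumps. The jumps of $N$ are supplied at the points of $\tilde p$ by the integrand, namely $\Delta N(s)=\theta(s,\tilde p(s),\pi(\tilde\omega))\,\1_{\dom(\tilde p)}(s)$, because the compensator term $\int_0^\cdot\int_\rY\theta\,m(\ud u)\,\ds$ is continuous and contributes no jumps; consequently the jump measure of $N$ equals $\sum_{s\le\cdot}\1_A(\theta(s,\tilde p(s),\cdot))$, which is exactly the right-hand side of the displayed identity above. Since that right-hand side equals $\eta_M(\cdot,\cdot;\pi)$, which by \eqref{eqn-N_M} is the jump measure of $\tilde M$, the two jump measures coincide as integer-valued random measures on $(0,\infty)\times(\bX\setminus\{0\})$. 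By the injectivity of the jump-measure map, Corollary \ref{cor-injectivity of map p to eta_p}, this forces $\Delta\tilde M(s)=\Delta N(s)$ for every $s$, $\tilde{\mathbb{P}}$-a.s. Then $L\coloneqq\tilde M-N$ is a purely discontinuous martingale with $L(0)=0$ and no jumps; applying Corollary \ref{cor-purely discontinuous} to each $\langle L,h\rangle$ gives $[\langle L,h\rangle]_t=\sum_{s\le t}|\Delta\langle L(s),h\rangle|^2=0$, so each $\langle L,h\rangle$ is constant and equal to $0$, whence $L\equiv 0$ and \eqref{eqn-KX_3.4.6} holds.

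The main obstacle is the infinite-dimensional bookkeeping rather than any single hard estimate: one must make precise, and this is where the conuclear setting of Kallianpur--Xiong is replaced by a separable Banach space, that pure discontinuity tested against a countable separating family of functionals behaves correctly under the extension, that $N$ and its jumps are legitimately defined as a Banach-space-valued Poisson integral, and that the coincidence of scalar jump measures genuinely identifies the $\bX$-valued jump processes through Corollary \ref{cor-injectivity of map p to eta_p}. Care is also needed to confirm that the continuous compensator term does not perturb the jumps, and that the reconstruction identity \eqref{eqn-IW_II.7.27_a}, established for scalar test sets $A$, upgrades to the vector identity \eqref{eqn-KX_3.4.6}.
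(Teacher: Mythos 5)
Your proposal is correct and follows essentially the same route as the paper's proof: apply Theorem \ref{thm_IW-Theorem II.7.4-PRM} to the jump measure $\eta_M$ to obtain the extension and the Poisson point process, define $\tilde M = M\circ\pi$ and the compensated Poisson integral of $\theta$, identify their jumps via the reconstruction identity together with Corollary \ref{cor-injectivity of map p to eta_p}, and conclude that the difference, being both continuous and purely discontinuous, vanishes. The only cosmetic difference is that you test pure discontinuity against a countable separating family of functionals explicitly, where the paper invokes Propositions \ref{prop-purely discontinuous on extension of probability space} and the Kallianpur--Xiong integral result directly; the substance is identical.
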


\begin{proof}[Proof of Theorem \ref{thm-martingale_rep}]

\textbf{Step 1.} Let  an object $\eta_M$ be defined in \eqref{eqn-N_M}.
According to \cite[Proposition\ II.1.16]{Jacod_Shiryaev}, where a proof is given only  for $\mathbb{R}$-valued processes,   $\eta_M$ is an integer-valued random measure on $\mathbb{R}_+ \times \mathbf{X}$.

The assumptions imposed on the random measure $\eta_M$ are the same as in Theorem \ref{thm_IW-Theorem II.7.4-PRM}.
Hence by Theorem \ref{thm_IW-Theorem II.7.4-PRM} there exists  an extension
$\bigl(\tilde{\mathfrak{A}}, \pi  \bigr)$,
where $\tilde{\mathfrak{A}}=\bigl( \tilde{\Omega}, \tilde{\fcal}, \tilde{\mathbb{P}}, \tilde{\mathbb{F}}  \bigr)$,  with  $\tilde{\mathbb{F}}=(\tilde{\fcal}_t)_{t\geq 0}$,
of  $\mathfrak{A}$ and there exists a stationary $\rY$-valued stationary Poisson point process $\tilde{p}$ on $\tilde{\mathfrak{A}}$,
with  the  corresponding Poisson random measure  denoted by ${\eta}_{\tilde{p}}$,  such that
\begin{trivlist}
\item[(i)] the characteristic measure of $\tilde{p}$ is equal to  $m$, and
\item[(ii)] the following equality holds for every for  $\tilde{\mathbb{P}}$-almost every $\tilde{\omega}\in \tilde{\Omega}$
 \end{trivlist}
\begin{equation}\label{eqn-IW-II.7.27_b}
\begin{aligned}
\eta_M\bigl((0,t]\times A, \pi(\tilde{\omega})\bigr)&=
\int_0^t \int_{\rY} (\1_A\circ \theta)(s,u,\pi(\tilde{\omega}))\, {\eta}_{\tilde{p}}(\ud s,\ud u) \\
&= \sum_{s >0} \1_{(0,t] \times A}(s,\theta(s,{\tilde{p}}(s,\tilde{\omega}),\pi(\tilde{\omega})))
\end{aligned}
\end{equation}
for all $t\in [0,\infty)$, $A\in \mathscr{B}_\mathbf{X}$

\textbf{Step 2.}  Note that the preimage by $\pi$ of a subset $\Omega$ of measure zero has measure zero in $(\tilde{\Omega},\tilde{\fcal},\tilde{\mathbb{P}})$. From \eqref{eqn-N_M} we infer that
\begin{equation}
\label{eqn-N_M_tilde}
\begin{aligned}
\eta_M\bigl((0,t]\times A;\pi(\tilde{\omega})\bigr)
&=\sum_{s>0}\1_{(0,t] \times A}(s,\Delta M(s,\pi(\tilde{\omega})))
\end{aligned}
\end{equation}
for all $t\in [0,\infty)$, $A\in \mathscr{B}(\mathbf{X}\setminus\{0\})$.
By applying Corollary \ref{cor-injectivity of map p to eta_p} 
we infer that for $\tilde{\mathbb{P}}$-almost every $\tilde{\omega}\in \tilde{\Omega}$
  \[
  \Delta M(t,\pi(\tilde{\omega}))=\theta(t,{\tilde{p}}(t, \tilde{\omega}),\pi(\tilde{\omega})), \;\; \mbox{ for every } t \in (0,\infty).
  \]
Let us next define a process $\tilde{M}$ by formula \eqref{eqn-tilde_M_t}, which by Proposition \ref{prop-purely discontinuous on extension of probability space}, is a square-integrable purely discontinuous martingale on $\bigl(\tilde{\mathfrak{A}}, \pi  \bigr)$. Moreover, it satisfies for  $\mathbb{P}$-almost every  $\omega\in \Omega$
  \[
  \Delta \tilde{M}(t,\tilde{\omega})=\tilde{\theta}(t,{\tilde{p}}(t,\tilde{\omega}),\tilde{\omega}), \;\; \mbox{ for every } t \in (0,\infty),
  \]
where the process $\tilde{\theta}\colon [0,\infty) \times \rY\times \tilde{\Omega} \to X\setminus\{0\} $ is defined by \eqref{eqn-tilde_theta}.

\textbf{Step 3.} Define an auxiliary process $\xi$ on $\bigl(\tilde{\mathfrak{A}}, \pi  \bigr)$, by
\[
\begin{aligned}
\xi(t,\tilde{\omega})&\coloneqq \int_0^t \int_{\rY} \theta\bigl(s,u,\pi(\tilde{\omega})\bigr)\, \tilde{\eta}_{\tilde{p}}(\ud s, \ud u)=\int_0^t \int_{\rY} \tilde{\theta}\bigl(s,u,\tilde{\omega}\bigr)\, \tilde{\eta}_{\tilde{p}}(\ud s,\ud u).
\end{aligned}
\]
By the 2nd part of Lemma \ref{lem-extension of probability space} the process
$\tilde{\theta}$ is a generalized predictable  process. Moreover,  by property (ii) in Definition \ref{def-extension of probability space} the change of measure theorem we have
\begin{align*}
  \tilde{\mathbb{E}} \left[ \int_0^t \int_{\rY} \vert \tilde{\theta}(s,u) \vert^2 m(\ud u) \ds \right]
 &=  \mathbb{E} \left[ \int_0^t \int_{\rY} \vert \theta(s,u) \vert^2 m(\ud u)\ds \right].
\end{align*}
which is finite by assumptions.

Therefore, by \cite[Theorem\  3.4.5]{Kallianpur_Xiong} we infer that $\xi$ is a square integrable purely discontinuous martingale  on  probability space $\tilde{\mathfrak{A}}$ and that for  $\mathbb{P}$-almost every $\omega\in \Omega$
\[
\Delta \xi(t,\tilde{\omega})=\tilde{\theta}\bigl(t,{\tilde{p}}(t,\tilde{\omega}),\tilde{\omega}\bigr), \;\; \mbox{ for every } t \in (0,\infty).
\]

We conclude that for $\mathbb{P}$-almost every  $\omega\in \Omega$
\begin{equation}
\label{M_and_xi_have_equal_jumps}
\Delta \tilde{M}(t,\tilde{\omega})=
\Delta \xi(t,\tilde{\omega})
, \;\; \mbox{ for every } t \in (0,\infty).
\end{equation}

\textbf{Step 4.} From \eqref{M_and_xi_have_equal_jumps} we deduce that the process $\tilde{M}-\xi$ is square integrable continuous  martingale  on  probability space $\tilde{\mathfrak{A}}$.
On the other hand, since both processes $\tilde{M}$ and $\xi$ are  square-integrable purely discontinuous martingales and the set of all  square-integrable purely discontinuous martingales is a vector space we infer that the process $\tilde{M}-\xi$ is also a is a square-integrable purely discontinuous martingale.  We conclude that $\tilde{M}-\xi$ is equal to $0$. Hence we proved that for $\mathbb{P}$-almost every $\omega\in \Omega$
\[
\begin{aligned}
\tilde{M}(t,\tilde{\omega})=\int_0^t \int_{\rY} \tilde{\theta}\bigl(s,u,\tilde{\omega}\bigr)\, \tilde{\eta}_{\tilde{p}}(\ud s, \ud u), \;\; \;\; \mbox{ for every } t \in (0,\infty).
\end{aligned}
\]

This completes the proof of the theorem.
\end{proof}

\section{Filtration enlargement and martingales}
\label{app_martingales}

\begin{definition}
\label{def_martingale}
A process  $\rM\colon  \Omega  \times [0,\infty) \to \mathbb{R}$ is a martingale with respect to a filtration $\mathbb{H}=(\mathscr{H}_t)_{t\geq 0}$ if and only if it satisfies the following properties
\begin{enumerate}
\item[(i)] For every   $t \geq 0$ the function $M_t\coloneqq M(\cdot,t)$ is $\mathscr{H}_t$-measurable,
\item[(ii)] For every   $t \geq 0$ the function $M_t$ is integrable,
\item[(iii)] For all pairs $(s,t) \in [0,\infty)^{2}_{+}$,
\begin{equation}\label{eqn-martingale definition (iii)}
\mathbb{E}\left[ M_t|\mathscr{H}_s \right] = M_s,
\end{equation}
i.e.\ for every $A\in \mathscr{H}_s$
\[
\int_A{M_t}\, \ud \mathbb{P}=\int_A{M_s}\, \ud \mathbb{P}.
\]
or
\[
\mathbb{E} \bigl[ \1_A(M_t-M_s) \bigr]=0.
\]

\end{enumerate}
\end{definition}

\begin{lemma}\label{Lem:Indep-Increments-augmentation}
	Assume that  $(\Omega, \mathscr{F}, \mathbb{P})$ is a probability space 
 with the family  of all $\mathbb{P}$-null sets of $\mathscr{F}$ denoted by $\mathscr{N}$, i.e. 
	\begin{equation}\label{eqn-N-null sets}
\mathscr{N}\coloneqq\bigl\{ A \subset \Omega: \exists B \in \mathscr{F}: A\subset B \mbox{  and } \mathbb{P}(B)=0 \bigr\}.
\end{equation}
 Assume that $\mathbb{H}$ is the filtration $\mathbb{H}=(\mathscr{H}_t)_{t\ge 0}$ and  $\mathbb{G}=(\mathscr{G}_t)_{t\ge 0}$ is the usual augmentation of $\mathbb{H}$, i.e.
	\begin{equation}\label{eqn-G_t}
	\mathscr{G}_t=\bigcap_{s>t} \tilde{\mathscr{H}_s} ,\, t\ge 0,
	\end{equation}
	where $\tilde{\mathscr{H}_s}$ is  the $(\mathscr{F},\mathbb{P})$-completion of the
$\sigma$-field $\mathscr{H}_s$, i.e. 
	\begin{equation}\label{eqn-H_t}
 \tilde{\mathscr{H}_s}\coloneqq\sigma(\mathscr{H}_s \cup  \mathscr{N}),\, s\ge 0.
	\end{equation}
If a process 	$\rM=\left(M_t: t\geq 0\right)$ is a c\`adl\`ag $\mathbb{H}$-martingale on $(\Omega, \mathscr{F}, \mathbb{P})$,		Then, it is also a $\mathbb{G}$-martingale.
\end{lemma}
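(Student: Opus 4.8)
The plan is to prove Lemma \ref{Lem:Indep-Increments-augmentation} by verifying the three defining properties of a martingale (Definition \ref{def_martingale}) for the process $M$ with respect to the augmented filtration $\mathbb{G}=(\mathscr{G}_t)_{t\ge 0}$, taking the corresponding properties with respect to $\mathbb{H}=(\mathscr{H}_t)_{t\ge 0}$ as given. The integrability condition (ii) is immediate since it does not reference the filtration. The main content lies in the adaptedness condition (i) and the tower/conditional expectation condition (iii), so I would concentrate there.

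First I would establish adaptedness. Since $M_t$ is $\mathscr{H}_t$-measurable and $\mathscr{H}_t\subset\tilde{\mathscr{H}}_t\subset\mathscr{G}_t$ (the first inclusion because $\tilde{\mathscr{H}}_t=\sigma(\mathscr{H}_t\cup\mathscr{N})$, the second because $\mathscr{G}_t=\bigcap_{s>t}\tilde{\mathscr{H}}_s$ and $t\in(t,s)$ fails, so one must argue via $\mathscr{H}_t\subset\mathscr{H}_s\subset\tilde{\mathscr{H}}_s$ for every $s>t$), the variable $M_t$ is $\mathscr{G}_t$-measurable. I would write this inclusion chain out carefully, using monotonicity of the original filtration $\mathbb{H}$ so that $\mathscr{H}_t\subset\mathscr{H}_s$ for all $s>t$ and hence $\mathscr{H}_t\subset\bigcap_{s>t}\tilde{\mathscr{H}}_s=\mathscr{G}_t$.

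For the martingale property I would fix $s<t$ and show $\Ebar[M_t\mid\mathscr{G}_s]=M_s$. The standard argument has two ingredients. The completion step: because $\tilde{\mathscr{H}}_r=\sigma(\mathscr{H}_r\cup\mathscr{N})$ merely adjoins null sets, conditioning on $\tilde{\mathscr{H}}_r$ agrees almost surely with conditioning on $\mathscr{H}_r$, i.e.\ $\E[M_t\mid\tilde{\mathscr{H}}_r]=\E[M_t\mid\mathscr{H}_r]=M_r$ $\mathbb{P}$-a.s.\ for every $r$ with $s\le r<t$. The right-continuity step: since $\mathscr{G}_s=\bigcap_{u>s}\tilde{\mathscr{H}}_u$, I would apply the tower property to obtain $\E[M_t\mid\mathscr{G}_s]=\E[\,\E[M_t\mid\tilde{\mathscr{H}}_u]\mid\mathscr{G}_s\,]=\E[M_u\mid\mathscr{G}_s]$ for any $u\in(s,t)$, and then let $u\downarrow s$ along a sequence. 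Here the c\`adl\`ag hypothesis on $M$ is exactly what makes $M_u\to M_s$ almost surely as $u\downarrow s$; combined with uniform integrability of the family $\{M_u:u\in(s,t)\}$ (which follows from $\{M_u\}$ being a martingale family dominated in $L^1$-conditional-expectation sense by $M_t$, via $M_u=\E[M_t\mid\tilde{\mathscr{H}}_u]$ and the standard fact that such a family is uniformly integrable), one passes to the limit inside the conditional expectation to conclude $\E[M_t\mid\mathscr{G}_s]=M_s$.

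The main obstacle I anticipate is justifying the interchange of limit and conditional expectation in the right-continuity step, namely that $\E[M_u\mid\mathscr{G}_s]\to M_s$ as $u\downarrow s$. This requires the uniform integrability of the conditional expectations $\{\E[M_t\mid\tilde{\mathscr{H}}_u]\}_{u}$ together with their almost-sure convergence to $M_s$ supplied by the c\`adl\`ag property, and then an $L^1$-continuity argument for conditional expectation. This is a standard but slightly delicate measure-theoretic point; everything else (the inclusion chain for adaptedness and the null-set invariance of conditioning) is routine. I would cite the well-known fact, e.g.\ from \cite[Lemma\ 7.8]{Kallenberg_2002} or the analogous completion results in \cite[1.4]{Jacod_Shiryaev}, to streamline the uniform-integrability and completion arguments rather than reproving them from scratch.
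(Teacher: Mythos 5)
Your proof is correct and follows essentially the same route as the paper's: verify adaptedness via the inclusion chain, dispose of integrability trivially, and obtain the martingale identity by combining the c\`adl\`ag right-limit $M_u \to M_s$ as $u \downarrow s$ with the null-set invariance of conditioning on completed $\sigma$-fields. The only cosmetic difference is the order of operations and the convergence tool: the paper first proves $\E[M_t\mid \mathscr{H}_{s+}]=M_s$ on the uncompleted filtration using L\'evy's downward (reverse-martingale) convergence theorem from \cite[Theorem\ 7.23]{Kallenberg_2002} and then transfers to the completion via \cite[Lemma\ 7.25]{Kallenberg_2002}, whereas you complete first and then pass to the limit via the tower property and uniform integrability of $\{\E[M_t\mid\tilde{\mathscr{H}}_u]\}_u$ -- both orderings are valid.
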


\begin{proof}[Proof of Lemma \ref{Lem:Indep-Increments-augmentation} ]
Let us choose and fix a probability space  $(\Omega, \mathscr{P}, \mathbb{P})$, a filtration $\mathbb{H}=(\mathscr{H}_t)_{t\ge 0}$,
its augmentation $\mathbb{G}=(\mathscr{G}_t)_{t\ge 0}$
and an $\mathbb{H}$-martingale $\rM$.

First of all let us observe that in view of \cite[Lemma\ 7.8]{Kallenberg_2002}, for every $t\geq 0$, the $\sigma$-field $\mathscr{G}_t$ is equal to the $(\mathscr{F},\mathbb{P})$-completion $\tilde{\mathscr{H}_{t+}}$ of the
the $\sigma$-field $\mathscr{H}_{t+}\coloneqq\bigcap_{s>t} \mathscr{H}_s$, i.e.
	\begin{equation}\label{eqn-G_t-2}
	\mathscr{G}_t=  \sigma\Bigl( \bigl( \bigcap_{s>t} \mathscr{H}_s \bigr) \cup  \mathscr{N} \Bigr)=\tilde{\mathscr{H}_{t+}}.
	\end{equation}

Thus we infer that the filtration $(\mathscr{G}_t)_{t\ge 0}$ satisfies the usual conditions, especially it is right-continuous and complete.

We first notice that since $\rM$ is an $\mathbb{H}$-martingale, it satisfies  the property  (ii) of Definition \ref{def_martingale}.  Thus,   we only need to check that $\rM$ satisfies   conditions (i) and (iii) of that definition  for the enlarged filtration $\mathbb{G}$.

	We first check the condition (i) of Definition \ref{def_martingale}.  In order to do this let $t\in [0,\infty)$ be arbitrary, but fixed. We observe that since  $M_t$  is $\mathscr{H}_t$-measurable, it   is also $\mathscr{H}_r$-measurable for any $r> t$ and hence, by \eqref{eqn-G_t},  $\mathscr{G}_t$-measurable.
	
	We now check the condition (iii) of Definition  \ref{def_martingale}. For this purpose we  choose and fix an arbitrary pair $(s,t)\in [0,\infty)^2_{+}$ with $s<t$.
We first prove that
\begin{equation}\label{eqn-martingale w.r.t. right continuous extension}
  \mathbb{E}[M_t|\mathscr{H}_{s+}]=M_s.
\end{equation}
For this purpose let us consider a sequence $(s_n)_{n\in \mathbb{N}}$ such that
\begin{equation}\label{eqn-s_n}
s_n \in (s,t)  \mbox{ for every $n\in \mathbb{R}$  and } s_n \searrow s.
\end{equation}
Then we have
\[
\mathscr{H}_{s+}=\bigcap_{n\in \mathbb{N}} \mathscr{H}_{s_n}.
\]
Therefore, by the conditioning limits Theorem, see \cite[Theorem\ 7.23]{Kallenberg_2002},
\begin{align*}
 \mathbb{E}[M_t|\mathscr{H}_{s+}]=\lim_{n\to \infty}  \mathbb{E}[M_t|\mathscr{H}_{s_n}] \mbox{ a.s.\ and in } \mathbb{L}^1.
\end{align*}
Since by assumptions, $M$ is  an  $\mathbb{H}$-martingale  we infer that  for every $n\in \mathbb{N}$,
\begin{align*}
  \mathbb{E}[M_t|\mathscr{H}_{s_n}]= M_{s_n} \mbox{ a.s.}.
\end{align*}
Moreover, since by assumptions, $M$ is a c\`adl\`ag process we infer that
\begin{align*}
\lim_{n\to \infty} M_{s_n}=M_s  \mbox{ a.s.}
\end{align*}
Combining the last three equalities  we conclude the proof of  identity
\eqref{eqn-martingale w.r.t. right continuous extension}.

To conclude the proof of Lemma \ref{Lem:Indep-Increments-augmentation}, in view of equality \eqref{eqn-G_t-2},  it is sufficient to observe that
if $X$ is $\mathscr{F}$-measurable integrable function and $\mathscr{H}$ is a sub-$\sigma$-field of $\mathscr{F}$,   then

\begin{equation}\label{eqn-expectation_wrt_extension}
  \mathbb{E}[X|\mathscr{H}]=\mathbb{E}[X|\overline{\mathscr{H}}].
\end{equation}
Obviously, the random variable $Y\coloneqq\mathbb{E}[X|\mathscr{H}]$ being $\mathscr{H}$-measurable is also
$\overline{\mathscr{H}}$-measurable. We only need to prove, with $Z=X-Y$,  that
\[
\mathbb{E} \bigl[] \1_AZ \bigr]=0 \mbox{ for every } A \in \overline{\mathscr{H}}.
\]
Let us choose and fix a set $A \in \overline{\mathscr{H}}$. Then by \cite[Lemma\ 7.25]{Kallenberg_2002} there exist two sets $A_{\pm}\in \mathscr{H}$ with
\begin{align*}
A_{-} \subset A \subset A_{+} \mbox{ and } \mathbb{P}(A_{+}\setminus A_{-})=0.
\end{align*}
Therefore   $\mathbb{E} \bigl[ \1_{A+}Z \bigr] = 0 $,
$\mathbb{E} \bigl[ \1_{A-}Z \bigr] = 0 $ and because $ \1_{A\setminus A_{-}} Z = 0$ a.s.\ and $\1_{A_{+}\setminus A} = 0$ a.s.
\begin{equation*}\mathbb{E} \bigl[ \1_{A\setminus A_{-}} Z  \bigr] = 0 \mbox{  and } \mathbb{E} \bigl[  \1_{A_{+}\setminus A} Z  \bigr]=0.\end{equation*}
Since $Z$ is integrable, we infer that
\begin{align*}
   \mathbb{E} \bigl[ \1_A Z\bigr]   &= \frac12 \bigl\{ \mathbb{E} \bigl[ \1_{A_{+}} Z\bigr]+ \mathbb{E} \bigl[ \1_{A_{-}} Z\bigr]    \\
   &- \mathbb{E} \bigl[ \1_{A_{+}\setminus A} Z\bigr]   +\mathbb{E} \bigl[ \1_{A\setminus A_{-}} Z\bigr] \bigr\}=\frac12 \times 0=0.
\end{align*}
\end{proof}

\section{A small strengthening of Lemma B.1 from paper  \texorpdfstring{\cite{Brzezniak+Motyl_2013}}{[11, Lemma B.1]}}
\label{sec-B.1}

We will formulate and prove  a slightly stronger version of \cite[Lemma B.1]{Brzezniak+Motyl_2013}.

\begin{lemma}
Assume that  $u \in {L}^{2}(0,T;H)$ and   ${({u}_{n} )}_{n} $ is  a bounded sequence in ${L}^{2}(0,T;H)$ such that ${u}_{n} \to u $ in ${L}^{2}(0,T;{H}_{\loc})$. If $\gamma > \frac{d}{2}+1$, 
then  for  every  $\psi \in {V}_{\gamma }$,
\begin{equation} \label{eqn-lem-B.12}
  \lim_{n \to \infty } \int_{0}^{T} \vert  \dualf{ B \bigl( {u}_{n} (s),{u}_{n}(s)  \bigr)  }{\psi }{{V}_{\gamma }}{{V}_{\gamma }^{\prime }}\, -
    \dualf{ B \bigl( u (s),u(s)  \bigr)  }{\psi }{{V}_{\gamma }}{{V}_{\gamma }^{\prime }}\, \vert \ds .
\end{equation}
\end{lemma}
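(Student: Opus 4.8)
The plan is to use the antisymmetry of the trilinear form $b$ to move the only spatial derivative onto the fixed, smooth test function, and then to reduce to a compactly supported test function by density so that the hypothesis of local $L^2$-convergence can be applied. Writing $b(v,v,\psi)$ for the duality pairing $\dualf{B(v,v)}{\psi}{V_\gamma}{V_\gamma^\prime}$ appearing in the statement, I would decompose, for a.e.\ $s\in[0,T]$,
\begin{equation*}
b(u_n,u_n,\psi)-b(u,u,\psi)=b(u_n-u,u_n,\psi)+b(u,u_n-u,\psi).
\end{equation*}
Since $\gamma-1>\tfrac d2$, the Sobolev embedding $\mathbb{H}^{\gamma-1}(\R^d)\embed L^\infty(\R^d)$ (as in \eqref{eqn-H^s-1 to L^infty} with $\gamma$ in place of $s$) together with the antisymmetry $b(a,w,v)=-b(a,v,w)$ gives, for $v,w\in\rH$ and $\varphi\in V_\gamma$,
\begin{equation*}
|b(v,w,\varphi)|=|b(v,\varphi,w)|\le|v|_\rH\,\|\nabla\varphi\|_{L^\infty}\,|w|_\rH\le c\,|v|_\rH\,|w|_\rH\,\|\varphi\|_{V_\gamma}.
\end{equation*}

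First I would carry out the density reduction. Because $V_\gamma$ is by definition the closure of $\vcal$ in $\mathbb{H}^\gamma$, see \eqref{eqn-V_s}, given $\varepsilon>0$ I can choose $\phi\in\vcal$ with $\|\psi-\phi\|_{V_\gamma}<\varepsilon$, and then fix $R\in\mathbb{N}$ with $\supp\phi\subset B_R$. Using the displayed estimate, the error incurred by replacing $\psi$ with $\phi$ is controlled uniformly in $n$,
\begin{equation*}
\int_0^T|b(u_n,u_n,\psi-\phi)|\,ds\le c\,\|\psi-\phi\|_{V_\gamma}\int_0^T|u_n(s)|_\rH^2\,ds\le c\,\varepsilon\,M,
\end{equation*}
where $M:=\sup_n\|u_n\|_{L^2(0,T;\rH)}^2<\infty$ by hypothesis, and the same bound holds with $u_n$ replaced by $u$. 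Hence it suffices to establish the convergence with $\psi$ replaced by the fixed compactly supported $\phi$.

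Next, for this fixed $\phi$ I would again invoke antisymmetry to write $b(u_n-u,u_n,\phi)=-b(u_n-u,\phi,u_n)$ and $b(u,u_n-u,\phi)=-b(u,\phi,u_n-u)$, each of which is an integral over $B_R$ of the form $\int_{B_R}(\,\cdot\,)\cdot\nabla\phi\cdot(\,\cdot\,)\dx$. Since $\|\nabla\phi\|_{L^\infty}<\infty$ and $\supp\nabla\phi\subset B_R$, I obtain
\begin{equation*}
|b(u_n-u,\phi,u_n)|\le\|\nabla\phi\|_{L^\infty}\,|u_n-u|_{L^2(B_R)}\,|u_n|_\rH,
\end{equation*}
and likewise $|b(u,\phi,u_n-u)|\le\|\nabla\phi\|_{L^\infty}\,|u|_\rH\,|u_n-u|_{L^2(B_R)}$. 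Integrating in time and applying the Cauchy--Schwarz inequality bounds the sum $\int_0^T|b(u_n-u,u_n,\phi)|\,ds+\int_0^T|b(u,u_n-u,\phi)|\,ds$ by
\begin{equation*}
\|\nabla\phi\|_{L^\infty}\,q_{T,R}(u_n-u)\,\bigl(\|u_n\|_{L^2(0,T;\rH)}+\|u\|_{L^2(0,T;\rH)}\bigr),
\end{equation*}
where $q_{T,R}(u_n-u)\to0$ as $n\to\infty$ by the assumption $u_n\to u$ in $L^2(0,T;\rH_{\loc})$, while the last factor stays bounded. Thus this contribution vanishes in the limit for the fixed $\phi$.

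Finally I would combine the two steps: letting $n\to\infty$ with $\phi$ fixed yields $\limsup_{n\to\infty}\int_0^T|b(u_n,u_n,\psi)-b(u,u,\psi)|\,ds\le 2c\varepsilon M$, and since $\varepsilon>0$ is arbitrary the limit equals $0$, which is the claim \eqref{eqn-lem-B.12}. The only genuine obstacle is that convergence in $L^2(0,T;\rH_{\loc})$ carries no information about the integrand near spatial infinity; this is resolved precisely by the density reduction above, which is admissible because the uniform $L^2(0,T;\rH)$ bound on $(u_n)$ combined with the $L^\infty$-control of $\nabla\psi$ (Sobolev embedding, $\gamma>\tfrac d2+1$) renders the tail contribution uniformly small in $n$.
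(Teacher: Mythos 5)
Your proof is correct and follows essentially the same route as the paper's: the same decomposition $B(u_n,u_n)-B(u,u)=B(u_n-u,u_n)+B(u,u_n-u)$, the same key estimate $|b(v,w,\varphi)|=|b(v,\varphi,w)|\le c\,|v|_{\rH}|w|_{\rH}\|\varphi\|_{V_\gamma}$ obtained from antisymmetry and the Sobolev embedding, localization to $B_R$ for a test function in $\vcal$ so that the $L^2(0,T;\rH_{\loc})$ convergence applies, and the density of $\vcal$ in $V_\gamma$ combined with the uniform $L^2(0,T;\rH)$ bound to handle general $\psi$. The only difference is cosmetic (you perform the density reduction first, the paper last), so nothing further is needed.
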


\begin{proof}
 Assume first that $\psi \in \vcal $.  Then there exists $R>0$ such that $\supp \psi $ is a compact subset of ${\ocal }_{R}$.
Then, by inequality \eqref{eqn-b_R} we infer that for every $v ,w \in H $
\begin{equation} \label{eqn-lem-B.1-2}
 | \dual{B(v,w)}{\psi }{} | \leq c
 \normb{v}{\HR}{} \normb{w}{\HR}{} \normb{\psi}{{\rV}_{s}}{}
\end{equation}
Since,
$ B({u}_{n} , {u}_{n} ) - B(u,u) =  B({u}_{n} -u , {u}_{n} ) + B(u,{u}_{n} -u) $.
by  using the estimate \eqref{eqn-lem-B.1-2} and the H\H{o}lder inequality, we infer that there exists $C>0$ such that
\begin{eqnarray*}
& & \int_{0}^{t} \Bigl| \dual{ B \bigl( {u}_{n} (s) ,{u}_{n} (s) \bigr) }{\psi }{}
-  \dual{B \bigl( u(s), u(s)\bigr)  }{\psi }{} \Bigr| \ds
\\
& &\leq \int_{0}^{t} \Bigl|  \dual{ B \bigl( {u}_{n} (s) - u(s) ,{u}_{n} (s) \bigr) \ }{\psi }{} \Bigr| \ds
  +  \int_{0}^{t} \Bigl| \dual{ B \bigl( u (s) ,{u}_{n} (s) - u(s) \bigr)  }{\psi }{} \Bigr| \ds \\
& &\leq \Bigl( \int_{0}^{t}   |{u}_{n} (s) -u(s) {|}_{H_R} |{u}_{n} (s){|}_{H_R} \ds
   + \int_{0}^{t}   |u(s) {|}_{H_R}  |{u}_{n} (s) - u(s){|}_{H_R} \ds  \Bigr)
 \cdot \normb{\psi}{{\rV}_{s}}{} \\
& & \leq C
\Vert {u}_{n} - u \Vert_{L^2(0,T;H_R)} \bigl( \Vert u_n\Vert_{L^2(0,T;H_R)}
+\Vert   u \Vert_{L^2(0,T;H_R)} \bigr)
\normb{\psi}{{\rV}_{s}}{} ,
\end{eqnarray*}
Since ${u}_{n} \to u $ in ${L}^{2}(0,T;{H}_{\loc})$, we infer that \eqref{eqn-lem-B.12} holds for every  $\psi \in \vcal $.

\bigskip  \noindent
If $\psi \in {V}_{\gamma }$ then for every  $\eps > 0 $ there exists ${\psi }_{\eps } \in \vcal $ such that  $\normb{\psi-{\psi }_{\eps }}{{\rV}_{s}}{} \leq \eps $.
Then, for every $s \in [0,T]$, with $u_n={u}_{n} (s )$ and $u=u (s )$, we have
\begin{align*}
& \bigl| \dual{ B\bigl( {u}_{n} ,u_n \bigr) - B\bigl( u,u \bigr) }{\psi }{} \bigr| \\
 &  \le  \bigl| \dual{ B\bigl( u_n ,u_n (s )\bigr) - B\bigl( u,u \bigr) }{
\psi -{\psi }_{\eps }}{} \bigr|
 + \bigl| \dual{ B\bigl( u_n ,u_n (s )\bigr) - B\bigl( u,u (s )\bigr)  }{{\psi }_{\eps } }{} \bigr| \\
&  \le  \bigl( \bigl| B\bigl( u_n ,u_n \bigr){\bigr| }_{{V}_{\gamma }^{\prime }}
  + \bigl| B\bigl( u ,u \bigr) {\bigr| }_{{V}_{\gamma }^{\prime }} \bigr)
   \normb{\psi -\psi _\eps }{{\rV}_{s}}{}
   \\
 &   + \bigl| \dual{ B\bigl( u_n ,u_n (s )\bigr) - B\bigl( u,u (s )\bigr)  }{{\psi }_{\eps } }{} \bigr| \\
 &  \le \eps  \bigl( |u_n (s ){|}_{H}^{2}
   +|u (s ){|}_{H}^{2}\bigr)
 + \bigl| \dual{ B\bigl( u_n ,u_n (s )\bigr)
   - B\bigl( u,u (s )\bigr)  }{{\psi }_{\eps } }{} \bigr| .
\end{align*}
Hence
\begin{align*}
&  \int_{0}^{T} \bigl| \dual{ B\bigl( u_n (s ) ,u_n (s ) \bigr) - B\bigl( u(s ) ,u (s ) \bigr) }{\psi }{} \bigr|  \ds
\\
&  \le  \eps \int_{0}^{T}\bigl(  |u_n (s ){|}_{H}^{2} +  |u (s ){|}_{H}^{2}\bigr) \ds
 + \bigl| \int_{0}^{T}\dual{ B\bigl( u_n (s ) ,u_n (s )\bigr) - B\bigl( u(s ) ,u (s )\bigr)  }{{\psi }_{\eps } }{} \ds \bigr| \\
&  \le \eps \cdot \bigl( \sup_{n\ge 1}\Vert u_n \Vert^2_{L^2(0,T;H)} +\Vert u \Vert^2_{L^2(0,T;H)}  \bigr)
+ \int_{0}^{T} \bigl|  \dual{ B\bigl( u_n (s ) ,u_n (s )\bigr) - B\bigl( u(s ) ,u (s )\bigr)  }{{\psi }_{\eps } }{} \bigr| \ds
.
\end{align*}
Passing to the upper limit as $n \to \infty $, we obtain
\begin{equation*}
 \limsup_{n \to \infty }
 \int_{0}^{T} \bigl| \dual{ B\bigl( u_n (s ) ,u_n (s ) \bigr) - B\bigl( u(s ) ,u (s ) \bigr) }{\psi }{} \bigr| \ds
 \le   M \eps ,
\end{equation*}
where $M\coloneqq\sup_{n\ge 1}\Vert u_n
\Vert^2_{L^2(0,T;H)} + \Vert u \Vert^2_{L^2(0,T;H)}<\infty $.
Since $\eps >0$ is arbitrary, we infer that
\[
 \lim_{n \to \infty }
 \int_{0}^{T}  \bigl| \dual{ B\bigl( u_n (s) ,u_n (s ) \bigr) - B\bigl( u(s ) ,u (s ) \bigr) }{\psi }{} \bigr| \ds =0.
\]
This completes the proof.
\end{proof}

\bigskip  \noindent

\bigskip

\section{Proof of  Lemma \ref{lem-Galerkin_estimates}} \label{app-proof_of_apriori_est}
Let us choose and fix $n \in \mathbb{N}$. Let  ${u}_{n}=\bigl( {u}_{n}(t) {\bigr) }_{t \in [0,T]} $  be the the processes from Lemma \ref{lem-Galerkin_existence}. 
For every  all $R>0$ let us define
\begin{equation} \label{eq:stopping_time}
{{\tau }_{n}}_{}(R):= \inf \{ t \ge 0: \, \, |{u}_{n}(t){|}_{H} > R  \} \wedge T. 
\end{equation}
Since the process ${u}_{n}$ is $\mathbb{F}$-adapted and right-continuous, 
and the filtration $\mathbb{F} $ is right-continuous,
by Proposition 2.1.6 from \cite{Ethier_Kurtz_1986}, we infer that
${\tau }_{n}(R)$ is an $\mathbb{F}$-stopping time. Moreover, since  it is c\`{a}dl\`{a}g on $[0,T]$,  the trajectories $[0,T] \ni t \mapsto {u}_{n}(t)$, $\mathbb{P} $-a.s. Thus
${\tau }_{n} (R)\toup T$, $\mathbb{P} $-a.s., as $R\to \infty $. In fact, $\mathbb{P} $-a.s. there exits (a random)  $R_0>0$ such that 
${\tau }_{n} (R)=T$ for all $R\geq R_0$.

Assume that $p =2$ or $p=4+\gamma $.
Using the It\^{o} formula, 
see \cite[Th. II.5.1]{Ikeda+Watanabe_1989},
to the function  
\[\phi(x):=|x{|}^{p}:= |x{|}_{H}^{p}, \;\;  x \in H,
\] 
we obtain for all $t \in [0,T]$
\begin{equation*}
\begin{aligned}
&|{u}_{n}  ({t\wedge {\tau }_{n} (R)}) {|}^{p} =  |P_n {u}_{0}{|}^{p} \\
&+ \int_{0}^{t\wedge {\tau }_{n} (R)} \bigl\{ p | u_n (s){| }^{p-2} \dual{u_n (s)}{- {\acal }_{n} u_n (s) - {B}_{n}(u_n (s)) + P_n f (s)}{} \bigr\}  \ds \\
& + \int_{0}^{t\wedge {\tau }_{n} (R)} \int_{{Y}} \big\{ \phi \bigl( u_n (s-)  + P_n F(s,u_n (s-),y) \bigr) -
   \phi \bigl( u_n (s-) \bigr)  \bigr\} \, \tilde{\eta}(\ud s,\ud y)   \\
& + \int_{0}^{t\wedge {\tau }_{n} (R)} \int_{{Y}} \big\{ \phi \bigl( u_n (s))  + P_n F(s,u_n (s),y) \bigr) -
 \phi \bigl( u_n (s) \bigr)  \\
&  - \dual{\phi '(u_n (s) ) }{P_n F(s,u_n (s),y)}{} \bigr\} \, \nu (\ud s,\ud y) .
\end{aligned}
\end{equation*} 
By \eqref{eqn-A_n_duality} and \eqref{eqn-B_n_b} we obtain for all $t \in [0,T]$
\begin{equation*}
\begin{aligned} 
& |{u}_{n}  ({t\wedge {\tau }_{n} (R)}) {|}^{p} 
=   |P_n {u}_{0}{|}^{p} \\
&+ \int_{0}^{t\wedge {\tau }_{n} (R)}  \bigl\{ -p | u_n (s){| }^{p-2} {\| u_n (s)\| }^{2} 
  + p | u_n (s){| }^{p-2} \dual{u_n (s)}{f (s)}{} \bigr\}  \ds   \\  
 & + \int_{0}^{t\wedge {\tau }_{n} (R)} 
   \int_{{Y}} \big\{ | u_n (s-)  + P_n F(s,u_n (s-),y) {|}^{p} -
    | u_n ({s}^{-}) {|}^{p}  \bigr\} \, \tilde{\eta}(\ud s,\ud y)    \\
& + \int_{0}^{t\wedge {\tau }_{n} (R)} \int_{{Y}} 
\big\{  | u_n (s)  + P_n F(s,u_n (s),y) {| }^{p}- | u_n (s) {|}^{p}   \\
 &  - p | u_n (s) {|}^{p-2} \dual{u_n (s)  }{P_n F(s,u_n (s),y)}{} \bigr\} \, 
 \nu (\ud y) \ds .
\end{aligned}
\end{equation*}
By the Cauchy-Schwarz inequality,  for $f \in \rV^\prime$ and $u_n \in \rV$, we have 
\begin{equation*}
\begin{aligned}
&  \dual{f}{u_n }{} \le  \, {|f|}_{\rV^\prime} \cdot {\| u_n \| }_{V}
\le \frac{1}{2} \bigl\{ {|f|}_{\rV^\prime}^{2} + |u_n {|}_{H}^{2} + {\| u_n \| }^{2} \bigr\} .
\end{aligned} 
\end{equation*}
Thus we infer that 
\begin{equation*}
\begin{aligned}
p {|u_n |}^{p-2} \dual{u_n }{f }{} 
& \le \frac{p}{2} {|u_n |}^{p-2} {\| u_n \| }^{2} + \frac{p}{2} {|u_n |}^{p} 
+ \frac{p}{2} {|f|}_{\rV^\prime}^{2} {|u_n |}^{p-2}  .
\end{aligned}
\end{equation*}
Since by the Young inequality  
\[
\begin{aligned}
\frac{p}{2} {|f|}_{\rV^\prime}^{2} {|u_n |}^{p-2}  
& \le  \frac{p}{2} \Bigl\{ \frac{2}{p} {|f|}_{\rV^\prime}^{p} + \Bigl( 1-\frac{2}{p}  \Bigr) {|u_n |}^{p}   \Bigr\} \\
& =  {|f|}_{\rV^\prime}^{p} + \Bigl( \frac{p}{2}-1  \Bigr) {|u_n |}^{p},
\end{aligned}
\]
we obtain
\begin{equation*}
\begin{aligned}
p {|u_n |}^{p-2} \dual{u_n }{f }{} 
& \le  \frac{p}{2} {|u_n |}^{p-2}{\| u_n \| }^{2} 
+ (p-1)  {|u_n |}^{p}  +  {|f|}_{\rV^\prime}^{p} .
\end{aligned}
\end{equation*}
Thus, 
applying the above for $f=f(s)$ and $u_n=u_n(s)$, by Assumption A\ref{assum-A.1}, \eqref{eqn-norm_V},  we obtain for all $s \in [0,T\wedge {\tau }_{n} (R)] $, with $c_n= |P_n u_0 {|}^{p} +  \int_{0}^{T} {|f(s)|}_{\rV^\prime}^{p} \, ds$ and $c_1= p-1$, we have 
\begin{equation}
\begin{aligned} 
& |{u}_{n}  ({t \wedge  {\tau }_{n} (R)}){|}^{p}
+ \frac{p}{2} \int_{0}^{t\wedge {\tau }_{n} (R)} |u_n (s){|}^{p-2} {\| u_n (s)\| }^{2}  \ds  
\\
 &\le c_n+ c_1   \int_{0}^{t\wedge  {\tau }_{n} (R)} | u_n (s){|}^{p}  \ds   \\
&  + \int_{0}^{t\wedge {\tau }_{n} (R)} 
   \int_{{Y}} \big\{ | u_n (s-)  +P_n F(s,u_n (s-),y) {|}^{p} -
   | u_n (s-) {|}^{p}  \bigr\} \, \tilde{\eta}(\ud s,\ud y)    \\
&  + \int_{0}^{t\wedge {\tau }_{n} (R)} \int_{Y} 
   \big\{ |u_n (s)  + P_n F(s,u_n (s),y) {|}^{p}-
   | u_n (s) {|}^{p}    \\
&   - p | u_n (s) {|}^{p-2} \ilsk{u_n (s)  }{P_n F(s,u_n (s),y)}{H} \bigr\} \, 
\nu (\ud y) \ds  . 
\end{aligned}
\label{eq:Ito_formula}
\end{equation}

From the Taylor formula, it follows that there exists a positive constant ${c}_{p}>0$ such that  the following inequality holds
\begin{equation}
\begin{aligned}
    \vert \phi(x+h)-\phi(x)-\phi^\prime(x)h \vert&= \bigl| |x+h{|}_{H}^{p} -|x{|}_{H}^{p}- p |x{|}_{H}^{p-2} \ilsk{x}{h}{H} \bigr| \\
&\leq {c}_{p} ( {|x|}_{H}^{p-2} +  {|h|}_{H}^{p-2} ) \, |h{|}_{H}^{2}, \;\; x,h \in H.  \label{eq:Taylor_II}
\end{aligned}
\end{equation}
Let us define a (local) process  ${M}_{n }$ and a process  ${I}_{n }$ by  respectively  
\begin{equation}\label{eqn-M_n-local}
    \begin{aligned}
            {M}_{n}(t)&:=  \int_{0}^{t} \int_{{Y}} 
\big\{ |u_n (s-)+ P_n F(s,u_n (s-),y) {|}^{p} - |u_n (s-) {|}^{p}  \bigr\} \, \tilde{\eta}(\ud s,\ud y) , 
 \\ 
 & \hspace{2 truecm} t\in [0,T\wedge {\tau }_{n} (R)],
     \end{aligned}
 \end{equation}
and 
\begin{equation}  
\begin{aligned}
& {I}_{n}(t):= \int_{0}^{t} \int_{{Y}} 
   \big\{  | u_n (s)  + P_n F(s,u_n (s),y) {| }^{p} -| u_n (s) {|}^{p}    \\
&  - p | u_n (s) {|}^{p-2} {(u_n (s) ,P_n F(s,u_n (s),y))}_{H} \bigr\} \, 
\nu (\ud y) \ds  , \quad t\in [0,T].  
\end{aligned}
\label{eq:I_n}
\end{equation}

By \eqref{eq:Taylor_II},  \eqref{eqn-F_linear_growth-p} and \eqref{eq:stopping_time}, we deduce that the stopped process 
$\bigl( {M}_{n}(t\wedge {\tau }_{n} (R)) {\bigr) }_{t \in [0,T]}$,
is a square integrable martingale. Hence we infer that 
\begin{equation}
\mathbb{E} [{M}_{n}(t\wedge {\tau }_{n}(R))] = 0  \mbox{ for all }t\in [0,T].
\end{equation}
Let us define another auxiliary process
By \eqref{eq:Taylor_II} and \eqref{eqn-F_linear_growth-p} we we infer that 
for some constant ${\tilde{c}}_{p}>0$,  the following inequalities hold, 
\begin{equation*}
\begin{aligned}
& |{I}_{n}(t)|  \\
& \le  {c}_{p}\int_{0}^{t} \int_{Y} | P_n F(s,u_n (s),y) {|}_{H}^{2}
 \bigl\{ | u_n (s) {|}_{H}^{p-2} + | P_n F(s,u_n (s),y) {|}_{H}^{p-2} \bigr\}
  \nu (\ud y) \ds \\
& \le  {c}_{p}\int_{0}^{t} \bigl\{ 
{C}_{2} | u_n (s) {|}_{H}^{p-2} \bigl( 1+ | u_n (s) {| }_{H}^{2}\bigr) 
 + {C}_{p}\bigl( 1+ |u_n (s) {|}_{H}^{p}\bigr)   \bigr\} \ds  \\
& \le  {\tilde{c}}_{p} \int_{0}^{t} \bigl\{ 1+ |u_n (s) {|}_{H}^{p} \bigr\} \ds
= {\tilde{c}}_{p}t + {\tilde{c}}_{p}\int_{0}^{t} |u_n (s){|}_{H}^{p} \ds , \qquad t \in [0,T]. 
\end{aligned}
\end{equation*}
Thus by the Fubini Theorem, we obtain the following inequality
\begin{equation} \label{eq:E_I_n(t)}
\mathbb{E} \bigl[ | {I}_{n}(t)| \bigr] \le  
 {\tilde{c}}_{p}t + {\tilde{c}}_{p} \int_{0}^{t} \mathbb{E} \bigl[|u_n (s) {|}_{H}^{p} \bigr] \ds , \qquad t \in [0,T]. 
\end{equation}

By \eqref{eq:Ito_formula} and \eqref{eq:E_I_n(t)}, we have for all $t\in [0,T]$
\begin{equation} 
\begin{aligned}
& \mathbb{E} \bigl[  \bigl| {u}_{n}  ({t\wedge {\tau }_{n} (R)}) {\bigr| }_{H}^{p} \bigr]  
 + \frac{p}{2} 
 \mathbb{E} \Bigl[ \int_{0}^{T\wedge {\tau }_{n} (R)} | u_n (s){|}_{H}^{p-2} {\| u_n (s)\}  }^{2} \ds \Bigr] 
 \\
\qquad & \le  c+ {\tilde{c}}_{p}T 
+ (c_1 +{\tilde{c}}_{p} ) \int_{0}^{t\wedge {\tau }_{n} (R)} \mathbb{E} \bigl[|u_n (s) {|}_{H}^{p} \bigr] \ds .
\end{aligned}
\label{eq:Ito_formula_est}
\end{equation} 
In particular, for $t \in [0,T]$, 
\[
\mathbb{E} \bigl[ | {u}_{n}  ({t\wedge {\tau }_{n} (R)}) {| }_{H}^{p} \bigr]
\le   c+ {\tilde{c}}_{p}T + (c_1 +{\tilde{c}}_{p} )
\int_{0}^{t\wedge {\tau }_{n} (R)}  \mathbb{E} \bigl[ |u_n (s \wedge {\tau }_{n} (R) ){|}_{H}^{p} \bigr] \ds .
\]
By the Gronwall Lemma we infer that there exists a constant ${\tilde{\tilde{C_p}}}$ independent of  $R>0$ and $n \in \mathbb{N} $,
\[
\mathbb{E} \bigl[| {u}_{n}  ({t\wedge {\tau }_{n} (R)}) {|}^{p} \bigr]
 \le   {\tilde{\tilde{C_p}}}, \;\; t \in [0,T].  
 \]
 Therefore, 
\[ \label{eq:E(un(t))_R_est}
\sup_{n \ge 1} \sup_{t\in [0,T]} \mathbb{E} \bigl[ |{u}_{n} (t\wedge {\tau }_{n} (R)) {|}_{H}^{p} \bigr]
 \le   {\tilde{\tilde{C_p}}}
\]
and  in particular, for some constant $\tilde{{C}_{p}}>0$, 
\[ \label{eq:E(int_un(t))_R_est}
\sup_{n \ge 1} \mathbb{E} \Bigl[ \int_{0}^{{T\wedge {\tau }_{n} (R)}} | {u}_{n}(s) {|}_{H}^{p} \ds \Bigr] \le \tilde{{C}_{p}}. 
\]
Passing to the limit as $R \to \infty $, by the Fatou Lemma we infer that
\begin{equation} \label{eq:E(int_un(t))_est}
\sup_{n \ge 1} \mathbb{E} \Bigl[ \int_{0}^{T}|{u}_{n} (s) {|}_{H}^{p} \, ds \Bigr]  
 \le   \tilde{{C}_{p}} .
\end{equation} 
By \eqref{eq:Ito_formula_est} and \eqref{eq:E(int_un(t))_est}, we infer that, for some positive constant ${C}_{p}$, 
\begin{equation}  \label{eq:HV_R_estimate}
\sup_{n \ge 1} \mathbb{E} \Bigl[  \int_{0}^{T\wedge {\tau }_{n} (R)}| u_n (s){|}_{H}^{p-2} {\| u_n (s)\| }^{2} \ds \Bigr]  \le {C}_{p}.
\end{equation}

Passing to the limit as $R \to \infty $ and using again the Fatou Lemma we infer that
\begin{equation}  \label{eq:HV_estimate}
\sup_{n \ge 1} \mathbb{E} \Bigl[  \int_{0}^{T} |u_n (s){|}_{H}^{p-2} {\| u_n (s)\| }^{2} \ds \Bigr] 
\le {C}_{p} .
\end{equation} 
In particular, putting $p:=2$ by \eqref{eqn-norm_V} \eqref{eq:HV_estimate} and \eqref{eq:E(int_un(t))_est}   we obtain assertion  \eqref{eqn-V_estimate}.

Let us move to the proof of inequality \eqref{eqn-H_estimate-p>2}.
From now we assume that $p:=4+\gamma $.
By the Burkholder-Davis-Gundy inequality we obtain
\begin{align}
\label{eq:BDG_R_new}
& \mathbb{E} \Bigl[ \sup_{r\in [0,t ]} |{M}_{n}(r\wedge {\tau }_{n} (R))| \Bigr]  \\
& =\mathbb{E} \Bigl[ \sup_{r\in [0,t ]} 
 \Bigl| \int_{0}^{r\wedge {\tau }_{n} (R)} 
 \int_{Y}  \big\{  \bigl| u_n ({s}^{-}\! )+P_n F(s,u_n ({s}^{-});y) {\bigr| }_{H}^{p} -
\bigl| u_n ({s}^{-}\! ) {\bigr| }_{H}^{p}  \bigr\}  \tilde{\eta}(\ud s,\ud y) \Bigr|  \Bigr]  
\notag\\
& \le {\tilde{K}}_{2,1} \mathbb{E} \Bigl[
\Bigl(  \int_{0}^{t\wedge {\tau }_{n} (R)} 
\int_{Y} \bigl(  \bigl| u_n ({s}^{}\! )+P_n F(s,u_n ({s}^{}),y) {\bigr| }_{H}^{p} 
- \bigl| u_n ({s}^{}) {\bigr|}_{H}^{p}  {\bigr) }^{2}  \nu (\ud y) \ds {\Bigr) }^{\frac{1}{2}} 
 \Bigr] 
 \notag
\end{align}
for some constant ${\tilde{K}}_{2,1}>0$.
Using the Mean Value Theorem we infer that there exists a constant ${\tilde{c}}_p>0$ such that  for all $x,h \in \rH$, 
\begin{equation}
\begin{aligned}
\bigl| |x+h{|}_{H}^{p} -|x{|}_{H}^{p} {\bigr| }^{2}
\; &\le \; {\tilde{c}}_{p} ( {|x|}_{H}^{p-1} {|h|}_{H} +  {|h|}_{H}^{p} {)}^{2} 
\\
\; &= \;  {\tilde{c}}_{p} ({|x|}_{H}^{2p-2}|h{|}_{H}^{2} + 2 {|x|}_{H}^{p-1} {|h|}_{H}^{p+1} + {|h|}_{H}^{2p} ) .
\end{aligned}
\label{eq:Taylor_1'}
\end{equation} 
Hence by inequality \eqref{eqn-F_linear_growth-p} we obtain for all $s \in [0,T] $

\begin{align}
\label{eq:BDG_R_Taylor_1''_new}  
& \int_{Y}\bigl( {u_n (s) + P_n F(s,u_n (s),y) }_{H}^{p} -
{u_n (s)}_{H}^{p}  {\bigr) }^{2} \nu (\ud y) \\
& \le   {\tilde{c}}_{p} {u_n (s)}_{H}^{2p-2} 
\int_{Y} {F(s,u_n (s),y)}_{H}^{2} \, \nu (\ud y)  
\notag \\ 
&  + {\tilde{c}}_{p} {u_n (s)}_{H}^{p-1} 
\int_{Y} { F(s,u_n (s),y)}_{H}^{p+1} \, \nu (\ud y)
 + {\tilde{c}}_{p}\int_{Y} {F(s,u_n (s),y)}_{H}^{2p} \nu (dy) \notag \\
& \le   {\tilde{c}}_{p} \Bigl\{ {C}_{2} {u_n (s)}_{H}^{2p-2} (1+{u_n (s)}_{H}^{2} )
  + {C}_{5+\gamma } {u_n (s)}_{H}^{p-1} (1+ {u_n (s)}_{H}^{p+1} ) 
  + {C}_{2(4+\gamma )} (1+{u_n (s) }_{H}^{2p})  \Bigr\}. 
\notag \end{align}

By \eqref{eq:BDG_R_Taylor_1''_new} and the Young inequality we infer that \[
\int_{Y}\bigl( |u_n (s)+ P_n F(s,u_n (s);y) {|}_{H}^{p} -|u_n (s) {|}_{H}^{p}  {\bigr) }^{2} \nu (\ud y) 
\le {K}_{1} + {K}_{2} | u_n (s) {|}_{H}^{2p}
\]     
for some positive constants ${K}_{1}$ and ${K}_{2}$. Thus
\begin{eqnarray}
& & 
 \Bigl( \int_{0}^{t\wedge {\tau }_{n} (R)} 
\int_{Y} \bigl( |u_n (s)  + P_n F(s,u_n (s),y) {|}_{H}^{p} -
|u_n (s) {|}_{H}^{p}  {\bigr) }^{2}  \, \nu (\ud y) \ds {\Bigr) }^{\frac{1}{2}} 
   \nonumber  \\
 & & \le \sqrt{T{K}_{1}} + \sqrt{{K}_{2}}  \Bigl( 
  \int_{0}^{t\wedge {\tau }_{n} (R)} |u_n (s) {|}_{H}^{2p} \ds
{\Bigr) }^{\frac{1}{2}}  . \label{eq:BDG_R_Taylor_1'_new} 
\end{eqnarray}
By \eqref{eq:BDG_R_new}, \eqref{eq:BDG_R_Taylor_1'_new} and \eqref{eq:E(int_un(t))_est}
we obtain the following inequalities
\begin{eqnarray} 
& &\mathbb{E} \Bigl[ \sup_{r\in [0,t]} |{M}_{n}(r \wedge {\tau }_{n} (R))| \Bigr] \nonumber \\
& &\le {\tilde{K}}_{p}\sqrt{T{K}_{1}} + {\tilde{K}}_{p}\sqrt{{K}_{2}} \mathbb{E} \Bigl[ \Bigl( 
  \int_{0}^{t\wedge {\tau }_{n} (R)} | u_n (s) {|}_{H}^{2p} \ds
{\Bigr) }^{\frac{1}{2}} \Bigr]  \nonumber \\
& &\le {\tilde{K}}_{p}\sqrt{T{K}_{1}} + {\tilde{K}}_{p}\sqrt{{K}_{2}} \mathbb{E} \Bigl[ 
 \Bigl( \sup_{s\in [0,t]} | u_n ({s\wedge {\tau }_{n} (R)}^{}) {|}_{H}^{p} {\Bigr) }^{\frac{1}{2}}
 \Bigl( \int_{0}^{t\wedge {\tau }_{n} (R)} |u_n (s) {|}_{H}^{p} \ds 
{\Bigr) }^{\frac{1}{2}} \Bigr]   \nonumber \\
& &\le {\tilde{K}}_{p}\sqrt{T{K}_{1}} + \frac{1}{4} \mathbb{E} \Bigl[ 
  \sup_{s\in [0,t]}  | u_n (s\wedge {\tau }_{n} (R)) {|}_{H}^{p} \Bigr]
 + {\tilde{K}}_{p}^{2} {K}_{2} \mathbb{E} \Bigl[ \int_{0}^{t\wedge {\tau }_{n} (R)}| u_n (s) {|}_{H}^{p} \ds   \Bigr] \nonumber \\
& & \le \frac{1}{4} \mathbb{E} \Bigl[ 
  \sup_{s\in [0,t]}  |u_n ({s\wedge {\tau }_{n} (R)}^{}) {|}_{H}^{p} \Bigr] + \tilde{K} ,
 \label{eq:BDG_R_cont._new}
\end{eqnarray}
where $\tilde{K} = {\tilde{K}}_{p}\sqrt{T{K}_{1}} + {\tilde{K}}_{p}^{2} {K}_{2} {\tilde{C}}_{p}$, with the constant ${\tilde{C}}_{p}$ is the same as in \eqref{eq:E(int_un(t))_est}.
Therefore, by \eqref{eq:Ito_formula} , 
\begin{eqnarray} 
& &| {u}_{n}({t\wedge {\tau }_{n} (R)}){|}^{p}
\le  c + {c}_{1}\int_{0}^{T} | u_n (s){|}^{p}  \ds 
+ \sup_{r \in [0,T] } |{M}_{n}(r\wedge {\tau }_{n} (R))| \nonumber  \\
& &  + |{I}_{n}(T\wedge {\tau }_{n} (R))|, \mbox{ for all $t\in [0,T]$},
\label{eq:Ito_BDG_R_new} 
\end{eqnarray} 
where ${I}_{n}$ is defined by \eqref{eq:I_n}. Since inequality \eqref{eq:Ito_BDG_R_new} holds for all $t\in [0,T]$ and the right-hand side of \eqref{eq:Ito_BDG_R_new} in independent of $t$, we infer that
\begin{eqnarray} 
& &\mathbb{E} \Bigl[ \sup_{t \in [0,T]} |{u}_{n} (t\wedge {\tau }_{n} (R)) {|}^{p} \Bigr]
\le  c + {c}_{1}\mathbb{E} \Bigl[ \int_{0}^{T} |u_n (s){|}^{p}  \ds  \Bigr] \nonumber \\
& & + \mathbb{E} \Bigl[ \sup_{r \in [0,T] } |{M}_{n}(r\wedge {\tau }_{n} (R))| \Bigr] 
+\mathbb{E} \Bigl[ |{I}_{n}(T\wedge {\tau }_{n} (R)) |  \Bigr]  . 
 \label{eq:Ito_BDG_R_E_new}    
\end{eqnarray}
Using inequalities \eqref{eq:E(int_un(t))_est}, \eqref{eq:BDG_R_cont._new} and \eqref{eq:E_I_n(t)}  in \eqref{eq:Ito_BDG_R_E_new}  we infer that
\[ \label{eq:H_est_R}
 \mathbb{E} \Bigl[ \sup_{t \in T}|{u}_{n}(t\wedge {\tau }_{n} (R)) {|}^{p} \Bigr]
 \le {C}_{1}
\]
for some constant ${C}_{1}$ independent of $n \in \mathbb{N} $ and $R>0$. Passing to the limit as $R \to \infty $, we obtain inequality \eqref{eqn-H_estimate-p>2} for $p=4+\gamma  $.
To obtain inequality \eqref{eqn-H_estimate-p>2} for $p \in [1,4+\gamma )$ it suffices to apply the Jensen inequality.
The proof of  Lemma \ref{lem-Galerkin_estimates} is thus complete. \qed

\section{Acknowledgments}

We would like to thank Erika Hausenblas for stimulating discussions, to Adam Jakubowski for important discussion about the Skorohod space 
and to Martin Ondrej\'at  who inspired us to write Appendix \ref{app_martingales}.

\bibliography{References-BKR}%
\bibliographystyle{plain}

\end{document}